\numberwithin{equation}{section}
\newcommand\myurl[1]{\url{#1}}
\newenvironment{customthm}[1]
  {\innercustomthm}
  {\endinnercustomthm}
\newtheorem{thm}{Theorem}[section]
\newtheorem{prop}[thm]{Proposition}
\newtheorem{conj}[thm]{Conjecture}
\newtheorem{cor}[thm]{Corollary}
\newtheorem{lem}[thm]{Lemma}
\theoremstyle{definition}
\newtheorem{define}[thm]{Definition}
\theoremstyle{remark}
\newtheorem{rem}[thm]{Remark}
\newtheorem{example}[thm]{Example}
\newtheorem{question}[thm]{Question}
\newcommand{\ve}[1]{\boldsymbol{\mathbf{#1}}}
\newcommand{\R}{\mathbb{R}}
\newcommand{\F}{\mathbb{F}}
\newcommand{\Z}{\mathbb{Z}}
\newcommand{\N}{\mathbb{N}}
\newcommand{\T}{\mathbb{T}}
\renewcommand{\d}{\partial}
\renewcommand{\subset}{\subseteq}
\renewcommand{\tilde}{\widetilde}
\renewcommand{\hat}{\widehat}
\newcommand{\iso}{\cong}
\DeclareMathOperator{\Char}{{Char}}
\DeclareMathOperator{\cotr}{{cotr}}
\DeclareMathOperator{\Cr}{{Cr}}
\DeclareMathOperator{\SO}{{SO}}
\DeclareMathOperator{\gr}{{gr}}
\DeclareMathOperator{\Hom}{{Hom}}
\DeclareMathOperator{\Id}{{Id}}
\DeclareMathOperator{\id}{{id}}
\DeclareMathOperator{\Int}{{int}}
\DeclareMathOperator{\MCG}{{MCG}}
\DeclareMathOperator{\Spin}{{Spin}}
\DeclareMathOperator{\Tor}{{Tor}}
\DeclareMathOperator{\Sing}{{Sing}}
\DeclareMathOperator{\Tors}{{Tors}}
\DeclareMathOperator{\tr}{{tr}}
\DeclareMathOperator{\tw}{tw}
\renewcommand{\wr}{\operatorname{wr}}
\DeclareMathOperator{\Imm}{{Imm}}
\DeclareMathOperator{\Cob}{Cob}
\DeclareMathOperator{\Top}{Top}
\newcommand{\bF}{\mathbb{F}}
\newcommand{\bH}{\mathbb{H}}
\newcommand{\bK}{\mathbb{K}}
\newcommand{\bL}{\mathbb{L}}
\newcommand{\bS}{\mathbb{S}}
\newcommand{\bT}{\mathbb{T}}
\newcommand{\bU}{\mathbb{U}}
\newcommand{\CP}{\mathbb{CP}}
\newcommand{\cA}{\mathcal{A}}
\newcommand{\cB}{\mathcal{B}}
\newcommand{\cC}{\mathcal{C}}
\newcommand{\cD}{\mathcal{D}}
\newcommand{\cF}{\mathcal{F}}
\newcommand{\cG}{\mathcal{G}}
\newcommand{\cH}{\mathcal{H}}
\newcommand{\cI}{\mathcal{I}}
\newcommand{\cM}{\mathcal{M}}
\newcommand{\cR}{\mathcal{R}}
\newcommand{\cS}{\mathcal{S}}
\newcommand{\cT}{\mathcal{T}}
\newcommand{\cU}{\mathcal{U}}
\newcommand{\frs}{\mathfrak{s}}
\newcommand{\frt}{\mathfrak{t}}
\newcommand{\as}{\ve{\alpha}}
\newcommand{\bs}{\ve{\beta}}
\newcommand{\gs}{\ve{\gamma}}
\newcommand{\Ds}{\ve{\Delta}}
\newcommand{\xs}{\ve{x}}
\newcommand{\ys}{\ve{y}}
\newcommand{\ws}{\ve{w}}
\newcommand{\zs}{\ve{z}}
\newcommand{\ts}{\ve{t}}
\newcommand{\sigmas}{\ve{\sigma}}
\newcommand{\taus}{\ve{\tau}}
\newcommand{\etas}{\ve{\eta}}
\newcommand{\xis}{\ve{\xi}}
\newcommand{\zetas}{\ve{\zeta}}
\newcommand{\from}{\leftarrow}
\newcommand{\cCFL}{\mathcal{C\!F\!L}}
\newcommand{\cHFL}{\mathcal{H\!F\! L}}
\newcommand{\CFK}{\mathit{CFK}}
\newcommand{\CF}{\mathit{CF}}
\newcommand{\HF}{\mathit{HF}}
\newcommand{\HFh}{\widehat{\mathit{HF}}}
\newcommand{\HFL}{\mathit{HFL}}
\newcommand{\HFK}{\mathit{HFK}}
\renewcommand{\a}{\alpha}
\renewcommand{\b}{\beta}
\newcommand{\g}{\gamma}
\renewcommand{\S}{\Sigma}
\newcommand{\x}{\mathbf{x}}
\newcommand{\y}{\mathbf{y}}
\newcommand{\PD}{\mathit{PD}}
\DeclareMathOperator{\st}{{st}}
\DeclareMathOperator{\Sw}{{Sw}}
\DeclareMathOperator{\dest}{{dest}}
\newcommand{\I}[1]{\leftidx{_{#1}}{I}}
\renewcommand{\L}[2]{\leftidx{_{#2}}{L}{_{#1}}}
\newcommand{\fil}{\mathit{fil}}
\newcommand{\hCFKf}{\hat{\CFK}{}^{\fil}}
\newcommand{\hCFKfw}{\hat{\CFK}{}^{\fil,w}}
\newcommand{\hCFKfz}{\hat{\CFK}{}^{\fil,z}}
\newcommand{\Tw}{\mathit{Tw}}
\newcommand{\Sphere}{\mathbb{S}}
\DeclareMathOperator{\Surf}{{Surf}}
\newcommand{\tHFK}{\mathit{tHFK}}
\newcommand{\tCFK}{\mathit{tCFK}}
\newcommand{\tF}{\mathit{tF}}
\newcommand{\tvt}{\mathit{t}\ve{t}}
\title{Stabilization distance bounds from link Floer homology}
\author{Andr\'as Juh\'asz}%
\address{Mathematical Institute, University of Oxford, Andrew Wiles Building,
Radcliffe Observatory Quarter, Woodstock Road, Oxford, OX2 6GG, UK}%
\email{juhasza@maths.ox.ac.uk}%
\author{Ian Zemke}
\address{Department of Mathematics\\Princeton University\\  Princeton, NJ 08544, USA}
\email{izemke@math.princeton.edu}
\begin{document}

\subjclass[2010]{57R58; 57R40; 57R42; 57Q45}%
\keywords{Heegaard Floer homology, 4-manifold, 4-ball genus, concordance, slice disk, immersion}
\begin{abstract}
  We consider the set of connected surfaces in the 4-ball with boundary a fixed knot in the 3-sphere.
  We define the stabilization distance between two surfaces as the minimal $g$ such that
  we can get from one to the other using stabilizations and destabilizations through surfaces of genus at most $g$.
  Similarly, we consider a double point distance between two surfaces of the same genus,
  which is the minimum over all regular homotopies connecting the two surfaces
  of the maximal number of  double points appearing in the homotopy. 

  To many of the concordance invariants defined using Heegaard Floer homology,
  we construct an analogous invariant for a pair of surfaces. We show
  that these give lower bounds on the stabilization distance and the double point distance.
  We compute our invariants for some pairs of deform-spun slice disks
  by proving a trace formula on the full infinity knot Floer complex, and by
  determining the action on knot Floer homology of an automorphism of the connected sum of
  a knot with itself that swaps the two summands.
   We use our invariants to find pairs of slice disks with arbitrarily large distance with respect to many of the metrics we consider in this paper. 
   We also answer a slice disk analogue of Problem~1.105 (B) from Kirby's problem list by showing the existence of non-0-cobordant slice disks.
\end{abstract}

\maketitle

\tableofcontents

\section{Introduction}

There is a natural stabilization operation on smooth, oriented surfaces in 4-manifolds
where one attaches an embedded 1-handle to the surface. This operation was recently considered by Baykur and Sunukjian~\cite{BaySunStabilizations}.
When the 1-handle is unknotted, this does not change the fundamental group of the surface complement.
They asked the following question: If two surfaces are topologically isotopic,
then do they become smoothly isotopic after a single unknotted 1-handle stabilization?
They verified this
for most known constructions of pairs of exotic surfaces, such as rim surgery. A related question is how many stabilizations are required to make a given pair of surfaces isotopic.

There is a parallel notion of stabilization for 4-manifolds. A classical result of Wall~\cite{Wall} states that if two smooth, simply-connected 4-manifolds
are homeomorphic, then they become diffeomorphic after taking connected sums with
some number of copies of $\Sphere^2 \times \Sphere^2$. It is an open conjecture
whether a single copy of $\Sphere^2 \times \Sphere^2$ always suffices. Recently, Lin and Mukherjee~\cite{Lin-Mukherjee} constructed a pair of surfaces-with-boundary in a punctured K3 that are topologically isotopic but not smoothly isotopic, and remain so after stabilizing their complements once with $\Sphere^2 \times \Sphere^2$. They showed this using family Bauer--Furuta invariants.

In this paper, we construct invariants that provide lower bounds on the number of
1-handle stabilizations required to make two surfaces isotopic.
Using a generalization of 1-handle stabilization, we endow the set of smooth, connected, oriented,
and properly embedded surfaces in $B^4$ with boundary a knot $K$ with a type of metric
that we call the \emph{stabilization distance}. The stabilization distance between two surfaces
bounds from below the number of 1-handle stabilizations required to make them isotopic.
Using the link Floer TQFT, we define several invariants of pairs of surfaces bounding $K$ that
give lower bounds on the stabilization distance. We compute these invariants for certain pairs
of slice disks arising from deform-spinning, and observe they
often give non-trivial lower bounds. Furthermore, we give examples in Section~\ref{sec:examples} where these lower bounds can be arbitrarily large.

Throughout this paper, we work in the smooth category, and all manifolds
are assumed to be oriented, unless otherwise stated.

\subsection{Metric filtrations on the set of surfaces bounding a knot}

In Definition~\ref{def:stab}, we introduce a very general type of stabilization operation for surfaces in 4-manifolds
that extends the 1-handle stabilization considered by Baykur and Sunukjian.
Let $S$ be a properly embedded surface in a 4-manifold~$W$.
To obtain the stabilization of $S$, we choose a 4-ball $B \subset \Int(W)$ such that $B \cap S$
is a collection of disks that can be isotoped into $\d B^4$ relative to their boundaries.
In particular, $\d B \cap S$ is an unlink. We then replace $S \cap B$
with a properly embedded surface $S_0 \subset B$ such that $\d S_0 = \d B \cap S$.
Any two surfaces in the same relative homology class can be related by a finite sequence of
such stabilizations and destabilizations (in fact, 1-handle stabilizations suffice).

Let $K$ be a knot in $\Sphere^3$. We denote by $\Surf(K)$ the set of isotopy classes of connected, properly embedded surfaces
in $B^4$ with boundary $K$. In Definition~\ref{def:stabgenus}, we introduce the \emph{stabilization distance}
$\mu_{\st}(S, S')$ of a pair of surfaces $S$, $S' \in \Surf(K)$
to be the minimum of
\[
\max\{g(S_1), \dots, g(S_k) \}
\]
over sequences of connected, properly embedded surfaces $S_1$, \dots, $S_k$ in $\Surf(K)$
connecting $S$ and~$S'$ such that consecutive surfaces are related by a stabilization or a destabilization.
Furthermore, if $K$ is slice and $S \in \Surf(K)$, then
we define the \emph{destabilizing genus} $g_{\dest}(S)$ to be the stabilization distance of $S$ from
the subset of slice disks.

We also define the \emph{M-distance} function $M_{(S, S')} \colon [0,2] \to \R^{\ge 0}$
of a pair of surfaces $S$, $S' \in \Surf(K)$, which is similar to the stabilization distance,
but where the stabilization operation is allowed to change
the ambient 4-manifold. Instead of changing the surface in a 4-ball,
one can glue in a pair $(X_0, S_0)$, where $\d X_0 = \Sphere^3$ and $\d S_0$ is an unlink,
and $b_2^+(X_0) = b_1(X_0) = 0$.
The \emph{$M$-degree} of a pair $(W, S)$, where $W$ is a compact 4-manifold and $S$ is a
properly embedded surface, is defined in Section~\ref{sec:M-distance}. It is a function
on $[0,2]$ that measures not only the genus of $S$,
but also a homological quantity depending on $[S] \in H_2(W)$
and the intersection form $Q_W$ of $W$.
The $M$-distance of a pair of surfaces $S$, $S' \in \Surf(K)$
minimizes the maximal $M$-degree along sequences $(W_1, S_1) \dots, (W_n, S_n)$
connecting $(B^4, S)$ and $(B^4, S')$ such that $(W_i, S_i)$
and $(W_{i+1}, S_{i+1})$ are related by the above stabilization operation.

Another notion of distance we consider in this paper is the \emph{cobordism distance}, which we denote $\mu_{\Cob}(S,S')$.  If $S$, $S'\in \Surf(K)$, we set $\mu_{\Cob}(S,S')$ to be the minimal $g$ such that there is a 5-dimensional cobordism $(I \times B^4, Y)$, where $Y$ is a smoothly and properly embedded, oriented 3-manifold-with-corners such that 
\[
\d Y= -(\{0\}\times S)\cup (\{1\}\times S')\cup (I\times K),
\]
projection of $Y$ to $I$ is Morse, and such that each regular level set of $Y$ is a surface such that the sum of the genera of its components is at most $g$. We say that $S$ and $S'$ are \emph{strictly $g$-cobordant} if $\mu_{\Cob}(S, S') = g$. Compare this to the notion of \emph{$g$-cobordism} of 2-knots defined by Melvin~\cite{MelvinPHD}, where one requires every component of each level set to have genus at most $g$, and \emph{$g$-concordance}, where, in addition, $Y \approx I \times S^2$. Two surfaces are strictly $0$-cobordant if and only if they are $0$-cobordant. 
In particular, $\mu_{\Cob}$ defines an ultrametric on the set of $0$-cobordism classes of surfaces.
It is straightforward to see that
\[
\mu_{\Cob}(S,S')\le \mu_{\st}(S,S').
\]

Note that Sunukjian showed that there are infinitely many distinct $0$-concordance classes of 2-knots in $S^4$ \cite{SunukjianConcordance}. Dai and Miller \cite{DaiMillerConcordance} improved this result to show that the 0-concordance monoid of 2-knots was infinitely generated.

For $g \in \N$, let $\Surf_g(K)$ denote the subset of $\Surf(K)$ consisting
of genus $g$ surfaces. If $S$, $S' \in \Surf_g(K)$, then they are regularly homotopic relative to $K$.
We define the \emph{double point distance} $\mu_{\Sing}(S, S')$ as $\tilde{\mu}_{\Sing}(S,S') + g$, where
$\tilde{\mu}_{\Sing}(S,S')$ is obtained by minimizing half the maximal number of double points
that appear during a regular homotopy from $S$ to $S'$; see Definition~\ref{def:sing}.
When $g(S) \neq g(S')$, we set $\mu_{\Sing}(S, S') = \infty$.
Motivated by an earlier version of this paper, Singh~\cite{Singh} showed that 
\[
\mu_{\st}(S, S') \le \mu_{\Sing}(S, S')  + 1
\]
using techniques from Gabai's proof~\cite{light-bulb} of the 4-dimensional light bulb theorem. It is an open problem whether the $+1$ is necessary in the above formula. 

Both $\mu_{\st}$ and $\mu_{\Sing}$ are \emph{metric filtrations} on $\Surf(K)$;
i.e., they are nonnegative, symmetric, and satisfy the ultrametric inequality
\[
\mu(S, S'') \le \max\{\, \mu(S, S'), \mu(S', S'') \,\}
\]
for any $S$, $S'$, $S'' \in \Surf(K)$; see Section~\ref{sec:metric-filtration}.
Furthermore, $M_{(S,S')}(t)$ is also a metric filtration for every $t \in [0,2]$.
If $S$, $S' \in \Surf_0(K)$, then $\mu_{\st}(S, S') = 0$ if and only if $S$ and $S'$ are related
by genus zero stabilizations, hence $\Surf_0(K)/\{\text{2-knots}\}$ is an ultrametric space.
However, in general, $\mu_{\st}(S, S) = g(S)$, and if $\mu_{\st}(S, S') = g(S) = g(S')$,
it is possible that $S$ and $S'$ are not related by genus zero stabilizations.

\subsection{Lower bounds from Heegaard Floer homology}
Our aim is to provide computable lower bounds on the stabilization distance, the double point distance, the destabilizing genus, and the cobordism distance using the link Floer TQFT of the second author~\cite{ZemCFLTQFT}, which extends the TQFT of the
first author~\cite{JCob} from $\widehat{\HFL}$ to the full infinity complex $\cCFL^\infty$.
If $\bK = (K, w, z)$ is a doubly-based knot in $\Sphere^3$, then $\cCFL^\infty(\bK)$ is a $\Z\oplus \Z$
filtered chain complex over the two-variable Laurent polynomial ring
\[
\cR^\infty:=\bF_2[U,V,U^{-1},V^{-1}].
\]
We give an overview of  $\cCFL^\infty(\bK)$ in Section~\ref{sec:background}.
There is a variation, denoted $\cCFL^-(\bK)$, corresponding to the subspace
of $\cCFL^\infty(\bK)$ in nonnegative bi-filtration, which is a module over the ring
\[
\cR^-:=\bF_2[U,V].
\]

Knot Floer homology has been used by many authors to provide numerical concordance invariants that provide deep geometric information. Important examples are the knot invariants $\tau$, $\nu$, $V_k$ for $k \in \N$,
and $\Upsilon$, introduced by Ozsv\'ath--Szab\'o~\cite{OS4ballgenus, OSRationalSurgeries}, Hom--Wu~\cite{HomWuNu+}, Rasmussen~\cite{RasmussenKnots},
and Ozsv\'ath--Stipsicz--Szab\'o~\cite{OSSUpsilon}, respectively.
Since  $\tau$, $\nu$, $V_k$, and $\Upsilon$ are concordance invariants, they
all vanish when the knot is slice.

It is well known that the knot invariants $\tau$, $\nu$, and $V_k$ give lower bounds on the 4-ball genus $g_4(K)$:
\[
\tau(K) \le \nu(K) \le g_4(K) \text{ and } V_k(K) \le \left\lceil \frac{g_4(K)-k}{2} \right\rceil
\]
whenever $k \le g_4(K)$; see \cite{OS4ballgenus}*{Theorem~1.1},
\cite{HomWuNu+}*{Section~2}, and \cite{RasmussenGodaTeragaito}*{Theorem~2.3}.
The invariant $\Upsilon$ also gives lower bounds on the 4-ball genus; see
\cite{OSSUpsilon}*{Theorem~1.11}. The invariants $\tau$ and $\Upsilon$
satisfy more general versions of the genus bound involving surfaces
in negative definite 4-manifolds $W$ with $b_1(W) = b_2^+(W) = 0$; see
\cite{OS4ballgenus}*{Theorem~1.1} and
\cite{ZemAbsoluteGradings}*{Theorem~1.1}.

In Section~\ref{sec:invariants}, we show that, by mirroring their constructions,
we can define secondary versions of all of the above knot invariants for a pair of surfaces
$S$, $S' \in \Surf(K)$ in $B^4$ with boundary a knot $K$ in $\Sphere^3$.
We show that these give formally analogous lower bounds on the metric filtrations
$\mu_{\st}$, $\mu_{\Sing}$, and $M$:

\begin{thm}\label{thm:maintheorem}
  Let $S$, $S' \in \Surf(K)$. Then
  \[
  \tau(S, S') \le \nu(S, S') \le \min\{\, \mu_{\st}(S, S') \text{, } \mu_{\Sing}(S, S') \,\}.
  \]
  Furthermore,
  \[
  \Upsilon_{(S,S')}(t) \le M_{(S,S')}(t)
  \]
  for every $t \in [0,2]$.
  Finally, for every $k \in \N$,
  \begin{equation}
  V_k(S, S') \le \left\lceil \frac{\mu_{\Sing}(S, S') - k}{2} \right \rceil. \label{eq:Vk-mu-Sing-intro}
  \end{equation}
  If $S$ and $S'$ are disks, then Equation~\eqref{eq:Vk-mu-Sing-intro} also holds with $\mu_{\st}$ in place of $\mu_{\Sing}$.
\end{thm}

The bounds stated in Theorem~\ref{thm:maintheorem} are proven separately throughout the paper;
see Theorems~\ref{thm:tauD1D2bound}, \ref{thm:Vkbound}, \ref{thm:doublepointbound},
\ref{thm:hinvdoublepointhighergenus}, and~\ref{thm:Upsilonbound},
as well as Proposition~\ref{prop:tau+}. In Section~\ref{sec:kappa}, we introduce a novel
invariant $\kappa(S, S')$ that does not have an analogue for knots,
and which only gives a lower bound on $\mu_{\Sing}(S, S')$; see Theorem~\ref{thm:kappaandmuSing}.

Using the link Floer TQFT, we also introduce another integer invariant
$\cI(S)$ of a surface $S$ which has $g(S) > 0$ and has boundary a slice knot $K$;
see Definition~\ref{def:depth}. In Theorem~\ref{thm:I}, we prove that $\cI(S)$ bounds the stabilization distance
between $S$ and the subset of slice disks:
\[
\cI(S) \le g_{\dest}(S).
\]
However, $\cI(S)$ is in general difficult to compute,
as it involves determining the link cobordism maps for infinitely many
decorations on the surface $S$. Giving a lower bound on $\cI(S)$ is theoretically feasible,
since it only involves finding two decorations on $S$ satisfying a
simple condition, whose induced cobordism maps disagree.
We define an analogue of $\kappa(S, S')$ for a single surface, $\kappa_0(S)$, which gives a computable
lower bound on $g_{\dest}(S)$; see Theorem~\ref{thm:kappa0gdest}.\footnote{In subsequent work \cite{JZGenusBounds}, we show that $\cI(S),\kappa_0(S)\in \{g(S),g(S)+1\}$, and hence $\cI(S)$ and $\kappa_0(S)$ give potential obstructions to surfaces being stabilized (cf.  Proposition~5.5 therein). By analyzing the proof of \cite{JZGenusBounds}*{Theorem~1.7} and by considering cases where the bound is sharp, one may find surfaces where $\cI(S)=g(S)+1$. For example, any genus 2 slice surface for $T_{2,3}\# T_{2,3}$ has $\cI(S)=3$.}

The invariants $\tau$, $\kappa$, and $\kappa_0$ can all be derived from $\Upsilon$.
As opposed to the case of knots, $\Upsilon_{(S, S')}(t)$ is not a symmetric function.
We will see in Theorem~\ref{thm:upsilon-slope} that, for all $t\in [0,2]$ sufficiently close to~0, we have
\[
\Upsilon_{(S,S')}(t)= \tau(S,S') \cdot t.
\]
However, for $t$ sufficiently close to $2$, we have
\[
\Upsilon_{(S,S')}(t)=\begin{cases} (\kappa_0(S)-g(S))\cdot (2-t)+ g(S)\cdot t& \text{ if } g(S)>g(S'),\\
(\kappa(S,S')-g(S))\cdot (2-t)+g(S)\cdot t& \text{ if } g(S)=g(S').
\end{cases}
\]

We now review the construction of the invariants.
If $S \in \Surf(K)$, then it can be viewed as a
link cobordism from $\emptyset$ to $(\Sphere^3, K)$.
If we decorate it such that the type-$\ws$ region is a bigon, then it induces a filtered chain map
\[
\ve{t}_{S, \zs}^\infty \colon \cR^\infty \to \cCFL^\infty(\bK),
\]
well-defined up to filtered chain homotopy; see Section~\ref{sec:invariant}.
If instead the type-$\zs$ region is a bigon, we obtain a map $\ve{t}_{S, \ws}^\infty$.
We call $\ve{t}_{S, \zs}^\infty$ and $\ve{t}_{S, \ws}^\infty$ the \emph{extremal principal invariants}
of the surface $S$.

Given surfaces $S$, $S' \in \Surf(K)$, the invariants
\[
\tau(S, S') \text{, } \nu(S, S') \text{, } V_k(S, S') \text{, and } \Upsilon_{S, S'}(t)
\]
are all extracted from the pair of maps $(\ve{t}^\infty_{S, \zs}, \ve{t}^\infty_{S', \zs})$
by algebraically mirroring the construction of their knot invariant counterparts.
Hence, we think of our invariants as \emph{secondary} versions of the knot invariants.
The invariant $\kappa_0(S)$ is derived from $\ve{t}_{S, \ws}^\infty$, and we obtain
$\kappa(S, S')$ from the pair $(\ve{t}^\infty_{S, \ws}, \ve{t}^\infty_{S', \ws})$.

For example, to obtain $\tau$, we set $U = 0$ and $V = 1$ in the $\cR$-module
$\cCFL^-(\bK)$, and obtain the $\Z$-filtered complex $\hCFKfz(\bK)$ whose homology is $\HFh(\Sphere^3) \cong \bF_2$.
Given a pair of surfaces $S$, $S' \in \Surf(K)$,
the elements $\ve{t}_{S, \zs}^-(1)$ and $\ve{t}_{S', \zs}^-(1)$ of $\cCFL^-(\bK)$ give rise to
elements $\hat{t}_{S, \zs}(1)$ and $\hat{t}_{S', \zs}(1)$ of $\hCFKfz(\bK)$. We define the invariant
\[
\tau(S, S') := \min\{ n \ge \max\{g, g'\} : \left[\hat{t}_{S, \zs}(1) \right] =
\left[\hat{t}_{S', \zs}(1)\right] \text{ as elements of }  H_*(\hCFKfz_n(\bK))\},
\]
where $\hCFKfz_n(\bK)$ is the part of $\hCFKfz(\bK)$ in filtration at most~$n$.

\subsection{Computing the invariants for deform-spun disks using the trace formula}

In Section~\ref{sec:examples}, we compute the secondary invariants for
several pairs of deform-spun slice disks, using the computer algebra software
SageMath~\cite{sagemath}. We exhibit several
examples where the lower bound on the stabilization distance is 2 or 3, and a family with arbitrarily large distance. 
We note that, in our examples, the pairs of slice disks are \emph{not}
topologically isotopic relative to their boundaries.

Let $K$ be a knot in $\Sphere^3$, and suppose that the 3-ball $B$ intersects $K$ in an unknotted arc.
Then $(\Sphere^3 \setminus \Int(B), K \setminus \Int(B))$ is a ball-arc pair $(B^3, a)$.
Suppose that we are given an isotopy $\phi \colon I \times \Sphere^3 \to \Sphere^3$ of $\Sphere^3$ that is the identity on $B$, and such
that $\phi_0 = \Id_{\Sphere^3}$ and $\phi_1(K) = K$, where $\phi_t(x) := \phi(t, x)$ for every $t \in I$ and $x \in \Sphere^3$. 
Then the \emph{deform-spun} slice disk $D_{K, \phi} \subset B^4$ is defined by taking
\[
\bigcup_{t \in I} \{t\} \times \phi_t(a) \subset I \times B^3,
\]
and rounding the corners along $\{0, 1\} \times \d B^3$.
This is a slice disk of $-K \# K$. The isotopy class of $D_{K, \phi}$, relative to $-K \# K$,
only depends on the diffeomorphism $\phi_1$, so we will write $D_{K, \phi_1}$ instead of $D_{K, \phi}$.

In this paper, we consider three automorphisms of a ball-arc pair.
The first is the \emph{roll-spinning} automorphism $r$, which
corresponds to a positive Dehn twist in the longitudinal direction of the knot $K$.
The automorphism $r$ is supported in a neighborhood of $K$.
The second is the \emph{twist-spinning} automorphism $t$, which is similar to roll-spinning,
but instead twists in the meridional direction.
The third automorphism $\ve{R}^\pi$ that we consider is specific to
knots of the form $K \# K$. The \emph{summand-swapping} automorphism $\ve{R}^\pi$ of $K \# K$ is the
composition of an isometry of $\R^3$ that swaps the two copies of $K$, and a
half Dehn twist that ensures $\ve{R}^\pi$ fixes $K \# K$ pointwise.

We show that, if $K$ is a knot in $\Sphere^3$, then the roll-spun and twist-spun slice disks $D_{K, r}$,
$D_{K, t} \in \Surf_0(-K \# K)$ satisfy
\[
\mu_{st}(D_{K, r}, D_{K, t}) \le 2;
\]
see Proposition~\ref{prop:rollspinningcomputation}. We conjecture that the upper bound can be improved to~$1$.

We obtain lower bounds on the stabilization distance and the double point distance using our secondary invariants.
For this end, we first compute the extremal principal invariant
$\ve{t}_{D_{K,{\phi_1}}}^\infty(1) \in \cCFL^\infty( -K \# K)$ of a deform-spun slice disk. 
We define the canonical cotrace map
\[
\cotr\colon \cR^\infty\to \cCFL^\infty(Y,\bL,\frs)\otimes_{\cR^\infty} \cCFL^\infty(-Y,-\bL,\frs)
\]
as the dual of the trace pairing.

\begin{thm}\label{thm:spinning}
  Let $D_{K, \varphi}$ be a slice disk of the doubly-based knot $-\bK \# \bK = (-K \# K, w, z)$, obtained by deform-spinning
  a doubly-based knot $\bK = (K, w, z)$ in $\Sphere^3$ using an automorphism $\varphi$ of $(\Sphere^3, K)$.
  If we write $C := \cCFL^\infty(\bK)$, then
  \[
  E \circ \ve{t}_{D_{K, \varphi}}^\infty \simeq (\id \otimes \varphi_*) \circ \cotr \in
  \Hom_{\cR^\infty}(\cR^\infty, C^\vee \otimes C),
  \]
  where  $E \colon \cCFL^\infty(-\bK \# \bK) \to C^\vee \otimes C$ is the chain homotopy equivalence
  induced by a pair-of-pants link cobordism.
\end{thm}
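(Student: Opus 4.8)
The plan is to reduce the theorem to a local computation on the level of decorated link cobordisms, using the functoriality of the link Floer TQFT of \cite{ZemCFLTQFT}. First I would give an explicit decorated cobordism interpretation of both sides. The deform-spun disk $D_{K,d}$ is built from the mapping torus of $d$, so it is naturally isotopic to a composite cobordism: cap off $\emptyset \to (\Sphere^3, -K \# K)$ by factoring through the birth of an unknot $U$, a band move realizing $-K \# K$ as the result of merging $U$ with two copies of $K$ coming from the two ends, and crucially the mapping-cylinder cobordism of the automorphism $d$ inserted on one of the $K$-factors. The point is that $D_{K,d}$ decorated so that the type-$\zs$ region is a bigon factors, up to isotopy rel boundary of decorated cobordisms, as $(\text{pair of pants}) \circ (\text{mapping cylinder of } d \text{ on one factor}) \circ (\text{birth})$.

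Next I would compute the TQFT map of each piece. The birth of the unknot with the appropriate decoration sends $1 \in \cR^\infty$ to a generator of $\cCFL^\infty(U) \cong \cR^\infty$; composed with the merge/pair-of-pants cobordism $(\Sphere^3, U) \sqcup (\Sphere^3, -\bK \# \bK)$-style reasoning, or more directly using that $E$ is by definition the equivalence induced by the pair-of-pants cobordism $\cCFL^\infty(-\bK \# \bK) \to C^\vee \otimes C$, I would identify $E \circ \ve{t}^\infty_{D_{K,d}}$ with the composite of $E$ applied after the mapping-cylinder map. The mapping cylinder of $d$, acting on the $\bK$-summand, induces $d_*$ on $C = \cCFL^\infty(\bK)$ and the identity on the $-\bK$-summand (equivalently $C^\vee$), so under $E$ it becomes $\id_{C^\vee} \otimes d_*$. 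What remains is to show that $E \circ \ve{t}^\infty_{D_{K,\id}} \simeq \cotr$, i.e.\ the pair-of-pants together with the birth, with trivial automorphism inserted, computes the coevaluation/cotrace map $\cR^\infty \to C^\vee \otimes C$. This is a ``doubling'' statement: the disk for $-K \# K$ obtained with no twisting is the standard product concordance doubled, and the pair-of-pants cobordism relates $\cCFL^\infty$ of a connected sum to a tensor product; under that identification the no-twist disk is exactly the diagonal, which is $\cotr$. I would justify this either by a direct Heegaard diagram computation for the relevant cobordism — drawing a genus-reducing diagram for the pair-of-pants composed with the birth — or by citing the connected-sum formula and the behavior of the cobordism maps under it, reducing the claim to the unknot case where everything is explicit.

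The main obstacle, I expect, will be the bookkeeping of decorations and gradings: one must check that the decorations on $D_{K,d}$ (type-$\zs$ region a bigon) match up, after the isotopy, with the decorations on the pieces in the factorization in a way compatible with how $E$ and $\cotr$ are normalized, and that the grading shifts (both Maslov and Alexander/$(U,V)$-weight) on each side agree so that the identification is not merely up to an overall power of $U^aV^b$. Handling the mapping cylinder of $d$ carefully — confirming that the decorated mapping-cylinder cobordism really induces $d_*$ with no extra sign, shift, or homotopy ambiguity beyond the stated $\simeq$ — is the other delicate point, though this should follow from the naturality and the axioms of the link cobordism TQFT established in \cite{ZemCFLTQFT}. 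Everything else is formal manipulation with the monoidal structure: the cotrace appears precisely because the pair-of-pants cobordism realizes the duality pairing and its adjoint, so once the pieces are in place the conclusion is essentially the statement that a doubled product cobordism is the coevaluation.
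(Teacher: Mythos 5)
Your high-level strategy matches the paper's: factor $E\circ\ve{t}^\infty_{D_{K,d}}$ through the cotrace cobordism with the mapping cylinder of $d$ inserted on one $\bK$-factor, peel off $\id\otimes d_*$ by functoriality of the link cobordism maps, and reduce to the claim that the cotrace cobordism induces the algebraic cotrace $\cotr\colon\cR^\infty\to C^\vee\otimes C$. That reduction is sound, and the decoration/grading bookkeeping you flag is indeed the routine part.

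The gap is in how you dispose of the remaining claim $E\circ\ve{t}^\infty_{D_{K,\id}}\simeq\cotr$. You describe it as ``essentially the statement that a doubled product cobordism is the coevaluation,'' to be handled either by ``a direct Heegaard diagram computation'' or by ``citing the connected-sum formula\dots reducing the claim to the unknot case.'' Neither suffices as stated. The link Floer TQFT is not constructed as a monoidal functor with coherent duality, so the identification of the geometric trace and cotrace cobordisms with the algebraic trace and cotrace maps is a theorem, not a formality: it is precisely Theorem~\ref{thm:traceforuma}, whose proof occupies all of Section~\ref{sec:trace} and rests on the Heegaard-triple cobordism formula (Theorem~\ref{thm:heegaardtriplemap}), which in turn requires twisted-conjugate and doubled Heegaard diagrams, compound $1$- and $3$-handle maps, neck-stretching, and explicit holomorphic triangle counts (Propositions~\ref{prop:compound1-handleischainmap} and~\ref{prop:holomorphictrianglecounts}). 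The connected-sum route in particular cannot reduce the claim to the unknot: $\cotr(1)=\sum_i \xs_i^\vee\otimes\xs_i$ depends on the entire complex $C$, and the cotrace cobordism for $K$ is not a connected sum of the unknot cotrace with anything; what is actually needed is to relate a diagram for $-\bK\#\bK$ to the double of a diagram for $\bK$ and identify the resulting transition maps, which is the content of Lemmas~\ref{lem:changeofdiagramstodouble}--\ref{lem:changeofdiagramsfromdouble}. So your argument is correct in outline but silently assumes the paper's main technical input.
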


Our proof of Theorem~\ref{thm:spinning} uses a much more general
trace formula for the full link Floer TQFT, Theorem~\ref{thm:heegaardtriplemap},
extending a result from our earlier work \cite[Theorem~1.1]{JZContactHandles} for sutured Floer
homology, as well as a result of the second author for the graph TQFT
\cite{ZemDualityMappingTori}*{Theorem~1.6}.

Hence, to compute the secondary invariants of deform-spun slice disks, it
remains to determine the induced diffeomorphism map $\varphi_*$ in specific examples.
The map $r_*$ induced by the basepoint moving diffeomorphism is well-known,
and is given by the formula
\[
r_*\simeq \id+\Phi\circ \Psi,
\]
which was proven on $\HFK^-(\bK)$ by Sarkar~\cite{SarkarMovingBasepoints}, and
extended to $\cCFL^\infty(\bK)$ by the second author~\cite{ZemQuasi}. The maps
$\Phi$ and $\Psi$ are two special endomorphisms of knot Floer homology, which
we describe in Section~\ref{sec:basepointactions}. It turns out that $\tau(D_{K,\id}, D_{K,r}) \le 1$
for the canonical deform-spun slice disks $D_{K,\id}$ and the roll-spun slice disks $D_{K,r}$;
see Proposition~\ref{prop:roll} .

To provide more interesting examples beyond roll-spinning, we compute a
formula for the map induced by the summand-swapping automorphism $\ve{R}^\pi$ on
$\cCFL^\infty(\bK \# \bK)$ in Theorem~\ref{thm:rigidmotionformula}. We prove
that there is a chain homotopy equivalence identifying
$\cCFL^\infty(\bK \# \bK)$ and $\cCFL^\infty(\bK) \otimes \cCFL^\infty(\bK)$ under which
\[
\ve{R}^\pi_* \simeq  \Sw \circ \left(\id \otimes \id + (\Phi \circ \Psi) \otimes \id +
\Phi \otimes \Psi \right),
\]
where $\Sw$ is the map that swaps the two tensor factors.
To our knowledge, after Sarkar's formula, this is the only other known formula
for a mapping class group action on the knot Floer complexes of a family of knots in $\Sphere^3$.

\subsection{Families with large distance}

Using the trace formula and our invariants, we prove the following:

\begin{thm}\label{thm:arbitrarily-large-torsion-order-intro}
 Given $n\ge 0$, there is a knot $K_n$ and a pair of slice disks $D_1$ and $D_2$ for $K_n$, such that $\tau(D_1,D_2)\ge n$. In particular $\omega(D_1,D_2)\ge n$ for $\omega \in \{\mu_{\st}, \mu_{\Sing}, \mu_{\Cob}\}$. 
\end{thm}

The slice disks appearing in our proof of Theorem~\ref{thm:arbitrarily-large-torsion-order-intro} are deform spun slice disks of 
\[
T_{p,q}\# T_{p,q}\# \bar T_{p,q} \# \bar T_{p,q} 
\]
for various $p$ and $q$.

We note that the above results appeared after the work of Miller and Powell \cite{MillerPowell}, who constructed for each $n$ a pair of slice disks such that any stabilization sequence between $D_1$ and $D_2$ required at least $n$ stabilizations. Their result does not imply ours, since it focuses on the total number of stabilizations, as opposed to the maximal genus. Hence it does not give a lower bound on the cobordism distance. See also work of Miyazaki \cite{MiyazakiStabilizations}.

Theorem~\ref{thm:arbitrarily-large-torsion-order-intro} gives an answer to a slice disk analogue of Problem 1.105 (B) of Kirby's Problem List~\cite{KirbyList} that asks whether every 2-knot is $0$-null-cobordant:

\begin{cor}
For every $g$, there is a knot $K_g$ and a pair of slice disks that are not strictly $g$-cobordant. In particular, there are slice disks that are not 0-cobordant in the sense of Melvin~\cite{MelvinPHD}.
\end{cor}

\begin{rem}
 We note that Dai, Mallick and Stoffregen have independently found examples of slice disks with large stabilization distance \cite{DMSEquivariant}. Additionally, they use some of the techniques of this paper in their work to study equivariant knots.
\end{rem}

\subsection*{Acknowledgements}
We would like to thank David Gabai and Maggie Miller for helpful conversations.
The first author was supported by a Royal Society Research Fellowship,
and the second author by an NSF Postdoctoral Research Fellowship (DMS-1703685).
This project has received funding from the European Research Council (ERC)
under the European Union's Horizon 2020 research and innovation programme
(grant agreement No 674978). We also thank an anonymous referee for feedback on an earlier draft.

\section{The stabilization distance of a pair of surfaces}
\label{sec:stabilizationgenus}

In this section, we first review deform-spun slice disks.
We then introduce the notion of a metric filtration,
which is a generalization of an ultrametric,
but where the distance of a point from itself can be nonzero.
We proceed to define the stabilization distance of a pair of surfaces with boundary a given knot, which
is an instance of a metric filtration. Finally, we show that the stabilization distance
of a 1-roll-spun and a 1-twist-spun slice disk is always at most two.

\subsection{Deform-spun slice disks}\label{sec:spinning}

We review the definitions of deform-spun slice disks \cite[Section~3]{SliceDisks}.
Originally, Litherland~\cite{Litherland} introduced deform-spinning to construct
2-knots in $\R^4$, generalizing twist-spinning, due to Zeeman~\cite{twisting},
and roll-spinning, due to Fox~\cite{rolling}. An analogous construction can be used to obtain
slice disks in $B^4$. The following is \cite[Definition~3.1]{SliceDisks}:

\begin{define}\label{def:spinning}
  Let $K$ be a knot in $\Sphere^3 = -B^3 \cup B^3$ such that $K$ intersects $-B^3$ in an unknotted arc,
  and write $a = K \cap B^3$.
  Furthermore, let $\phi \colon I \times \Sphere^3 \to \Sphere^3$ be an isotopy of $\Sphere^3$ such that
  $\phi_0 = \Id_{\Sphere^3}$, $\phi_t|_{-B^3} = \Id_{-B^3}$ for every $t \in I$, and $\phi_1(K) = K$.
  The \emph{deform-spun} slice disk $D_{K, \phi} \subset B^4$ is defined by taking
  \[
  \bigcup_{t \in I} \{t\} \times \phi_t(a) \subset I \times B^3,
  \]
  and rounding the corners along $\{0, 1\} \times \d B^3$.
  The surface $D_{K, \phi}$ is a slice disk for $-K \# K$, where $-K$ stands for $(-\Sphere^3, -K)$.
\end{define}

Note that $\d \R^4_+ = \R^3 = \R^3_+ \cup \R^3_-$.
Intuitively, we consider the arc $a$ in $\R^3_-$, which we
rotate about the plane $\R^2 = \R^3_+ \cap \R^3_-$ in $\R^4_+$,
while applying the isotopy $\phi$, until we reach $\R^3_+$.
The following result is \cite[Lemma~3.3]{SliceDisks}, which immediately follows from
the work of Hatcher~\cite{Hatcher}.

\begin{lem}
  Let $\varphi$ be an automorphism of $(\Sphere^3, K)$ such that $\varphi|_{-B^3} = \Id_{-B^3}$.
  Then there is an isotopy $\phi \colon I \times \Sphere^3 \to \Sphere^3$
  as in Definition~\ref{def:spinning}, such that $\phi_1 = \varphi$. Furthermore,
  the isotopy class of the deform-spun disk $D_{K, \phi}$ only depends on $\varphi$,
  which we denote by $D_{K, \varphi}$.
\end{lem}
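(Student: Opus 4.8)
The plan is to deduce both assertions from Hatcher's resolution of the Smale conjecture~\cite{Hatcher}, which implies that the group $\Diff(B^3\text{ rel }\d B^3)$ of diffeomorphisms of the $3$-ball fixing its boundary pointwise is contractible; in particular it is path-connected and simply connected. Writing $\Sphere^3 = -B^3 \cup_{\Sphere^2} B^3$, restriction to $B^3$ identifies the subgroup of $\Diff(\Sphere^3)$ consisting of diffeomorphisms that are the identity on $-B^3$ with $\Diff(B^3\text{ rel }\d B^3)$, and under this identification $d$ corresponds to $d|_{B^3}$. Everything reduces to the topology of this group.

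For the first assertion I would use path-connectedness: pick a path $t \mapsto \psi_t$ in $\Diff(B^3\text{ rel }\d B^3)$ from $\Id_{B^3}$ to $d|_{B^3}$ (smooth, and the identity near $\d B^3$, after a standard smoothing argument), and extend each $\psi_t$ by the identity on $-B^3$ to obtain a smooth isotopy $\phi\colon I \times \Sphere^3 \to \Sphere^3$. By construction $\phi_0 = \Id_{\Sphere^3}$, $\phi_t|_{-B^3} = \Id_{-B^3}$ for all $t$, and $\phi_1 = d$; in particular $\phi_1(K) = d(K) = K$, so $\phi$ is of the type required in Definition~\ref{def:spinning}.

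For the second assertion, suppose $\phi$ and $\phi'$ are two such isotopies with $\phi_1 = \phi'_1 = d$. Under the reduction above they are paths in $\Diff(B^3\text{ rel }\d B^3)$ with the same endpoints, so by simple connectedness they are homotopic rel endpoints; choose such a homotopy $H\colon I \times I \to \Diff(B^3\text{ rel }\d B^3)$, which we may take smooth, with $H(\cdot,0) = \phi|_{B^3}$, $H(\cdot,1) = \phi'|_{B^3}$, and $H(0,u) = \Id_{B^3}$, $H(1,u) = d|_{B^3}$ for all $u \in I$. Extending by the identity on $-B^3$, every slice $H(\cdot,u)$ is again an isotopy as in Definition~\ref{def:spinning}, hence yields a deform-spun disk $D_{K,H(\cdot,u)} \subset B^4$. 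Performing the corner-rounding uniformly in $u$, these assemble into a smooth one-parameter family of properly embedded disks from $D_{K,\phi}$ to $D_{K,\phi'}$; since $H(0,u)$ and $H(1,u)$ are independent of $u$ and $\phi_t|_{-B^3} = \Id$, the boundary $-K \# K$ is fixed throughout the family. The isotopy extension theorem then upgrades this to an ambient isotopy of $B^4$ rel $\d B^4$, so $D_{K,\phi}$ and $D_{K,\phi'}$ are isotopic rel $-K \# K$; thus the isotopy class of the deform-spun disk depends only on $d$, and we may write $D_{K,d}$.

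The main obstacle is genuinely just having the Smale conjecture available: once $\Diff(B^3\text{ rel }\d B^3)$ is known to be path-connected and simply connected, the remaining ingredients---smoothing continuous paths and homotopies, carrying out the corner-rounding in families, and applying isotopy extension---are routine. I note that a weaker input, such as Cerf's vanishing $\Gamma_4 = 0$ (path-connectedness only), would give the existence statement in the first assertion but not the well-definedness claim in the second.
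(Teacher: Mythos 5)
Your argument is correct and is exactly the route the paper intends: the lemma is quoted from \cite{SliceDisks} with the remark that it ``immediately follows from the work of Hatcher,'' i.e.\ from the contractibility (in particular path-connectedness and simple connectedness) of $\Diff(B^3 \text{ rel } \d B^3)$, which is precisely what you use for existence and for well-definedness, respectively. The remaining details you supply (smoothing, extending by the identity, rounding corners in families, isotopy extension) are the standard ones and are handled correctly.
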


We now recall the definition of roll-spinning, based on the description of
Litherland~\cite{Litherland}*{Example~2.2}. The following is \cite[Definition~3.5]{SliceDisks}:

\begin{define}\label{def:rolling}
  Let $K$ be a knot in $\Sphere^3$. Choose a tubular neighborhood $N(K)$ of $K$ as well as an identification
  $N(K) \approx K \times B^2$ which induces the Seifert framing of $K$.
  Let $X = \Sphere^3 \setminus \Int(N(K))$ be the knot exterior.
  Furthermore, let $\d X \times I$ be a collar of $\d X$ in $X$, with $\d X\times \{0\}$ be identified with $\d X\subset X$.
  We identify $K$ with $\R/\Z$. Choose a smooth monotonic function $\varphi \colon \R \to I$
  such that $\varphi(s) = 0$ for $s \le 0$ and $\varphi(s) = 1$ for $s \ge 1$.
  We define the \emph{rolling diffeomorphism} $r \colon (\Sphere^3, K) \to (\Sphere^3, K)$ by the formula
  \[
  r(\bar{x}, \bar{\theta}, s) = \left(\overline{x + \varphi(s)}, \bar{\theta}, s \right)
  \text{ for } (\bar{x}, \bar{\theta}, s) \in K \times \d B^2 \times I \approx \d X \times I,
  \]
  and let $r(p) = p$ for $p \in \Sphere^3 \setminus (\d X \times I)$.

  Similarly, we define the \emph{twisting diffeomorphism} $t\colon (\Sphere^3,K)\to (\Sphere^3,K)$ by the formula
 \[
  t(\bar{x}, \bar{\theta}, s) = \left(\overline{x}, \overline{\theta + \varphi(s)}, s \right)
  \text{ for } (\bar{x}, \bar{\theta}, s) \in K \times \d B^2 \times I \approx \d X \times I,
  \]
  and let $t(p) = p$ for $p \in \Sphere^3 \setminus (\d X \times I)$.

  Let $B \subset N(K)$ be an open ball that intersects $K$ in an arc.
  Then $(\Sphere^3 \setminus B, K \setminus B)$ is diffeomorphic to a ball-arc pair
  $(B^3, a)$, and $r|_B = \id_B$. We define the \emph{$(k,l)$-twist-roll-spin} of $K$ to be $D_{K, t^kr^l}$.
\end{define}

Note that $D_{K, \id}$ is simply the spun slice disk of $K$, obtained using the identity deformation.
We will call this the \emph{canonical} slice disk of $-K \# K$.

\subsection{Metric filtrations}\label{sec:metric-filtration}

\begin{define}\label{def:metric-filtration}
  Let $X$ be a set. We say that a function $\mu \colon X \times X \to \R^{\ge 0}$
  is a \emph{metric filtration} on $X$ if it is symmetric, and satisfies the \emph{ultrametric inequality}
    \[
    \mu(x,x'') \le \max\{\mu(x,x'), \mu(x',x'')\}
    \]
    for every $x$, $x'$, $x'' \in X$.
\end{define}

The metric filtrations appearing in the paper will all be instances of the following construction.

\begin{example}\label{ex:metric-filtration}
  Let $X$ be a path-connected topological space, and $f \colon X \to \R^{\ge 0}$ a continuous function.
  Given points $x$, $x' \in X$, we define
  \[
  \mu(x,x') := \inf_{\substack{\gamma \colon I \to X \\ \gamma(0) = x \text{, } \gamma(1) = x'}} \max_{t \in I} (f\circ\g)(t),
  \]
  where the infimum is taken over continuous paths $\g$. This is clearly a metric filtration.
  Note that $\mu(x,x) = f(x)$.

  Similarly, let $G = (V, E)$ be a connected graph, and $f \colon V \to \R^{\ge 0}$ a function.
  For $x$, $x' \in V$, we define $\mu(x,x')$ to be the infimum of
  \[
  \max\{f(x_1), \dots, f(x_n)\}
  \]
  over paths $x_1, \dots, x_n$ in $V$ such that $x_1 = x$ and $x_n = x'$.
  This is a special case of the above construction: Set $X$ to be the 1-complex
  associated to $G$, and extend $f$ over the 1-cells of $X$ linearly. Typically we will be interested in graphs where $f$ is integrally valued on $V$. For these graphs, we can replace the infimum with a minimum.
\end{example}

\begin{define}\label{def:normalization}
  Let $\mu$ be a metric filtration on the set $X$. Then we define its \emph{normalization} as
  \[
  \tilde{\mu}(x,x') := \mu(x,x') - \min\{\mu(x,x), \mu(x',x')\}.
  \]
\end{define}

Recall that $g \colon X \times X \to \R^{\ge 0}$ is a pseudometric on $X$ if it is symmetric, satisfies the triangle
inequality, and $g(x,x) = 0$ for every $x \in X$ (but $g(x,y) = 0$ does not necessarily
imply that $x = y$).

\begin{lem}\label{lem:normalization}
  Let $\mu$ be a metric filtration on the set $X$. Then its normalization $\tilde{\mu}$
  is a pseudometric.
\end{lem}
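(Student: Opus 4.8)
The plan is to verify the three pseudometric axioms for $\tilde\mu$ in turn, the only nontrivial one being the triangle inequality. First, symmetry of $\tilde\mu$ is immediate: $\mu$ is symmetric by hypothesis, and the correction term $\min\{\mu(x,x),\mu(x',x')\}$ is visibly symmetric in $x$ and $x'$. Second, $\tilde\mu(x,x) = \mu(x,x) - \min\{\mu(x,x),\mu(x,x)\} = 0$ for every $x \in X$. Third, one should note $\tilde\mu \ge 0$: since $\mu$ satisfies the ultrametric inequality, taking $x'' = x$ gives $\mu(x,x) \le \max\{\mu(x,x'),\mu(x',x)\} = \mu(x,x')$, and symmetrically $\mu(x',x') \le \mu(x,x')$, so $\min\{\mu(x,x),\mu(x',x')\} \le \mu(x,x')$, whence $\tilde\mu(x,x') \ge 0$.

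The heart of the argument is the triangle inequality $\tilde\mu(x,x'') \le \tilde\mu(x,x') + \tilde\mu(x',x'')$. Writing $a = \mu(x,x)$, $b = \mu(x',x')$, $c = \mu(x'',x'')$ for the ``diagonal'' values, and using the shorthand $\mu_{xy} = \mu(x,y)$, this unpacks to
\[
\mu_{xx''} - \min\{a,c\} \le \big(\mu_{xx'} - \min\{a,b\}\big) + \big(\mu_{x'x''} - \min\{b,c\}\big).
\]
The plan is to combine the ultrametric inequality $\mu_{xx''} \le \max\{\mu_{xx'},\mu_{x'x''}\}$ with the observation from the previous paragraph that each of $\mu_{xx'}, \mu_{x'x''}$ dominates the relevant diagonal values. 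Concretely, I would argue by cases on which of $\mu_{xx'}$, $\mu_{x'x''}$ realizes the maximum. Say $\mu_{xx''} \le \mu_{xx'}$ (the other case is symmetric). Then it suffices to show
\[
\mu_{xx'} - \min\{a,c\} \le \mu_{xx'} - \min\{a,b\} + \mu_{x'x''} - \min\{b,c\},
\]
i.e.\ $\min\{a,b\} + \min\{b,c\} \le \mu_{x'x''} + \min\{a,c\}$. Since $\min\{a,b\} \le b \le \mu_{x'x''}$ (using $b = \mu_{x'x'} \le \mu_{x'x''}$) and $\min\{b,c\} \le \min\{a,c\}$ would not generally hold, I instead bound $\min\{b,c\} \le c$ and $\min\{b,c\} \le b$, then split further: if $\min\{a,c\} = a$ use $\min\{a,b\} \le a$ and $\min\{b,c\} \le b \le \mu_{x'x''}$; if $\min\{a,c\} = c$ use $\min\{b,c\} \le c$ and $\min\{a,b\} \le b \le \mu_{x'x''}$. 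Either way the inequality closes.

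The main obstacle, such as it is, is purely bookkeeping: making sure the case analysis on the two maxima and the two minima is organized so that in every branch one can dispose of the ``extra'' min-term by absorbing it into $\mu_{x'x''}$ via $b = \mu(x',x') \le \mu(x',x'')$ (the key consequence of the ultrametric inequality applied with repeated argument). I expect the cleanest writeup to isolate the lemma ``$\mu(y,y) \le \mu(y,z)$ for all $y,z$'' first, then reduce the triangle inequality to an elementary inequality among the six numbers $a,b,c,\mu_{xx'},\mu_{x'x''},\mu_{xx''}$ constrained by $\mu_{xx''}\le\max\{\mu_{xx'},\mu_{x'x''}\}$, $b\le\min\{\mu_{xx'},\mu_{x'x''}\}$, $a\le\mu_{xx'}$, $c\le\mu_{x'x''}$, which is then a short finite check.
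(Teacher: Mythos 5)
Your proof is correct and follows essentially the same route as the paper: establish $\mu(y,y)\le\mu(y,z)$ from the ultrametric inequality, use that inequality to reduce the triangle inequality to an elementary comparison among the diagonal values $a,b,c$ and one off-diagonal term, and close it with a short case split on which minimum is attained. The paper packages the final step as the purely numerical inequality $\min\{a,a'\}+\min\{a',a''\}\le\max\{a,a'\}+\min\{a,a''\}$, but this is only a cosmetic difference from your bookkeeping.
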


\begin{proof}
  Choose points $x$, $x'$, $x'' \in X$, and
  write $a = \mu(x,x)$, $a' = \mu(x',x')$, and $a'' = \mu(x'',x'')$.
  It is clear that $\tilde{\mu}(x,x) = 0$.
  If we apply the ultrametric inequality to the triple $x$, $x'$, $x$
  we obtain that $\mu(x,x') \ge a$. Similarly, by considering the
  triple $x'$, $x$, $x'$, we get that $\mu(x,x') \ge a'$, and hence
  $\mu(x,x') \ge \max\{a,a'\}$. In particular, $\tilde{\mu}(x,x') \ge 0$.

  It remains to prove the triangle inequality
  \[
  \tilde{\mu}(x, x'') \le \tilde{\mu}(x, x') + \tilde{\mu}(x', x'').
  \]
  By definition,
  \[
  \tilde{\mu}(x, x') = \mu(x, x') - \min\{a, a'\} \quad {and} \quad
  \tilde{\mu}(x', x'') = \mu(x', x'') - \min\{a', a''\}.
  \]
  Without loss of generality, we can assume that
  \[
  \max\{\mu(x,x'), \mu(x',x'')\} = \mu(x',x'').
  \]
  Hence, by the ultrametric inequality, $\mu(x,x'') \le \mu(x',x'')$.
  So it suffices to show that
  \[
  -\min\{a, a''\} \le \mu(x,x') - \min\{a, a'\} - \min\{a', a''\}.
  \]
  We saw that $\max\{a, a'\} \le \mu(x,x')$, hence we only need to prove that
  \[
  \min\{a, a'\} + \min\{a', a''\} \le \max\{a, a'\} + \min\{a, a''\}.
  \]
  This holds because $\max\{a, a'\}$ bounds both terms on the left-hand side from above,
  while $\min\{a, a''\}$ bounds at least one of them from above.
\end{proof}

\subsection{The stabilization distance}
In this section, we describe a collection of topological numerical invariants associated
to pairs of surfaces bounding a knot.
Given a properly embedded surface $S$ in a 4-manifold $W$, we describe
several ways of increasing the genus of $S$ within $W$.  Our description
is inspired by Baykur and Sunukjian~\cite{BaySunStabilizations}.
The most general notion, and the one we will focus on in this paper, is the following:

\begin{define}\label{sec:stabilizationgenusdef}
Let $S$ be a connected and properly embedded surface in a 4-manifold $W$.
Suppose that $B^4 \subset \Int(W)$ is an embedded 4-ball such that $\d B^4 \cap S$
is an unlink of $m$ components, written as $U_1\cup \cdots \cup U_m$. Furthermore, suppose
that $S \cap B^4$ is a collection of $m$ pairwise disjoint and properly embedded disks
$D_1\cup \cdots \cup D_m$ that can simultaneously be smoothly isotoped into $\d B^4$ relative to their boundaries.
Let $S_0$ be an oriented, connected, properly embedded genus~$n$ surface in $B^4$ such
that $\d S_0 = U_1 \cup \dots \cup U_m$. The surface
\[
S' := (S \setminus \Int(B^4)) \cup S_0
\]
is the \emph{$(m,n)$-stabilization} of $S$ along $(B^4, S_0)$.
We say the \emph{genus} of an $(m,n)$-stabilization is $m + n - 1$,
which we note is $g(S') - g(S)$.

If $S'$ is the $(m,n)$-stabilization of $S$, then we say that $S$ is the \emph{$(m,n)$-destabilization} of $S'$. In this paper, a \emph{stabilization} refers to an $(m,n)$-stabilization and a \emph{destabilization} to an $(m,n)$-destabilization for some $m$ and $n$.
\end{define}

A schematic of an $(m,n)$-stabilization is shown in Figure~\ref{fig::1}.

\begin{figure}[ht!]
	\centering
\begingroup%
  \makeatletter%
  \providecommand\color[2][]{%
    \errmessage{(Inkscape) Color is used for the text in Inkscape, but the package 'color.sty' is not loaded}%
    \renewcommand\color[2][]{}%
  }%
  \providecommand\transparent[1]{%
    \errmessage{(Inkscape) Transparency is used (non-zero) for the text in Inkscape, but the package 'transparent.sty' is not loaded}%
    \renewcommand\transparent[1]{}%
  }%
  \providecommand\rotatebox[2]{#2}%
  \newcommand*\fsize{\dimexpr\f@size pt\relax}%
  \newcommand*\lineheight[1]{\fontsize{\fsize}{#1\fsize}\selectfont}%
  \ifx\svgwidth\undefined%
    \setlength{\unitlength}{193.34541526bp}%
    \ifx\svgscale\undefined%
      \relax%
    \else%
      \setlength{\unitlength}{\unitlength * \real{\svgscale}}%
    \fi%
  \else%
    \setlength{\unitlength}{\svgwidth}%
  \fi%
  \global\let\svgwidth\undefined%
  \global\let\svgscale\undefined%
  \makeatother%
  \begin{picture}(1,0.87966255)%
    \lineheight{1}%
    \setlength\tabcolsep{0pt}%
    \put(0,0){\includegraphics[width=\unitlength,page=1]{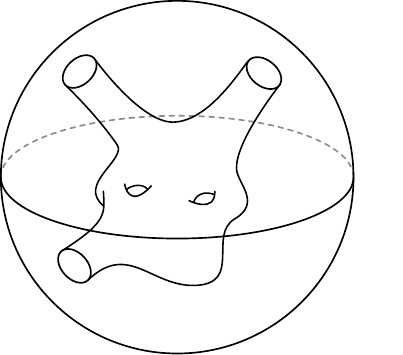}}%
    \put(0.7785861,0.21333928){\color[rgb]{0,0,0}\makebox(0,0)[rt]{\lineheight{1.25}\smash{\begin{tabular}[t]{r}$B^4$\end{tabular}}}}%
    \put(0.7531135,0.07885458){\color[rgb]{0,0,0}\makebox(0,0)[lt]{\lineheight{1.25}\smash{\begin{tabular}[t]{l}$\d B^4$\end{tabular}}}}%
    \put(0.6275688,0.44192739){\color[rgb]{0,0,0}\makebox(0,0)[lt]{\lineheight{1.25}\smash{\begin{tabular}[t]{l}$S_0$\end{tabular}}}}%
    \put(0.2528231,0.74948203){\color[rgb]{0,0,0}\makebox(0,0)[lt]{\lineheight{1.25}\smash{\begin{tabular}[t]{l}$U_1$\end{tabular}}}}%
    \put(0.72177467,0.63588075){\color[rgb]{0,0,0}\makebox(0,0)[lt]{\lineheight{1.25}\smash{\begin{tabular}[t]{l}$U_2$\end{tabular}}}}%
    \put(0.23204236,0.1329872){\color[rgb]{0,0,0}\makebox(0,0)[lt]{\lineheight{1.25}\smash{\begin{tabular}[t]{l}$U_3$\end{tabular}}}}%
  \end{picture}%
\endgroup%

	\caption{A $(3,2)$-stabilization.}\label{fig::1}
\end{figure}

Note that performing a $(1,0)$-stabilization is the same as taking the connected sum of a surface $S$ with a 2-knot contained in a small 4-ball, disjoint from $S$. We additionally define the following special cases of stabilization:

\begin{define}\label{def:stab}
Let $S$ be a properly embedded surface in the 4-manifold $W$,
and suppose that $S'$ is obtained from $S$ by an $(m,n)$-stabilization along $(B^4, S_0)$.
\begin{enumerate}
  \item We say $S'$ is an \emph{unknotted surface stabilization} of $S$
    if $m = 1$ and $S_0$ is smoothly isotopic into $\d B^4$ relative to $\d S_0$. If, furthermore, $n = 1$, then we call this a \emph{trivial stabilization}.
  \item We say $S'$ is a \emph{1-handle stabilization} of $S$ if $(m,n) = (2,0)$
    and $S_0 \cup D_1 \cup D_2$ is the boundary of a 3-dimensional 1-handle that is embedded in $W$.
\end{enumerate}
\end{define}

Unknotted surface stabilization and 1-handle stabilization
have been introduced by Boyle~\cite{Boyle},
and studied further by Baykur and Sunukjian~\cite{BaySunStabilizations}.

\begin{lem}
  The surface $S'$ can be obtained from $S$ by a genus $g$ unknotted surface stabilization
  if and only if it can be obtained by $g$ disjoint trivial stabilizations.
\end{lem}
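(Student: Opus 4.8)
The plan is to prove the two implications separately, using the fact that an unknotted surface stabilization of genus $n$ replaces a disk $D$ (that is isotopic rel boundary into $\partial B^4$) by a connected genus $n$ surface $S_0 \subset B^4$ with $\partial S_0 = \partial D$ and $S_0$ itself isotopic rel boundary into $\partial B^4$. The key observation is that, since $S_0$ lies (up to isotopy) entirely on the 3-sphere $\partial B^4$, it is a Seifert surface for the unknot $U_1 = \partial S_0$ sitting inside $\partial B^4 = \Sphere^3$; so this is really a statement about stabilizing Seifert surfaces of the unknot in $\Sphere^3$, thickened into a collar of $\partial B^4$ inside $B^4$.

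For the ``if'' direction, suppose $S'$ is obtained from $S$ by $g$ disjoint trivial stabilizations, i.e.\ by successively attaching $g$ small genus-one ``trivial'' pieces in $g$ disjoint balls. First I would observe that $g$ disjoint trivial stabilizations can be realized inside a single ball: choose a large ball $B^4$ containing all $g$ of the small balls and meeting $S$ in a single disk $D$ that swallows the $g$ stabilization sites; the resulting surface $S_0 = S' \cap B^4$ is connected of genus $g$, and since each trivial piece is isotopic rel boundary into $\partial B^4$ and they are arranged disjointly, $S_0$ as a whole is isotopic rel $\partial S_0$ into $\partial B^4$. Hence $S'$ is a genus $g$ unknotted surface stabilization of $S$.

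For the ``only if'' direction, suppose $S' = (S\setminus \Int(B^4))\cup S_0$ is a genus $g$ unknotted surface stabilization along $(B^4, S_0)$, so $S_0$ is a connected genus $g$ surface isotopic rel boundary into $\partial B^4 \cong \Sphere^3$, hence a Seifert surface of the unknot $\partial S_0$. The main step is to show that such an $S_0$ can be built up from the disk $D$ it replaces by $g$ successive unknotted 1-handle attachments carried out in disjoint sub-balls, each adding one to the genus and each being (rel boundary) a trivial stabilization. For this I would use the standard fact that any two Seifert surfaces of a knot in $\Sphere^3$ of the same genus become isotopic after stabilizations, together with the fact that a genus $g$ Seifert surface of the unknot is obtained from a disk by $g$ stabilizations (e.g.\ by an innermost-curve/surgery argument on a system of $g$ disjoint nonseparating curves on $S_0$ bounding disks in $B^4$); pushing each such stabilization slightly off $\partial B^4$ into the collar, and shrinking it into its own small ball, exhibits it as a trivial stabilization, and disjointness of the curve system makes the $g$ balls disjoint.

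The hardest part will be the ``only if'' direction: specifically, organizing the decomposition of the genus $g$ surface $S_0$ into $g$ trivial stabilization pieces so that the ambient balls are genuinely disjoint and each piece is unknotted in the precise sense of Definition~\ref{def:stab}(1). The essential input is a Morse-theoretic or handle-decomposition statement for Seifert surfaces of the unknot --- that $S_0$ deformation retracts onto a wedge of $2g$ circles and admits a system of $g$ pairwise disjoint, disjointly embedded dual pairs of 1-handles relative to a spanning disk --- which lets one peel the handles off one at a time into separate balls. Once this structural fact is in hand, the rest is a routine isotopy argument in the collar $\partial B^4 \times I \subset B^4$.
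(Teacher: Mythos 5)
Your ``if'' direction is fine and matches what the paper treats as straightforward. The problem is the ``only if'' direction, where you defer the entire content of the lemma to an unproven structural claim --- that a genus $g$ Seifert surface of the unknot carries a system of $g$ \emph{disjoint} compressing curves whose handles can be peeled off into pairwise disjoint balls, each piece unknotted. That claim is at least as hard as the lemma itself, and your sketch of how to obtain it is muddled: curves on $S_0$ ``bounding disks in $B^4$'' is automatic and gives nothing (every curve in a $4$-ball bounds a disk); what you would need are compressing disks for $S_0$ inside $\Sphere^3$, and in dimension $3$ these are in general only produced \emph{sequentially} --- after one compression you must locate a new compressing disk for the lower-genus surface --- with no reason for the resulting handles to be simultaneously disjoint or individually unknotted in $\Sphere^3$.

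The paper's proof uses dimension $4$ at exactly the two points you try to handle in dimension $3$. First, it gets compressibility of $S_0$ from the loop theorem: $\pi_1(S_0 \setminus N(\d S_0))$ is free of rank $2g \ge 2$ and so cannot inject into $\pi_1(\Sphere^3 \setminus N(\d S_0)) \cong \Z$. It compresses once, reversing a single $1$-handle stabilization in $\Sphere^3$, and then observes that this handle --- which may well be knotted in $\Sphere^3$ --- becomes unknotted once its interior is pushed into $B^4$, since arcs in a $4$-ball are unknotted rel endpoints. That is what makes each step a \emph{trivial} stabilization; your write-up never explains why a handle sitting in $\Sphere^3$ should be trivial in the sense of Definition~\ref{def:stab}. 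Second, the induction yields $g$ \emph{consecutive} trivial stabilizations, and disjointness is achieved at the end by an isotopy in $B^4$ (consecutive trivial stabilizations of a surface in a $4$-manifold can always be made disjoint), not by exhibiting a disjoint curve system in $\Sphere^3$. If you restructure your argument to compress one handle at a time, push it into the $4$-ball, and postpone disjointness to a final $4$-dimensional isotopy, the gap closes.
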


\begin{proof}
  Suppose that $S'$ is obtained from $S$ by an unknotted surface stabilization along $(B^4, S_0)$.
  After isotoping $S_0$ into $\Sphere^3$, it becomes a Seifert surface of the unknot $\d S_0$.
  If $g > 0$, the map
  \[
  \pi_1(S_0 \setminus N(\d S_0)) \to \pi_1(\Sphere^3 \setminus N(\d S_0)) \cong \Z
  \]
  is not injective, hence $S_0$ is compressible in $\Sphere^3$ by the loop theorem.
  Compressing corresponds to reversing a 1-handle stabilization in $\Sphere^3$.
  If we push the interior of the 1-handle into $B^4$, it becomes unknotted.
  By induction on the genus of $S_0$, we see that $S'$ can be obtained from $S$ by $g$ consecutive
  trivial stabilizations. However, in dimension $4$, we can always isotope consecutive trivial stabilizations
  to be disjoint from each other. The opposite implication is straightforward.
\end{proof}

As in \cite{SliceDisks}*{Definition~3.8}, we can define the \emph{peripheral map}
\[
h_S \colon \pi_1(\d W \setminus \d S) \to \pi_1(W \setminus S)
\]
for a properly embedded surface $S$ in $W$. Given surfaces $S$ and $S'$ in $W$
with $\d S = \d S'$,
we say that their peripheral maps are \emph{equivalent} if there is an
isomorphism
\[
g \colon \pi_1(W \setminus S) \to \pi_1(W \setminus S')
\]
such that $h_{S'} = g \circ h_{S}$. The equivalence class of the peripheral map is clearly
an invariant of $S$ up to ambient isotopy in $W$ fixing $\d W$ pointwise.

\begin{lem}\label{lem:trivstab}
  Suppose that $S'$ is obtained from $S$ by a trivial stabilization.
  Then their peripheral maps are equivalent.
\end{lem}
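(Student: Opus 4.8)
The plan is to reduce, via the Seifert--van Kampen theorem, to a fundamental-group computation inside the stabilization ball, using crucially that a trivial stabilization is supported in an embedded $4$-ball $B^4 \subset \Int(W)$, so that $\partial W$ and $\partial S = \partial S'$ are untouched. Such a stabilization replaces the disk $D_1 := S \cap B^4$, which is boundary-parallel in $B^4$ with $\partial D_1 = U_1$ an unknot, by a properly embedded genus-one surface $S_0 \subset B^4$ with $\partial S_0 = U_1$ that is isotopic rel $U_1$ into $\partial B^4$.

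Set $A := W \setminus (\Int(B^4) \cup S) = W \setminus (\Int(B^4) \cup S')$, the common part of the two complements. It contains $\partial W \setminus \partial S$, so writing $j \colon A \hookrightarrow W \setminus S$ and $j' \colon A \hookrightarrow W \setminus S'$ for the inclusions and $\iota \colon \pi_1(\partial W \setminus \partial S) \to \pi_1(A)$ for the inclusion-induced map, both peripheral maps factor as $h_S = j_* \circ \iota$ and $h_{S'} = j'_* \circ \iota$. Hence it suffices to show $j_*$ and $j'_*$ are isomorphisms, for then $g := j'_* \circ j_*^{-1}$ is an isomorphism with $h_{S'} = g \circ h_S$, which is exactly the asserted equivalence. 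Now $W \setminus S = A \cup_{\partial B^4 \setminus U_1} (B^4 \setminus D_1)$ and $W \setminus S' = A \cup_{\partial B^4 \setminus U_1} (B^4 \setminus S_0)$, glued along the connected set $\partial B^4 \setminus U_1 \simeq \Sphere^1$, so by van Kampen $\pi_1(W \setminus S) \cong \pi_1(A) *_{\pi_1(\partial B^4 \setminus U_1)} \pi_1(B^4 \setminus D_1)$ and similarly for $S'$. A pushout of groups along an isomorphism collapses --- the natural map $G \to G *_H K$ is an isomorphism whenever $H \to K$ is --- so $j_*$ and $j'_*$ are isomorphisms provided the two inclusions $\partial B^4 \setminus U_1 \hookrightarrow B^4 \setminus D_1$ and $\partial B^4 \setminus U_1 \hookrightarrow B^4 \setminus S_0$ each induce an isomorphism on $\pi_1$. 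The first is the standard fact that the complement of a boundary-parallel disk in $B^4$ is homotopy equivalent, via the boundary inclusion, to $\partial B^4 \setminus U_1 \simeq \Sphere^1$, visible from the model $(B^4, D_1) \cong (D^2 \times D^2, D^2 \times \{p\})$.

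The remaining claim --- that $\partial B^4 \setminus U_1 \hookrightarrow B^4 \setminus S_0$ is a $\pi_1$-isomorphism --- is the main obstacle. Arguing as in the proof of the preceding lemma, after isotoping $S_0$ into $\partial B^4$ it becomes a genus-one Seifert surface of the unknot $U_1$, which the loop theorem shows is compressible; compressing once yields a disk, so up to isotopy $S_0$ is a boundary-parallel disk $D_1'$ with a single handle (tube) attached along an arc $\alpha \subset B^4$ whose endpoints lie on $D_1'$. Since $B^4$ is $4$-dimensional, $\alpha$ is isotopic rel its endpoints to an unknotted arc, and the framing of the tube is pinned down by the orientation of $S_0$; hence the tube can be isotoped rel its feet into a collar of $D_1'$ in $B^4$. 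Such a tube does not change the homotopy type of the complement (one exhibits $B^4 \setminus S_0$ as $B^4 \setminus D_1'$ with a contractible piece glued on along a simply connected locus), and boundary-parallel disks bounding a fixed unknot are unique up to ambient isotopy rel $\partial B^4$, so $B^4 \setminus S_0 \simeq B^4 \setminus D_1 \simeq \Sphere^1$, with the boundary inclusion again realizing the $\pi_1$-isomorphism. With both $\pi_1$-isomorphisms established, the map $g$ above gives the equivalence of peripheral maps.
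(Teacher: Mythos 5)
Your overall architecture is sound and is essentially the paper's argument in disguise: the paper also localizes the change to the stabilization region and applies Seifert--van Kampen, phrasing the trivial stabilization as a connected sum of $(W,S)$ with the unknotted torus $(\Sphere^4, T^2)$ and using $\pi_1(\Sphere^4\setminus T^2)\cong \Z$ generated by the meridian. Your factoring of both peripheral maps through $\pi_1(A)$ is in fact a more careful treatment of the ``equivalence of peripheral maps'' conclusion than the paper's one-line remark. The reduction of the genus-one piece $S_0$ to a boundary-parallel disk with a single trivial tube, via the loop theorem as in the preceding lemma, is also fine.

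The gap is in the last step, which is exactly the mathematical content of the lemma. You assert that attaching the trivial tube ``does not change the homotopy type of the complement'' because one exhibits $B^4\setminus S_0$ as $B^4\setminus D_1'$ with a contractible piece glued on along a simply connected locus. That is not what happens: passing from $B^4\setminus D_1'$ to $B^4\setminus S_0$ requires \emph{removing} a neighborhood of the codimension-two annulus forming the tube (which a priori introduces a new meridional generator) and adding back the two open disks $d_1, d_2$ deleted from $D_1'$; if you instead decompose along a ball $B'$ containing the tube, the piece $B'\setminus S_0$ is not contractible and the gluing locus $\partial B'\setminus S_0$ (the complement of a two-component unlink in $\Sphere^3$) is not simply connected. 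The correct argument is itself a van Kampen/Mayer--Vietoris computation showing that the new meridian is identified with the meridian of $D_1'$ and that no new generators or relations survive --- equivalently, the computation $\pi_1(\Sphere^4\setminus T^2)\cong\Z$ for the unknotted torus, which the paper obtains from Boyle's Lemma~11 together with the description of $\Sphere^4\setminus T^2$ as a suspension of $\Sphere^3\setminus T^2$. As written, your justification would ``prove'' the same conclusion for a knotted tube, so it cannot be correct; you need to supply the genuine local computation (or cite it) to close the argument.
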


\begin{proof}
  Boyle~\cite[Lemma~11]{Boyle} showed that $\pi_1(W \setminus S) \cong \pi_1(W \setminus S')$.
  Indeed, a trivial stabilization corresponds to taking the connected sum of $(W, S)$ and $(\Sphere^4, T^2)$,
  where $T^2$ is an unknotted torus. Since $\Sphere^4 \setminus T^2$ is homotopy equivalent to
  the suspension of $\Sphere^3 \setminus T^2$,
  we have $\pi_1(\Sphere^4 \setminus T^2) \cong \Z$, generated by the meridian of $T^2$.
  Hence, the claim follows from the Seifert--van Kampen theorem.
  As the connected sum is taken in the interior of $W$, it follows that the peripheral maps are equivalent.
\end{proof}

As shown by Boyle~\cite[Lemma~9]{Boyle}, a nontrivial 1-handle stabilization might change
the fundamental group of the surface complement, though it is always a quotient of
the original group. Based on his work,
Baykur and Sunukjian~\cite[Lemma~3]{BaySunStabilizations} determined when two
1-handle stabilizations give isotopic surfaces. If the 1-handle $h$ is attached along the points $a$, $b \in S$,
then we can act on the homotopy class of the core of $h$ by either adding the class of the
meridian of $S$, or pre- or post-composing with the push-off of a loop in $\pi_1(S, a)$
or $\pi_1(S, b)$. The equivalence class of the homotopy class of the core of $h$
determines the resulting surface up to isotopy, also in the case when $\S$ has boundary.

\begin{cor}\label{cor:trivial}
  Let $D_\varphi$ and $D_{\varphi'}$ be deform-spun slice disks of a knot $-K \# K$,
  where $\varphi$ and $\varphi'$ are non-isotopic automorphisms of $(\Sphere^3, K)$ that are
  fixed in a neighborhood of a point of $K$. Then one cannot obtain $D_{\varphi'}$
  from $D_\varphi$ by a sequence of trivial stabilizations and destabilizations (or, equivalently, by
  unknotted surface stabilizations and destabilizations).
\end{cor}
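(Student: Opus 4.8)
The plan is to reduce the statement to the peripheral map obstruction, which Lemma~\ref{lem:trivstab} already provides. By Lemma~\ref{lem:trivstab}, a trivial stabilization preserves the equivalence class of the peripheral map, and by the preceding Lemma a genus~$g$ unknotted surface stabilization is the same as $g$ disjoint trivial stabilizations, so the peripheral map of $D_{d'}$ would have to be equivalent to that of $D_d$. Therefore it suffices to show that $D_d$ and $D_{d'}$ have inequivalent peripheral maps whenever $d$ and $d'$ are non-isotopic automorphisms of $(\Sphere^3,K)$ fixed near a point of $K$. This is exactly the content of the classification of deform-spun slice disks in \cite{SliceDisks}: the peripheral map $h_{D_{K,d}}\colon \pi_1(\d B^4\setminus(-K\#K))\to\pi_1(B^4\setminus D_{K,d})$ records the deformation $d$ up to the relevant notion of isotopy.

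First I would recall precisely how the peripheral map of $D_{K,d}$ is computed. Writing $X$ for the knot exterior of $K$, the complement $B^4\setminus D_{K,d}$ deformation retracts onto the mapping torus of $d|_X$ (glued appropriately along the solid-torus part), so $\pi_1(B^4\setminus D_{K,d})$ is the semidirect product $\pi_1(X)\rtimes_{d_*}\Z$, and $h_{D_{K,d}}$ sends the two meridians and the longitude of $-K\#K$ to specified elements depending on how $d$ acts on the peripheral subgroup of $\pi_1(X)$ and on the homotopy class of the ``spinning'' loop. This is set up in \cite[Section~3]{SliceDisks}; I would cite the statement there that the equivalence class of $h_{D_{K,d}}$ determines $d$ up to isotopy rel a neighborhood of a point of~$K$. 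Granting that, the corollary is immediate: if $D_{d'}$ were obtained from $D_d$ by trivial stabilizations then $h_{D_{d'}}\sim h_{D_d}$, forcing $d\simeq d'$, contrary to hypothesis; and the parenthetical equivalence with unknotted surface stabilization is precisely the Lemma proved just above.

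The main obstacle is making rigorous the claim that the peripheral map is a \emph{complete} invariant of the deformation class of $d$ — i.e.\ that inequivalence of $h_{D_d}$ and $h_{D_{d'}}$ really does follow from non-isotopy of $d$ and $d'$, rather than merely being implied by it. I expect this to be handled by quoting the corresponding result from \cite{SliceDisks} rather than reproving it: there, using that automorphisms of $(\Sphere^3,K)$ fixed near a point of $K$ are classified by their action on $\pi_1(X)$ together with Hatcher's theorem on $\Diff(\Sphere^3)$, one shows that the semidirect-product structure together with the images of the peripheral elements reconstructs $d_*$ on $\pi_1(X)$, and hence $d$ itself. Once that input is in hand the proof is a two-line deduction, so I would keep the argument short and delegate the group-theoretic bookkeeping to the cited reference.
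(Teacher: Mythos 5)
Your proposal is correct and matches the paper's argument: the paper likewise combines Lemma~\ref{lem:trivstab} with the fact, quoted from the proof of \cite[Proposition~3.9]{SliceDisks}, that non-isotopic deformations $d$ and $d'$ yield inequivalent peripheral maps. The extra detail you sketch about the mapping-torus structure of the disk complement is exactly the content delegated to that reference, so nothing further is needed.
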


\begin{proof}
  According to the proof of \cite[Proposition~3.9]{SliceDisks}, the peripheral maps
  of $D_\varphi$ and $D_{\varphi'}$ are inequivalent. Hence, the result follows from Lemma~\ref{lem:trivstab}.
\end{proof}

\begin{prop}\label{prop:1-handle}
  Let $S$ and $S'$ be compact, properly embedded surfaces in the compact 4-manifold $W$
  such that $\d S = \d S'$ and $[S] = [S'] \in H_2(W, \d S)$. Then $\S$ and $\S'$ become ambient isotopic
  relative to $\d W$ after finitely many 1-handle stabilizations.
\end{prop}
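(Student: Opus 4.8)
The plan is to reduce the statement to the standard fact that two embeddings of a surface in a $4$-manifold that are homologous (rel boundary) become isotopic after connected sums with enough unknotted tori, and then observe that such a connected sum is exactly a $1$-handle stabilization followed by its inverse up to isotopy — or rather, that the excess genus can be absorbed.  Concretely, since $[S]=[S']\in H_2(W,\d S)$ and $\d S=\d S'$, the closed-up difference $S\cup(-S')$ (after an isotopy making them agree near $\d W$) is null-homologous in the double or in $W$ capped appropriately, so by general position and the usual surgery-theoretic argument (Wall's trick, as in the surface setting of Baykur--Sunukjian~\cite{BaySunStabilizations}) the two surfaces can be connected by a sequence of births, deaths, and saddle moves of an immersed cobordism; pushing the double points off and trading them for handles, one gets an abstract cobordism from $S$ to $S'$ built entirely out of $1$-handles and $0$-/$2$-handles, i.e.\ a sequence of stabilizations and destabilizations in the sense of Definition~\ref{sec:stabilizationgenusdef}.

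First I would make $S$ and $S'$ coincide in a collar of $\d W$ and be transverse in the interior, so that $S\cap S'$ is a finite set of arcs and circles; a small isotopy removes the arcs (they are inessential since $\d S=\d S'$), leaving finitely many circles of intersection.  Next, because $[S]=[S']$ rel boundary, the singular surface $S\cup S'$ bounds a (possibly singular) $3$-chain; realizing a suitable piece of this geometrically gives an immersed $3$-dimensional cobordism $V\subset W\times I$ from $S$ to $S'$ — here one invokes that in a $4$-manifold two homologous embedded surfaces with common boundary are \emph{regularly homotopic}, hence cobordant through an immersion.  Then I would put $V$ in Morse position with respect to the $I$-coordinate, so that $V$ is obtained from $S\times\{0\}$ by a finite sequence of elementary cobordisms attaching $0$-, $1$-, and $2$-handles (indices of critical points of the height function on the $3$-manifold $V$), together with finitely many double-point arcs of $V$ itself.

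The key translation step is then: each index-$1$ (or index-$2$) elementary cobordism of $V$ is precisely an $(m,n)$-stabilization or destabilization of the surface living inside a small $B^4$; an index-$0$ handle creates a tiny unknotted sphere, which is a $(1,0)$-stabilization, and an index-$2$ handle caps one off.  A double point of $V$ contributes a local model that, by the standard finger-move/Whitney-move dictionary, can be exchanged for a pair of $1$-handles attached to the surface inside a ball — so double points, too, become stabilizations.  Concatenating, $S'$ is reached from $S$ by finitely many of the moves of Definition~\ref{sec:stabilizationgenusdef}, which is the assertion; ambient isotopy rel $\d W$ is automatic because every move was performed inside an embedded $4$-ball disjoint from $\d W$.

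The main obstacle is the second step: producing the immersed cobordism $V$ from the homological hypothesis and controlling it well enough to put it in handle position without introducing uncontrolled topology.  The cleanest route is to cite the surface-cobordism analogue of Wall's stabilization theorem already used by Baykur and Sunukjian, which says that homologous properly embedded surfaces with the same boundary are related by a finite sequence of ambient $1$-handle attachments and their reverses; granting that, the rest is the bookkeeping above.  I would expect the write-up to lean on \cite{BaySunStabilizations} and on Boyle's handle calculus \cite{Boyle} for the local models, rather than reprove the cobordism existence from scratch.
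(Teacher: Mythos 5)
Your closing paragraph identifies the right move --- adapt \cite[Theorem~5]{BaySunStabilizations} to the relative setting --- and that is exactly what the paper does, but the argument you actually propose in the body is a different and problematic one. The paper's proof does not pass through an immersed cobordism in $W\times I$ at all: after making $S$ and $S'$ transverse, one removes a neighborhood of the intersection circles $S\cap S'$ and uses the hypothesis $[S]=[S']\in H_2(W,\d S)$ to find an \emph{embedded} relative Seifert $3$-manifold $M\subset W$ with boundary $(S\cup -S')\setminus N(S\cap S')$ (plus the pieces created by the resolution). A self-indexing Morse function on $M$ with only index~$1$ and index~$2$ critical points, minimal along $S$ and maximal along $S'$, then exhibits a common surface reached from $S$ by attaching the embedded cores of the index-$1$ handles and from $S'$ by attaching those of the index-$2$ handles; these are literally $1$-handle stabilizations in the sense of Definition~\ref{def:stab}(2), and no births, deaths, or double points ever appear.

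The gaps in your route are concrete. First, the assertion that homologous embedded surfaces with common boundary in a general compact $W$ are regularly homotopic is unjustified: regular homotopy presupposes a homotopy of the underlying maps, and homologous surfaces of positive genus need not be homotopic rel boundary in an arbitrary $W$ (the contractibility of $B^4$ is what rescues the analogous lemma in Section~5 of the paper). Second, even granting an immersed cobordism $V$, its Morse decomposition produces index-$0$ and index-$2$ critical points (births and deaths of unknotted spheres) and double-point arcs; none of these is a $1$-handle stabilization, and the proposition asserts that $1$-handle stabilizations alone suffice. Trading a double point for "a pair of $1$-handles inside a ball" is not a standard dictionary entry --- resolving a double point of the $3$-dimensional cobordism is not the same as the finger-move/Whitney-move calculus for surfaces --- and you would still owe an argument that the sphere births and deaths can be cancelled. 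The embedded Seifert-manifold argument avoids every one of these issues, which is why the paper uses it.
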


\begin{proof}
  This is a relative version of \cite[Theorem~5]{BaySunStabilizations},
  and the proof is analogous. The idea is that one removes a neighborhood
  of $S \cap S'$, and chooses a relative Seifert manifold $M$ for
  \[
  (S \cup {-}S') \setminus N(S \cap S').
  \]
  Then a self-indexing Morse function on $M$ with only index 1 and 2 critical points
  that is minimal along $S$ and maximal along $S'$ gives the required handles.
\end{proof}

\begin{define}\label{def:stabgenus}
Suppose that $S$ and $S'$ are connected, properly embedded surfaces in the
compact 4-manifold $W$ such that $\d S = \d S'$ is a knot $K$ in $\d W$.
We define the \emph{stabilization distance} of the pair $(S, S')$, for which we write
$\mu_{\st}(S, S')$, to be the minimum of
\[
\max\{ g(S_1), \dots, g(S_k) \}
\]
over sequences of connected, properly embedded surfaces $S_1, \dots, S_k$ in $W$
such that
\begin{enumerate}
  \item $S_1 = S$ and $S_k = S'$,
  \item $\d S_i = K$ for $i \in \{1, \dots, k\}$, and
  \item\label{it:stab} $S_i$ and $S_{i+1}$ are related by a stabilization or destabilization, up to proper isotopy,
  for $i \in \{1, \dots, k-1\}$.
\end{enumerate}
We define $\mu_{\st}(S,S')$ to be $\infty$ if no such sequence exists. Analogously, we define $\overline{\mu}_{\st}(S, S')$ by minimizing
\[
\max\{ g(S_1), \dots, g(S_k) \} - \min\{ g(S_1), \dots, g(S_k) \}
\]
over the same set of sequences. Finally, we let
\[
\tilde{\mu}_{\st}(S, S') = \mu_{\st}(S, S') - \min\{g(S), g(S')\},
\]
which we call the \emph{normalized stabilization distance}.

The \emph{(trivial) 1-handle distance} of $S$ and $S'$ is defined similarly to $\mu_{\st}$,
except that $S_i$ and $S_{i+1}$ are related by adding or removing a (trivial) 1-handle.
We denote the 1-handle distance by $\mu_1(S, S')$, and the trivial 1-handle distance by $\mu_1^0(S, S')$.
\end{define}

 We observe that
\[
\tilde{\mu}_{\st}(S, S') \le \overline{\mu}_{\st}(S, S') \le \mu_{\st}(S, S') \le \mu_1(S, S') \le \mu_1^0(S, S').
\]
Furthermore, if $[S] = [S']$ in $H_2(W, K)$,
then $\mu_1(S, S')$ is finite by Proposition~\ref{prop:1-handle},
and hence so are $\mu_{\st}(S, S')$ and $\overline{\mu}_{\st}(S, S')$.
On the other hand, $\mu_1^0(S, S')$ might be infinite by Corollary~\ref{cor:trivial}.
Note that $\overline{\mu}_{\st}(S, S') = 0$ if and only if $S$ and $S'$ become
isotopic after taking connected sums with 2-knots. Since $\mu_{\st}(S, S) = g(S)$,
the normalized distance satisfies $\tilde{\mu}_{\st}(S, S) = 0$.

Consider the graph whose vertices are isotopy classes (rel.~boundary) of connected surfaces in $W$ with boundary the knot $K$
in a fixed relative homology class in $H_2(W, K)$, and whose edges correspond to $(m,n)$-stabilization
for some $m$ and $n$. If we apply the procedure outlined in Example~\ref{ex:metric-filtration}
to the genus function, we obtain $\mu_{\st}$, which is hence a metric filtration in the sense
of Definition~\ref{def:metric-filtration}.
Its normalization in the sense of Definition~\ref{def:normalization} is $\tilde{\mu}_{\st}$.
So, as a special case of Lemma~\ref{lem:normalization}, we obtain the following:

\begin{lem}
  Let $W$ be a compact 4-manifold, and $K$ a knot in $\d W$.
  The normalized stabilization distance $\tilde{\mu}_{\st}$ is a pseudometric
  when restricted to surfaces in a given class in $H_2(W, K)$.
\end{lem}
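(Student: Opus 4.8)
The plan is to deduce this directly from Lemma~\ref{lem:normalization} by exhibiting $\tilde{\mu}_{\st}$ as the normalization of the metric filtration $\mu_{\st}$. First I would fix a class $\alpha \in H_2(W,K)$ and form the graph $G_\alpha$ whose vertex set is the collection of isotopy classes of connected, properly embedded surfaces in $W$ that bound $K$ and represent $\alpha$, with an edge joining two vertices whenever the corresponding surfaces are related by an $(m,n)$-stabilization or destabilization. I would check that $G_\alpha$ is connected: by Proposition~\ref{prop:1-handle}, any two such surfaces become ambient isotopic rel $\d W$ after finitely many $1$-handle stabilizations, and a $1$-handle stabilization is a $(2,0)$-stabilization, so one obtains a path in $G_\alpha$ between the two vertices by first stabilizing one surface up and then destabilizing back down to the other. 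In particular $\mu_{\st}$ takes finite values on $G_\alpha$, so the minima in Definition~\ref{def:stabgenus} are over nonempty sets.

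Next I would apply the graph version of the construction in Example~\ref{ex:metric-filtration} to $G_\alpha$, taking $f$ to be the genus function on vertices. By construction this produces exactly $\mu_{\st}$ restricted to surfaces representing $\alpha$ — this is the observation recorded immediately before the statement — so $\mu_{\st}$ is a metric filtration in the sense of Definition~\ref{def:metric-filtration} on this vertex set. Since the one-term sequence $S_1 = S$ realizes the defining minimum, $\mu_{\st}(S,S) = g(S)$, and therefore the normalization of $\mu_{\st}$ in the sense of Definition~\ref{def:normalization} is
\[
\tilde{\mu}_{\st}(S,S') = \mu_{\st}(S,S') - \min\{g(S), g(S')\},
\]
which is precisely the normalized stabilization distance of Definition~\ref{def:stabgenus}. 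Applying Lemma~\ref{lem:normalization} to the metric filtration $\mu_{\st}$ on the vertex set of $G_\alpha$ then shows that $\tilde{\mu}_{\st}$ is a pseudometric there, i.e., on surfaces in the fixed class $\alpha$, which is the assertion.

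I do not expect a genuine obstacle in this argument; it is essentially a matter of checking that the abstract setup of Example~\ref{ex:metric-filtration} applies verbatim. The one point that warrants a sentence of justification is the connectedness (equivalently, the finiteness of $\mu_{\st}$) of each $G_\alpha$, and this is supplied by Proposition~\ref{prop:1-handle}; everything else is formal once $\mu_{\st}$ has been identified as the output of the Example and its normalization has been matched with $\tilde{\mu}_{\st}$.
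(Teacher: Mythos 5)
Your proposal is correct and follows essentially the same route as the paper: identify $\mu_{\st}$ restricted to a fixed class in $H_2(W,K)$ as the output of the graph construction in Example~\ref{ex:metric-filtration} applied to the genus function, note that its normalization is $\tilde{\mu}_{\st}$, and invoke Lemma~\ref{lem:normalization}. Your extra sentence verifying connectedness of the graph via Proposition~\ref{prop:1-handle} is a reasonable addition that the paper records elsewhere rather than in this proof.
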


For a knot $K$ in $\Sphere^3$, let us write $\Surf(K)$ for the set of isotopy classes of
connected, oriented, properly embedded surfaces in $B^4$ with boundary $K$,
and $\Surf(K)/\{\text{2-knots}\}$ for $\Surf(K)$ modulo genus~0 stabilizations.
We denote by $\Surf_g(K)$ the subset of genus $g$ surfaces in $\Surf(K)$.
If $K$ is slice, we will write
\[
\cD(K) := \Surf_0(K)
\]
for the set of isotopy classes of slice disks of $K$ in $B^4$,
and $\cD(K)/\{\text{2-knots}\}$ for $\cD(K)$ modulo genus~0 stabilizations.

\begin{rem}
Note that $(\Surf(K)/\{\text{2-knots}\}, \tilde{\mu}_{\st})$ is a \emph{pseudometric space}
(i.e., $\tilde{\mu}_{\st}(S, S') = 0$ can hold for $S \neq S'$ in $\Surf(K)/\{\text{2-knots}\}$),
while $(\Surf(K)/\{\text{2-knots}\}, \overline{\mu}_{\st})$ is a \emph{metric space}.
Furthermore, $\mu_{\st}$ is a metric filtration, so it satisfies the ultrametric inequality
\[
\mu_{\st}(S, S'') \le \max\{ \mu_{\st}(S, S'), \mu_{\st}(S', S'') \}
\]
for any $S$, $S'$, $S'' \in \Surf(K)$, and so does $\tilde{\mu}_{\st}$ when restricted
to $\Surf_g(K)$.

If one of $S$, $S' \in \Surf(K)$ is a disk,
and $S_1, \dots, S_k$ is a sequence of surfaces connecting $S$ and $S'$,
as in Definition~\ref{def:stabgenus}, then
\[
\min\{ g(S_1), \dots, g(S_k) \} = 0,
\]
so $\mu_{\st}(S, S')$ and $\overline{\mu}_{\st}(S, S')$
are both obtained by minimizing $\max\{g(S_1), \dots, g(S_k)\}$.
Hence $\mu_{\st}$ and $\overline{\mu}_{\st}$ agree on $\cD(K)/\{\text{2-knots}\}$, and
$(\cD(K)/\{\text{2-knots}\}, \mu_{\st})$ is an \emph{ultrametric space};
i.e., a metric space that satisfies the ultrametric inequality.
Our invariants from Heegaard Floer homology naturally give bounds on $\mu_{\st}$,
hence we will not study $\overline{\mu}_{\st}$ in the rest of this paper.
\end{rem}

\begin{define}\label{def:dest}
If $K$ is a slice knot and $S \in \Surf(K)$,
we define the \emph{destabilizing genus} $g_{\dest}(S)$ of $S$
to be the minimum of
\[
\max\{ g(S_1), \dots, g(S_n) \}
\]
over sequences of properly embedded surfaces $S_1, \dots, S_n$ in $B^4$ such that
\begin{enumerate}
\item $S_{i+1}$ is obtained from $S_i$ via stabilization or destabilization for $i \in \{1, \dots, n-1\}$,
\item $S_1 = S$, and $S_n$ is a slice disk of $K$.
\end{enumerate}
\end{define}

By definition, $g_{\dest}(S) \ge g(S)$. Furthermore, if $D$ is a slice disk of $K$,
then $g_{\dest}(S) \le \tilde{\mu}_{\st}(S, D)$, and hence $g_{\dest}(S)$ is finite.
In fact, $g_{\dest}(S)$ is the distance of $S$ from $\cD(K)/\{\text{2-knots}\}$ in the pseudometric space
$(\Surf(K)/\{\text{2-knots}\}, \tilde{\mu}_{\st})$.

\begin{prop}\label{prop:mu-upper}
  Let $S_1$, $S_2 \in \Surf(K)$. Then
  \[
  \mu_{\st}(S_1, S_2) \le 2g(K) + \max\{g(S_1), g(S_2)\},
  \]
  where $g(K)$ is the Seifert genus of $K$.
\end{prop}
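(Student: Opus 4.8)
The plan is to produce an explicit sequence of surfaces in $B^4$ connecting $S_1$ and $S_2$, each of genus at most $2g(K) + \max\{g(S_1), g(S_2)\}$, with consecutive surfaces related by an $(m,n)$-stabilization or destabilization. The idea is to route the sequence through a pushed-in Seifert surface. First I would take a genus $g(K)$ Seifert surface $F$ for $K$ in $\Sphere^3$, and push its interior into $B^4$ to obtain a surface $F^{\flat} \in \Surf_{g(K)}(K)$. The key observation is that, for any $S \in \Surf(K)$, we have $[S] = [F^{\flat}] = 0 \in H_2(B^4, K)$ (since $H_2(B^4,\partial) = 0$), so Proposition~\ref{prop:1-handle} applies: $S$ and $F^{\flat}$ become ambient isotopic rel $\partial$ after finitely many 1-handle stabilizations.

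The next step is to control the genus along the resulting sequence. A priori the stabilization sequence from Proposition~\ref{prop:1-handle} could pass through very high-genus surfaces, so I would not use it directly. Instead I would argue as follows: from $S_i$ (with $g(S_i) = g_i$) one can reach a \emph{maximally destabilized} representative by destabilizations only, but more cleanly, one can first stabilize $S_i$ up to genus $g_i + g(K)$ so as to "contain" a pushed-in Seifert surface piece, then destabilize down the part coming from $S_i$ itself. Concretely: inside a small $4$-ball meeting $S_i$ in a trivial disk, glue in a pushed-in Seifert surface of the unknot of genus $g(K)$ — wait, that only adds a $2$-knot. Rather, the honest route is: $S_i$ and $F^\flat$ are both connected surfaces bounding $K$; tube $S_i$ to a small standardly embedded pushed-in Seifert surface so that a single destabilization (removing the handles of $S_i$) lands on $F^{\flat}$. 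The genus of the intermediate surface is $g_i + g(K)$, and then from $F^{\flat}$ we stabilize back up symmetrically to reach $S_{3-i}$, with maximum genus $g(S_{3-i}) + g(K)$ along that half. Combining the two halves, the maximum genus is $\max\{g_1, g_2\} + g(K)$, which is even better than claimed; the extra factor of $2$ in the statement gives room for a cruder but more transparent argument, e.g.\ if one must first \emph{add} a Seifert surface (genus $g(K)$) and only later recognize it as allowing destabilization of the original handles, one passes through genus $g_i + 2g(K)$.

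The main obstacle I expect is making precise the claim that $S_i$ can be converted to $F^{\flat}$ by a \emph{single} $(m,n)$-destabilization after one controlled stabilization — i.e., that $S_i \cup (-F^{\flat})$ bounds a manifold that can be absorbed into one $4$-ball in $B^4$ disjoint from a complementary copy of $F^{\flat}$. This is essentially the content of the proof of Proposition~\ref{prop:1-handle} (a relative Seifert manifold for $(S_i \cup -F^{\flat}) \setminus N(S_i \cap F^{\flat})$), but I need the Morse function there to have the surface $F^{\flat}$ as a level set at a controlled stage; arranging $F^\flat$ to be an actual cross-section, rather than obtained only after passing through all index-$1$ and index-$2$ critical points, is where the genus bound comes from and requires care. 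Once that is set up, the genus count and the verification of the ultrametric-type bound $\mu_{\st}(S_1,S_2) \le \max\{\mu_{\st}(S_1, F^{\flat}), \mu_{\st}(F^{\flat}, S_2)\}$ (using that $\mu_{\st}$ is a metric filtration, Lemma~\ref{lem:normalization} and the surrounding discussion) are routine.
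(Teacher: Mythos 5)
Your overall strategy --- routing the sequence through a pushed-in Seifert surface and building common stabilizations on either side --- is the same as the paper's, but the key construction is missing, and your primary claimed bound is not justified. You assert that tubing $S_i$ to a pushed-in Seifert surface produces a surface of genus $g(S_i)+g(K)$ that destabilizes in one step to $F^{\flat}$. For that destabilization to be legal, the piece you excise must lie in a $4$-ball $B\subset \Int(B^4)$ meeting the surface in disks whose boundaries form an unlink in $\d B$ and which are isotopic into $\d B$. But the piece you want to excise is essentially $S_i$ itself (plus a tube), and $S_i$ has boundary $K$ on $\d B^4$; it cannot be enclosed in an interior $4$-ball without first capping off its $K$-boundary. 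Capping off costs a second copy of the punctured Seifert surface, and this is exactly where the factor $2g(K)$ comes from --- it is not slack in the statement, it is forced by the construction. Your claimed bound $g(K)+\max\{g(S_1),g(S_2)\}$ is therefore not established by your argument.

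The paper's proof makes this precise as follows. Take a minimal genus Seifert surface $S$ for $K$, a disk $B\subset\Int(S)$, and the product $S\times I\subset\Sphere^3$. Isotope $S_i$ to bound $K\times\{1\}$, and form
\[
\S_i := \bigl((S\setminus B)\times\{0\}\bigr)\cup(\d B\times I)\cup\bigl((S\setminus B)\times\{1\}\bigr)\cup S_i',
\]
of genus $2g(K)+g(S_i)$. This is simultaneously a $2g(K)$-fold $1$-handle stabilization of $S_i$ \emph{and}, after pushing everything except $(S\setminus B)\times\{0\}$ into $\Int(B^4)$, a single $(1,n)$-stabilization of the pushed-in Seifert surface: the excised piece $(\d B\times I)\cup((S\setminus B)\times\{1\})\cup S_i'$ now has boundary the unknot $\d B$, so it genuinely sits in an interior $4$-ball as required. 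The sequence $S_1,\S_1',S,\S_2',S_2$ then gives the bound. Your instinct to worry about whether $F^{\flat}$ can be arranged as a controlled cross-section of the relative Seifert manifold from Proposition~\ref{prop:1-handle} is reasonable, but the paper sidesteps that entirely with the explicit doubled-surface construction above; without something equivalent, your proof does not close.
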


\begin{proof}
  Let $S$ be a minimal genus Seifert surface for $K$, and choose an open ball $B \subset \Int(S)$.
  Consider the product $S \times I \subset \Sphere^3$, where we identify
  $S$ with $S \times \{0\}$. For $i \in \{1, 2\}$, isotope $S_i$ near $\d S_i$ such that
  it becomes a surface $S_i'$ with boundary $K \times \{1\}$. We let
  \[
  \S_i := ((S \setminus B) \times \{0\}) \cup (\d B \times I) \cup ((S \setminus B) \times \{1\}) \cup S_i'.
  \]
  Then $\S_i$ is a surface of genus $2g(K) + g(S_i)$ that can be obtained from $S_i$
  by $2g(K)$ 1-handle stabilizations in $\Sphere^3$. Indeed, let $a_1, \dots, a_{2g}$ be pairwise disjoint arcs in $S \setminus B$ with boundary on $\d B$ that span $H_1(S, B)$. If we compress $\Sigma_i$ along the curves
  \[
  (a_i \times \{0, 1\}) \cup (\d a_i \times I)
  \] 
  using the compressing disks $a_i \times I \subset S \times I$, we obtain $S_i$, up to proper isotopy.
  
  If we push $\S_i \setminus (S \times \{0\})$ into $\Int(B^4)$ relative to $\d B \times \{0\}$,
  we obtain a surface $\S_i'$ that is a stabilization of $S$ with $m = 1$.
  The sequence of surfaces $S_1$, $\S_1'$, $S$, $\S_2'$, $S_2$ satisfies the requirements of
  Definition~\ref{def:stabgenus}, and has maximal genus $2g(K) + \max\{ g(S_1), g(S_2) \}$.
\end{proof}

\begin{cor}
  If $K$ is a slice knot and $S \in \Surf_g(K)$, then
  \[
  g_{\dest}(S) \le 2g(K) + g.
  \]
\end{cor}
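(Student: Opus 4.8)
The plan is to read this off directly from Proposition~\ref{prop:mu-upper}, together with the observation recorded immediately after Definition~\ref{def:dest} that $g_{\dest}(S) \le \tilde{\mu}_{\st}(S, D)$ for any slice disk $D$ of $K$.

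Since $K$ is slice, fix such a slice disk $D \in \cD(K) = \Surf_0(K)$, so that $g(D) = 0$. Because $g = g(S) \ge 0$, the normalization unwinds to $\tilde{\mu}_{\st}(S, D) = \mu_{\st}(S, D) - \min\{g, 0\} = \mu_{\st}(S, D)$, hence $g_{\dest}(S) \le \mu_{\st}(S, D)$. Now apply Proposition~\ref{prop:mu-upper} to the pair $(S, D)$:
\[
\mu_{\st}(S, D) \le 2g(K) + \max\{g(S), g(D)\} = 2g(K) + \max\{g, 0\} = 2g(K) + g.
\]
Combining the two inequalities yields $g_{\dest}(S) \le 2g(K) + g$, as claimed.

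There is no genuine obstacle here: the statement is a one-line corollary of an already-established bound. If one prefers an argument that does not route through $\mu_{\st}$, one can instead re-run the construction in the proof of Proposition~\ref{prop:mu-upper} with $S_2$ taken to be the slice disk $D$. The resulting sequence $S$, $\Sigma_1'$, $S_0$, $\Sigma_2'$, $D$ — where $S_0$ is a minimal-genus Seifert surface of $K$ pushed into $\Int(B^4)$, and $\Sigma_1'$, $\Sigma_2'$ are the associated $m = 1$ stabilizations of $S_0$ appearing in that proof — consists of connected, properly embedded surfaces in $B^4$ bounding $K$, consecutive terms of which differ by a stabilization or destabilization, which starts at $S$ and ends at the slice disk $D$, and whose maximal genus is $2g(K) + \max\{g, 0\} = 2g(K) + g$. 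By Definition~\ref{def:dest}, this sequence already witnesses $g_{\dest}(S) \le 2g(K) + g$. The only things to check in either approach are the connectedness and proper embeddedness of the intermediate surfaces, both of which are immediate from the construction in Proposition~\ref{prop:mu-upper}.
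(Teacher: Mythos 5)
Your proof is correct and matches the paper's intended (and essentially only) derivation: the corollary is an immediate consequence of Proposition~\ref{prop:mu-upper} applied with $S_2$ a slice disk $D$, combined with the observation $g_{\dest}(S) \le \tilde{\mu}_{\st}(S,D) = \mu_{\st}(S,D)$ recorded after Definition~\ref{def:dest}. Nothing further is needed.
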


\begin{prop}\label{prop:st-dest}
  Let $K$ be a slice knot, and $S$, $S' \in \Surf(K)$. Then
  \[
  \tilde{\mu}_{\st}(S, S') \ge  |g_{\dest}(S) - g_{\dest}(S')|.
  \]
\end{prop}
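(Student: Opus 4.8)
The plan is to show that $g_{\dest}$ is $1$-Lipschitz with respect to the normalized stabilization distance, i.e. that for a slice knot $K$ and $S$, $S' \in \Surf(K)$,
\[
g_{\dest}(S) \le \tilde{\mu}_{\st}(S, S') + g_{\dest}(S').
\]
By symmetry, the same inequality holds with the roles of $S$ and $S'$ reversed, and together these give $|g_{\dest}(S) - g_{\dest}(S')| \le \tilde{\mu}_{\st}(S, S')$, which is exactly the claim.

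To prove the displayed inequality, first I would unwind the definitions. Pick a sequence of surfaces $S = S_1, \dots, S_k = S'$ realizing $\mu_{\st}(S, S')$, so that consecutive surfaces are related by an $(m,n)$-stabilization or destabilization and $\max_i g(S_i) = \mu_{\st}(S, S')$. Next, pick a sequence $S' = T_1, \dots, T_n = D$ realizing $g_{\dest}(S')$, where $D$ is a slice disk of $K$, so that $\max_j g(T_j) = g_{\dest}(S')$. Concatenating the first sequence with the second gives a sequence from $S$ to the slice disk $D$ through stabilizations and destabilizations, hence
\[
g_{\dest}(S) \le \max\bigl\{\, \mu_{\st}(S, S'),\ g_{\dest}(S') \,\bigr\}.
\]
Now I would observe that $g_{\dest}(S') \ge g(S')$ always (as noted in the text right after Definition~\ref{def:dest}), and likewise $\mu_{\st}(S, S') \ge \min\{g(S), g(S')\}$ follows from applying the ultrametric inequality to the triple $S, S, S'$ (this is the computation from the proof of Lemma~\ref{lem:normalization}, giving $\mu(x,x') \ge \mu(x,x)$ and $\ge \mu(x',x')$). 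If $\mu_{\st}(S, S') \le g_{\dest}(S')$ the above bound already gives $g_{\dest}(S) \le g_{\dest}(S') \le \tilde{\mu}_{\st}(S,S') + g_{\dest}(S')$ since $\tilde\mu_{\st}\ge 0$. Otherwise $\mu_{\st}(S, S') > g_{\dest}(S') \ge g(S')$, so in particular $\min\{g(S), g(S')\} \le g(S')\le g_{\dest}(S')< \mu_{\st}(S,S')$, whence $\tilde{\mu}_{\st}(S, S') = \mu_{\st}(S, S') - \min\{g(S), g(S')\} \ge \mu_{\st}(S,S') - g_{\dest}(S')$, i.e. $\mu_{\st}(S, S') \le \tilde{\mu}_{\st}(S, S') + g_{\dest}(S')$, and combined with $g_{\dest}(S) \le \mu_{\st}(S,S')$ we are done.

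The only mildly delicate point is the bookkeeping with the normalization: one has to be careful that the $\min\{g(S), g(S')\}$ subtracted in $\tilde\mu_{\st}$ is bounded above by $g_{\dest}(S')$, which is where the inequalities $g_{\dest}(S') \ge g(S')$ and (implicitly) $g_{\dest}(S) \ge g(S)$ enter; no genuinely new geometric input beyond concatenation of stabilization sequences is needed. I do not anticipate a substantive obstacle—the argument is entirely formal once the two realizing sequences are placed side by side.
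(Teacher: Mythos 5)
Your proposal is correct. It is essentially the same argument as the paper's: the paper assumes WLOG that $g_{\dest}(S) \le g_{\dest}(S')$, picks a slice disk $D$ with $\tilde{\mu}_{\st}(D,S) = g_{\dest}(S)$, and applies the triangle inequality for the pseudometric $\tilde{\mu}_{\st}$ (Lemma~\ref{lem:normalization}) to the triple $D$, $S$, $S'$; your concatenation of realizing stabilization sequences together with the normalization bookkeeping is just a hands-on re-derivation of that same triangle inequality in this special case.
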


\begin{proof}
  Since the claim is symmetric in $S$ and $S'$, we can assume that $g_{\dest}(S) \le g_{\dest}(S')$.
  Let $D$ be a slice disk for $K$ such that $\tilde{\mu}_{\st}(D, S) = g_{\dest}(S)$.
  Then
  \[
  g_{\dest}(S) + \tilde{\mu}_{\st}(S, S') =
  \tilde{\mu}_{\st}(D, S) + \tilde{\mu}_{\st}(S, S') \ge \tilde{\mu}_{\st}(D, S') \ge g_{\dest}(S')
  \]
  by the triangle inequality, and the result follows.
\end{proof}

\subsection{An upper bound on the distance between 1-roll-spun and 1-twist-spun slice disks}

Let $t^n r^m$ denote the $n$-twist-$m$-roll-spinning diffeomorphism of $K$.
We will show the following:

\begin{prop}\label{prop:rollspinningcomputation}
If $K$ is a knot in $\Sphere^3$, then the deform-spun slice disks $D_{K, r}$,
$D_{K, t}\in \Surf_0(-K \# K)$ satisfy
\[
\mu_{st}(D_{K, r}, D_{K, t}) \le 2.
\]
\end{prop}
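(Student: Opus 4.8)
The plan is to exhibit an explicit intermediate surface that connects $D_{K,r}$ and $D_{K,t}$ via stabilizations and destabilizations through surfaces of genus at most $2$, by interpolating between the rolling and twisting diffeomorphisms at the level of the deform-spinning construction. Recall from Definition~\ref{def:rolling} that both $r$ and $t$ are supported in a collar $\d X \times I \approx K\times \d B^2\times I$ of the knot exterior, $r$ translating in the $K$-direction and $t$ in the $\d B^2$-direction, each by the monotone cutoff $\varphi$. First I would observe that the composite diffeomorphism $t r^{-1}$ (or, more symmetrically, a diffeomorphism isotopic to $t$ through which we can slide) differs from the identity only inside this collar, and that the deform-spun disk $D_{K,tr^{-1}}$ can be compared with $D_{K,\id}$ by a controlled modification localized in a ball. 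Concretely, the slice disks $D_{K,r}$ and $D_{K,t}$ both bound $-K\# K$, and the effect of changing the deformation by a diffeomorphism supported near $K$ is to alter the disk only inside a $4$-ball $B^4\subset B^4$ meeting the disk in trivial sheets; this is exactly the setup of an $(m,n)$-stabilization from Definition~\ref{sec:stabilizationgenusdef}.

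The key steps, in order, would be: (i) isolate a $4$-ball $B$ in $B^4$ such that outside $B$ the two deform-spun disks $D_{K,r}$ and $D_{K,t}$ coincide, with $D_{K,r}\cap B$ and $D_{K,t}\cap B$ each being a single trivial disk; this follows because $r$ and $t$ are both supported in $\d X\times I$ and the spinning construction then localizes the discrepancy; (ii) inside $B$, write down a genus-one (or at worst genus-two) cobordism surface $F_0\subset B$ connecting the two trivial disks, obtained by tracking the isotopy from $r$ to $t$ — since $r$ and $t$ are both given by shear-type formulas using $\varphi$, the straight-line homotopy of the shearing vector fields produces a $1$-parameter family of diffeomorphisms, whose "movie" traces out a surface with one or two births/deaths worth of handles; (iii) use this to realize $D_{K,t}$ as the result of first stabilizing $D_{K,r}$ (replacing the trivial disk in $B$ by $F_0$, raising the genus to at most $2$) and then destabilizing (compressing $F_0$ back down to the trivial disk for $t$); (iv) check that every intermediate surface in this sequence is connected and has genus $\le 2$, so that $\mu_{\st}(D_{K,r},D_{K,t})\le 2$ by Definition~\ref{def:stabgenus}.

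The main obstacle I expect is step (ii): producing an explicit regular homotopy or movie between the two trivial disks inside $B$ that genuinely has genus at most $2$, rather than an a priori larger bound. The naive interpolation between the shear diffeomorphisms $r$ and $t$ passes through diffeomorphisms that shear simultaneously in the $K$- and $\d B^2$-directions, and one must verify the spun disks of these do not accumulate extra topology — equivalently, that the relevant loop in the diffeomorphism group of the solid torus, based at the identity and representing $r^{-1}t$, bounds a disk in a way that lifts to a genus-$\le 2$ surface in the spinning picture. A cleaner route, which I would pursue as a fallback, is to note that $r$ and $t$ differ by a diffeomorphism isotopic to one supported in a ball $B\subset N(K)$ meeting $K$ in an arc: since $r|_B=\id_B$ in Definition~\ref{def:rolling}, and twisting is similarly trivial near a point of $K$, one compares $D_{K,r}$ and $D_{K,t}$ through $D_{K,\id}$, bounding $\mu_{\st}(D_{K,r},D_{K,\id})\le 1$ and $\mu_{\st}(D_{K,\id},D_{K,t})\le 1$ separately (each by a single $(1,1)$-stabilization exhibiting the change-of-deformation as a trivial stabilization in a ball), and then applying the ultrametric inequality
\[
\mu_{\st}(D_{K,r},D_{K,t})\le \max\{\mu_{\st}(D_{K,r},D_{K,\id}),\,\mu_{\st}(D_{K,\id},D_{K,t})\}\le 1.
\]
Wait — that would give the stronger bound $1$, which the paper explicitly states it can only \emph{conjecture}; so this fallback must fail somewhere, presumably because the relevant change-of-deformation is \emph{not} supported in a ball disjoint from the spinning axis in a way that makes it a trivial stabilization, and one genuinely needs two handles' worth of room to slide the roll past the twist. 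Thus the honest proof keeps the bound at $2$, and the real content is the explicit genus-$2$ movie in step (ii), built from the commutator of the roll and twist shears localized in a $4$-ball.
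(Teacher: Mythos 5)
There is a genuine gap: both load-bearing steps of your plan fail. Step (i) asserts that $D_{K,r}$ and $D_{K,t}$ coincide outside a single $4$-ball $B$ meeting each disk in one trivial sheet, but this is never justified and there is no reason for it to hold. The diffeomorphisms $r$ and $t$ are supported in a collar $\d X\times I$ of the \emph{entire} boundary torus of the knot exterior, not in a ball, and the spinning construction $\bigcup_t \{t\}\times\phi_t(a)$ spreads that difference over the whole movie; the discrepancy between the two disks is not localized. Step (ii) is worse: the ``straight-line homotopy of the shearing vector fields'' does not produce diffeomorphisms of $(\Sphere^3,K)$. Writing the interpolation as $(\bar x,\bar\theta,s)\mapsto(\overline{x+u\varphi(s)},\overline{\theta+(1-u)\varphi(s)},s)$, the map only glues with the identity beyond the collar (at $s=1$) when both $u$ and $1-u$ are integers, i.e.\ only at the endpoints. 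So there is no isotopy from $r$ to $t$ to ``track'' --- and indeed, if there were, Lemma~2.2 would force $D_{K,r}$ and $D_{K,t}$ to be isotopic, giving distance $0$. Your own diagnosis of the fallback route is correct as far as it goes, but you never supply a working replacement for (ii), which is where all the content lies.

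The paper's actual proof is an explicit diagrammatic movie argument with no localization to a ball. One picks a diagram $\cD$ of $K$ with $\wr(\cD)=0$ and presents $D_{K,t^{\wr(\cD)}r}=D_{K,r}$ as the movie that rotates $\cD$ in a plane while translating it so the arc stays attached to $\d B$. A first $1$-handle (genus $1$) splits off a closed copy of $K$ from the arc; one then defines a family of genus-one surfaces $S_{\ve{c}}$ indexed by the set $\ve{c}$ of crossings of $\cD$ whose overstrand still passes over $\d B$, and shows that two consecutive crossings of opposite sign in $\ve{c}$ can be removed at the cost of one further stabilization and destabilization through genus $2$ (a band between the two overstrands plus its dual band). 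Since $\wr(\cD)=0$, all crossings cancel in pairs, reducing to $S_{\emptyset}$; a $\pi_1(\SO(3))\iso\Z_2$ argument, using that $\tw(\cD)+\wr(\cD)$ is odd, identifies $S_{\emptyset}$ with a genus-one stabilization of the movie for $D_{K,t}$. The maximal genus encountered is $2$, which is exactly why the bound is $2$ and why improving it to $1$ remains Conjecture~2.11.
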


\begin{proof}
Let $B_0 \subset \Sphere^3$ denote a 3-ball that intersects $K$ in an unknotted arc.
We consider the knotted ball-arc pair $(B,a)$, where $B := \Sphere^3 \setminus \Int(B_0)$ and $a = K \cap B$.
We present both slice disks $D_{K, r}$ and $D_{K, t}$ as movies
of ball-arc pairs which start and end at $(B, a)$.

\begin{figure}[ht!]
	\centering
\begingroup%
  \makeatletter%
  \providecommand\color[2][]{%
    \errmessage{(Inkscape) Color is used for the text in Inkscape, but the package 'color.sty' is not loaded}%
    \renewcommand\color[2][]{}%
  }%
  \providecommand\transparent[1]{%
    \errmessage{(Inkscape) Transparency is used (non-zero) for the text in Inkscape, but the package 'transparent.sty' is not loaded}%
    \renewcommand\transparent[1]{}%
  }%
  \providecommand\rotatebox[2]{#2}%
  \newcommand*\fsize{\dimexpr\f@size pt\relax}%
  \newcommand*\lineheight[1]{\fontsize{\fsize}{#1\fsize}\selectfont}%
  \ifx\svgwidth\undefined%
    \setlength{\unitlength}{408.98576228bp}%
    \ifx\svgscale\undefined%
      \relax%
    \else%
      \setlength{\unitlength}{\unitlength * \real{\svgscale}}%
    \fi%
  \else%
    \setlength{\unitlength}{\svgwidth}%
  \fi%
  \global\let\svgwidth\undefined%
  \global\let\svgscale\undefined%
  \makeatother%
  \begin{picture}(1,0.51063013)%
    \lineheight{1}%
    \setlength\tabcolsep{0pt}%
    \put(0,0){\includegraphics[width=\unitlength,page=1]{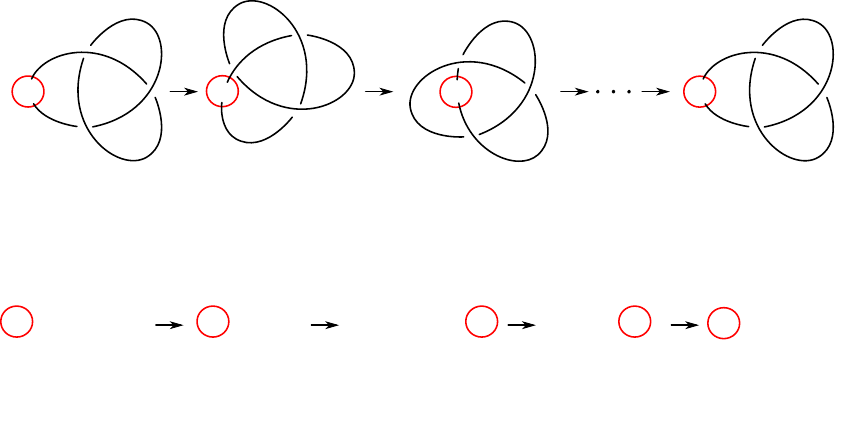}}%
    \put(0.46136019,0.32637058){\color[rgb]{0,0,0}\makebox(0,0)[t]{\lineheight{1.25}\smash{\begin{tabular}[t]{c}$D_{K,t^{\wr(\cD)}r}$\end{tabular}}}}%
    \put(0.46136057,0.00187201){\color[rgb]{0,0,0}\makebox(0,0)[t]{\lineheight{1.25}\smash{\begin{tabular}[t]{c}$D_{K,t}$\end{tabular}}}}%
    \put(0,0){\includegraphics[width=\unitlength,page=2]{fig36.pdf}}%
    \put(0.02891755,0.20251902){\color[rgb]{0.54901961,0.54901961,0.54901961}\makebox(0,0)[lt]{\lineheight{1.25}\smash{\begin{tabular}[t]{l}$\ell$\end{tabular}}}}%
    \put(0,0){\includegraphics[width=\unitlength,page=3]{fig36.pdf}}%
  \end{picture}%
\endgroup%

	\caption{The slice disks $D_{K, t^{\wr(\cD)}r}$ and $D_{K, t}$ of $-K \# K$. In the top row, we rotate the diagram counterclockwise a full turn in the plane, and consecutive frames differ by a small rotation.}
\label{fig::36}
\end{figure}

We begin with describing a movie for $D_{K, t^n r}$ for any $n \in \N$.
We pick a diagram $\cD$ for $K$ with $\wr(\cD) = n$. We view the diagram $\cD$ as nearly being
embedded in a plane $P$. The movie for $D_{K, t^n r}$ consists of
rotating the diagram $\cD$ about an axis perpendicular to $P$ (and shifting
along the axis perpendicular to $P$ slightly), while translating $\cD$
in the plane so that the image of $K$ intersects $B$ in an unknotted arc.
This is a movie for $D_{K,t^nr}$ for some $n$. The exponent $n$
is equal to the difference between the blackboard framing and the Seifert
framing. Since the difference between the blackboard framing and the Seifert
framing is $\wr(\cD)$, it follows that this movie represents $D_{K, t^{\wr(\cD)} r}$.

Next, we describe a movie for the slice disk $D_{K, t}$. We pick a line
$\ell$ in $\R^3$ which coincides with $K$ inside $B_0$, and is disjoint
from $K$ outside a small neighborhood of $B_0$. The movie for
$D_{K, t}$ is obtained by rotating $a$ in a full twist about $\ell$.
Schematics of the movies for $D_{K, t^{\wr(\cD)} r}$ and $D_{K, t}$
are shown in  Figure~\ref{fig::36}.

We now present a stabilization sequence from $D_{K, t^{\wr(\cD)}r}$ to
$D_{K,t}$ that has maximal genus two. Let us write $\{a_s : s \in I\}$ for the movie
of arcs corresponding to $D_{K, t^{\wr(\cD)}r}$.
Suppose that $a_s = \phi_s(K) \cap B$ for a 1-parameter family of rigid motions
$\phi_s \colon \Sphere^3 \to \Sphere^3$ for $s \in I$ that nearly preserve the plane $P$.
We give $K$ a parametrization $\gamma(s)$ such that $\phi_s(\gamma(s))$ is the
center point of the ball $B_0 = \Sphere^3 \setminus \Int(B)$ (note that $B_0$ is the
region inside the red ball in Figure~\ref{fig::36}).

\begin{figure}[ht!]
	\centering
	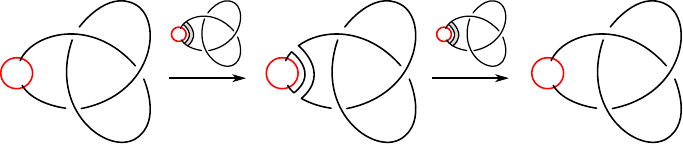
	\caption{Attaching a 1-handle to the disk $D_{K, t}$ by adding a pair
	of bands to the beginning of the movie for $D_{K,t}$.}
\label{fig::38}
\end{figure}

Let $0 < s_1 < \dots < s_n < 1$ be the times such that $\gamma(s_i)$
is the lower point of a crossing of $\cD$, which we denote $c_i$.
Let $\Cr(\cD)$ be the set $\{c_1, \dots, c_n\}$ of crossings of $\cD$.
As $s$ passes $s_i$ for some $i \in \{1, \dots, n\}$,
the overstrand of the crossing $c_i$ passes over $\d B$ in the movie $a_s$.

As a first step, we attach a 1-handle to $D_{K, t^{\wr(\cD)}r}$, by adding
two bands to the beginning of the movie $a_s$,  as in Figure~\ref{fig::38}.
We can move the second band to the end of the movie. This breaks each arc $a_s$ into a knot $K_s$
disjoint from $\d B$, which we can view as a copy of our original knot $K$,
as well as a small, boundary-parallel arc attached to $\d B$.
We now wish to continuously pull the family of knots $K_s$ downward,
such that they do not pass over $\d B$ for any $s$ (or, phrased another
way, such that there is a path from $\d B$ to $\infty \in \Sphere^3$
disjoint from $K_s$ for all $s \in I$).

Given $\ve{c} \subset \Cr(\cD)$, we let $S_{\ve{c}} \in \Surf_1(-K \# K)$
denote the genus one surface obtained by modifying the movie $\{a_s : s \in I\}$
such that the upper strand of $c_i$ passes over $\d B$ if $c_i \in \ve{c}$, and the upper
strand of $c_i$ passes under $\d B$ if $c_i \not\in \ve{c}$; see Figure~\ref{fig::39}.

 \begin{figure}[ht!]
	\centering
\begingroup%
  \makeatletter%
  \providecommand\color[2][]{%
    \errmessage{(Inkscape) Color is used for the text in Inkscape, but the package 'color.sty' is not loaded}%
    \renewcommand\color[2][]{}%
  }%
  \providecommand\transparent[1]{%
    \errmessage{(Inkscape) Transparency is used (non-zero) for the text in Inkscape, but the package 'transparent.sty' is not loaded}%
    \renewcommand\transparent[1]{}%
  }%
  \providecommand\rotatebox[2]{#2}%
  \newcommand*\fsize{\dimexpr\f@size pt\relax}%
  \newcommand*\lineheight[1]{\fontsize{\fsize}{#1\fsize}\selectfont}%
  \ifx\svgwidth\undefined%
    \setlength{\unitlength}{307.73171522bp}%
    \ifx\svgscale\undefined%
      \relax%
    \else%
      \setlength{\unitlength}{\unitlength * \real{\svgscale}}%
    \fi%
  \else%
    \setlength{\unitlength}{\svgwidth}%
  \fi%
  \global\let\svgwidth\undefined%
  \global\let\svgscale\undefined%
  \makeatother%
  \begin{picture}(1,0.69605428)%
    \lineheight{1}%
    \setlength\tabcolsep{0pt}%
    \put(0,0){\includegraphics[width=\unitlength,page=1]{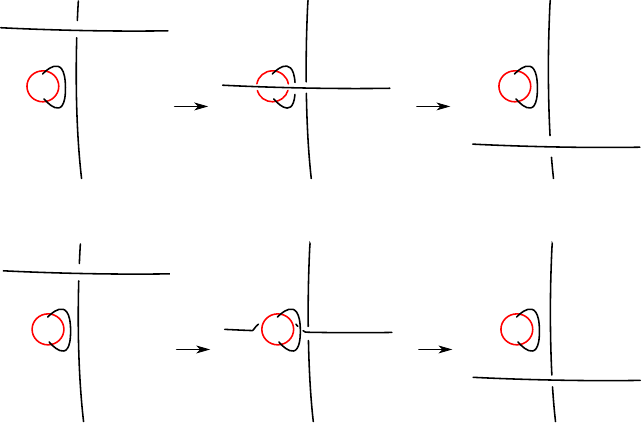}}%
    \put(0.12920163,0.65866246){\color[rgb]{0,0,0}\makebox(0,0)[lt]{\lineheight{1.25}\smash{\begin{tabular}[t]{l}$c_i$\end{tabular}}}}%
    \put(0.13622014,0.28504903){\color[rgb]{0,0,0}\makebox(0,0)[lt]{\lineheight{1.25}\smash{\begin{tabular}[t]{l}$c_i$\end{tabular}}}}%
    \put(0.47897812,0.37984857){\color[rgb]{0,0,0}\makebox(0,0)[t]{\lineheight{1.25}\smash{\begin{tabular}[t]{c}$c_i\in \ve{c}$\end{tabular}}}}%
    \put(0.47897717,0.00306553){\color[rgb]{0,0,0}\makebox(0,0)[t]{\lineheight{1.25}\smash{\begin{tabular}[t]{c}$c_i\not \in \ve{c}$\end{tabular}}}}%
  \end{picture}%
\endgroup%

	\caption{On the top row, we show a portion of the surface $S_{\ve{c}}$
	 when $c_i \in \ve{c}$. The upper strand of a crossing $c_i$
	 passes over $\d B$. On the bottom row, we show a portion of the
	 movie for $S_{\ve{c}}$ when $c_i \not\in \ve{c}$. The upper strand of
	 the crossing $c_i$ passes underneath $\d B$.}
\label{fig::39}
\end{figure}

Let $c_i$ and $c_j$ be consecutive crossings in $\ve{c}$ of opposite sign.
We claim that $S_{\ve{c}}$ and $S_{\ve{c} \setminus \{c_i, c_j\}}$
become isotopic after a single stabilization. The stabilization is
obtained by attaching a band connecting the upper strands of the crossings $c_i$ and $c_j$,
followed by attaching the dual band; see Figure~\ref{fig::40}.

 \begin{figure}[ht!]
	\centering
\begingroup%
  \makeatletter%
  \providecommand\color[2][]{%
    \errmessage{(Inkscape) Color is used for the text in Inkscape, but the package 'color.sty' is not loaded}%
    \renewcommand\color[2][]{}%
  }%
  \providecommand\transparent[1]{%
    \errmessage{(Inkscape) Transparency is used (non-zero) for the text in Inkscape, but the package 'transparent.sty' is not loaded}%
    \renewcommand\transparent[1]{}%
  }%
  \providecommand\rotatebox[2]{#2}%
  \newcommand*\fsize{\dimexpr\f@size pt\relax}%
  \newcommand*\lineheight[1]{\fontsize{\fsize}{#1\fsize}\selectfont}%
  \ifx\svgwidth\undefined%
    \setlength{\unitlength}{407.99734537bp}%
    \ifx\svgscale\undefined%
      \relax%
    \else%
      \setlength{\unitlength}{\unitlength * \real{\svgscale}}%
    \fi%
  \else%
    \setlength{\unitlength}{\svgwidth}%
  \fi%
  \global\let\svgwidth\undefined%
  \global\let\svgscale\undefined%
  \makeatother%
  \begin{picture}(1,0.26712722)%
    \lineheight{1}%
    \setlength\tabcolsep{0pt}%
    \put(0,0){\includegraphics[width=\unitlength,page=1]{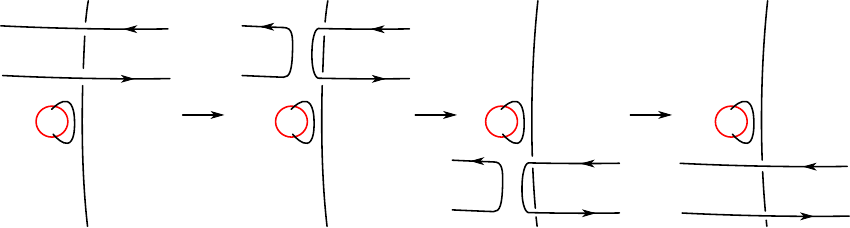}}%
    \put(0.10320771,0.18273669){\color[rgb]{0,0,0}\makebox(0,0)[lt]{\lineheight{1.25}\smash{\begin{tabular}[t]{l}$c_i$\end{tabular}}}}%
    \put(0.10835624,0.24220925){\color[rgb]{0,0,0}\makebox(0,0)[lt]{\lineheight{1.25}\smash{\begin{tabular}[t]{l}$c_j$\end{tabular}}}}%
  \end{picture}%
\endgroup%

	\caption{A movie for a common stabilization of $S_{\ve{c}}$ and
	$S_{\ve{c} \setminus \{c_i, c_j\}}$, when $c_i$ and $c_j$ are
	consecutive crossings in $\ve{c}$ of opposite sign.
    In the movie, a band is added between the
	first and the second frames. An isotopy connects the second and
	third frames. The third and fourth frames are related by attaching the dual band.}
\label{fig::40}
\end{figure}

In the case that $\wr(\cD)=0$, the number of positive crossings is
equal to the number of negative crossings, so we can eliminate all
crossings from $\ve{c}$ pairwise via the stabilization sequence described above. Hence
\[
\mu_{st}(S_{\Cr(\cD)}, S_{\emptyset}) \le 2.
\]
We note that $S_{\Cr(\cD)}$ is a genus one stabilization of $D_{K, r}$,
while the surface $S_{\emptyset}$ is described by a movie that
starts at $(B, a)$, then has a copy of $K$ break off and move away
from $\d B$. This copy of $K$ rotates $\tw(\cD)$-many times near a plane far
away from $\d B$, where $\tw(\cD)$ denotes
the \emph{twisting number} of $\cD$, and then $K$ is reattached
to the arc on $\d B$ via a band. 

It is a general fact that $\tw(\cD) + \wr(\cD)$ is always odd for
any diagram of a knot (as can be verified by noting that Reidemeister
moves and crossing changes do not change the quantity $\tw(\cD) + \wr(\cD)$
modulo 2, and that $\tw + \wr = 1$ for a trivial diagram of an unknot).
Since we picked $\cD$ to satisfy $\wr(\cD) = 0$, we conclude that
\[
\tw(\cD) \equiv 1 \pmod 2.
\]
Since a path of rotations about a line in $\R^3$ induces the
generator of $\pi_1(\SO(3)) \iso \Z_2$, we can assume that the
copy of $K$ which breaks off rotates in a plane exactly once.

Hence, our movie for $S_{\emptyset}$ becomes a copy of $K$ which
breaks off of $(B, a)$ as in Figure~\ref{fig::38}, then makes one
full rotation in a plane (away from $\d B$), and is finally
reconnected to the arc attached to $\d B$.
A continuous deformation transforms this final movie into a
stabilization of the movie for $D_{K, t}$ shown in Figure~\ref{fig::36}.
\end{proof}

We conclude this section with the following conjecture (compare Proposition~\ref{prop:roll}, below).

\begin{conj}\label{conj:rt}
If $K$ is a knot in $\Sphere^3$, then the deform-spun slice disks $D_{K, r}$,
$D_{K, t} \in \Surf_0(-K \# K)$ satisfy
\[
\mu_{st}(D_{K, r}, D_{K, t}) \le 1.
\]
\end{conj}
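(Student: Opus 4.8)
The plan is to upgrade the stabilization sequence constructed in the proof of Proposition~\ref{prop:rollspinningcomputation} so that its maximal genus is $1$ rather than $2$. Recall the shape of that sequence: one passes from $D_{K,r}$ to $S_{\Cr(\cD)}$ (a genus-one stabilization of $D_{K,r}$), then through the family of genus-one surfaces $S_{\ve{c}}$, $\ve{c}\subseteq\Cr(\cD)$, ending at $S_{\emptyset}$, which after a continuous deformation is a genus-one stabilization of $D_{K,t}$. The only surfaces of genus $2$ appearing anywhere are the common stabilizations used in the elementary move of Figure~\ref{fig::40}, which connects $S_{\ve{c}}$ to $S_{\ve{c}\setminus\{c_i,c_j\}}$ whenever $c_i,c_j\in\ve{c}$ are consecutive crossings of opposite sign. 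Everything else in the sequence already has genus $\le 1$. Hence Conjecture~\ref{conj:rt} reduces to the single assertion that, for such $\ve{c}$ and $\ve{c}'=\ve{c}\setminus\{c_i,c_j\}$, the surfaces $S_{\ve{c}}$ and $S_{\ve{c}'}$ can be joined by a stabilization sequence of maximal genus~$1$.

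There are two natural routes to this assertion. The first is to show that $S_{\ve{c}}$ and $S_{\ve{c}'}$ are already ambient isotopic rel boundary, using the classification of $1$-handle stabilizations of Baykur and Sunukjian (\cite[Lemma~3]{BaySunStabilizations}): each $S_{\ve{c}}$ is a $1$-handle stabilization of the genus-zero disk $D_{K,r}$, hence is determined up to isotopy by the homotopy class of the core arc of the handle in $\pi_1(B^4\setminus D_{K,r})$, modulo adding the meridian of the surface and pre- or post-composing with push-offs of loops in the surface. One would then compute how routing the upper strand of a crossing $c_i$ over versus under $\d B$ changes this core class and check that flipping an opposite-sign pair $c_i,c_j$ changes it only by (a conjugate of) the meridian, hence not at all after passing to isotopy classes of stabilizations. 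The second, more robust route is to produce for each $\ve{c}$ an explicit compression of $S_{\ve{c}}$ to a genus-zero slice disk $D_{\ve{c}}\in\Surf_0(-K\#K)$ and to show that removing an opposite-sign pair of crossings yields an isotopic disk $D_{\ve{c}}\simeq D_{\ve{c}'}$; then $S_{\ve{c}}$, $D_{\ve{c}}\simeq D_{\ve{c}'}$, $S_{\ve{c}'}$ is a sequence of maximal genus~$1$. Either way, chaining these detours together with the genus-one stabilizations relating $D_{K,r}$ to $S_{\Cr(\cD)}$ and $S_{\emptyset}$ to $D_{K,t}$ — using as before that $\wr(\cD)=0$ makes the positive and negative crossings of $\cD$ cancel in pairs, and that $\tw(\cD)\equiv 1\pmod 2$ together with $\pi_1(\SO(3))$ being $2$-torsion lets the split-off copy of $K$ rotate exactly once — yields a stabilization sequence from $D_{K,r}$ to $D_{K,t}$ through surfaces of genus at most~$1$.

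The main obstacle is exactly this last assertion: staying at genus~$1$ throughout the crossing-cancellation move, rather than retreating to genus~$2$ as in Proposition~\ref{prop:rollspinningcomputation}. Concretely, one must identify the genus-zero disk underlying $S_{\ve{c}}$ — equivalently, the $\pi_1$-class of its stabilizing handle — precisely enough to see the opposite-sign cancellation, so that the crossing signs, their cyclic order along $K$, and the parity identity $\tw(\cD)+\wr(\cD)\equiv 1\pmod 2$ all enter the bookkeeping correctly. It is conceivable that $S_{\ve{c}}$ and $S_{\ve{c}'}$ are genuinely non-isotopic, in which case the first route fails and one is forced onto the second, where the burden is to verify that a genus-zero (de)stabilization suffices. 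As a partial consistency check, Proposition~\ref{prop:roll} shows that the Floer-theoretic obstructions of Theorem~\ref{thm:maintheorem} are already $\le 1$ in the closely related roll-spinning setting, so the secondary invariants place no lower bound above $1$ in the way of the conjectured value.
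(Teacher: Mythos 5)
The statement you are addressing is presented in the paper as Conjecture~\ref{conj:rt}: the paper proves only the weaker bound $\mu_{st}(D_{K,r},D_{K,t})\le 2$ (Proposition~\ref{prop:rollspinningcomputation}) and explicitly leaves the improvement to $1$ open, so there is no proof in the paper to compare yours against. Your proposal does not close the gap either. You correctly isolate where genus~$2$ enters the existing argument --- the common stabilization of $S_{\ve{c}}$ and $S_{\ve{c}\setminus\{c_i,c_j\}}$ in Figure~\ref{fig::40} --- and you correctly reduce the conjecture to replacing that single elementary move by a detour through surfaces of genus at most~$1$. But you then offer two candidate routes and, by your own admission, carry out neither; the ``main obstacle'' you name at the end is precisely the entire content of the conjecture beyond what Proposition~\ref{prop:rollspinningcomputation} already gives.

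Both routes also rest on unverified premises. Route one assumes that every $S_{\ve{c}}$ is a $1$-handle stabilization of the fixed disk $D_{K,r}$, so that the Baykur--Sunukjian classification applies with a common base disk; the paper identifies only $S_{\Cr(\cD)}$ as a stabilization of $D_{K,r}$ and $S_{\emptyset}$ as (a deformation of) a stabilization of $D_{K,t}$, and for intermediate $\ve{c}$ there is no reason $S_{\ve{c}}$ destabilizes to $D_{K,r}$, or indeed destabilizes at all. Route two presupposes exactly such a destabilization $S_{\ve{c}}\to D_{\ve{c}}$ together with the isotopy $D_{\ve{c}}\simeq D_{\ve{c}'}$, neither of which is constructed; a genus-one element of $\Surf_1(-K\# K)$ need not be a stabilization of any slice disk, so the existence of $D_{\ve{c}}$ is itself a nontrivial claim. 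Until one of these routes is actually executed --- for instance, by computing the homotopy class of the core of the stabilizing handle and tracking how it changes when the upper strand of a crossing $c_i$ is rerouted over versus under $\d B$ --- the argument establishes nothing beyond the genus-$2$ bound, and the conjecture remains open.
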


\section{Background on the link Floer TQFT}
\label{sec:technicalbackground}

In this section, we recall some previous results about the link Floer TQFT which we will need
to compute the effect of stabilization on the link cobordism maps in Section~\ref{sec:stab},
to determine the map induced by the summand-swapping diffeomorphism in Section~\ref{sec:rigidmotiondeformation},
and to prove the trace formula in Section~\ref{sec:trace}.

\subsection{The full link Floer TQFT}\label{sec:background}

We first recall the category whose objects are multi-based links, and
whose morphisms are decorated link cobordisms, following the notation
of the second author~\cite{ZemCFLTQFT}; see also the equivalent construction of the first author~\cite{JCob}.

\begin{define}
A \emph{multi-based link} $\bL = (L,\ws,\zs)$ in a closed, oriented (not necessarily connected) 3-manifold $Y$ is an oriented link
$L \subset Y$, together with two disjoint collections of basepoints $\ws$, $\zs \subset L$ such that
\begin{enumerate}
\item each component of $L$ has at least two basepoints;
\item the basepoints along a link component of $L$ alternate between $\ws$ and $\zs$, as one traverses the link;
\item each component of $Y$ has at least one component of $L$, and each component of $L$ has at least two basepoints. 
\end{enumerate}
\end{define}

\begin{define}
Let $Y_1$ and $Y_2$ be 3-manifolds containing multi-based links $\bL_1 = (L_1, \ws_1, \zs_1)$
and $\bL_2 = (L_2, \ws_2, \zs_2)$, respectively. A \emph{decorated link cobordism} from
$(Y_1, \bL_1)$ to $(Y_2, \bL_2)$ is a pair $(W,\cF)=(W,(S,\cA))$, where
\begin{enumerate}
\item $W$ is an oriented cobordism from $Y_1$ to $Y_2$,
\item $S$ is a properly embedded, oriented surface in $W$ with $\d S = -L_1 \cup L_2$, and
\item $\cA$ is a properly embedded 1-manifold in $S$
that divides $S$ into two subsurfaces, $S_{\ws}$ and $S_{\zs}$,
that meet along $\cA$, such that $\ws_1$, $\ws_2 \subset S_{\ws}$
and $\zs_1$, $\zs_2 \subset S_{\zs}$.
\end{enumerate}
\end{define}

Multi-based links and equivalence classes of decorated link cobordisms form a category.

The first author~\cite{JCob} showed that decorated link cobordisms induce functorial maps
on the hat version of link Floer homology.
The second author \cite{ZemCFLTQFT} extended this to the \emph{full infinity
complex}, denoted $\cCFL^\infty$, which is a minor variation of the
infinity complexes of Ozsv\'{a}th--Szab\'{o}~\cite{OSKnots} and
Rasmussen~\cite{RasmussenKnots}. We now review the construction of $\cCFL^\infty$.

Let $\cR^-$ denote the ring $\bF_2[U,V]$, and let $\cR^\infty$ denote the
ring $\bF_2[U,V,U^{-1},V^{-1}]$, obtained by inverting $U$ and $V$ in $\cR^-$.
Suppose that $\bL=(L,\ws,\zs)$ is a multi-based link in $Y$. Given a
multi-pointed diagram $(\Sigma,\as,\bs,\ws,\zs)$ for $(Y,\bL)$ (see
\cite{OSLinks}*{Definition~3.1}), the complex $\cCFL^\infty(Y,\bL,\frs)$ is
the free module over $\cR^{\infty}$ generated by intersection points $\xs\in
\bT_{\a}\cap \bT_{\b}$ with $\frs_{\ws}(\xs)=\frs$. Over $\bF_2$, the generators are the monomials
\[
U^i V^j \cdot \xs,
\]
where $i$, $j \in \Z$.

The module $\cCFL^\infty(Y,\bL,\frs)$ has a filtration $\cG$ over
$\Z\oplus \Z$, where the subset $\cG_{(n,m)}\subset \cCFL^\infty(Y,\bL,\frs)$
is generated over $\bF_2$ by monomials $U^i V^j \cdot \xs$ with $i\ge n$ and $j\ge m$.
We denote the $\cR^-$-submodule $\cG_{(0,0)}$ by $\cCFL^-(Y,\bL,\frs)$, and
call it the \emph{full minus complex}. It is generated over $\bF_2$ by monomials $U^i
V^j  \cdot \xs$ with $i$, $j \ge 0$.

There is a filtered, $\cR^\infty$-equivariant endomorphism $\d$ of $\cCFL^\infty(Y,\bL,\frs)$,
defined by the formula
\[
\d \xs = \sum_{\ys\in \bT_{\a}\cap \bT_{\b}}\sum_{\substack{\phi\in \pi_2(\xs,\ys)\\
\mu(\phi) = 1}} \# \hat{\cM}(\phi) \cdot U^{n_{\ws}(\phi)} V^{n_{\zs}(\phi)} \cdot \ys,
\]
which satisfies $\d \circ \d = 0$.

When $[L] = 0$ in $H_1(Y)$ and $\frs$ is torsion,
the chain complex $(\cCFL^\infty(Y,\bL,\frs),\d)$ has several gradings.
Ozsv\'{a}th and Szab\'{o} defined a homological grading and an Alexander
grading. It is convenient for our purposes to repackage their two gradings
into three gradings that satisfy a linear dependency. Namely, there are two
homological gradings, $\gr_{\ws}$ and $\gr_{\zs}$, and an Alexander grading
$A$, which together satisfy
\[
A = \frac{1}{2}(\gr_{\ws}-\gr_{\zs}).
\]
Note that $V$ is $+1$ graded with respect to $A$, and $U$ is $-1$ graded.

When $\bK = (K,w,z)$ is a doubly-based knot in $\Sphere^3$,
we will write $\cCFL^\infty(\bK)$ for $\cCFL^\infty(\Sphere^3, \bK,\frs_0)$, where $\frs_0$ is the unique $\Spin^c$ structure on $\Sphere^3$.

The second author \cite{ZemCFLTQFT} constructed cobordism maps for the full knot
and link Floer complexes. Given a decorated link cobordism
$(W, \cF)$ from $(Y_1, \bL_1)$ to $(Y_2, \bL_2)$ and a $\Spin^c$ structure $\frs\in \Spin^c(W)$,
there is an induced $\cR^\infty$-equivariant, filtered chain map
\[
F_{W,\cF,\frs} \colon \cCFL^\infty(Y_1,\bL_1,\frs|_{Y_1}) \to \cCFL^\infty(Y_2,\bL_2,\frs|_{Y_2}),
\]
well-defined up to filtered, $\cR^\infty$-equivariant chain homotopy.

\subsection{Basepoint actions on link Floer homology}
\label{sec:basepointactions}

Let $\bL = (L, \ws, \zs)$ be a multi-based link in the 3-manifold $Y$, and fix $\frs \in \Spin^c(Y)$.
We recall that, for each $w\in \ws$ and $z\in \zs$, there are distinguished endomorphisms $\Phi_w$ and $\Psi_z$ of $\cCFL^\infty(Y,\bL,\frs)$. If $\cH = (\Sigma,\as,\bs,\ws,\zs)$ is a
diagram for  $(Y, \bL)$, then $\Phi_w$ and $\Psi_z$ can be defined via the formulas
\begin{equation}
\Phi_w(\xs) = \sum_{\ys \in \bT_\a \cap \bT_\b} \sum_{\substack{\phi \in \pi_2(\xs,\ys)\\
\mu(\phi) = 1}} n_w(\phi) \# \hat{\cM}(\phi) \cdot U^{n_{\ws}(\phi)-1} V^{n_{\zs}(\phi)} \cdot \ys,
\label{eq:defPhi}
\end{equation}
and
\begin{equation}
\Psi_z(\xs) = \sum_{\ys\in \bT_\a \cap \bT_\b} \sum_{\substack{\phi \in \pi_2(\xs,\ys)\\
\mu(\phi) = 1}} n_z(\phi) \# \hat{\cM}(\phi) \cdot U^{n_{\ws}(\phi)} V^{n_{\zs}(\phi)-1} \cdot \ys
\label{eq:defPsi}
\end{equation}
for $\x \in \T_{\a} \cap \T_{\b}$ with $\frs_{\ws}(\x) = \frs$.
According to \cite{ZemConnectedSums}*{Lemma~4.1}, 
the endomorphisms $\Phi_w$
and $\Psi_z$ are the link cobordism maps induced by the two decorations of $(I\times Y,I\times L)$ shown in Figure~\ref{fig::6}.
When we are working with doubly-based knots, we will often write $\Phi$ and $\Psi$ for the maps~$\Phi_w$
and~$\Psi_z$, respectively.

\begin{figure}[ht!]
	\centering
\begingroup%
  \makeatletter%
  \providecommand\color[2][]{%
    \errmessage{(Inkscape) Color is used for the text in Inkscape, but the package 'color.sty' is not loaded}%
    \renewcommand\color[2][]{}%
  }%
  \providecommand\transparent[1]{%
    \errmessage{(Inkscape) Transparency is used (non-zero) for the text in Inkscape, but the package 'transparent.sty' is not loaded}%
    \renewcommand\transparent[1]{}%
  }%
  \providecommand\rotatebox[2]{#2}%
  \newcommand*\fsize{\dimexpr\f@size pt\relax}%
  \newcommand*\lineheight[1]{\fontsize{\fsize}{#1\fsize}\selectfont}%
  \ifx\svgwidth\undefined%
    \setlength{\unitlength}{276.87460579bp}%
    \ifx\svgscale\undefined%
      \relax%
    \else%
      \setlength{\unitlength}{\unitlength * \real{\svgscale}}%
    \fi%
  \else%
    \setlength{\unitlength}{\svgwidth}%
  \fi%
  \global\let\svgwidth\undefined%
  \global\let\svgscale\undefined%
  \makeatother%
  \begin{picture}(1,0.21120918)%
    \lineheight{1}%
    \setlength\tabcolsep{0pt}%
    \put(0,0){\includegraphics[width=\unitlength,page=1]{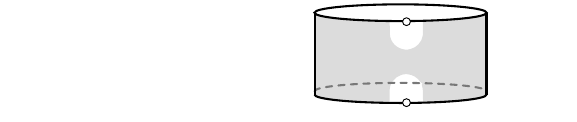}}%
    \put(0.87759499,0.11094733){\color[rgb]{0,0,0}\makebox(0,0)[lt]{\lineheight{0}\smash{\begin{tabular}[t]{l}$\Psi_z$\end{tabular}}}}%
    \put(0.67713367,0.00542042){\color[rgb]{0,0,0}\makebox(0,0)[lt]{\lineheight{0}\smash{\begin{tabular}[t]{l}$z$\end{tabular}}}}%
    \put(0,0){\includegraphics[width=\unitlength,page=2]{fig6.pdf}}%
    \put(0.33208817,0.10961862){\color[rgb]{0,0,0}\makebox(0,0)[lt]{\lineheight{0}\smash{\begin{tabular}[t]{l}$\Phi_w$\end{tabular}}}}%
    \put(0.12400369,0.00511354){\color[rgb]{0,0,0}\makebox(0,0)[lt]{\lineheight{0}\smash{\begin{tabular}[t]{l}$w$\end{tabular}}}}%
    \put(0,0){\includegraphics[width=\unitlength,page=3]{fig6.pdf}}%
  \end{picture}%
\endgroup%

	\caption{The two decorated link cobordisms for $\Phi_w$ and $\Psi_z$. The diagrams indicate the decorations on the surface $I\times L$. Here we denote $\ws$ by solid basepoints, and $\zs$ by open basepoints. The shaded regions denote $\Sigma_{\ws}$ and the unshaded regions denote $\Sigma_{\zs}$. }\label{fig::6}
\end{figure}

According to \cite{ZemCFLTQFT}*{Lemma~4.9}, the basepoint actions satisfy
\begin{equation}
\Phi_w^2 \simeq \Psi_{z}^2 \simeq 0.
\label{eqn:R7}
\end{equation}
Note that the dividing sets on the decorated link cobordisms corresponding to $\Phi_w^2$ and $\Psi_z^2$
both contain a closed curve that bounds a disk in either $S_{\ws}$ or $S_{\zs}$.

\subsection{Quasi-stabilizations and basepoint moving maps}
\label{sec:backgroundquasibasepoint}

We now review the quasi-stabilization maps. Suppose $\bL = (L,\ws,\zs)$ is a
multi-based link in $Y$, and suppose that $w$ and $z$ are two new
basepoints contained in a single component of $L \setminus (\ws \cup \zs)$. Let us assume
that~$w$ immediately follows~$z$ with respect to the orientation of $L$,
and write
\[
\bL_{w,z}^+ := (L, \ws \cup \{w\}, \zs \cup \{z\}).
\]
There are two \emph{quasi-stabilization} maps
\[
S_{w,z}^+ \text{, } T_{w,z}^+ \colon \cCFL^\infty(Y, \bL, \frs) \to \cCFL^\infty(Y, \bL^+_{w,z}, \frs),
\]
as well as two \emph{quasi-destabilization} maps $S_{w,z}^-$ and $T_{w,z}^-$, defined in the opposite direction.
If instead $z$ follows $w$ with respect to the orientation of $L$, then we
obtain similar maps $S_{z, w}^\pm$ and $T_{z,w}^\pm$.

We briefly summarize the construction of the quasi-stabilization maps. See \cite{MoIntSurg}*{Section~6} and  \cite{ZemCFLTQFT}*{Section~4} for further details. Suppose
$\cH=(\Sigma,\as,\bs,\ws,\zs)$ is a diagram for $(Y,\bL)$. Let $w'$ and $z'$
denote the basepoints adjacent to $w$ and $z$ on $\bL$. There is a component
$A$ of $\Sigma\setminus \as$ which contains~$w'$ and~$z'$. We pick a simple
closed curve $\alpha_s\subset A$ that cuts $A$ into two components, one of
which contains $w'$ and the other $z'$. We add another curve, $\beta_0$,
that bounds a small disk on $\Sigma$, which is cut into two bigons by
$\alpha_s$, and is disjoint from the other $\as$ curves. Inside one bigon, we
place $w$. In the other bigon, we place $z$. See Figure~\ref{fig::43}.

\begin{figure}[ht!]
	\centering
	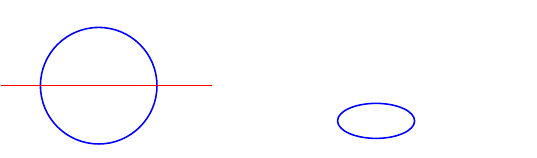
	\caption{A quasi-stabilization of Heegaard diagrams. On the left, a local picture of the quasi-stabilization near $w$ and $z$ is shown. On the right, we have a complete example of a quasi-stabilization. The shaded region denotes $A$. Also shown are the points $w'$ and $z'$. }
    \label{fig::43}
\end{figure}

We write $\theta^{\ws}$ and $\theta^{\zs}$ for the higher $\gr_{\ws}$- and
$\gr_{\zs}$-graded intersection points of $\alpha_s \cap \beta_0$,
respectively. The maps $S_{w,z}^+$ and $T_{w,z}^+$ are defined via the
formulas
\begin{equation}
S_{w,z}^+(\xs) := \xs \times \theta^{\ws} \quad \text{and}
\quad T_{w,z}^+(\xs) := \xs \times \theta^{\zs}\label{eq:quasi-stabilization}
\end{equation}
for $\x \in \T_{\a} \cap \T_{\b}$, extended $\cR^{\infty}$-equivariantly.
The quasi-destabilization maps are defined dually, via the equations
\begin{equation}
S_{w,z}^-(\xs\times \theta^{\ws})=0, \quad S_{w,z}^-(\xs\times \theta^{\zs}) =
\xs, \quad T_{w,z}^-(\xs\times \theta^{\ws})=\xs,\quad \text{and} \quad T_{w,z}^-(\xs\times \theta^{\zs}) = 0,
\label{eq:quasi-destabilization}
\end{equation}
extended $\cR^{\infty}$-equivariantly.
The decorations on $(I \times Y, I \times L)$ inducing the
quasi-stabilization maps $S_{w,z}^+$, $S_{w,z}^-$, $T_{w,z}^+$, and
$T_{w,z}^-$ are shown in Figure~\ref{fig::15}.

\begin{figure}[ht!]
	\centering
	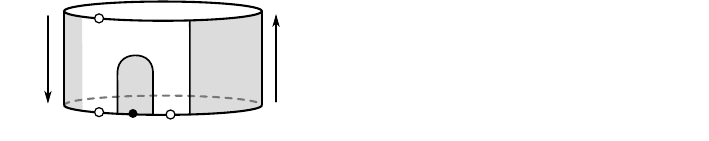
	\caption{The decorated link cobordisms for the quasi-stabilization maps.}
    \label{fig::15}
\end{figure}

We will need the following relations between the quasi-stabilization maps and the basepoint actions:
\begin{align}
T_{w,z}^+ &\simeq \Psi_z S_{w,z}^+, \label{eq:R1} \\
T_{w,z}^- &\simeq S_{w,z}^-\Psi_z, \label{eq:R2} \\
\Phi_w    &\simeq S_{w,z}^+S_{w,z}^-, \label{eq:R3} \\
\Psi_z    &\simeq T_{w,z}^+T_{w,z}^-. \label{eq:R4}
\end{align}
Proofs of equations~\eqref{eq:R1}--\eqref{eq:R4} can be found in \cite{ZemCFLTQFT}*{Lemmas~4.10 and 4.11}.
Examples of the corresponding dividing sets for the relations in equations~\eqref{eq:R1}--\eqref{eq:R4}
appear in Figure~\ref{fig::16}.

\begin{figure}[ht!]
	\centering
	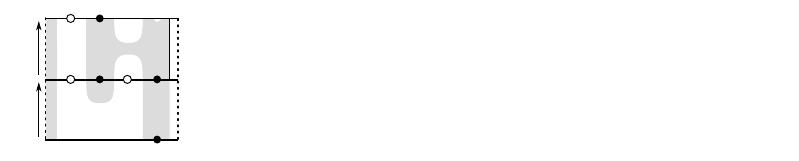
	\caption{Dividing set manipulations corresponding to the relations
    $T_{w,z}^+ \simeq \Psi_z S_{w,z}^+$ and $\Phi_w \simeq S_{w,z}^+ S_{w,z}^-$.}
    \label{fig::16}
\end{figure}

Next, we review the connection between the basepoint moving maps
and the quasi-stabilization maps.
We first focus on using the quasi-stabilization maps to move a single
basepoint, while fixing all other basepoints. Suppose that $(L, \ws_0 \cup \{w'\}, \zs)$
is a multi-based link in $Y$. Suppose that $(w,z)$ are a new
pair of basepoints in a single component of $L \setminus (\ws_0 \cup \{w'\} \cup \zs)$,
such that $z$ is adjacent to~$w'$. Suppose further that, according to
the orientation of $L$, the three basepoints appear in the order $w'$, $z$, $w$.
We can construct a diffeomorphism
\[
\tau^{w\from w'} \colon (Y,L,\ws_0\cup \{w'\},\zs) \to (Y,L,\ws_0\cup \{w\}, \zs),
\]
by moving $w'$ to $w$ along the arc connecting them, but fixing all of $Y$ outside a neighborhood of this arc.
According to \cite{ZemCFLTQFT}*{Lemma~4.24},
\begin{equation}
T^+_{w,z}\simeq T_{z,w'}^+ \tau^{w \from w'}_*.\label{eq:movebasepointandquasistabilization}
\end{equation}
Equation~\eqref{eq:movebasepointandquasistabilization} has a simple description in terms of dividing sets and link cobordisms,
shown in Figure~\ref{fig::25}.

\begin{figure}[ht!]
	\centering
	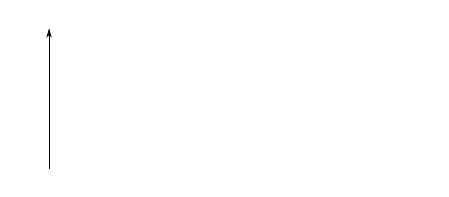
	\caption{The interpretation of equation~\eqref{eq:movebasepointandquasistabilization}
    in terms of decorated link cobordisms.}
\label{fig::25}
\end{figure}

Using the quasi-stabilization maps, we can also move a pair of adjacent
basepoints at the same time. Suppose that $\bL = (L,\ws_0,\zs_0)$ is a
multi-based link (though we allow the case where $\ws_0$ and $\zs_0$
intersect a single  component of $L$ trivially). Suppose that $(w',z',w,z)$
are four new consecutive basepoints on a single
component of $L \setminus (\ws_0 \cup \zs_0)$. We assume that these four basepoints appear in the order $w'$, $z'$, $w$ and $z$ on the link, with $z$ being the first and $w'$ being the last with respect to the orientation of $L$. There is a diffeomorphism
\[
\rho^{(w',z') \from (w,z)} \colon (Y ,L, \ws_0 \cup \{w\}, \zs_0 \cup \{z\}) \to
(Y, L, \ws_0 \cup \{w'\}, \zs_0 \cup \{z'\}),
\]
obtained by moving the pair $(w,z)$ to $(w',z')$, but fixing everything outside a neighborhood
of an interval containing the four basepoints $(w',z',w,z)$.
According to \cite{ZemCFLTQFT}*{Lemma~4.27}, there is a chain homotopy
\begin{equation}
\rho^{(w',z') \from (w,z)}_* \simeq S_{w,z}^- T_{w',z'}^+.
\label{eq:R5}
\end{equation}
Equation~\eqref{eq:R5} can be interpreted in terms of the manipulation of
dividing sets shown in Figure~\ref{fig::17}.

\begin{figure}[ht!]
	\centering
	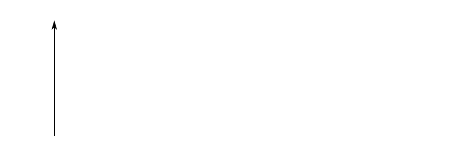
	\caption{The interpretation of the relation $\rho^{(w',z') \from (w,z)}_*\simeq S_{w,z}^- T_{w',z'}^+$
    in terms of dividing sets.}
    \label{fig::17}
\end{figure}

Another useful relation  is the following:
\begin{equation}
T_{w,z}^+ S_{w,z}^- + S_{w,z}^+ T_{w,z}^- + \id \simeq 0
\label{eq:R6}.
\end{equation}
See \cite{ZemCFLTQFT}*{Lemma~4.13}. Note that Equation~\eqref{eq:R6} follows immediately from the formulas in Equations~\eqref{eq:quasi-stabilization} and~\eqref{eq:quasi-destabilization}.
For a pictorial description, see Figure~\ref{fig::18}.
Equation~\eqref{eq:R6} is an example of the \emph{bypass relation},
which is often helpful when doing computations in the link Floer TQFT.

\begin{figure}[ht!]
	\centering
	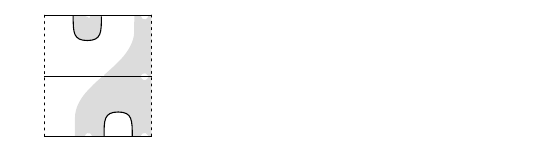
	\caption{A pictorial description of the bypass relation, equation~\eqref{eq:R6}.}
    \label{fig::18}
\end{figure}

\subsection{Cobordism maps for saddles}

Next, we discuss the maps for saddle cobordisms.
Suppose that $\bL = (L,\ws,\zs)$ is a multi-based link in $Y$,
and that $B$ is an embedded band for $L$ that has both ends in subarcs of $L \setminus (\ws\cup \zs)$
that run from $\ws$ to $\zs$, or has both ends in subarcs that run from $\zs$ to $\ws$.
Assuming that $\bL(B)$, the result of band surgery, is also a valid multi-based link, there is a map
\[
F_B^{\zs} \colon \cCFL^\infty(Y,\bL,\frs) \to \cCFL^\infty(Y,\bL(B),\frs),
\]
described in \cite{ZemCFLTQFT}*{Section~6}.
The map $F_{B}^{\zs}$ corresponds to a saddle link cobordism,
with an index~1 critical point occurring in the type-$\zs$ subregion.
There is another map $F_{B}^{\ws}$, with the same domain and codomain as $F_{B}^{\zs}$,
that corresponds to adding a band to the type-$\ws$ subregion.

The relation between the band maps and the basepoint maps
is studied in \cite{ZemCFLTQFT}*{Section~9.1}. According to \cite{ZemCFLTQFT}*{Lemma~9.1},
\begin{equation}
F_{B}^{\zs} \Phi_w + \Phi_w F_{B}^{\zs} \simeq 0.
\label{eq:R7}
\end{equation}
In contrast, the map $F_{B}^{\zs}$ does not always commute with $\Psi_z$.
Instead, if $z$ is either of the two $\zs$-basepoints adjacent to the ends of $B$, then
\begin{equation}
F_B^{\zs} \Psi_z + \Psi_zF_B^{\zs} \simeq F_{B}^{\ws},
\label{eq:commutatorFBzandPsi}
\end{equation}
according to \cite{ZemCFLTQFT}*{Proposition~9.3}.
In fact, the three dividing sets corresponding to the maps in equation~\eqref{eq:commutatorFBzandPsi}
can be interpreted as a bypass relation on a saddle cobordism; see \cite{ZemCFLTQFT}*{Figure~9.1}.

Note that, if $z$ and $z'$ are the two $\zs$-basepoints adjacent to the ends of $B$,
then equation~\eqref{eq:commutatorFBzandPsi} holds for both $z$ and $z'$.
As a consequence, if we sum both relations, we obtain
\begin{equation}
F_B^{\zs}(\Psi_z + \Psi_{z'}) + (\Psi_z + \Psi_{z'}) F_B^{\zs} \simeq 0.
\label{eq:R8}
\end{equation}

\subsection{Birth cobordisms and quasi-stabilizations}
\label{sec:birthsandquasis}

We recall from \cite{ZemCFLTQFT}*{Section~7.1} the birth cobordism map.
Suppose $\bU=(U,w,z)$ is a doubly-based unknot, which is unlinked from the
multi-based link in $Y$, and is given a distinguished Seifert disk $D$. In
this situation, there is a well defined birth map
\[
\cB_{\bU,D}^+\colon \cCFL^\infty(Y,\bL,\frs)\to \cCFL^\infty(Y,\bL\cup \bU,\frs).
\]
The map $\cB_{\bU,D}^+$ corresponds to a birth cobordism in $I\times Y$,
where the disk portion of the link cobordism surface is decorated with a
single dividing arc.

The map $\cB_{\bU,D}^+$ can be computed as follows. We pick a Heegaard
diagram $(\Sigma,\as,\bs,\ws,\zs)$ for $(Y,\bL)$ such that $w,z\in \Sigma$,
and $D\cap \Sigma$ consists of an embedded arc in $\Sigma\setminus (\as\cup
\bs)$ that connects $w$ and $z$. We add two new curves, $\alpha_0$ and
$\beta_0$, that bound a small disk containing $D\cap \Sigma$, and intersect
in a pair of points. Let $\theta_{\alpha_0,\beta_0}^+\in \alpha_0\cap
\beta_0$ denote the higher  Maslov graded intersection point (the designation
is the same for both $\gr_{\ws}$ and $\gr_{\zs}$). The map $\cB_{\bU,D}^+$ is
defined via the formula
\[
\cB_{\bU,D}^+(\xs)=\xs\times \theta^+_{\alpha_0,\beta_0}
\]
for $\x \in \T_{\a} \cap \T_{\b}$, and extended $\cR^{\infty}$-equivariantly.
There is also a death map $\cD_{\bU,D}^-$, defined in the opposite direction,
though it will not make an appearance in this paper.

Suppose $L$ is a link in $Y$, and $U$ is an unknot that bounds a Seifert
disk $D$,  disjoint from $L$. Suppose further that $B$ is a band connecting
$U$ and $L$, which is disjoint from the interior of $D$. Let
\[
\phi\colon  (Y,L)\to (Y,(L\cup U)(B))
\]
denote a diffeomorphism which is the identity outside a neighborhood of $B\cup D$.

Since a birth cobordism adds two basepoints, the composition of a birth
cobordism map and a band map is not simply the diffeomorphism map $\phi_*$.
Instead the composition is a quasi-stabilization. More precisely, if
$\bU=(U,w,z)$, and $B$ is an $\alpha$-band (i.e., has both ends on strands of
$L$ that lie in the $\alpha$-handlebody), then
\begin{equation}
F_{B}^{\ws}\cB^+_{\bU,D}\simeq T_{w,z}^+ \phi_*.
\label{eq:band-birth=quasi}
\end{equation}
A proof of equation~\eqref{eq:band-birth=quasi} can be found in
\cite{ZemCFLTQFT}*{Proposition~8.5}. Note that there are other variations of
equation~\eqref{eq:band-birth=quasi}. For example, if $B$ is instead a $\beta$-band, then
\begin{equation}
F_{B}^{\ws} \cB^+_{\bU,D} \simeq T_{z,w}^+ \phi_*.
\label{eq:band-birth=quasi2}
\end{equation}
A schematic illustrating equation~\eqref{eq:band-birth=quasi} is shown in Figure~\ref{fig::26}.

\begin{figure}[ht!]
	\centering
\begingroup%
  \makeatletter%
  \providecommand\color[2][]{%
    \errmessage{(Inkscape) Color is used for the text in Inkscape, but the package 'color.sty' is not loaded}%
    \renewcommand\color[2][]{}%
  }%
  \providecommand\transparent[1]{%
    \errmessage{(Inkscape) Transparency is used (non-zero) for the text in Inkscape, but the package 'transparent.sty' is not loaded}%
    \renewcommand\transparent[1]{}%
  }%
  \providecommand\rotatebox[2]{#2}%
  \newcommand*\fsize{\dimexpr\f@size pt\relax}%
  \newcommand*\lineheight[1]{\fontsize{\fsize}{#1\fsize}\selectfont}%
  \ifx\svgwidth\undefined%
    \setlength{\unitlength}{270.71379391bp}%
    \ifx\svgscale\undefined%
      \relax%
    \else%
      \setlength{\unitlength}{\unitlength * \real{\svgscale}}%
    \fi%
  \else%
    \setlength{\unitlength}{\svgwidth}%
  \fi%
  \global\let\svgwidth\undefined%
  \global\let\svgscale\undefined%
  \makeatother%
  \begin{picture}(1,0.3819263)%
    \lineheight{1}%
    \setlength\tabcolsep{0pt}%
    \put(0,0){\includegraphics[width=\unitlength,page=1]{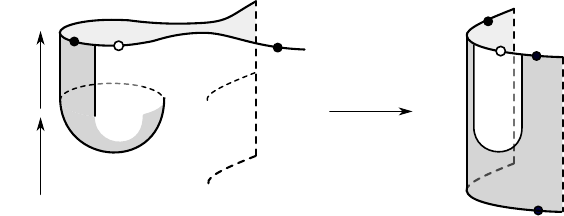}}%
    \put(0.05852456,0.09652627){\color[rgb]{0,0,0}\makebox(0,0)[rt]{\lineheight{1.25}\smash{\begin{tabular}[t]{r}$\cB^+_{\bU,D}$\end{tabular}}}}%
    \put(0.05852456,0.23885825){\color[rgb]{0,0,0}\makebox(0,0)[rt]{\lineheight{1.25}\smash{\begin{tabular}[t]{r}$F_B^{\ws}$\end{tabular}}}}%
    \put(0,0){\includegraphics[width=\unitlength,page=2]{fig26.pdf}}%
    \put(0.78910031,0.11903598){\color[rgb]{0,0,0}\makebox(0,0)[rt]{\lineheight{1.25}\smash{\begin{tabular}[t]{r}$T_{w,z}^+$\end{tabular}}}}%
    \put(0,0){\includegraphics[width=\unitlength,page=3]{fig26.pdf}}%
  \end{picture}%
\endgroup%

	\caption{A manipulation demonstrating equation~\eqref{eq:band-birth=quasi}.
    The composition of a birth cobordism followed by a band is (up to diffeomorphism) a quasi-stabilization.}
\label{fig::26}
\end{figure}

\subsection{4-dimensional connected sums of link cobordisms}

Suppose $(W_1,\cF_1)$ and $(W_2,\cF_2)$ are two link cobordisms, and pick
two embedded 4-balls $D_1\subset W_1$ and $D_2\subset W_2$ such that $D_i\cap \cF_i$ consists of a 2-dimensional disk which intersects the dividing set of $\cF_i$ in a single arc. We glue $W_1\setminus \Int(D_1)$ to $W_2\setminus \Int(D_2)$ using an orientation-reversing diffeomorphism which restricts to an orientation-reversing diffeomorphism of $\cF_1\cap \d D_1$ and $\cF_2\cap \d D_2$, and is compatible with the dividing sets.  We write $(W_1\# W_2,\cF_1\# \cF_2)$ for the resulting link cobordism.  Using a handle cancellation argument
in the connected sum region, one can prove that
\begin{equation}
F_{W_1\# W_2, \cF_1\# \cF_2,\frs_1\# \frs_2}\simeq F_{W_1,\cF_1,\frs_1}\otimes F_{W_2, \cF_2, \frs_2}.
\label{eq:connectedsum=tensorprod}
\end{equation}
See \cite{ZemConnectedSums}*{Proposition~5.2} for a detailed proof. To make
use of equation~\eqref{eq:connectedsum=tensorprod}, it will be convenient to
have a more explicit description of the cobordism map for $(W_1\# W_2,\cF_1\#
\cF_2)$. To this end, suppose  the following:
\begin{enumerate}
\item $(Y_1,\bL_1)$ and $(Y_2,\bL_2)$ are two 3-manifolds with multi-based links.
\item $\bS_0\subset Y_1\sqcup Y_2$ is a framed 0-sphere with one foot in $Y_1$ and the other in $Y_2$.
\item  $\bS_2\subset Y_1\# Y_2$ is the dual framed 2-sphere.
\item $B$ is a band in $Y_1\# Y_2$ that connects $\bL_1$ and $\bL_2$, and intersects $\bS_2$ in a single arc.
\item $B$ is adjacent to the basepoints $w_1$ and $z_1$ on $\bL_1$, as well as $w_2$ and $z_2$ on $\bL_2$.
\item $B'$ is the band in $Y_1\# Y_2$ attached to $\bL_1 \# \bL_2$ dual to $B$.
\end{enumerate}
When $(W_i, \cF_i)$ is the identity cobordism of $(Y_i, \bL_i)$ for $i \in \{1, 2\}$,
equation~\eqref{eq:connectedsum=tensorprod} can be rewritten as
\begin{equation}
F_{\bS_2} F_{B'}^{\ws} \Phi_w F_{B}^{\ws} F_{\bS_0}\simeq \id,
\label{eq:connectedsum=tensorprod2}
\end{equation}
where $w \in \{w_1,w_2\}$. Figure~\ref{fig::27} shows equation~\eqref{eq:connectedsum=tensorprod2}
in terms of dividing sets.

\begin{figure}[ht!]
\centering
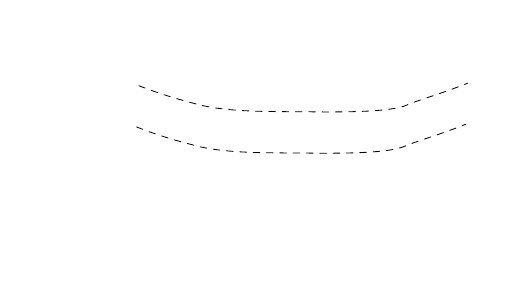
\caption{A schematic of equation~\eqref{eq:connectedsum=tensorprod2}, the
effect of taking the connected sum of two link cobordisms. There is
additionally a 4-dimensional 1-handle and 3-handle, which are not shown.}
\label{fig::27}
\end{figure}

\section{Heegaard Floer invariants of surfaces}\label{sec:invariants}

\subsection{Variations on the knot Floer complex}
\label{sec:variations}
In this section, we describe several variations on the full infinity complex $\cCFL^\infty(Y,\bL,\frs)$, which we will use to define our invariants. We focus on the case that $Y=\Sphere^3$ and $\bL=\bK = (K, w, z)$ is a doubly-based knot.

The first variation we consider is the  \emph{standard infinity complex}, denoted $\CFK^\infty(\bK)$. It is defined as the homogeneous subset of $\cCFL^\infty(\bK)$ in Alexander grading zero; i.e., the $\bF_2$-vector space generated by monomials
\[
U^i V^j\cdot \xs
\]
with $A(\xs)+j-i=0$. Since the actions of $U$ and $V$ are $-1$ and $+1$ graded with respect to the Alexander grading, the chain complex $\CFK^\infty(\bK)$ is not an $\bF_2[U,V]$-module. However the product $\hat{U} := UV$ is 0-graded with respect to $A$, and we view $\CFK^\infty(\bK)$ as an $\bF_2[\hat{U},\hat{U}^{-1}]$-module. The complex $\CFK^\infty(\bK)$ contains essentially the same information as $\cCFL^\infty(\bK)$. We will use $\CFK^\infty(\bK)$ to define our invariants $V_k(S,S')$ and $\cI(S)$.

There are two \emph{small minus complexes},
\[
\CFK^-_{U=0}(\bK):=\cCFL^-(\bK)\otimes_{\cR^-} \cR^-/(U)\,\text{ and }\, \CFK^-_{V=0}(\bK):=\cCFL^-(\bK)\otimes_{\cR^-} \cR^-/(V),
\]
which are modules over $\bF_2[V]$ and $\bF_2[U]$, respectively.
By inverting $V$ or $U$, respectively, we obtain the \emph{small infinity complexes}
\[
\CFK^\infty_{U=0}(\bK) \quad \text{and} \quad \CFK^\infty_{V=0}(\bK),
\]
whose homologies are canonically isomorphic to $\bF_2[V,V^{-1}]$ and $\bF_2[U,U^{-1}]$, respectively.

By setting $V=1$ in the complex $\CFK^-_{U=0}(\bK)$, we obtain the \emph{Alexander filtered complex}
\[
\hCFKfz(\bK),
\]
described by Ozsv\'{a}th and Szab\'{o}~\cite{OS4ballgenus}.
It has an increasing filtration over $\Z$; i.e., we have an increasing sequence of subcomplexes
\[
\hCFKfz_i(\bK) \subset \hCFKfz_{i+1}(\bK)
\]
for $i \in \Z$. The subspace $\hCFKfz_i(\bK)$ is generated by the set of
intersection points $\xs \in \bT_{\a} \cap \bT_{\b}$ with $A(\xs) \le i$.
The differential counts holomorphic disks which are allowed to pass over $z$, but not $w$.
Note that
\[
\hCFKfz_i(\bK) = \hCFKfz(\bK) \iso \hat{\CF}(\Sphere^3)
\]
for sufficiently large $i$, while $\hCFKfz_i(\bK) = \{0\}$ for sufficiently negative $i$. In particular, the total homology of $\hCFKfz(\bK)$ is isomorphic to $\hat{\HF}(\Sphere^3)\iso \bF_2$.

Symmetrically, one can filter using the $w$ basepoint to get a $\Z$-filtered chain complex $\hCFKfw(\bK)$. The filtration $\hCFKfw(\bK)$ is \emph{decreasing}; i.e.,
\[
\hCFKfw_{i+1}(\bK)\subset \hCFKfw_{i}(\bK),
\]
where $\hCFKfw_{i}(\bK)$ is generated over $\bF_2$ by intersection points $\xs$ with $A(\xs)\ge i$.

\begin{rem}\label{rem:equivalence-filtered-versus-polynomial}
The  chain complex $\CFK^-_{U=0}(\bK)$ and the filtered chain complex $\hCFKfz(\bK)$ contain equivalent information. To see this, note that $\hCFKfz(\bK)$ is obtained from $\CFK^-_{U=0}(\bK)$ by setting $V=1$, and using the filtration induced by the Alexander grading. In the other direction, $\CFK^-_{U=0}(\bK)$ is obtained by taking a basis of intersection points of $\hCFKfz(\bK)$, and weighting an intersection point $\ys$ appearing in $\d \xs$ by $V^{A(\xs)-A(\ys)}$.
\end{rem}

Furthermore, there is a conjugation symmetry of knot Floer homology that allows one to recover $\CFK^-_{V=0}(\bK)$ from $\CFK^-_{U=0}(\bK)$, and vice versa, and similarly recover $\hCFKfw(\bK)$ from $\hCFKfz(\bK)$.
Our invariant $\tau(S,S')$ will be defined in Section~\ref{subsec:taudef} using $\hCFKfz(\bK)$,
while $\kappa(S,S')$ and $\kappa_0(S)$ in Section~\ref{sec:kappa}
in terms of $\CFK^-_{U=0}(\bK)$.

Another variation we use to construct our invariants is the \emph{$t$-modified complex}, denoted $\tCFK^-(\bK)$, due to Ozsv\'{a}th--Stipsicz--Szab\'{o} \cite{OSSUpsilon}. If $t=\frac{m}{n}\in [0,2]$ is a rational number with $m$ and $n$ relatively prime integers and $n\neq 0$, then we consider the ring $\bF_2[v^{1/n}]$, where $v$ has grading $-1$. The ring $\bF_2[v^{1/n}]$ can be given an action of $\bF_2[U,V]$ by having $U$ act by  $v^{2-t}$ and $V$ act by $v^t$. The chain complex $\tCFK^-(\bK)$ is defined as the tensor product
\[
\tCFK^-(\bK):=\cCFL^-(\bK)\otimes_{\bF_2[U,V]} \bF_2[v^{1/n}].
\]
The invariant $\Upsilon_{(S,S')}(t)$ will be defined in Section~\ref{sec:upsilon} using $\tCFK^-(\bK)$.

A final variation is the \emph{hat complex}
\[
\hat{\CFK}(\bK):=\cCFL^-(\bK)\otimes_{\cR^-} \cR^-/(U,V).
\]
We will not discuss $\hat{\CFK}(\bK)$ extensively in this paper,
since it does not contain enough information to compute most of our invariants.

\subsection{The principal invariants of a surface bounding a knot}\label{sec:invariant}
We now describe two generalizations of the slice disk invariant
$t_{D} \in \hat{\HFK}(\bK)$, defined by Marengon and the first author~\cite{JMConcordance},
to higher genus surfaces in the full infinity complex.

\begin{define}
Let $\bK=(K,w,z)$ be a multi-based knot in $\Sphere^3$, and let $S \in \Surf_g(K)$ be a surface in $B^4$ bounding $K$.
If $\cA$ is a decoration on $S$ consisting of a single arc which divides $S$ into two connected subsurfaces, then we say that the map $F_{B^4,(S,\cA)}$ is a \emph{principal invariant} of the surface $S$.

Let $\cA_{\zs}$ denote the decoration on $S$ consisting of a single arc
such that $g(S_{\zs})=g(S)$ and $g(S_{\ws})=0$; see Figure~\ref{fig::30}. We  define
\[
\ve{t}^\infty_{S, \zs} := F_{B^4, (S, \cA_{\zs})} \colon \cR^\infty \to \cCFL^\infty(\bK).
\]
Similarly, if $\cA_{\ws}$ denotes the decoration on $S$ with $g(S_{\ws})=g(S)$ and $g(S_{\zs})=0$,  we define
\[
\ve{t}^\infty_{S, \ws} := F_{B^4, (S, \cA_{\ws})} \colon \cR^\infty \to \cCFL^\infty(\bK).
\]
We call $\ve{t}^\infty_{S, \ws}$ and $\ve{t}^\infty_{S, \zs}$ the \emph{extremal principal invariants}
of the surface~$S$.
\end{define}

Both $\ve{t}^\infty_{S, \ws}$ and $\ve{t}^\infty_{S, \zs}$ are filtered, $\cR^\infty$-equivariant
chain map that are well-defined up to filtered, $\cR^\infty$-equivariant chain homotopy.
Furthermore, both of them induce isomorphisms on homology by \cite[Theorem~9.9]{ZemAbsoluteGradings}.
By \cite[Theorem~1.4]{ZemAbsoluteGradings}, the map $\ve{t}_{S, \ws}^\infty$
decreases the Alexander grading by $g(S)$, while $\ve{t}_{S, \zs}^\infty$ increases it by $g(S)$.
When $D \in \cD(K)$ is a slice disk for $K$, then $\cA_{\zs} = \cA_{\ws}$,
and we denote $\ve{t}^\infty_{D, \zs} = \ve{t}^\infty_{D, \ws}$ by $\ve{t}^\infty_D$.

\begin{figure}
	\centering
\begingroup%
  \makeatletter%
  \providecommand\color[2][]{%
    \errmessage{(Inkscape) Color is used for the text in Inkscape, but the package 'color.sty' is not loaded}%
    \renewcommand\color[2][]{}%
  }%
  \providecommand\transparent[1]{%
    \errmessage{(Inkscape) Transparency is used (non-zero) for the text in Inkscape, but the package 'transparent.sty' is not loaded}%
    \renewcommand\transparent[1]{}%
  }%
  \providecommand\rotatebox[2]{#2}%
  \newcommand*\fsize{\dimexpr\f@size pt\relax}%
  \newcommand*\lineheight[1]{\fontsize{\fsize}{#1\fsize}\selectfont}%
  \ifx\svgwidth\undefined%
    \setlength{\unitlength}{119.93686037bp}%
    \ifx\svgscale\undefined%
      \relax%
    \else%
      \setlength{\unitlength}{\unitlength * \real{\svgscale}}%
    \fi%
  \else%
    \setlength{\unitlength}{\svgwidth}%
  \fi%
  \global\let\svgwidth\undefined%
  \global\let\svgscale\undefined%
  \makeatother%
  \begin{picture}(1,0.67895642)%
    \lineheight{1}%
    \setlength\tabcolsep{0pt}%
    \put(0,0){\includegraphics[width=\unitlength,page=1]{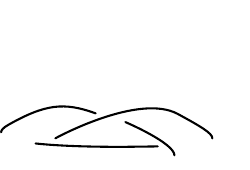}}%
    \put(0.89452391,0.05727341){\color[rgb]{0,0,0}\makebox(0,0)[lt]{\lineheight{1.25}\smash{\begin{tabular}[t]{l}$K$\end{tabular}}}}%
    \put(0.84449826,0.4056707){\color[rgb]{0,0,0}\makebox(0,0)[lt]{\lineheight{1.25}\smash{\begin{tabular}[t]{l}$\Sigma$\end{tabular}}}}%
    \put(0,0){\includegraphics[width=\unitlength,page=2]{fig30.pdf}}%
  \end{picture}%
\endgroup%

	\caption{The dividing set $\cA_{\zs}$ used to define the map $\ve{t}_{\Sigma,\zs}^\infty$.}
    \label{fig::30}
\end{figure}

Although the chain complexes $\cCFL^\infty(\bK)$ and $\cCFL^-(\bK)$ contain equivalent information,
it is convenient to also define maps
\[
\ve{t}^-_{S, \ws} \text{, } \ve{t}^-_{S, \zs} \colon \cR^-\to \cCFL^-(\bK)
\]
as the cobordism maps on the full minus complexes. For a slice disk $D$, we again have
$\ve{t}^-_{D, \ws} = \ve{t}^-_{D, \zs}$, which we denote by $\ve{t}^-_D$.
By \cite{JZContactHandles}*{Theorem~1.4}, when we set $U = V = 0$, the map $\ve{t}_D^-$,
defined using the maps from \cite{ZemCFLTQFT}, becomes $t_D$, defined in \cite{JMConcordance}.

We note that the elements $[\ve{t}_{S,\ws}^-(1)]$ and $[\ve{t}_{S,\zs}^-(1)]$
in the homology group $\cHFL^-(\bK):=H_*(\cCFL^-(\bK))$ contain exactly the
same information as the $\cR^-$-equivariant, $\Z\oplus \Z$-filtered chain
homotopy types of the maps
\[
\ve{t}_{S,\ws}^-,\, \ve{t}_{S,\zs}^-\colon \cR^-\to \cCFL^-(\bK),
\]
since two filtered, equivariant maps from $\cR^-$ to $\cCFL^-(\bK)$ are
filtered, equivariantly chain homotopic if and only if their values on $1\in
\cR^-$ differ by $\d \eta$ for some  $\eta \in \cCFL^-(\bK)$. Nonetheless,
we will usually not view $\ve{t}_{S,\ws}^-$ and $\ve{t}_{S,\zs}^-$ as elements of
$\cHFL^-(\bK)$ because, on its own, the group $\cHFL^-(\bK)$ is not sufficient to define our invariants.

We now describe some basic properties of the invariants $\ve{t}_{S,\ws}^\infty$ and $\ve{t}_{S,\zs}^\infty$.
Let $r$ denote the rolling automorphism of $(\Sphere^3, \bK)$ that consists of a Dehn twist about $K$;
see Definition~\ref{def:rolling}. Then
\begin{equation}
r_* \circ \ve{t}_{S, \zs}^\infty \simeq \ve{t}_{S, \zs}^\infty,
\label{eq:SarkarmaptSz}
\end{equation}
and similarly for $\ve{t}_{S,\ws}^\infty$,
as the corresponding dividing sets differ by a Dehn twist about $\d S$, and are hence isotopic.

\begin{define}
Let $C$ and $C'$ be free, $\Z \oplus \Z$-filtered chain complexes over $\cR^\infty$.
We say that a map $f \colon C \to C'$ is \emph{skew-equivariant} and \emph{skew-filtered} if
\[
f \circ U = V \circ f, \quad f \circ V = U \circ f, \quad \text{and} \quad f(\cG_{i,j}(C)) \subset \cG_{j,i}(C').
\]
Given skew-equivariant and skew-filtered chain maps $f \text{, } g \colon C \to C'$,
we say that they are \emph{skew-equivariant and skew-filtered chain homotopic}, and write $f \eqsim g$,
if they are chain homotopic through a skew-equivariant and skew-filtered chain homotopy.
\end{define}

There is a skew-equivariant and skew-filtered homotopy automorphism
\[
\iota_K \colon \cCFL^\infty(\bK) \to \cCFL^\infty(\bK),
\]
defined as the composition of a tautological conjugation automorphism of
$\cCFL^\infty(\bK)$, and the map induced by a half Dehn twist about $K$
that switches $w$ and $z$; see \cite{HMInvolutive}*{Section~6.1}.
The map $\iota_K$ satisfies $\iota_K^2 \simeq r_*$.

Using the conjugation formula for the link cobordism maps \cite{ZemConnectedSums}*{Theorem~1.3},
as well as the same manipulation of dividing sets as in equation~\eqref{eq:SarkarmaptSz}, one obtains
\begin{equation}
\ve{t}_{S ,\ws}^\infty \circ \iota_{\cR^\infty} \eqsim \iota_K \circ \ve{t}_{S,\zs}^\infty,
\label{eq:conjugationandtSigmaw}
\end{equation}
where $\iota_{\cR^\infty} \colon \cR^\infty \to \cR^\infty$ denotes the unique
involution that switches $U$ and $V$.
Using equation~\eqref{eq:SarkarmaptSz}, we can rewrite equation~\eqref{eq:conjugationandtSigmaw} as
\begin{equation}
\iota_K \circ \ve{t}_{S, \ws}^{\infty} \circ \iota_{\cR^\infty} \simeq \ve{t}_{S, \zs}^{\infty}.
\label{eq:equationiotaupsilon}
\end{equation}
Hence, together with the conjugation automorphism, $\ve{t}_{S,\ws}^\infty$ and $\ve{t}_{S,\zs}^{\infty}$
provide essentially equivalent information.

In the following subsections, we introduce several invariants of a pair of surfaces $S$, $S' \in \Surf(K)$,
in order to give lower bounds on their stabilization distance. These are all derived from the principal invariants
for $S$ and $S'$. Furthermore, our invariants $\tau$, $\nu$, $V_k$, and $\Upsilon$
are constructed as algebraic analogues of the homonymous knot invariants from knot Floer homology.
Hence, we shall sometimes call these \emph{secondary versions} of their knot invariant counterparts.

\subsection{The tau invariant}\label{subsec:taudef}

Let $\bK = (K, w, z)$ be a doubly-based knot in $\Sphere^3$.
We now describe a map
\[
\tau \colon \Surf(K) \times \Surf(K) \to \Z^{\ge 0}.
\]
It is a secondary version of the concordance invariant defined by Ozsv\'{a}th and Szab\'{o} \cite{OS4ballgenus}.
Let $(\S,\as,\bs,w,z)$ be a doubly-pointed diagram representing $\bK$. We will define
the invariant $\tau(S, S')$ for $S$, $S' \in \Surf(K)$ in terms of the Alexander filtered
complex $\hCFKfz(\bK)$ described in Section~\ref{sec:variations}.

Following Section~\ref{sec:invariant}, a surface $S \in \Surf_g(K)$ induces a chain map
$\hat{t}_{S, \zs} \colon \bF_2 \to \hCFKfz(\bK)$ whose  image is contained in $\hCFKfz_g(\bK)$. We recall that for sufficiently large $n$, we have
\begin{equation}
H_*(\hCFKfz_n(\bK))\iso \HFh(S^3)=\bF.\label{eq:large-n-iso-F}
\end{equation} Additionally, it follows from the reduction theorem \cite{ZemCFLTQFT}*{Theorem~C} that with respect to the isomorphism in equation~\eqref{eq:large-n-iso-F}, the element $\hat{t}_{S,\ve{z}}(1)$ represents $1\in \bF$. We make the following definition: 

\begin{define}
Let $\bK = (K, w, z)$ be a doubly-based knot in $\Sphere^3$.
Given surfaces $S \in \Surf_g(K)$ and $S' \in \Surf_{g'}(K)$, we define the invariant
\[
\tau(S, S') = \min \left\{ n \ge \max\{g, g'\} : \left[\hat{t}_{S, \zs}(1) \right] =
\left[\hat{t}_{S', \zs}(1)\right] \text{ in } H_*\left(\hCFKfz_n(\bK)\right) \right\}.
\]
\end{define}

It is straightforward to see that $\tau(S, S')$ is independent of the choice of basepoints on $K$, and is furthermore a finite integer.

\begin{lem}
  Let $S_1$, $S_1'$, $S_2$, $S_2' \in \Surf(K)$ be surfaces such that
  $[S_1] = [S_2]$ and $[S_1'] = [S_2']$ in $\Surf(K)/\{\text{2-knots}\}$.
  Then
  \[
  \tau(S_1, S_1') = \tau(S_2, S_2').
  \]
\end{lem}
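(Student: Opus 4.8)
The plan is to reduce to a single genus-zero stabilization and to show that such a move leaves the extremal principal invariant $\ve{t}^-_{S,\zs}$ unchanged up to filtered, $\cR^-$-equivariant chain homotopy; then the filtration level at which $\hat{t}_{S,\zs}(1)$ and $\hat{t}_{S',\zs}(1)$ become homologous will not change, which is exactly the quantity $\tau$ records. First I would unwind the hypothesis: ``$[S_1]=[S_2]$ in $\Surf(K)/\{\text{2-knots}\}$'' means $S_1$ and $S_2$ are joined by a finite chain of genus-zero stabilizations and destabilizations, and likewise for $S_1'$ and $S_2'$. Since a genus-zero stabilization does not change the genus, this already gives $g(S_1)=g(S_2)=:g$ and $g(S_1')=g(S_2')=:g'$, and it suffices to treat a single stabilization taking $S$ to $S'$ (a destabilization being its inverse, and the general chain following by induction).

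Next I would realize such a stabilization as an interior connected sum of decorated link cobordisms. Replacing the trivially embedded disk $S\cap B^4$ by the genus-zero surface $S_0$ and capping $S_0$ off with a trivial disk across the boundary sphere produces a $2$-knot $T\subset\Sphere^4$, and the key verification will be that $(B^4,(S',\cA_{\zs}))$ is the connected sum of $(B^4,(S,\cA_{\zs}))$ with $(\Sphere^4,(T,\gamma))$, where $\gamma$ is a single dividing circle on $T$ and the connected sum is formed along disks meeting the two dividing sets in one arc each, so that the resulting decoration on $S'$ is again of type $\cA_{\zs}$. Working at the level of $\cCFL^-$, I would then apply the connected-sum formula~\eqref{eq:connectedsum=tensorprod} (cf.\ \cite{ZemConnectedSums}*{Proposition~5.2}) to obtain $\ve{t}^-_{S',\zs}\simeq \ve{t}^-_{S,\zs}\otimes_{\cR^-}F^-_{\Sphere^4,(T,\gamma)}$; since $F^-_{\Sphere^4,(T,\gamma)}\colon\cR^-\to\cR^-$ is $\cR^-$-equivariant it is multiplication by the element $p:=F^-_{\Sphere^4,(T,\gamma)}(1)\in\cR^-$, so that $\ve{t}^-_{S',\zs}\simeq p\cdot\ve{t}^-_{S,\zs}$.

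The remaining point is that $p=1$. Tensoring with $\cR^\infty$, the same relation gives $\ve{t}^\infty_{S',\zs}\simeq p\cdot\ve{t}^\infty_{S,\zs}$ with $p$ now regarded in $\cR^\infty$; since both $\ve{t}^\infty_{S,\zs}$ and $\ve{t}^\infty_{S',\zs}$ induce isomorphisms on homology (\cite{ZemAbsoluteGradings}*{Theorem~9.9}), multiplication by $p$ is an automorphism of $\cHFL^\infty(\bK)\iso\cR^\infty$, so $p$ is a unit in $\cR^\infty$, i.e.\ a Laurent monomial $U^aV^b$; as $p\in\cR^-=\bF_2[U,V]$ this forces $p=1$. (Alternatively, one can pin down $p$ by comparing the $(\gr_{\ws},\gr_{\zs})$-bidegrees of $\ve{t}^\infty_{S,\zs}(1)$ and $\ve{t}^\infty_{S',\zs}(1)$, which agree because $S$ and $S'$ have the same genus and relative homology class, using the grading formula \cite{ZemAbsoluteGradings}*{Theorem~1.4}.) Hence $\ve{t}^-_{S',\zs}\simeq\ve{t}^-_{S,\zs}$ through a filtered $\cR^-$-equivariant chain homotopy.

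Finally I would set $U=0$ and $V=1$: the filtered homotopy descends to a filtered chain homotopy $H$ between $\hat{t}_{S,\zs}$ and $\hat{t}_{S',\zs}$ as maps $\bF_2\to\hCFKfz(\bK)$, and since both maps have image in $\hCFKfz_g(\bK)$ we get $H(1)\in\hCFKfz_g(\bK)$ with $\hat{t}_{S,\zs}(1)$ and $\hat{t}_{S',\zs}(1)$ differing by $\partial H(1)$; thus they are homologous in $\hCFKfz_g(\bK)$, hence in $\hCFKfz_n(\bK)$ for every $n\ge g$. Chaining this along the sequences connecting $S_1$ to $S_2$ and $S_1'$ to $S_2'$ yields $[\hat{t}_{S_1,\zs}(1)]=[\hat{t}_{S_2,\zs}(1)]$ and $[\hat{t}_{S_1',\zs}(1)]=[\hat{t}_{S_2',\zs}(1)]$ in $H_*(\hCFKfz_n(\bK))$ for all $n\ge\max\{g,g'\}$, and since this common threshold equals both $\max\{g(S_1),g(S_1')\}$ and $\max\{g(S_2),g(S_2')\}$, the two sets of integers defining $\tau(S_1,S_1')$ and $\tau(S_2,S_2')$ coincide, so their minima do as well. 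I expect the main obstacle to be the topological step — exhibiting a genus-zero stabilization as a connected sum with a decorated $2$-knot so that the hypotheses of~\eqref{eq:connectedsum=tensorprod} are met and the decoration remains of type $\cA_{\zs}$; once that is in place, the underlying fact that the full link Floer TQFT assigns the identity to a decorated $2$-knot in $\Sphere^4$ is essentially forced by $\cR^-$-equivariance together with the isomorphism-on-homology statement.
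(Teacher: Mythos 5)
Your proposal follows the same route as the paper: the paper's proof consists of the single observation that $\hat{t}_{S,\zs}$ is unchanged, up to filtered chain homotopy, under connected sum with a 2-knot (citing an adaptation of \cite[Lemma~4.2]{SliceDisks}), and your argument is precisely a fleshed-out version of that observation, using the interior connected sum formula of \cite{ZemConnectedSums}*{Proposition~5.2} to write $\ve{t}^-_{S',\zs}\simeq p\cdot \ve{t}^-_{S,\zs}$ and then arguing $p=1$. The reduction to a single genus-zero ($(1,0)$-)stabilization, the identification of such a move with a connected sum with a decorated 2-knot, and the final passage to $\hCFKfz$ are all sound and consistent with how the paper treats these moves elsewhere (compare Lemma~\ref{lem:stabilizationlemma}, which for a genus-zero stabilization with the $\cA_{\zs}$ decoration gives $U^0V^0\cdot F_{W,\cF,\frs}$ directly).

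One step is wrong as primarily stated, though you supply the correct fix in parentheses. From ``$p$ is a unit in $\cR^\infty$'' and ``$p\in\cR^-$'' you cannot conclude $p=1$: the elements $U$ and $V$ are themselves units in $\cR^\infty=\bF_2[U,V,U^{-1},V^{-1}]$ and lie in $\cR^-$, so this only pins $p$ down to a monomial $U^aV^b$ with $a,b\ge 0$. The grading comparison you offer as an alternative is in fact the necessary argument: since $g(S)=g(S')$ and both surfaces lie in $B^4$, the bigrading shifts of $\ve{t}^\infty_{S,\zs}$ and $\ve{t}^\infty_{S',\zs}$ agree by \cite{ZemAbsoluteGradings}*{Theorem~1.4}, which forces $a=b=0$. (Equivalently, one can quote the paper's stabilization formula, whose proof computes the monomial via the closed-surface formula of Lemma~\ref{lem:closedsurfacecomputation} rather than by a unit argument.) With that substitution your proof is complete.
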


\begin{proof}
This follows from the observation that $\hat{t}_{S, \zs}$
is unchanged, up to filtered chain homotopy,
if we take the connected sum of $S$ with a 2-knot,
which can be shown by adapting the proof of \cite[Lemma~4.2]{SliceDisks}.
\end{proof}

We recall that the concordance invariant $\tau(K)$ may be
computed from the $\bF_2[V]$-module
\[
\CFK^-_{U=0}(\bK) := \cCFL^-(\bK) \otimes_{\cR^-} \cR^-/(U),
\]
obtained from $\cCFL^-(\bK)$ by setting $U = 0$; see Ozsv\'ath--Szab\'o--Thurston \cite{OSTLegendrian}*{Lemma~A.2}. (Note that Ozsv\'{a}th, Szab\'{o}, and Thurston use the $V=0$ version of knot Floer homology. Using the conjugation symmetry, these are equivalent perspectives).

 Analogously, we can reformulate $\tau(S, S')$ in terms of $\CFK^-_{U=0}(\bK)$. Let us write $t_{S, \zs}^-$ and $t_{S', \zs}^-$ for the maps from $\bF_2[V]$ to $\CFK_{U=0}^-(\bK)$
 induced by $\ve{t}_{S, \zs}^-$ and $\ve{t}_{S', \zs}^-$, respectively.

\begin{lem}\label{lem:slightreformulationoftau}
Let $\bK = (K, w, z)$ be a doubly-based knot in $\Sphere^3$.
If $S \in \Surf_g(K)$ and $S' \in \Surf_{g'}(K)$, then
\[
\tau(S, S') = \min\{\, n \ge \max\{g, g'\} : V^{n - g} \cdot [t_{S, \zs}^-(1)] =
V^{n- g'} \cdot [t_{S', \zs}^-(1)] \text{ in } \HFK^-_{U=0}(\bK) \,\}.
\]
\end{lem}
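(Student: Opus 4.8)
The plan is to reduce the comparison of the two formulas to a single chain-level isomorphism between the $n$-th filtration level of $\hCFKfz(\bK)$ and the Alexander grading $n$ summand of $\CFK^-_{U=0}(\bK)$, and then to check that the distinguished cycles coming from $S$ and $S'$ correspond to the expected elements under it.

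First, I would record that, because $U$ acts as $0$, the Alexander grading $A$ is a genuine grading on $\CFK^-_{U=0}(\bK)$ rather than merely a filtration: any disk $\phi$ contributing to the differential has $n_{\ws}(\phi)=0$, so $A(\xs)-A(\ys)=n_{\zs}(\phi)-n_{\ws}(\phi)=n_{\zs}(\phi)$, and hence the term $V^{n_{\zs}(\phi)}\cdot \ys$ lies in the same Alexander grading as $\xs$. Thus $\CFK^-_{U=0}(\bK)$ splits as a direct sum of complexes over Alexander gradings, and so does $\HFK^-_{U=0}(\bK)=\bigoplus_a \HFK^-_{U=0}(\bK)_a$. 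For a fixed $n$, the $\bF_2$-linear map
\[
\Theta_n\colon \bigl(\CFK^-_{U=0}(\bK)\bigr)_n \longrightarrow \hCFKfz_n(\bK),\qquad V^j\cdot \xs \longmapsto \xs,
\]
is well defined (a homogeneous element of Alexander grading $n$ is a combination of monomials of the form $V^{\,n-A(\xs)}\cdot \xs$ with $A(\xs)\le n$), carries the standard $\bF_2$-basis bijectively to the standard $\bF_2$-basis of $\hCFKfz_n(\bK)$, and is a chain map: applying the differential to $V^{\,n-A(\xs)}\cdot\xs$ produces the terms $V^{\,n-A(\ys)}\cdot\ys$ over exactly the disks $\phi$ with $n_{\ws}(\phi)=0$, which $\Theta_n$ sends to $\sum \ys$, matching the differential on $\hCFKfz(\bK)$ (which counts precisely the disks avoiding $w$). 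This is the chain-level statement underlying \cite{OSTLegendrian}*{Lemma~A.2}; in particular $\Theta_n$ induces an isomorphism $H_*(\hCFKfz_n(\bK))\cong \HFK^-_{U=0}(\bK)_n$.

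Next, I would identify the two classes under $\Theta_n$. By \cite{ZemAbsoluteGradings}*{Theorem~1.4} the map $\ve{t}_{S,\zs}^-$ shifts the Alexander grading by $+g$, and passing to $\CFK^-_{U=0}(\bK)$ preserves $A$, so $t_{S,\zs}^-(1)$ is a cycle that is homogeneous of Alexander grading $g$; write $t_{S,\zs}^-(1)=\sum_i V^{j_i}\cdot \xs_i$ with $A(\xs_i)+j_i=g$. Setting $V=1$ gives $\hat{t}_{S,\zs}(1)=\sum_i \xs_i$, a cycle in $\hCFKfz_g(\bK)\subset \hCFKfz_n(\bK)$ for $n\ge g$, and then
\[
\Theta_n^{-1}\bigl(\hat{t}_{S,\zs}(1)\bigr)=\sum_i V^{\,n-A(\xs_i)}\cdot \xs_i = V^{\,n-g}\sum_i V^{j_i}\cdot \xs_i = V^{\,n-g}\cdot t_{S,\zs}^-(1).
\]
Hence under the isomorphism $H_*(\hCFKfz_n(\bK))\cong \HFK^-_{U=0}(\bK)_n$ the class $[\hat{t}_{S,\zs}(1)]$ corresponds to $V^{\,n-g}\cdot [t_{S,\zs}^-(1)]$, and the identical computation for $S'$ shows $[\hat{t}_{S',\zs}(1)]$ corresponds to $V^{\,n-g'}\cdot [t_{S',\zs}^-(1)]$.

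Finally, since $\Theta_n$ is an isomorphism and both $V^{\,n-g}\cdot [t_{S,\zs}^-(1)]$ and $V^{\,n-g'}\cdot [t_{S',\zs}^-(1)]$ lie in $\HFK^-_{U=0}(\bK)_n$, for every $n\ge \max\{g,g'\}$ we have $[\hat{t}_{S,\zs}(1)]=[\hat{t}_{S',\zs}(1)]$ in $H_*(\hCFKfz_n(\bK))$ if and only if $V^{\,n-g}\cdot [t_{S,\zs}^-(1)]=V^{\,n-g'}\cdot [t_{S',\zs}^-(1)]$ in $\HFK^-_{U=0}(\bK)$; taking the minimum over all such $n$ on both sides yields the stated formula. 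I expect the only genuinely delicate point to be the first step—checking that $\Theta_n$ is a chain isomorphism, not merely a graded vector space isomorphism—which hinges on the vanishing $n_{\ws}(\phi)=0$ forced by $U=0$ together with matching the $V$-powers produced by the differential against the Alexander drop $A(\xs)-A(\ys)=n_{\zs}(\phi)$.
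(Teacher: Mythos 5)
Your proof is correct and follows essentially the same route as the paper: both arguments rest on identifying $\hCFKfz_n(\bK)$ with the Alexander-grading-$n$ summand of $\CFK^-_{U=0}(\bK)$ via setting $V=1$ (equivalently, filling in the unique powers of $V$), using that the differential preserves the Alexander grading and that $t^-_{S,\zs}(1)$ is homogeneous of grading $g$. You package this as a single chain isomorphism $\Theta_n$, whereas the paper proves the two inequalities by exhibiting the bounding chains $x$ and $\tilde{x}$ explicitly, but the underlying content is identical.
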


\begin{proof}
Let us write $\zeta(S, S')$ for the right-hand side. If $n = \zeta(S, S')$, then
\[
V^{n - g} \cdot t_{S, \zs}^-(1) + V^{n - g'} \cdot t_{S', \zs}^-(1) = \d x
\]
for some $x \in \CFK_{U=0}^-(\bK)$. Note that $\d$ preserves the Alexander
grading, $V$ increases it by one, $A(t_{S, \zs}^-(1)) = g$,
and $A(t_{S', \zs}^-(1)) = g'$. Hence, we can assume that $A(x) = n$.
Consequently, using the identification
\[
\hCFKfz(\bK) \iso \CFK_{U=0}^-(\bK) \otimes_{\bF_2[V]} \bF_2[V]/(V-1),
\]
we have $x \otimes 1 \in \hCFKfz_n(\bK)$, and
\[
\hat{t}_{S, \zs}(1) + \hat{t}_{S', \zs}(1) = \d (x \otimes 1) \in \hCFKfz_n(\bK).
\]
It follows that
\[
\tau(S, S') \le \zeta(S, S').
\]

Conversely, suppose $n = \tau(S, S')$. Then
$\hat{t}_{S, \zs}(1) + \hat{t}_{S', \zs}(1) = \d x$ for some $x \in \hCFKfz_n(\bK)$. We write $x$ as a sum of intersection points $x=\sum_{i=1}^n \xs_i$. We define an element $\tilde x\in \CFK_{U=0}^-(\bK)$ in Alexander grading $n$ via the formula $\tilde x=\sum_{i=1}^n V^{n-A(\xs_i)} \xs_i$. The element $\tilde x$ satisfies 
\[
V^{n-g} \cdot t_{S, \zs}^-(1) + V^{n-g'} \cdot t_{S', \zs}^-(1) = \d \tilde{x}.
\]
Compare Remark~\ref{rem:equivalence-filtered-versus-polynomial}.
 It follows that
\[
\zeta(S, S') \le \tau(S, S'),
\]
which concludes the proof.
\end{proof}

Let $K_1$ and $K_2$ be knots in $\Sphere^3$.
Given surfaces $S_1 \in \Surf(K_1)$ and $S_2 \in \Surf(K_2)$, their boundary
connected sum $S_1 \natural S_2$ is an element of $\Surf(K_1 \# K_2)$.

\begin{prop}
Let $K_1$ and $K_2$ be knots in $\Sphere^3$.
If $S_1$, $S_1' \in \Surf(K_1)$ and $S_2$, $S_2' \in \Surf(K_2)$
are surfaces of genera $g_1$, $g_1'$, $g_2$, and $g_2'$, respectively, then
\[
\tau(S_1 \natural S_2, S_1' \natural S_2') \le
\max \left\{\tau(S_1, S_1') + \max\{g_2, g_2'\},
\tau(S_2, S_2') + \max\{g_1, g_1'\} \right\}.
\]
Furthermore, when $g_1 = g_1' = g_2 = g_2' = 0$, then equality holds.
\end{prop}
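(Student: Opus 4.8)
The plan is to reduce both claims, via Lemma~\ref{lem:slightreformulationoftau}, to statements about the classes $[t^-_{S,\zs}(1)]\in\HFK^-_{U=0}(\bK)$, and then to exploit the behaviour of the extremal principal invariant under boundary connected sum. Write $\bK_i=(K_i,w_i,z_i)$ and abbreviate $g_i=g(S_i)$, $g_i'=g(S_i')$, $m=\tau(S_1,S_1')$, $n=\tau(S_2,S_2')$. The first step is to prove the \emph{connected-sum formula}
\[
t^-_{S_1\natural S_2,\zs}(1)\ \simeq\ t^-_{S_1,\zs}(1)\otimes t^-_{S_2,\zs}(1)
\]
under the Künneth identification $\CFK^-_{U=0}(\bK_1\#\bK_2)\simeq\CFK^-_{U=0}(\bK_1)\otimes_{\bF_2[V]}\CFK^-_{U=0}(\bK_2)$, which is the $U=0$ specialization of the connected-sum equivalence $\cCFL^-(\bK_1\#\bK_2)\simeq\cCFL^-(\bK_1)\otimes_{\cR^-}\cCFL^-(\bK_2)$. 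This should follow from equation~\eqref{eq:connectedsum=tensorprod}: the boundary connected sum of the cobordisms $(B^4,(S_i,\cA_{\zs}))$ from $\emptyset$ to $(\Sphere^3,\bK_i)$ realizes $(B^4,(S_1\natural S_2,\cA_{\zs}))$, once one checks that the extremal decoration is preserved under the operation — the $\ws$-region of $\cA_{\zs}$ is a disk, so the connected-sum disk can be chosen to meet the dividing set in a single arc, while the genera of the $\zs$-regions add. The genus-zero instance of this is essentially the boundary-connected-sum behaviour of $t_D$ from \cite{SliceDisks}.

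Granting the connected-sum formula, the inequality is elementary algebra in $\HFK^-_{U=0}(\bK_1\#\bK_2)=H_*\bigl(\CFK^-_{U=0}(\bK_1)\otimes_{\bF_2[V]}\CFK^-_{U=0}(\bK_2)\bigr)$. Put $N=\max\{m+\max\{g_2,g_2'\},\ n+\max\{g_1,g_1'\}\}$. Since $\tau(S_i,S_i')\ge\max\{g_i,g_i'\}$, one checks that $N\ge\max\{g_1+g_2,\ g_1'+g_2'\}$ and that all exponents occurring below are nonnegative. Starting from $V^{N-(g_1+g_2)}[t^-_{S_1,\zs}(1)]\otimes[t^-_{S_2,\zs}(1)]$, rewrite it as $V^{m-g_1}[t^-_{S_1,\zs}(1)]\otimes V^{N-m-g_2}[t^-_{S_2,\zs}(1)]$, use $\tau(S_1,S_1')=m$ to replace the first factor by $V^{m-g_1'}[t^-_{S_1',\zs}(1)]$, collect powers of $V$ and rewrite as $V^{N-g_1'-n}[t^-_{S_1',\zs}(1)]\otimes V^{n-g_2}[t^-_{S_2,\zs}(1)]$, use $\tau(S_2,S_2')=n$ to replace the second factor by $V^{n-g_2'}[t^-_{S_2',\zs}(1)]$, and collect once more to reach $V^{N-(g_1'+g_2')}[t^-_{S_1',\zs}(1)]\otimes[t^-_{S_2',\zs}(1)]$. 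Each replacement is valid in the homology of the tensor product because the tensor of a boundary in one factor with a cycle in the other is a boundary. By the connected-sum formula the two ends are $V^{N-(g_1+g_2)}[t^-_{S_1\natural S_2,\zs}(1)]$ and $V^{N-(g_1'+g_2')}[t^-_{S_1'\natural S_2',\zs}(1)]$, so Lemma~\ref{lem:slightreformulationoftau} gives $\tau(S_1\natural S_2,\ S_1'\natural S_2')\le N$.

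For the equality when $g_1=g_1'=g_2=g_2'=0$ we have $N=\max\{m,n\}$, so by symmetry it suffices to prove $\tau(D_1\natural D_2,\ D_1'\natural D_2')\ge\tau(D_1,D_1')$. Using the standard reduction of the knot Floer complex, $\CFK^-_{U=0}(\bK_2)$ is chain homotopy equivalent over $\bF_2[V]$ to $\bF_2[V]\oplus A_2$ with $H_*(A_2)$ entirely $V$-torsion; and since $K_2$ is slice, $\tau(K_2)=0$ forces the free summand $\bF_2[V]$ to be generated in Alexander grading $0$. The classes $[t^-_{D_2,\zs}(1)]$ and $[t^-_{D_2',\zs}(1)]$ lie in Alexander grading $0$ and, by \cite[Theorem~9.9]{ZemAbsoluteGradings}, are not $V$-torsion, so each projects to the generator of this $\bF_2[V]$. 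Tensoring with $\CFK^-_{U=0}(\bK_1)$ gives $\CFK^-_{U=0}(\bK_1\#\bK_2)\simeq\CFK^-_{U=0}(\bK_1)\oplus\bigl(\CFK^-_{U=0}(\bK_1)\otimes_{\bF_2[V]}A_2\bigr)$, and under the projection onto the first summand the connected-sum formula shows $[t^-_{D_1\natural D_2,\zs}(1)]\mapsto[t^-_{D_1,\zs}(1)]$ and $[t^-_{D_1'\natural D_2',\zs}(1)]\mapsto[t^-_{D_1',\zs}(1)]$. Hence any $p$ with $V^p[t^-_{D_1\natural D_2,\zs}(1)]=V^p[t^-_{D_1'\natural D_2',\zs}(1)]$ also satisfies $V^p[t^-_{D_1,\zs}(1)]=V^p[t^-_{D_1',\zs}(1)]$, which gives $\tau(D_1,D_1')\le\tau(D_1\natural D_2,\ D_1'\natural D_2')$; combined with the inequality, this forces equality.

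The step I expect to be the main obstacle is the first one: verifying that the boundary connected sum of the $\cA_{\zs}$-decorated cobordisms is again $\cA_{\zs}$-decorated and that equation~\eqref{eq:connectedsum=tensorprod} applies in this (boundary rather than interior) connected-sum setting, so as to obtain the clean identity $t^-_{S_1\natural S_2,\zs}(1)\simeq t^-_{S_1,\zs}(1)\otimes t^-_{S_2,\zs}(1)$; the higher-genus decoration bookkeeping requires care, though the slice-disk case is already present in \cite{SliceDisks}. Everything after that — the telescoping of $V$-powers for the inequality and the projection argument for equality — is routine, the only nonformal input being that slice-ness of $K_2$ places the $\bF_2[V]$-tower generator of $\HFK^-_{U=0}(\bK_2)$ in Alexander grading $0$.
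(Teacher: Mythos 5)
Your proof is correct, and after the shared first step it follows a genuinely different route from the paper's. Both arguments begin identically: the boundary connected sum of the $\cA_{\zs}$-decorated cobordisms is the $\cA_{\zs}$-decorated cobordism for $S_1\natural S_2$, so functoriality and the connected-sum equivalence give $t^-_{S_1\natural S_2,\zs}(1)\simeq t^-_{S_1,\zs}(1)\otimes t^-_{S_2,\zs}(1)$; your level of detail here matches the paper's, which also just invokes \cite[Proposition~5.1]{ZemConnectedSums} and functoriality. For the inequality, the paper passes to the K\"unneth short exact sequence, splits each $\HFK^-_{U=0}(\bK_i)$ as $\bF_2[V]\oplus T_i$, and verifies the required identity componentwise before pushing it forward; your telescoping of $V$-powers directly in $H_*\bigl(\CFK^-_{U=0}(\bK_1)\otimes_{\bF_2[V]}\CFK^-_{U=0}(\bK_2)\bigr)$, using that a boundary tensored with a cycle is a boundary, is cleaner — it avoids the torsion splitting entirely and, unlike the paper's formula $t^-_{S_i,\zs}(1)=V^{g_i}\oplus s_i$, does not implicitly normalize the Alexander grading of the tower generator. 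For the equality, the paper uses injectivity of the K\"unneth map to pull the identity back to $\HFK^-_{U=0}(\bK_1)\otimes\HFK^-_{U=0}(\bK_2)$ and compares the $T_1\otimes\bF_2[V]$ and $\bF_2[V]\otimes T_2$ components, whereas you project onto the free chain-level summand of the second factor; these are essentially dual maneuvers, and both hinge on the same nonformal input you correctly isolate, namely that sliceness of $K_i$ places the $\bF_2[V]$-tower generator in Alexander grading $0$ (the paper uses this silently). The only caveat is that your equality argument needs the chain-level splitting $\CFK^-_{U=0}(\bK_2)\simeq\bF_2[V]\oplus A_2$ over the PID $\bF_2[V]$, which holds by the standard classification of finitely generated free complexes over a PID, so there is no gap.
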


\begin{proof}
Using the connected sum formula \cite[Proposition~5.1]{ZemConnectedSums}, there is chain homotopy equivalence
\[
F \colon \CFK_{U=0}^-(\bK_1) \otimes_{\bF_2[V]} \CFK_{U=0}^-(\bK_2) \to \CFK_{U=0}^-(\bK_1 \# \bK_2),
\]
where the decoration on $\bK_1 \# \bK_2$ is the decoration of $\bK_1$.
Furthermore, the map $F$ can be taken to be the
link cobordism map for a 1-handle cobordism containing a band. By the functoriality
of the link cobordism maps,
\[
F \circ(t_{S_1, \zs}^- \otimes t_{S_2, \zs}^-) \simeq t_{S_1 \natural S_2, \zs}^-,
\]
and similarly for $S_1'$ and $S_2'$.

By the K\"{u}nneth theorem for tensor products over a PID, there is a short exact sequence
\[
\begin{split}
0 \to \HFK^-_{U=0}(\bK_1) \otimes_{\bF_2[V]} \HFK^-_{U=0}(\bK_2) \xrightarrow{G} \HFK^-_{U=0}(\bK_1 \# \bK_2) \to \\
\Tor_{\bF_2[V]}^1 (\HFK^-(\bK_1), \HFK^-_{U=0}(\bK_2)) \to 0,
\end{split}
\]
where $G$ is the composition of the natural map
\[
\HFK^-_{U=0}(\bK_1) \otimes_{\bF_2[V]} \HFK^-_{U=0}(\bK_2) \to H_*(\CFK_{U=0}^-(\bK_1) \otimes_{\bF_2[V]} \CFK_{U=0}^-(\bK_2))
\]
and $F_*$. The map $G$ sends $[t_{S_1, \zs}^-(1)] \otimes [t_{S_2, \zs}^-(1)]$ to $[t_{S_1 \natural S_2, \zs}^-(1)]$.

For $i \in \{1, 2\}$, the $\bF_2[V]$-module $\HFK^-_{U=0}(\bK_i)$
splits (non-canonically) as $\bF_2[V] \oplus T_i$, where $T_i$ is
a torsion $\bF_2[V]$-module, and
$t_{S_i, \zs}^-(1) = V^{g_i} \oplus s_i$ and $t_{S_i', \zs}^-(1) = V^{g_i'} \oplus s_i'$ for some $s_i$, $s_i' \in T_i$.
Let
\begin{equation}\label{eqn:def-n}
n = \max \left\{\tau(S_1, S_1') + \max\{g_2, g_2'\},
\tau(S_2, S_2') + \max\{g_1, g_1'\} \right\}.
\end{equation}
Then we claim that
\begin{equation}\label{eqn:sumineq}
V^{n - g_1 - g_2} \cdot ([t_{S_1, \zs}^-(1)] \otimes [t_{S_2, \zs}^-(1)]) =
V^{n - g_1' - g_2'} \cdot ([t_{S_1', \zs}^-(1)] \otimes [t_{S_2', \zs}^-(1)])
\end{equation}
as elements of $\HFK^-(\bK_1) \otimes \HFK^-(\bK_2)$. This is equivalent to
\[
\begin{split}
&V^{n - g_1 - g_2}((V^{g_1} \otimes V^{g_2}) \oplus (s_1 \otimes V^{g_2}) \oplus (V^{g_1} \otimes s_2) \oplus (s_1 \otimes s_2)) = \\
&V^{n - g_1' - g_2'}((V^{g_1'} \otimes V^{g_2'}) \oplus (s_1' \otimes V^{g_2'}) \oplus (V^{g_1'} \otimes s_2')
\oplus (s_1' \otimes s_2')).
\end{split}
\]
For $i \in \{1, 2\}$ and $k \ge \tau(S_i, S_i')$,
by Lemma~\ref{lem:slightreformulationoftau}, we have
\[
V^k \oplus (V^{k-g_i} s_i) = V^{k - g_i} \cdot [t_{S_i, \zs}^-(1)] =
V^{k - g_i'} \cdot [t_{S_i', \zs}^-(1)] = V^k \oplus (V^{k-g_i'} s_i').
\]
Together with equation~\eqref{eqn:def-n},
this implies that $V^{n-g_i} s_i = V^{n-g_i'} s_i'$ and
\[
V^{n - g_1 - g_2} (s_1 \otimes s_2) = V^{n - g_1' - g_2'} (s_1' \otimes s_2'),
\]
so equation~\eqref{eqn:sumineq} holds.
The result follows by applying $G$ to equation~\eqref{eqn:sumineq},
and invoking Lemma~\ref{lem:slightreformulationoftau}.

When $g_1 = g_1' = g_2 = g_2' = 0$, then we choose
\[
n = \tau(S_1 \natural S_2, S_1' \natural S_2').
\]
By Lemma~\ref{lem:slightreformulationoftau},
$n$ satisfies equation~\eqref{eqn:sumineq}, which implies
that $V^n s_i = V^n s_i'$ for $i \in \{1, 2\}$, and hence
\[
V^n \cdot [t^-_{S_i, \zs}] = V^n \cdot [t^-_{S_i', \zs}].
\]
So $n \ge \max\{\tau(S_1, S_1'), \tau(S_2, S_2')\}$, and equality holds, as claimed.
\end{proof}

\subsection{An infinitesimal refinement of tau}\label{sec:tau+}
In this section, we describe a refinement of $\tau(S, S')$,
inspired by work of Ozsv\'{a}th--Szab\'{o}~\cite{OSRationalSurgeries},
Hom--Wu~\cite{HomWuNu+}, and Hom~\cite{HomEpsilon},

Let $\bar{\Z}$ denote $\Z \cup \{-\infty,\infty\}$,
and write $\bar{\Z}^{\le 0} = [-\infty, 0] \cap \bar{\Z}$.
Given a knot $K$ in $\Sphere^3$, we will define a symmetric map
\[
\tau^+ \colon \Surf(K) \times \Surf(K) \to \N \times \bar{\Z}^{\le 0}.
\]
The invariant $\tau^+(S, S')$ takes the form
\[
\tau^+(S, S') := (\tau(S, S'), \tau'(S, S')),
\]
where $\tau(S, S')$ is the integer defined in Section~\ref{subsec:taudef},
and $\tau'(S, S')$ is an element of $\bar{\Z}^{\le 0}$ that we define shortly.
We will think of $\tau'$ as a second-order version of $\tau$, or an infinitesimal refinement.

To define $\tau'$, we introduce some notation. If $(i,j) \in \Z \oplus \Z$, let
\[
R_{i,j} := \{\, (m,n) \in \Z \oplus \Z : m \ge i \text{, } n \ge j \,\}.
\]
If $\cS \subset \Z \oplus \Z$, let $H(\cS)$ denote
the \emph{filtered hull} of $\cS$; i.e.,
\[
H(\cS) = \bigcup_{(i,j) \in \cS} R_{i,j}.
\]
We say that $\cS$ is a \emph{filtered} shape if
\[
\cS = H(\cS).
\]

Let $w$ and $z$ be basepoints on $K$, and write $\bK = (K, w, z)$.
If $\cS \subset \Z \oplus \Z$, let $C(\bK,\cS)$ denote the subspace of $\CFK^\infty(\bK)$
generated over $\bF_2$ by monomials $U^i V^j \cdot \xs$ with $A(\xs) + j - i = 0$
and $(i,j) \in \cS$. If $\cS$ is a filtered shape, then $C(\bK,\cS)$ is a subcomplex of $\CFK^\infty(\bK)$.

More generally, we say $\cS \subset \Z \times \Z$ is a \emph{sub-quotient} shape if, whenever
$(i,j)$, $(m,n) \in \cS$ with $i \le m$ and $j \le n$,
then the entire rectangle spanned by the points $(i,j)$ and $(m,n)$ is
contained in~$\cS$.
If $\cS$ is a sub-quotient shape, then $C(\bK,\cS)$ is in fact a sub-quotient
complex of $\CFK^\infty(\bK)$; i.e., there are subcomplexes
$C_{\text{in}}(\bK, \cS) \subset C_{\text{out}}(\bK, \cS) \subset \CFK^\infty(\bK)$ such that
$C_{\text{out}}(\bK, \cS)/C_{\text{in}}(\bK, \cS)$
is chain isomorphic to $C(\bK, \cS)$. Indeed, the two sub-complexes of $\CFK^\infty(\bK)$ are
\[
\begin{split}
C_{\text{out}}(\bK, \cS) &:= C\left(\bK, H(\cS)\right) \text{, and}\\
C_{\text{in}}(\bK, \cS) &:= C\left(\bK, H(\cS) \setminus \cS\right).
\end{split}
\]
We note that, if $\cS \subset \Z \oplus \Z$ is an arbitrary subset, then its filtered hull
$H(\cS)$ is automatically a filtered shape. It is an easy exercise to show that,
if $\cS$ is a sub-quotient shape, then $H(\cS) \setminus \cS$ is filtered. Hence
$C_{\text{in}}(\bK, \cS)$ and $C_{\text{out}}(\bK, \cS)$ are both subcomplexes of $\CFK^\infty(\bK)$.

We note that the map $\ve{t}^-_{S,\zs}$ naturally has image in the $g(S)$ Alexander graded subspace of $\cCFL^-(\bK)$.
Hence, there is a well-defined map
\[
V^{-g(S)}\cdot \ve{t}_{S,\zs}^-\colon \bF_2[\hat{U}]\to C(\bK, R_{0,-g(S)}).
\]
Furthermore, if $\cS$ is a sub-quotient shape of $\Z\oplus \Z$
such that $R_{0,-g(S)}\subset H(\cS)$, then there is a natural map
\[
q\colon C(\bK,R_{0,-g(S)})\to C(\bK,\cS),
\]
which is the composition of the inclusion map $C(\bK,R_{0,-g(S)}) \to C_{\text{out}}(\bK,\cS)$,
followed by the quotient map $C_{\text{out}}(\bK,\cS) \to C_{\text{out}}(\bK,\cS)/C_{\text{in}}(\bK,\cS)\iso C(\bK,\cS)$.
In particular,  $[V^{-g(S)}\cdot \ve{t}_{S,\zs}^-(1)]$ determines a well-defined element of $H_*(C(\bK,\cS))$.

Define
\[
\I{n} := \{0\} \times ([-n, \infty) \cap \Z),
\]
 which is a sub-quotient shape.
Noting that $C(\bK,\I{n})$ is chain isomorphic to $\hCFKfz_n(\bK)$, we obtain the following:

\begin{lem}\label{lem:tausubquotientshapes}
Given a doubly-based knot $\bK = (K, w, z)$ and surfaces $S$, $S' \in \Surf(K)$, we have
\[
\tau(S,S') = \min\left\{ n \ge \max\{g(S),g(S')\} :
[V^{-g(S)}\cdot \ve{t}^-_{S, \zs}(1)] = [V^{-g(S')} \cdot \ve{t}^{-}_{S', \zs}(1)]\text{ in } H_*(C(\bK, \I{n}))\right\}.
\]
\end{lem}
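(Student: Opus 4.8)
The plan is to reduce Lemma~\ref{lem:tausubquotientshapes} directly to Lemma~\ref{lem:slightreformulationoftau}, using the chain isomorphism $C(\bK,\I{n})\iso \hCFKfz_n(\bK)$ and the compatibility of the maps $\ve{t}^-_{S,\zs}$ under the passage from $\cCFL^-$ to its various quotients. The first step is to make the identification $C(\bK,\I{n})\iso \hCFKfz_n(\bK)$ explicit at the chain level: since $\I{n}=\{0\}\times([-n,\infty)\cap \Z)$, the complex $C(\bK,\I{n})$ consists of monomials $U^0V^j\cdot \xs$ with $A(\xs)+j=0$ and $j\ge -n$, i.e.\ with $A(\xs)\le n$, and setting $V=1$ gives precisely the subcomplex $\hCFKfz_n(\bK)$ generated by intersection points of Alexander grading at most $n$. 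Concretely, this is the composite of $C(\bK,\I{n})\into C(\bK,R_{0,-n})$ followed by the isomorphism $C(\bK,R_{0,-n})\iso \hCFKfz_n(\bK)$ induced by $V\mapsto 1$ (this last map is bijective because the $V$-power is uniquely determined by the Alexander grading once $U=0$).

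The second step is to track where the principal invariant goes under this isomorphism. By construction $\ve{t}^-_{S,\zs}(1)$ has Alexander grading $g(S)$ inside $\cCFL^-(\bK)$, so its image under $\cCFL^-(\bK)\to \CFK^-_{U=0}(\bK)$ is $t^-_{S,\zs}(1)$, sitting in Alexander grading $g(S)$; multiplying by $V^{-g(S)}$ lands it in $C(\bK,R_{0,-g(S)})$, and this is exactly the element $[V^{-g(S)}\cdot \ve{t}^-_{S,\zs}(1)]$ appearing in the statement. Applying the quotient map $q\colon C(\bK,R_{0,-g(S)})\to C(\bK,\I{n})$ (valid whenever $n\ge g(S)$, so that $R_{0,-g(S)}\subset H(\I{n})$) and then the isomorphism $C(\bK,\I{n})\iso \hCFKfz_n(\bK)$, one computes that the image is $V^{n-g(S)}\cdot [t^-_{S,\zs}(1)]$ reduced mod $V=1$; but reducing mod $V=1$ of $V^{n-g(S)}\cdot t^-_{S,\zs}(1)$ is the same class in $\hCFKfz_n(\bK)$ as $\hat t_{S,\zs}(1)$ placed in filtration level $n$. (One should be slightly careful here: after setting $V=1$ all powers of $V$ collapse, so it is really the \emph{inclusion} of $\hat t_{S,\zs}(1)$ into $\hCFKfz_n(\bK)$ that is recorded, which matches the definition of $\tau(S,S')$ and the reformulation in Lemma~\ref{lem:slightreformulationoftau}.)

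The third step is then purely formal: combining the previous two steps, for $n\ge \max\{g(S),g(S')\}$ the equality
\[
[V^{-g(S)}\cdot \ve{t}^-_{S,\zs}(1)]=[V^{-g(S')}\cdot \ve{t}^-_{S',\zs}(1)]\in H_*(C(\bK,\I{n}))
\]
is equivalent, under the chain isomorphism $C(\bK,\I{n})\iso \hCFKfz_n(\bK)$, to
\[
[\hat t_{S,\zs}(1)]=[\hat t_{S',\zs}(1)]\in H_*(\hCFKfz_n(\bK)),
\]
which is the defining condition of $\tau(S,S')$. Taking the minimum over such $n$ gives the claimed formula. The one point requiring genuine care — and which I expect to be the main obstacle — is the bookkeeping in the second step: one must verify that the quotient map $q$ really does send the $g(S)$-graded class to the expected filtration level $n$ (rather than, say, $n$ shifted by $g(S)$), and that this is consistent for \emph{both} $S$ and $S'$ simultaneously even though they may have different genera. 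This is exactly the same normalization subtlety that appears in Lemma~\ref{lem:slightreformulationoftau} (the powers $V^{n-g}$ versus $V^{n-g'}$), and the cleanest way to handle it is to invoke that lemma directly rather than re-deriving the filtration-level accounting from scratch, so that the proof becomes a short translation between the two descriptions of $\tau$ via the identification of sub-quotient complexes above.
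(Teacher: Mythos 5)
Your proposal is correct and matches the paper's treatment: the paper simply observes that $C(\bK,\I{n})$ is chain isomorphic to $\hCFKfz_n(\bK)$ (via setting $U=0$, $V=1$) and reads the lemma off from the definition of $\tau$ together with the bookkeeping already done in Lemma~\ref{lem:slightreformulationoftau}. The only minor imprecision is that $C(\bK,\I{n})$ is a sub-quotient, not a subcomplex, of $C(\bK,R_{0,-n})$ (it is the quotient killing the $U$-divisible part), but this does not affect the argument.
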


We are now ready to define the refinement $\tau'(S, S')$.

\begin{define}
For $n$, $m \in \Z$, let
\[
\L{m}{n} := \I{n} \cup ([0,m] \cap \Z) \times \{-n\}.
\]
This is an L-shaped subset of $\Z \times \Z$, and hence a sub-quotient shape.

Let $\bK = (K, w, z)$ be a doubly-pointed knot in $\Sphere^3$,
let $S$, $S' \in \Surf(K)$, and write $\tau = \tau(S, S')$.
Then we define
\[
\tau'(S, S') = -\sup \{\, m \in \Z : [V^{-g(S)}\cdot\ve{t}^{-}_{S, \zs}(1) - V^{-g(S')}\cdot \ve{t}^-_{S', \zs}(1)] = 0 \in
H_* \left(C(\bK, \L{m}{\tau} \right) \,\}.
\]
\end{define}

Note that $\L{0}{\tau} = \I{\tau}$, and $[V^{-g(S)}\cdot \ve{t}^{-}_{S, \zs}(1) - V^{-g(S')}\cdot \ve{t}^-_{S', \zs}(1)] = 0$
in $H_*(C(\bK, \I{\tau}))$ by the definition of $\tau$. It follows that $\tau'(S, S') \le 0$.
However, if $x \in C(\bK, \I{\tau})$ satisfies
\[
\d x = V^{-g(S)}\cdot \ve{t}^{-}_{S, \zs}(1) - V^{-g(S')} \cdot \ve{t}^-_{S', \zs}(1)
\]
in $C(\bK, \I{\tau})$, then $\d x$ might have some nonzero terms in $([0,m] \cap \Z) \times \{-\tau\}$ for $m > 0$.
Hence $[V^{-g(S)}\cdot \ve{t}^{-}_{S, \zs}(1) - V^{-g(S')}\cdot \ve{t}^-_{S', \zs}(1)]$ might not be zero
in $H_* \left(C(\bK, \L{m}{\tau})\right)$ for $m > 0$,
and this is what $\tau'(S, S')$ measures.

The invariant $\tau^+(S,S')$ was inspired by the concordance invariant $\nu(K)$,
due to Ozsv\'{a}th and Szab\'{o} \cite{OSRationalSurgeries}*{Definition~9.1},
which gives an improved 4-ball genus bound over $\tau(K)$ by at most 1. We now extract
an analogue of $\nu$ from $\tau^+$ for pairs of surfaces in $\Surf(K)$,
though there is some information lost when doing this as
$\tau'$ can take any value in $\bar{\Z}^{\le 0}$.

\begin{define}
Let $K$ be a knot in $\Sphere^3$ and $S$, $S' \in \Surf(K)$. Then let
\[
\nu(S, S') =
\begin{cases}
\tau(S, S') &\text{if } \tau'(S, S') = -\infty, \\
\tau(S, S') + 1 &\text{otherwise.}
\end{cases}
\]
\end{define}

We will see that $\nu$ gives a lower bound on the stabilization and double point
distances in Proposition~\ref{prop:tau+}.

\subsection{A sequence of local $h$-invariants}\label{sec:V}

Let $K$ be a knot in $\Sphere^3$ and $S$, $S' \in \Surf_g(K)$.
Modeled on the invariants $V_k(K)$ of large surgeries from knot Floer
homology, also referred to as Rasmussen's \emph{local $h$-invariants}
\cite{RasmussenKnots},  we describe a sequence of integer invariant $V_k(S,S')$
for $k\ge g$, such that
\[
V_{g}(S,S')\ge V_{g+1}(S,S')\ge \cdots\ge 0,
\]
and such that $V_k(S, S') = 0$ for $k$ sufficiently large.

\begin{define}
Let $\bK = (K, w, z)$ be a doubly-based knot in $\Sphere^3$, and let $S$, $S' \in \Surf_g(K)$.
To define $V_k(S,S')$, we consider the subcomplex
\[
A_k^-(\bK) := C(\bK,R_{0, -k})
\]
of $\CFK^\infty(\bK)$.
We think of the complexes $A_k^-(\bK)$ as modules over the ring $\bF_2[\hat{U}]$, where $\hat{U} = UV$.

The map $\ve{t}_{S,\zs}^-$ increases the Alexander grading by $g(S)$. If $k\ge g(S)$,
then $V^{-g(S)}\cdot \ve{t}_{S,\zs}^-(1)$ has Alexander grading $0$, and determines a
well-defined element of $H_*(A_k^-(\bK))$. We define the invariant
\[
V_k(S,S') := \min \left\{\, n \in \N : \hat{U}^n \cdot [V^{-g}\cdot \ve{t}^-_{S,\zs}(1)] =
\hat{U}^n \cdot [V^{-g}\cdot \ve{t}^-_{S',\zs}(1)] \text{ in } H_*(A_k^-(\bK)) \,\right\}.
\]
\end{define}

\begin{rem}
The above definition of $V_k$ can be easily adapted to surfaces of different genera;
however, we specialize to the case when $g(S) = g(S')$ since our topological applications for $V_k$
only hold when this is the case.
\end{rem}

We now show that the invariants $V_k$ of pairs of surfaces in $\Surf_g(K)$ satisfy many of
the same properties as Rasmussen's local $h$-invariants. The reader should
compare the following to \cite{RasmussenKnots}*{Proposition~7.6}:

\begin{lem}\label{lem:monotonicity}
If $k \ge g$, then
\[
V_{k}(S,S') \ge V_{k+1}(S,S') \ge V_{k}(S,S')-1.
\]
\end{lem}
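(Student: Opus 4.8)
The plan is to deduce both inequalities from two elementary chain maps relating the consecutive complexes $A_k^-(\bK)$ and $A_{k+1}^-(\bK)$, in direct analogy with the standard argument for Rasmussen's local $h$-invariants.

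First I would observe that, since $R_{0,-k}\subset R_{0,-k-1}$, there is an inclusion of subcomplexes $j\colon A_k^-(\bK)\into A_{k+1}^-(\bK)$, which is $\bF_2[\hat U]$-equivariant and sends the cycle $V^{-g}\cdot\ve{t}_{S,\zs}^-(1)$ to itself (and similarly for $S'$), so that $j_*$ carries the class $[V^{-g}\cdot\ve{t}_{S,\zs}^-(1)]\in H_*(A_k^-(\bK))$ to the corresponding class in $H_*(A_{k+1}^-(\bK))$. Applying $j_*$ to the defining relation for $n=V_k(S,S')$, namely $\hat U^{n}\cdot[V^{-g}\cdot\ve{t}_{S,\zs}^-(1)]=\hat U^{n}\cdot[V^{-g}\cdot\ve{t}_{S',\zs}^-(1)]$ in $H_*(A_k^-(\bK))$, immediately gives the same relation in $H_*(A_{k+1}^-(\bK))$, whence $V_{k+1}(S,S')\le V_k(S,S')$.

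For the reverse bound I would use the map $m\colon A_{k+1}^-(\bK)\to A_k^-(\bK)$ given by multiplication by $\hat U=UV$: since $\hat U$ shifts $(i,j)\mapsto(i+1,j+1)$, it carries $R_{0,-k-1}$ into $R_{0,-k}$, so $m$ is a well-defined $\bF_2[\hat U]$-equivariant chain map, and the composite $m\circ j$ is multiplication by $\hat U$ on $A_k^-(\bK)$. Starting from the defining relation for $n=V_{k+1}(S,S')$ in $H_*(A_{k+1}^-(\bK))$ and applying $m_*$, and using $m_*\circ j_*=\hat U\cdot\mathrm{id}$ on $H_*(A_k^-(\bK))$ together with the compatibility of $j_*$ with the relevant classes recorded above, I obtain $\hat U^{\,n+1}\cdot[V^{-g}\cdot\ve{t}_{S,\zs}^-(1)]=\hat U^{\,n+1}\cdot[V^{-g}\cdot\ve{t}_{S',\zs}^-(1)]$ in $H_*(A_k^-(\bK))$, hence $V_k(S,S')\le V_{k+1}(S,S')+1$.

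Essentially everything here is bookkeeping; the only points needing attention are verifying that $j$ and $m$ actually preserve the subcomplexes $A_k^-(\bK)$ (i.e.\ the set-theoretic inclusions $R_{0,-k}\subset R_{0,-k-1}$ and $\hat U\cdot R_{0,-k-1}\subset R_{0,-k}$), that both maps are $\hat U$-equivariant, and that $V^{-g}\cdot\ve{t}_{S,\zs}^-(1)$ is genuinely a cycle of Alexander grading $0$ representing consistent classes in both complexes---which holds because $\ve{t}_{S,\zs}^-$ is a chain map out of $\cR^-$ (which carries the zero differential) whose image lies in Alexander grading $g\le k$. There is no serious obstacle; if $V_k(S,S')$ or $V_{k+1}(S,S')$ equals $+\infty$ the asserted inequalities are vacuous, so no extra case arises. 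The closest thing to a subtlety is keeping the two copies of the class $[V^{-g}\cdot\ve{t}_{S,\zs}^-(1)]$, in $H_*(A_k^-(\bK))$ and in $H_*(A_{k+1}^-(\bK))$, carefully identified via $j_*$ throughout.
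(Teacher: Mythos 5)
Your proof is correct and follows essentially the same route as the paper: the first inequality comes from the grading-preserving inclusion $A_k^-(\bK)\hookrightarrow A_{k+1}^-(\bK)$, and the second from the chain map $A_{k+1}^-(\bK)\to A_k^-(\bK)$ given by multiplication by $\hat U$, exactly as in the paper's argument. The bookkeeping points you flag (the set-theoretic inclusions of the regions $R_{0,-k}$, $\hat U$-equivariance, and the identification of the distinguished classes under the inclusion) are all handled correctly.
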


\begin{proof}
There is a natural, grading-preserving inclusion of chain complexes
\[
i_{k} \colon A_{k}^-(\bK) \hookrightarrow A_{k+1}^-(\bK),
\]
which becomes an isomorphism on homology after we invert $\hat{U}$, and satisfies
\[
(i_{k})_*([ V^{-g} \cdot \ve{t}_{S,\zs}^-(1)])= [V^{-g}\cdot  \ve{t}_{S,\zs}^-(1)].
\]
Hence
\[
V_{k+1}(S,S') \le V_{k}(S,S').
\]

Multiplication by $\hat{U}$ induces a $-2$-graded inclusion $A_{k+1}^-(\bK) \hookrightarrow A_k^-(\bK)$ of chain complexes,
which becomes an isomorphism on homology after we invert $\hat{U}$. The map sends
$[V^{-g}\cdot \ve{t}_{S,\zs}(1)]\in H_*(A_{k+1}^-(\bK))$
 to $\hat{U} \cdot [V^{-g}\cdot \ve{t}_{S,\zs}^-(1)]\in H_*(A_k^-(\bK))$, and similarly for $S'$. Hence, if
\[
\begin{split}
\hat{U}^n \cdot [V^{-g}\cdot \ve{t}_{S,\zs}^-(1)] &= \hat{U}^n \cdot [V^{-g}\cdot \ve{t}_{S',\zs}^-(1)]\text{ in } H_*(A_{k+1}^-(\bK)) \text{, then} \\
\hat{U}^{n+1} \cdot [V^{-g}\cdot \ve{t}_{S,\zs}^-(1)] &= \hat{U}^{n+1} \cdot [V^{-g}\cdot \ve{t}_{S',\zs}^-(1)]\text{ in }  H_*(A_{k}^-(\bK)).
\end{split}
\]
We conclude that $V_k(S,S') \le V_{k+1}(S,S')+1$.
\end{proof}

The reader should compare the following to \cite{RasmussenKnots}*{Proposition~7.7}:

\begin{lem}
If $S$, $S'\in \Surf_g(K)$ and $g \le k < \tau(S,S')$, then $0 < V_{k}(S,S')$.
\end{lem}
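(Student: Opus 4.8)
The plan is to prove the contrapositive: if $V_k(S,S') = 0$ and $k \ge g$, then $\tau(S,S') \le k$, which contradicts $k < \tau(S,S')$. Since $g(S) = g(S') = g$, taking $n = 0$ in the definition of $V_k$ shows that the hypothesis $V_k(S,S') = 0$ is equivalent to
\[
[V^{-g}\cdot \ve{t}^-_{S,\zs}(1)] = [V^{-g}\cdot \ve{t}^-_{S',\zs}(1)] \in H_*\left(A_k^-(\bK)\right) = H_*\left(C(\bK, R_{0,-k})\right).
\]

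Next I would realize $\I{k}$ as a sub-quotient shape inside $R_{0,-k}$ and use the associated quotient map. Concretely, $H(\I{k}) = R_{0,-k}$ and $H(\I{k}) \setminus \I{k} = R_{1,-k}$, which is a filtered shape; hence $C_{\text{out}}(\bK, \I{k}) = A_k^-(\bK)$, $C_{\text{in}}(\bK, \I{k}) = C(\bK, R_{1,-k})$, and there is a quotient chain map
\[
q \colon A_k^-(\bK) \longrightarrow C(\bK, \I{k}).
\]
Because $k \ge g$, we have $R_{0,-g} \subset H(\I{k})$, and by construction the composite of the inclusion $C(\bK, R_{0,-g}) \hookrightarrow A_k^-(\bK)$ with $q$ is exactly the map appearing in Lemma~\ref{lem:tausubquotientshapes} that sends $V^{-g}\cdot \ve{t}^-_{S,\zs}(1)$ to the class $[V^{-g}\cdot \ve{t}^-_{S,\zs}(1)] \in H_*(C(\bK, \I{k}))$ used in that reformulation of $\tau$; the same holds for $S'$.

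Applying $q_*$ to the displayed equality then gives
\[
[V^{-g}\cdot \ve{t}^-_{S,\zs}(1)] = [V^{-g}\cdot \ve{t}^-_{S',\zs}(1)] \in H_*\left(C(\bK, \I{k})\right),
\]
and since $k \ge g = \max\{g(S), g(S')\}$, Lemma~\ref{lem:tausubquotientshapes} yields $\tau(S,S') \le k$, the desired contradiction. The argument is therefore purely a naturality statement for the quotient $q$ together with a check that the two definitions are compatible; the only point requiring care---and the closest thing to an obstacle---is verifying that $H(\I{k}) \setminus \I{k}$ is filtered so that $q$ is genuinely a chain map between the relevant sub-quotient complexes, which is immediate here since $H(\I{k}) \setminus \I{k} = R_{1,-k}$.
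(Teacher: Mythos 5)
Your proof is correct and is essentially the paper's argument: both exhibit a chain map out of $A_k^-(\bK)$ into a complex in which $\tau$ is computed, and conclude by naturality on homology. The only difference is which reformulation of $\tau$ is targeted --- you project $A_k^-(\bK) = C_{\text{out}}(\bK,\I{k})$ onto the sub-quotient $C(\bK,\I{k})$ (correctly checking that $H(\I{k})\setminus \I{k}=R_{1,-k}$ is filtered) and cite Lemma~\ref{lem:tausubquotientshapes}, whereas the paper composes the inclusion $V^k\cdot(-)\colon A_k^-(\bK)\hookrightarrow \cCFL^-(\bK)$ with the quotient setting $U=0$ to land in $\CFK^-_{U=0}(\bK)$ and cites Lemma~\ref{lem:slightreformulationoftau}; since the paper's map factors through your quotient $q$, the two arguments are interchangeable.
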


\begin{proof}
By Lemma~\ref{lem:slightreformulationoftau},
we can describe $\tau(S, S')$ as the minimal $n$
such that $V^{n-g} \cdot [t_{S,\zs}^-(1)] = V^{n-g}\cdot  [t_{S',\zs}^-(1)]$ in
\[
\HFK^-_{U=0}(\bK) = H_*\left( \cCFL^-(\bK) \otimes \bF_2[U,V]/(U) \right).
\]
Note that multiplication by $V^k$ determines an inclusion of chain complexes
\[
A_k^-(\bK) \hookrightarrow \cCFL^-(\bK),
\]
which we compose with the natural map $\cCFL^-(\bK) \to \CFK_{U=0}^-(\bK)$ given by
$\xs \mapsto \xs \otimes 1$, corresponding to setting $U = 0$.
The induced map on homology sends $[V^{-g}\cdot \ve{t}_{S,\zs}^-(1)]$, $[V^{-g}\cdot \ve{t}_{S',\zs}^-(1)] \in H_*(A_k^-(\bK))$ to
$V^{k-g} \cdot [t_{S,\zs}^-(1)]$, $V^{k-g} \cdot [t_{S',\zs}^-(1)] \in \HFK^-_{U=0}(\bK)$, respectively.
If $g\le k < \tau(S, S')$, then
\[
V^{k-g} \cdot [t_{S,\zs}^-(1)] \neq V^{k-g} \cdot [t_{S',\zs}^-(1)] \text{ in } \HFK^-_{U=0}(\bK),
\]
and consequently,
\[
[V^{-g}\cdot \ve{t}_{S,\zs}^-(1)] \neq [V^{-g}\cdot \ve{t}_{S',\zs}^-(1)] \text{ in } H_*(A_k^-(\bK)).
\]
Hence $V_k(S, S') > 0$, as claimed.
\end{proof}

\subsection{The upsilon invariant}\label{sec:upsilon}
Let $K$ be a knot in $\Sphere^3$, and let $S$, $S' \in \Surf(K)$.
We now describe our invariant
\[
\Upsilon_{(S, S')} \colon [0,2] \to \R^{\ge 0}.
\]
It is a secondary version of Ozsv\'{a}th, Stipsicz, and Szab\'{o}'s \cite{OSSUpsilon} invariant~$\Upsilon_K(t)$.

We recall the \emph{$t$-modified} version of knot Floer homology, described
by Ozsv\'{a}th, Stipsicz, and Szab\'{o}. Suppose that $t = \frac{m}{n}\in [0,2]$ is a
rational number with $m \in \N$ and $n \in \Z_+$ relatively prime.
We define $\tCFK^-(\bK)$ to be the free $\bF_2[v^{1/n}]$-module generated by $\bT_{\a} \cap \bT_{\b}$, where $v$ is a formal variable.
Similarly, let $\tCFK^\infty(\bK)$ be the free $\bF_2[v^{1/n}, v^{-1/n}]$-module generated
by $\bT_{\a} \cap \bT_{\b}$. The modules $\tCFK^-(\bK)$ and $\tCFK^\infty(\bK)$ are equipped with a
differential $\d$ that satisfies
\[
\d \xs := \sum_{\ys \in \bT_{\a} \cap \bT_{\b}} \sum_{\substack{\phi \in \pi_2(\xs,\ys)\\
\mu(\phi) = 1}} \# \hat{\cM}(\phi) \cdot v^{t n_{z}(\phi) + (2-t) n_{w}(\phi)} \cdot \ys
\]
for $\x \in \T_\a \cap \T_\b$.

We note that $\tCFK^-(\bK)$ and $\tCFK^\infty(\bK)$ can easily be expressed in
terms of $\cCFL^-(\bK)$ and $\cCFL^\infty(\bK)$, respectively, as we now describe.
We give $\bF_2[v^{1/n}]$ the structure of an $\bF_2[U,V]$-module, where
$U$ acts by $v^{2-t}$ and $V$ acts by $v^{t}$. With this action, we have
a canonical isomorphism
\[
\tCFK^-(\bK) \iso \cCFL^-(\bK) \otimes_{\bF_2[U,V]} \bF_2[v^{1/n}],
\]
as well as a similar isomorphism involving $\tCFK^\infty(\bK)$ and
$\cCFL^\infty(\bK)$. Note that, in particular, if $(W,\cF) \colon
(\Sphere^3, \bK_1) \to (\Sphere^3, \bK_2)$ is a decorated link cobordism, then
the map $F_{W,\cF,\frs}$ determines a $t$-modified version
\[
\tF_{W,\cF,\frs} := F_{W,\cF,\frs} \otimes \id_{\bF_2[v^{1/n}]} \colon \tCFK^-(\bK_1) \to \tCFK^-(\bK_2).
\]
Finally, we note that there is a $t$-grading on $\cCFL^-(\bK)$, defined via the formula
\[
\gr_t(\xs) := \frac{t}{2} \cdot \gr_{\zs}(\xs) + \left(1 - \frac{t}{2} \right) \cdot \gr_{\ws}(\xs).
\]
This induces a well-defined grading on $\tCFK^-(\bK)$,
for which we also write $\gr_t$. With respect to $\gr_t$, the variable $v$ is $-1$ graded.

If $S \in \Surf_{g}(K)$ and $S' \in \Surf_{g'}(K)$, the invariants
$\ve{t}_{S,\zs}^-$ and $\ve{t}_{S',\zs}^-$ admit $t$-modified versions
$\tvt_{S,\zs}^-$ and $\tvt_{S',\zs}^-$, respectively. Furthermore,
the elements $\tvt_{S,\zs}^-(1)$ and $\tvt_{S',\zs}^-(1)$ for $1 \in \bF_2[v^{1/n}]$
have $\gr_t$-grading $-t \cdot g$ and $-t \cdot g'$, respectively.

\begin{define}
For $t = \frac{m}{n} \in [0,2]$, we define
\[
\Upsilon_{(S,S')}(t) := \min \{\,s = k/n \ge \max \{t \cdot g, t \cdot g'\} :
v^{s - t \cdot g} \cdot [\tvt^-_{S,\zs}(1)] = v^{s - t \cdot g'} \cdot
[\tvt^-_{S',\zs}(1)] \in \tHFK^-(\bK)\,\}.
\]
\end{define}

There is an alternate definition of the invariant $\Upsilon_{(S,S')}(t)$, which is more amenable to computations, and is based on Livingston's description of the corresponding knot invariant \cite{LivingstonUpsilon}.
If $t\in [0,2]$, there is a filtration $\cG_{s}^t(\bK)$ of $\CFK^\infty(\bK)$ which is indexed by a parameter $s\in \R$. The set $\cG_s^{t}(\bK)$ is the $\bF_2$-module generated by monomials $U^i V^j \cdot \xs$ with $A(\xs)+j-i=0$ and
\[
t\cdot j+(2-t)\cdot i\ge -s.
\]
If $s\ge t\cdot g(S)$, then it is straightforward to see that $[V^{-g(S)}\cdot \ve{t}_{S,\zs}^-(1)]$ is a well-defined element of $H_*(\cG_s^t(\bK))$. It is not hard to adapt \cite{LivingstonUpsilon}*{Section~14.1} to establish the following:

\begin{lem}\label{lem:livingstonreformupsilon}
If $S \in \Surf_g(K)$ and $S' \in \Surf_{g'}(K)$, then
\[
\Upsilon_{(S,S')}(t) =  \min\left\{s\ge t\cdot \max\{g,g'\} :
[V^{-g} \cdot \ve{t}_{S,\zs}^-(1)] = [V^{-g'} \cdot \ve{t}_{S',\zs}^-(1)] \text{ in } H_*(\cG_s^t(\bK)) \right\}.
\]
\end{lem}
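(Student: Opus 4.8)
The plan is to adapt Livingston's reformulation of $\Upsilon_K(t)$ \cite{LivingstonUpsilon}*{Section~14.1}, whose content is that the $t$-modified complex is obtained from $\CFK^\infty(\bK)$ by extending scalars, under which the $v$-grading becomes the $\cG^t$-filtration. Write $t=m/n$ with $n>0$. First I would introduce the chain map $\Phi_t\colon \CFK^\infty(\bK)\to \tCFK^\infty(\bK)$ obtained by restricting the canonical map $\cCFL^\infty(\bK)\to \tCFK^\infty(\bK)=\cCFL^\infty(\bK)\otimes_{\bF_2[U,V]}\bF_2[v^{1/n},v^{-1/n}]$ (in which $U$ acts by $v^{2-t}$ and $V$ by $v^{t}$) to the Alexander grading zero subspace; explicitly, $\Phi_t(U^iV^j\xs)=v^{(2-t)i+tj}\xs$ whenever $A(\xs)+j-i=0$. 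This $\Phi_t$ is injective; it carries $\cG_s^t(\bK)$ into the subcomplex of $\tCFK^\infty(\bK)$ spanned by monomials $v^p\xs$ with $p\ge -s$; and for $s\in\tfrac{1}{n}\Z$ this target subcomplex is exactly $v^{-s}\tCFK^-(\bK)$.

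The first real step is to show that $\Phi_t$ is \emph{split} injective on each of these truncations, hence injective on homology. For this I would note that the differential of $\tCFK^\infty(\bK)$ preserves the residue $\overline{p+tA(\xs)}\in\tfrac{1}{n}\Z/2\Z$ of a monomial $v^p\xs$, since a holomorphic disk from $\xs$ to $\ys$ contributes a power of $v$ congruent to $t\bigl(A(\xs)-A(\ys)\bigr)$ modulo $2$; so $\tCFK^\infty(\bK)$ splits as a direct sum of subcomplexes indexed by this residue, and $\Phi_t(\CFK^\infty(\bK))$ is precisely the summand of residue $\bar0$. Because $v^{-s}\tCFK^-(\bK)$ is spanned by monomials, it is the direct sum of its intersections with these summands, so $\Phi_t$ identifies $\cG_s^t(\bK)$ with a chain-complex direct summand of $v^{-s}\tCFK^-(\bK)$, and therefore $\Phi_t\colon H_*(\cG_s^t(\bK))\to H_*\bigl(v^{-s}\tCFK^-(\bK)\bigr)$ is injective.

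Next I would match the distinguished classes. Since $\ve{t}^-_{S,\zs}(1)\in\cCFL^-(\bK)$ is homogeneous of Alexander grading $g$ and of $\gr_t$-grading $-tg$, the element $V^{-g}\cdot\ve{t}^-_{S,\zs}(1)$ has Alexander grading $0$, defines a class in $H_*(\cG_s^t(\bK))$ for $s\ge tg$ (as noted just before the lemma), and—using $\tvt^-_{S,\zs}=\ve{t}^-_{S,\zs}\otimes\id$—satisfies $\Phi_t\bigl(V^{-g}\cdot\ve{t}^-_{S,\zs}(1)\bigr)=v^{-tg}\cdot\tvt^-_{S,\zs}(1)$; the monomials of the latter lie in $v$-power at least $-tg\ge -s$, so it lies in $v^{-s}\tCFK^-(\bK)$ (and similarly with $S'$, $g'$). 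Combining this with the injectivity from the previous step and with the chain isomorphism given by multiplication by $v^{s}$ from $v^{-s}\tCFK^-(\bK)$ onto $\tCFK^-(\bK)$, I obtain, for $s\in\tfrac{1}{n}\Z$ with $s\ge t\max\{g,g'\}$, that $[V^{-g}\cdot\ve{t}^-_{S,\zs}(1)]=[V^{-g'}\cdot\ve{t}^-_{S',\zs}(1)]$ in $H_*(\cG_s^t(\bK))$ if and only if $v^{s-tg}\cdot[\tvt^-_{S,\zs}(1)]=v^{s-tg'}\cdot[\tvt^-_{S',\zs}(1)]$ in $\tHFK^-(\bK)$.

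To finish, I would observe that the minimum on the right-hand side of the asserted identity, a priori over $s\in\R$, is attained in $\tfrac{1}{n}\Z$: the defining condition is monotone in $s$, and $\cG_s^t(\bK)$ changes only at the locally finite set of jump values $\{\,tA(\xs)-2i : \xs\in\bT_\a\cap\bT_\b,\ i\in\Z\,\}\subset\tfrac{1}{n}\Z$, while $t\max\{g,g'\}\in\tfrac{1}{n}\Z$ as well; so the minimum is attained at a point $s=k/n\ge t\max\{g,g'\}$, and by the previous paragraph it then equals $\Upsilon_{(S,S')}(t)$. The main difficulty I anticipate is entirely bookkeeping: keeping the $v$-power and $\gr_t$ shifts straight so that $\Phi_t$ lands in the intended truncation and picks out exactly the residue-$\bar0$ summand (which is what makes it injective on homology after truncating), and confirming that $\Phi_t\bigl(V^{-g}\cdot\ve{t}^-_{S,\zs}(1)\bigr)$ is the precise $v$-power multiple of $\tvt^-_{S,\zs}(1)$ stated above; the remainder of the argument is formal.
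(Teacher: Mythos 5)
Your proof is correct and is precisely the adaptation of Livingston's argument that the paper invokes without writing out for this lemma. The key points all check out: the differential of $\tCFK^\infty(\bK)$ preserves the residue $\overline{p+tA(\xs)}\in\tfrac{1}{n}\Z/2\Z$, so $\Phi_t$ identifies $\cG_s^t(\bK)$ with the residue-$\bar 0$ chain-complex direct summand of $v^{-s}\tCFK^-(\bK)$ and is therefore injective on homology; $\Phi_t\bigl(V^{-g}\cdot\ve{t}^-_{S,\zs}(1)\bigr)=v^{-tg}\cdot\tvt^-_{S,\zs}(1)$; and the right-hand minimum is attained at a jump value in $\tfrac{1}{n}\Z$, matching the discretization in the definition of $\Upsilon_{(S,S')}(t)$.
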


\subsection{The kappa and kappa-nought invariants}\label{sec:kappa}
If $\bK=(K,w,z)$ is a doubly-based knot in $\bS^3$, let $\CFK^-_{U=0}(\bK)$
and $\CFK^\infty_{U=0}(\bK)$ denote the small minus and infinity knot Floer complexes
described in Section~\ref{sec:variations}.

\begin{lem}
If $g(S)>0$, then
\[
[t_{S,\ws}^\infty(1)] = 0 \in \HFK^\infty_{U=0}(\bK).
\]
\end{lem}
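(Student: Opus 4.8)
The plan is to deduce the vanishing from the Maslov grading. Recall that $\cCFL^-(\bK)$ carries the homological grading $\gr_{\ws}$ (with respect to which $U$ has degree $-2$ and $V$ has degree $0$), and that this grading descends to $\CFK^-_{U=0}(\bK)$ and to $\CFK^\infty_{U=0}(\bK)$, since setting $U=0$ and inverting $V$ does not disturb $\gr_{\ws}$. The first step is to observe that $\HFK^\infty_{U=0}(\bK)$ is concentrated in the single grading $\gr_{\ws}=0$. Indeed, in $\CFK^\infty_{U=0}(\bK)$ the differential counts only classes $\phi$ with $n_{\ws}(\phi)=0$, and for such a class $n_{\zs}(\phi)=A(\xs)-A(\ys)$; hence the change of basis $\xs\mapsto V^{-A(\xs)}\cdot\xs$, which has $\gr_{\ws}$-degree $0$ because $\gr_{\ws}(V)=0$, carries this differential to the $\bF_2[V,V^{-1}]$-linear extension of the $\hat{\CF}(\Sphere^3)$ differential. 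Thus $\CFK^\infty_{U=0}(\bK)\simeq \hat{\CF}(\Sphere^3)\otimes_{\bF_2}\bF_2[V,V^{-1}]$ as $\gr_{\ws}$-graded complexes, so its homology is $\hat{\HF}(\Sphere^3)\otimes_{\bF_2}\bF_2[V,V^{-1}]\cong\bF_2[V,V^{-1}]$, living entirely in $\gr_{\ws}=0$.

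The second step is to compute the $\gr_{\ws}$-grading of the cycle $t^\infty_{S,\ws}(1)$. By the grading-shift formula for the link cobordism maps \cite[Theorem~1.4]{ZemAbsoluteGradings} --- the same result that yields the Alexander-grading shift recorded above --- the map $\ve{t}^\infty_{S,\ws}$ lowers $\gr_{\ws}$ by $2g(S)$, since in the decoration $\cA_{\ws}$ the subsurface $S_{\ws}$ carries all of the genus of $S$ while $S_{\zs}$ is a disk. As $1\in\cR^-$ has $\gr_{\ws}=0$ and $\gr_{\ws}$ is unaffected by setting $U=0$ and inverting $V$, the cycle $t^\infty_{S,\ws}(1)\in\CFK^\infty_{U=0}(\bK)$ is homogeneous of $\gr_{\ws}$-grading $-2g(S)$. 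Alternatively, using only material from the preceding sections: combining $A(\ve{t}^\infty_{S,\ws}(1))=-g(S)$ and $A(\ve{t}^\infty_{S,\zs}(1))=g(S)$ with the conjugation relation~\eqref{eq:equationiotaupsilon} --- under which $\iota_K$ interchanges $\gr_{\ws}$ and $\gr_{\zs}$ and $\iota_{\cR^\infty}$ fixes $1$ --- gives $\gr_{\ws}(\ve{t}^\infty_{S,\ws}(1))=\gr_{\ws}(\ve{t}^\infty_{S,\zs}(1))-2g(S)$; and since $\ve{t}^\infty_{S,\zs}$ is a chain homotopy equivalence, its reduction $t^\infty_{S,\zs}$ is a quasi-isomorphism onto $\CFK^\infty_{U=0}(\bK)$, so $[t^\infty_{S,\zs}(1)]$ is a generator of $\HFK^\infty_{U=0}(\bK)$ and hence has $\gr_{\ws}=0$ by the first step.

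Finally, since $g(S)>0$, the cycle $t^\infty_{S,\ws}(1)$ represents a class in the $\gr_{\ws}=-2g(S)$ part of $\HFK^\infty_{U=0}(\bK)$; but that summand is $0$ by the first step, so $[t^\infty_{S,\ws}(1)]=0$, as claimed.

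The step I expect to require the most care is the second one: fixing the $\gr_{\ws}$-shift of $\ve{t}^\infty_{S,\ws}$ unambiguously, that is, confirming it is a strictly negative multiple of $g(S)$ and not accidentally zero. Quoting Zemke's grading-shift formula settles this directly, but one must be attentive to the conventions for cobordism maps with empty incoming end. The conjugation-symmetry route instead reduces the point to the fact that $\ve{t}^\infty_{S,\zs}$ remains a quasi-isomorphism after setting $U=0$ and inverting $V$, which follows from it being a chain homotopy equivalence over $\cR^\infty$ --- slightly more than the homology isomorphism quoted in the text, but standard over $\bF_2[U,V,U^{-1},V^{-1}]$.
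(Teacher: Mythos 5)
Your proof is correct and follows essentially the same route as the paper: both arguments first identify $\HFK^\infty_{U=0}(\bK)$ with a direct sum of copies of $\hat{\HF}(\Sphere^3)$ (your change of basis $\xs\mapsto V^{-A(\xs)}\cdot\xs$ is the inverse of the paper's Alexander-graded reduction maps $R_w^n$), concluding that the homology is supported in $\gr_{\ws}$-grading $0$, and then invoke the grading formula of \cite{ZemAbsoluteGradings}*{Theorem~1.4} to place $t^\infty_{S,\ws}(1)$ in $\gr_{\ws}$-grading $-2g(S)\neq 0$. The alternative derivation of the grading shift via conjugation symmetry is a harmless extra, but the core argument coincides with the paper's.
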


\begin{proof}
For $n \in \Z$, let $\CFK^\infty_{U=0}(\bK)_n$ denote the subspace of $\CFK^\infty_{U=0}(\bK)$ in Alexander grading $n$.
Explicitly, the subspace $\CFK^\infty_{U=0}(\bK)_n$ is generated by monomials of the form $V^i\cdot \xs$, where
\[
A(\xs) + i = n.
\]
We define the reduction map
\[
R_w^n\colon \CFK^\infty_{U=0}(\bK)_n\to \hat{\CF}( \Sphere^3 , w ),
\]
by the formula $R_w^n(V^i \cdot \xs)=\xs$. It is straightforward to see that
$R_w^n$ is a chain map. Furthermore, since the differential on
$\CFK^\infty_{U=0}(\bK)$ preserves the Alexander grading, the map $R_w^n$ is
a chain isomorphism. Consider the chain isomorphism
\[
R_{w} := \bigoplus_{n \in \Z} R_w^n \colon \CFK^\infty_{U=0}(\bK)\to \bigoplus_{n\in \Z} \hat{\CF}(\Sphere^3,w).
\]
In particular, since $\hat{\HF}( \Sphere^3 , w )$ is supported in $\gr_{\ws}$-grading 0, it follow that $\HFK^\infty_{U=0}(\bK)$ is as well. The map $\ve{t}_{S,\ws}^\infty(1)$ has $\gr_{\ws}$-grading $-2g(S)$ by the grading formula in \cite{ZemAbsoluteGradings}*{Theorem~1.4}. It follows that
\[
[\ve{t}_{S,\ws}^\infty(1)]=0\in \HFK^\infty_{U=0}(\bK),
\]
completing the proof.
\end{proof}

\begin{define}
Let $\bK = (K, w, z)$ be a doubly-based knot in $\Sphere^3$, and let $S \in \Surf_g(K)$ for $g > 0$.
Then we let
\[
\kappa_0(S) := \min\left\{\, n \ge g : V^{n-g} \cdot [t_{S,\ws}^-(1)] = 0 \text{ in } \HFK^-_{U=0}(\bK) \,\right\}.
\]
If $S \in \Surf_0(K)$, we set $\kappa_0(S) = 0$.
\end{define}

We note that the element $\ve{t}_{S,\ws}^-(1)$ lives in Alexander grading $-g$. Hence
\[
\kappa_0(S) = g + \min\left\{\, k \in \N : [\hat{t}_{S,\ws}(1)] = 0 \text{ in } H_*\left(\hCFKfz_{k-g}(\bK)\right) \,\right\}.
\]

\begin{define}
If $g>0$ and $S$, $S'\in \Surf_g(K)$, we define the invariant
\[
\kappa(S,S'):=\min\left\{\, n \ge g : V^{n-g} \cdot [t^-_{S,\ws}(1)] =
V^{n-g} \cdot [t^-_{S',\ws}(1)] \text{ in } \HFK^-_{U=0}(\bK) \,\right\}.
\]
\end{define}

Note that
\[
\kappa(S,S') \le \max\{\kappa_0(S), \kappa_0(S')\}.
\]
We emphasize that the invariant $\tau(S,S')$ is defined
in terms of the maps $t_{S,\zs}^-$ and $t_{S',\zs}^-$,
while $\kappa(S,S')$ is defined in terms of the maps $t_{S,\ws}^-$ and $t_{S',\ws}^-$.
Also, unlike the invariant $\tau(S,S')$, the
definition of $\kappa(S,S')$ only makes sense if $g(S) = g(S') > 0$.

\subsection{Upsilon near $0$ and $2$}

Ozsv\'{a}th, Stipsicz, and Szab\'{o} \cite{OSSUpsilon}*{Proposition~1.6} proved that the knot invariant
$\Upsilon_K(t) = -\tau(K)\cdot t$ near $t=0$. In this section, we prove a
similar result for $\Upsilon_{(S,S')}(t)$.

\begin{thm}\label{thm:upsilon-slope}
Suppose that $S$, $S'\in \Surf(K)$. For all $t\in [0,2]$ sufficiently close to 0, we have
\[
\Upsilon_{(S,S')}(t)= \tau(S,S')\cdot t.
\]
For $t$ sufficiently close to $2$, we have
\[
\Upsilon_{(S,S')}(t)=\begin{cases} (\kappa_0(S)-g(S))\cdot (2-t)+ g(S)\cdot t& \text{ if } g(S)>g(S'),\\
(\kappa(S,S')-g(S))\cdot (2-t)+g(S)\cdot t& \text{ if } g(S)=g(S').
\end{cases}
\]
\end{thm}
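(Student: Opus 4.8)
\emph{Setup.} The plan is to work throughout with Livingston's reformulation, Lemma~\ref{lem:livingstonreformupsilon}: $\Upsilon_{(S,S')}(t)$ is the least $s \ge t\max\{g,g'\}$ for which the cycles $V^{-g}\cdot\ve{t}_{S,\zs}^-(1)$ and $V^{-g'}\cdot\ve{t}_{S',\zs}^-(1)$ become homologous inside the filtered subcomplex $\cG_s^t(\bK) \subset \CFK^\infty(\bK)$, where $g = g(S)$ and $g' = g(S')$. Everything reduces to comparing $\cG_s^t(\bK)$ with the sub-quotient complexes $C(\bK,\I{n})$ of Section~\ref{sec:tau+}, recalling that $\tau(S,S')$ is read off from these by Lemma~\ref{lem:tausubquotientshapes}, and with the analogous $\ws$-side picture, recalling that $\kappa_0(S)$ and $\kappa(S,S')$ are the $V$-torsion orders of $[\ve{t}_{S,\ws}^-(1)]$ and of $[\ve{t}_{S,\ws}^-(1)] - [\ve{t}_{S',\ws}^-(1)]$ in $\HFK^-_{U=0}(\bK)$.

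\emph{Behaviour near $0$.} First I would establish the global inequality $\Upsilon_{(S,S')}(t) \le \tau(S,S')\cdot t$ for all $t\in[0,2]$. Writing $\tau = \tau(S,S')$, Lemma~\ref{lem:tausubquotientshapes} supplies $x \in C(\bK,\I{\tau})$ with $\d x = V^{-g}\cdot\ve{t}_{S,\zs}^-(1) - V^{-g'}\cdot\ve{t}_{S',\zs}^-(1)$; every monomial $U^0 V^j\cdot\ys$ of $x$ has $j \ge -\tau$, so $t\cdot j \ge -\tau t$ and $x \in \cG_{\tau t}^t(\bK)$, while the fact that $\ve{t}_{\cdot,\zs}^-$ takes values in $\cCFL^-(\bK)$ (all monomials have $i,j\ge 0$) together with $\tau \ge \max\{g,g'\}$ puts both endpoint cycles in $\cG_{\tau t}^t(\bK)$ as well. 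For the reverse inequality near $0$, fix an integer $n$ with $\max\{g,g'\} \le n < \tau$; there are only finitely many such. The key step is that for $t>0$ small enough (a bound depending only on $\bK$, since the relevant span of Alexander gradings is finite) the subcomplex $\cG_{nt}^t(\bK)$ contains no generator $U^iV^j\cdot\ys$ with $i \le -1$ — such a generator would force the Alexander grading $A(\ys)=i-j$ to be arbitrarily negative as $t\to 0$ — so $\cG_{nt}^t(\bK)$ is an extension of $C(\bK,\I{n})$ by the subcomplex of monomials with $i\ge 1$, and an $\hat U$-equivariant contraction of that portion (valid in this range of $t$) shows that $[V^{-g}\cdot\ve{t}_{S,\zs}^-(1)] \ne [V^{-g'}\cdot\ve{t}_{S',\zs}^-(1)]$ in $H_*(\cG_{nt}^t(\bK))$ whenever this holds in $H_*(C(\bK,\I{n}))$, which it does since $n<\tau$. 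Letting $n\to\tau-1$ and $s\to\tau t$ gives $\Upsilon_{(S,S')}(t)\ge\tau t$ for all small $t$.

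\emph{Behaviour near $2$.} Here the plan is to pass to the $\ws$-side via conjugation. Equation~\eqref{eq:conjugationandtSigmaw} gives $\ve{t}_{S,\ws}^\infty \circ \iota_{\cR^\infty} \eqsim \iota_K \circ \ve{t}_{S,\zs}^\infty$, and $\iota_{\cR^\infty}$ exchanges $U$ and $V$; under the $t$-modified identifications of Section~\ref{sec:upsilon} (where $U$ acts by $v^{2-t}$ and $V$ by $v^t$) this exchange carries the filtration $\cG_s^t$ on the $\zs$-side to the analogous filtration at parameter $2-t$ built from the $\ws$-side maps. So computing $\Upsilon_{(S,S')}(t)$ for $t$ near $2$ amounts to computing an ``$\ws$-flavoured $\Upsilon$'' near $0$, to which I would apply the same degeneration argument, now landing in $\CFK^-_{U=0}(\bK)$. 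The decisive difference from the $\zs$-side is the Lemma preceding Definition~\ref{def:depth}: for $g(S)>0$ the class $[\ve{t}_{S,\ws}^\infty(1)]$ already vanishes in $\HFK^\infty_{U=0}(\bK)$, so the degenerate limit records only $V$-torsion orders. When $g(S)=g(S')$ the two $\ws$-classes sit in the same Alexander grading $-g(S)$ and the limit is a genuine comparison, giving $n=\kappa(S,S')$; when $g(S)>g(S')$ the class $V^{-g'}\cdot\ve{t}_{S',\ws}^-(1)$ lies in strictly lower Alexander grading and is already a boundary at the filtration level where $V^{-g}\cdot\ve{t}_{S,\ws}^-(1)$ first becomes one, so the limit records only when the latter dies, giving $n=\kappa_0(S)$. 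Transporting the resulting value $n$ and the $\ws$-grading shift $-g(S)$ back through $t\mapsto 2-t$ — the $i$-coordinate is weighted by $2-t$ on the $\ws$-side — converts $s = n\cdot(2-t)$ together with the shift into the asserted affine expressions $(\kappa_0(S)-g(S))(2-t)+g(S)\cdot t$ and $(\kappa(S,S')-g(S))(2-t)+g(S)\cdot t$.

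\emph{Expected main obstacle.} I expect the near-$2$ statement to be the hard part. The identity~\eqref{eq:conjugationandtSigmaw} is only a chain-homotopy of \emph{skew}-filtered maps, so turning it into a clean identification of $\cG_s^t(\bK)$ with its $\ws$-counterpart at parameter $2-t$ — keeping straight the simultaneous swaps $t\leftrightarrow 2-t$, $U\leftrightarrow V$, $\zs\leftrightarrow\ws$ and the grading shifts by $\pm g(S)$ — needs care. Within that, the genuinely delicate point is the genus dichotomy: one must show that when $g(S)>g(S')$ the $S'$-term really drops out at the relevant filtration level rather than interacting with the $S$-term, and this is precisely where the vanishing of $[\ve{t}_{\cdot,\ws}^\infty(1)]$ for positive genus must be combined with a comparison of Alexander gradings. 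A secondary technical point, present in both cases, is upgrading the endpoint slope to the asserted equality on a whole one-sided neighbourhood; I would do this by making the no-negative-$i$ and contraction arguments uniform for $t$ in $(0,\varepsilon)$ (resp.\ $(2-\varepsilon,2)$), rather than by separately proving piecewise linearity of $\Upsilon_{(S,S')}$.
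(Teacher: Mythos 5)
Your overall strategy is the same as the paper's: Livingston's reformulation (Lemma~\ref{lem:livingstonreformupsilon}), identification of the shapes that $\cG_s^t(\bK)$ degenerates to near the endpoints of $[0,2]$, and reduction to the sub-quotient complexes computing $\tau$, $\kappa_0$, and $\kappa$, with conjugation symmetry used to translate between the $\ws$- and $\zs$-sides. Two points in your write-up do not hold as stated. First, near $t=0$ the subcomplex of $\cG_{nt}^t(\bK)$ spanned by monomials with $i\ge 1$ does not admit an $\hat{U}$-equivariant contraction: its homology is a free rank-one $\bF_2[\hat{U}]$-module (it is $\hat{U}$ times the $V$-localized complex), not zero. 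This error is harmless, because the conclusion you want follows more simply: if the difference of the two cycles were $\d x$ in $\cG_{nt}^t(\bK)$, applying the quotient chain map onto $C(\bK,\I{n})$ would exhibit it as a boundary there, contradicting $n<\tau(S,S')$. (Alternatively, one can run the torsion/torsion-free argument on the associated long exact sequence, which is exactly what the paper does at the other endpoint.)

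The genuine gap is the genus dichotomy near $t=2$, which you correctly flag as the delicate point but then resolve with a claim that is not right. You assert that when $g(S)>g(S')$ the class $V^{-g'}\cdot\ve{t}_{S',\ws}^-(1)$ ``is already a boundary at the filtration level where $V^{-g}\cdot\ve{t}_{S,\ws}^-(1)$ first becomes one.'' Neither class is individually a boundary (or even torsion) in the relevant complexes --- each generates the free part of the localized homology, since the extremal principal invariants induce isomorphisms on $H_*(\cCFL^\infty)$ --- and there is no a priori comparison between the torsion orders of the two classes. The mechanism in the paper is different: for $t$ near $2$ one has $\cG_s^t(\bK)=C(\bK,Z_{-g,k})$, which sits in a short exact sequence with sub $C(\bK,H_{-g+1})$ and quotient the single row $C(\bK,T_{-g,k})$ at $j=-g$. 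The \emph{difference} of the two classes is torsion, the quotient map is injective on the torsion submodule (because $H_*(C(\bK,H_{-g+1}))$ is torsion-free, so the connecting map vanishes), and the $S'$-class dies in $C(\bK,T_{-g,k})$ \emph{at the chain level}, since all of its monomials have $j\ge -g'>-g$. Only then does the condition reduce to the vanishing of $[V^{-g}\cdot\ve{t}_{S,\zs}^-(1)]$ in the row complex, which conjugation identifies with $\hCFKfw_{k+g}(\bK)$ and hence with $\kappa_0(S)$. Without some version of this projection-to-the-row argument, your proposal does not establish that the $S'$-term drops out.
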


\begin{proof}
The argument we present is an adaptation of Livingston's proof of the
analogous fact \cite{LivingstonUpsilon}*{Theorem~13.1} for the knot invariants $\Upsilon_K(t)$,
and we use the reformulation of
$\Upsilon_{(S,S')}(t)$ in terms of filtrations on $\CFK^\infty$ from
Lemma~\ref{lem:livingstonreformupsilon}. We focus on $\Upsilon_{(S,S')}(t)$ near $t=2$ when
$g(S)>g(S')$, since the other cases are straightforward adaptations of this.
Let us write $g=g(S)$ and $g'=g(S')$.

Define the following sub-quotient shapes of $\Z\oplus \Z:$
\begin{align*}
H_{-g+1}&:=\{\,(i,j): j\ge -g+1\,\},\\
T_{-g,k}&:=\{\,(i,j): i\ge k \text{, } j=-g\,\}\\
Z_{-g, k}&:= H_{-g+1} \cup T_{-g,k}.
\end{align*}

\begin{figure}[ht!]
	\centering
	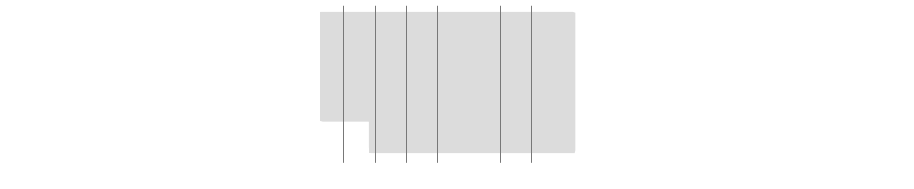
	\caption{Examples of the sub-quotient shapes $H_{-g+1}$, $Z_{-g,k}$, and $T_{-g,k}$ of $\Z \oplus \Z$.}
\label{fig::34}
\end{figure}

It is straightforward to see that, for $t$ sufficient close to $2$, the
complex $\cG_s^t(\bK)$ is always equal to $C(\bK,Z_{i,j})$ for some $i$, $j$.
Hence $[V^{-g}\cdot \ve{t}_{S,\zs}^-(1)]=[V^{-g'}\cdot \ve{t}_{S',\zs}^-]\in
H_*(\cG_{s}^t(\bK))$ if and only if
\[
C(\bK, Z_{-g,m})\subset \cG_{s}^t(\bK),
\]
where
\[
m := \max\left\{\, k\le 0: [V^{-g}\cdot \ve{t}_{S,\zs}^-(1)]=
[V^{-g'} \cdot \ve{t}^-_{S',\zs}(1)]\text{ in } H_*(C(\bK,Z_{-g,k}))\,\right\}.
\]
Consequently, an easy computation shows that
\begin{equation}
\Upsilon_{(S,S')}(t)=-m(2-t)+g t,\label{eq:Upsilonzfortnear2}
\end{equation}
for $t$ near $2$.

There is a short exact sequence of chain complexes
\begin{equation}
0\to C(\bK, H_{-g+1})\xrightarrow{i}  C(\bK, Z_{-g,k})\xrightarrow{q} C(\bK, T_{-g,k})\to 0.\label{eq:SES}
\end{equation}
Since $H_*(C(\bK, T_{-g,k}))$ is a torsion $\bF_2[\hat{U}]$-module (in fact, $\hat U$ has vanishing action), while
$H_*(C(\bK,H_{-g+1}))$ is torsion-free (in fact, isomorphic to $\bF_2[\hat U]$),
the connecting homomorphism of the
long exact sequence associated to equation~\eqref{eq:SES} vanishes.
Consequently, there is a short exact sequence
\begin{equation}
0\to H_*(C(\bK, H_{-g+1}))\xrightarrow{i} H_*(C(\bK, Z_{-g,k}))\xrightarrow{q} H_*(C(\bK, T_{-g,k}))\to 0.
\label{eq:LESsplits}
\end{equation}

Furthermore, since $H_*(C(\bK,H_{-g+1})) \iso \bF_2[\hat{U}]$, it follows that
$q$ is injective on the torsion submodule of $H_*(C(\bK, Z_{-g, k}))$.
Since $[V^{-g} \cdot \ve{t}_{S,\zs}^-(1)] + [V^{-g'} \cdot
\ve{t}_{S',\zs}^-(1)]$ is a torsion element of $H_*(C(\bK,Z_{-g,k}))$, it
follows that $[V^{-g} \cdot \ve{t}_{S,\zs}^-(1) + V^{-g'}\cdot  \ve{t}_{S',\zs}^-(1)]$
is zero in $H_*(C(\bK, Z_{-g,k}))$ if and only if its image under $q$ is zero
in $H_*(C(\bK, T_{-g,k}))$. Consequently,
\begin{equation}\label{eq:shiftbyUV^g}
m=\max\left\{k\le 0:  [V^{-g}\cdot \ve{t}^-_{S,\zs}(1)]=
[V^{-g'} \cdot \ve{t}^-_{S',\zs}(1)] \text{ in } H_*(C(\bK, T_{-g,k}))\right\}.
\end{equation}
Note that if $g>g'$, then  $[V^{-g'}\cdot \ve{t}^-_{S',\zs}]= 0$ as an element of $ H_*(C(\bK, T_{-g,k}))$,
so equation~\eqref{eq:shiftbyUV^g} implies that
\begin{equation}\label{eq:invariantforS'vanishes}
m=\max\left\{k\le 0:  [V^{-g}\cdot \ve{t}^-_{S,\zs}(1)]=0 \text{ in } H_*(C(\bK, T_{-g,k}))\right\}.
\end{equation}

Next, we note that multiplication by $\hat{U}^g$ induces a chain isomorphism
between $C(\bK, T_{-g,k})$ and $C(\bK, T_{0, k+g})$. The group
$C(\bK, T_{0,k+g})$ is the $\bF_2$-module generated by intersection points
$\xs\in \bT_{\a}\cap \bT_{\b}$ with $A(\xs)\ge k+g$. The differential on
$C(\bK, T_{0,k+g})$ counts holomorphic disks which are allowed to go over $w$,
but not $z$. This is simply the subcomplex $\hCFKfw_{k+g}(\bK)\subset \hCFKfw(\bK)$.
Applying the conjugation symmetry of knot Floer homology to
equation~\eqref{eq:invariantforS'vanishes} implies that
\begin{equation}
\begin{split}
m &= \max\left\{k\le 0 : [U^g \cdot \ve{t}^-_{S,\zs}(1)] = 0 \text{ in } H_*(C(\bK, T_{0,k+g}))\right\}\\
&= -\min\left\{k \ge 0 : [V^g \cdot \ve{t}_{S,\ws}^-(1)] = 0 \text{ in } H_*(\hCFKfz_{k-g}(\bK))\right\}\\
&= g - \kappa_0(S).
\end{split}
\label{eq:computem}
\end{equation}
Equations~\eqref{eq:Upsilonzfortnear2} and \eqref{eq:computem} together imply that, for $t$ near 2, we have
\[
\Upsilon_{(S,S')}(t)=(\kappa_0(S) - g) \cdot (2-t) + g \cdot t.
\]
Similar arguments apply when $g = g'$, and for $t$ close to 0.
\end{proof}

\subsection{Further properties of the secondary invariants}
Our secondary invariants satisfy a monotonicity condition with respect
to stacking link cobordisms.

\begin{prop}\label{prop:concordancemonotonicity}
Suppose that $(I \times \Sphere^3, S)$ is a link cobordism from $(\Sphere^3, K_0)$
to $(\Sphere^3, K_1)$, and $S_0$, $S_0' \in \Surf(K_0)$.
Let $S_1$, $S_1' \in \Surf(K_1)$ be the surfaces obtained by
stacking $S_0$ or $S_0'$ and $S$, respectively. Then
\[
\tau(S_0, S_0') + g(S) \ge \tau(S_1, S_1').
\]
When $g(S_0) = g(S_0')$, the invariant $\kappa$ satisfies an analogous inequality; furthermore,
\[
V_{k}(S_0, S_0') \ge V_{k+g(S)}(S_1, S_1')
\]
for $k \ge g(S_0)$.
Finally,
\[
\Upsilon_{(S_0, S_0')}(t) + (1 - |1-t|)\cdot g(S) \ge \Upsilon_{(S_1, S_1')}(t),
\]
for any $S_0$, $S_0' \in \Surf(K)$ and $t \in [0,2]$.
\end{prop}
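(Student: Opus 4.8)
The plan is to derive all four bounds from a single observation: the surface $S_1$ (resp.\ $S_1'$), regarded as a link cobordism from $\emptyset$ to $(\Sphere^3,K_1)$, is the composite of $S_0$ (resp.\ $S_0'$) with the cobordism $(I\times\Sphere^3,S)$. First I would decorate $S$ by a dividing set $\cA$ that meets $K_0$ and $K_1$ in two points each, splits $S$ into a genus $g(S)$ piece and a genus $0$ piece, and glues along $K_0$ with the decoration $\cA_{\zs}$ on $S_0$ to produce the decoration $\cA_{\zs}$ on $S_1$; such a choice is available since $\ve{t}^-_{S_1,\zs}$ is independent, up to filtered chain homotopy, of the precise location of the dividing arc. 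Writing $F_S := F_{I\times\Sphere^3,(S,\cA)}$, functoriality of the link Floer TQFT \cite{ZemCFLTQFT} gives $\ve{t}^-_{S_1,\zs}\simeq F_S\circ\ve{t}^-_{S_0,\zs}$ and $\ve{t}^-_{S_1',\zs}\simeq F_S\circ\ve{t}^-_{S_0',\zs}$, and since genera add, $g(S_1)=g(S_0)+g(S)$ and $g(S_1')=g(S_0')+g(S)$.

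Each of the invariants is a ``least level at which two classes become homologous'' in an auxiliary complex, so what is needed is a filtration-shift bound for $F_S$. The map $F_S$ is $\cR^-$-equivariant, preserves $\cCFL^-$, and by the grading formula of \cite{ZemAbsoluteGradings} raises the Alexander grading by $g(S)$, since all of the genus of $S$ lies in the $\zs$-subsurface. From this I would extract: (i) the reduction $\hat F_S$ maps $\hCFKfz_n(\bK_0)$ into $\hCFKfz_{n+g(S)}(\bK_1)$ and carries $[\hat t_{S_0,\zs}(1)]$ to $[\hat t_{S_1,\zs}(1)]$, so applying $\hat F_S$ to the defining equality of $\tau(S_0,S_0')$ yields $\tau(S_1,S_1')\le\tau(S_0,S_0')+g(S)$; (ii) the map $\tilde F_S := V^{-g(S)}F_S$ preserves the Alexander grading $0$ subspace, commutes with $\hat U=UV$, sends $A_k^-(\bK_0)$ into $A_{k+g(S)}^-(\bK_1)$ and $[V^{-g}\ve{t}^-_{S_0,\zs}(1)]$ to $[V^{-g(S_1)}\ve{t}^-_{S_1,\zs}(1)]$, giving $V_{k+g(S)}(S_1,S_1')\le V_k(S_0,S_0')$ for $k\ge g(S_0)$ when $g(S_0)=g(S_0')$; (iii) for $\kappa$ one repeats (i) with the $\ws$-decorated cobordism map, which lowers the Alexander grading by $g(S)$, and since $n-g(S_0)=(n+g(S))-g(S_1)$ the witnessing index in $\HFK^-_{U=0}$ increases by exactly $g(S)$, giving $\kappa(S_1,S_1')\le\kappa(S_0,S_0')+g(S)$ when $g(S_0)=g(S_0')$.

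For $\Upsilon$ I would use the reformulation of Lemma~\ref{lem:livingstonreformupsilon} via the filtrations $\cG_s^t(\bK)$ of $\CFK^\infty(\bK)$. A direct computation with $\tilde F_S=V^{-g(S)}F_S$ — using $\cR^-$-equivariance, $F_S(\cCFL^-)\subset\cCFL^-$, and the inequality $ta+(2-t)b\ge 0$ for the nonnegative exponents $a,b$ occurring in $F_S$ — shows $\tilde F_S(\cG_s^t(\bK_0))\subset\cG_{s+t\cdot g(S)}^t(\bK_1)$, which gives $\Upsilon_{(S_1,S_1')}(t)\le\Upsilon_{(S_0,S_0')}(t)+t\cdot g(S)$. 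For $t\le 1$ this is exactly the asserted bound, since $1-|1-t|=t$. For $t\ge 1$ I would instead pass to the $\ws$-decorated picture via the conjugation identity~\eqref{eq:equationiotaupsilon} and the conjugation symmetry of knot Floer homology, which exchanges $t$ with $2-t$; there the filtrations $\cG_s^t$ near $t=2$ take the form $C(\bK,Z_{i,j})$ appearing in the proof of Theorem~\ref{thm:upsilon-slope}, and stacking with $S$ on the $\ws$-side shifts the parameter by $(2-t)g(S)=(1-|1-t|)g(S)$, so the two ranges together yield the inequality for all $t\in[0,2]$.

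The functoriality factorization and the grading bookkeeping in (i)--(iii) are routine. The hard part will be the $\Upsilon$ inequality, and specifically obtaining the uniform weight $1-|1-t|$ rather than merely $t$: the $\zs$-decorated cobordism map only shifts $\cG_s^t$ by $t\cdot g(S)$, which is too weak for $t>1$, so one must reorganize the computation near $t=2$ through the $\ws$-side, and carefully check that the conjugation relation~\eqref{eq:equationiotaupsilon} is compatible with stacking so that the genus-$g(S)$ part of $S$ contributes precisely $(2-t)g(S)$ there.
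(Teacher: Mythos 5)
Your main line of attack is the paper's: decorate $S$ so that all of its genus lies in the type-$\zs$ region, use functoriality to get $\ve{t}^-_{S_1,\zs}\simeq F_S\circ \ve{t}^-_{S_0,\zs}$, and track the Alexander grading shift $g(S)$. Your deductions (i)--(iii) for $\tau$, $V_k$, and $\kappa$ are correct and essentially identical to what the paper intends. The same bookkeeping gives $\Upsilon_{(S_1,S_1')}(t)\le \Upsilon_{(S_0,S_0')}(t)+t\cdot g(S)$ for \emph{all} $t\in[0,2]$, which settles the $\Upsilon$ inequality for $t\le 1$.

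The gap is in your $t>1$ argument, and it cannot be closed, because the inequality with weight $1-|1-t|$ is actually false there whenever $g(S)>0$. Directly from the definition, $\Upsilon_{(S,S)}(t)=t\cdot g(S)$ for any $S$ (the defining condition is vacuous and only the constraint $s\ge t\cdot g$ remains). Taking $K_0=K_1$ the unknot, $S_0=S_0'$ the standard slice disk, and $S$ any genus-one self-cobordism of the unknot gives $\Upsilon_{(S_0,S_0')}(t)=0$ but $\Upsilon_{(S_1,S_1')}(t)=t\cdot g(S)>(2-t)\cdot g(S)$ for $t>1$. Equivalently, $\Upsilon_{(S,S')}(2)=2\max\{g(S),g(S')\}$ by Theorem~\ref{thm:upsilon-slope}, which must grow under stacking, while the claimed correction term vanishes at $t=2$. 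The weight $1-|1-t|$ is the one appearing in the knot-genus bound, where it comes from the symmetry $\Upsilon_K(t)=\Upsilon_K(2-t)$ --- a symmetry the paper itself points out fails for $\Upsilon_{(S,S')}$.

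Concretely, here is where your conjugation manoeuvre breaks. The map $\iota_K$ swaps $U\leftrightarrow V$ \emph{and} $\cA_{\ws}\leftrightarrow\cA_{\zs}$ simultaneously, so it identifies $V^{-g}\ve{t}^-_{S,\zs}(1)$ inside $\cG_s^t$ with $U^{-g}\ve{t}^-_{S,\ws}(1)$ inside $\cG_s^{2-t}$. On that side the stacking map is $U^{-g(S)}F_S^{\ws}$, and multiplication by $U^{-g(S)}$ lowers the filtration quantity $(2-t)j+ti$ by $t\cdot g(S)$, not by $(2-t)\cdot g(S)$; so both sides of the conjugation produce the same weight $t$, as they must. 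The statement that is provable (and that the paper's own one-line proof delivers) is $\Upsilon_{(S_0,S_0')}(t)+t\cdot g(S)\ge \Upsilon_{(S_1,S_1')}(t)$; in the only place the proposition is used later (invertible concordances, $g(S)=0$) the two weights agree, so nothing downstream is affected.
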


\begin{proof}
Choose basepoints $w_i$ and $z_i$ on $K_i$ for $i \in \{0,1\}$,
and a decoration $\cA$ on $S$ such that $S_{\ws}$ is a strip
containing $w_0$ and $w_1$.
Using the functoriality of the link cobordism maps,
it sends $\ve{t}_{S_0, \zs}^\infty(1)$ to $\ve{t}_{S_1, \zs}^\infty(1)$
and $\ve{t}_{S_0', \zs}^\infty(1)$ to $\ve{t}_{S_1', \zs}^\infty(1)$. Furthermore, by \cite[Theorem~1.4]{ZemAbsoluteGradings}, the map $F_{I \times \Sphere^3, (S, \cA)}$
increases the Alexander grading by $g(S)$. From these two facts, all claims can be proven quickly.
\end{proof}

A concordance $\cC$ from $K_0$ to $K_1$ is called \emph{invertible}
if there is a concordance $\cC'$ from $K_1$ to $K_0$ such that $\cC' \circ \cC$
is the identity cobordism from $K_0$ to itself; see Sumners~\cite{Sumners}.

\begin{cor}
  Suppose that $K_0$ and $K_1$ are knots in $\Sphere^3$ and $S_0$, $S_0' \in \Surf(K_0)$.
  If $\cC$ is an invertible concordance from $K_0$ to $K_1$,
  let $S_1$ and $S_1'$ denote the surfaces in  $\Surf(K_1)$
  obtained by stacking $S_0$ or $S_0'$ and $\cC$, respectively. Then
  \[
  \omega(S_0, S_0') = \omega(S_1, S_1')
  \]
  for $\omega \in \{\tau, \Upsilon\}$.
  If $g(S_0) = g(S_0')$, then the same equality holds for $\omega \in  \{V_k,\kappa\}$, provided $k\ge g$.
\end{cor}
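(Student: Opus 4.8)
The plan is to apply Proposition~\ref{prop:concordancemonotonicity} twice — once to the concordance $\cC$ and once to its inverse $\cC'$ — and combine the resulting inequalities. First I would record that an invertible concordance is in particular a genus-zero link cobordism, so $g(\cC) = g(\cC') = 0$; hence $g(S_1) = g(S_0)$ and $g(S_1') = g(S_0')$, and in particular the hypothesis $g(S_0) = g(S_0')$ needed for $V_k$ and $\kappa$ transfers to $S_1, S_1'$. Applying Proposition~\ref{prop:concordancemonotonicity} to $(I \times \Sphere^3, \cC)$, with $g(\cC) = 0$, then gives
\[
\tau(S_0, S_0') \ge \tau(S_1, S_1'), \qquad \Upsilon_{(S_0, S_0')}(t) \ge \Upsilon_{(S_1, S_1')}(t),
\]
and, when $g(S_0) = g(S_0')$, also $V_k(S_0, S_0') \ge V_k(S_1, S_1')$ for $k \ge g$ together with the analogous inequality for $\kappa$.

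For the reverse inequalities I would stack $S_1 = S_0 \cup \cC$ and $S_1' = S_0' \cup \cC$ with $\cC'$. Because $\cC' \circ \cC$ is the identity cobordism of $K_0$, the stacked surfaces $S_0 \cup (\cC' \circ \cC)$ and $S_0' \cup (\cC' \circ \cC)$ are isotopic to $S_0$ and $S_0'$, respectively. Since the principal invariants, and hence $\tau$, $\nu$, $V_k$, $\Upsilon$, and $\kappa$, depend only on the isotopy class of the surface, the secondary invariants of the stacked surfaces literally equal those of $S_0, S_0'$. Applying Proposition~\ref{prop:concordancemonotonicity} to $(I \times \Sphere^3, \cC')$, again of genus zero, then yields
\[
\tau(S_1, S_1') \ge \tau(S_0, S_0'), \qquad \Upsilon_{(S_1, S_1')}(t) \ge \Upsilon_{(S_0, S_0')}(t),
\]
and $V_k(S_1, S_1') \ge V_k(S_0, S_0')$ and the $\kappa$ analogue under the genus hypothesis. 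Chaining the two families of inequalities produces the claimed equalities for all $\omega \in \{\tau, \Upsilon\}$, and for $\omega \in \{V_k, \kappa\}$ when $g(S_0) = g(S_0')$ and $k \ge g$.

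The hard part is not the algebra of inequalities but the two bookkeeping points underneath it. First, one must confirm that a genus-zero cobordism map does not shift the relevant gradings, so that no correction term survives in Proposition~\ref{prop:concordancemonotonicity}: this is precisely the $g(S) = 0$ instance of the Alexander-grading formula of \cite[Theorem~1.4]{ZemAbsoluteGradings}, which is already invoked in the proof of that proposition. Second, one must be slightly careful that "$S_0 \cup (\cC' \circ \cC)$ isotopic to $S_0$" upgrades to an honest equality of invariants; this follows from the well-definedness of the extremal principal invariants $\ve{t}^\infty_{S,\zs}$ up to filtered chain homotopy as invariants of the isotopy class of $S$, exactly as used in the invariance lemmas for connected sums with $2$-knots earlier in the paper. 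Both points are routine, so the corollary is essentially a formal consequence of Proposition~\ref{prop:concordancemonotonicity}.
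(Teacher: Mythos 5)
Your proof is correct and follows exactly the paper's argument: apply Proposition~\ref{prop:concordancemonotonicity} to $\cC$ (a genus-zero cobordism) for one inequality, then observe that stacking $S_1$, $S_1'$ with the left inverse $\cC'$ recovers $S_0$, $S_0'$ and apply the proposition again for the reverse inequality. The extra bookkeeping remarks about grading shifts and isotopy invariance are fine but not needed beyond what the proposition already provides.
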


\begin{proof}
Let $\cC'$ be a left inverse of $\cC$.
We first apply Proposition~\ref{prop:concordancemonotonicity}
to the surfaces $S_0$, $S_0'$, and to the concordance $\cC$,
to obtain that $\omega(S_0, S_0') \ge \omega(S_1, S_1')$.
Note that we recover $S_0$ if we stack $S_1$ and $\cC'$,
and $S_0'$ if we stack $S_1'$ and $\cC'$.
Hence, if we apply Proposition~\ref{prop:concordancemonotonicity}
to the surfaces $S_1$, $S_1'$, and to the concordance $\cC'$,
we obtain that $\omega(S_1, S_1') \ge \omega(S_0, S_0')$.
\end{proof}

Like $\mu_{\st}$ and $\mu_{\Sing}$, our secondary invariants satisfy
the following ultrametric inequality:

\begin{prop}\label{prop:ultra}
If $K$ is a knot in $\Sphere^3$ and $S_1$, $S_2$,  $S_3 \in \Surf(K)$, then
\[
\omega(S_1, S_3) \le \max\{\, \omega(S_1, S_2), \omega(S_2, S_3) \,\}
\]
for $\omega\in \{\tau,\Upsilon\}$, where the inequality is to be
taken pointwise when $\omega = \Upsilon$.
If $g(S_1) = g(S_2) = g(S_3)$, then the
inequality holds with $\omega\in \{V_k,\kappa\}$, as well.
\end{prop}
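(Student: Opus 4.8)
The plan is to exploit the common shape of all four definitions: each invariant $\omega(S,S')$ is the minimum of a parameter over the set of parameter values at which a certain equality of homology classes holds, where one class is extracted from the principal invariant of $S$ and the other from that of $S'$, and where increasing the parameter only makes the equality easier to satisfy in a way that is compatible with the classes themselves. Concretely, for $\tau$ and, via Lemma~\ref{lem:livingstonreformupsilon}, for $\Upsilon$, the relevant groups $H_*(\hCFKfz_n(\bK))$, respectively $H_*(\cG_s^t(\bK))$, form a direct system under the inclusions $\hCFKfz_n \into \hCFKfz_{n'}$ for $n \le n'$ (resp.\ $\cG_s^t \into \cG_{s'}^t$ for $s \le s'$), and under these maps the class $[\hat t_{S,\zs}(1)]$ (resp.\ $[V^{-g(S)}\cdot\ve{t}_{S,\zs}^-(1)]$) is sent to the class with the same name at the higher level. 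For $V_k$ and $\kappa$ the role of the direct system is played by multiplication by $\hat U$ on $H_*(A_k^-(\bK))$, resp.\ by $V$ on $\HFK^-_{U=0}(\bK)$, which is additive. In every case the equality relation is transitive on a fixed $\bF_2$-vector space, and transitivity together with this compatibility is exactly what yields the ultrametric inequality.

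Carrying this out for $\tau$: write $n_{12} = \tau(S_1,S_2)$, $n_{23} = \tau(S_2,S_3)$, and $n = \max\{n_{12},n_{23}\}$. Since $n_{12}\ge \max\{g(S_1),g(S_2)\}$ and $n_{23}\ge \max\{g(S_2),g(S_3)\}$, we have $n \ge \max\{g(S_1),g(S_3)\}$, so the three classes $[\hat t_{S_i,\zs}(1)]$, $i=1,2,3$, all make sense in $H_*(\hCFKfz_n(\bK))$. Pushing forward the defining equalities along the inclusions $\hCFKfz_{n_{12}}\into \hCFKfz_n$ and $\hCFKfz_{n_{23}}\into \hCFKfz_n$ gives $[\hat t_{S_1,\zs}(1)] = [\hat t_{S_2,\zs}(1)]$ and $[\hat t_{S_2,\zs}(1)] = [\hat t_{S_3,\zs}(1)]$ in $H_*(\hCFKfz_n(\bK))$, hence $[\hat t_{S_1,\zs}(1)] = [\hat t_{S_3,\zs}(1)]$ there; therefore $\tau(S_1,S_3)\le n$. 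The argument for $\Upsilon$ is word for word the same, replacing $\hCFKfz_n$ by $\cG_s^t$ and the integer $n$ by $s=\max\{\Upsilon_{(S_1,S_2)}(t),\Upsilon_{(S_2,S_3)}(t)\}$, using that the minimum in Lemma~\ref{lem:livingstonreformupsilon} is attained because $\cG_s^t(\bK)$ changes only at finitely many values of $s$.

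For $V_k$ and $\kappa$ one additionally assumes $g(S_1)=g(S_2)=g(S_3)=g$ (with $k\ge g$, resp.\ $g>0$), so that the relevant classes live in a single fixed group. Writing $x_i=[V^{-g}\cdot\ve{t}_{S_i,\zs}^-(1)]\in H_*(A_k^-(\bK))$ and $n=\max\{V_k(S_1,S_2),V_k(S_2,S_3)\}$, from $\hat U^{V_k(S_1,S_2)}x_1=\hat U^{V_k(S_1,S_2)}x_2$ we get $\hat U^n x_1 = \hat U^{\,n-V_k(S_1,S_2)}\hat U^{V_k(S_1,S_2)}x_1 = \hat U^n x_2$, and similarly $\hat U^n x_2=\hat U^n x_3$, so $\hat U^n x_1=\hat U^n x_3$ and $V_k(S_1,S_3)\le n$. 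The same computation with $V$ acting on $\HFK^-_{U=0}(\bK)$ and the classes $[t_{S_i,\ws}^-(1)]$ proves it for $\kappa$. I do not expect a genuine obstacle here; the only point that needs a word of justification is the one already noted, namely that for $\Upsilon$ the minimum over $s$ is actually attained, which is built into the reformulation we invoke.
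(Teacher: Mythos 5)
Your proposal is correct and is essentially the paper's own argument: the paper's proof is a one-sentence observation that each invariant records when two distinguished homology classes become equal after multiplying by a power of $V$, $\hat{U}$, or $v$, and you have simply spelled out the routine verification (persistence of equalities under the inclusions $\hCFKfz_{n}\into\hCFKfz_{n'}$ and $\cG_s^t\into\cG_{s'}^t$, and under multiplication by $\hat{U}$ or $V$, followed by transitivity). The genus bookkeeping ensuring $\max\{n_{12},n_{23}\}\ge\max\{g(S_1),g(S_3)\}$ and the remark that the minimum defining $\Upsilon$ is attained are both handled correctly.
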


\begin{proof}
All of the invariants are described in terms of when two distinguished
elements become equal in homology, after multiplying by some power of $V$, $\hat{U}$, or $v$.
\end{proof}

\begin{lem}
Let $K$ be a knot in $\Sphere^3$, and suppose $S_1$, $S_2$, $S_3 \in \Surf(K)$.
If we endow $\N \times \bar{\Z}^{\le 0}$ with the lexicographic ordering,
then the map $\tau^+$ satisfies the ultrametric inequality
\[
\tau^+(S_1, S_3) \le \max\{ \tau^+(S_1, S_2), \tau^+(S_2, S_3) \}.
\]
\end{lem}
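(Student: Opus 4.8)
The plan is to bootstrap from the ultrametric inequality for $\tau$ itself (Proposition~\ref{prop:ultra}) and then control the second coordinate $\tau'$ by comparing the $L$-shaped sub-quotient complexes $C(\bK,\L{m}{n})$ at a common $\tau$-level. Throughout, write $x_i := V^{-g(S_i)}\cdot\ve t^-_{S_i,\zs}(1)$, which, for every sub-quotient shape $\cS$ with $R_{0,-g(S_i)}\subset H(\cS)$, determines a class $[x_i]\in H_*(C(\bK,\cS))$ via the map $q$; set $\tau_{ij}=\tau(S_i,S_j)$ and $\tau'_{ij}=\tau'(S_i,S_j)$. Two structural facts will be used repeatedly. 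First, for $m'\le m$ the complex $C(\bK,\L{m'}{n})$ is a quotient of $C(\bK,\L{m}{n})$, obtained by collapsing the $\d$-invariant subcomplex spanned by the monomials in $([m'+1,m]\cap\Z)\times\{-n\}$, which is upward-closed in $\L{m}{n}$; hence the set $\{m\ge 0:[x_i-x_j]=0\in H_*(C(\bK,\L{m}{n}))\}$ is a down-set in $\Z^{\ge 0}$, and when $n=\tau_{ij}$ its supremum is exactly $-\tau'_{ij}$. Second, homology classes are additive, so $[x_1-x_3]=[x_1-x_2]+[x_2-x_3]$ in any $H_*(C(\bK,\cS))$.

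The first step is to dispose of the easy case. By Proposition~\ref{prop:ultra} we have $\tau_{13}\le\max\{\tau_{12},\tau_{23}\}$. If this inequality is strict, the first coordinates already give $\tau^+(S_1,S_3)<\max\{\tau^+(S_1,S_2),\tau^+(S_2,S_3)\}$ in the lexicographic order, and we are done. So assume $\tau_{13}=\max\{\tau_{12},\tau_{23}\}=:\tau$; since the asserted inequality is symmetric under interchanging $S_1$ and $S_3$, we may also assume $\tau_{23}\ge\tau_{12}$, so $\tau_{23}=\tau$ and $\tau_{12}\le\tau$. Then the right-hand side equals $(\tau,\tau'_{23})$ if $\tau_{12}<\tau$ and $(\tau,\max\{\tau'_{12},\tau'_{23}\})$ if $\tau_{12}=\tau$, and since $\tau_{13}=\tau$ it only remains to bound $\tau'_{13}$ above by the appropriate second coordinate.

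The technical heart is the following claim: \emph{if $\tau(S,S')\le n-1$, then $[x_S-x_{S'}]=0\in H_*(C(\bK,\L{m}{n}))$ for every $m\ge 0$.} I would prove it by unwinding the definition of $\tau$: the hypothesis lets us write $x_S-x_{S'}=\d y+z$ inside $C(\bK,R_{0,-n+1})=C_{\text{out}}(\bK,\I{n-1})$, with $y$ supported in $\{i=0,\ j\ge -n+1\}$ and $z$ supported in $C_{\text{in}}(\bK,\I{n-1})=C(\bK,\{i\ge 1,\ j\ge -n+1\})$. Now $C_{\text{in}}(\bK,\L{m}{n})$ contains every monomial with $i\ge 1$ and $j\ge -n+1$, so it absorbs $z$ and the $i\ge 1$ part of $\d y$ when we pass to $C(\bK,\L{m}{n})$; since $y$ itself lands in $\I{n-1}\subset\L{m}{n}$, what survives is $[x_S-x_{S'}]=[\,(\d y)|_{i=0}\,]=\d_{\L{m}{n}}[y]=0$. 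Granting this, the two subcases are immediate. If $\tau_{12}<\tau$, the claim (with $n=\tau$) gives $[x_1-x_2]=0\in H_*(C(\bK,\L{m}{\tau}))$ for all $m$, so for every $m\le -\tau'_{23}$ we get $[x_1-x_3]=[x_1-x_2]+[x_2-x_3]=0$ there, whence $-\tau'_{13}\ge -\tau'_{23}$, i.e.\ $\tau'_{13}\le\tau'_{23}$. If $\tau_{12}=\tau$, then for every $m\le\min\{-\tau'_{12},-\tau'_{23}\}$ both $[x_1-x_2]$ and $[x_2-x_3]$ vanish in $H_*(C(\bK,\L{m}{\tau}))$, hence so does $[x_1-x_3]$, giving $\tau'_{13}\le\max\{\tau'_{12},\tau'_{23}\}$. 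In either subcase $\tau^+(S_1,S_3)\le\max\{\tau^+(S_1,S_2),\tau^+(S_2,S_3)\}$.

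The main obstacle I anticipate is the bookkeeping in the technical claim: one must track carefully which of the shapes $\I{n}$, $\L{m}{n}$, and their complements cut out subcomplexes versus quotient complexes of $\CFK^\infty(\bK)$ (using that the differential weakly increases both the $i$- and $j$-coordinates), and verify that the natural maps $q$ relating the classes $[x_S]$ in the various $H_*(C(\bK,\cS))$ are compatible with these inclusions and quotient maps. Once that compatibility is in place, everything else is the formal down-set and additivity manipulation described above.
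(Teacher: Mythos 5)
Your proposal is correct and follows essentially the same route as the paper's proof: reduce via Proposition~\ref{prop:ultra} to the case $\tau_{13}=\tau_{23}=\tau$, then handle $\tau_{12}<\tau$ by pushing the witness chain for $\tau_{12}$ into $C(\bK,\L{m}{\tau})$ (your ``technical claim'' is exactly the paper's inclusion $\iota\colon C(\bK,\I{\tau_{12}})\to C(\bK,\L{m}{\tau})$), and handle $\tau_{12}=\tau$ via the projection $C(\bK,\L{m'}{\tau})\to C(\bK,\L{m}{\tau})$ for $m'\ge m$, which is your down-set observation. The extra care you take in verifying which shapes cut out subcomplexes versus quotients is sound and matches the paper's conventions.
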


\begin{proof}
For $i$, $j \in \{1,2,3\}$, let us write $\tau_{ij} = \tau(S_i, S_j)$ and $\tau'_{ij} = \tau'(S_i, S_j)$,
and let
\[
\ve{t}_i := V^{-g(S_i)} \cdot \ve{t}_{S_i, \zs}^-(1).
\]
Without loss of generality, we can suppose that $\tau_{12} \le \tau_{23}$.
By Proposition~\ref{prop:ultra}, we have $\tau_{13} \le \max \{\tau_{12}, \tau_{23}\} = \tau_{23}$.
As we have endowed $\N \times \bar{\Z}^{\le 0}$ with the lexicographic ordering,
it suffices to consider the case when $\tau_{13} = \tau_{23}$, which we denote by $\tau$,
as otherwise $\tau^+(S_1, S_3) < \tau^+(S_2, S_3)$.

Choose basepoints $w$ and $z$ on $K$, and write $\bK = (K, w, z)$.
First, assume that $\tau_{12} < \tau$. By the definition of $\tau_{12}$, there
exists $x \in C(\bK, \I{\tau_{12}})$ such that $\d x = \ve{t}_1 - \ve{t}_2$.
If we write $m := -\tau'_{23}$, by definition, there is an $y \in C(\bK, \L{m}{\tau})$ such that
$\d y = \ve{t}_2 - \ve{t}_3$. Since $\tau_{12} < \tau$ and $\d$ respects the Alexander filtration,
there is an inclusion of complexes
\[
\iota \colon C(\bK, \I{\tau_{12}}) \to C(\bK, \L{m}{\tau}).
\]
It follows that $\iota(x) + y \in C(\bK, \L{m}{\tau})$ satisfies
$\d(\iota(x) + y) = \ve{t}_1 - \ve{t}_3$. As $\tau_{13} = \tau$,
it follows that $-\tau'_{13} \ge m$, hence $\tau'_{13} \le \tau'_{23}$
and $\tau^+(S_1, S_3) \le \tau^+(S_2, S_3)$.

Now suppose that $\tau_{12} = \tau$. If we write $m' := -\tau'_{12}$ and $m := -\tau'_{23}$,
then there exist $x \in C(\bK, \L{m'}{\tau})$ and $y \in C(\bK, \L{m}{\tau})$ such that
$\d x = \ve{t}_1 - \ve{t}_2$ and $\d y = \ve{t}_2 - \ve{t}_3$.
Without loss of generality, we can assume that $m' \ge m$. Then there is a natural projection
\[
\pi \colon C(\bK, \L{m'}{\tau}) \to C(\bK, \L{m}{\tau})
\]
that is a chain map, and preserves the elements $\ve{t}_i$ for $i \in \{1,2,3\}$.
Since $x + \pi(y) \in C(\L{m}{\tau})$
and $\d(x + \pi(y)) = \ve{t}_1 - \ve{t}_3$, we have $-\tau'_{13} \ge m$,
hence $\tau'_{13} \le \tau'_{23}$ and $\tau^+(S_1, S_3) \le \tau^+(S_2, S_3)$.
\end{proof}

\section{Link Floer homology and the stabilization distance}\label{sec:stab}

In this section we prove our main technical results about stabilizations and the link Floer TQFT,
and show that our invariants $\tau$ and $V_k$ give lower bounds on $\mu_{\st}$,
while $\kappa_0$ and $\cI$ give lower bounds on $g_{\dest}$.

\subsection{Algebraic reduction}

In this section, we consider the relation between the Heegaard Floer homology of multi-pointed 3-manifolds and the link Floer homology of unlinks.

There are two natural ways to reduce $\cCFL^-$ to $\CF^-$ via a tensor product. Let $M_{V=1}$ denote the $(\bF[U,V],\bF[\hat U])$-bimodule with underlying vector space $\bF[\hat{U}]$, where $U$ acts on the left by $\hat{U}$, and $V$ acts by 1. We have $\hat{U}$ act on the right by ordinary multiplication. There is also an $(\bF[U,V],\bF[\hat U])$-bimodule $M_{U=1}$  with underlying vector space $\bF[\hat{U}]$, defined similarly, except that we have $V$ act on the left by $\hat{U}$ and we have $U$ act by $1$.

There are canonical isomorphisms
\[
\begin{split}
\cCFL^-(Y,\bL,\frs)\otimes_{\bF[U,V]} M_{V=1}&\iso \CF^-(Y,\ws,\frs)\\
\cCFL^-(Y,\bL,\frs)\otimes_{\bF[U,V]} M_{U=1}&\iso \CF^-(Y,\zs,\frs-\PD[L]).
\end{split}
\]
These isomorphisms are obtained by taking any Heegaard diagram for $(Y,\bL)$, and ignoring the $\zs$ basepoints, or ignoring the $\ws$ basepoints.

In particular, for any $\bF[U,V]$-equivariant map $F$ from $\cCFL^-(Y_1,\bL_1,\frs_1)$ to $\cCFL^-(Y_2,\bL_2,\frs_2)$, we obtain a map $F|_{V=1}$ from $\CF^-(Y_1,\ws_1,\frs_1)$ to $\CF^-(Y_2,\ws_2,\frs_2)$. There is also a map $F|_{U=1}$ from $\CF^-(Y_1,\zs_1,\frs_1-\PD[L_1])$ to $\CF^-(Y_2,\zs_2,\frs_2-\PD[L_2])$. 

An important special case is when $\bL$ is an unlink, and each link component has exactly two basepoints. We say that a diagram for $(Y,\bL)$ is a \emph{minimal unlink diagram} if each $\ws$ basepoint occurs in the same component of $\Sigma\setminus (\as\cup \bs)$ as a $\zs$ basepoint. In this case, a Seifert disk is canonically specified by picking a collection of arcs in $\Sigma\setminus (\as\cup \bs)$ which connects each $\ws$ basepoint to a $\zs$ basepoint. By pushing the interiors of these arcs off of $\Sigma$, in both directions, a collection of Seifert disks for $\bL$ is spanned. In particular, there is a canonical Seifert surface $S$ of $\bL$ which is determined by the diagram, and $A_S(U^i V^j\xs)=j-i$ for all intersection points $\xs$.

Additionally, in the case of a minimal unlink diagram $\cH=(\Sigma,\as,\bs,\ws,\zs)$, there is a canonical isomorphism
\begin{equation}
\cCFL^-(\Sigma,\as,\bs,\ws,\zs,\frs)\iso \CF^-(\Sigma,\as,\bs,\ws,\frs)\otimes_{\bF[\hat{U}]} \bF[U,V],
\label{eq:minimal-unlink-diagram-isomorphism}
\end{equation}
where we view $\hat{U}$ as acting on $\bF[U,V]$ via the product $UV$. 

In particular, if we are given minimal unlink diagrams for $(Y_1,\bL_1)$ and $(Y_2,\bL_2)$ as well as an $\bF[U,V]$-equivariant map $F$ from  $\cCFL^-(Y_1,\bL_1,\frs_1)$ to $\cCFL^-(Y_2,\bL_2,\frs_2)$, we may view $F|_{U=1}\otimes \id_{\bF[U,V]}$ and $F|_{V=1}\otimes \id_{\bF[U,V]}$ as also being maps from $\cCFL^-(Y_1,\bL_1,\frs_1)$ to $\cCFL^-(Y_2,\bL_2,\frs_2)$. For our purposes, it is useful to compare these maps to the original map $F$:

\begin{lem}
\label{lem:compute-using-reductions} Suppose that $\bL_1\subset Y_1$ and $\bL_2\subset Y_2$ are unlinks, and pick minimal unlink diagrams for $(Y_1,\bL_1)$ and $(Y_2,\bL_2)$, respectively. Suppose that $F\colon \cCFL^-(Y_1,\bL_1,\frs_1)\to \cCFL^-(Y_2,\bL_2,\frs_2)$ is an $\bF[U,V]$-equivariant map, which is homogeneously graded with respect to the Alexander grading, and shifts the Alexander grading by $\Delta$.
\begin{enumerate}
\item If $\Delta\ge 0$, then
\[
F=V^\Delta\cdot (F|_{V=1}\otimes \id_{\bF[U,V]})\quad \text{and} \quad U^\Delta \cdot  F=(F|_{U=1}\otimes \id_{\bF[U,V]}).
\]
\item If $\Delta\le 0$, then
\[
V^{-\Delta}\cdot F=(F|_{V=1}\otimes \id_{\bF[U,V]})\quad \text{and} \quad F=U^{-\Delta} \cdot (F|_{U=1}\otimes \id_{\bF[U,V]}).
\]
\end{enumerate}
\end{lem}
\begin{proof} Consider the claim for $\Delta\ge 0$. In this case, the maps $F$ and $F|_{V=1}\otimes \id_{\bF[U,V]}$ agree up to an overall power of $V$. Since $F|_{V=1}\otimes \id_{\bF[U,V]}$ preserves the Alexander grading, the overall power is $V^\Delta$. The same argument works for the other claims.
\end{proof}

\begin{rem} Lemma~\ref{lem:compute-using-reductions} is stated using two fixed minimal unlink diagrams for $(Y_1,\bL_1)$ and $(Y_2,\bL_2)$, so we do not claim that the map $F|_{V=1}\otimes \id_{\bF[U,V]}$ and $F|_{U=1}\otimes \id_{\bF[U,V]}$ are natural maps. We may view these maps as being natural if we fix a set of Seifert disks for $\bL_1$ and $\bL_2$. 
\end{rem}

\subsection{Stabilizations and link Floer homology}\label{sec:stabandHFL}

In this section, we prove our main computational results about stabilizations
and link Floer homology.
Before we state our computational results, we recall that the link cobordism
maps admit extensions
\[
F_{W,\cF,\frs} \colon \Lambda^*(H_1(W)/\Tors) \otimes \cCFL^-(Y_1,\bL_1,\frs|_{Y_1}) \to
\cCFL^-(Y_2,\bL_2,\frs|_{Y_2})
\]
that incorporate the action of $\Lambda^* (H_1(W)/\Tors)$, similar to the
cobordism maps of Ozsv\'ath and Szab\'o~\cite{OSTriangles}; see
\cite{ZemAbsoluteGradings}*{Section~12.2} for a description.

If $F$ and $G$ are two maps from $\bF_2[U,V]$ to
$\cCFL^-(Y,\bL,\frs)$, we say that $F \simeq G$ \emph{modulo the action of $H_1(Y)$}, and write
\[
F \simeq G \mod H_1(Y),
\]
if there are classes $[\gamma_1], \dots, [\gamma_k] \in H_1(Y)$, as well as $\F_2[U,V]$-equivariant maps $J_1, \dots, J_k$ from $\bF_2[U,V]$ to
$\cCFL^-(Y,\bL,\frs)$, such that
\[
F + G \simeq \sum_{i=1}^k A_{\gamma_i} \circ J_i.
\]
(Note that if $\gamma$ and $\gamma'$ are homologous 1-cycles in $Y$, then $A_\gamma \simeq A_{\gamma'}$.)

\begin{lem}\label{lem:closedsurfacecomputation}
Suppose that $(W,\cF) \colon \emptyset \to (Y,\bU)$ is a decorated link
cobordism from the empty set to a doubly-based unknot $\bU$ in $Y$, equipped
with a Seifert disk $D$, and let $\frs \in \Spin^c(W)$. Pick a Heegaard diagram for $(Y,\bU)$ where the $w$ and $z$ basepoints are immediately adjacent. 
 Write $\cF = (S,\cA)$, and suppose that $H_1(Y) \to H_1(W)$ is a surjection.
\begin{enumerate}
\item\label{it:claim1} Suppose $\cA$ has a single component (necessarily a non-closed arc),
and write
\[
h(S\cup D,\frs) := \frac{\langle\, c_1(\frs), [S \cup D] \,\rangle - [S \cup D] \cdot [S \cup D]}{2}.
\]
If $h(S \cup D,\frs) + g(S_{\zs}) - g(S_{\ws}) \ge 0$, then
\[
F_{W,\cF,\frs} \simeq  U^{g(S_{\ws})} V^{g(S_{\zs}) + h(S \cup D, \frs)} \cdot F_{W,\frs}\otimes \id_{\bF[U,V]} \mod H_1(Y),
\]
with respect to the isomorphism from  Equation~\eqref{eq:minimal-unlink-diagram-isomorphism}.
If $h(S \cup D,\frs) + g(S_{\zs}) - g(S_{\ws}) \le 0$, then
\[
F_{W,\cF,\frs} \simeq U^{g(S_{\ws}) - h(S \cup D, \frs)} V^{g(S_{\zs})} \cdot F_{W,\frs - \PD[S]}\otimes \id_{\bF[U,V]} \mod H_1(Y).
\]
\item\label{it:claim2} If $\cA$ has a closed component $\gamma$, then
\[
F_{W,\cF,\frs} \simeq 0 \mod H_1(Y).
\]
\end{enumerate}
\end{lem}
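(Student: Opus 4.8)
\emph{Overall strategy.} The plan is to reduce claim~\eqref{it:claim1} to two inputs --- a grading computation and a ``reduction to three--manifolds'' computation --- and then let Lemma~\ref{lem:relknotifiedredandoriginal} convert between them. \emph{Input 1 (gradings):} by the grading shift formula for the link cobordism maps \cite{ZemAbsoluteGradings}*{Theorem~1.4}, the map $F_{W,\cF,\frs}$ is homogeneous with respect to the Alexander grading $A_D$ determined by the Seifert disk $D$, of $A_D$-degree
\[
k := g(S_{\zs}) - g(S_{\ws}) + h(S\cup D,\frs).
\]
(This is consistent with the already quoted cases $\ve{t}_{S,\zs}^\infty$ and $\ve{t}_{S,\ws}^\infty$, where $[S\cup D]=0$.) The two sub-cases of~\eqref{it:claim1} are exactly $k\ge 0$ and $k\le 0$.

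\emph{Input 2 (reduction).} I claim
\[
\Red_{V=1}(F_{W,\cF,\frs}) \simeq \hat{U}^{g(S_{\ws})}\cdot F_{W,\frs} \mod H_1(Y)
\qquad\text{and}\qquad
\Red_{U=1}(F_{W,\cF,\frs}) \simeq \hat{U}^{g(S_{\zs})}\cdot F_{W,\frs-\PD[S]} \mod H_1(Y),
\]
where $F_{W,\frs}\colon \bF_2[\hat{U}]\to \CF^-(Y,p,\frs)$ is the three-manifold cobordism map of $W$, viewed as a cobordism from $\emptyset$. To prove this I would decompose $(W,\cF)$, as a morphism from $\emptyset$ to $(Y,\bU)$, into elementary link cobordisms (cf.\ \cite{ZemCFLTQFT}): an initial birth of a doubly-based unknot with a disk inside a four-ball, then band attachments, four-dimensional handle attachments, quasi-stabilizations, and diffeomorphisms of $(Y,\bU)$, arranged so that the $g(S_{\ws})$ handles of $S_{\ws}$ and the $g(S_{\zs})$ handles of $S_{\zs}$ are each attached by a pair of bands lying in the $\ws$- and $\zs$-regions. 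Applying $\Red_{V=1}$ termwise: the initial birth reduces to the generator of $\CF^-(\Sphere^3)$ and the four-dimensional handle maps reduce to the corresponding maps on $\CF^-$, so together they assemble to $F_{W,\frs}$; a tube in the $\zs$-region has cobordism map homotopic to multiplication by $V$ (via the band-map relations of \cite{ZemCFLTQFT}*{Section~9}), hence becomes the identity after setting $V=1$; a tube in the $\ws$-region is homotopic to multiplication by $U=\hat{U}$; and the quasi-stabilization maps and diffeomorphisms either reduce to isomorphisms or contribute terms in the $H_1(Y)$-action. This is where surjectivity of $H_1(Y)\to H_1(W)$ is used: every $H_1(W)$-action term produced by a handle slide is realized by an $H_1(Y)$-action. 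The second relation is proved the same way, the spin$^c$ twist by $\PD[S]$ arising because $\Red_{U=1}$ implicitly caps off $S$ along the $\ws$-side.

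\emph{Assembling the claims.} If $k\ge 0$, Lemma~\ref{lem:relknotifiedredandoriginal} gives $F_{W,\cF,\frs}\simeq V^k\cdot(\Kn\circ\Red_{V=1})(F_{W,\cF,\frs})$; applying $\Kn$ to Input~2, using $\Red_{V=1}\circ\Kn=\id$ and \eqref{eq:kncommutesH1action} (so $H_1$-terms remain $H_1$-terms), this is $\simeq U^{g(S_{\ws})}V^{\,k+g(S_{\ws})}\cdot\Kn(F_{W,\frs})\mod H_1(Y)$, and $k+g(S_{\ws})=g(S_{\zs})+h(S\cup D,\frs)$ gives the first formula of~\eqref{it:claim1}. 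If $k\le 0$, one instead uses $U^{-k}\cdot(\Kn\circ\Red_{U=1})(F_{W,\cF,\frs})\simeq F_{W,\cF,\frs}$ together with the second relation of Input~2, and $-k+g(S_{\zs})=g(S_{\ws})-h(S\cup D,\frs)$ gives the second formula; when $k=0$ both formulas agree. For~\eqref{it:claim2}, if $\cA$ has a closed component $\gamma$, then in the decomposition above $\gamma$ may be created within a small annular neighborhood on $S$, so that $F_{W,\cF,\frs}$ factors through a local cobordism map homotopic to $\Psi_z^2$, to $\Phi_w^2$, or (if $\gamma$ does not bound a disk on $S$) through an $A_{[\gamma]}$-action with $[\gamma]\in H_1(Y)$; by \eqref{eqn:R7} the first two are null-homotopic, so $F_{W,\cF,\frs}\simeq 0\mod H_1(Y)$ in every case.

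\emph{Main obstacle.} The crux is Input~2, and within it the two genuinely four-dimensional points: checking that a tube in the $\zs$- (resp.\ $\ws$-) region acts as multiplication by $V$ (resp.\ $U$), and correctly extracting the homological correction $h(S\cup D,\frs)$ from the interaction of the surface with the two-handles of $W$ and with $\frs$ --- including the spin$^c$ twist by $\PD[S]$ in the $U=1$ reduction. The grading input is what makes this tractable: rather than computing the exponents of $U$ and $V$ directly from the handle picture, we read off the total $A_D$-degree from \cite{ZemAbsoluteGradings}*{Theorem~1.4} and use Lemma~\ref{lem:relknotifiedredandoriginal} to split $F_{W,\cF,\frs}$ into the form $U^a V^b\cdot\Kn(F_{W,\frs})$.
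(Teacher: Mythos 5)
Your treatment of Claim~\eqref{it:claim1} is essentially the paper's argument: the paper also computes the $V=1$ and $U=1$ reductions (it simply cites \cite{ZemAbsoluteGradings}*{Lemma~9.6}, which says $\Red_{V=1}(F_{W,\cF,\frs})\simeq F_{W,\frs}(\xi_{\ws}\otimes -)$ with $\xi_{\ws}\equiv \hat{U}^{g(S_{\ws})}$ modulo $H_1(W)$, rather than re-deriving it from an elementary decomposition as you propose), uses surjectivity of $H_1(Y)\to H_1(W)$ to push the $H_1$-terms to the outgoing end, and then applies the knotification dictionary exactly as you do. Your exponent bookkeeping in both sub-cases is correct.

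Claim~\eqref{it:claim2} is where there is a genuine gap. Your dichotomy --- a closed dividing curve $\gamma$ either bounds a disk in $S_{\ws}$ or $S_{\zs}$ (giving $\Phi_w^2$ or $\Psi_z^2\simeq 0$), or else contributes an $A_{[\gamma]}$-action --- does not cover all configurations, and the second branch is asserted rather than proved. Concretely: if some component $C_{\ws,1}$ of $S_{\ws}$ is an annulus disjoint from $\d S$, its two boundary circles are closed components of $\cA$ that bound no disk in either region, and $[\gamma]$ may well vanish in $H_1(W)$, so $A_{[\gamma]}$ gives you nothing; the paper handles this case by a separate bypass argument in which the other two dividing sets in the bypass triple are isotopic, forcing the map to vanish. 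More generally, the paper must first run a combinatorial induction (four cases for $C_{\ws,1}$, using bypass triples) to reduce to the situation where the closed curve is a boundary component of the piece $C_{\ws,0}$ of $S_{\ws}$ meeting $\d S$; only then does the identification of $\Red_{V=1}(F_{W,\cF,\frs})$ with a graph cobordism map for a ribbon $1$-skeleton of $S_{\ws}$ let one isolate the loop in a product piece $I\times Y'$ where the induced map is literally $A_\gamma$ (by \cite{ZemDualityMappingTori}*{Proposition~4.6}), after which surjectivity of $H_1(Y)\to H_1(W)$ is used to commute $A_\gamma$ to the outgoing boundary. Note also that your $[\gamma]$ naturally lives in $H_1(W)$ (it is a curve on $S$), not in $H_1(Y)$; realizing it by a class from $Y$ is exactly where the surjectivity hypothesis enters, and that step needs to be stated. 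To repair the proof you need both the bypass reduction and the ribbon-skeleton/graph-cobordism decomposition; without them the claim that ``$F_{W,\cF,\frs}$ factors through an $A_{[\gamma]}$-action'' is unsupported.
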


\begin{proof}
The proof is a modification of \cite{ZemAbsoluteGradings}*{Proposition~9.7}.
Consider first Claim~\eqref{it:claim1}, in  the case when $h(S \cup
D,\frs) + g(S_{\zs}) - g(S_{\ws}) \ge 0$. Note that the latter quantity is the
Alexander grading change of the map $F_{W,\cF,\frs}$.  The reduction
$F_{W,\cF,\frs}|_{V=1}$ is computed explicitly in
\cite{ZemCFLTQFT}*{Theorem~C}, and depends only on $W$, $\frs$, and the
embedding of the subsurface $S_{\ws}$ in $W$. When $S_{\ws}$ is a connected
surface with a single boundary component, according to
\cite{ZemAbsoluteGradings}*{Lemma~9.6}, the $V = 1$ reduction satisfies
\begin{equation}
F_{W,\cF,\frs}|_{V=1} \simeq F_{W,\frs}(\xi_{\ws} \otimes -), \label{eq:V=1reductioncomputation1}
\end{equation}
where $\xi_{\ws} \in \Lambda^*(H_1(W)/\Tors) \otimes \bF_2[\hat{U}]$ is an element
equal to $\hat{U}^{g(S_{\ws})}$ modulo $H_1(W)$. Since $H_1(Y)$
surjects onto $H_1(W)$, we can commute $\xi_{\ws}$ with $F_{W,\frs}$ to obtain the relation
\[
F_{W,\cF,\frs}|_{V=1} \simeq \hat{U}^{g(S_{\ws})} \cdot F_{W,\frs} \mod H_1(Y).
\]
Applying Lemma~\ref{lem:compute-using-reductions}, we conclude that
\begin{equation}
F_{W,\cF,\frs} \simeq  V^{h(S\cup D,\frs) + g(S_{\zs}) - g(S_{\ws})} \hat{U}^{g(S_{\ws})} \cdot F_{W,\frs}\otimes \id_{\bF[U,V]} \mod H_1(Y). \label{eq:almostclaim1}
\end{equation}
Claim~\eqref{it:claim1} in this case follows by rearranging equation~\eqref{eq:almostclaim1}
using the fact that $\hat{U} = UV$.

The argument fails when the Alexander grading $h(S \cup
D,\frs) + g(S_{\zs}) - g(S_{\ws})$ is negative, since multiplication by
$V^{h(S \cup D,\frs) + g(S_{\zs}) - g(S_{\ws})}$ is not a filtered map. Instead we
must consider the $U = 1$ reduction of $F_{W,\cF,\frs}$. According to
\cite{ZemAbsoluteGradings}*{Lemma~9.6}, the $U = 1$ reduction satisfies
\[
F_{W,\cF,\frs}|_{U=1} \simeq F_{W,\frs - \PD[S]}(\xi_{\zs}\otimes -),
\]
for an element $\xi_{\zs} \in \Lambda^*(H_1(W)/\Tors) \otimes \bF_2[\hat{U}]$
equal to $\hat{U}^{g(S_{\zs})}$ modulo $H_1(W)$. Using this fact, the formula
\[
F_{W,\cF,\frs} \simeq U^{g(S_{\ws}) - h(S\cup D,\frs)} V^{g(S_{\zs})} \cdot (F_{W,\frs-\PD[S]}\otimes \id_{\bF[U,V]})\mod H_1(Y)
\]
can be established by the same strategy as before.

Next, we consider Claim~\eqref{it:claim2}, where $\cA$ contains a closed component. As in
the proof of Claim~\eqref{it:claim1}, the key will be to consider the maps
$F_{W,\cF,\frs}|_{V=1}$ and $F_{W,\cF,\frs}|_{U=1}$.
Let $\Delta$ denote the quantity
\[
\Delta := h(S \cup D,\frs) + \frac{\chi(S_{\ws}) - \chi(S_{\zs})}{2},
\]
which we note is the Alexander grading of the map $F_{W,\cF,\frs}$.

We first consider the case when $\Delta\ge 0$.
Let us write $C_{\ws,0}$ and $C_{\zs,0}$ for the components of $S_{\ws}$ and
$S_{\zs}$ that intersect $\d S$. We will reduce to the case when
$\d C_{\ws,0}$ contains a closed component disjoint from $\d S$. If $C_{\ws,0} = S_{\ws}$,
then $\d C_{\ws,0}$ trivially contains a closed component
disjoint from $\d S$. If $C_{\ws,0}$ is not the only component of $S_{\ws}$,
then, since $S$ is connected, we can find a properly embedded path
$\gamma_{\zs} \colon I \to C_{\zs,0}$, with both endpoints on $\cA$, such that
$\gamma_{\zs}(0) \in C_{\ws,0}$, and $\gamma_{\zs}(1)$ is a point in the
boundary of another component, $C_{\ws,1}$ of $S_{\ws}$. There are four cases we consider:
\begin{enumerate}
\item\label{it:1} $C_{\ws,1}$ is planar, and $|\d C_{\ws,1}| = 1$.
\item\label{it:2} $C_{\ws,1}$ is planar, and $|\d C_{\ws,1}| = 2$.
\item\label{it:3} $C_{\ws,1}$ is planar, and $|\d C_{\ws,1}| > 2$.
\item\label{it:4}  $g(C_{\ws,1})>0$.
\end{enumerate}

In Case~\eqref{it:1}, the surface $C_{\ws,1}$ is topologically a disk, which is
necessarily disjoint from $\d S$, since $|\cA \cap \d S| = 2$. We claim that the
map $F_{W,\cF,\frs} \simeq 0$. Indeed, the cobordism map
$F_{W,\cF,\frs}$ can be factored through the composition of a
quasi-stabilization, followed by a quasi-destabilization, and such a
composition clearly vanishes.

We next consider Case~\eqref{it:2}, when $C_{\ws,1}$ is an annulus, which is
disjoint from $\d S$. In this case, we also have $F_{W,\cF,\frs} \simeq 0$. To
see this, pick a properly embedded path $\gamma_{\ws} \colon I \to C_{\ws,1}$
that connects the two boundary components of $C_{\ws,1}$, and such that
$\gamma_{\ws}(0) = \gamma_{\zs}(1)$; see the top of Figure~\ref{fig::12}.
We concatenate $\gamma_{\zs}$ and $\gamma_{\ws}$ to get a path $\gamma$. A neighborhood of $\gamma$ is the
domain of a bypass. Let $\cA'$ and $\cA''$ denote the other two dividing sets
in the bypass triple; see the bottom row of Figure~\ref{fig::12}.
The bypass relation (relation~\eqref{eq:R6} above and its interpretation in terms of decorated cobordisms from Figure~\ref{fig::18}) implies that
\begin{equation}
F_{W,(S,\cA),\frs} \simeq F_{W,(S,\cA'),\frs} + F_{W,(S,\cA''),\frs}.
\label{eq:bypassforannulus}
\end{equation}
The key observation is that $\cA'$ and $\cA''$ are actually isotopic, so
equation~\eqref{eq:bypassforannulus} implies that $F_{W,(S,\cA),\frs} \simeq 0$.
The isotopy between $\cA'$ and $\cA''$ is shown in Figure~\ref{fig::12}.

\begin{figure}[ht!]
	\centering
	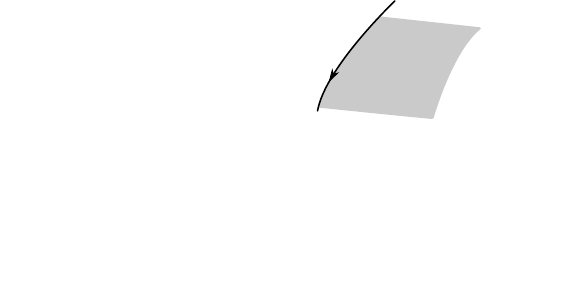
	\caption{When $C_{\ws,1}$ is an annulus, the two other dividing sets in a bypass are isotopic.}
\label{fig::12}
\end{figure}

We now consider Cases~\eqref{it:3} and~\eqref{it:4}, when $C_{\ws,1}$ is
planar and $|\d C_{\ws,1}| > 2$, or when $g(C_{\ws,1}) > 0$, respectively. In
both cases, we let $\gamma_{\ws} \colon I \to C_{\ws,1}$ be a properly
embedded curve which is non-separating and satisfies
$\gamma_{\zs}(1) = \gamma_{\ws}(0)$; see the top of Figure~\ref{fig::11}.
If $g(C_{\ws,1}) > 0$, we require that both
ends of $\gamma_{\ws}$ are on the same component of $\d C_{\ws,1}$. We let
$\gamma$ denote the concatenation of $\gamma_{\ws}$ and $\gamma_{\zs}$. As in
Case~\eqref{it:2}, we consider the bypass triple obtained by taking a regular
neighborhood of the image of $\gamma$.  Let $\cA'$ and $\cA''$ denote the
other two dividing sets in the bypass triple, shown on the bottom of Figure~\ref{fig::11}.

Let $C_{\ws,0}'$ and $C_{\ws,0}''$ denote the type-$\ws$ subregions of
$S \setminus \cA'$ and $S \setminus \cA''$ that intersect $\d S$.
In Cases~\eqref{it:3} and~\eqref{it:4}, it is easy to check that
$\d C_{\ws,0}'$ and $\d C_{\ws,0}''$ both contain a closed curve disjoint from $\d S$,
so it is sufficient to show the main claim for each of $F_{W,(S,\cA'),\frs}$ and
$F_{W,(S,\cA''),\frs}$ separately. Case~\eqref{it:4} is
illustrated in Figure~\ref{fig::11}.

\begin{figure}[ht!]
	\centering
	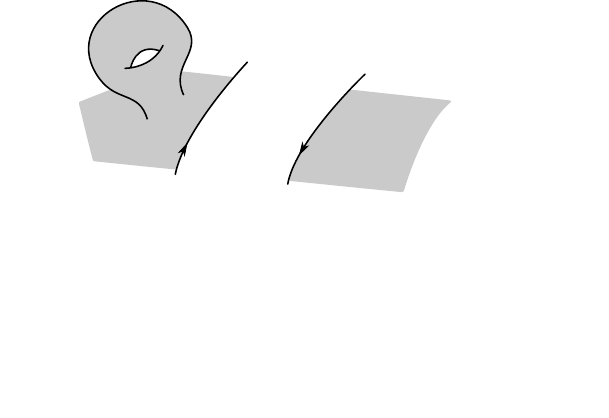
	\caption{The bypass relation in Case~\eqref{it:4}, when $g(C_{\ws,1}) > 0$. On the
    bottom row, the domain of the bypass is shown. The dotted lines outside of
    the domain of the bypass indicate the configuration of the dividing set
    outside the bypass region.}
\label{fig::11}
\end{figure}

We now proceed to show that if $\cA$ is a dividing set on $S$ such that
$\d C_{\ws,0}$ contains a closed component $\gamma$ disjoint from $\d S$, then
\begin{equation}
F_{W,\cF,\frs}|_{V=1} = A_\gamma \circ G
\label{eq:closedloopimplieshomologyaction}
\end{equation}
for some map $G$, where $A_\gamma$ denotes the homology action of the curve
$\gamma$. To establish equation~\eqref{eq:closedloopimplieshomologyaction},
we must recall some additional facts about the functor
$F_{W,\cF,\frs}|_{V=1}$. According to \cite{ZemCFLTQFT}*{Theorem~C}, the
chain homotopy type of the map $F_{W,\cF,\frs}|_{V=1}$ depends only on
$W$, the embedded surface $S_{\ws}$ (which is not properly embedded) and
$\frs\in \Spin^c(W)$. To describe the reduction in more detail, we recall
that a \emph{ribbon 1-skeleton} of $S_{\ws}$ is a choice of embedded graph
$\Gamma_{\ws} \subset S_{\ws}$ such that $\Gamma_{\ws}\cap \d S_{\ws} = \{w\}$,
and $S_{\ws}$ is a regular neighborhood of $\Gamma_{\ws}$ in $S$; see
\cite{ZemCFLTQFT}*{Definition~14.5}.

There is a simple way to construct the ribbon 1-skeleton of the subsurface
$S_{\ws}$. One starts with a collection of arcs $a \subset S_{\ws}$ such that
$a \cap \d S_{\ws} = \{w\}$, and such that each component of $S_{\ws}$ contains
exactly one arc. One then takes inward translates $C_1, \dots, C_n$ of the boundary components of $\d S_{\ws}$ which do not contain a basepoint of $\ws$, which one connects to
$a$ by adjoining an embedded arc (disjoint from the other arcs). The complement of this graph in $S_{\ws}$
consists of a collection of $|\d S_{\ws}|$ connected surfaces, each with a
single boundary component. The total genus of these surfaces is $g(S_{\ws})$.
We then pick a geometric symplectic basis of $H_1$ of the complement of this
graph (i.e., a collection of simple closed curves $A_1, \dots, A_g$,
$B_1, \dots, B_g$ that form a basis of $H_1$ and satisfy $|A_i \cap
B_j| = \delta_{ij}$). By connecting $a$ with one of the curves
in each pair in the symplectic basis by an arc, we obtain a ribbon 1-skeleton of
$S_{\ws}$. An example is shown in Figure~\ref{fig::13}.

\begin{figure}[ht!]
	\centering
\begingroup%
  \makeatletter%
  \providecommand\color[2][]{%
    \errmessage{(Inkscape) Color is used for the text in Inkscape, but the package 'color.sty' is not loaded}%
    \renewcommand\color[2][]{}%
  }%
  \providecommand\transparent[1]{%
    \errmessage{(Inkscape) Transparency is used (non-zero) for the text in Inkscape, but the package 'transparent.sty' is not loaded}%
    \renewcommand\transparent[1]{}%
  }%
  \providecommand\rotatebox[2]{#2}%
  \newcommand*\fsize{\dimexpr\f@size pt\relax}%
  \newcommand*\lineheight[1]{\fontsize{\fsize}{#1\fsize}\selectfont}%
  \ifx\svgwidth\undefined%
    \setlength{\unitlength}{192.82673827bp}%
    \ifx\svgscale\undefined%
      \relax%
    \else%
      \setlength{\unitlength}{\unitlength * \real{\svgscale}}%
    \fi%
  \else%
    \setlength{\unitlength}{\svgwidth}%
  \fi%
  \global\let\svgwidth\undefined%
  \global\let\svgscale\undefined%
  \makeatother%
  \begin{picture}(1,0.68761133)%
    \lineheight{1}%
    \setlength\tabcolsep{0pt}%
    \put(0,0){\includegraphics[width=\unitlength,page=1]{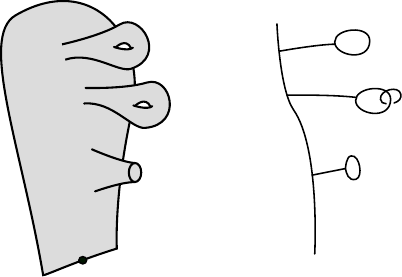}}%
    \put(0.15997085,0.2756731){\color[rgb]{0,0,0}\makebox(0,0)[t]{\lineheight{1.25}\smash{\begin{tabular}[t]{c}$S_{\ws}$\end{tabular}}}}%
    \put(0.74868433,0.28026361){\color[rgb]{0,0,0}\makebox(0,0)[rt]{\lineheight{1.25}\smash{\begin{tabular}[t]{r}$\Gamma_{\ws}$\end{tabular}}}}%
    \put(0.23165934,0.01683659){\color[rgb]{0,0,0}\makebox(0,0)[lt]{\lineheight{1.25}\smash{\begin{tabular}[t]{l}$w$\end{tabular}}}}%
    \put(0,0){\includegraphics[width=\unitlength,page=2]{fig13.pdf}}%
    \put(0.80248328,0.027391){\color[rgb]{0,0,0}\makebox(0,0)[lt]{\lineheight{1.25}\smash{\begin{tabular}[t]{l}$w$\end{tabular}}}}%
    \put(0,0){\includegraphics[width=\unitlength,page=3]{fig13.pdf}}%
  \end{picture}%
\endgroup%

	\caption{A ribbon 1-skeleton $\Gamma_{\ws}$ for a genus 2 component of $S_{\ws}$
    with 2 boundary components.}
    \label{fig::13}
\end{figure}

The second author~\cite{ZemGraphTQFT} constructed maps on $\CF^-$
induced by cobordisms with embedded ribbon graphs.
By \cite{ZemCFLTQFT}*{Theorem~C}, the reduction
$F_{W,\cF,\frs}|_{V=1}$ is chain homotopic to the graph cobordism map
$F_{W,\Gamma_{\ws},\frs}$ for a ribbon 1-skeleton $\Gamma_{\ws}$ of $S_{\ws}$.
Note that $\Gamma_{\ws}$ inherits a ribbon structure; i.e., a cyclic
ordering of the edges adjacent to each vertex, from
the orientation of $S_{\ws}$.

By picking an appropriate ribbon 1-skeleton $\Gamma_{\ws}$ of $\S_{\ws}$ (see Figure~\ref{fig::14}), we
can decompose the graph cobordism $(W,\Gamma_{\ws})$ such that it
is a sequence of graph cobordisms $(W_3,\gamma_3) \circ (W_2,\Gamma_2) \circ (W_1,\Gamma_1)$,
satisfying the following conditions:
\begin{enumerate}
\item $W_1$ is a 4-dimensional 1-handlebody, and the graph $\Gamma_1$ intersects $\d W_1$ in a single point.
\item $W_2$ is a cylinder $I \times Y$, and $\Gamma_2$ is a graph of the form
$(I \times \{p\}) \cup \gamma$, as shown in Figure~\ref{fig::14}, where
$\gamma$ is a loop induced by one of the boundary components of
$S_{\ws}$ which is disjoint from $\d S$.
\item $W_3$ is a cobordism between two connected 3-manifolds,
and $\gamma_3$ is a path connecting the two components.
\end{enumerate}
This can be achieved as follows. We pick $\Gamma_{\ws}$ by first taking an arc $a$ in $\Sigma_{\ws}$ such that $a\cap \d \Sigma_{\ws}=\{w\}$. We then join closed loops (as described above) for components of $\d S_{\ws}$ not containing $w$, and also for a symplectic basis $A_1,\dots, A_g, B_1,\dots, B_g$ as above. We assume that of these loops, $\g$ is joined the closest to $w$ along the arc $a$. We pick an ordered handle decomposition for $W$ into 0-, 1-, 2- and 3-handles. We let $W_1$ be the the union of the 0- and 1-handles. We let $W_2$ be a regular neighborhood of $\d W_1$, and we let $W_3$ be the 2-handles and 3-handles.  By flowing using a gradient like Morse function for this handle decomposition, we may isotope all of the closed loops of $\Gamma_{\ws}$ so that they lie below $W_3$. Therefore we may assume that $\Gamma_{\ws}\cap W_2$ consists of a subarc of $a$ with $\gamma$ spliced in, and that $ W_3\cap \Gamma_{\ws}$ consists only of a single arc, as claimed above.

\begin{figure}[ht!]
	\centering
\begingroup%
  \makeatletter%
  \providecommand\color[2][]{%
    \errmessage{(Inkscape) Color is used for the text in Inkscape, but the package 'color.sty' is not loaded}%
    \renewcommand\color[2][]{}%
  }%
  \providecommand\transparent[1]{%
    \errmessage{(Inkscape) Transparency is used (non-zero) for the text in Inkscape, but the package 'transparent.sty' is not loaded}%
    \renewcommand\transparent[1]{}%
  }%
  \providecommand\rotatebox[2]{#2}%
  \newcommand*\fsize{\dimexpr\f@size pt\relax}%
  \newcommand*\lineheight[1]{\fontsize{\fsize}{#1\fsize}\selectfont}%
  \ifx\svgwidth\undefined%
    \setlength{\unitlength}{140.4423405bp}%
    \ifx\svgscale\undefined%
      \relax%
    \else%
      \setlength{\unitlength}{\unitlength * \real{\svgscale}}%
    \fi%
  \else%
    \setlength{\unitlength}{\svgwidth}%
  \fi%
  \global\let\svgwidth\undefined%
  \global\let\svgscale\undefined%
  \makeatother%
  \begin{picture}(1,1.01884534)%
    \lineheight{1}%
    \setlength\tabcolsep{0pt}%
    \put(0,0){\includegraphics[width=\unitlength,page=1]{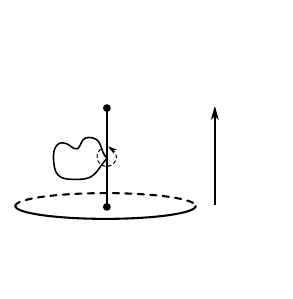}}%
    \put(0.76139992,0.45880973){\color[rgb]{0,0,0}\makebox(0,0)[lt]{\lineheight{1.25}\smash{\begin{tabular}[t]{l}$(W_2, \Gamma_2)$\end{tabular}}}}%
    \put(0.16525594,0.50269232){\color[rgb]{0,0,0}\makebox(0,0)[rt]{\lineheight{1.25}\smash{\begin{tabular}[t]{r}$\gamma$\end{tabular}}}}%
    \put(0,0){\includegraphics[width=\unitlength,page=2]{fig14.pdf}}%
    \put(0.76139992,0.17604575){\color[rgb]{0,0,0}\makebox(0,0)[lt]{\lineheight{1.25}\smash{\begin{tabular}[t]{l}$(W_1, \Gamma_1)$\end{tabular}}}}%
    \put(0,0){\includegraphics[width=\unitlength,page=3]{fig14.pdf}}%
    \put(0.76139992,0.78697142){\color[rgb]{0,0,0}\makebox(0,0)[lt]{\lineheight{1.25}\smash{\begin{tabular}[t]{l}$(W_3, \gamma_3)$\end{tabular}}}}%
    \put(0,0){\includegraphics[width=\unitlength,page=4]{fig14.pdf}}%
  \end{picture}%
\endgroup%

	\caption{A decomposition of the graph cobordism $(W,\Gamma_{\ws})$.  The
    loop $\gamma$ in $\Gamma_2$ corresponds to a closed curve in $\d C_{\ws,0}$
    disjoint from $\d S$.}\label{fig::14}
\end{figure}

The composition law for graph cobordism maps implies that
\begin{equation}
F_{W,\Gamma_{\ws},\frs} \simeq F_{W_3,\gamma_3,\frs|_{W_3}} \circ F_{W_2, \Gamma_2,\frs|_{W_2}}
\circ F_{W_1,\Gamma_1,\frs|_{W_1}}.
\label{eq:graphcobordismdemanip1}
\end{equation}
Since $\gamma_3$ is a path, the map $F_{W_3,\gamma_3,\frs|_{W_3}}$
agrees with Ozsv\'{a}th and Szab\'{o}'s cobordism map. By \cite{ZemDualityMappingTori}*{Proposition~4.6}, we have
\begin{equation}
F_{W_2,\Gamma_2,\frs|_{W_2}} \simeq A_\gamma.
\label{eq:graphcobordismmaniup2}
\end{equation}
Since $H_1(Y)$ surjects onto $H_1(W)$, we have
\begin{equation}
F_{W_3,\gamma_3,\frs|_{W_3}} \circ A_\gamma \simeq A_\gamma \circ  F_{W_3,\gamma_3,\frs|_{W_3}}.
\label{eq:graphcobordismmanip3}
\end{equation}
Combining equations~\eqref{eq:graphcobordismdemanip1},
\eqref{eq:graphcobordismmaniup2}, and~\eqref{eq:graphcobordismmanip3}, we
obtain the relation
\begin{equation}
F_{W,\cF,\frs}|_{V=1}\simeq F_{W,\Gamma_{\ws},\frs} \simeq
A_\gamma\circ F_{W_3,\gamma_3,\frs|_{W_3}} \circ F_{W_1,\Gamma_1,\frs|_{W_1}}.
\label{eq:graphcobordismmanip4}
\end{equation}

Since we assumed that the Alexander grading shift $\Delta$ was nonnegative, by using Lemma~\ref{lem:compute-using-reductions} we obtain
\[
\begin{split}
F_{W,\cF,\frs} &\simeq V^{\Delta} \cdot  (F_{W,\cF,\frs}|_{V=1})\otimes \id_{\bF[U,V]}\\
& \simeq V^{\Delta} \cdot (A_\gamma\circ G)\otimes \id_{\bF[U,V]}\\
&\simeq (A_{\g}\otimes \id_{\bF[U,V]}) \circ (V^{\Delta} \cdot G\otimes \id_{\bF[U,V]})\\
&\simeq A_\gamma \circ (V^{\Delta}\cdot G\otimes \id_{\bF[U,V]}),
\end{split}
\]
where $G \simeq F_{W_3,\gamma_3,\frs|_{W_3}} \circ F_{W_1,\Gamma_1,\frs|_{W_1}}$. In the last line, we are using the fact that $A_\g$ preserves the Alexander grading, so $(A_\g|_{V=1})\otimes \id_{\bF[U,V]}$ coincides with the ordinary action of $A_\g$ on $\cCFL^-(Y,\bU)$ by Lemma~\ref{lem:compute-using-reductions}.  This proves the claim.

The case when the Alexander grading change $\Delta$ is negative is handled similarly,
using the $U = 1$ reductions instead.
\end{proof}

Next, we compute the effect of a stabilization, for a simple dividing set:

\begin{lem}\label{lem:stabilizationlemma}
Suppose that $(W,\cF) \colon (Y_1,\bL_1) \to (Y_2,\bL_2)$ is a decorated
link cobordism with $b_1(W)=0$. Write $\cF = (S,\cA)$, and suppose that $S$ is connected.
Let $S'$ be a $(n, g)$-stabilization of $S$ along $(B^4, S_0)$.  Let $D_1, \dots, D_n$ denote
the components of $S \cap B^4$, and let $\hat{D} \subset S$ be a disk
that contains $D_1,\dots, D_n$ and intersects $\cA$ in a single arc. Consider the subsurface
\[
S_0' := (\hat{D} \setminus (D_1 \cup \cdots \cup D_n)) \cup S_0 \subset S'.
\]
Let $\cA'$ be a dividing set on $S'$ that agrees with $\cA$ outside $\hat{D}$,
and write $\cF' = (S', \cA')$.
\begin{enumerate}
\item\label{claim:1} Suppose that $\cA'$ intersects $S_0'$ in a single arc that
divides $S_0'$ into two connected components.
Let $S_{\ws}$ and $S_{\ws}'$ denote the type-$\ws$ regions,
and $S_{\zs}$ and $S_{\zs}'$ the type-$\zs$ regions
of $\cF$ and $\cF'$, respectively. Then
\[
F_{W,\cF',\frs} \simeq U^{g(S_{\ws}') - g(S_{\ws})} V^{g(S_{\zs}') - g(S_{\zs})} \cdot F_{W,\cF,\frs}.
\]
\item~\label{claim:2} If $\cA \cap S_0'$ contains a closed component, then
\[
F_{W,\cF',\frs} \simeq 0.
\]
\end{enumerate}
\end{lem}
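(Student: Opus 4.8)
The plan is to reduce the statement to Lemma~\ref{lem:closedsurfacecomputation} by exhibiting $(W,\cF')$ as a $4$-dimensional connected sum of $(W,\cF)$ with a decorated closed surface in $S^4$, and then evaluating the resulting $S^4$ factor.

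I would first set up the local picture. Since the stabilization is supported in $B^4 \subset \Int W$, choose a slightly larger ball $\hat{B}^4 \subset \Int W$ containing $B^4$ so that $\hat{B}^4 \cap S = \hat{D}$ is a standard unknotted properly embedded disk, with $\partial\hat{D} \subset \partial\hat{B}^4 = S^3$ an unknot $\bU$; then $\hat{B}^4 \cap S' = S_0'$, which is properly embedded in $\hat{B}^4$ with $\partial S_0' = \bU$. Write $S^4 = \hat{B}^4 \cup_{S^3} B_\Delta$, cap off $S_0'$ with a disk $\Delta \subset B_\Delta$ to obtain a closed surface $\bar{S}_0' = S_0' \cup \Delta$, and equip it with the dividing set $\hat{\gamma}$ obtained by adjoining to $\cA'|_{S_0'}$ a single arc in $\Delta$. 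Since $\cA'$ agrees with $\cA$ away from $\hat{D}$, a direct check shows that $(W,\cF')$ is the connected sum of $(W,\cF)$ and $(S^4,(\bar{S}_0',\hat{\gamma}))$, formed along a disk in $\hat{D}$ meeting $\cA$ in a single arc and a disk in $\Delta$ meeting $\hat{\gamma}$ in a single arc.

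I would then apply the connected sum formula \eqref{eq:connectedsum=tensorprod}. Since $S^4$ carries a unique $\Spin^c$ structure $\frs_0$ and $H_1(S^4) = 0$, the map $F_{S^4,(\bar{S}_0',\hat{\gamma}),\frs_0}$ is multiplication by some $\lambda \in \cR^-$, so that $F_{W,\cF',\frs} \simeq \lambda \cdot F_{W,\cF,\frs}$. To compute $\lambda$, split $S^4$ along the $S^3$ containing $\bU$: this realizes $(S^4,(\bar{S}_0',\hat{\gamma}))$ as $(\hat{B}^4,(S_0',\cA'|_{S_0'})) \colon \emptyset \to (S^3,\bU)$ followed by the death cobordism $(B_\Delta,(\Delta,\text{arc})) \colon (S^3,\bU) \to \emptyset$, which induces the map $\cD^-$ sending the generator $x$ of $\cCFL^-(S^3,\bU) \cong \cR^-$ to $1$. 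Lemma~\ref{lem:closedsurfacecomputation} applies to the first cobordism, because $H_1(S^3) = 0$ surjects onto $H_1(\hat{B}^4) = 0$ and the $h$-invariant vanishes since $H_2(\hat{B}^4) = 0$: in the situation of Claim~\eqref{claim:1} it gives $F_{\hat{B}^4,(S_0',\cA'|_{S_0'}),\frs_0} \simeq U^{a}V^{b}\cdot\Kn(F_{\hat{B}^4,\frs_0})$ with $a = g((S_0')_\ws)$ and $b = g((S_0')_\zs)$, where $\Kn(F_{\hat{B}^4,\frs_0})$ sends $1$ to $x$; in the situation of Claim~\eqref{claim:2} the closed component of $\cA'|_{S_0'}$ forces $F_{\hat{B}^4,(S_0',\cA'|_{S_0'}),\frs_0} \simeq 0$. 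Composing with $\cD^-$ yields $\lambda = U^a V^b$ in the first case and $\lambda = 0$ in the second. It remains to identify $a = g(S_\ws') - g(S_\ws)$ and $b = g(S_\zs') - g(S_\zs)$, which follows from the fact that $\cA'$ agrees with $\cA$ outside $\hat{D}$ together with the additivity of genus under internal connected sum; this gives Claim~\eqref{claim:1}, and Claim~\eqref{claim:2} is immediate.

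The genuinely new content here is small, since the delicate bypass arguments have already been carried out in the proof of Lemma~\ref{lem:closedsurfacecomputation}. The main points requiring care are the verification in the first step that $(W,\cF')$ decomposes as the asserted connected sum with the dividing sets matching up correctly, and the genus bookkeeping $a = g(S_\ws') - g(S_\ws)$, $b = g(S_\zs') - g(S_\zs)$ extracted from how $\cA'$ restricts to $S_0'$.
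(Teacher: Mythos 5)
Your first step contains a genuine gap. You assume that the entire stabilization region can be engulfed in a 4-ball $\hat{B}^4 \subset \Int(W)$ with $\hat{B}^4 \supset B^4$ and $\hat{B}^4 \cap S = \hat{D}$, so that $(W,\cF')$ becomes a connected sum of $(W,\cF)$ with a closed decorated surface in $\Sphere^4$. This is only true when $n = 1$. For $n \ge 2$, any such region must contain $B^4 \cup N(\hat{D}_0)$, where $\hat{D}_0 = \hat{D}\setminus(D_1\cup\cdots\cup D_n)$ is an $n$-holed disk, and a Mayer--Vietoris computation shows this union is a 4-dimensional 1-handlebody $(\Sphere^1\times B^3)^{\natural(n-1)}$ with $H_1 \cong \Z^{n-1}$ (its $H_1$ is generated by loops running through $B^4$ between two of the $D_i$ and back through $\hat{D}_0$); such loops need not bound in the complement of $S\setminus\hat{D}$, so no engulfing ball exists in general. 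The correct local region is this handlebody $W_0$, whose boundary is $\#^{n-1}(\Sphere^1\times\Sphere^2)$ rather than $\Sphere^3$, and one must separately prove that $\d\hat{D}$ is an \emph{unknot} in $\d W_0$ (this is where the hypotheses that the $D_i$ are boundary-parallel and that $\d B^4 \cap S$ is an unlink are actually used, via an explicit Seifert disk). None of this appears in your argument.

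The second, related problem is that with $Y = \d W_0 \ne \Sphere^3$, Lemma~\ref{lem:closedsurfacecomputation} only determines the local cobordism map \emph{modulo the action of $H_1(Y)$}, i.e., up to error terms of the form $\sum A_{\gamma_i}\circ J_i$. Disposing of these terms is where the hypothesis $b_1(W)=0$ enters (together with the triviality of $H^1(Y)\to H^2(W)$ and of $H_1(Y)\to H_1(W\setminus\Int(W_0))/\Tors$, the latter holding because $W_0$ is a 1-handlebody). Your proof never invokes $b_1(W)=0$, which is a symptom of the missing step: in your picture $Y=\Sphere^3$ and the indeterminacy is vacuous, but only because the ball was assumed. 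For $n=1$ your connected-sum reduction, the evaluation of the $\Sphere^4$ factor via the birth/death pairing, and the genus bookkeeping all match the paper's argument and are fine; the lemma as stated, however, is about general $(g,n)$-stabilizations, and the case $n\ge 2$ is exactly where the real work lies.
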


\begin{figure}[ht!]
	\centering
\begingroup%
  \makeatletter%
  \providecommand\color[2][]{%
    \errmessage{(Inkscape) Color is used for the text in Inkscape, but the package 'color.sty' is not loaded}%
    \renewcommand\color[2][]{}%
  }%
  \providecommand\transparent[1]{%
    \errmessage{(Inkscape) Transparency is used (non-zero) for the text in Inkscape, but the package 'transparent.sty' is not loaded}%
    \renewcommand\transparent[1]{}%
  }%
  \providecommand\rotatebox[2]{#2}%
  \newcommand*\fsize{\dimexpr\f@size pt\relax}%
  \newcommand*\lineheight[1]{\fontsize{\fsize}{#1\fsize}\selectfont}%
  \ifx\svgwidth\undefined%
    \setlength{\unitlength}{332.8387729bp}%
    \ifx\svgscale\undefined%
      \relax%
    \else%
      \setlength{\unitlength}{\unitlength * \real{\svgscale}}%
    \fi%
  \else%
    \setlength{\unitlength}{\svgwidth}%
  \fi%
  \global\let\svgwidth\undefined%
  \global\let\svgscale\undefined%
  \makeatother%
  \begin{picture}(1,0.32655939)%
    \lineheight{1}%
    \setlength\tabcolsep{0pt}%
    \put(0,0){\includegraphics[width=\unitlength,page=1]{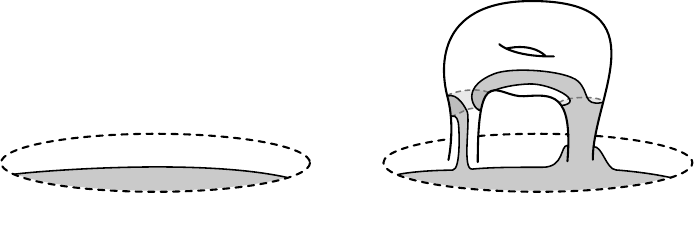}}%
    \put(0.22418129,0.00644324){\color[rgb]{0,0,0}\makebox(0,0)[t]{\lineheight{1.25}\smash{\begin{tabular}[t]{c}$(S,\cA)$\end{tabular}}}}%
    \put(0.77564778,0.00644324){\color[rgb]{0,0,0}\makebox(0,0)[t]{\lineheight{1.25}\smash{\begin{tabular}[t]{c}$(S',\cA')$\end{tabular}}}}%
    \put(0,0){\includegraphics[width=\unitlength,page=2]{fig8.pdf}}%
  \end{picture}%
\endgroup%

	\caption{An example of a stabilization considered in Lemma~\ref{lem:stabilizationlemma}. The dividing set $\cA$ in the region $\hat{D}\subset S$ is shown on the left, and the dividing set $\cA'$ on the stabilization $S'$ is shown on the right.}\label{fig::8}
\end{figure}

\begin{proof} We first show Claim~\eqref{claim:1}. Consider the punctured disk
\[
\hat{D}_0 := \hat{D}\setminus (D_1 \cup \cdots \cup D_n).
\]
Let $N$ denote the total space of the unit normal disk bundle of $\hat{D}_0$
in $W \setminus B^4$. Note that $N$ is diffeomorphic to $\hat{D}_0 \times B^2$. Define
\[
W_0 := B^4 \cup N,
\]
which, after rounding corners, we can view as a codimension 0 submanifold of
$W$ with smooth boundary. In fact, $W_0$ is a 4-dimensional genus $n-1$
handlebody $(\Sphere^1 \times B^3)^{\natural (n-1)}$. 
Let $Y$ denote $\d W_0$. We observe that $S_0'$, as defined above, is equal to $W_0\cap S'$. We can view 
$(W_0,S_0')$ as a link cobordism from the empty link to the pair $(Y,K)$
where $K=\d \hat{D}\times \{0\}$.

Let us write $C_1, \dots, C_n$ for the components of $\d \hat{D}_0 \setminus \d \hat{D}$,
and $U_1, \dots, U_n$ for the components of the unlink
$S_0 \cap \d B^4$. We can view
\[
\d N = (\hat{D}_0 \times \d B^2) \cup (\d\hat{D} \times B^2) \cup \bigcup_{i=1}^n (C_i \times B^2).
\]
Hence, we can write
\[
Y = \left(\d N \setminus \bigcup_{i=1}^n (C_i \times B^2) \right) \cup
\left(\d B^4 \setminus \bigcup_{i=1}^n N(U_i) \right),
\]
where the two manifolds are glued along their $n$ torus boundary components.

We now claim that $K$ is an unknot in $Y$. It is at this step that we use the
fact that $S \cap \d B^4$ is an unlink. To see that $K$ is an unknot, we will
construct a Seifert disk $D_K$ for $K$ in $Y$. Let $r$ denote a radial arc
from $0 \in B^2$ to a point $p \in \d B^2$. Let $A$ denote the annulus
\[
A:= \d \hat{D} \times r \subset Y.
\]
We then attach the punctured disk $\hat{D}_0 \times \{p\}$ to the annulus $A$. The resulting surface has boundary
\[
\d(A \cup (\hat{D}_0 \times \{p\})) = K \cup \bigcup_{i=1}^n (C_i \times \{p\}).
\]
Next, we note that the image of $C_i \times \{p\}$ in $\d N(U_i) \subset \d B^4$
is the Seifert longitude, since the disks $D_i \subset S \cap B^4$ can be
pushed into $\d B^4$ to give Seifert disks of $U_i$ that intersect $\d N(U_i)$
along $C_i \times \{p\}$. By capping $C_i \times \{p\}$ with Seifert
disks of the $U_i$, we obtain the Seifert disk $D_K$ of $K$ in $Y$.

Let us write $\cF_0$ for the decorated surface $(\hat{D}, \cA \cap \hat{D})$,
$\cF_0'$ for the decorated surface $(S_0', \cA' \cap S_0')$, and $\frs_0$ for
$\frs|_{W_0}$. Since $H_2(W_0) = 0$, by applying Lemma~\ref{lem:closedsurfacecomputation}
to both $F_{W_0,\cF_0,\frs_0}$ and $F_{W_0,\cF_0',\frs_0}$, we compute that
\begin{equation}
F_{W_0,\cF_0',\frs_0} \simeq U^{g(S_{0,\ws}')} V^{g(S_{0,\zs}') }\cdot  F_{W_0,\cF_0,\frs_0}
\mod H_1(Y).
\label{eq:computationofstabilzationpiece}
\end{equation}

Write $W_1 := W \setminus \Int(W_0)$,  $\cF_1 := \cF'\cap W_1$, and
$\frs_1 := \frs|_{W_1}$. Since $W_0$ is a 4-dimensional handlebody, we conclude
that $b_1(W_1) = 0$. Noting that the map $\delta \colon H^1(Y) \to  H^2(W)$
is trivial, and using the $\Spin^c$ composition law, we conclude from
equation~\eqref{eq:computationofstabilzationpiece} that
\begin{equation}
F_{W,\cF',\frs} \simeq U^{g(S_{0,\ws}')} V^{g(S_{0,\zs}')} \cdot F_{W,\cF,\frs}.
\label{eq:postcompose}
\end{equation}
Noting that $g(S'_{\ws}) - g(S_{\ws}) = g(S_{0,\ws}')$ and
$g(S'_{\zs}) - g(S_{\zs}) = g(S_{0,\zs}')$, the proof of Claim~\eqref{claim:1} is complete.

We now consider Claim~\eqref{claim:2}. In this case, Lemma~\ref{lem:closedsurfacecomputation} implies that
$F_{W_0,\cF_0',\frs} \simeq \sum_{i=1}^k A_{\gamma_i} \circ J_i$ for some
filtered, equivariant maps $J_1, \dots , J_k$.  The map $H_1(Y) \to H_1(W_1)/\Tors$
is trivial since $b_1(W) = 0$ and $W_0$ is a 4-dimensional 1-handlebody.
Hence, using the composition law,
\[
F_{W,\cF',\frs} \simeq F_{W_1,\cF_1,\frs_1} \circ F_{W_0,\cF_0',\frs_0} \simeq
F_{W_1,\cF_1,\frs_1} \circ \left(\sum_{i=1}^k A_{\gamma_i} \circ J_i \right) \simeq 0,
\]
concluding the proof of Claim~\eqref{claim:2}.
\end{proof}

Lemma~\ref{lem:stabilizationlemma} computes the result of a stabilization of
a link cobordism, when the dividing set is nicely arranged on the
stabilization. However, to prove geometric bounds on the secondary versions of $V_k$,
we will need to consider more general dividing sets on stabilizations.

\begin{define}\label{def:stabilizationtypedepthd}
Suppose that $(B^4,S)$ is an undecorated knot cobordism from $\emptyset$ to
an arbitrary knot $K$ in $\Sphere^3$. Let $\bK$ denote $K$ decorated with two
basepoints, and let $\frs_0$ be the unique $\Spin^c$ structure on $B^4$.
We say that $S$ satisfies the \emph{decoration-independence condition (DI)} if the following holds:
\begin{enumerate}[leftmargin=1.5cm,label=(\textit{DI}), ref=\textit{DI}]
\item\label{def:DI} For any decoration
$\cF = (S, \cA)$ whose dividing set intersects $K$ in exactly two points,
\begin{enumerate}[label=(\arabic*)]
\item  the filtered, equivariant chain homotopy type of the map
\[
F_{B^4,\cF,\frs_0} \colon \bF_2[U,V] \to \cCFL^-(\bK)
\]
depends only on $g(S_{\ws})$ and $g(S_{\zs})$ when $|\cA| = 1$, and
\item $F_{B^4,\cF,\frs_0} \simeq 0$ when $|\cA| > 1$.
\end{enumerate}
\end{enumerate}
\end{define}

Note that, if $S$ is a stabilization of a slice disk $(B^4, D)$, then, by
Lemma~\ref{lem:stabilizationlemma}, the link cobordism $(B^4, S)$ satisfies the decoration-independence condition \eqref{def:DI}.

\begin{define}\label{def:depth}
Let $S$ and $\bK$ be as in Definition~\ref{def:stabilizationtypedepthd}.
Suppose $d \ge g(S)$ is an integer. We say that $S$ satisfies
the decoration-independence condition \eqref{def:DI} \emph{above degree~$d$} if for any decoration $\cF = (S, \cA)$ compatible with $\bK$,
and for any $i$, $j \in \N$ satisfying $i + j + g(S) \ge d$,
\begin{enumerate}
\item the chain homotopy type of the map $U^i V^j \cdot F_{B^4,\cF,\frs_0}$
depends only on $i + g(S_{\ws})$ and $j + g(S_{\zs})$ when $|\cA| = 1$, and
\item $U^i V^j \cdot F_{B^4,\cF,\frs} \simeq 0$ when $|\cA| > 1$.
\end{enumerate}
We define the invariant $\cI(S) \in \N$
to be the minimal $d \ge g(S)$ such that $S$ satisfies condition~\eqref{def:DI}  above degree~$d$.
\end{define}

\begin{rem}
 The quantity $\cI(S)$ is finite for every surface $S$. This can be seen as follows. Two $\bF[U,V]$-equivariant chain maps $f$, $g\colon \bF[U,V]\to \cCFL^-(S^3,\bK)$ are $\bF[U,V]$-equivariantly chain homotopic if and only if $[f(1)]=[g(1)]$, as elements of $\cHFL^-(S^3,\bK)$. However, the rank of $\cHFL^-(S^3,\bK)$ in $(\gr_{\ws}, \gr_{\zs}$)-bigrading $(-2n,-2m)$ is 1 whenever $n$, $m \ge 0$ and $n+m$ is sufficiently large.
\end{rem}

Note that, to compute $\cI(S)$, one would need to determine the cobordism maps
for infinitely many dividing sets on $S$, which is a formidable task.
However, to obtain a lower bound on $\cI(S)$, it suffices to find
two dividing sets $\cA_1$ and $\cA_2$ on $S$, both consisting of a single arc,
and integers $i_1$, $i_2$, $j_1$, and $j_2$, such that
\[
i_1 + g(S_{1,\ws}) = i_2 + g(S_{2,\ws}) \text{ and }
j_1 + g(S_{1,\zs}) = j_2 + g(S_{2,\zs}) \text{, but}
\]
\[
U^{i_1} V^{j_1} \cdot F_{B^4, (S,\cA_1), \frs_0} \not\simeq
U^{i_2} V^{j_2} \cdot F_{B^4, (S,\cA_2),\frs_0},
\]
where $S_{k, \ws}$ and $S_{k, \zs}$ denote the type-$\ws$ and type-$\zs$
subsurfaces of $S$ with respect to the decoration $\cA_k$ for $k \in \{1, 2\}$.
In this case, $\cI(S) > i_1 + i_2 + g(S)$.

\begin{prop}\label{prop:2foldstabisstabtype}
Suppose that $(B^4,S)$ satisfies the decoration-independence condition \eqref{def:DI} above degree~$d$,
and let $S'$ be a stabilization of $S$.
Then $(B^4,S')$ satisfies condition~\eqref{def:DI} above degree~$\max\{d, g(S')\}$.
\end{prop}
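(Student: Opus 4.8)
\emph{The plan} is to reduce an arbitrary decoration on $S'$ to one that is standard near the stabilization region by bypass moves, apply Lemma~\ref{lem:stabilizationlemma} to each resulting term, and then invoke the decoration-independence condition for $S$. First I would set up notation as in Lemma~\ref{lem:stabilizationlemma}: write $S'$ as an $(m,n)$-stabilization of $S$ along $(B_0^4,S_0)$, let $\hat D\subset S$ be a disk containing the components $D_1,\dots,D_m$ of $S\cap B_0^4$, put $\hat D_0=\hat D\setminus(D_1\cup\dots\cup D_m)$, and let $S_0'=\hat D_0\cup S_0\subset S'$ be the stabilization region, so that $\partial S_0'=\partial\hat D$ is a single circle disjoint from $K$ and $g(S_0')=m+n-1=g(S')-g(S)$. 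From the proof of Lemma~\ref{lem:stabilizationlemma} we have a decomposition $B^4=W_0\cup_Y W_1$ in which $W_0$ (the union of $B_0^4$ with the normal disk bundle of $\hat D_0$) is a $4$-dimensional $1$-handlebody with $\partial W_0=Y$, $H_1(Y)\twoheadrightarrow H_1(W_0)$, and $H_2(W_0)=0$, while $b_1(W_1)=0$; moreover $(W_0,S_0')$ is a link cobordism from $\emptyset$ to a doubly-based unknot $\bK$ in $Y$ equipped with a Seifert disk. Since $\delta\colon H^1(Y)\to H^2(B^4)$ is trivial, the composition law gives $F_{B^4,(S',\cA'),\frs_0}\simeq F_{W_1,(S\setminus\hat D,\,\cA'\cap W_1),\frs_1}\circ F_{W_0,(S_0',\,\cA'\cap S_0'),\frs_0}$ for any decoration $\cA'$ on $S'$, and any $H_1(Y)$-action terms produced in the first factor on the right vanish after post-composition because $b_1(W_1)=0$, exactly as in the proof of Lemma~\ref{lem:stabilizationlemma}\eqref{claim:2}.

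Now fix a decoration $\cF'=(S',\cA')$ compatible with $\bK$ and integers $i,j\in\N$ with $i+j+g(S')\ge\max\{d,g(S')\}$; in particular $i+j+g(S')\ge d$. The main step is to use the bypass relation \cite{ZemConnectedSums}*{Lemma~1.4}, together with isotopies of $\cA'$ supported inside $S_0'$, to express $F_{B^4,\cF',\frs_0}$, up to filtered chain homotopy, as an $\bF_2$-linear combination of cobordism maps $F_{B^4,(S',\cA'_\ell),\frs_0}$ in which each $\cA'_\ell$ is \emph{standard near $S_0'$}: either it meets $S_0'$ in a single separating arc dividing $S_0'$ into two connected pieces (type~(a)), or it has a closed component contained in $S_0'$ (type~(b)). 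This is carried out by the same case analysis as in the proof of Lemma~\ref{lem:closedsurfacecomputation}: components of $S_0'\cap S_{\ws}$ (or $S_0'\cap S_{\zs}$) that are planar with more than one boundary circle or that have positive genus are removed by bypass triangles whose two new dividing sets each acquire a closed component in $S_0'$, annulus components contribute two isotopic (hence cancelling) dividing sets, and disk components contribute zero. Because every bypass is supported in $S_0'$, each $\cA'_\ell$ agrees with $\cA'$ on $S'\setminus S_0'=S\setminus\hat D$; tracking Euler characteristics through the reduction pins down, modulo the vanishing type-(b) terms, the genus splitting carried by the type-(a) terms.

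To conclude: for a type-(b) term, Lemma~\ref{lem:stabilizationlemma}\eqref{claim:2} shows its cobordism map vanishes, hence so does any $U^iV^j$-multiple. For a type-(a) term $\cA'_\ell$, let $\cA$ be the decoration on $S$ that agrees with $\cA'$ on $S\setminus\hat D$ and meets $\hat D$ in a single arc; then $\cA$ is compatible with $\bK$, with $|\cA|=1$ exactly when $|\cA'|=1$ (recalling that since $\cA'$ meets $K$ in two points, any extra component of $\cA'$ is a closed curve). Writing $(g_{\ws},g_{\zs})$ for the genus splitting of $S_0'$ induced by $\cA'_\ell$, Lemma~\ref{lem:stabilizationlemma}\eqref{claim:1} gives
\[
U^iV^j\,F_{B^4,(S',\cA'_\ell),\frs_0}\simeq U^{\,i+g_{\ws}}V^{\,j+g_{\zs}}\,F_{B^4,(S,\cA),\frs_0},
\]
and $g_{\ws}+g_{\zs}=g(S_0')=m+n-1$, so $(i+g_{\ws})+(j+g_{\zs})+g(S)=i+j+g(S')\ge d$, while both exponents are $\ge 0$. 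Hence the decoration-independence condition above degree~$d$ for $S$ applies: when $|\cA|=1$ the homotopy type of the right-hand side depends only on $i+g_{\ws}+g(S_{\ws})=i+g(S'_{\ws})$ and $j+g(S'_{\zs})$, and when $|\cA|>1$ it vanishes. Summing over the finitely many terms, $U^iV^jF_{B^4,\cF',\frs_0}$ is determined by $i$, $j$, $g(S'_{\ws})$, and $g(S'_{\zs})$ alone when $|\cA'|=1$, and vanishes when $|\cA'|>1$; as $\max\{d,g(S')\}\ge g(S')$, this is precisely condition~\eqref{def:DI} above degree $\max\{d,g(S')\}$ for $S'$.

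\emph{The main obstacle} is the bypass reduction in the middle step: one must carry out the simplification of an arbitrary $\cA'$ near $S_0'$ while keeping careful track of which terms acquire a closed component inside $S_0'$ — so that the dichotomy between $|\cA'|=1$ and $|\cA'|>1$ survives passage to the $\bF_2$-linear combination — and verify, via the Euler-characteristic bookkeeping, that the genus splitting appearing in the surviving type-(a) terms reassembles to $(g(S'_{\ws}),g(S'_{\zs}))$. This is a relative, parametrized version of the argument in the proof of Lemma~\ref{lem:closedsurfacecomputation}, and once it is in place everything else is the direct application of Lemma~\ref{lem:stabilizationlemma} and the hypothesis on $S$ described above.
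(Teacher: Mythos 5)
Your overall architecture — reduce an arbitrary decoration to standard ones by bypasses, apply Lemma~\ref{lem:stabilizationlemma} termwise, then invoke condition~\eqref{def:DI} for $S$ — is the paper's strategy, and your final step (the exponent bookkeeping $(i+g_{\ws})+(j+g_{\zs})+g(S)=i+j+g(S')\ge d$) is correct. But the middle step contains a genuine gap. You claim the reduction can be carried out by bypasses and isotopies \emph{supported inside $S_0'$}, so that every $\cA'_\ell$ agrees with $\cA'$ on $S'\setminus S_0'$ and meets $S_0'$ in a single arc or has a closed component there. Any move supported in the interior of $S_0'$ fixes the finite set $\cA'\cap \d S_0'$ pointwise, so if $\cA'$ crosses the circle $\d S_0'=\d\hat D$ in $2k>2$ points, no such sequence of moves can produce a dividing set meeting $S_0'$ in a single arc. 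This is not a corner case: the whole difficulty of the proposition is precisely that $\cA'$ may cross $\d S_0'$ arbitrarily many times. The bypasses that fix this must be performed in an annular neighborhood of $\d S_0'$ straddling both $S_0'$ and $S\setminus\hat D$ (this is what the paper does, inducting on $|\cA'\cap \d S_0'|$ and reducing it by $2$ at each stage), and the resulting dividing sets do \emph{not} agree with $\cA'$ outside $S_0'$ — which is why the paper's inductive hypothesis is phrased only in terms of the genera $g(S'_{\ws})$, $g(S'_{\zs})$ and the presence of closed components, rather than by matching decorations on $S\setminus\hat D$ as you do. Your appeal to the case analysis of Lemma~\ref{lem:closedsurfacecomputation} also does not transplant directly: that analysis concerns a dividing set meeting $\d S$ in exactly two points and aims to produce a closed curve in $\d C_{\ws,0}$, not to normalize a dividing set crossing an interior circle many times.

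The second, related gap is the cancellation bookkeeping, which you explicitly set aside as ``the main obstacle'' rather than resolving. Each bypass replaces one map by a sum of two, and one must know, for every triple that arises, how many of the three dividing sets have closed components: this is what guarantees both that the dichotomy $|\cA'|=1$ versus $|\cA'|>1$ is preserved and that, when $\cA'$ has a closed component, the two terms either both vanish or are chain homotopic (hence cancel over $\bF_2$). The paper supplies exactly this input as a standalone combinatorial statement (Lemma~\ref{lem:closedcomponentsbypass}: in a bypass triple on a surface with $|\cA_i\cap\d S|=2$, the number of $\cA_i$ without closed loops is even), proved by a six-case analysis of arc orderings. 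Without that lemma, or an equivalent, the claim that your type-(b) terms absorb all the ``bad'' contributions and that the surviving type-(a) terms carry a well-defined genus splitting is unsupported; Euler-characteristic counting alone does not distinguish a closed dividing curve from a genus-reducing separating arc. So the proposal identifies the right endpoints of the argument but leaves out the two pieces that constitute its actual content.
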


Our proof of Proposition~\ref{prop:2foldstabisstabtype} uses the following
combinatorial lemma about dividing sets on surfaces:

\begin{lem}\label{lem:closedcomponentsbypass}
Suppose that $\cA_1$, $\cA_2$, and $\cA_3$ are three dividing sets that fit
into a bypass triple on a surface $S$ with $|\d S| = 1$, and $|\cA_i \cap \d
S| = 2$ for $i \in \{1,2,3\}$. Then the number of $\cA_i$ that have no
closed loops is even.
\end{lem}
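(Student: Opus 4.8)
Recall a bypass triple on a surface $S$: three dividing sets $\cA_1,\cA_2,\cA_3$ that agree outside a disk (the bypass region) and inside that disk take the three standard configurations of a bypass move. The plan is to track how the "closed components" of the dividing set change as we pass around the triple, and to show that a suitable parity is invariant. Since $|\d S|=1$ and $|\cA_i\cap\d S|=2$, each $\cA_i$ consists of one arc running between the two points of $\cA_i\cap\d S$, together with some number $c_i\ge 0$ of disjoint simple closed curves on $S$; "no closed loops" means $c_i=0$. I want to show $\#\{i: c_i=0\}$ is even, equivalently that $c_1,c_2,c_3$ cannot be such that exactly one or exactly three of them vanish.

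**Main idea: the induced $\Z/2$-homology class.** To each dividing set $\cA$ on $S$ (with $\cA\cap\d S$ the fixed pair of points) I would associate the mod-2 homology class $[\cA]\in H_1(S,\d S;\Z/2)$. Since $S$ has one boundary circle and $\cA$ meets it in $2$ points, $\cA$ is a mod-2 relative cycle, and $[\cA]$ is well-defined. The key local computation is that the three configurations of a bypass inside the bypass disk differ pairwise by the class of the core arc of the bypass region, i.e. $[\cA_1]+[\cA_2]+[\cA_3]=0$ in $H_1(S,\d S;\Z/2)$ — this is exactly the content of the bypass relation at the level of dividing curves, and it is a purely local statement about three arcs in a disk. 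Next, I would observe that $[\cA_i]=0$ in $H_1(S,\d S;\Z/2)$ precisely when $\cA_i$ \emph{separates} $S$; and for a dividing set consisting of one arc plus $c_i$ closed curves on a surface with connected boundary, separating is equivalent to: the arc splits $S$ into two pieces and every closed curve bounds on one side — in particular the subsurface count, together with the Euler-characteristic/genus bookkeeping already used in Lemma~\ref{lem:closedsurfacecomputation}, forces the relevant parity. Actually the cleaner route: the number of complementary regions $S\setminus\cA_i$ is $2$ iff $\cA_i$ separates iff $[\cA_i]=0$; and more generally the parity of the number of closed components is detected by whether $[\cA_i]=0$.

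**The parity argument.** Concretely, I would argue as follows. A dividing set $\cA$ with the fixed boundary behaviour divides $S$ into regions alternately labelled $\ws$ and $\zs$; since $|\d S|=1$ carries one arc of each type, both $S_\ws$ and $S_\zs$ are nonempty. The closed curves of $\cA$ are exactly the components of $\cA$ that do not meet $\d S$, and each such closed curve, being two-sided, either separates $S$ or not. One shows that $[\cA]=0\in H_1(S,\d S;\Z/2)$ iff $\cA$ has an \emph{even} number of non-separating closed curves — or, packaging it more robustly, that the mod-2 invariant "$\cA$ bounds" is additive under the bypass relation. Then from $[\cA_1]+[\cA_2]+[\cA_3]=0$ we get that the number of $i$ with $[\cA_i]=0$ is $0$ or $2$ — it cannot be odd. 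Finally I must connect "$[\cA_i]=0$" with "$\cA_i$ has no closed loops." This is \emph{not} literally an equivalence (a separating closed curve gives $[\cA_i]=0$ but a closed loop is present), so the statement needs the extra geometric input from the bypass: in a bypass triple, the closed components away from the bypass disk are common to all three $\cA_i$, and inside the bypass disk each of the three configurations is a single arc with \emph{no} closed components. Hence $c_i$ equals a fixed common count $c_0$ (coming from outside) plus the number of closed loops created by how the bypass arc closes up against the ambient dividing set; passing around the triple, this "created" count changes by a controlled amount whose parity I can compute directly in the local model (a disk with a few boundary arcs of the ambient set). The cleanest implementation is: do the entire analysis in a small surface, namely the union of the bypass disk with a regular neighbourhood of the arcs of $\cA_i$ outside it that touch the bypass disk, reducing to a finite case-check of bypass triples on a planar surface with few boundary components — exactly the kind of local picture drawn in Figures~\ref{fig::11} and~\ref{fig::12}.

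**Expected obstacle.** The genuine difficulty is the bookkeeping in the last step: "closed loop present" is a finer invariant than "$[\cA_i]=0$," so the homological argument alone does not close the proof — I need to show that in a bypass move the \emph{number} of closed components of the dividing set changes parity in the appropriate way, and that this is compatible around the triple. I expect to handle this by a direct combinatorial classification of the possible local pictures (how the bypass arc meets the finitely many surrounding arcs), checking the parity statement in each case; this is finite but slightly tedious, and is the part most likely to hide an exceptional configuration. The homological statement $[\cA_1]+[\cA_2]+[\cA_3]=0$ and the reduction to a local surface are routine; the parity case-check is where the real care is needed.
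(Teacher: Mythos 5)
Your proposal does not close the proof. The homological layer is a red herring: the class $[\cA_i]\in H_1(S,\d S;\Z/2)$ cannot distinguish ``no closed loops'' from ``nullhomologous'' (an arc together with a separating closed curve has the same class as the arc alone), and you concede this yourself. Worse, the identity $[\cA_1]+[\cA_2]+[\cA_3]=0$ is asserted but not justified: mod $2$ the common part of the three dividing sets outside the bypass disk contributes $3\equiv 1$ times, so the sum is $[\cA_{\mathrm{out}}]$ plus a term from the disk, which is not obviously zero and in any case is never used in a way that would finish the argument. A second inaccuracy is the claim that ``the closed components away from the bypass disk are common to all three $\cA_i$'': the closed loops that matter are precisely those formed by concatenating arcs of $\cA_i$ inside the bypass disk with arcs outside it, so they are created and destroyed by the bypass move rather than being common to the triple.

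What remains after discarding the homology is exactly the finite case check, and that is the entire content of the lemma --- but you defer it (``the part most likely to hide an exceptional configuration'') rather than carry it out, and you do not supply the reduction that makes it finite and small. The paper's proof runs as follows: if all three $\cA_i$ have closed loops there is nothing to prove, so assume $\cA_1$ has none; since $|\cA_1\cap\d S|=2$, the set $\cA_1$ is then a \emph{single arc}, and $\cA_1\cap D$ consists of three arcs $a_1,a_2,a_3$ whose only relevant datum is their linear order along $\cA_1$. This leaves six cases, and in each one inspection of the two other bypass configurations (as in Figure~\ref{fig::10}) shows that exactly one of $\cA_2$, $\cA_3$ is again a single arc while the other acquires closed components. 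Without this reduction --- no closed loops plus two boundary points forces a single arc, whose combinatorics is captured by an ordering of three subarcs --- your ``finite but slightly tedious'' classification has no bound on its size, and the proof is incomplete.
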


\begin{proof}
If $\cA_1$, $\cA_2$, and $\cA_3$ all have a closed loop, then the
statement is true since 0 is even, so instead assume that $\cA_1$ has
no closed loops. Note that this implies that $|\cA_1| = 1$, since $|\d S \cap \cA_1| = 2$.
The dividing sets $\cA_i$ can be consistently oriented, by declaring their
orientation to be the boundary orientation of $S_{\ws}$. Let $D$ denote the
bypass region. The set $\cA_1 \cap D$ consists of three arcs, which we label
as $a_1$, $a_2$, and $a_3$; see Figure~\ref{fig::10}. The main claim can be
proven by considering separately six cases, corresponding to the possible
relative orderings of the arcs $a_1$, $a_2$, and $a_3$, as they appear on
$\cA_1$. Let us first consider the case when the arcs appear ordered
$(a_1,a_2,a_3)$, read left-to-right. In this case, $\cA_1$ has no closed
loops by assumption, and by inspecting Figure~\ref{fig::10}, we see that
exactly one of $\cA_2$ and $\cA_3$ also has no closed loops. The arguments
when the arcs appear along $\cA_1$ with ordering $(a_1,a_3,a_2)$,
$(a_2,a_1,a_3)$, $(a_2,a_3,a_1)$,  $(a_3,a_1,a_2)$, or $(a_3,a_2,a_1)$ are
easy modifications of the above argument.
\end{proof}

\begin{figure}[ht!]
	\centering
\begingroup%
  \makeatletter%
  \providecommand\color[2][]{%
    \errmessage{(Inkscape) Color is used for the text in Inkscape, but the package 'color.sty' is not loaded}%
    \renewcommand\color[2][]{}%
  }%
  \providecommand\transparent[1]{%
    \errmessage{(Inkscape) Transparency is used (non-zero) for the text in Inkscape, but the package 'transparent.sty' is not loaded}%
    \renewcommand\transparent[1]{}%
  }%
  \providecommand\rotatebox[2]{#2}%
  \newcommand*\fsize{\dimexpr\f@size pt\relax}%
  \newcommand*\lineheight[1]{\fontsize{\fsize}{#1\fsize}\selectfont}%
  \ifx\svgwidth\undefined%
    \setlength{\unitlength}{250.09380687bp}%
    \ifx\svgscale\undefined%
      \relax%
    \else%
      \setlength{\unitlength}{\unitlength * \real{\svgscale}}%
    \fi%
  \else%
    \setlength{\unitlength}{\svgwidth}%
  \fi%
  \global\let\svgwidth\undefined%
  \global\let\svgscale\undefined%
  \makeatother%
  \begin{picture}(1,0.36485426)%
    \lineheight{1}%
    \setlength\tabcolsep{0pt}%
    \put(0.2442293,0.00636979){\color[rgb]{0,0,0}\makebox(0,0)[rt]{\lineheight{0}\smash{\begin{tabular}[t]{r}$\cA_1$\end{tabular}}}}%
    \put(0.57835685,0.00636979){\color[rgb]{0,0,0}\makebox(0,0)[rt]{\lineheight{0}\smash{\begin{tabular}[t]{r}$\cA_2$\end{tabular}}}}%
    \put(0.94308843,0.00636979){\color[rgb]{0,0,0}\makebox(0,0)[rt]{\lineheight{0}\smash{\begin{tabular}[t]{r}$\cA_3$\end{tabular}}}}%
    \put(0,0){\includegraphics[width=\unitlength,page=1]{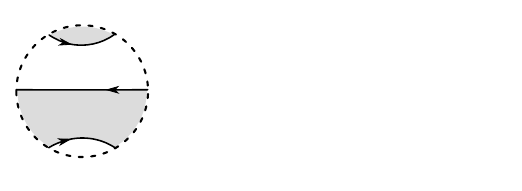}}%
    \put(0.15404029,0.25186831){\color[rgb]{0,0,0}\makebox(0,0)[t]{\lineheight{1.25}\smash{\begin{tabular}[t]{c}$a_1$\end{tabular}}}}%
    \put(0.15404029,0.20339908){\color[rgb]{0,0,0}\makebox(0,0)[t]{\lineheight{1.25}\smash{\begin{tabular}[t]{c}$a_2$\end{tabular}}}}%
    \put(0.15404029,0.11479143){\color[rgb]{0,0,0}\makebox(0,0)[t]{\lineheight{1.25}\smash{\begin{tabular}[t]{c}$a_3$\end{tabular}}}}%
    \put(0,0){\includegraphics[width=\unitlength,page=2]{fig10.pdf}}%
  \end{picture}%
\endgroup%

	\caption{The proof of Lemma~\ref{lem:closedcomponentsbypass}, when the
    arcs $a_1$, $a_2$, and $a_3$ appear on $\cA_1$ with order $(a_1,a_2,a_3)$,
    read left-to-right. The bypass region is the disk shown. The dashed lines
    outside the bypass regions represent the configuration of dividing arcs
    outside the bypass region. In the case at hand, $\cA_1$ and $\cA_3$ have
    no closed components, while $\cA_2$ has two.}\label{fig::10}
\end{figure}

\begin{proof}[Proof of Proposition~\ref{prop:2foldstabisstabtype}]
Fix integers $i$, $j \ge 0$ such that $i + j + g(S') \ge d$. Analyzing the proof of
Lemma~\ref{lem:stabilizationlemma}, we can find a 4-dimensional 1-handlebody
$W_0$ whose boundary we denote $Y$, such that
\begin{enumerate}
\item $S' \cap Y = S \cap Y$ is an unknot in $Y$;
\item $S \cap W_0$ is a disk, and $S' \cap W_0$ is a connected,
genus $g(S') - g(S)$ surface with only one boundary component.
\end{enumerate}
Let $J$ denote $S'\cap Y$. Note that Lemma~\ref{lem:stabilizationlemma}
immediately implies the statement for any dividing set $\cA' \subset S'$
(connected or disconnected) that intersects $J$ in exactly two points.

We now show the main claim by induction on $|\cA' \cap J|$.
We have established the base case, $|\cA' \cap J| = 2$.
If $\cA'$ is a dividing set on $S'$ with $|\cA' \cap J| \ge 4$, then, using the
bypass relation as shown in Figure~\ref{fig::9}, we can write
\begin{equation}
F_{B^4,(S',\cA'),\frs_0} \simeq F_{B^4,(S',\cA''),\frs_0} + F_{B^4,(S',\cA'''),\frs_0},
\label{eq:bypasstoreduceintersections}
\end{equation}
where $\cA''$ and $\cA''$ are dividing sets satisfying
\[
|\cA'' \cap J| = |\cA''' \cap J| = |\cA' \cap J| - 2.
\]

\begin{figure}[ht!]
	\centering
\begingroup%
  \makeatletter%
  \providecommand\color[2][]{%
    \errmessage{(Inkscape) Color is used for the text in Inkscape, but the package 'color.sty' is not loaded}%
    \renewcommand\color[2][]{}%
  }%
  \providecommand\transparent[1]{%
    \errmessage{(Inkscape) Transparency is used (non-zero) for the text in Inkscape, but the package 'transparent.sty' is not loaded}%
    \renewcommand\transparent[1]{}%
  }%
  \providecommand\rotatebox[2]{#2}%
  \newcommand*\fsize{\dimexpr\f@size pt\relax}%
  \newcommand*\lineheight[1]{\fontsize{\fsize}{#1\fsize}\selectfont}%
  \ifx\svgwidth\undefined%
    \setlength{\unitlength}{246.94983479bp}%
    \ifx\svgscale\undefined%
      \relax%
    \else%
      \setlength{\unitlength}{\unitlength * \real{\svgscale}}%
    \fi%
  \else%
    \setlength{\unitlength}{\svgwidth}%
  \fi%
  \global\let\svgwidth\undefined%
  \global\let\svgscale\undefined%
  \makeatother%
  \begin{picture}(1,0.52162396)%
    \lineheight{1}%
    \setlength\tabcolsep{0pt}%
    \put(0.50295009,0.14636718){\color[rgb]{0,0,0}\makebox(0,0)[t]{\lineheight{1.25}\smash{\begin{tabular}[t]{c}$+$\end{tabular}}}}%
    \put(0.50295009,0.25732811){\color[rgb]{0,0,0}\makebox(0,0)[t]{\lineheight{1.25}\smash{\begin{tabular}[t]{c}$\simeq$\end{tabular}}}}%
    \put(0,0){\includegraphics[width=\unitlength,page=1]{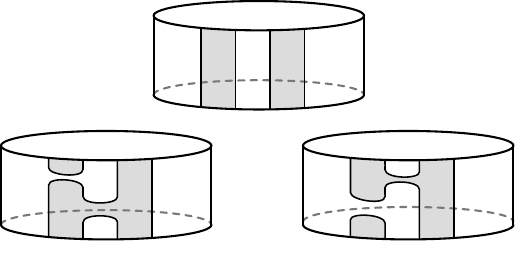}}%
    \put(0.7282273,0.39604033){\color[rgb]{0,0,0}\makebox(0,0)[lt]{\lineheight{1.25}\smash{\begin{tabular}[t]{l}$(S',\cA')$\end{tabular}}}}%
    \put(0.20639675,0.00476121){\color[rgb]{0,0,0}\makebox(0,0)[t]{\lineheight{1.25}\smash{\begin{tabular}[t]{c}$(S',\cA'')$\end{tabular}}}}%
    \put(0.79342934,0.00476121){\color[rgb]{0,0,0}\makebox(0,0)[t]{\lineheight{1.25}\smash{\begin{tabular}[t]{c}$(S',\cA''')$\end{tabular}}}}%
  \end{picture}%
\endgroup%

	\caption{Reducing $|J \cap \cA'|$ by 2, using the bypass relation. The
    annulus shown is a neighborhood of $J$ in the surface $S'$. Using a small
    isotopy, we may push the two bigons in each of the bottom two annuli out of
    the neighborhood of $J$ which is shown.}\label{fig::9}
\end{figure}

Let us write $S_{\ws}''$, $S_{\zs}''$, $S_{\ws}'''$, and $S_{\zs}'''$ for the
type-$\ws$ and type-$\zs$ subregions of $S' \setminus \cA''$ and $S' \setminus \cA'''$.
There are two cases to consider: when $\cA'$ has no closed components,
or when $\cA'$ has at least one  closed component.

Let us consider the case when $\cA'$ has no closed components. In this case,
by Lemma~\ref{lem:closedcomponentsbypass}, we know that exactly one of
$\cA''$ and $\cA'''$ has no closed components, while the other has a closed
component. For definiteness, let us say that $\cA''$ has no closed
components. Note that, in this case, $g(S_{\ws}') = g(S_{\ws}'')$ and
$g(S_{\zs}') = g(S_{\zs}'')$.

By our inductive hypothesis, we know that $U^i V^j \cdot F_{B^4,
(S',\cA'''),\frs_0} \simeq 0$. Combining this with
equation~\eqref{eq:bypasstoreduceintersections}, we conclude that
\[
U^i V^j \cdot F_{B^4,(S', \cA'),\frs_0} \simeq U^i V^j \cdot  F_{B^4,(S',\cA''),\frs_0}.
\]
By the inductive hypothesis, $U^i V^j \cdot F_{B^4,(S',\cA''),\frs_0}$
depends only on the integers $i + g(S_{\ws}'')$
and $j + g(S_{\zs}'')$, and hence the same holds for $U^i V^j \cdot
F_{B^4,(S',\cA'),\frs_0}$.

Next, we consider the case when $\cA'$ has a closed component. We
wish to show that
\[
U^i V^j \cdot F_{B^4,(S',\cA'),\frs_0} \simeq 0.
\]
By Lemma~\ref{lem:closedcomponentsbypass}, one of the following two cases holds:
Either $\cA''$ and $\cA'''$ both have a closed component, or neither $\cA''$ nor
$\cA'''$ has a closed component. If $\cA''$ and $\cA'''$ both have a closed
component, then $U^i V^j \cdot F_{B^4,(S',\cA''),\frs_0}$ and $U^i V^j \cdot
F_{B^4,(S',\cA'''),\frs_0}$ are both chain homotopic to zero, by induction. If neither $\cA''$
and $\cA'''$ have a closed component, we note that
$g(S_{\ws}'') = g(S_{\ws}''')$ and $g(S_{\zs}'') = g(S_{\zs}''')$, so
\[
U^i V^j \cdot F_{B^4,(S',\cA''),\frs_0} \simeq U^i V^j \cdot F_{B^4,(S',\cA'''),\frs_0}
\]
by induction. In both cases, the sum
\[
U^i V^j \cdot F_{B^4,(S',\cA''),\frs_0} + U^i V^j \cdot F_{B^4,(S',\cA'''),\frs_0}\simeq 0.
\]
 Hence, by equation~\eqref{eq:bypasstoreduceintersections},
 $U^i V^j \cdot F_{B^4,(S',\cA'),\frs_0} \simeq 0$, completing the proof.
\end{proof}

\subsection{Destabilizing genus bounds from \texorpdfstring{$\cI$}{I} and \texorpdfstring{$\kappa_0$}{kappa-nought}}

In this section, we show that the invariants $\kappa_0(S)$ and $\cI(S)$ give lower bounds on the quantity $g_{\dest}(S)$,
introduced in Definition~\ref{def:dest}. We begin with the invariant $\cI(S)$ from
Definition~\ref{def:depth}.

\begin{thm}\label{thm:I}
If $K$ is slice a knot in $\Sphere^3$ and $S \in \Surf(K)$, then
\[
\cI(S) \le g_{\dest}(S).
\]
\end{thm}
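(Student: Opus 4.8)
The plan is to unwind the definition of $g_{\dest}(S)$ directly. Fix a sequence $S = S_1, S_2, \dots, S_n = D$ of connected, properly embedded surfaces in $B^4$ as in Definition~\ref{def:dest}, with $D$ a slice disk of $K$, and set $g := \max_i g(S_i)$; such sequences exist with $g$ finite because $K$ is slice (indeed $g_{\dest}(S) \le \tilde\mu_{\st}(S,D) \le 2g(K)+g(S)$ by Proposition~\ref{prop:mu-upper}). Since $\cI(S)$ is the least $d \ge g(S)$ for which $S$ satisfies the decoration-independence condition \eqref{def:DI} above degree~$d$, and $g\ge g(S_1)=g(S)$, it suffices to show that $(B^4,S)$ satisfies \eqref{def:DI} above degree~$g$; minimizing over all such sequences then gives $\cI(S) \le g_{\dest}(S)$. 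I would prove this by a backward induction along the sequence, maintaining the statement ``$(B^4,S_i)$ satisfies \eqref{def:DI} above degree~$g$''. The base case $i=n$ is essentially the remark after Definition~\ref{def:stabilizationtypedepthd}: a slice disk satisfies the unconditional condition \eqref{def:DI} (for $|\cA|=1$ there is a unique isotopy class of dividing arc, so $F_{B^4,(D,\cA),\frs_0}$ trivially depends only on $(g(D_{\ws}),g(D_{\zs}))=(0,0)$; for $|\cA|>1$ a closed component of $\cA$ bounds a disk in $D$, and the resulting cobordism map is null-homotopic by a standard argument), and unconditional \eqref{def:DI} implies \eqref{def:DI} above degree~$g$ for every $g\ge 0$.

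For the inductive step, assume $(B^4,S_{i+1})$ satisfies \eqref{def:DI} above degree~$g$. If $S_i$ is a stabilization of $S_{i+1}$, then Proposition~\ref{prop:2foldstabisstabtype} gives \eqref{def:DI} above degree $\max\{g,g(S_i)\}=g$ for $S_i$, since $g(S_i)\le g$. The remaining case, in which $S_{i+1}$ is a stabilization of $S_i$, is the main obstacle: Proposition~\ref{prop:2foldstabisstabtype} runs only in the direction of passing to a stabilization, so I need a converse. For this I would exploit that Lemma~\ref{lem:stabilizationlemma} furnishes an \emph{equality} of cobordism maps, not just a one-way implication. Write $\delta := g(S_{i+1})-g(S_i)\ge 0$. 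Any decoration $\cA$ on $S_i$ with $|\cA\cap K|=2$ can be isotoped to meet the stabilization region in a single sub-arc (pushing any closed components out of that region), and then extended over the stabilization to a decoration $\cA'$ on $S_{i+1}$ with $\cA'\cap S_0'$ a single arc that splits the genus-$\delta$ surface $S_0'$ into pieces of genus $\delta_{\ws}$ and $\delta_{\zs}$, for \emph{any} prescribed $\delta_{\ws}+\delta_{\zs}=\delta$. Lemma~\ref{lem:stabilizationlemma} then gives $F_{B^4,(S_{i+1},\cA'),\frs_0}\simeq U^{\delta_{\ws}}V^{\delta_{\zs}}\cdot F_{B^4,(S_i,\cA),\frs_0}$ when $|\cA|=1$, and $F_{B^4,(S_{i+1},\cA'),\frs_0}\simeq 0$ when $|\cA|>1$.

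The crucial point, and the reason the converse goes through, is that I may choose the genus split of the stabilization \emph{differently for different decorations}. Let $\cA_1,\cA_2$ be decorations on $S_i$ with $|\cA_k|=1$, and write $a_k,b_k$ for the genera of the type-$\ws$ and type-$\zs$ subsurfaces of $(S_i,\cA_k)$, so $a_k+b_k=g(S_i)$. Given $i_k,j_k\ge 0$ with $i_k+j_k+g(S_i)\ge g$ and with matching data $i_1+a_1=i_2+a_2$ and $j_1+b_1=j_2+b_2$, the bound $i_k+j_k\ge g-g(S_i)\ge g(S_{i+1})-g(S_i)=\delta$ leaves enough room to pick, for each $k$, a split with $\delta_{\ws,k}\le i_k$ and $\delta_{\zs,k}\le j_k$. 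Absorbing the monomial, $U^{i_k-\delta_{\ws,k}}V^{j_k-\delta_{\zs,k}}\cdot F_{B^4,(S_{i+1},\cA_k'),\frs_0}\simeq U^{i_k}V^{j_k}\cdot F_{B^4,(S_i,\cA_k),\frs_0}$, and a one-line computation shows that both the matching data for $S_{i+1}$ (namely $(i_k-\delta_{\ws,k})+(a_k+\delta_{\ws,k})=i_k+a_k$, equal for $k=1,2$, and similarly in $V$) and the degree condition $(i_k-\delta_{\ws,k})+(j_k-\delta_{\zs,k})+g(S_{i+1})=i_k+j_k+g(S_i)\ge g$ hold automatically. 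Applying \eqref{def:DI} above degree~$g$ for $S_{i+1}$ then identifies the two left-hand sides, yielding $U^{i_1}V^{j_1}\cdot F_{B^4,(S_i,\cA_1),\frs_0}\simeq U^{i_2}V^{j_2}\cdot F_{B^4,(S_i,\cA_2),\frs_0}$; the $|\cA|>1$ vanishing is proven the same way. Hence $(B^4,S_i)$ satisfies \eqref{def:DI} above degree~$g$, and running the induction down to $i=1$ finishes the proof. (Note that using the \emph{same} split for both decorations would only establish this relation up to an uncancellable factor $U^{\delta_{\ws}}V^{\delta_{\zs}}$; sidestepping that is precisely the role of the variable split, and the degree hypothesis $i_k+j_k+g(S_i)\ge g\ge g(S_{i+1})$ is exactly what supplies the necessary slack.)
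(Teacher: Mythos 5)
Your proof is correct and follows essentially the same route as the paper's: backward induction along the stabilization sequence, using Proposition~\ref{prop:2foldstabisstabtype} in the stabilization direction and the stabilization formula of Lemma~\ref{lem:stabilizationlemma} for the converse (destabilization) direction. The only difference is that you spell out the converse step in detail — in particular the trick of choosing the genus split of the stabilization differently for different decorations so the monomials can be absorbed — which the paper asserts in a single sentence; your elaboration is accurate.
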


\begin{proof}
Let $S_1, \dots, S_n \in \Surf(K)$ be a sequence of surfaces as in Definition~\ref{def:dest}
connecting $S_1 = S$ with a slice disk $S_n = D$, such that
$g_{\dest}(S) = \max \{g(S_1),\dots, g(S_n)\}$.

The result follows immediately from Proposition~\ref{prop:2foldstabisstabtype},
with the following explanation.
The disk $S_n$ trivially satisfies the decoration-independence condition~\eqref{def:DI} above degree~$0$.
By Proposition~\ref{prop:2foldstabisstabtype}, if $S_k$ for $k \in \{2, \dots, n\}$ satisfies condition~\eqref{def:DI}
above degree~$d$ and $S_{k-1}$ is a stabilization of $S_k$, then $S_{k-1}$
satisfies condition~\eqref{def:DI} above degree~$\max\{d, g(S_{k-1})\}$. Using the
stabilization formula, Lemma~\ref{lem:stabilizationlemma}, the converse is
also true: If $S_k$ satisfies condition~\eqref{def:DI} above degree~$d$ and $S_{k-1}$ is a
destabilization of $S_k$, then $S_{k-1}$ also satisfies condition~\eqref{def:DI} above degree~$d$.
Hence, by induction, we see that $S = S_1$ satisfies condition~\eqref{def:DI}
above degree~$d = \max\{ g(S_1), \dots, g(S_n)\}$, and hence $\cI(S) \le g_{\dest}(S)$.
\end{proof}

We note that the invariant $\cI(S)$ is not easy to determine,
since it involves computing the cobordism maps for infinitely
many decorations on $S$. The invariant $\kappa_0(S)$
defined in Section~\ref{sec:kappa} is easier to compute
because it involves calculating just a single cobordism map
on $\HFK^-_{U=0}$, as opposed to infinitely many on $\cCFL^-$.
We now prove that $\kappa_0(S)$ also bounds $g_{\dest}(S)$:

\begin{thm}\label{thm:kappa0gdest}
If $K$ is a slice knot in $\Sphere^3$ and $S \in \Surf(K)$, then
\[
\kappa_0(S) \le g_{\dest}(S).
\]
\end{thm}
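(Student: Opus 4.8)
The plan is to deduce the bound from Theorem~\ref{thm:I} together with a short decoration-swapping argument. Write $g := g(S)$ and $d := g_{\dest}(S)$; we may assume $g > 0$, since $\kappa_0(S) = 0$ otherwise. By Theorem~\ref{thm:I} (more precisely, by the chain of implications proving it) the knot cobordism $(B^4, S)$ satisfies the decoration-independence condition~\eqref{def:DI} above degree $d$, and since condition~\eqref{def:DI} above a given degree is inherited by all larger degrees, this is the only input about $S$ that I will use.

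Suppose first that $d \ge g + 1$. Choose an integer $k$ with $\max\{0,\, 2g - d\} \le k \le g - 1$; this interval is nonempty precisely because $d \ge g+1$ and $g \ge 1$. Let $\cA_k$ be a dividing set on $S$ consisting of a single arc, chosen so that $g(S_{\ws}) = k$ and $g(S_{\zs}) = g - k$. The two maps $V^{d-g}\ve{t}_{S,\ws}^- = U^0 V^{d-g} F_{B^4,(S,\cA_{\ws}),\frs_0}$ and $U^{g-k} V^{d-2g+k}\, F_{B^4,(S,\cA_k),\frs_0}$ both have $|\cA| = 1$, both have nonnegative exponents whose sum with $g$ equals $d$, and both have the same pair $\bigl(i + g(S_{\ws}),\, j + g(S_{\zs})\bigr) = (g,\, d - g)$. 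Hence the single-arc clause of condition~\eqref{def:DI} above degree $d$ gives
\[
V^{d-g}\ve{t}_{S,\ws}^- \;\simeq\; U^{g-k} V^{d-2g+k}\, F_{B^4,(S,\cA_k),\frs_0}.
\]
Because $g - k \ge 1$, the right-hand side becomes the zero map after tensoring with $\cR^-/(U)$, so $V^{d-g}[t_{S,\ws}^-(1)] = 0$ in $\HFK^-_{U=0}(\bK)$; that is, $\kappa_0(S) \le d$, as desired.

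It remains to treat $d = g$. Fix a sequence $S = S_1, \dots, S_n$ connecting $S$ to a slice disk with $\max_i g(S_i) = g$, and let $j$ be the first index with $g(S_{j+1}) < g(S_j)$, which exists since $g(S_n) = 0 < g$. As $g = \max_i g(S_i)$, the genera $g(S_1), \dots, g(S_j)$ are all equal to $g$, so each of the first $j-1$ steps is a genus-preserving stabilization, and Lemma~\ref{lem:stabilizationlemma}\eqref{claim:1}, applied to the extremal $\ws$-decorations, gives $\ve{t}_{S_1,\ws}^- \simeq \cdots \simeq \ve{t}_{S_j,\ws}^-$. The step $S_j \to S_{j+1}$ exhibits $S_j$ as a genuine stabilization of $S_{j+1}$, so the same lemma gives $\ve{t}_{S_j,\ws}^-(1) \simeq U^{g(S_j) - g(S_{j+1})}\cdot \ve{t}_{S_{j+1},\ws}^-(1)$ in $\cCFL^-(\bK)$, and since the exponent is positive this vanishes after setting $U = 0$. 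Therefore $[t_{S,\ws}^-(1)] = [t_{S_j,\ws}^-(1)] = 0$ and $\kappa_0(S) = g = d$.

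The part that needs genuine care is the topological bookkeeping behind these formal moves: checking that $S$ carries single-arc decorations realizing every genus split $(k, g-k)$, and, in the last paragraph, that the extremal $\ws$-decorations on consecutive surfaces in a destabilization sequence can be chosen compatibly so that Lemma~\ref{lem:stabilizationlemma}\eqref{claim:1} applies with the stated exponents — one must pick the stabilizing $4$-ball and the dividing arc of the inserted surface so that all the new handles are pushed into the $\ws$-subsurface. Once these arrangements are made, what remains is bookkeeping with the grading and exponent conventions of Section~\ref{sec:invariants}.
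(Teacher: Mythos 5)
Your proof is correct and follows essentially the same route as the paper's: both arguments feed $g_{\dest}(S)$ through Theorem~\ref{thm:I} to get decoration-independence above degree $d$, and then trade a unit of genus (or exponent) from the $\zs$-side to the $\ws$-side to manufacture a factor of $U$ that kills the class in $\HFK^-_{U=0}(\bK)$. The only differences are cosmetic: in the case $d>g$ you invoke the $(i,j)$-exponents built into Definition~\ref{def:depth} directly on two decorations of $S$ itself, whereas the paper first performs $d-g(S)$ trivial stabilizations and applies the degree-zero condition to the stabilized surface; and your treatment of the case $d=g$ spells out the bookkeeping along the sequence that the paper compresses into one line.
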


\begin{proof}
Suppose that $g(S) > 0$.
Recall that $\ve{t}_{S,\ws}^-$ is defined by
decorating $S$ with a dividing set consisting of a single
arc such that $g(S_{\ws})=g(S)$ and $g(S_{\zs})=0$. Suppose
that $S_1,\dots, S_n$ is a stabilization sequence of surfaces
in $\Surf (K)$ such that $S_1=S$ and $S_n$ is a slice disk.
Let
\[
d := \max\{g(S_1),\dots g(S_n)\}.
\]
By Theorem~\ref{thm:I}, the surface $S$ satisfies the decoration-independence condition \eqref{def:DI}
above degree~$d$. There are two cases:
$d=g(S)$ or $d>g(S)$. If $d=g(S)$, then the stabilization formula implies that
\[
\ve{t}_{S,\ws}^-\simeq U^{g(S)}\cdot \ve{t}_{S_n}^-,
\]
so $\ve{t}_{S,\ws}^-$ vanishes on $\HFK^-_{U=0}$, implying that
\[
\kappa_0(S) = g(S) = g_{\dest}(S).
\]

We now consider the second case, where $d>g(S)$.
We note that
\[
V^{d-g(S)}\cdot \ve{t}_{S,\ws}^- \simeq F_{B^4, (S',\cA_{\ws}')},
\]
where $(S',\cA_{\ws}')$ is obtained from $(S,\cA_{\ws})$
by performing $d - g(S)$ trivial 1-handle stabilizations along~$S_{\zs}$.
Since $(S',\cA_{\ws}')$ satisfies condition~\eqref{def:DI},
by definition the map $F_{B^4,(S',\cA_{\ws}')}$
depends only on the dividing set through the genera of the
type-$\ws$ and type-$\zs$ subregions. Hence, if $\cA'\subset S'$
is any other dividing set on $S'$ consisting of a single
arc, such that the genera of the type-$\ws$ and type-$\zs$
subregions are the same as those of $(S',\cA_{\ws}')$, then
\[
F_{B^4, (S',\cA_{\ws}')}\simeq F_{B^4,(S',\cA')}.
\]
We pick a dividing set $\cA'\subset S'$ such that one
of the trivial stabilizations of $S'$ occurs in the
type-$\ws$ subregion. See Figure~\ref{fig::41}.

\begin{figure}[ht!]
	\centering
\begingroup%
  \makeatletter%
  \providecommand\color[2][]{%
    \errmessage{(Inkscape) Color is used for the text in Inkscape, but the package 'color.sty' is not loaded}%
    \renewcommand\color[2][]{}%
  }%
  \providecommand\transparent[1]{%
    \errmessage{(Inkscape) Transparency is used (non-zero) for the text in Inkscape, but the package 'transparent.sty' is not loaded}%
    \renewcommand\transparent[1]{}%
  }%
  \providecommand\rotatebox[2]{#2}%
  \newcommand*\fsize{\dimexpr\f@size pt\relax}%
  \newcommand*\lineheight[1]{\fontsize{\fsize}{#1\fsize}\selectfont}%
  \ifx\svgwidth\undefined%
    \setlength{\unitlength}{425.29463801bp}%
    \ifx\svgscale\undefined%
      \relax%
    \else%
      \setlength{\unitlength}{\unitlength * \real{\svgscale}}%
    \fi%
  \else%
    \setlength{\unitlength}{\svgwidth}%
  \fi%
  \global\let\svgwidth\undefined%
  \global\let\svgscale\undefined%
  \makeatother%
  \begin{picture}(1,0.28167361)%
    \lineheight{1}%
    \setlength\tabcolsep{0pt}%
    \put(0.13805917,0.00189207){\color[rgb]{0,0,0}\makebox(0,0)[t]{\lineheight{1.25}\smash{\begin{tabular}[t]{c}$(S,\cA_{\ws})$\end{tabular}}}}%
    \put(0.49779082,0.00189207){\color[rgb]{0,0,0}\makebox(0,0)[t]{\lineheight{1.25}\smash{\begin{tabular}[t]{c}$(S',\cA_{\ws}')$\end{tabular}}}}%
    \put(0.86457673,0.00189207){\color[rgb]{0,0,0}\makebox(0,0)[t]{\lineheight{1.25}\smash{\begin{tabular}[t]{c}$(S',\cA')$\end{tabular}}}}%
    \put(0,0){\includegraphics[width=\unitlength,page=1]{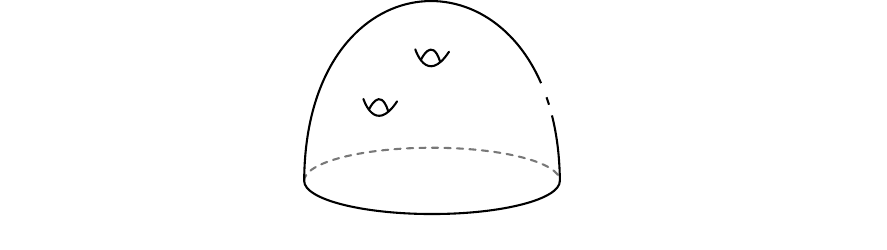}}%
    \put(0.61746113,0.0397123){\color[rgb]{0,0,0}\makebox(0,0)[lt]{\lineheight{1.25}\smash{\begin{tabular}[t]{l}$K$\end{tabular}}}}%
    \put(0,0){\includegraphics[width=\unitlength,page=2]{fig41.pdf}}%
    \put(0.27533377,0.0397123){\color[rgb]{0,0,0}\makebox(0,0)[lt]{\lineheight{1.25}\smash{\begin{tabular}[t]{l}$K$\end{tabular}}}}%
    \put(0,0){\includegraphics[width=\unitlength,page=3]{fig41.pdf}}%
    \put(0.96711797,0.0397123){\color[rgb]{0,0,0}\makebox(0,0)[lt]{\lineheight{1.25}\smash{\begin{tabular}[t]{l}$K$\end{tabular}}}}%
    \put(0,0){\includegraphics[width=\unitlength,page=4]{fig41.pdf}}%
  \end{picture}%
\endgroup%

	\caption{The surfaces $(S,\cA_{\ws})$, $(S',\cA_{\ws}')$, and $(S',\cA')$ from the proof of Theorem~\ref{thm:kappa0gdest}.}
    \label{fig::41}
\end{figure}

Using the stabilization formula,
we conclude that there is a decorated surface $\cF$ such that
\[
V^{d-g(S)}\cdot F_{B^4,(S,\cA_{\ws})}\simeq F_{B^4,(S',\cA')}\simeq U\cdot F_{B^4,\cF},
\]
from which we conclude that $V^{d-g(S)}\cdot t^-_{S,\ws} \simeq  0$
since the action of $U$ is trivial on $\CFK_{U=0}$.
\end{proof}

\begin{rem}
If $S$ is a genus $g > 0$ stabilization of a surface,
then  $\kappa_0(S) = g(S)$, since $\ve{t}_{S,\ws}^-
\simeq U^g \cdot G$ for some map $G$, so $[t^-_{S,\ws}(1)]=0$ in $\HFK^-_{U=0}(\bK)$.
Also, we note that if $S$ satisfies the decoration independence condition
\eqref{def:DI} at degree $d>g(S)$, then the map $V^{d-g(S)}\cdot \ve{t}_{S,\ws}^-$ vanishes on $\HFK^-_{U=0}$.
This follows by adapting the argument from the proof of Theorem~\ref{thm:kappa0gdest}.
Hence
\[
\kappa_0(S)\le \max\{g(S)+1,\cI(S)\}.
\]
\end{rem}

\subsection{Stabilization distance bounds from \texorpdfstring{$\tau$}{tau} and \texorpdfstring{$V_k$}{Vk}}
\label{sec:stabilizationdistanceproofs}
\begin{thm} \label{thm:tauD1D2bound}
Let $K$ be a knot in $\Sphere^3$, and let $S$, $S' \in \Surf(K)$. Then
\[
\tau(S, S') \le \mu_{\st}(S, S').
\]
\end{thm}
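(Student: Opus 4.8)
The plan is to exploit the ultrametric property of $\tau$ proved in Proposition~\ref{prop:ultra} together with a local computation showing that $\tau$ does not increase under a single stabilization or destabilization. Suppose $S_1,\dots,S_k \in \Surf(K)$ is a stabilization sequence realizing $\mu_{\st}(S,S')$, so that $S_1 = S$, $S_k = S'$, consecutive surfaces are related by an $(m,n)$-stabilization or destabilization, and $\max\{g(S_1),\dots,g(S_k)\} = \mu_{\st}(S,S')$. By the ultrametric inequality
\[
\tau(S,S') \le \max\{\tau(S_1,S_2), \dots, \tau(S_{k-1},S_k)\},
\]
so it suffices to show that if $S''$ is obtained from $S'$ by a single $(m,n)$-stabilization along $(B^4, S_0)$, then $\tau(S', S'') \le \max\{g(S'), g(S'')\}$. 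Since a stabilization never decreases genus, $\max\{g(S'), g(S'')\} = g(S'')$, so the target is $\tau(S',S'') \le g(S'')$. Because $\tau(S',S'')$ is by definition bounded below by $\max\{g(S'),g(S'')\}$, this will in fact give equality $\tau(S',S'') = g(S'')$, but only the inequality is needed.

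The main step is therefore the local computation. I would decorate $S''$ by a dividing set $\cA''$ of type $\cA_{\zs}$ (so $g(S''_{\zs}) = g(S'')$ and $g(S''_{\ws}) = 0$) chosen to restrict to the type-$\cA_{\zs}$ decoration on $S'$ outside the stabilization region, with the new genus $n$ and the new bands absorbed into the $\zs$-subsurface; concretely one arranges $\cA''$ on $S'' = (S' \setminus \Int(B^4)) \cup S_0$ so that $\cA'' \cap S_0'$ is a single arc dividing the punctured-disk-plus-$S_0$ piece $S_0'$ into two connected pieces, with all of the extra genus on the $\zs$-side. Then Lemma~\ref{lem:stabilizationlemma}\eqref{claim:1}, applied with $W = B^4$, $Y_1 = \emptyset$, $Y_2 = \Sphere^3$, gives
\[
\ve{t}_{S'',\zs}^- \simeq U^{g(S''_{\ws}) - g(S'_{\ws})} V^{g(S''_{\zs}) - g(S'_{\zs})} \cdot \ve{t}_{S',\zs}^- = V^{g(S'') - g(S')} \cdot \ve{t}_{S',\zs}^-,
\]
since $g(S'_{\ws}) = g(S''_{\ws}) = 0$ and $g(S'_{\zs}) = g(S')$, $g(S''_{\zs}) = g(S'')$. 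Passing to $\CFK^-_{U=0}(\bK)$ (which is legitimate because the identity has no $U$-powers on the right), this reads $t_{S'',\zs}^-(1) \simeq V^{g(S'')-g(S')} \cdot t_{S',\zs}^-(1)$. Now apply Lemma~\ref{lem:slightreformulationoftau} with $g = g(S')$, $g' = g(S'')$: taking $n = g(S'')$ we need $V^{n-g(S')}\cdot[t_{S',\zs}^-(1)] = V^{n - g(S'')}\cdot[t_{S'',\zs}^-(1)] = [t_{S'',\zs}^-(1)]$ in $\HFK^-_{U=0}(\bK)$, which is exactly the equality just derived. Hence $\tau(S',S'') \le g(S'')$, as desired.

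There is one subtlety to handle carefully: the roles of stabilization versus destabilization. If $S''$ is a \emph{destabilization} of $S'$, then $S'$ is a stabilization of $S''$, $g(S') \ge g(S'')$, $\max\{g(S'),g(S'')\} = g(S')$, and by the same application of Lemma~\ref{lem:stabilizationlemma} (with the roles reversed) one gets $\tau(S',S'') \le g(S')$. In either direction $\tau(S_i, S_{i+1}) \le \max\{g(S_i), g(S_{i+1})\} \le \max\{g(S_1),\dots,g(S_k)\} = \mu_{\st}(S,S')$, and combining with the ultrametric inequality yields $\tau(S,S') \le \mu_{\st}(S,S')$. The hardest part will be checking that the dividing set $\cA''$ on the stabilized surface can indeed be chosen in the normal form required by Lemma~\ref{lem:stabilizationlemma}\eqref{claim:1} — namely that $\cA_{\zs}$ on $S''$ is isotopic to a dividing set agreeing with $\cA_{\zs}$ on $S'$ outside $\hat D$ and meeting $S_0'$ in a single separating arc — and that the genus bookkeeping ($g(S_{\ws}'') - g(S_{\ws}') = g(S_{0,\ws}')$, etc.) comes out as claimed; this is a routine but slightly fiddly surface-topology verification, together with the standard fact that $\ve{t}_{S,\zs}^-$ is, up to filtered equivariant chain homotopy, independent of which representative of the type-$\cA_{\zs}$ decoration is used.
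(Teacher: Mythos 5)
Your proof is correct and rests on the same key input as the paper's: Lemma~\ref{lem:stabilizationlemma} applied to the extremal $\zs$-decoration at each step of a genus-minimizing stabilization sequence, combined with the reformulation of $\tau$ in Lemma~\ref{lem:slightreformulationoftau} (a perspective the paper itself notes is equivalent to its use of Lemma~\ref{lem:tausubquotientshapes}). The only difference is organizational — you localize to adjacent pairs via the ultrametric inequality of Proposition~\ref{prop:ultra} and bound each $\tau(S_i,S_{i+1})$ separately, whereas the paper chains the filtered homotopies $V^{m-g(S_i)}\cdot \ve{t}_{S_i,\zs}^- \simeq V^{m-g(S_{i+1})}\cdot \ve{t}_{S_{i+1},\zs}^-$ across the whole sequence and applies the reformulation of $\tau$ once at the end; both are valid and essentially the same argument.
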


\begin{proof}
Let us write $m = \mu_{\st}(S, S')$.
Suppose that $S_1, \dots, S_k$ is a stabilization sequence of surfaces in $B^4$ connecting $S$ and
$S'$, as in Definition~\ref{def:stabgenus}, such that
\[
\max \{\, g(S_1), \dots, g(S_k) \,\} = m.
\]
Let $\bK$ denote $K$ decorated with two basepoints. By Lemma~\ref{lem:stabilizationlemma},
if $S_{i+1}$ is obtained from $S_i$ by a stabilization, then
the map $\ve{t}_{S_{i+1},\zs}^-$ is filtered chain homotopic to $V^{g(S_{i+1}) - g(S_i)} \cdot  \ve{t}_{S_i,\zs}^-$.
Similarly, if $S_{i+1}$ is obtained from $S_i$ by a destabilization, then
the map $\ve{t}_{S_{i+1},\zs}^-$ is filtered chain homotopic to $V^{g(S_i) - g(S_{i+1})} \cdot \ve{t}_{S_{i+1},\zs}^-$.
It follows that all of the maps $V^{m-g(S_i)}\cdot \ve{t}_{S_i,\zs}^-$ coincide
for $i\in \{1,\dots, n\}$. In particular,
\begin{equation}\label{eq:homotopyequivalencestabseqfullcomp}
V^{m-g(S)} \cdot \ve{t}_{S, \zs}^- \simeq V^{m-g(S')} \cdot \ve{t}_{S', \zs}^-.
\end{equation}
The map $\ve{t}_{S_i,\zs}^-$ on $\cCFL^-$ increases the Alexander grading by $g(S_i)$,
so $V^{-g(S_i)} \cdot \ve{t}_{S_i,\zs}^-$ determines a well-defined map from $\bF_2[\hat{U}]$ into $C(\bK, R_{0,-g(S_i)})\subset \CFK^\infty(\bK)$. Hence, from equation~\eqref{eq:homotopyequivalencestabseqfullcomp}, we conclude that the induced elements $[V^{-g(S)}\cdot \ve{t}_{S,\zs}^-(1)]$ and $ [V^{-g(S')}\cdot \ve{t}_{S',\zs}^-(1)]$ coincide in $H_*(C(\bK,\I{m}))$. By Lemma~\ref{lem:tausubquotientshapes}, this implies that $\tau(S,S')\le m$, completing the proof.
\end{proof}

A different algebraic perspective on the previous proof can be given using the formulation
of $\tau(S, S')$ in terms of $\HFK^-_{U=0}(\bK)$ described in Lemma~\ref{lem:slightreformulationoftau},
and the computation of the effect of stabilizations from Lemma~\ref{lem:stabilizationlemma}.

We now show that the local $h$-invariants give a lower bound on the stabilization
distance between two slice disks:

\begin{thm}\label{thm:Vkbound}
If $D$ and $D'$ are slice disks of $K$ and $k \le \mu_{\st}(D,D')$, then
\[
V_k(D,D') \le \left\lceil \frac{\mu_{\st}(D,D')-k}{2} \right \rceil.
\]
If $k\ge \mu_{\st}(D,D')$, then $V_k(D,D')=0$.
\end{thm}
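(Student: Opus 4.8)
The strategy mirrors the proof of Theorem~\ref{thm:tauD1D2bound}, but keeps track of both extremal principal invariants and distributes genus between the two sides of the decoration. Write $m := \mu_{\st}(D,D')$ and $\bK = (K,w,z)$, and fix a stabilization sequence $D = S_1, \dots, S_n = D'$ in $\Surf(K)$ realizing $m$, so that $g(S_i)\le m$ for all $i$ and consecutive surfaces are related by a stabilization or destabilization. Set $x := \ve{t}^-_{D,\zs}(1) - \ve{t}^-_{D',\zs}(1)\in\cCFL^-(\bK)$, which is a cycle in Alexander grading~$0$; recall that for disks $\ve{t}^-_{D,\ws} = \ve{t}^-_{D,\zs}$, and that $V_k(D,D')$ is the least $n$ with $\hat{U}^n\cdot[x] = 0$ in $H_*(A_k^-(\bK))$, where $A_k^-(\bK) = C(\bK, R_{0,-k})$.

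The case $k\ge m$ is immediate: as in Theorem~\ref{thm:tauD1D2bound}, Lemma~\ref{lem:stabilizationlemma} shows that the maps $V^{m-g(S_i)}\cdot\ve{t}^-_{S_i,\zs}$ are all filtered chain homotopic, so $V^m\cdot\ve{t}^-_{D,\zs}\simeq V^m\cdot\ve{t}^-_{D',\zs}$, i.e. $V^m x = \partial\eta$ for some $\eta\in\cCFL^-(\bK)$. Equivalently $x = \partial(V^{-m}\eta)$ with $V^{-m}\eta\in C(\bK, R_{0,-m}) = A_m^-(\bK)$. Since $R_{0,-m}\subseteq R_{0,-k}$ when $k\ge m$, we get $A_m^-(\bK)\subseteq A_k^-(\bK)$, hence $[x] = 0$ in $H_*(A_k^-(\bK))$ and $V_k(D,D') = 0$.

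Now assume $k<m$. For each $i$ I would choose a decoration $\cA_i$ on $S_i$ whose type-$\ws$ and type-$\zs$ subsurfaces have genera $p_i$ and $g(S_i)-p_i$, agreeing with $\cA_{i\pm1}$ away from the stabilization regions; at each step one is free to place the new genus on either side (and likewise when destabilizing, provided that side carries enough genus). By Lemma~\ref{lem:stabilizationlemma} and its $U\leftrightarrow V$-symmetric form — which is also what~\eqref{eq:equationiotaupsilon} yields after conjugating — each step multiplies the cobordism map by a monomial $U^aV^b$ (or its inverse), and telescoping the chain while clearing the negative exponents coming from destabilizations gives $U^P V^Q\cdot\ve{t}^-_{D,\zs}\simeq U^P V^Q\cdot\ve{t}^-_{D',\zs}$, where $P$ and $Q$ are the total genus placed on the $\ws$- and $\zs$-sides over all stabilization steps. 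Thus $U^P V^Q x = \partial\eta'$ for some $\eta'\in\cCFL^-(\bK)$, so $\hat{U}^n x = \partial(U^{n-P}V^{n-Q}\eta')$, and $U^{n-P}V^{n-Q}\eta'\in A_k^-(\bK)$ as soon as $n\ge P$ and $n\ge Q-k$. If the genus function along the sequence is unimodal then $P+Q = m$ and $P$ can be prescribed to be any integer in $\{0,\dots,m\}$; taking $P = \lceil(m-k)/2\rceil$ gives $\max\{P,\,Q-k\} = \lceil(m-k)/2\rceil$, hence $\hat{U}^{\lceil(m-k)/2\rceil}\cdot[x] = 0$ in $H_*(A_k^-(\bK))$ and $V_k(D,D')\le\lceil(m-k)/2\rceil$.

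The remaining — and main — difficulty is to reduce to the unimodal case, equivalently to arrange $P+Q = m$ rather than $P+Q$ equal to the (possibly larger) total genus encountered along the ascents. One way is to argue that the stabilization distance between two slice disks is always realized by a sequence whose genus rises monotonically to its maximum and then descends monotonically; alternatively one can deduce the bound directly from the two extremal relations $V^m x = \partial\eta_1$ and $U^m x = \partial\eta_2$ (valid for any sequence, obtained by tracking $\ve{t}^-_{S_i,\zs}$ and $\ve{t}^-_{S_i,\ws}$ separately) via a filtration argument, in the same spirit as the classical bound $V_k(K)\le\lceil(g_4(K)-k)/2\rceil$ of~\cite{RasmussenGodaTeragaito}. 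This balancing of the $U$- and $V$-exponents is exactly what yields the factor of~$2$ improvement over the naive estimate $V_k(D,D')\le m-k$; everything else is a routine transcription of the proof of Theorem~\ref{thm:tauD1D2bound}.
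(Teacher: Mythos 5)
There is a genuine gap, and it sits exactly where you flag the ``main difficulty,'' but it is worse than a missing reduction to the unimodal case. Your telescoping step treats the placement of genus on the $\ws$- or $\zs$-side of the decoration as free bookkeeping: at each stabilization you put the new genus where you like, and at each destabilization you remove it from whichever side ``carries enough genus.'' But Lemma~\ref{lem:stabilizationlemma} only compares $F_{B^4,(S_i,\cA_i)}$ with $F_{B^4,(S_{i+1},\cA_{i+1})}$ when the two decorations agree outside the (de)stabilization region and restrict there to a single arc. The decoration you have carried forward into a destabilization step has no reason to be of this form relative to the region being destabilized, so you cannot apply the lemma without first \emph{changing the decoration} on $S_i$ --- and that requires knowing that the cobordism map is independent of the decoration once the genera of the two sides (weighted by powers of $U$ and $V$) are fixed. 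This is precisely the decoration-independence condition~\eqref{def:DI} of Definitions~\ref{def:stabilizationtypedepthd}--\ref{def:depth}; establishing that every surface in the sequence satisfies it above degree $d=\max_i g(S_i)$ (via Proposition~\ref{prop:2foldstabisstabtype} and Theorem~\ref{thm:I}, whose proof needs the bypass relation and the combinatorial Lemma~\ref{lem:closedcomponentsbypass}) is the actual content of the paper's argument. Once you have~\eqref{def:DI}, the unimodality issue evaporates: one simply chooses, for each $i$, integers $n_i,m_i\ge 0$ with $g(S_{i,\ws})+n_i=(d-k)/2$ and $g(S_{i,\zs})+m_i=(d+k)/2$, and all the maps $U^{n_i}V^{m_i}F_{B^4,(S_i,\cA_i)}$ coincide.

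Your proposed fallback --- deducing the bound purely algebraically from the two extremal relations $V^m x=\d\eta_1$ and $U^m x=\d\eta_2$ --- is false. Take the free $\bF_2[U,V]$-complex generated by $x,\eta_1,\eta_2$ with $\d\eta_1=V^2x$, $\d\eta_2=U^2x$, $\d x=0$: then $UVx$ is not a boundary, since $UV\notin(U^2,V^2)\subset\bF_2[U,V]$, so the analogue of $V_0\le 1$ fails even though both extremal relations hold with $m=2$. (The classical inequality $V_k(K)\le\lceil(g_4(K)-k)/2\rceil$ is not proved this way either; it uses $d$-invariants of large surgeries and the topology of the surgery cobordism.) The $k\ge\mu_{\st}$ case and the overall shape of your argument are fine, but the $k<\mu_{\st}$ case needs the decoration-independence machinery to be supplied.
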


\begin{proof}[Proof of Theorem~\ref{thm:Vkbound}]
Suppose first that $k\le \mu_{\st}(D,D')$,
and that $S_1, \dots, S_n$ is a sequence of embedded surfaces in
$\Surf(K)$ such that $S_{i+1}$ is either obtained from $S_i$
by a stabilization or destabilization. Further, we assume that $S_1 = D$ and
$S_n = D'$. Let $d$ denote $\max \{g(S_1),\dots, g(S_n)\}$.
Since $S_n$ is a slice disk, $g_{\dest}(S_i) \le d$ for $i \in \{1, \dots, n\}$.
Furthermore, $\cI(S_i) \le g_{\dest}(S_i)$ by Theorem~\ref{thm:I}, hence
$S_i$ satisfies the decoration-independence condition \eqref{def:DI} above degree~$d$.

Next, we fix an integer $k$ such that $0 \le k \le d$. By increasing $d$ by 1,
if necessary, we may assume that $(d-k)/2$ is an integer.
Let $\bK$ denote $K$ decorated with two basepoints. We decorate
each surface $S_i$ with a single dividing arc $\cA_i$, and we pick
nonnegative integers $n_i$ and $m_i$ such that $g(S_i) + n_i + m_i = d$ and
\[
g(S_{i,\ws}) + n_i = \frac{d-k}{2} \text{ and } g(S_{i,\zs}) + m_i = \frac{d+k}{2}.
\]
We note that, since each $S_i$ satisfies condition~\eqref{def:DI} above degree~$d$, it
follows that the map $U^{n_i} V^{m_i} \cdot F_{B^4, (S_i,\cA_i), \frs_0}$ depends on
the dividing set $\cA_i$ only up to the quantities $n_i + g(S_{i,\ws})$ and
$m_i + g(S_{i,\zs})$, and is independent of the choice of $\cA_i$.
Using the stabilization formula, Lemma~\ref{lem:stabilizationlemma}, it thus
follows that all of the maps $U^{n_i} V^{m_i} \cdot  F_{W,(S_i,\cA_i),\frs_0}$
are chain homotopic. In particular,
\begin{equation}
U^{(d-k)/2} V^{(d+k)/2} \cdot \ts_{D}^{\infty} \simeq
U^{(d-k)/2} V^{(d+k)/2} \cdot \ts_{D'}^{\infty}.
\label{eq:tdsequal}
\end{equation}

Note that $U^{(d-k)/2} V^{(d+k)/2} \cdot \ts_{D_i}^{\infty}(1)$ is not an
element of $\CFK^\infty(\bK)$, since it lives in Alexander grading $k$.
In fact, $U^{(d-k)/2} V^{(d+k)/2} \cdot \ts_{D_i}^{\infty}(1)$ is an element of
the subcomplex of $\cCFL^-(\bK)$ of Alexander grading $k$. Multiplication by $V^{-k}$ gives a chain
isomorphism between the subset of
$\cCFL^-(\bK)$ in Alexander grading $k$ and the
subcomplex of $\cCFL^\infty( \bK)$ generated over $\bF_2$ by elements
$U^i V^j\cdot \xs$ with $A(\xs) + (i - j) = 0$, $i \ge 0$, and $j \ge - k$. The latter is
$A_k^-(\bK)$, by definition. Hence, from equation~\eqref{eq:tdsequal}, it follows that
\[
\hat{U}^{(d-k)/2} \cdot [\ts_{D}^{\infty}(1)] = \hat{U}^{(d-k)/2} \cdot [\ts_{D'}^{\infty}(1)] \in
H_*(A_k^-(\bK)),
\]
where $\hat{U} = U V$. It follows that
\[
V_k(D,D') \le \frac{d-k}{2},
\]
completing the proof when $k\le \mu_{\st}(D,D')$.

The statement for $k \ge \mu_{\st}(D,D')$ follows from the statement for
$k = \mu_{\st}(D,D')$, together with the monotonicity result from Lemma~\ref{lem:monotonicity}.
\end{proof}

\section{Regular homotopies and the double point distance}\label{sec:doublepoints}

\subsection{The double point distance}
If $K$ is a knot in $\Sphere^3$, we denote by $\Imm(K)$ the set of \emph{immersed}
connected surfaces in $B^4$ with boundary $K$. Furthermore, for $g \in \N$, we write $\Imm_g(K)$ for
the subset of $\Imm(K)$ consisting of genus $g$ surfaces.
If $S$, $S' \in \Imm(K)$, then a \emph{regular homotopy} from $S$ to $S'$ is a 1-parameter family
$\{\, S_t : t \in I \,\}$ in $\Imm(K)$ that is continuous in the $C^\infty$-topology,
and such that $S_0 = S$ and $S_1 = S'$. If $S$ and $S'$ are regularly homotopic,
then $g(S) = g(S')$. For a \emph{generic} regular homotopy, at all but finitely many $t$,
the surface $S_t$ is embedded away from finitely many transverse double points.
At finitely many $t$, the immersion $S_t$ has a single non-transverse double point,
where a pair of double points of opposite signs is created or canceled.
In particular, the algebraic number of double points is constant along a generic regular homotopy.

\begin{lem}
  Let $K$ be a knot in $\Sphere^3$, and let $g \in \N$. Then any two surfaces $S$, $S' \in \Surf_g(K)$
  are regularly homotopic relative to $K$.
\end{lem}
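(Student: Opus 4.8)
The plan is to reduce this to the analogous statement for closed surfaces together with the Smale--Hirsch theory of immersions. First I would observe that both $S$ and $S'$ are immersed (indeed embedded) surfaces in $B^4$ with the same boundary $K$, the same genus, and the same relative homology class, since $H_2(B^4,K) = 0$. The obstruction to a regular homotopy between two immersions of a fixed surface $\Sigma_g$ with boundary, rel boundary, into $B^4$ lies in the relevant homotopy groups coming from the Hirsch--Smale classification: a regular homotopy rel $K$ is governed by the homotopy class of the bundle monomorphism $T\Sigma_g \hookrightarrow T B^4$ induced by the differential, rel the boundary data. Since $B^4$ is parallelizable, such monomorphisms rel boundary are classified by a homotopy class of maps from $(\Sigma_g, \d\Sigma_g)$ into the appropriate Stiefel manifold $V_2(\R^4)$ of 2-frames in $\R^4$ (after trivializing $TB^4$), relative to the fixed frame along the boundary.

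The key computation will be that this obstruction vanishes. Since $\Sigma_g$ with nonempty boundary is homotopy equivalent to a wedge of circles, and $V_2(\R^4)$ is $1$-connected (indeed $V_2(\R^4) \cong T^1 S^3$ is $2$-connected-ish: $\pi_0 = \pi_1 = 0$, $\pi_2 = \Z$), the relative obstruction lives in $H^1(\Sigma_g, \d\Sigma_g; \pi_1(V_2(\R^4))) = 0$ because $\pi_1(V_2(\R^4)) = 0$, with the only potential higher obstruction in $H^2(\Sigma_g, \d\Sigma_g;\pi_2(V_2(\R^4))) \cong \Z$; this last group I would handle by noting it is detected by the relative Euler number / self-intersection data, which agrees for $S$ and $S'$ since both are embedded (so have normal Euler number zero relative to the Seifert framing of $K$) and represent the same class in $H_2(B^4,K)$. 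Hence the Gauss maps of $S$ and $S'$ are homotopic rel boundary, and Hirsch's theorem promotes this to a regular homotopy rel $K$ from $S$ to $S'$ through immersions in $B^4$.

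The main obstacle will be setting up the relative Hirsch--Smale framework carefully enough to justify that "homotopic formal immersion data rel boundary" implies "regularly homotopic rel boundary" in the manifold-with-boundary setting — the classical statements are usually for closed source manifolds or for immersions without boundary conditions, so I would either cite a relative version (e.g. from Hirsch's original paper or Adachi's book) or deduce it by doubling: glue $S$ and $S'$ along $K$ inside the double $B^4 \cup_{S^3} B^4 = S^4$ to get closed immersed surfaces, apply the closed Smale--Hirsch classification there, and then cut back down, using that the regular homotopy can be taken to fix a collar of $K$. I expect the homotopy-group bookkeeping itself ($\pi_i(V_2(\R^4))$ and the identification of the top obstruction with the normal Euler number) to be routine once the framework is in place.
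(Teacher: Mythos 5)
Your proposal is correct and follows essentially the same route as the paper: a relative version of Hirsch--Smale reduces the question to the relative normal Euler class, which vanishes because the surfaces are embedded and null-homologous in $(B^4,K)$ (the paper realizes this vanishing by exhibiting a bounding 3-manifold $M$ with $S\subset\d M$, which induces a normal framing restricting to the Seifert framing). One small caution: "embedded $\Rightarrow$ normal Euler number zero rel the Seifert framing" is not true in a general 4-manifold (it equals the relative self-intersection number), but it does hold here since $H_2(B^4,\d B^4)=0$, which you invoke.
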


\begin{proof}
  By extending the proof of Hirsch~\cite[Theorem~8.2]{HirschImmersions} using the relative version
  of his $h$-principle~\cite[Theorem~5.9]{HirschImmersions},
  we obtain that the regular homotopy class of $S$ relative to $K$
  is determined by the relative normal Euler class of $S$. Since $S$ is embedded and $[S] = 0$
  in $H_2(B^4, \d B^4)$, there is a 3-manifold-with-boundary $M$ embedded in $B^4$ such
  that $S \subset \d M$ and $\d M \setminus S \subset \Sphere^3$ is a Seifert surface of~$K$.
  In particular, $M$ induces a normal framing of $S$ that restricts to the Seifert framing along $K$.
  Hence, the normal Euler class of $S$ relative to the Seifert framing vanishes.
  Since the same holds for $S'$, we obtain that $S$ and $S'$ are regularly homotopic relative to $K$.
\end{proof}

The regular homotopy class of a generic immersed surface $S \in \Imm(K)$ is determined by the algebraic
number of its double points. If $\{\, S_t : t \in I \,\}$ is a regular homotopy
such that $S_0$ is embedded, then the algebraic number of double points of $S_t$ is zero
for every $t \in I$ where $S_t$ is generic.

\begin{define}\label{def:sing}
Given an immersed surface $S \in \Imm(K)$, let $\Sing(S)$
be the set of its double points (this might be infinite when $S$ is not generic).
If $S$, $S' \in \Surf_g(K)$, then we define
\[
\tilde{\mu}_{\Sing}(S,S') := \frac12 \min_{\{\, S_t : t \in I \,\}} \max \{\, |\Sing(S_t)| : t \in I \,\},
\]
where the minimum is taken over all generic regular homotopies $\{\, S_t : t \in I \,\}$
such that $S_0 = S$ and $S_1 = S'$. Furthermore, we set \
\[
\mu_{\Sing}(S,S') = \tilde{\mu}_{\Sing}(S,S') + g.
\]
When $S$, $S' \in \Surf(K)$ and $g(S) \neq g(S')$, we set $\mu_{\Sing}(S, S') = \infty$.
We call $\mu_{\Sing}(S,S')$ the \emph{double point distance} between $S$ and $S'$.
\end{define}

Since $\tilde{\mu}_{\Sing}(S, S') = 0$ if and only if $S$ and $S'$ are isotopic, the
function $\tilde{\mu}_{\Sing}$ is an ultrametric on $\Surf_g(K)$ for every $g \in \N$.
Furthermore, $\mu_{\Sing}$ is a metric filtration whose normalization is $\tilde{\mu}_{\Sing}$.
The goal of this section is to prove that, if $S$, $S' \in \Surf_g(K)$, then
\begin{equation}\label{eq:tauboundsmusing}
\tau(S,S') \le \mu_{\Sing}(S,S').
\end{equation}
 If $g>0$, we will also show that
\begin{equation}\label{eq:kappaboundsmusing}
\kappa(S,S') \le \mu_{\Sing}(S,S').
\end{equation}
Equations~\eqref{eq:tauboundsmusing} and \eqref{eq:kappaboundsmusing}
are proven in Theorems~\ref{thm:doublepointbound} and~\ref{thm:kappaandmuSing}.
Finally, in Theorem~\ref{thm:hinvdoublepointhighergenus}, we will show that
the local $h$-invariants also give lower bounds on $\mu_{\Sing}(S,S')$.

\subsection{Movies of immersions and regular homotopies}

Suppose that $S \in \Imm(K)$ is the image of a proper immersion
\[
f \colon \bar{S} \to B^4.
\]
Let $B' \subset \Int(B^4)$ be a ball disjoint from $S$.
After a suitable identification between $B^4 \setminus \Int(B')$ and $I \times S^3$,
we can view $S$ as an immersed surface in $I \times \Sphere^3$,
satisfying $S \cap (\{0\} \times \Sphere^3) = \emptyset$
and $S \cap (\{1\} \times \Sphere^3) = K$. We can visualize
$S$ by considering the movie $\{\, S^s : s \in I\,\}$,
where $S^s$ is obtained by projecting $(\{s\} \times \Sphere^3) \cap S$ into $\Sphere^3$.
We orient $S^s$ as the boundary of $([0, s] \times \Sphere^3) \cap S$
using the outward-normal-first convention.

If $S$ is generic, then $\pi_I \circ f$ is a Morse function on $\bar{S}$
and the double points are on regular level sets,
where $\pi_I \colon I \times \Sphere^3 \to I$ is the projection onto the $I$-factor.
Hence $S^s$ is an immersed link whenever $s$ is a regular value. If $s$ is a critical value of
index zero, then an unknotted component is born. If $s$ has index one,
the link undergoes a saddle move, and if it has index two,
an unknotted component dies. Generically, passing a double point of $S$ locally corresponds
to a crossing change of $S^s$; see Gompf--Stipsicz \cite[Figure~6.25]{GompfStipsicz}.
We now explain why this is true, and how to read off the intersection sign.

\begin{lem}\label{lem:double}
Generically, as we pass a positive (negative) double point $p$ of $S$,
a negative (positive) crossing of $S^s$ changes to a positive (negative) crossing; see Figure~\ref{fig::2}.
\end{lem}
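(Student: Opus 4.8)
The assertion is local near $p$, and the plan is to reduce to the standard model of a transverse double point of a surface in a $4$–manifold and then track orientations. Since $S$ is generic, $p$ lies on a regular level set of $h:=\pi_I|_S$, and the two sheets of $S$ through $p$ are $2$–disks that are transverse in $I\times\Sphere^3$ and on which $h$ restricts to a submersion. Hence I can choose orientation–preserving coordinates $(x_1,x_2,x_3,x_4)$ on $I\times\Sphere^3$ near $p$, with $p$ the origin, in which the sheets are the coordinate planes $\Sigma_1=\{x_3=x_4=0\}$ and $\Sigma_2=\{x_1=x_2=0\}$ and $h=x_1+x_3$. I will record which ordering of the coordinate vectors represents the orientation of $S$ on each sheet, since the sign $\epsilon(p)\in\{\pm1\}$ of $p$ is, by definition, the sign comparing the oriented frame of $T_p\Sigma_1$ followed by that of $T_p\Sigma_2$ with the ambient orientation.

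For $s$ close to $0$ the level set $\{h=s\}$ meets $\Sigma_1$ in the arc $\{x_1=s,\ x_3=x_4=0\}$ and meets $\Sigma_2$ in the arc $\{x_3=s,\ x_1=x_2=0\}$; projecting $\{h=s\}$ into $\Sphere^3$ by dropping $h$ (equivalently, using $(x_1,x_2,x_4)$ as coordinates on the slice), these two arcs cross in a generic diagram, and the over/under relation is governed by the value of $x_1$, which is $s$ on the first arc and $0$ on the second. So $\Sigma_1$ lies over $\Sigma_2$ for $s>0$ and under it for $s<0$: passing the height of $p$ is a crossing change, which is the first half of the statement. It remains to identify, say, the sign of the crossing present just below the level of $p$.

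Along each slice arc the orientation of $S^s$ is obtained from the recorded orientation of $S$ by deleting the outward normal of $([0,s]\times\Sphere^3)\cap S$, which points in the $+\nabla h$ direction and hence projects to $+\partial_{x_1}$ along $\Sigma_1$ and $+\partial_{x_3}$ along $\Sigma_2$. Feeding these oriented arc directions, the over/under data above, and the induced orientation of the diagram plane into the skein definition of the crossing sign, I would expand the resulting determinant and check that it equals $-\epsilon(p)$ for $s<0$ and $+\epsilon(p)$ for $s>0$ — precisely the claim that a positive (negative) double point converts a positive (negative) crossing of $S^s$ into a negative (positive) one. As a consistency check I would observe that this matches the two global facts that the signed double–point count is a regular–homotopy invariant and that a crossing change shifts the writhe by $\mp2$. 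The genuinely delicate point, and the only content beyond the picture in \cite[Figure~6.25]{GompfStipsicz}, is keeping the three sign conventions mutually consistent — the intersection sign of $p$, the outward–normal–first boundary orientation defining $S^s$, and the skein sign of a diagram crossing; the over/under behaviour itself is immediate from the model.
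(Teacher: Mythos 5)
Your setup is sound and takes a genuinely different route from the paper's: you pass to a static normal form in which the two sheets are coordinate $2$--planes and the height function is linearized, whereas the paper argues dynamically, taking the trajectory $\gamma$ of a point on the moving (upper) strand and reading the intersection sign off the concatenated frames $(\d/\d s+\gamma'(s_0),v_+)$ and $(\d/\d s-\gamma'(s_0),v_-)$. Your identification of the over/under switch as $s$ passes the level of $p$ is correct and essentially immediate in either picture.

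The problem is that the proof stops exactly where the lemma begins. The assertion of the lemma is not that a crossing change occurs --- that much is obvious --- but that the \emph{sign} of the double point equals the sign of the crossing that is destroyed (equivalently, is opposite to the sign of the crossing that is created). You write that you ``would expand the resulting determinant and check that it equals $-\epsilon(p)$,'' and you correctly identify this as ``the only content beyond the picture,'' but you never perform the expansion: you do not record which ordered frames represent the orientations of $\Sigma_1$ and $\Sigma_2$ induced from $S$, nor the orientation of the diagram plane $(x_2,x_4)$ used to define the skein sign, so no determinant is actually written down or evaluated. Compare the paper's proof, which does carry this step out by exhibiting explicit oriented bases for the two sheets and concatenating them. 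Your proposed consistency check does not close the gap either: the invariance of the signed double-point count under regular homotopy and the fact that a crossing change shifts the writhe by $\mp 2$ are both compatible with either correlation of signs, so they cannot distinguish the stated convention from its opposite. To complete the argument you would need either to carry out the determinant computation in your normal form (fixing all three orientation conventions explicitly) or to verify the sign on a single concrete example, such as the standard Whitney/finger-move model, and then invoke the local nature of the statement.
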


\begin{proof} 
Suppose that $S$ is the image of an immersion $f \colon \bar{S} \looparrowright B^4$.
The set of points $x \in \bar{S}$ such that $S$ is not
transverse to the sets $\{s\} \times \Sphere^3$ at $f(x)$
is generically 0-dimensional, and hence disjoint from the
two preimages of $p$. Hence, generically, passing the double
point $p$ corresponds to two strands of $S^s$ passing through each other.

Write  the double point $p \in S \subseteq I \times \Sphere^3$ as $p = (s_0,p_0)$, where $s_0 \in I$ and $p_0 \in \Sphere^3$. Suppose that, at $s = s_0$, a negative crossing of $S^s$ turns into a positive crossing. Let $v_+$, $v_- \in T_{p_0} \Sphere^3$
denote oriented tangent vectors for the upper and lower strands of the crossing, respectively.
Let $\gamma \colon (s_0-\epsilon, s_0+\epsilon) \to \Sphere^3$ denote
the trajectory of a point on the upper strand, chosen to pass through
a point on the lower strand at $s_0$. By inspection of the crossing
change, the triple $(v_+,v_-,\gamma'(s_0))$ is a positive basis of
$T_{p_0} \Sphere^3$. Using the product orientation on $I \times \Sphere^3$,
the 4-tuple $(\d/\d s, v_+, v_-, \gamma'(s_0))$ is an oriented basis
for $I \times \Sphere^3$. It is easy to see that oriented bases
for the tangent spaces of the two sheets of $S$ at $p$ are
\[
(\d/\d s+\gamma'(s_0),v_+)\quad \text{and} \quad (\d/\d s-\gamma'(s_0), v_-),
\]
respectively, which concatenate to form a positive basis of $I \times \Sphere^3$.

A similar argument applies when a positive crossing turns into a negative one at $p$.
\end{proof}

Now suppose that $\{\, S_t : t \in [-1,1] \,\}$ is a generic regular homotopy in $\Imm(K)$,
and that a pair of double points $p_+$ and $p_-$ appear as $t$ passes $0\in [-1,1]$.
The immersed surface $S_{0}$ has a non-transverse double point $p\in B^4$.
Write $p=(s_0,p_0)$, where $s_0 \in I$ and $p_0 \in \Sphere^3$.

A local model for a double point creation
can be visualized via a 2-parameter family
\[
\{\, S_t^s : (s,t) \in [s_0-\epsilon,s_0+ \epsilon] \times [-\epsilon,\epsilon] \,\}
\]
of immersed links in $\Sphere^3$ that is constant outside a neighborhood $N(p_0)$ containing the crossing.
The families  $\{\, S_t^{s_0-\epsilon} : t \in [-\epsilon, \epsilon] \,\}$
and $\{\, S_t^{s_0 + \epsilon} : t \in [-\epsilon, \epsilon] \,\}$
are constant and have a positive crossing in $N(p_0)$; we denote this link by $L_+$.
For $t < 0$, the intersection $S_t^s \cap N(p_0)$ is a positive crossing
and the family of links $S_t^s$ is embedded (and hence isotopic to $L_+$) for all $s \in [s_0-\epsilon, s_0+\epsilon]$.
For $t > 0$, the positive crossing $L_+ \cap N(p_0)$ changes
to a negative crossing, and then back to a positive crossing. Let $L_-$ be the link
obtained by changing $L_+ \cap N(p_0)$ to a negative crossing.
If we fix $t$, self-intersections in the 1-parameter family
$\{\, S_t^s \colon s \in [s_0 - \epsilon, s_0 + \epsilon] \,\}$ correspond to double points of the surface
the family traces out in $[s_0 - \epsilon, s_0 + \epsilon] \times \Sphere^3$.
The movie $\{\, S_t^s : s \in [s_0 - \epsilon, s_0 + \epsilon] \,\}$ for $t > 0$ is shown in the top of Figure~\ref{fig::2}.
We prove the above in the following lemma.

\begin{figure}[ht!]
	\centering
	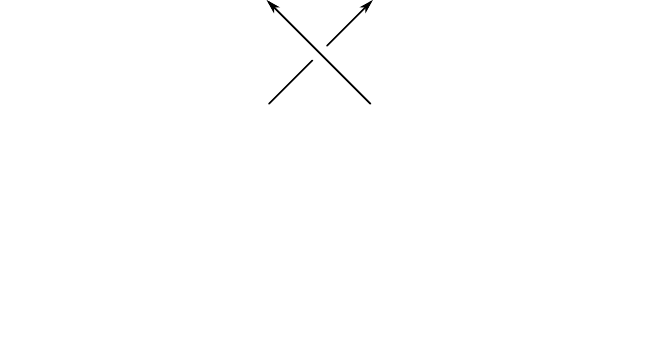
	\caption{The top row shows a movie of a pair of double points after they have been born during
    a regular homotopy of an immersed surface. The bottom row shows the standard model of a Whitney disk
    used in the proof, which gives a canonical neighborhood of the pair of canceling double points.}
    \label{fig::2}
\end{figure}

\begin{lem}\label{lem:birth}
Let $\{\, S_t : t \in [-1,1] \,\}$ be a generic regular homotopy of immersed surfaces such
that a pair of double points is born at time $0$ at a point $p \in B^4$.
Furthermore, let $B' \subset \Int(B^4)$ be a ball disjoint from $S_t$ for every $t \in [-1,1]$.
Then there is an identification of $B^4 \setminus \Int(B')$ with $I \times \Sphere^3$ and an $\epsilon > 0$
such that, if $p$ corresponds to $(s_0, p_0)$, the 1-parameter family of immersed links
\[
\{\, S_t^s : s \in [s_0 - \epsilon, s_0 + \epsilon] \,\}
\]
is diffeomorphic to the constant 1-parameter family $L_+ := S_{-\epsilon}^{s_0 - \epsilon}$ for $t = -\epsilon$,
where $L_+$ has a positive crossing in $N(p_0)$,
and to the 1-parameter family shown on the top row of Figure~\ref{fig::2} for $t = \epsilon$,
where the positive crossing of $L_+$ in $N(p_0)$ changes to negative, and then back to positive.
\end{lem}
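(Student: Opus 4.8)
The plan is to establish a standard local model for the birth of a pair of canceling double points, and then show that the generic regular homotopy agrees with this model up to diffeomorphism near the birth point. First I would recall the standard picture: when a pair of double points $p_+$, $p_-$ of opposite sign is born in a regular homotopy, they bound a Whitney disk. The key observation is that a neighborhood of the pair of canceling double points together with a Whitney disk has a canonical form — this is the standard model shown in the bottom row of Figure~\ref{fig::2}. Concretely, one takes the Whitney disk $B_0$ with boundary arcs $a$ and $b$ on the two sheets of the surface, and the two double points $c_0$, $c_0'$ at the corners. The local model is $I\times B^3$, with the two sheets of the immersed surface intersecting in the prescribed way.

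The main steps, in order: (1) Use the $h$-principle / genericity to arrange that near the birth time $t=0$, the regular homotopy is supported in a small ball $N$ around $p=(s_0,p_0)$, and that outside $N$ the family $S_t$ is the constant (embedded) family. (2) Identify $B^4\setminus \Int(B')$ with $I\times \Sphere^3$ so that the Morse function $\pi_I\circ f$ has the double points on regular level sets, as already discussed before the lemma statement. Choose $\epsilon>0$ small enough that on $[s_0-\epsilon,s_0+\epsilon]$ nothing else happens. (3) For $t=-\epsilon$ (before the birth), the surface restricted to this slab is embedded, so the 1-parameter family of links $\{S_t^s : s\in[s_0-\epsilon,s_0+\epsilon]\}$ is a constant family, which one arranges to be a link $L_+$ with a single positive crossing in $N(p_0)$ — this uses that the relevant piece of the surface is, by the standard Whitney-disk model, two sheets that locally project to an overcrossing/undercrossing pair. (4) For $t=\epsilon$ (after the birth), invoke Lemma~\ref{lem:double}: passing the positive double point $p_+$ changes the positive crossing to a negative one, and passing the negative double point $p_-$ changes it back to positive. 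So the movie $\{S_\epsilon^s\}$ realizes exactly the top row of Figure~\ref{fig::2}: $L_+ \rightsquigarrow L_- \rightsquigarrow L_+$, with the two self-intersection events in the $s$-direction corresponding precisely to the pair of double points.

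The hard part, and the step I would spend the most care on, is (3)–(4): verifying that the canonical Whitney-disk neighborhood of the pair of canceling double points actually produces this precise link movie, with the correct crossing signs, rather than some other combinatorially equivalent picture. This is where one must unwind the orientation conventions — the outward-normal-first orientation on $S^s$, the product orientation on $I\times\Sphere^3$, and the sign conventions for double points — and check they are all consistent with the basis computations performed in the proof of Lemma~\ref{lem:double}. Concretely, one writes down explicit coordinates for the standard model (the bottom of Figure~\ref{fig::2}): $B_0$ as a disk in a coordinate $3$-ball, the two sheets as graphs over $B_0$ and over a transverse disk, parametrized so that the two corners $c_0$, $c_0'$ have opposite local intersection signs, and then reads off the slices. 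Once the model is fixed, the identification with the actual homotopy near $p$ is forced by the $h$-principle (the regular homotopy class rel the complement of $N$ is determined by the algebraic count of double points created, which is zero, and the local model for a single birth is unique up to diffeomorphism). I would present this by first stating the standard model precisely, then checking orientations against Lemma~\ref{lem:double}, and finally concluding that the actual homotopy is diffeomorphic to the model, which yields the two claimed link families for $t=-\epsilon$ and $t=\epsilon$.

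One remark on bookkeeping: the statement only asks for the behavior at the two sample times $t=-\epsilon$ and $t=\epsilon$, not for a description of the full $2$-parameter family, so it suffices to analyze these two slices of the standard model; the intermediate times $t\in(-\epsilon,\epsilon)$ are interpolations and need not be described explicitly. I expect the proof to be short modulo the orientation verification, since all the analytic content (genericity of the homotopy, Morse-theoretic structure of $\pi_I\circ f$, the crossing-change description of passing a double point) is already in place from the preceding lemmas and the discussion of movies of immersions.
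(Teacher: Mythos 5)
Your proposal is correct and follows essentially the same route as the paper: reduce to the standard Whitney-disk (finger-move) local model — the paper justifies this via the decomposition of regular homotopies into finger and Whitney moves (Gabai, Freedman--Quinn) together with Milnor's normal form for a neighborhood of the Whitney disk — and then apply Lemma~\ref{lem:double} to each of the two double points to read off the crossing changes. The one detail to make explicit is that the identification with $I \times \Sphere^3$ must be chosen so that $s(p_+) < s(p_-)$, which is what forces the movie to run $L_+ \to L_- \to L_+$ rather than starting from $L_-$.
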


\begin{proof}
  Choose an identification between $B^4 \setminus \Int(B')$ and $I \times S^3$ such that
  $s(p_-) < s(p_+)$, where $s$ is the $I$-coordinate. We write $S_t$ as the image of a 1-parameter
  family of immersions $f_t \colon \bar{S} \looparrowright I \times \Sphere^3$. Up to
  isotopy, we can express any regular homotopy of a surface in a 4-manifold
  as a composition of finger moves and Whitney moves; see Gabai~\cite[Proposition~4.3]{LightBulb}
  and Freedman--Quinn~\cite[Section~1.5]{FQ}. In particular, $p_+$ and $p_-$ admit a Whitney disk $B$.
  A neighborhood of the Whitney disk can be put in the standard form of Milnor~\cite[Lemma~6.7]{Milnor};
  see the bottom row of Figure~\ref{fig::2}.
  This is given by an embedding $\varphi \colon U \times \R \times \R \to B^4$,
  where $U$ is a neighborhood of the disk $B_0$ in $\R^2$ enclosed by arcs $c_0$ and $c_0'$
  that transversely intersect at points $a$ and $b$. We have $\varphi(B_0) = B$,
  $\varphi(a) = p_+$ and $\varphi(b) = p_-$, and let us write $c = \varphi(c_0)$ and $c' = \varphi(c_0')$.
  The preimages of the two branches of $S_1$ meeting at $p_+$ and $p_-$ are $(U \cap c_0) \times \R \times \{0\}$
  and $(U \cap c_0') \times \{0\} \times \R$, respectively. The isotopy $S_t$ at the finger move
  is modeled on the isotopy of $c_0'$ shown in \cite[Figure~6.3]{Milnor}
  that creates the intersection points $a$ and $b$ with $c_0$.
  This isotopy is constant in the normal $\R \times \R$ direction.

The immersed surface $S_0$ has a non-transverse double point.
If $x$, $y \in \bar{S}$ are the two preimages of the double point,
let $v$ denote a generator of the 1-dimensional vector space $(f_0)_*(T_x\bar{S})\cap (f_0)_*(T_y \bar{S})$.
For $t>0$, the movie $S_t^s$ has an extra pair of double points.
By the choice of the coordinate function~$s$, we have $ds(v) \neq 0$ and
$s(p_-) < s(p_+)$. Both $p_+$ and $p_-$ correspond to a crossing change in the movie
$\{\, S^s_t : s \in [s_0-\epsilon, s_0+\epsilon] \,\}$ for $t > 0$ by
Lemma~\ref{lem:double}. By arranging for the Whitney disk to be symmetric
about $s = s_0$ in a small neighborhood of $s_0$, the movie for the second
double point is obtained by reversing the movie for the first double point.
When $t < 0$, the curves $c_0$ and $c_0'$ become disjoint,
and so the movie $\{\, S_t^s : s \in [s_0-\epsilon,s_0+ \epsilon] \,\}$ is just an isotopy of the link $L_+$,
completing the proof.
\end{proof}

\subsection{The desingularization of an immersed surface}

\begin{define}\label{def:desingularization}
Suppose $S \in \Imm(K)$ is a generic immersed surface in $B^4$; i.e., an immersion
with only transverse double points. The \emph{desingularization} of $(B^4, S)$
is the link cobordism $(B^4(S), \hat{S})$ obtained as follows:
\begin{enumerate}
\item The 4-manifold $B^4(S)$ is constructed by blowing up the 4-manifold at each \emph{negative} double point of $S$.
  Topologically, this corresponds to connected summing with $\overline{\CP}^2$.
\item The surface $\hat{S}$ is constructed from the proper transform of $S$ in $B^4(S)$
  by resolving each \emph{positive} double point (increasing the genus of $S$ by 1 at each point).
\end{enumerate}
\end{define}

Definition~\ref{def:desingularization} makes sense for any immersed oriented link cobordism as well.
For a movie presentation of the resolution of a positive double point, see Figure~\ref{fig::3},
taken from the book of Gompf and Stipsicz \cite[Figure~6.30]{GompfStipsicz}.
For a movie of the blowup of a negative double point, see Figure~\ref{fig::4}. The 4-dimensional
2-handle of $\overline{\CP}^2$ is attached along a $(-1)$-framed unknot that links the negative crossing of $L_+$.

\begin{figure}[ht!]
	\centering
\begingroup%
  \makeatletter%
  \providecommand\color[2][]{%
    \errmessage{(Inkscape) Color is used for the text in Inkscape, but the package 'color.sty' is not loaded}%
    \renewcommand\color[2][]{}%
  }%
  \providecommand\transparent[1]{%
    \errmessage{(Inkscape) Transparency is used (non-zero) for the text in Inkscape, but the package 'transparent.sty' is not loaded}%
    \renewcommand\transparent[1]{}%
  }%
  \providecommand\rotatebox[2]{#2}%
  \newcommand*\fsize{\dimexpr\f@size pt\relax}%
  \newcommand*\lineheight[1]{\fontsize{\fsize}{#1\fsize}\selectfont}%
  \ifx\svgwidth\undefined%
    \setlength{\unitlength}{238.39565023bp}%
    \ifx\svgscale\undefined%
      \relax%
    \else%
      \setlength{\unitlength}{\unitlength * \real{\svgscale}}%
    \fi%
  \else%
    \setlength{\unitlength}{\svgwidth}%
  \fi%
  \global\let\svgwidth\undefined%
  \global\let\svgscale\undefined%
  \makeatother%
  \begin{picture}(1,0.56032218)%
    \lineheight{1}%
    \setlength\tabcolsep{0pt}%
    \put(0,0){\includegraphics[width=\unitlength,page=1]{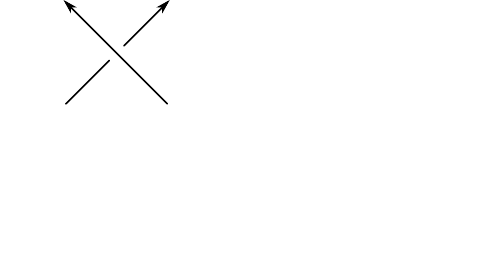}}%
    \put(0.22110793,0.31979004){\color[rgb]{0,0,0}\makebox(0,0)[lt]{\lineheight{0}\smash{\begin{tabular}[t]{l}$L_-$\end{tabular}}}}%
    \put(0,0){\includegraphics[width=\unitlength,page=2]{fig3.pdf}}%
    \put(0.74477925,0.31979004){\color[rgb]{0,0,0}\makebox(0,0)[lt]{\lineheight{0}\smash{\begin{tabular}[t]{l}$L_+$\end{tabular}}}}%
    \put(0,0){\includegraphics[width=\unitlength,page=3]{fig3.pdf}}%
    \put(0.53271185,0.49996121){\color[rgb]{0,0,0}\makebox(0,0)[lt]{\lineheight{1.25}\smash{\begin{tabular}[t]{l}$p_+$\end{tabular}}}}%
    \put(0,0){\includegraphics[width=\unitlength,page=4]{fig3.pdf}}%
    \put(0.0932442,0.01350111){\color[rgb]{0,0,0}\makebox(0,0)[lt]{\lineheight{0}\smash{\begin{tabular}[t]{l}$L_-$\end{tabular}}}}%
    \put(0,0){\includegraphics[width=\unitlength,page=5]{fig3.pdf}}%
    \put(0.86859805,0.00720913){\color[rgb]{0,0,0}\makebox(0,0)[lt]{\lineheight{0}\smash{\begin{tabular}[t]{l}$L_+$\end{tabular}}}}%
    \put(0,0){\includegraphics[width=\unitlength,page=6]{fig3.pdf}}%
  \end{picture}%
\endgroup%

	\caption{Resolving a positive double point. The top row is the singular knot cobordism.
    The bottom is our choice of resolution.}\label{fig::3}
\end{figure}

\begin{figure}[ht!]
	\centering
	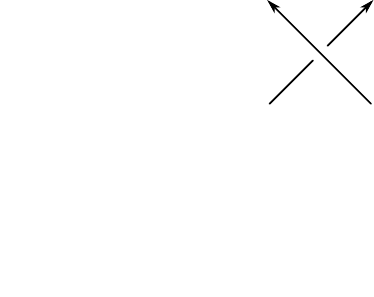
	\caption{Resolving a negative double point. The top is the singular knot cobordism.
    The bottom is the resolution, obtained by blowing up the surface at the double point.}\label{fig::4}
\end{figure}

Let $(W,\cF)$ with $\cF = (S, \cA)$ be an immersed, decorated link cobordism,
such that the double points are disjoint from $\cA$. Furthermore, suppose that
the two branches meeting at a double point either both lie in $S_{\ws}$, or
they both lie in $S_{\zs}$. We write $(W(S), \hat{\cF})$ for the decorated
link cobordism with double points resolved as described above.

\begin{lem}\label{lem:computedesingularization}
Suppose $(W_0,\cF_0)$ is a non-singular link cobordism, and $(W,\cF)$ with $\cF = (S, \cA)$ is
obtained from $(W_0,\cF_0)$ by a double point birth, corresponding to a tangency
between either two branches of $S_{\ws}$, or two branches of $S_{\zs}$.
Let $(W(S),\hat{\cF})$ denote the resolved link cobordism, as described above.
Then $W(S) = W_0 \# \overline{\CP}^2$, and
$\hat{\cF}$ is obtained from $\cF_0$ by a 1-handle stabilization
along $S_{\ws}$ or $S_{\zs}$, and disjoint from $\overline{\CP}^2$.

Let $\hat{\frs}$ be a $\Spin^c$ structure on $\hat{W}$ such that
$\langle\, c_1(\hat{\frs}), E \,\rangle = \pm 1$, where $E$ denotes the exceptional divisor
in $\hat{W}$, and agrees with $\frs$ on $W$. Then
\[
F_{\hat{W}, \hat{\cF}, \hat{\frs}}\simeq  U\cdot F_{W,\cF,\frs}
\]
if the double points both occur in  $S_{\ws}$, and
\[
F_{\hat{W}, \hat{\cF}, \hat{\frs}} \simeq V \cdot F_{W,\cF,\frs},
\]
if the double points both occur in $S_{\zs}$.
\end{lem}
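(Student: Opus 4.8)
The plan is to reduce the whole statement to the explicit local model of a double point birth supplied by Lemma~\ref{lem:birth}. A double point birth is a regular homotopy supported in a ball $B \subset \Int(W_0)$ meeting $\cF_0$ in a trivial product, so $W = W_0$, $W(S)$ agrees with $W_0$ outside $B$, and both $\hat\cF$ and $\cF$ agree with $\cF_0$ outside $B$. Inside $B$ we put $\cF$ in Milnor's standard form for a neighbourhood of the Whitney disk joining the canceling pair $p_+$, $p_-$ (as already arranged in the proof of Lemma~\ref{lem:birth}), so that $p_+$ and $p_-$ lie in disjoint sub-balls $B_+, B_- \subset B$. I would first dispose of the topological claims. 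Blowing up at the positive double point $p_+$ is by definition the operation $W_0 \rightsquigarrow W_0 \# \overline{\CP}^2$ performed inside $B_+$, so $W(S) = W_0 \# \overline{\CP}^2$. Resolving the negative double point $p_-$ inside $B_-$ replaces a neighbourhood of $p_-$ on $S$ by an embedded annulus, i.e.\ attaches an embedded $3$-dimensional $1$-handle; since the two branches through $p_-$ both lie in $S_{\ws}$ (resp.\ both in $S_{\zs}$), this handle lies in the $\ws$-subsurface (resp.\ the $\zs$-subsurface), and, choosing the disk $\hat D \supset B$ to meet the dividing set $\cA$ in a single arc, $\hat\cF$ is a $1$-handle stabilization of $\cF_0$ along $S_{\ws}$ (resp.\ $S_{\zs}$) whose stabilizing handle is contained in $B_-$, hence disjoint from the $\overline{\CP}^2$ summand living in $B_+$. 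This establishes the first two assertions.

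For the cobordism map I would compute $F_{W(S),\hat\cF,\hat\frs}$ by decomposing the part of $(W(S),\hat\cF)$ lying in $B$ into elementary link cobordisms and using the relations of Section~\ref{sec:technicalbackground}. Following Figures~\ref{fig::3} and~\ref{fig::4}, this local piece is the composition of (i) the $2$-handle cobordism attached along the $(-1)$-framed unknot linking the crossing of $L_+$ — which realizes the blow-up and carries $\hat\frs$ with $\langle\, c_1(\hat\frs), E\,\rangle = \pm 1$ — followed by (ii) the pair of saddle moves resolving $p_-$, which together form a $1$-handle stabilization of the (otherwise product) surface inside $S_{\ws}$ (resp.\ $S_{\zs}$). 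The map of (ii) is given by the stabilization formula: by Lemma~\ref{lem:stabilizationlemma}, Claim~\eqref{claim:1}, with $g(S'_{\ws}) - g(S_{\ws}) = 1$ and $g(S'_{\zs}) - g(S_{\zs}) = 0$ (resp.\ the reverse), it is multiplication by $U$ (resp.\ by $V$). The map of (i) is the analogue for the link cobordism TQFT of Ozsv\'ath and Szab\'o's blow-up formula: for the $\Spin^c$ structure with $\langle c_1(\hat\frs), E\rangle = \pm 1$ the grading shift $(c_1(\hat\frs)^2 - 2\chi - 3\sigma)/4$ of $\overline{\CP}^2 \setminus 2B^4$ is $(-1 - 2 + 3)/4 = 0$, and one checks (either by citing this blow-up formula or by a direct handle computation) that (i) induces the identity on the relevant complex. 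Since $\hat\cF$ and $W(S)$ agree with $\cF_0$ and $W_0$ outside $B$, functoriality of the link cobordism maps then lets one splice in the local contribution, giving $F_{W(S),\hat\cF,\hat\frs} \simeq U \cdot F_{W_0,\cF_0,\frs}$ (resp.\ $V \cdot F_{W_0,\cF_0,\frs}$); as $W = W_0$ and the birth is supported in $B$, this is the asserted formula $F_{W(S),\hat\cF,\hat\frs} \simeq U \cdot F_{W,\cF,\frs}$ (resp.\ $V \cdot F_{W,\cF,\frs}$).

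I expect the main obstacle to be the local topological verification in the first paragraph — extracting, from Milnor's standard Whitney-disk model together with the movies of Figures~\ref{fig::3} and~\ref{fig::4}, that the desingularization is exactly a blow-up in $B_+$ together with a $1$-handle stabilization in the disjoint ball $B_-$ confined to the correct subsurface, and in particular that the cocore of the $(-1)$-framed $2$-handle can be separated from the resolution band. A secondary point requiring care is the map of step (i): if a ready-made blow-up formula for the link cobordism maps is not available, one must compute directly the cobordism map of a $2$-handle attached along a $(-1)$-framed unknot encircling two strands in the central $\Spin^c$ structure, for instance by introducing the unknot via the birth of a split doubly-based unknotted circle with a Seifert disk, sliding it into position, and then applying the birth--band relation~\eqref{eq:band-birth=quasi} and the connected-sum relation~\eqref{eq:connectedsum=tensorprod2}; this variant keeps the argument inside the formalism reviewed in Section~\ref{sec:technicalbackground}.
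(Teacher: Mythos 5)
Your skeleton is the same as the paper's (local model from Lemma~\ref{lem:birth}, decompose into a blow-up plus a $1$-handle stabilization, evaluate the stabilization via Lemma~\ref{lem:stabilizationlemma} and the $2$-handle via the blow-up formula, then splice with the composition law), but the step you flag as ``the main obstacle'' is not a technicality to be deferred --- it is the crux of the proof, and your proposal does not supply it. The difficulty is that $\hat{\cF}$ is built from $\cF$, the surface \emph{after} the finger move, whereas the lemma compares it to $\cF_0$, the surface \emph{before}. The finger is supported in a neighbourhood of the whole Whitney arc joining $p_+$ and $p_-$, not just in $B_+\sqcup B_-$, so the observation that the blow-up and the tube live in disjoint sub-balls does not by itself identify $\hat{\cF}$ with a stabilization of $\cF_0$ disjoint from $\overline{\CP}^2$: in the naive local picture the two resolution bands of Figure~\ref{fig::3} link the $(-1)$-framed unknot $\cU$ of Figure~\ref{fig::4}, and the proper transform meets the exceptional sphere. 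The paper's proof resolves this by commuting the blow-up past the first band and then sliding the second band over $\cU$ (Figure~\ref{fig::movie}); it is only \emph{after} this slide that $\cU$ is split from the link and bands, that the two bands become a dual pair forming a trivial tube on $\cF_0$ in the correct subregion ($S_{\ws}$ or $S_{\zs}$), and that the surface is disjoint from the $\overline{\CP}^2$ summand.

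The same omission undermines your step~(i). The standard blow-up formula computes the map of a $2$-handle attached along a $(-1)$-framed unknot contained in a ball \emph{disjoint from the link}; before the band slide, $\cU$ encircles two strands of the movie and the cobordism surface passes through the blown-up region, so that formula does not apply to the configuration you describe. Your suggested alternative --- introduce the unknot split from the link and ``slide it into position'' --- is exactly the paper's move run in reverse, and carrying it out (in particular tracking the full right-handed twist the second band acquires when slid over $\cU$, which is what makes the two bands a trivially embedded dual pair) is precisely the missing content. Once that manipulation is in place, the rest of your argument (grading count for $\hat{\frs}$ with $\langle c_1(\hat{\frs}),E\rangle=\pm1$, Lemma~\ref{lem:stabilizationlemma} giving $U$ or $V$ according to whether the tube lies in $S_{\ws}$ or $S_{\zs}$, and the composition law) goes through as you describe.
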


\begin{proof}
By Lemma~\ref{lem:birth}, there is a movie presentation of $\cF$ as in the top of Figure~\ref{fig::2},
where a positive crossing changes to negative, and then back to positive.
Furthermore, the movie of the decorated surface $\cF_0$ only differs from that of $\cF$
by locally changing the above movie to one where the positive crossing stays positive.

We consider the composition of the resolutions of the positive and negative
double points shown in Figures~\ref{fig::3} and \ref{fig::4}; see the top
row of Figure~\ref{fig::movie}. We can arrange
that the resolution of the negative double point occurs immediately before
the resolution of the positive double point. The resolution of the positive
double point is a pair of saddles, corresponding to attaching bands~$B_1$ and~$B_2$.
The composition of the two resolutions can be rearranged such that we first attach the band $B_1$,
then attach a 4-dimensional 2-handle along the $-1$ framed unknot $\cU$, and
finally attach the band $B_2$; see the second row of Figure~\ref{fig::movie}.

\begin{figure}[ht!]
	\centering
\begingroup%
  \makeatletter%
  \providecommand\color[2][]{%
    \errmessage{(Inkscape) Color is used for the text in Inkscape, but the package 'color.sty' is not loaded}%
    \renewcommand\color[2][]{}%
  }%
  \providecommand\transparent[1]{%
    \errmessage{(Inkscape) Transparency is used (non-zero) for the text in Inkscape, but the package 'transparent.sty' is not loaded}%
    \renewcommand\transparent[1]{}%
  }%
  \providecommand\rotatebox[2]{#2}%
  \newcommand*\fsize{\dimexpr\f@size pt\relax}%
  \newcommand*\lineheight[1]{\fontsize{\fsize}{#1\fsize}\selectfont}%
  \ifx\svgwidth\undefined%
    \setlength{\unitlength}{371.82357294bp}%
    \ifx\svgscale\undefined%
      \relax%
    \else%
      \setlength{\unitlength}{\unitlength * \real{\svgscale}}%
    \fi%
  \else%
    \setlength{\unitlength}{\svgwidth}%
  \fi%
  \global\let\svgwidth\undefined%
  \global\let\svgscale\undefined%
  \makeatother%
  \begin{picture}(1,0.63513662)%
    \lineheight{1}%
    \setlength\tabcolsep{0pt}%
    \put(0,0){\includegraphics[width=\unitlength,page=1]{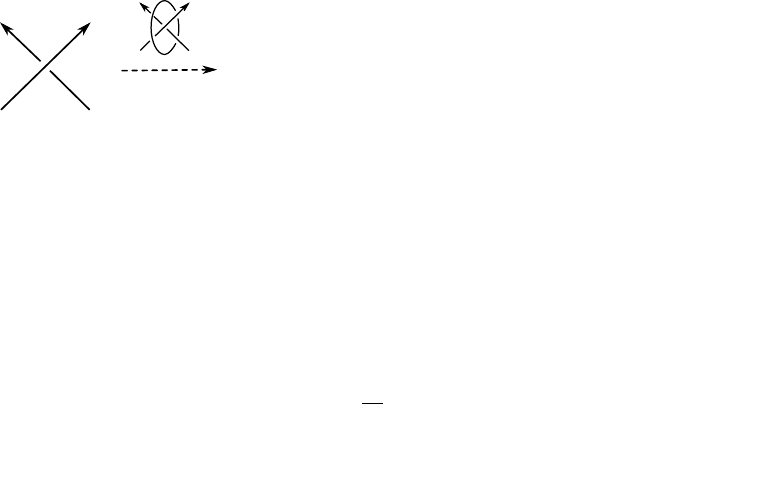}}%
    \put(0.23612834,0.59530439){\color[rgb]{0,0,0}\makebox(0,0)[lt]{\lineheight{0}\smash{\begin{tabular}[t]{l}$-1$\end{tabular}}}}%
    \put(0,0){\includegraphics[width=\unitlength,page=2]{movie.pdf}}%
    \put(0.53493506,0.35498992){\color[rgb]{0,0,0}\makebox(0,0)[lt]{\lineheight{0}\smash{\begin{tabular}[t]{l}$-1$\end{tabular}}}}%
    \put(0,0){\includegraphics[width=\unitlength,page=3]{movie.pdf}}%
    \put(0.81532514,0.15625882){\color[rgb]{0,0,0}\makebox(0,0)[lt]{\lineheight{0}\smash{\begin{tabular}[t]{l}$-1$\end{tabular}}}}%
    \put(0,0){\includegraphics[width=\unitlength,page=4]{movie.pdf}}%
  \end{picture}%
\endgroup%

	\caption{In the top row, we show the movie of the resolution of a canceling
    pair of double points, which consists of a blowup followed by two band moves.
    We then commute the blowup and the first band move, giving rise to the movie in
    the second row. Finally, we slide the second band over the $(-1)$-framed 2-handle,
    giving the third row. The two bands form a tube, and the blowup now happens away
    from the surface.}\label{fig::movie}
\end{figure}

We can slide the band $B_2$ over $\cU$, though it gains a full right-handed
twist when we do this; see the third row of Figure~\ref{fig::movie}.
The unknot $\cU$ is now totally unlinked from the knot
and bands, and $B_1$ and $B_2$ are simply dual bands,
corresponding to a 1-handle stabilization of the surface $\cF_0$
disjoint from the $-1$ framed unknot giving the $\overline{\CP}^2$ summand of $W(S)$.

Applying Lemma~\ref{lem:stabilizationlemma} for the effect of the 1-handle stabilization,
and using the standard blow-up formula for $-1$ surgery on an unknot
contained in a ball disjoint from the link, we see that the composition is multiplication by either $U$ or
$V$, depending on whether the 1-handle is added to
$\Sigma_{\ws}$ or $\Sigma_{\zs}$. Using the composition law, the proof is complete.
\end{proof}

\subsection{Tau, nu, and the double point distance}
We now prove that $\tau$ gives a lower bound on~$\mu_{\Sing}$:

\begin{thm}\label{thm:doublepointbound}
If $S$, $S' \in \Surf_g(K)$, then
\[
 \tau(S, S') \le \mu_{\Sing}(S, S').
\]
\end{thm}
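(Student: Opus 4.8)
The plan is to follow a generic regular homotopy from $S$ to $S'$, desingularize each intermediate immersed surface, and track the resulting principal invariants through the sub-quotient complexes $C(\bK,\I{n})$ of $\CFK^\infty(\bK)$; the factors of $V$ produced by Lemma~\ref{lem:computedesingularization} will telescope to nothing, leaving the desired comparison in homology. Fix a generic regular homotopy $\{S_t:t\in I\}$ from $S$ to $S'$ realizing $\tilde\mu_{\Sing}(S,S')=:m$, so that $|\Sing(S_t)|\le 2m$ for all $t$ and $\mu_{\Sing}(S,S')=g+m$, where $g=g(S)=g(S')$. Using the local models of Lemma~\ref{lem:birth} for finger moves and their reverses for Whitney moves, break the homotopy into a finite sequence of generic immersed surfaces $S=F_0,F_1,\dots,F_N=S'$, each with at most $2m$ double points, where consecutive terms differ by an ambient isotopy of $B^4$ fixing $K$, a finger move, or a Whitney move. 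The key combinatorial observation is that, since $F_0$ is embedded and every elementary move changes the numbers of positive and negative double points of $F_k$ by the same amount (births and deaths involve canceling pairs of opposite sign, isotopies preserve both counts), each $F_k$ has equal numbers $a_k=b_k$ of positive and negative double points; in particular $b_k\le m$.

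Next, desingularize: let $(W_k,\hat F_k)$ be the desingularization of $(B^4,F_k)$ from Definition~\ref{def:desingularization}, so $W_k=B^4\#\,b_k\,\overline{\CP}^2$ and $\hat F_k$ is a connected, embedded, genus $g+b_k$ surface in $W_k$ bounding $K$ and disjoint from the exceptional spheres. Decorate $\hat F_k$ with an $\cA_{\zs}$-type dividing set $\hat\cA_k$ arranged so that all double points of $F_k$, and hence the tubes created when resolving the negative ones, lie in the type-$\zs$ subsurface, and fix a $\Spin^c$ structure $\hat\frs_k$ with $\langle c_1(\hat\frs_k),E\rangle=\pm1$ on each exceptional sphere $E$, chosen coherently as $k$ varies. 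Then $\ve t^-_{\hat F_k,\zs}:=F_{W_k,(\hat F_k,\hat\cA_k),\hat\frs_k}\colon\cR^-\to\cCFL^-(\bK)$ raises the Alexander grading by $g+b_k$, the homological correction term of \cite{ZemAbsoluteGradings}*{Theorem~1.4} vanishing because $[\hat F_k]$ is carried away from the $\overline{\CP}^2$ summands. Consequently $x_k:=V^{-(g+b_k)}\cdot\ve t^-_{\hat F_k,\zs}(1)$ is a cycle of Alexander grading $0$ in $C(\bK,R_{0,-(g+b_k)})$, and since $b_k\le m$ its image under the natural map $C(\bK,R_{0,-(g+b_k)})\to C(\bK,\I{g+m})$ represents a class $[x_k]\in H_*(C(\bK,\I{g+m}))$.

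The heart of the argument is then to show $[x_k]=[x_{k+1}]$ for every $k$. If $F_k$ and $F_{k+1}$ differ by an isotopy, the desingularized cobordisms are diffeomorphic rel boundary, so $\ve t^-_{\hat F_k,\zs}\simeq\ve t^-_{\hat F_{k+1},\zs}$ and $x_k-x_{k+1}$ is the boundary of an Alexander-grading-$0$ element of $C(\bK,R_{0,-(g+b_k)})$. If $F_{k+1}$ is obtained from $F_k$ by a finger move, applying Lemma~\ref{lem:computedesingularization} in a ball about the birth point (with the remaining double points already resolved) gives $\ve t^-_{\hat F_{k+1},\zs}\simeq V\cdot\ve t^-_{\hat F_k,\zs}$, since the born pair lies in $S_{\zs}$; dividing through by $V^{g+b_{k+1}}=V^{g+b_k+1}$ yields $x_{k+1}\simeq x_k$. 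A Whitney move is the reverse of a finger move, so the same lemma gives $\ve t^-_{\hat F_k,\zs}\simeq V\cdot\ve t^-_{\hat F_{k+1},\zs}$ and again $x_k\simeq x_{k+1}$. In all three cases the chain homotopy witnessing the equality has image in $C(\bK,R_{0,-(g+m)})=C_{\text{out}}(\bK,\I{g+m})$, hence descends to $C(\bK,\I{g+m})$. Telescoping over $k$ gives $[x_0]=[x_N]$ in $H_*(C(\bK,\I{g+m}))$; since $F_0=S$ and $F_N=S'$ are embedded, $x_0=V^{-g}\cdot\ve t^-_{S,\zs}(1)$ and $x_N=V^{-g}\cdot\ve t^-_{S',\zs}(1)$, so Lemma~\ref{lem:tausubquotientshapes} yields $\tau(S,S')\le g+m=\mu_{\Sing}(S,S')$.

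The step I expect to be the main obstacle is making precise the phrase ``apply Lemma~\ref{lem:computedesingularization} locally'': that lemma is stated for a cobordism obtained from a \emph{non-singular} one by a single double point birth, and one must check that the factor of $V$ persists when other, already resolved, double points are present, and when the roles of ``before'' and ``after'' are exchanged for a Whitney move, together with verifying that the decorations, the auxiliary $\Spin^c$ choices, and the ambient isotopies between events can all be transported coherently through the desingularized cobordisms. These are local, essentially topological verifications, but they are what the argument rests on.
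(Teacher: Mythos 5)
Your proposal is correct and follows essentially the same route as the paper's proof: decompose a generic regular homotopy realizing $\mu_{\Sing}$ into double point births/deaths, desingularize each intermediate immersed surface with all double points placed in the type-$\zs$ region, apply Lemma~\ref{lem:computedesingularization} to extract a factor of $V$ at each birth (and its inverse at each death), telescope, and conclude via the reformulation of $\tau$ in terms of sub-quotient complexes. The "main obstacle" you flag at the end is handled in the paper exactly as you anticipate, by the locality of Lemma~\ref{lem:computedesingularization} and the composition law.
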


\begin{proof}
Suppose that $\{\, S_t : t \in I \,\}$ is a generic regular homotopy between
embedded surfaces $S$, $S' \in \Surf(K)$. The immersion $S_t$ fails to
be self-transverse at times $s_1, \dots, s_{n-1} \in (0,1)$.
Pick a point $t_i \in (s_{i-1}, s_i)$ for every $i \in \{2, \dots, n-1\}$,
and let $S_i = S_{t_i}$. We write $S_1 = S$ and $S_n = S'$.
Let $(B^4(S_i), \hat{\cF}_i)$ denote the desingularization of $(B^4, S_i)$, as described in
Definition~\ref{def:desingularization}. Let $\hat{\frs}_i$ denote any
maximal $\Spin^c$ structure on $B^4(S_i)$ (i.e.,
$c_1(\frs)^2 + b_2(B^4(S_i)) = 0$), such that $\hat{\frs}_{i+1}$ is obtained by
blowing up or blowing down $\hat{\frs}_i$.

We decorate each $\hat{\cF}_i$ such that the type-$\ws$ region is a bigon along
$K$, and the rest of $\hat{\cF}_i$ is of type-$\zs$. In particular, all double
points occur in regions of type-$\zs$. If $S_i$ is obtained from $S_{i-1}$
via a double point birth, then Lemma~\ref{lem:computedesingularization} implies that
\[
F_{B^4(S_{i+1}), \hat{\cF}_{i+1}, \hat{\frs}_{i+1}} \simeq
V \cdot F_{B^4(S_i), \hat{\cF}_{i}, \hat{\frs}_i}.
\]
Similarly, if $S_{i+1}$ is obtained from $S_i$ via a double point cancellation, then
\[
V \cdot F_{B^4(S_{i+1}), \hat{\cF}_{i+1}, \hat{\frs}_{i+1}} \simeq
F_{B^4(S_i), \hat{\cF}_{i}, \hat{\frs}_i}.
\]
It follows that
\[
V^n \cdot \ve{t}_{S,\zs}^\infty \simeq V^n \cdot \ve{t}_{S',\zs}^\infty,
\]
where $n$ is the maximal number of positive double points of any $S_i$. Since the algebraic count of double points
of each $S_i$ is zero, we have $n = \frac{1}{2} \max \{\, |\Sing(S_t)| : t \in I \,\}$.
It now follows from Lemma~\ref{lem:slightreformulationoftau} that
\[
\tau(S, S') \le \frac{1}{2} \max \{\, |\Sing(S_t)| : t \in I \,\} + g.
\]
Hence $\tau(S, S') \le \mu_{\Sing}(S, S')$, as claimed.
\end{proof}

We now show that $\nu$, introduced in Section~\ref{sec:tau+},
gives a slightly better lower bound on the stabilization distance and the double point distance
than $\tau$.

\begin{prop}\label{prop:tau+}
If $S$, $S' \in \Surf(K)$, then
\begin{equation}
\nu(S, S') \le \min\{\mu_{\st}(S, S'), \mu_{\Sing}(S, S')\}.
\label{eq:nuinequality}
\end{equation}
\end{prop}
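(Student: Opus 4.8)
The plan is to bootstrap off Theorems~\ref{thm:tauD1D2bound} and~\ref{thm:doublepointbound}, which already give $\tau(S,S')\le\min\{\mu_{\st}(S,S'),\mu_{\Sing}(S,S')\}=:\mu$ (which is finite: $\mu_{\st}(S,S')$ always is, e.g.\ by Proposition~\ref{prop:mu-upper}), and to improve this by one whenever $\tau'(S,S')\neq-\infty$. Since $\nu(S,S')\in\{\tau(S,S'),\tau(S,S')+1\}$, if $\tau(S,S')<\mu$ there is nothing left to prove, so I would assume $\tau(S,S')=\mu$; it then suffices to show that in this case $\tau'(S,S')=-\infty$, which forces $\nu(S,S')=\tau(S,S')=\mu$. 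If $g(S)\neq g(S')$, then $\mu_{\Sing}(S,S')=\infty$, so $\mu=\mu_{\st}(S,S')$ and only the stabilization case matters; and if $\mu=\mu_{\Sing}(S,S')$ (which is then finite), then necessarily $g(S)=g(S')$. In every case, $\mu$ is realised by $\mu_{\st}(S,S')$ or by $\mu_{\Sing}(S,S')$.

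The first step is to extract, from whichever of these two quantities realises $\mu$, an element $\eta\in\cCFL^-(\bK)$ of Alexander grading $\mu$ with
\[
\d\eta = V^{\mu-g(S)}\cdot\ve{t}^-_{S,\zs}(1) + V^{\mu-g(S')}\cdot\ve{t}^-_{S',\zs}(1).
\]
When $\mu=\mu_{\st}(S,S')$, the proof of Theorem~\ref{thm:tauD1D2bound} provides a filtered, $\cR^-$-equivariant chain homotopy $V^{\mu-g(S)}\cdot\ve{t}^-_{S,\zs}\simeq V^{\mu-g(S')}\cdot\ve{t}^-_{S',\zs}$; evaluating it on $1$ and then projecting onto the Alexander-grading-$\mu$ summand of $\cCFL^-(\bK)$ yields such an $\eta$ (the projection is harmless, since $\d$ preserves the Alexander grading and both terms on the right-hand side lie in grading $\mu$). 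When $\mu=\mu_{\Sing}(S,S')$, we have $g(S)=g(S')=:g$, and applying the proof of Theorem~\ref{thm:doublepointbound} to a generic regular homotopy realising $\tilde{\mu}_{\Sing}(S,S')=\mu-g$ gives a filtered, $\cR^\infty$-equivariant chain homotopy $V^{\mu-g}\cdot\ve{t}^\infty_{S,\zs}\simeq V^{\mu-g}\cdot\ve{t}^\infty_{S',\zs}$; here I would evaluate the homotopy on $1\in\cR^\infty$ and observe that, because the homotopy is filtered and $1$ lies in the $(0,0)$-th filtration level, its value lands in $\cG_{(0,0)}(\cCFL^\infty(\bK))=\cCFL^-(\bK)$, after which the same projection argument produces the required $\eta$. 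I expect this descent from the infinity complex to the minus complex, together with the verification that the homotopies produced in those two theorems really are filtered and carry $1$ to an element of the correct bifiltration and Alexander grading, to be the only genuinely delicate point; everything else is elementary once the shapes $\L{m}{\mu}$ are unwound.

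It then remains to unwind the definition of $\tau'$. Multiplying the displayed equation by $V^{-\mu}$ moves it into the Alexander-grading-$0$ part of $\CFK^\infty(\bK)$: we get $V^{-\mu}\eta\in C(\bK,R_{0,-\mu})$ with $\d(V^{-\mu}\eta)=V^{-g(S)}\cdot\ve{t}^-_{S,\zs}(1)+V^{-g(S')}\cdot\ve{t}^-_{S',\zs}(1)$, the two terms on the right being exactly the grading-$0$ classes entering the definitions of $\tau(S,S')$ and $\tau'(S,S')$. For each integer $m$, a direct filtered-hull computation gives $H\bigl(\L{m}{\mu}\bigr)=R_{0,-\mu}$, hence $C_{\text{out}}(\bK,\L{m}{\mu})=C(\bK,R_{0,-\mu})$ and there is a natural quotient chain map $q_m\colon C(\bK,R_{0,-\mu})\to C(\bK,\L{m}{\mu})$; since $\tau(S,S')=\mu\ge\max\{g(S),g(S')\}$, the map $q_m$ carries $V^{-g(S)}\cdot\ve{t}^-_{S,\zs}(1)$ and $V^{-g(S')}\cdot\ve{t}^-_{S',\zs}(1)$ to the classes used in the definition of $\tau'$. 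Applying $q_m$ to the previous equation then shows that
\[
\bigl[V^{-g(S)}\cdot\ve{t}^-_{S,\zs}(1)-V^{-g(S')}\cdot\ve{t}^-_{S',\zs}(1)\bigr]=0\in H_*\bigl(C(\bK,\L{m}{\mu})\bigr)
\]
for every $m$ (for $m\le 0$ this is already the content of $\tau(S,S')=\mu$, since $\L{m}{\mu}=\I{\mu}$ there, via Lemma~\ref{lem:tausubquotientshapes}). Therefore the supremum defining $\tau'(S,S')$ is $+\infty$, so $\tau'(S,S')=-\infty$, and hence $\nu(S,S')=\tau(S,S')=\mu$, completing the argument.
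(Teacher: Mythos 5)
Your proposal is correct and follows essentially the same route as the paper: reduce to the case $\tau(S,S')=\mu$, extract from the proofs of Theorems~\ref{thm:tauD1D2bound} and~\ref{thm:doublepointbound} a primitive in $C(\bK,R_{0,-\mu})$ for $V^{-g(S)}\cdot\ve{t}^-_{S,\zs}(1)-V^{-g(S')}\cdot\ve{t}^-_{S',\zs}(1)$, and push it through the quotient maps onto $C(\bK,\L{m}{\mu})$ for every $m$ to conclude $\tau'(S,S')=-\infty$. Your observation that $H(\L{m}{\mu})=R_{0,-\mu}$ is exactly the paper's remark that each $C(\bK,\L{m}{\mu})$ is a quotient of $C(\bK,R_{0,-\mu})$ by a filtered subcomplex, and your extra care about descending from the infinity to the minus complex and projecting to the correct Alexander grading is a harmless elaboration of the same argument.
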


\begin{proof}
Let $\bK$ be $K$ decorated with two basepoints, and write $g = g(S)$ and $g' = g(S')$.
By Theorems~\ref{thm:tauD1D2bound} and~\ref{thm:doublepointbound},
if $n$ is either $\mu_{\st}(S, S')$ or $\mu_{\Sing}(S, S')$, then
$\tau(S, S') \le n$ (recall that $\mu_{\Sing}(S, S') = \infty$ when $g \neq g'$,
so the inequality obviously holds in this case).
Furthermore, their proofs imply that
\begin{equation}
V^{n-g} \cdot \ve{t}_{S, \zs}^\infty \simeq V^{n-g'} \cdot \ve{t}_{S', \zs}^\infty.
\label{eq:generalchainhomotopy}
\end{equation}
If $\tau(S, S') < n$, then equation~\eqref{eq:nuinequality} automatically holds,
since $\nu(S, S') \le \tau(S, S') + 1$,
so it is sufficient to consider the case when $\tau(S, S') = n$.
It follows from equation~\eqref{eq:generalchainhomotopy} that
\begin{equation}
V^{-g} \cdot \ve{t}^-_{S, \zs}(1) - V^{-g'} \cdot \ve{t}^-_{S', \zs}(1) = \d x
\label{eq:tDsequaltoboundaryinsubcomplex}
\end{equation}
for some $x \in C(\bK, i \ge 0, j \ge -n)$. It follows that the elements
$\ve{t}^-_{S, \zs}(1)$ and $\ve{t}^-_{S', \zs}(1)$ agree in the homology
of any quotient of $C(\bK, i \ge 0, j \ge -n)$ by a filtered subcomplex.
Since $C(\bK, \L{m}{n})$ is the quotient of $C(\bK, i \ge 0, j \ge -n)$ by a
filtered subcomplex for any $m \in \N$, it follows that
\[
[V^{-g} \cdot \ve{t}_{S, \zs}^-(1) - V^{-g'} \cdot \ve{t}_{S', \zs}^-(1)] = 0 \in
H_*(C(\bK, \L{m}{n}))
\]
for all $m \in \N$. Hence $\tau'(S, S') = -\infty$ and $\nu(S, S') = \tau(S, S') = n$.
\end{proof}

\subsection{Kappa and the double point distance}
In this section, we show that the kappa invariant also gives a lower bound on the double point distance.

\begin{thm}\label{thm:kappaandmuSing}
Let $K$ be a knot in $\Sphere^3$.
If $S$, $S' \in \Surf_g(K)$ and $g>0$, then
\[
\kappa(S,S') \le \mu_{\Sing}(S,S').
\]
\end{thm}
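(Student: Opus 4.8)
The plan is to mirror the proof of Theorem~\ref{thm:doublepointbound}, replacing the extremal invariant $\ve{t}_{S,\zs}^\infty$ with $\ve{t}_{S,\ws}^\infty$ and tracking what happens to the $\ws$-type regions under desingularization. Let $\{\,S_t:t\in I\,\}$ be a generic regular homotopy between $S$, $S'\in\Surf_g(K)$, with non-transverse times $s_1,\dots,s_{n-1}$, and pick representatives $S_i=S_{t_i}$ with $S_1=S$, $S_n=S'$, exactly as in the proof of Theorem~\ref{thm:doublepointbound}. Form the desingularizations $(B^4(S_i),\hat\cF_i)$, but this time decorate each $\hat\cF_i$ so that the type-$\zs$ region is a bigon along $K$, and the rest of $\hat\cF_i$ is of type-$\ws$; in particular every double point of $S_i$ now lies in a type-$\ws$ region. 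Choose maximal $\Spin^c$ structures $\hat\frs_i$ compatibly with blow-ups and blow-downs as before.

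First I would invoke Lemma~\ref{lem:computedesingularization} in its ``$S_{\ws}$'' case: since the two branches meeting at each double point lie in $S_{\ws}$, a double point birth introduces a factor of $U$ rather than $V$. Thus if $S_{i+1}$ is obtained from $S_i$ by a double point birth, then $F_{B^4(S_{i+1}),\hat\cF_{i+1},\hat\frs_{i+1}}\simeq U\cdot F_{B^4(S_i),\hat\cF_i,\hat\frs_i}$, and dually $U\cdot F_{B^4(S_{i+1}),\hat\cF_{i+1},\hat\frs_{i+1}}\simeq F_{B^4(S_i),\hat\cF_i,\hat\frs_i}$ for a cancellation. Composing along the whole sequence, and letting $n$ denote the maximal number of negative double points of any $S_i$ — which equals $\tfrac12\max\{\,|\Sing(S_t)|:t\in I\,\}$ since the algebraic count is zero — I get
\[
U^n\cdot \ve{t}_{S,\ws}^\infty \simeq U^n\cdot \ve{t}_{S',\ws}^\infty.
\]
Here I should be careful that the desingularized surfaces $B^4(S_i)$ have nontrivial intersection form, so strictly one works with the link cobordism maps into $\cCFL^\infty$ of $-K\#K$ — wait, no: each $S_i$ bounds $K$ in $B^4$, and $B^4(S_i)=B^4\#(\text{copies of }\overline{\CP}^2)$ has $b_1=0$, so the maps land in $\cCFL^\infty(\bK)$ after the standard blow-up formula is applied, exactly as in the proof of Theorem~\ref{thm:doublepointbound}; the decoration conventions are the only thing that changes.

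Next I would translate $U^n\cdot\ve{t}_{S,\ws}^\infty\simeq U^n\cdot\ve{t}_{S',\ws}^\infty$ into a statement about $\kappa$. Recall $\ve{t}_{S,\ws}^-(1)$ and $\ve{t}_{S',\ws}^-(1)$ both lie in Alexander grading $-g$. Passing to $\CFK^-_{U=0}(\bK)$, the variable $U$ acts as zero, so I cannot simply multiply by $U^n$ there; instead I use the conjugation symmetry of knot Floer homology, under which multiplication by $U^n$ on the $\ws$-side corresponds to multiplication by $V^n$ on the $\zs$-side — this is precisely the manipulation carried out in equations~\eqref{eq:invariantforS'vanishes}--\eqref{eq:computem} of the proof of Theorem~\ref{thm:upsilon-slope}. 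Concretely, $U^n\cdot\ve{t}_{S,\ws}^-$ determines a well-defined map into $C(\bK, T_{0,n-g})\iso\hCFKfw_{n-g}(\bK)$, and $U^n\cdot\ve{t}_{S,\ws}^\infty\simeq U^n\cdot\ve{t}_{S',\ws}^\infty$ then gives
\[
V^{n-g}\cdot[t_{S,\ws}^-(1)] = V^{n-g}\cdot[t_{S',\ws}^-(1)]\in\HFK^-_{U=0}(\bK),
\]
after applying conjugation symmetry to identify $\hCFKfw_{n-g}(\bK)$ with the relevant subcomplex built from $\hCFKfz(\bK)$ (just as in equation~\eqref{eq:computem}). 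By the definition of $\kappa(S,S')$ this yields $\kappa(S,S')\le n+g=\tfrac12\max\{\,|\Sing(S_t)|:t\in I\,\}+g=\mu_{\Sing}(S,S')$. Minimizing over all generic regular homotopies finishes the proof.

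The main obstacle will be the conjugation-symmetry bookkeeping: one has to make sure that the homotopy equivalence $U^n\cdot\ve{t}_{S,\ws}^\infty\simeq U^n\cdot\ve{t}_{S',\ws}^\infty$ at the level of $\cCFL^\infty$ descends correctly to the $\bF_2[V]$-module $\HFK^-_{U=0}(\bK)$ with the right power of $V$, keeping track of the Alexander-grading shift $-g$ of $\ve{t}_{S,\ws}^-$ and the fact that conjugation interchanges $\hCFKfz$ and $\hCFKfw$. Since all of these manipulations already appear verbatim in the proof of Theorem~\ref{thm:upsilon-slope} near $t=2$ (the $g(S)=g(S')$ case), I would simply cite that argument; the new content is purely the desingularization computation with the roles of $U$ and $V$ swapped relative to Theorem~\ref{thm:doublepointbound}, which is immediate from Lemma~\ref{lem:computedesingularization}. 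As a remark, one could alternatively deduce this from the $\Upsilon$-inequality near $t=2$ together with Theorem~\ref{thm:hinvdoublepointhighergenus}, but the direct argument is cleaner.
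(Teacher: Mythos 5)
Your desingularization step is fine as far as it goes: with the type-$\zs$ region a bigon and all double points in the type-$\ws$ region, Lemma~\ref{lem:computedesingularization} does give
\[
U^n\cdot \ve{t}_{S,\ws}^{\infty}\simeq U^n\cdot \ve{t}_{S',\ws}^{\infty},
\]
and no tree machinery is needed for that. The gap is in the final conversion to a statement about $\kappa$. The conjugation symmetry $\iota_K$ is \emph{skew}-equivariant: it interchanges $U$ with $V$ \emph{and} interchanges the $\ws$- and $\zs$-extremal decorations (equation~\eqref{eq:conjugationandtSigmaw}). Applying it to your relation therefore produces $V^n\cdot \ve{t}_{S,\zs}^{\infty}\simeq V^n\cdot \ve{t}_{S',\zs}^{\infty}$, i.e.\ a bound on $\tau(S,S')$ — which is already Theorem~\ref{thm:doublepointbound} — and not the relation $V^{n}\cdot[t_{S,\ws}^-(1)]=V^{n}\cdot[t_{S',\ws}^-(1)]$ in $\HFK^-_{U=0}(\bK)$ that the definition of $\kappa$ requires. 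There is no implication from the one to the other: in $\cCFL^-$, the class $U^n\cdot\bigl(\ve{t}_{S,\ws}^-(1)-\ve{t}_{S',\ws}^-(1)\bigr)$ being a boundary says the difference class is $U$-torsion upstairs, but after setting $U=0$ this is vacuous, and $U$-torsion in $\cHFL^-$ does not control $V$-torsion in $\HFK^-_{U=0}$ (consider $\bF_2[U,V]\langle a,b\rangle$ with $\d b=Ua$: here $U[a]=0$ but $V^m[a]\neq 0$ in the $U=0$ reduction for every $m$). The appeal to equations~\eqref{eq:invariantforS'vanishes}--\eqref{eq:computem} does not help, because there the conjugation is applied to a statement about $\ve{t}_{S,\zs}^-$ with a $U$-power, which is what correctly lands on $\ve{t}_{S,\ws}^-$ with a $V$-power; your input has the basepoint roles on the wrong side.

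The paper's proof avoids this by choosing a \emph{mixed} decoration on the desingularized surfaces: the genus of the underlying embedded surface stays in the type-$\ws$ region (so that the endpoints of the sequence carry the decoration defining $\ve{t}_{S,\ws}^-$), while the tubes resolving the negative double points are placed in the type-$\zs$ region (so that each double point birth contributes a factor of $V$). One then gets $V^m\cdot \ve{t}_{S,\ws}^-\simeq V^m\cdot \ve{t}_{S',\ws}^-$ directly, which survives setting $U=0$ and yields $\kappa(S,S')\le m+g$. The cost of this mixed decoration is that it is no longer of the simple form covered by Lemma~\ref{lem:computedesingularization}, which is exactly why the paper introduces the tree-based decorations $\cA_{\ws}(T)$, proves their independence of the tree (Proposition~\ref{prop:independentofT}, via the tree moves, the bypass relation, and the vanishing of $\Phi_{w_1}\Psi_{z_1}$ on $\cCFL^-(\bH^-)$), and only then runs the stabilization count. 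That extra layer is not optional bookkeeping; it is what makes a $V$-power (rather than a $U$-power) appear in front of the $\ws$-extremal invariant.
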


The proof requires several steps.
Suppose that $S\in \Imm(K)$ is  a generic,
properly immersed surface in $B^4$ (by generic, we mean $S$ has a discrete collection
of transverse double points, disjoint from $\d S$).
The surface $S$ is the image of an immersion $f \colon \bar{S} \looparrowright B^4$, and
write $\hat{S}\subset B^4(S)$ for the desingularization of $S$.
Let
\[
\ve{p}^+ \subset \bar{S}
\]
 denote the preimages of the positive double points of $S$.
Note that each positive double point of $S$ contributes two
points to  $\ve{p}^+$. Let $P$ be a subset of the positive double points of $S$.

\begin{define}\label{def:A(T)}
Suppose that $S\in \Imm(K)$ is a generic immersed surface.
Let $T \subset \bar{S}$ denote an embedded tree such that the following hold:
\begin{enumerate}[label=(\textit{T\arabic*}), ref=\textit{T\arabic*}]
\item\label{num:tree1} $T \cap \d \bar{S}$ consists of a single point.
\item\label{num:tree2} Each point of $f^{-1}(P)$ is a leaf of $T$,
  and $T$ is disjoint from $\ve{p}^+ \setminus f^{-1}(P)$.
\end{enumerate}
Given such a tree $T$, we define an induced decoration $\cA_{\ws}(T)$ on $\bar{S}$, as follows.
The underlying dividing set of $\cA_{\ws}(T)$  is $\d N(T) \setminus \d \bar{S}$. We declare $N(T)$
to be the type-$\zs$ subregion, and the complement of $N(T)$ to be the type-$\ws$ subregion.
We note that the decoration $\cA_{\ws}(T)$ on $\bar{S}$ induces a decoration on the desingularized surface
$\hat{S} \subset B^4(S)$, for which we also write $\cA_{\ws}(T)$. There is an analogous decoration $\cA_{\zs}(T)$, obtained by reversing the roles of $\ws$ and $\zs$. 
\end{define}

Note that $g(\hat{S}_{\ws}) = g(\bar{S}) + |\ve{p}^+|/2 - |P|$ and $g(\hat{S}_{\zs}) = |P|$.
We now prove the following, somewhat surprising fact:

\begin{prop}\label{prop:independentofT}
Suppose that $S \in \Imm(K)$ is a generic immersed surface in $B^4$ with boundary~$K$,
and that $P$ is a subset of the positive double points of $S$. Let $T \subset \bar{S}$ be a tree satisfying
conditions~\eqref{num:tree1} and~\eqref{num:tree2}. If $\frs \in \Spin^c( B^4(S) )$, the chain homotopy type of the map
\[
F_{B^4(S), (\hat{S}, \cA_{\ws}(T)), \frs}\colon \cR^\infty\to \cCFL^\infty(\bK)
\]
is independent of the choice of tree $T$.
\end{prop}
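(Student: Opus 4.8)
The key point is that any two trees $T_0$, $T_1$ in $\bar S$ satisfying \eqref{num:tree1} and \eqref{num:tree2} can be connected by a finite sequence of elementary moves, and that each elementary move changes the induced decoration $\cA_{\ws}(T)$ on $\hat S$ only by a bypass move or an isotopy, both of which leave the cobordism map unchanged (up to chain homotopy) by the bypass relation \cite{ZemConnectedSums}*{Lemma~1.4} and invariance of the link cobordism maps. So first I would set up the combinatorial lemma: fixing the leaves $f^{-1}(P)\cup\{*\}$ (where $*$ is the boundary point) and the ``forbidden'' set $\ve{p}^-\setminus f^{-1}(P)$, any two embedded trees in the surface $\bar S$ with these prescribed leaves and avoiding the forbidden points are related by (a) isotopy rel leaves and rel the forbidden set, and (b) a finite sequence of ``edge slides'', i.e.\ sliding one edge of the tree across an adjacent edge or around a handle/boundary component of $\bar S$. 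This is a standard fact about spines of surfaces; it can be proved by putting $T_0\cup T_1$ in general position and inducting on $|T_0\cap T_1|$, or by appealing to the contractibility of the space of such trees (the complement $\bar S\setminus N(T)$ being a disjoint union of once-punctured surfaces, a regular-neighborhood argument identifies the moves). I expect this step to be the main obstacle, since one has to be careful that the edge slides never force the tree through a point of $\ve{p}^-\setminus f^{-1}(P)$ — but since those are isolated points, a generic slide avoids them, and one only needs the slides within the complement of those points, which is still connected enough for the induction to run.

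Next I would analyze the effect of a single edge slide on the decoration. An edge slide is supported in a disk (or an annular neighborhood of an edge) in $\bar S$, hence in a disk in $\hat S$ disjoint from the blown-up region and from $\d\hat S$ except possibly near the boundary leaf. In that disk, the two dividing sets $\cA_{\ws}(T)$ and $\cA_{\ws}(T')$ before and after the slide, together with a third dividing set, form a bypass triple: sliding an edge of $T$ across a neighboring edge is exactly the local model of attaching a bypass along an arc connecting the two parallel strands of $\d N(T)$. So by the bypass relation,
\[
F_{B^4(S),(\hat S,\cA_{\ws}(T)),\frs}\simeq F_{B^4(S),(\hat S,\cA_{\ws}(T')),\frs}+F_{B^4(S),(\hat S,\cA''),\frs},
\]
and I would check — as in the proof of Lemma~\ref{lem:closedsurfacecomputation}, Case~\eqref{it:2} — that the third dividing set $\cA''$ is isotopic to one of the other two, or else that its type-$\ws$ region acquires a closed boundary component forcing $F_{B^4(S),(\hat S,\cA''),\frs}\simeq 0$ by Lemma~\ref{lem:closedsurfacecomputation}\eqref{it:claim2} applied to the relevant submanifold. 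Either way, the two maps for $T$ and $T'$ agree up to chain homotopy. (One also checks that isotopies of $T$ rel the prescribed data induce isotopies of $\cA_{\ws}(T)$, which change the cobordism map by chain homotopy by functoriality.)

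Finally I would assemble: given arbitrary $T_0$ and $T_1$, pick a sequence $T_0=\tilde T_0,\tilde T_1,\dots,\tilde T_k=T_1$ where consecutive trees differ by an isotopy or a single edge slide, and conclude
\[
F_{B^4(S),(\hat S,\cA_{\ws}(T_0)),\frs}\simeq F_{B^4(S),(\hat S,\cA_{\ws}(T_1)),\frs}
\]
by transitivity. The only subtlety I would flag is making sure the local bypass picture really does reduce the third term in each step, which is where I would lean on the case analysis already carried out in the proof of Lemma~\ref{lem:closedsurfacecomputation} (the annulus and higher-genus cases there are precisely the configurations that arise from an edge slide around a handle of $\hat S_{\ws}$), rather than redoing it from scratch.
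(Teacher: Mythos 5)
Your overall architecture matches the paper's: reduce to a finite set of elementary moves on trees, and handle each move via the bypass relation. The paper's version of your combinatorial step is Lemma~\ref{lem:treemovessufficient}, and it is worth noting that the paper's second move is deliberately restrictive: the edge being replaced must contain a point $p\in f^{-1}(P)$, and the discarded segment must not separate $p$ from the new attaching point. This is not an incidental choice — it guarantees that the third dividing set $\cA''$ in the resulting bypass triple has a closed component that is \emph{homologically essential} in the tube glued in at the negative double point $f(p)$. Your more generic ``edge slides'' do not come with this control, so even granting your combinatorial lemma you would still need to classify what the third bypass term looks like in each case.

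The genuine gap is in how you dispose of that third term. You propose that $\cA''$ is either isotopic to one of the other two decorations, or that its type-$\ws$ region acquires a closed boundary component so that $F_{B^4(S),(\hat{S},\cA''),\frs}\simeq 0$ by Lemma~\ref{lem:closedsurfacecomputation}\eqref{it:claim2} ``applied to the relevant submanifold.'' Neither mechanism applies here. First, Lemma~\ref{lem:closedsurfacecomputation} is stated for cobordisms whose outgoing end is a doubly-based \emph{unknot}, and even then it only yields vanishing modulo the $H_1(Y)$-action; in Lemma~\ref{lem:stabilizationlemma} the passage from ``$\simeq 0 \bmod H_1$'' to honest vanishing uses that the relevant $H_1$ maps are trivial for the handlebody decomposition there. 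Second, and more fundamentally, the relevant local piece of $(B^4(S),\hat{S})$ near a negative double point is a 4-ball meeting $\hat{S}$ in an annulus bounding a \emph{negative Hopf link}, not an unlink, so there is no submanifold to which Lemma~\ref{lem:closedsurfacecomputation} applies. The closed loop of $\cA''$ wraps essentially around that annulus; it does not bound a disk in $S_{\ws}$ or $S_{\zs}$, so the easy vanishing of equation~\eqref{eqn:R7} is also unavailable. The paper instead factors $F_{B^4(S),(\hat{S},\cA''),\frs}$ through the endomorphism $\Phi_{w_1}\Psi_{z_1}$ of $\cCFL^-(\bH^-)$ and verifies by an explicit genus-zero Heegaard diagram computation (Figures~\ref{fig::32} and~\ref{fig::Hopfcomplex}) that this composition vanishes. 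That Hopf-link computation is the essential input your argument is missing, and without it the bypass relation only tells you that the two maps for $T$ and $T'$ differ by $F_{B^4(S),(\hat{S},\cA''),\frs}$, not that they agree.
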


To prove Proposition~\ref{prop:independentofT}, we need a set of moves that can be used
to connect two trees satisfying conditions~\eqref{num:tree1} and~\eqref{num:tree2}.
We introduce the following \emph{tree-moves}:

\begin{enumerate}[label=(\textit{TM\arabic*}), ref=\textit{TM\arabic*}]
\item\label{num:treemove1}  $T$ is replaced by another tree $T'$ satisfying \eqref{num:tree1} and \eqref{num:tree2} such that $\d N(T)$ and $\d N(T')$ are isotopic through dividing sets which are fixed on $\d \bar{S}$ and never intersect $\ve{p}^+$.
\item\label{num:treemove2} Suppose that $e$ is an edge of $T$ which contains a point $p$ of $\ve{p}^+$, and that $e'$ is an embedded path in $\bar{S}$ such that $e'\cap T = \d e'$, and such that $e \cap e'$ consists of a single point $t$. The tree $T$ is replaced with the tree $T'$ formed by adding $e'$, and removing a segment of $e$ that is not between $p$ and $t$; see the top row of Figure~\ref{fig::31}.
\end{enumerate}

\begin{lem}\label{lem:treemovessufficient}
If $\bar{S}$ is a surface, $\ve{p}^+ \subset \bar{S}$ is a collection of points,
and $f^{-1}(P) \subset \ve{p}^+$ is a chosen subset, then any two trees satisfying \eqref{num:tree1}
and \eqref{num:tree2} can be connected by moves \eqref{num:treemove1} and \eqref{num:treemove2}.
\end{lem}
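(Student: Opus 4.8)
The plan is to prove this by a standard ``tree comparison'' argument, reducing arbitrary pairs of trees to a common form by induction. First I would fix a basepoint on $\d\bar S$ and regard every admissible tree as rooted at the unique point of $T\cap \d\bar S$. The key observation is that moves \eqref{num:treemove1} and \eqref{num:treemove2} suffice to perform the following three elementary operations on an admissible tree, up to isotopy of dividing sets avoiding $\ve p^-$: (a) \emph{pruning} a leaf edge that terminates at an interior point of $\bar S$ not in $f^{-1}(P)$ (so that one may assume $T$ has no ``dead ends''); (b) \emph{sliding} an edge of $T$ across a region of $\bar S$ disjoint from $\ve p^-$, which is exactly \eqref{num:treemove1}; and (c) \emph{rerouting} the path from the root to a given point of $f^{-1}(P)$ along a different embedded arc, which is \eqref{num:treemove2} applied edge by edge. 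Operation (a) is itself an instance of \eqref{num:treemove1} once one checks that $N(T)$ and $N(T\setminus e)$ have isotopic boundary away from $\ve p^-$, using that the pruned edge bounds a disk region of type $\zs$ meeting no point of $\ve p^-$.

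Next I would describe a \emph{normal form}. Enumerate the points of $f^{-1}(P)=\{q_1,\dots,q_k\}$ (two for each double point in $P$). Choose a collar of $\d\bar S$ and an embedded arc $\alpha$ in $\bar S$ from the basepoint into the interior, together with pairwise disjoint embedded ``whiskers'' $\beta_1,\dots,\beta_k$ from consecutive points of $\alpha$ out to the $q_i$, arranged so that $\alpha\cup\bigcup_i\beta_i$ is an embedded tree satisfying \eqref{num:tree1} and \eqref{num:tree2}. Call this tree $T_0$. The claim is that any admissible tree $T$ can be carried to $T_0$ by a sequence of tree-moves. The argument is by induction on $k=|f^{-1}(P)|$ (equivalently $|P|$) together with the combinatorial complexity of $T$ (number of edges, or number of vertices of valence $\neq 2$). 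In the base case $k=0$, an admissible tree is a tree meeting $\d\bar S$ in one point and disjoint from $\ve p^-$; by repeated pruning (operation (a)) one reduces it to a small arc near the basepoint, which is $T_0$. For the inductive step, I would locate a leaf $q_j\in f^{-1}(P)$ of $T$, consider the unique embedded path $\gamma_j$ in $T$ from the root to $q_j$, and use \eqref{num:treemove2} (rerouting) to slide $\gamma_j$ off of the rest of $T$ — since $\bar S$ is connected and the obstruction to making $\gamma_j$ disjoint from the remaining edges is purely a matter of general position of arcs on a surface, one can push $\gamma_j$ to an arc that meets $T$ only at the root. Then $\gamma_j$ becomes a separate whisker, and deleting it (pruning, now that $q_j$ is no longer attached to the trunk we instead regard $\gamma_j$ together with a small trunk segment as the object to isolate) reduces us to an admissible tree for the smaller set $f^{-1}(P)\setminus\{q_j\}$, to which the inductive hypothesis applies; finally one reinserts the whisker in standard position using \eqref{num:treemove1}.

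The main obstacle I anticipate is the rerouting step: making precise that \eqref{num:treemove2}, which as stated only adds one edge $e'$ and removes a subsegment of a single edge $e$ containing a point of $\ve p^-$, can be iterated to realize an arbitrary ambient isotopy of the root-to-$q_j$ path past the other edges of $T$ while never letting the dividing set $\d N(T)$ touch $\ve p^-$. The care needed is twofold: one must subdivide the desired path-homotopy into elementary ``finger moves'' each of which is literally a single instance of \eqref{num:treemove2} — this requires choosing, at each stage, the edge $e$ to be the unique edge of the current tree through the relevant point $p\in\ve p^-$, and verifying that the removed segment is ``not between $p$ and $t$'' as the move demands; and one must ensure that the intermediate trees remain embedded and satisfy \eqref{num:tree1}–\eqref{num:tree2}, in particular that no point of $\ve p^-\setminus f^{-1}(P)$ is ever swept over, which is arranged by keeping all the moves in the complement of a fixed neighborhood of those points (this is possible since $\bar S\setminus(\ve p^-\setminus f^{-1}(P))$ is still connected). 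Once these general-position bookkeeping points are dispatched, the induction goes through routinely, and Lemma~\ref{lem:treemovessufficient} follows; Proposition~\ref{prop:independentofT} is then immediate by checking that each of \eqref{num:treemove1} and \eqref{num:treemove2} leaves the chain homotopy type of $F_{B^4(S),(\hat S,\cA_{\ws}(T)),\frs}$ unchanged — the first because it is an isotopy of decorated surfaces, the second because it is a bypass move whose two other dividing sets are isotopic, exactly as in the annulus case of Lemma~\ref{lem:closedsurfacecomputation}.
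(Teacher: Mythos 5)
Your overall strategy (induct on $|f^{-1}(P)|$ by isolating one whisker at a time) is genuinely different from the paper's, which instead chooses a system $A$ of compressing curves, uses the moves to make both trees disjoint from $A$, and then compresses $\bar{S}$ to a disk where the comparison is easy. Your preliminary reductions are fine: pruning a dead-end leaf is indeed an instance of \eqref{num:treemove1}, and a single application of \eqref{num:treemove2} with $e'$ running through the connected complement $\bar{S}\setminus T$ does let you reattach the terminal segment $[t,q_j]$ of a marked leaf edge directly at the root, so the ``isolate a whisker'' step works (though the justification via ``general position'' is off --- general position gives transversality, not disjointness; what makes the whisker avoidable is the rerouting move itself).

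The genuine gap is in the inductive step. Deleting the isolated whisker and ``applying the inductive hypothesis to the smaller set $f^{-1}(P)\setminus\{q_j\}$'' is not legitimate as stated: deletion and reinsertion are not among the allowed moves, and, more importantly, the sequence of moves \eqref{num:treemove1}--\eqref{num:treemove2} furnished by the inductive hypothesis connects the two \emph{reduced} trees through intermediate trees and isotopies/arcs $e'$ living in all of $\bar{S}$, which may sweep across or intersect the deleted whisker. Such a sequence does not lift to a sequence of moves on the full trees, and you give no argument that the inductive moves can be pushed off the whisker (doing so would essentially require working in the cut-open surface $\bar{S}\setminus N(\gamma_j)$, which in turn requires the two whiskers to already coincide). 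This points to the second problem: your final step ``reinserts the whisker in standard position using \eqref{num:treemove1}.'' Two embedded arcs from the root to $q_j$ need not be isotopic in $\bar{S}$ rel endpoints avoiding $\ve{p}^-$ (they can differ by wrapping around genus or around other marked points), so their induced dividing sets are not related by \eqref{num:treemove1}; bringing the two isolated whiskers of $T$ and $T'$ to a common position is exactly the non-trivial content of the lemma and requires \eqref{num:treemove2}. Until the whiskers are normalized \emph{and} the remaining moves are confined to their complement, the induction does not close. (As a side remark, your last sentence misstates how Proposition~\ref{prop:independentofT} handles \eqref{num:treemove2}: the two other dividing sets in that bypass triple are not isotopic; rather, the third term vanishes because its dividing set has a closed component that is essential on the desingularization tube, which is killed by a negative Hopf link computation.)
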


\begin{proof}
We pick a subset $A \subset \bar{S}$ consisting of $g(\bar{S})$ compressing
curves on $\bar{S}$ such that we are left with a disk after surgering
$\bar{S}$ on $A$. If $T$ is a tree satisfying \eqref{num:tree1} and
\eqref{num:tree2}, we give $T$ the partial order determined by setting $T\cap
\d S$ to be the maximal point. If $t\in T$, define
\[
L(t) = \{x \in T : x\le t\}.
\]

As a first step, we show that any tree $T$ satisfying conditions
\eqref{num:tree1} and \eqref{num:tree2} can be connected by moves
\eqref{num:treemove1} and \eqref{num:treemove2} to a tree which is disjoint
from $A$. To establish this, we show that if $|A \cap T| > 0$, we can always
reduce $|A \cap T|$ by 1, using moves~\eqref{num:treemove1} and~\eqref{num:treemove2}.
To do this, we pick any point $t \in A \cap T$ such
$L(t) \cap A = \emptyset$. There are two cases: either $L(t) \cap
\ve{p}^+ = \emptyset$ or $L(t) \cap \ve{p}^+ \neq \emptyset$.

If $L(t)\cap \ve{p}^+ = \emptyset$, then we can just isotope $L(t)$ (an
instance of move~\eqref{num:treemove1}) so that it no longer intersects $A$,
thus reducing $|A\cap T|$.

Next, we consider the case that $L(t) \cap \ve{p}^+ \neq \emptyset$. In this
case, we pick a point $t' \in L(t)$ such that $L(t')$ is a subset of a single
edge of $T$, and $L(t')$ contains a point $p \in \ve{p}^+$. We let $e'$ be any
embedded path in $S\setminus A$ such that $e' \cap T = \d e'$, and $\d e'$
consists of $t'$ and another point of $T$ which is not contained in $L(t)$.
Let $e$ denote a subinterval of the edge of $T$ containing $t'$, such that
$t'$ is the smaller endpoint of $e$. We can then
use move~\eqref{num:treemove2} to replace $e$ with $e'$. This reduces
$L(t)\cap \ve{p}^+$ by 1, and does not increase $|A \cap T|$. Repeating this
procedure, we may reduce to the case that $L(t) \cap \ve{p}^+ = \emptyset$.
Arguing as before, an isotopy of $L(t)$ can then be used to reduce $|A \cap T|$ by 1.

Hence, if $T$ and $T'$ are two trees satisfying conditions~\eqref{num:tree1}
and \eqref{num:tree2}, by applying moves~\eqref{num:treemove1} and
\eqref{num:treemove2}, we may assume that $T$ and $T'$ are both disjoint from
$A$. We may compress $\bar{S}$ along $A$ to get a disk $D$, containing $T$ and
$T'$, as well as a collection of $2g(\bar{S})$ points $\ve{p}$ corresponding to the
curves in $A$. We note that $T$ and $T'$ are disjoint from $\ve{p}$.
Furthermore, isotoping an edge of $T$ or $T'$ across a point in $\ve{p}$ may
be achieved by move~\eqref{num:treemove2}. In this manner, we can reduce the
claim to the case that $S$ is a disk, and it is straightforward to see that
in this situation that $T$ and $T'$ can be related by applying move~\eqref{num:treemove1}.
\end{proof}

\begin{proof}[Proof of Proposition~\ref{prop:independentofT}]
By Lemma~\ref{lem:treemovessufficient}, it is sufficient to show invariance
of $F_{B^4(S), (\hat{S}, \cA_{\ws}(T)),\frs}$
under moves~\eqref{num:treemove1} and \eqref{num:treemove2}.
First note that, up to isotopy, the decoration $\cA_{\ws}(T)$
depends only on a regular neighborhood of $T \subset \bar{S}$,
so move~\eqref{num:treemove1} does not change the cobordism map
$F_{B^4(S),(\hat{S},\cA_{\ws}(T)),\frs}$.

We now address move~\eqref{num:treemove2}.
Suppose that $e$ is an edge of $T$ which has exactly
one endpoint at a point $p \in \ve{p}^+$ and another
at a vertex $v \in T \setminus \ve{p}^+$. Suppose that
$e'$ is an embedded path on $\bar{S}$ such that $e' \cap T = \d e'$ and $e \cap e' = \{t\}$.
Let $T'$ denote the tree obtained by removing a segment of $e$ not between $t$ and $p$,
and inserting $e'$. There is a bypass relation
\begin{equation}\label{eq:bypassrelationfortrees}
F_{B^4(S), (\hat{S}, \cA_{\ws}(T)), \frs} +
F_{B^4(S), (\hat{S}, \cA_{\ws}(T')),\frs} +
F_{B^4(S), (\hat{S}, \cA''), \frs} \simeq 0
\end{equation}
for a third decoration $\cA''\subset \hat{S}$, which is shown in Figure~\ref{fig::31}.

\begin{figure}[ht!]
	\centering
	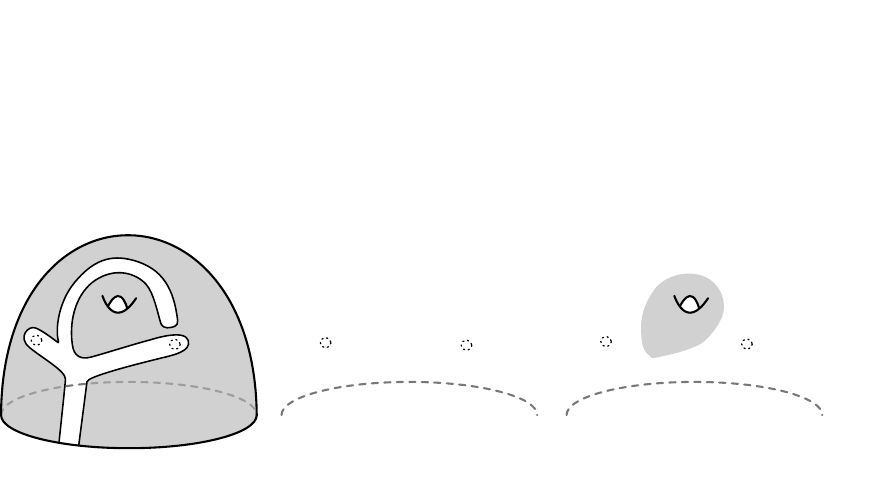
	\caption{The graphs $T$ and $T'$ on the top row are related by move~\eqref{num:treemove2}.
	The point $p$ is in $\ve{p}^+$. On the bottom row, the associated
	decorations $\cA_{\ws}(T)$ and $\cA_{\ws}(T')$ on the desingularized surface $\hat{S}$ are shown,
	as well as a third decoration $\cA''$, which fits
	into a bypass triple with $\cA_{\ws}(T)$ and $\cA_{\ws}(T')$.
	The dashed circles denote where a tube is added on the desingularized surface.}
\label{fig::31}
\end{figure}

We now claim that
\begin{equation}
F_{B^4(S),(\hat{S},\cA''),\frs} \simeq 0
\label{eq:dividingsetwithclosedloopiszero}
\end{equation}
for any $\frs \in \Spin^c(B^4(S))$, which will complete the proof when
combined with equation~\eqref{eq:bypassrelationfortrees}.
The key observation is that the dividing set $\cA''$ has a closed loop,
which is homologically essential in the tube that
is added to form the desingularized surface $\hat{S}$. Let
$B$ be a 4-ball in $B^4(S)$ containing the point $f(p)$.
We note that $\hat{S}$ intersects $\d B$ in a negative Hopf link $H$.
Let $A_{0}$ denote the annulus that is inserted into $B$
to form the desingularized surface $\hat{S}$. By isotoping
$\cA''$ in $\hat{S}$, we  may assume that $\cA''$
intersects $A_{0}$ in the dividing set $\cA_0''$ shown in Figure~\ref{fig::33}.

\begin{figure}[ht!]
	\centering
\begingroup%
  \makeatletter%
  \providecommand\color[2][]{%
    \errmessage{(Inkscape) Color is used for the text in Inkscape, but the package 'color.sty' is not loaded}%
    \renewcommand\color[2][]{}%
  }%
  \providecommand\transparent[1]{%
    \errmessage{(Inkscape) Transparency is used (non-zero) for the text in Inkscape, but the package 'transparent.sty' is not loaded}%
    \renewcommand\transparent[1]{}%
  }%
  \providecommand\rotatebox[2]{#2}%
  \newcommand*\fsize{\dimexpr\f@size pt\relax}%
  \newcommand*\lineheight[1]{\fontsize{\fsize}{#1\fsize}\selectfont}%
  \ifx\svgwidth\undefined%
    \setlength{\unitlength}{380.52066902bp}%
    \ifx\svgscale\undefined%
      \relax%
    \else%
      \setlength{\unitlength}{\unitlength * \real{\svgscale}}%
    \fi%
  \else%
    \setlength{\unitlength}{\svgwidth}%
  \fi%
  \global\let\svgwidth\undefined%
  \global\let\svgscale\undefined%
  \makeatother%
  \begin{picture}(1,0.23987467)%
    \lineheight{1}%
    \setlength\tabcolsep{0pt}%
    \put(0,0){\includegraphics[width=\unitlength,page=1]{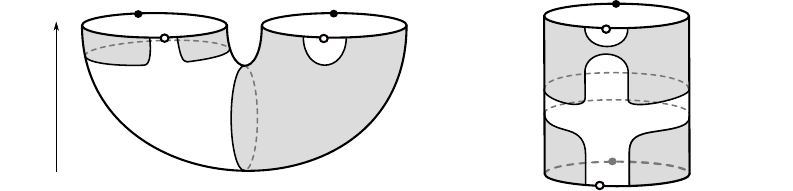}}%
    \put(0.05903712,0.11356894){\color[rgb]{0,0,0}\makebox(0,0)[rt]{\lineheight{1.25}\smash{\begin{tabular}[t]{r}$\cF_{0}''$\end{tabular}}}}%
    \put(0,0){\includegraphics[width=\unitlength,page=2]{fig33.pdf}}%
    \put(0.89976214,0.10887198){\color[rgb]{0,0,0}\makebox(0,0)[lt]{\lineheight{1.25}\smash{\begin{tabular}[t]{l}$\Psi_z\circ \Phi_w$\end{tabular}}}}%
  \end{picture}%
\endgroup%

	\caption{On the left is the annulus $A_{0}\subset \hat{S}$,
	which is added to desingularize the immersed surface $S$ at the
	negative double point $f(p)$. The decoration
	$\cA''_0$ on  $A_0$ is shown. On the right is
	the dividing set corresponding to the map $\Phi_{w} \circ \Psi_{z}$,
	on a cylindrical link cobordism.}
\label{fig::33}
\end{figure}

Let us write $\cF''$ for $(\hat{S},\cA'')$ and $\cF_{0}'' = (A_{0}, \cA_0'')$.
Using the composition law, it is sufficient to show that
\[
F_{B, \cF_{0}'', \frs_0} \simeq 0,
\]
where $\frs_0$ is the unique torsion $\Spin^c$ structure on $B$.

Let $\bH^+$ denote the positive Hopf link, decorated with basepoints
$w_1$ and $z_1$ on one component, and $w_2$ and $z_2$
on the other. By isotoping the dividing set on $\cF_{0}''$,
we may factor the map $F_{B,\cF_{0}'',\frs_0}$ through
\[
\Phi_{w_1}\Psi_{z_1}\colon \cCFL^-(\bH^+)\to \cCFL^-(\bH^+).
\]

\begin{figure}[ht!]
	\centering
	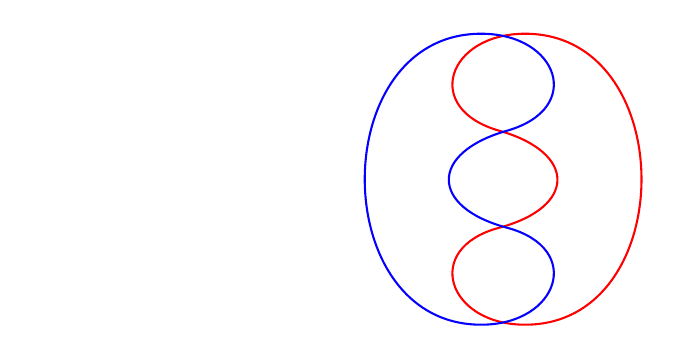
	\caption{The positive Hopf link (left), and a genus zero Heegaard diagram (right).}
\label{fig::32}
\end{figure}

\begin{figure}[ht!]
\centering
\[\cCFL^-(\bH^+)=\left(
\begin{tikzcd}
\xs_1\arrow{r}{V}\arrow{d}{U} &\xs_2\\
\xs_4& \xs_3\arrow{u}{V}\arrow{l}{U}
\end{tikzcd}
\right), \quad \Phi_{w_1}=\left(\begin{tikzcd}
\xs_1 &\xs_2\\
\xs_4& \xs_3\arrow{l}
\end{tikzcd}\right), \quad  \Psi_{z_1}=\left(\begin{tikzcd}
\xs_1\arrow{r} &\xs_2\\
\xs_4& \xs_3
\end{tikzcd}\right)
\]
\caption{The chain complex $\cCFL^-(\bH^+)$, as well as
the maps $\Phi_{w_1}$ and $\Psi_{z_1}$.}
\label{fig::Hopfcomplex}
\end{figure}

A diagram for $\bH^+$ is shown in Figure~\ref{fig::32}. 
The differential can be computed by simply counting bigons.
It is not hard to see that the complex $\cCFL^-(\bH^+)$
is chain homotopy equivalent to the complex shown in
Figure~\ref{fig::Hopfcomplex}. The chain homotopy type
of the maps $\Phi_{w_1}$ and $\Psi_{z_1}$ are also
shown in Figure~\ref{fig::Hopfcomplex}.
Examining the maps $\Phi_{w_1}$ and $\Psi_{z_1}$ shown
in Figure~\ref{fig::Hopfcomplex}, we see that $\Phi_{w_1}\Psi_{z_1}$ vanishes, completing the proof. 
\end{proof}

\begin{define}
Given a generic immersed surface $S \in \Imm(K)$ that is the image of an immersion $f \colon \bar{S} \looparrowright B^4$,
as well as a subset $P$ of the positive double points of $S$,
pick a tree $T \subset \bar{S}$ satisfying conditions~\eqref{num:tree1} and \eqref{num:tree2}.
We form the decoration $\cA_{\ws}(T)$ of the
desingularization $(B^4(S), \hat{S})$ as in Definition~\ref{def:A(T)}.
For $\frs \in \Spin^c(B^4(S))$, we define the map
\[
\ve{t}_{S, P, \ws, \frs}^- \colon \cR^- \to \cCFL^-(\bK)
\]
to be the decorated link cobordism map $F_{B^4(S), (\hat{S}, \cA_{\ws}(T)),\frs}$,
which is independent of the choice of $T$ by Proposition~\ref{prop:independentofT}.
Analogously, we can define the map
\[
\ve{t}_{S, P, \zs, \frs}^- \colon \cR^- \to \cCFL^-(\bK).
\]
\end{define}

We can now prove that $\kappa(S,S')$ is a lower bound for $\mu_{\Sing}(S,S')$:

\begin{proof}[Proof of Theorem~\ref{thm:kappaandmuSing}]
Suppose that $S_1,\dots, S_n$ is a sequence of
immersed surfaces, each of which is obtained from the
previous via creating or canceling a pair of double points,
up to diffeomorphism fixing $\d B^4$ pointwise, and $S_1 = S$ and $S_n = S'$.
Furthermore, let $P_k$ be the set of all positive double points of $S_k$ for $k \in \{1, \dots, n\}$.

Using Lemma~\ref{lem:computedesingularization}, we conclude that, if
\[
m := \frac{1}{2} \max \left\{|\Sing(S_1)|,\dots, |\Sing(S_n)|\right\},
\]
and $\frs_1,\dots,\frs_n$ is a stabilization sequence of $\Spin^c$
structures on $B^4(S_1),\dots, B^4(S_n)$ that are all maximal,
then the filtered chain homotopy types of the maps
\[
V^{m - \tfrac{1}{2} |\Sing(S_i)|} \cdot \ve{t}_{S_k, P_k, \ws, \frs}^-
\]
coincide for $k \in \{1, \dots, n\}$. In particular, the maps on $\CFK^-_{U=0}$ coincide, so
\[
V^m \cdot [t_{S, \ws}^-(1)] = V^m \cdot [t_{S',\ws}^-(1)]  \text{ in }  \HFK^-_{U=0}(\bK),
\]
and hence $\kappa(S, S') \le m + g$.
\end{proof}

\subsection{The local $h$-invariants and the double point distance}

In this section, we show the following:

\begin{thm}\label{thm:hinvdoublepointhighergenus}
Suppose that $S$, $S' \in \Surf_g(K)$ and $g \le k \le \mu_{\Sing}(S,S')$. Then
\[
V_k(S,S') \le \left\lceil\frac{\mu_{\Sing}(S,S')-k}{2} \right\rceil.
\]
If $k\ge \mu_{\Sing}(S,S')$, then $V_k(S,S')=0$.
\end{thm}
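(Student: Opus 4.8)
\textbf{Proof proposal for Theorem~\ref{thm:hinvdoublepointhighergenus}.}
The plan is to mirror the proof of Theorem~\ref{thm:Vkbound}, replacing stabilization sequences with generic regular homotopies and using the desingularization machinery from Section~\ref{sec:doublepoints} in place of Lemma~\ref{lem:stabilizationlemma}. First I would fix a generic regular homotopy $\{S_t : t\in I\}$ from $S$ to $S'$, select representatives $S_1=S,\dots,S_n=S'$ lying in the complementary intervals of the finitely many non-transverse times, and set $2m = \max\{|\Sing(S_1)|,\dots,|\Sing(S_n)|\}$, so that $m + g$ realizes (for an optimal homotopy) the double point distance $\mu_{\Sing}(S,S')$.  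For each $S_i$, I would pass to the desingularization $(B^4(S_i),\hat{S}_i)$ of Definition~\ref{def:desingularization}, decorated so that the type-$\zs$ region is a bigon along $K$ and all double points occur in the type-$\ws$ subregion, and choose a compatible stabilization sequence of maximal $\Spin^c$ structures $\hat\frs_i$.  By Lemma~\ref{lem:computedesingularization}, passing a double-point birth (in the $\ws$-region) multiplies the cobordism map by $U$, and a cancellation divides by $U$; hence the maps $U^{\,m - \frac12|\Sing(S_i)|}\cdot \ve{t}^\infty_{S_i,\ws}$ all coincide up to filtered chain homotopy, and in particular
\begin{equation}\label{eq:Vkdphtarget}
U^{m}\cdot \ve{t}^\infty_{S,\ws} \simeq U^{m}\cdot \ve{t}^\infty_{S',\ws}.
\end{equation}

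Next, I would convert \eqref{eq:Vkdphtarget} into a statement about $V_k$ via the conjugation symmetry $\iota_K$ of Section~\ref{sec:invariant}, which relates $\ve{t}^\infty_{S,\ws}$ and $\ve{t}^\infty_{S,\zs}$ by equation~\eqref{eq:equationiotaupsilon} after swapping $U$ and $V$.  Applying $\iota_K\circ(-)\circ\iota_{\cR^\infty}$ to \eqref{eq:Vkdphtarget} yields $V^{m}\cdot \ve{t}^\infty_{S,\zs}\simeq V^{m}\cdot \ve{t}^\infty_{S',\zs}$, and then multiplying by a further power of $U$ to reach Alexander grading $k$ (increasing $m$ by $1$ if necessary to keep $(m-k+g)/2$, or rather the relevant exponent, an integer, exactly as in the proof of Theorem~\ref{thm:Vkbound}) gives the homotopy $U^{(d-k)/2}V^{(d+k)/2}\cdot \ve{t}^\infty_{S}\simeq U^{(d-k)/2}V^{(d+k)/2}\cdot \ve{t}^\infty_{S'}$ with $d = 2m+g$.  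Since multiplication by $V^{-k}$ identifies the Alexander-grading-$k$ part of $\cCFL^-(\bK)$ with $A_k^-(\bK)$, this shows $\hat{U}^{(d-k)/2}\cdot[\ve{t}^\infty_S(1)] = \hat{U}^{(d-k)/2}\cdot[\ve{t}^\infty_{S'}(1)]$ in $H_*(A_k^-(\bK))$, so $V_k(S,S')\le (d-k)/2 = \lceil(\mu_{\Sing}(S,S')-k)/2\rceil$.  The case $k\ge \mu_{\Sing}(S,S')$ then follows from the case $k=\mu_{\Sing}(S,S')$ together with the monotonicity of Lemma~\ref{lem:monotonicity}, exactly as in Theorem~\ref{thm:Vkbound}.

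One subtlety I would need to address carefully: Lemma~\ref{lem:computedesingularization} is stated for the resolution of a \emph{pair} of double points born during a homotopy, whereas a general $S_i$ may already carry several negative double points.  I would handle this by iterating the lemma, or equivalently by working with the maps $\ve{t}^-_{S_i,P_i,\ws,\frs}$ of the last definition in Section~\ref{sec:doublepoints} (with $P_i$ the full set of negative double points) and invoking Proposition~\ref{prop:independentofT} to see that the relevant cobordism map is independent of the auxiliary tree, so that only the difference in double-point counts between consecutive $S_i$ matters.  The main obstacle is therefore bookkeeping: ensuring that along the whole homotopy the decorations, the choices of trees/$\Spin^c$ structures, and the exponents of $U$ and $V$ are tracked consistently so that the ``telescoping'' of the chain homotopies actually produces \eqref{eq:Vkdphtarget} with the correct uniform exponent $m$; the conjugation step and the final identification with $A_k^-(\bK)$ are then routine repetitions of arguments already in the paper.
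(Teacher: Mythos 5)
Your overall architecture (desingularize, track double-point births via Lemma~\ref{lem:computedesingularization}, telescope the chain homotopies, land in $A_k^-(\bK)$) is the right skeleton, but the quantitative core of the theorem is missing, and the final arithmetic hides it. If you put \emph{all} double points of each $S_i$ into a single subregion (type-$\ws$, say), then each of the $m = \tfrac12\max|\Sing(S_i)|$ canceling pairs costs one full power of $U$, and the best relation you can extract is $U^m\cdot \ve{t}^-_{S,\ws}\simeq U^m\cdot\ve{t}^-_{S',\ws}$, equivalently $V^m\cdot\ve{t}^-_{S,\zs}\simeq V^m\cdot\ve{t}^-_{S',\zs}$. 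Pushing this into Alexander grading $k\ge g$ by multiplying by nonnegative powers of $U$ and $V$ yields at best $\hat{U}^{m+g-k}\cdot[V^{-g}\ve{t}^-_{S,\zs}(1)] = \hat{U}^{m+g-k}\cdot[V^{-g}\ve{t}^-_{S',\zs}(1)]$, i.e.\ $V_k(S,S')\le \mu_{\Sing}(S,S')-k$ --- roughly twice the claimed bound. Your closing identity ``$(d-k)/2=\lceil(\mu_{\Sing}(S,S')-k)/2\rceil$ with $d=2m+g$'' is false for $m>0$: $(2m+g-k)/2 = m+(g-k)/2$, whereas the target is $(m+g-k)/2$. (Compare Theorem~\ref{thm:kappaandmuSing}, where the all-in-one-region decoration is exactly what is used and the bound obtained is $m+g$ with no factor of $2$; your argument is essentially that one.)

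The missing idea is the balancing of double points between the two subregions. The paper partitions the negative double points of each $S_l$ into a subset $P_l$ (whose resolution tubes lie in the type-$\ws$ region, each costing a $U$) and its complement (lying in the type-$\zs$ region, each costing a $V$), with $|P_l|\le\frac{m+g-k}{2}$ and $|P_l^-|-|P_l|\le\frac{m-g+k}{2}$; the monomial $U^{\frac{g+m-k}{2}}V^{\frac{m-g+k}{2}}$ then has total degree $m$ but normalizes to only $\hat{U}^{\frac{m+g-k}{2}}$ in $A_k^-(\bK)$. Making this legitimate is exactly what the machinery you relegated to ``bookkeeping'' is for: Proposition~\ref{prop:independentofT} shows the maps $\ve{t}^-_{S,P,\zs,\frs}$ are well defined for an arbitrary subset $P$ (via a bypass argument and a Hopf-link computation showing $\Phi_{w}\Psi_{z}\simeq 0$ there), and the singular decoration-independence condition of Lemma~\ref{lem:stabilizationtypedoublepoints} shows the chain homotopy type of $U^iV^j\cdot\ve{t}^-_{S,P,\zs,\frs}$ depends only on $i+|P|$ and $j+|P^-|-|P|$, which is what allows the subsets $P_l$ to be re-chosen as $|P_l^-|$ fluctuates along the homotopy. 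Taking $P_i$ to be the full set of negative double points, as you propose in your last paragraph, forfeits exactly the freedom that produces the factor of $2$.
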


\begin{define}
Suppose that $S \in \Imm(K)$ is a generic immersed surface. We denote by $P^+$ the set of positive
double points of $S$. Furthermore, let $d$ be an integer satisfying $d \ge g(S) + |P^+|$. We say that $S$
satisfies the \emph{singular, decoration-independence condition (SDI) above degree~$d$} if the following holds:
\begin{enumerate}[leftmargin=1.5cm, label=(\textit{SDI}), ref=\textit{SDI}]
\item\label{def:SDI} For all
maximal $\frs \in \Spin^c(B^4(S))$, for every $i$, $j \in \N$ satisfying
\[
g(S) + |P^+| + i + j \ge d,
\]
and for all $P \subset P^+$, the chain homotopy type of the map
\[
U^i V^j\cdot  \ve{t}^-_{S, P, \zs, \frs} \colon \cR^- \to \cCFL^-(\bK)
\]
depends only on $\frs$ and the quantities
\[
i + |P| \quad \text{and} \quad j + |P^+| - |P|.
\]
\end{enumerate}

\end{define}

\begin{lem}\label{lem:stabilizationtypedoublepoints}
Let $S \in \Imm(K)$ be a generic immersion with positive double points $P^+$, and
suppose that $S'$ is obtained from $S$ by the birth of a pair of double points.
Write $p^+$ and  $p^-$ for the new positive and negative double points of $S'$, respectively.
 If $S$ satisfies the singular decoration-independence condition~\eqref{def:SDI} above degree~$d$, then $S'$ satisfies condition~\eqref{def:SDI} above degree
\[
\max\{ d, g(S') + |P^+\cup \{p^+\}| \}.
\]
Similarly, if $S'$ satisfies~\eqref{def:SDI} above degree~$d$, then so does $S$.
\end{lem}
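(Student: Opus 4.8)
The strategy is to show that when $S'$ is obtained from $S$ by the birth of a canceling pair of double points $p^+, p^-$, the desingularized cobordism $(B^4(S'), \hat{S}')$ differs from $(B^4(S), \hat{S})$ in a highly controlled way: by Lemma~\ref{lem:computedesingularization}, $B^4(S') = B^4(S) \# \overline{\CP}^2$, the surface $\hat{S}'$ is obtained from $\hat{S}$ by a $1$-handle stabilization along $\hat{S}_{\zs}$ (the new negative double point $p^-$ is resolved, adding a tube in the type-$\zs$ region, since all double points lie in $S_{\zs}$ by our choice of decoration), and this stabilization is disjoint from the $\overline{\CP}^2$ summand. The first step is to make precise how the trees used to define $\ve{t}^-_{S',P',\zs,\frs'}$ relate to those for $S$: given $P' \subset (P^-)' = P^- \cup \{p^-\}$, one either has $p^- \in P'$ or $p^- \notin P'$, and in each case a tree $T'$ on $\bar{S}'$ satisfying \eqref{num:tree1}--\eqref{num:tree2} can be chosen to be a tree $T$ on $\bar{S}$ (satisfying the conditions for $P = P' \cap P^-$) together with either a short edge running into the new leaf at $f^{-1}(p^-)$, or left untouched. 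By Proposition~\ref{prop:independentofT}, the cobordism map is independent of this choice, so we may compute with these specific trees.

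Next I would feed these tree choices through Lemma~\ref{lem:computedesingularization}. The key local picture (Figure~\ref{fig::movie}) shows that the blowup at $p^+$ happens in a ball disjoint from the surface after sliding bands, and the resolution at $p^-$ is literally a $1$-handle stabilization of the decorated surface. If $p^- \in P'$, this tube carries genus into the type-$\zs$ region and contributes a factor depending on whether we track it via the stabilization formula, Lemma~\ref{lem:stabilizationlemma}; if $p^- \notin P'$ but instead the tube is engulfed by the tree $T'$ (so $N(T')$ covers it), one must check using Lemma~\ref{lem:stabilizationlemma} again — together with the behavior of trees in Definition~\ref{def:A(T)} — that the resulting map is still governed only by the genera of the $\ws$- and $\zs$-subregions. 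The upshot is a relation of the shape
\[
\ve{t}^-_{S', P', \zs, \frs'} \simeq X \cdot \ve{t}^-_{S, P, \zs, \frs},
\]
where $P = P' \cap P^-$, $\frs = \frs'|_{B^4(S)}$, and $X \in \{U, V\}$ is determined by whether the new tube sits in the $\ws$- or $\zs$-region of the decoration $\cA_{\zs}$ (always $V$, with our conventions, when $p^- \notin P'$, and a factor absorbed into the genus count when $p^- \in P'$). One then checks that the arithmetic of the indices matches: passing from $S$ to $S'$ replaces $(i + |P|,\, j + |P^-| - |P|)$ by the corresponding quantities for $S'$ with the shift bookkept correctly, using $g(\hat{S}'_{\zs}) = g(\hat{S}_{\zs}) + 1$ when $p^- \in P'$.

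With this relation in hand, condition~\eqref{def:SDI} for $S'$ above the asserted degree follows by a direct substitution: given $i, j$ with $g(S') + |(P^-)'| + i + j \ge \max\{d, g(S') + |(P^-)'|\}$, we have $g(S) + |P^-| + i' + j' \ge d$ for the corresponding indices $i', j'$ (possibly after absorbing a unit of genus), so the hypothesis on $S$ applies and the chain homotopy type of $U^i V^j \cdot \ve{t}^-_{S', P', \zs, \frs'}$ depends only on $\frs'$ and $i + |P'|$, $j + |(P^-)'| - |P'|$. The converse — that if $S'$ satisfies \eqref{def:SDI} above degree $d$ then so does $S$ — is the reverse reading of the same relation, exactly as in the destabilization direction of the proof of Theorem~\ref{thm:I}. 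The main obstacle I anticipate is the careful case analysis of where the new tube lies relative to the tree $T'$ and ensuring that Proposition~\ref{prop:independentofT} genuinely lets us reduce to the convenient tree; the delicate point is that the tube arising from resolving $p^-$ may or may not be inside $N(T')$, and one must confirm that Lemma~\ref{lem:closedcomponentsbypass} and the vanishing statement \eqref{eq:dividingsetwithclosedloopiszero} interact correctly with the genus bookkeeping in both situations. Everything else is a routine combination of the composition law, the blow-up formula, and Lemma~\ref{lem:stabilizationlemma}.
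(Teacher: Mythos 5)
Your overall strategy coincides with the paper's: combine the desingularization formula of Lemma~\ref{lem:computedesingularization} with the tree-independence of Proposition~\ref{prop:independentofT} (already built into the definition of $\ve{t}^-_{S,P,\zs,\frs}$) to express $U^iV^j\cdot \ve{t}^-_{S',P',\zs,\frs'}$ as an explicit $U$- or $V$-multiple of a map for $S$ with $P=P'\cap P^-$, and then check the index bookkeeping against condition~\eqref{def:SDI}. However, your case analysis of where the new tube lands is backwards, and this is not a harmless slip of convention. By condition~\eqref{num:tree2}, the preimages of $p^-$ are leaves of $T'$ precisely when $p^-\in P'$; so when $p^-\in P'$ the tube is engulfed by $N(T')$, which for the decoration used to define the $\zs$-version of the invariant is the type-$\ws$ subregion, and Lemma~\ref{lem:computedesingularization} then produces a factor of $U$ (with $|P'|=|P|+1$, so it is $g(\hat{S}'_{\ws})$, not $g(\hat{S}'_{\zs})$, that increases by one). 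When $p^-\notin P'$ the tube lies outside $N(T')$, in the type-$\zs$ bulk, and the factor is $V$. You assert the opposite in both cases (``carries genus into the type-$\zs$ region'' for $p^-\in P'$, ``engulfed by the tree $T'$'' for $p^-\notin P'$), your opening claim that the tube is always added along $S_{\zs}$ conflates this decoration with the one used in Theorem~\ref{thm:doublepointbound}, and the identity $g(\hat{S}'_{\zs})=g(\hat{S}_{\zs})+1$ for $p^-\in P'$ is false.

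This matters because the whole point of the computation is that the two cases produce the \emph{same} pair of invariant quantities. With the correct factors, both $U^{i+1}V^j\cdot\ve{t}^-_{S,P'\setminus\{p^-\},\zs,\frs}$ (for $p^-\in P'$) and $U^iV^{j+1}\cdot\ve{t}^-_{S,P',\zs,\frs}$ (for $p^-\notin P'$) depend, by condition~\eqref{def:SDI} for $S$, only on $i+|P'|$ and $j+|P^-\cup\{p^-\}|-|P'|$, which is exactly what condition~\eqref{def:SDI} for $S'$ requires and is what identifies the maps for two subsets $P'$ realizing the same values of these quantities. If the factors are assigned the other way, the two cases yield dependence on quantities shifted in opposite directions and the argument does not close, so the assignment cannot be left as ``a factor absorbed into the genus count.'' The rest of your outline is fine and matches the paper: the degree arithmetic $g(S')+|P^-\cup\{p^-\}|+i+j=g(S)+|P^-|+i'+j'$ gives the stated degree, and the converse direction is the same relation read backwards. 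Your worry about Lemma~\ref{lem:closedcomponentsbypass} and equation~\eqref{eq:dividingsetwithclosedloopiszero} is unnecessary here, since the dividing set $\d N(T')\setminus \d\bar{S}'$ is a single arc by construction and no vanishing cases arise.
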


\begin{proof}
Consider first the case that $S$ satisfies condition~\eqref{def:SDI} above degree~$d$.
 Let $\frs \in \Spin^c(B^4(S))$ and $\frs' \in
\Spin^c(B^4(S'))$ be such that $\frs'$ agrees with $\frs$ on
$B^4(S) \setminus N(p^-)$ and $\langle\, c_1(\frs'), E \,\rangle = \pm 1$,
where $E$ is the exceptional divisor that appears after blowing up $B^4(S)$ at $p^-$.

Let $P \subset P^+$ and $P' \subset P^+ \cup \{p^+\}$.
By Lemma~\ref{lem:computedesingularization}, we have that
\begin{equation}
U^{i} V^j \cdot \ve{t}^-_{S', P', \zs, \frs'} \simeq
\begin{cases}
U^{i+1}V^j \cdot \ve{t}^-_{S, P' \setminus \{p^+\}, \zs, \frs} & \text{ if } p^+ \in P'\\
U^i V^{j+1} \cdot \ve{t}^-_{S, P', \zs, \frs} & \text{ if } p^+ \not\in P'.
\label{eq:formulaafterdoublepoint}
\end{cases}
\end{equation}
Since $S$ satisfies condition~\eqref{def:SDI} above degree~$d$, the
expression on the right side of equation~\eqref{eq:formulaafterdoublepoint}
depends only on the quantities $i + |P'|$ and $j + 1 + |P^+| - |P'|$, as long as
\[
i + j + 1 + |P^+| + g(S) \ge d.
\]
Hence $S'$ satisfies condition~\eqref{def:SDI} above degree~$d$ if $d > g(S) + |P^+|$,
or above degree~$d + 1$ if $d = g(S) + |P^+|$.

Next, we suppose that $S'$ satisfies condition~\eqref{def:SDI}
above degree
\[
d \ge g(S') + |P^+\cup \{p^+\}|=g(S')+|P^+|+1.
\]
The stabilization formula from equation~\eqref{eq:formulaafterdoublepoint}
 shows that $S$ satisfies condition~\eqref{def:SDI} above degree~$d$, as well.
\end{proof}

We can now prove that $V_k(S,S')$ gives a lower bound on $\mu_{\Sing}(S,S')$:

\begin{proof}[Proof of Theorem~\ref{thm:hinvdoublepointhighergenus}]
Suppose first that $g\le k \le \mu_{\Sing}(S,S')$.
Pick a sequence of generic immersed surfaces $S_1,\dots, S_n \in \Imm(K)$ such that
consecutive surfaces differ by the birth or death of a pair of double points.
Furthermore, assume that $S_1 = S$ and $S_n = S'$. Note that, trivially, $S$ and $S'$ satisfy the singular decoration-independence condition~\eqref{def:SDI} above degree~$g := g(S) = g(S')$.
We pick a stabilization sequence of maximal $\Spin^c$ structures $\frs_1,\dots,\frs_n$
on $B^4(S_1),\dots, B^4(S_n)$, respectively.

Write $m = \tfrac{1}{2} \max \{|\Sing(S_1)|, \dots, |\Sing(S_n)|\}$. By
Lemma~\ref{lem:stabilizationtypedoublepoints}, each of the immersed surfaces
$S_i$ satisfies condition~\eqref{def:SDI} above degree~$g + m$.
By adding one additional birth-death pair of double points, if necessary,
we may assume that $\frac{m+g-k}{2}$ and $\frac{m-g+k}{2}$ are both integers.
Note also that both expressions are nonnegative, since $g \le k \le m + g$ by assumption.
Let $P_l^+$ be the set of positive double points of $S_l$ for $l \in \{1,\dots,n\}$.
Since
\[
\frac{m+g-k}{2}+\frac{m-g+k}{2}=m
\]
and $|P_l^+| \le m$ for all $l$, we can pick subsets $P_l \subset P_l^+$ such that
\[
|P_l| \le \frac{m+g-k}{2} \qquad \text{and} \qquad |P_l^+| - |P_l| \le \frac{m-g+k}{2}.
\]
We then pick sequences of nonnegative integers $i_l$ and $j_l$ such that
\[
i_l + |P_l| = \frac{m+g-k}{2} \qquad \text{ and } \qquad  j_l + |P_l^+| - |P_l| = \frac{m-g+k}{2}
\]
for all $1 \le l \le n$.

Using our computation of the effect of a double point birth from
Lemma~\ref{lem:computedesingularization}, as well as
Lemma~\ref{lem:stabilizationtypedoublepoints} to change the decorations as
needed, we conclude that the maps
\[
U^{i_l} V^{j_l} \cdot \ve{t}^-_{S_l, P_l, \zs, \frs_l} \colon \cR^- \to \cCFL^-(\bK)
\]
are all filtered chain homotopic to one another. In particular,
\begin{equation}\label{eq:doublepointboundfromVk}
U^{\frac{g+m-k}{2}} V^{\frac{m-g+k}{2}} \cdot \ve{t}_{S,\zs }^-\simeq
U^{\frac{g+m-k}{2}} V^{\frac{m-g+k}{2}} \cdot \ve{t}_{S',\zs }^-.
\end{equation}
The maps in equation~\eqref{eq:doublepointboundfromVk} increase the Alexander grading by $k$.
Multiplying equation~\eqref{eq:doublepointboundfromVk} by $V^{-k}$ and rearranging, we conclude that
\[
\hat{U}^{\frac{g+m-k}{2}}\cdot [V^{-g} \cdot  \ve{t}_{S,\zs}^-(1)] =
\hat{U}^{\frac{g+m-k}{2}}\cdot [V^{-g} \cdot  \ve{t}_{S',\zs}^-(1)] \text{ in } H_*(A_k^-(\bK)),
\]
completing the proof for $k \le \mu_{\Sing}(S,S')$.

To verify that $V_k(S,S') = 0$ when $k \ge \mu_{\Sing}(S,S')$,
we note that the above result implies that $V_{\mu_{\Sing}(S,S')}(S,S') = 0$.
The monotonicity result from Lemma~\ref{lem:monotonicity} then implies the claim for $k > \mu_{\Sing}(S,S')$.
\end{proof}

\section{Upsilon and an infinite family of topological metric filtrations}
\label{sec:M-distance}

Let $K$ be a knot in $\Sphere^3$, and let $S$, $S' \in \Surf(K)$.
The invariant $\Upsilon_{(S,S')}$ gives a family of algebraically defined functions
\[
\Upsilon_{(S,S')}(t) \colon \Surf(K) \times \Surf(K) \to \R^{\ge 0}
\]
parametrized by $t \in [0,2]$. In this section, we describe
a topologically defined family $M_{(S,S')}(t)$
of functions that are bounded below by $\Upsilon_{(S,S')}(t)$.

\subsection{The topological $M$-metric on \texorpdfstring{$\Surf(K)$}{Surf(K)}}

The topological $M$-metric will be defined using the following
generalized stabilization operation:

\begin{define}\label{def:generalizedstabilization}
Suppose that $(W, S)$ bounds $(\Sphere^3, K)$, and that $B^4 \subset \Int(W)$ is an
embedded 4-ball such that $\d B^4 \cap S$ is an $n$-component unlink $U_n$. Further,
suppose that $B^4 \cap S$ consists of disks $D_1, \dots, D_n$ that can be
smoothly isotoped into $\d B^4$ relative to $U_n$. We say that $(W',S')$ is a \emph{generalized
stabilization} of $(W, S)$ if it is formed by removing $(B^4, S \cap B^4)$ from
$(W, S)$, and gluing in a link cobordism $(X_0, S_0)$ such that the following hold:
\begin{enumerate}
\item $(X_0, S_0)$ is a cobordism from $\emptyset$ to $(\d B^4, U_n)$,
\item $b_2^+(X_0) = b_1(X_0) = 0$,
\item $S_0$ is connected and oriented.
\end{enumerate}
\end{define}

We remark that, although Definition~\ref{def:generalizedstabilization}
clearly generalizes the stabilization operation from
Section~\ref{sec:stabilizationgenusdef}, it may still seem somewhat
unmotivated.  We note that, after a double point creation, the desingularization of an
immersed surface changes by a generalized stabilization:

\begin{example}
Suppose that $S$ is an immersed surface in $B^4$ which bounds $K$, and $S'$ is
obtained from $S$ by a creation of a pair of canceling double points. If
$(W, \hat{S})$ and $(W', \hat{S}')$ denote their desingularizations, as defined
in Definition~\ref{def:desingularization}, then $(W', \hat{S}')$ can be
obtained from $(W, \hat{S})$ by a generalized stabilization. Indeed, since a
double point creation can be achieved by a finger move supported in a
neighborhood of a path $\lambda$ connecting two points on $S$, we can take
$B^4$ to be a neighborhood of $\lambda$. Clearly, $\d B^4$ intersects $S$
along two disks, and $S \cap \d B^4$ is a 2-component unlink. Since $S'$
differs from $S$ only inside $B^4$, the desingularization $(W, \hat{S})$ can
be obtained by cutting out $B^4$ and gluing in $(\overline{\CP}^2 \setminus
B^4, S_0)$ for an annulus $S_0$ in $\overline{\CP}^2 \setminus B^4$.
\end{example}

If $W$ is a compact, oriented 4-manifold with boundary $\Sphere^3$, we let
$\Char(W)$ denote the set of \emph{characteristic vectors} of the
intersection form $Q_W$; i.e., the set of elements $C \in H^2(W) \cong H^2(W, \d W)$ such that
\[
\langle\, x \cup x, [W,\d W] \,\rangle \equiv \langle\, C \cup x, [W,\d W] \,\rangle \mod 2
\]
for every $x \in H^2(W,\d W)$.
It is well known that $\Char(W) = \{\, c_1(\frs) : \frs \in \Spin^c(W) \,\}$.

Suppose $(W,S)$ is a link cobordism from $\emptyset$ to $(\Sphere^3, K)$, with
$b_1(W) = b_2^+(W) = 0$. For $C \in \Char(W)$, let
\[
H_t(W, [S], C) : =\frac{C^2 + b_2(W) - 2t \langle\, C,[S] \,\rangle + 2t[S] \cdot [S]}{4},
\]
where $[S] \in H_2(W) \cong H_2(W, \d W)$. Furthermore, for $\frs \in \Spin^c(W)$, we write
\[
H_t(W, [S], \frs) := H_t(W, [S], c_1(\frs)).
\]
If $t \in [0,2]$, we define the \emph{$M$-degree} of $(W,S)$ to be the function
\[
M_{(W,S)}(t) := \min_{C \in \Char(W)} -H_t(W, [S], C) +  t \cdot g(S).
\]

\begin{define}
Suppose that $K$ is a knot in $\Sphere^3$, and $S$, $S' \in \Surf(K)$.
We say that $\vec{\cS} = \{\cS_1,\dots, \cS_n\}$ is a
\emph{generalized stabilization sequence} from $S$ to $S'$ if the
following hold:
\begin{enumerate}
\item Each $\cS_i = (W_i, S_i)$ is a link cobordism bounding $(\Sphere^3, K)$,
    such that $S_i$ is connected and $b_1(W_i) = b_2^+(W_i) = 0$.
\item $\cS_1 = (B^4, S)$ and $\cS_n = (B^4, S')$.
\item Up to diffeomorphism fixing $\Sphere^3$ pointwise,
    $\cS_{i+1}$ is obtained from $\cS_i$ via a generalized
    stabilization or destabilization.
\end{enumerate}
We write $P_{\st}(S, S')$ for the set of stabilization sequences
connecting $S$ and $S'$.
\end{define}

\begin{define}
Let $K$ be a knot in $\Sphere^3$ and suppose $S$, $S' \in \Surf(K)$.
Given a stabilization sequence
$\vec{\cS} = \{\cS_1, \dots, \cS_n\}$ from $S$ to $S'$,
we define the \emph{$M$-degree} of the sequence $\vec{\cS}$ to be the function
\[
M_{\vec{\cS}}(t) := \max_{1 \le i \le n} M_{(W_i, S_i)}(t).
\]
Furthermore, the \emph{$M$-distance} of $S$ and $S'$ is the function
$M_{(S,S')} \colon [0,2] \to \R^{\ge 0}$ defined by
\[
M_{(S,S')}(t) := \min_{\vec{\cS} \in P_{\st}(S, S')} M_{\vec{\cS}}(t).
\]
\end{define}

For each $t \in [0,2]$, the quantity $M_{(S,S')}(t)$ is a metric filtration on $\Surf(K)$.

\subsection{The $M$-metric and \texorpdfstring{$\Upsilon$}{upsilon}}

In this section, we prove the following:

\begin{thm}\label{thm:Upsilonbound}
If  $K$ is a knot in $\Sphere^3$ and  $S$, $S' \in \Surf(K)$, then
\[
\Upsilon_{(S,S')}(t) \le M_{(S,S')}(t).
\]
\end{thm}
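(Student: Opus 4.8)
The plan is to follow the template that has already produced the bounds $\tau(S,S') \le \mu_{\st}(S,S')$ and $\tau(S,S') \le \mu_{\Sing}(S,S')$, but now tracking the full $t$-modified cobordism maps and the $\Spin^c$ grading shift through a generalized stabilization sequence. Fix a generalized stabilization sequence $\vec{\cS} = \{\cS_1, \dots, \cS_n\}$ from $S$ to $S'$, where $\cS_i = (W_i, S_i)$, and let $\bK = (K, w, z)$. Decorate each $S_i$ so that the type-$\ws$ region is a bigon along $K$ and everything else is type-$\zs$, giving the extremal principal invariant $\ve{t}_{S_i, \zs}^\infty$. For each $i$ choose a $\Spin^c$ structure $\frs_i \in \Spin^c(W_i)$ realizing the minimum in the definition of $M_{(W_i, S_i)}(t)$, and arrange (as in the proof of Theorem~\ref{thm:doublepointbound}) that consecutive $\frs_i$ are related by the generalized stabilization. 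The first step is to show that a single generalized stabilization $\cS_i \rightsquigarrow \cS_{i+1}$ changes the $t$-modified cobordism map by multiplication by $v^{\delta}$, where $\delta$ is exactly the change in the quantity $-H_t(W, [S], \frs) + t\cdot g(S)$; this is the analogue of Lemma~\ref{lem:stabilizationlemma} and Lemma~\ref{lem:computedesingularization}, obtained by cutting out $B^4$, applying Lemma~\ref{lem:closedsurfacecomputation} to the piece glued in (whose $4$-manifold has $b_1 = b_2^+ = 0$), and invoking the $\Spin^c$-graded blow-up/composition formulas together with the $t$-grading formula $\gr_t = \tfrac{t}{2}\gr_{\zs} + (1 - \tfrac{t}{2})\gr_{\ws}$ under which $v$ is $-1$-graded.

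Granting this, let $M := M_{\vec{\cS}}(t) = \max_i M_{(W_i, S_i)}(t)$. The grading-shift computation shows that each $\tF^-_{W_i, \cF_i, \frs_i}$, suitably multiplied by a nonnegative power of $v$ to bring it up to $\gr_t$-degree corresponding to the value $M$, gives the \emph{same} map up to filtered chain homotopy. Concretely, this yields
\[
v^{M - t\cdot g(S)} \cdot [\tvt^-_{S,\zs}(1)] = v^{M - t\cdot g(S')} \cdot [\tvt^-_{S',\zs}(1)] \in \tHFK^-(\bK),
\]
and by the definition of $\Upsilon_{(S,S')}(t)$ this says $\Upsilon_{(S,S')}(t) \le M = M_{\vec{\cS}}(t)$. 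Taking the minimum over all generalized stabilization sequences $\vec{\cS} \in \vec{\cS}(S,S')$ gives $\Upsilon_{(S,S')}(t) \le M_{(S,S')}(t)$, as desired. One must check that the nonnegativity of the powers of $v$ involved — i.e. that the target $\gr_t$-degree $M$ dominates each intermediate degree — follows precisely from $M = \max_i M_{(W_i, S_i)}(t)$; this is where the choice of $M$-degree as a maximum along the sequence does its work, exactly parallel to the role of $\max\{g(S_1), \dots, g(S_k)\}$ in Theorem~\ref{thm:tauD1D2bound}. Alternatively, I would run the whole argument through the Livingston-style reformulation of Lemma~\ref{lem:livingstonreformupsilon}, working with the filtration $\cG_s^t(\bK)$ of $\CFK^\infty(\bK)$, which makes the monotone-filtration bookkeeping cleaner: a single generalized stabilization corresponds to shifting the $t$-filtration level by the change in $H_t$, and one concludes that $[V^{-g}\cdot\ve{t}^-_{S,\zs}(1)] = [V^{-g'}\cdot\ve{t}^-_{S',\zs}(1)]$ in $H_*(\cG_M^t(\bK))$.

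The main obstacle is the first step: precisely identifying the $v$-power (equivalently, the $\gr_t$-grading shift) produced by a generalized stabilization in terms of $H_t(W_0, [S_0], \frs_0)$ for the glued-in piece $(X_0, S_0)$. This requires combining Lemma~\ref{lem:closedsurfacecomputation} (which handles the link cobordism map from $\emptyset$ to an unknot, here an unlink, and forces it to be multiplication by a monomial $U^a V^b$ modulo the $H_1$-action, with $a - b$ controlled by $\langle c_1(\frs), [S_0 \cup D]\rangle - [S_0\cup D]^2$) with the absolute grading formula of \cite{ZemAbsoluteGradings}*{Theorem~1.4} to pin down $a + b$, and then passing through the $\Spin^c$ composition law to assemble the contributions of all the exceptional-type summands into the combination $C^2 + b_2(W) - 2t\langle C, [S]\rangle + 2t[S]^2$ appearing in $H_t$. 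The bookkeeping with the $H_1$-action terms should vanish in the end as in the proof of Lemma~\ref{lem:stabilizationlemma}, because the $4$-manifolds involved in a generalized stabilization are built so that $H_1$ of the boundary surjects onto $H_1$ of the glued piece; but verifying this, and checking that we may always choose the intermediate $\Spin^c$ structures compatibly so that their $c_1$-squares track the minimizing characteristic vectors, is the delicate part. Once the single-step formula is in hand, the telescoping argument and the passage to $\tHFK^-$ are entirely routine, mirroring Theorems~\ref{thm:tauD1D2bound}, \ref{thm:doublepointbound}, and \ref{thm:Vkbound}.
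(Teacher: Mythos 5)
Your proposal follows essentially the same route as the paper: the paper proves a generalized stabilization formula (Lemma~\ref{lem:stabilizationlemma2} and Corollary~\ref{cor:tmodifiedstabilizationformula}) expressing the change in the $t$-modified cobordism map as $v$ to the power of the change in $G_t$, exactly the quantity $-H_t + t\cdot g$ you identify, and then telescopes along the sequence after choosing a compatible sequence of minimizing $\Spin^c$ structures via the additivity of $H_t$ under connected sum. The only ingredient you leave slightly implicit is that pinning down the $\hat U$-power of the glued-in piece uses not just the grading shift but also the injectivity of $F_{X_0,\frt}$ on $\HF^-(\Sphere^3)$ for $b_1(X_0)=b_2^+(X_0)=0$ (the paper cites the proof of \cite{OSIntersectionForms}*{Theorem~9.1}); otherwise the outline matches the paper's argument.
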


The proof of Theorem~\ref{thm:Upsilonbound} is similar to the proof of
Theorem~\ref{thm:tauD1D2bound}, our bound on $\tau$.
It is convenient to introduce the following notation.
Suppose $(W, \cF) \colon (Y_1, \bL_1)\to (Y_2, \bL_2)$  is a decorated link cobordism,
 and $\frs \in \Spin^c(W)$. Write $\cF = (S, \cA)$.
If $\frs|_{Y_i}$ is torsion and $\bL_i = (L_i, \ws_i, \zs_i)$
is null-homologous for $i \in \{1, 2\}$, we define the quantities
\[
G_{\ws}(W,\cF,\frs) := \frac{c_1(\frs)^2 - 2\chi(W) - 3\sigma(W)}{4} +
\chi(S_{\ws}) - \frac{1}{2}(|\ws_1| + |\ws_2|)
\]
and
\[
G_{\zs}(W,\cF,\frs) := \frac{c_1(\frs - \PD[S])^2 - 2\chi(W) - 3\sigma(W)}{4} +
\chi(S_{\zs}) - \frac{1}{2}(|\zs_1| + |\zs_2|).
\]
For $t\in [0,2]$, we define
\[
G_t(W,\cF,\frs):=\left(1-\frac{t}{2}\right)\cdot G_{\ws}(W,\cF,\frs)+\frac{t}{2}\cdot  G_{\zs}(W,\cF,\frs).
\]

In the case when $(W,\cF)$ is a cobordism from $\emptyset$ to $(\Sphere^3, \bK)$
with $b_2^+(W) = b_1(W) = 0$, and the dividing set $\cA$ consists of a single arc
that divides $S$ into two components, we have
\[
G_{\ws}(W,\cF,\frs) = \frac{c_1(\frs)^2 + b_2(W)}{4} - 2g(S_{\ws}) \text{ and }
G_{\zs}(W,\cF,\frs) = \frac{c_1(\frs-\PD[S])^2 + b_2(W)}{4} - 2g(S_{\zs}).
\]
In this situation, we also have
\begin{equation}\label{eqn:Gt}
G_t(W,\cF,\frs) = H_t(W, [S], \frs) - (2-t) \cdot g(S_{\ws}) - t \cdot g(S_{\zs}).
\end{equation}

We now compute the effect of a generalized stabilization;
cf.~Lemma~\ref{lem:stabilizationlemma}:

\begin{lem}\label{lem:stabilizationlemma2}
Suppose that $(W,\cF) \colon (Y_1, \bL_1) \to (Y_2, \bL_2)$ is a decorated link
cobordism with $b_1(W) = 0$. Write $\cF = (S,\cA)$. Suppose that $(W',S')$ is a
generalized stabilization of $(W,S)$, obtained by cutting out $B^4 \subset W$
and gluing in a link cobordism $(X_0, S_0)$ with $b_1(X_0) = b_2^+(X_0) = 0$,
as in Definition~\ref{def:generalizedstabilization}. Let $D_1, \dots, D_n$ denote
the components of $S \cap B^4$, and suppose that $\hat{D} \subset S$ is a disk
that contains each of $D_1, \dots, D_n$, and intersects $\cA$ in a single arc. Define
\[
S_0' := (\hat{D} \setminus (D_1 \cup \cdots \cup D_n)) \cup S_0,
\]
and suppose that $\cA'$ is a dividing set on $S'$ that intersects $S_0'$ in
a single arc, and agrees with $\cA$ outside $\hat{D}$. Write
$\cF' = (S',\cA')$. If $\frs' \in \Spin^c(W')$ agrees with $\frs \in \Spin^c(W)$
on $W \setminus B^4$, then
\[
F_{W',\cF',\frs'} \simeq U^{-d_1/2} V^{-d_2/2} \cdot F_{W,\cF,\frs},
\]
where $d_1 := G_{\ws}(W',\cF',\frs') - G_{\ws}(W,\cF,\frs)$ and
$d_2 := G_{\zs}(W',\cF',\frs') - G_{\zs}(W,\cF,\frs)$.
\end{lem}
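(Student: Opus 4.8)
The plan is to mirror the proof of Lemma~\ref{lem:stabilizationlemma}, replacing the handlebody argument and Lemma~\ref{lem:closedsurfacecomputation} with the general gluing formula for link cobordism maps over an arbitrary $\Spin^c$-filling $(X_0,S_0)$. First I would set $B^4\subset W$ as in the hypothesis and decompose $W' = W_1 \cup X_0$, where $W_1 = W\setminus\Int(B^4)$, so that $(W',\cF')$ factors, up to diffeomorphism fixing $\Sphere^3$, as the composition of the cobordism $(X_0,\cF_0')$ from $\emptyset$ to $(\d B^4, U_n)$ followed by $(W_1,\cF_1)$ with $\cF_1 = \cF'\cap W_1 = \cF\cap W_1$. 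By the $\Spin^c$ composition law (valid since $b_1(W)=0$ forces $\delta\colon H^1(\d B^4)\to H^2(W')$ to vanish and the $\Spin^c$ structures glue uniquely),
\[
F_{W',\cF',\frs'} \simeq F_{W_1,\cF_1,\frs_1}\circ F_{X_0,\cF_0',\frs_0},
\]
and similarly $F_{W,\cF,\frs} \simeq F_{W_1,\cF_1,\frs_1}\circ F_{B^4,\hat{\cF},\frs|_{B^4}}$, where $\hat{\cF}$ is $\hat{D}$ with its single dividing arc. So it suffices to prove the local statement: $F_{X_0,\cF_0',\frs_0} \simeq U^{-d_1/2}V^{-d_2/2}\cdot F_{B^4,\hat{\cF},\frs|_{B^4}}$ as maps $\cR^\infty\to\cCFL^\infty(\Sphere^3, U_n)$, modulo the $H_1$-action — which here is trivial since $H_1(\d B^4) = 0$.

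Next I would establish the local identity using the $V=1$ and $U=1$ reductions together with the knotification functor, exactly as in Lemma~\ref{lem:closedsurfacecomputation}, but now appealing to the general computation of $\Red_{V=1}(F_{X_0,\cF_0',\frs_0})$ from \cite{ZemCFLTQFT}*{Theorem~C} and the closed-surface $4$-manifold formula. Since $S_0'$ is connected with a single boundary circle, \cite{ZemAbsoluteGradings}*{Lemma~9.6} (or directly Theorem~C) gives $\Red_{V=1}(F_{X_0,\cF_0',\frs_0})\simeq \hat U^{\,g(S_{0,\ws}')}\cdot F_{X_0,\frs_0}$; applying $\Kn$ and Lemma~\ref{lem:relknotifiedredandoriginal} recovers $F_{X_0,\cF_0',\frs_0}$ up to a power of $V$ determined by the $A_D$-grading shift. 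The grading computation is the crux: the Alexander-grading shift of $F_{X_0,\cF_0',\frs_0}$ relative to the fixed Seifert disk of $U_n$, minus that of $F_{B^4,\hat{\cF},\frs|_{B^4}}$, must be shown to equal $(d_2 - d_1)/2$, while the total $\hat U = UV$ power discrepancy is $-(d_1+d_2)/2$. Both follow from the dimension/grading formulas of \cite{ZemAbsoluteGradings}*{Theorem~1.4} applied to $G_{\ws}$ and $G_{\zs}$: by definition $G_{\ws}(W',\cF',\frs') - G_{\ws}(W,\cF,\frs) = G_{\ws}(X_0,\cF_0',\frs_0) - G_{\ws}(B^4,\hat{\cF},\frs|_{B^4}) = d_1$ (the pieces outside $B^4$ cancel, since $\chi$, $\sigma$, $c_1^2$, and the $\chi(S_\bullet)$-terms are additive under the decomposition), and likewise for $d_2$; and $G_{\ws}$, $G_{\zs}$ are precisely engineered so that the $\gr_{\ws}$- and $\gr_{\zs}$-grading shifts of a cobordism map equal twice these quantities. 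Feeding these into the identity $F \simeq V^{(\text{grading shift})}\cdot (\Kn\circ\Red_{V=1})(F)$ from Lemma~\ref{lem:relknotifiedredandoriginal}, and rewriting powers of $\hat U$, $V$ in terms of $U$ and $V$, yields $F_{X_0,\cF_0',\frs_0}\simeq U^{-d_1/2}V^{-d_2/2}\cdot F_{B^4,\hat{\cF},\frs|_{B^4}}$. The case where the relevant Alexander shift is negative is handled symmetrically via the $U=1$ reduction, exactly as in Lemma~\ref{lem:closedsurfacecomputation}.

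The main obstacle I anticipate is bookkeeping the $\Spin^c$ structures and gradings on the non-simply-connected piece $X_0$: unlike the $4$-dimensional $1$-handlebody in Lemma~\ref{lem:stabilizationlemma}, here $X_0$ can have nontrivial intersection form (its $b_2^+ = 0$, but $b_2^-$ may be positive, as in the $\overline{\CP}^2$-example), so one must check that $\Red_{V=1}(F_{X_0,\cF_0',\frs_0})$ still reduces to a single power of $\hat U$ times the closed-surface invariant without stray $H_1(\d B^4)$-corrections — which is automatic as $H_1(\d B^4)=0$ — and that the $c_1(\frs_0)^2$ appearing in $G_\bullet(X_0,\cdot)$ is exactly the $C^2 + b_2(X_0)$ contribution to $H_t(W',[S'],\frs')$. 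Verifying that $d_1, d_2$ are even integers (so that $U^{-d_1/2}V^{-d_2/2}$ makes sense over $\cR^\infty$) is part of this: both are differences of the half-integer-looking but in fact integer quantities $G_\bullet$, and the parity follows because adjacent $\Spin^c$ structures differ by evaluating against $[S']$ and the characteristic vector condition forces the relevant quantities to change by even amounts. Once the grading arithmetic is pinned down, everything else is a direct transcription of the earlier argument, so I would present it compactly, citing Lemma~\ref{lem:closedsurfacecomputation} and Lemma~\ref{lem:relknotifiedredandoriginal} for the structural steps and doing only the $G_{\ws}$–$G_{\zs}$ computation in detail.
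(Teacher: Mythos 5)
Your overall strategy---decompose $W'$ along the stabilization region, compute the local piece via the reduction/knotification machinery, and absorb the grading bookkeeping into the $G_{\ws}$, $G_{\zs}$ differences---is the right shape, and your observations about additivity and parity of $d_1,d_2$ are correct. But there is a genuine gap in the reduction step: you cut along $\d B^4$ and claim it suffices to compare $F_{X_0,\cF_0',\frs_0}$ with $F_{B^4,\hat{\cF},\frs|_{B^4}}$ as maps into $\cCFL^\infty(\Sphere^3, U_n)$. When $n>1$ (which is the case of interest, e.g.\ for $1$-handle stabilizations), $U_n$ is an $n$-component unlink, not a doubly-based unknot, so Lemma~\ref{lem:closedsurfacecomputation} does not apply: that lemma, and the whole $\Kn\circ\Red_{V=1}$ argument behind it, requires the outgoing link to be a single unknot $\bU$ with a chosen Seifert disk, so that $\cCFL^-(Y,\bU,\frs)$ is the knotification of $\CF^-(Y,p,\frs)$ and the filtered map is recovered from its reduction up to a single power of $V$ determined by one Alexander grading. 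For an $n$-component unlink the complex carries an Alexander multi-grading, the surface $S_0$ has $n$ boundary components, and the quantity $h(S\cup D,\frs)$ and the genera $g(S_{\ws})$, $g(S_{\zs})$ of the two pieces of a single dividing arc are not even defined in the form you need. So your "local statement" cannot be established by citing the earlier lemmas.

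The paper's proof repairs exactly this by cutting along a larger hypersurface: it takes $N(\hat{D}_0)$, the normal disk bundle of the punctured disk $\hat{D}_0=\hat{D}\setminus B^4$, and sets $W_0:=N(\hat{D}_0)\cup B^4$, $W_0':=N(\hat{D}_0)\cup X_0$, with common boundary $Y$. The tube $N(\hat{D}_0)$ closes the unlink $U_n$ up into the single unknot $K=\d\hat{D}$ in $Y$ (with an explicit Seifert disk), so Lemma~\ref{lem:closedsurfacecomputation} applies to both $(W_0,\hat{D})$ and $(W_0',S_0')$. The price is that $H_1(Y)\neq 0$, so one must then check that the $H_1(Y)$-correction terms die after composing with the rest of the cobordism (they do, because $Y$ bounds a $4$-dimensional $1$-handlebody in $W$ and $b_1(W)=0$), and that the coboundary maps $H^1(Y)\to H^2(W)$, $H^1(Y)\to H^2(W')$ vanish so the $\Spin^c$ composition law gives a single term. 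The remaining step---identifying $F_{X_0,\frt}$ on $\HF^-(\Sphere^3)$ with multiplication by $\hat{U}^{-d/2}$ using injectivity from $b_1(X_0)=b_2^+(X_0)=0$ and the degree formula---is the analogue of your appeal to the closed-surface formula, and your $G_{\ws}$/$G_{\zs}$ arithmetic then matches the paper's. In short: add the normal-neighborhood handlebody to your decomposition and track the resulting $H_1(Y)$ ambiguity, and your argument becomes the paper's.
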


\begin{proof}
Let $\hat{D}_0$ denote the punctured disk $\hat{D} \setminus B^4$.
We write $N(\hat{D}_0)$ for the total space of the unit normal disk bundle of
$\hat{D}_0$ in $W \setminus B^4$, and let
\[
W_0 := N(\hat{D}_0) \cup B^4 \text{ and } W_0' := N(\hat{D}_0) \cup X_0.
\]
Note that $W_0$ and $W_0'$ are topologically obtained from $B^4$ and $X_0$, respectively,
by attaching a collection of 4-dimensional 1-handles. Write $Y := \d W_0 = \d W_0'$.
We can view  $(W_0, \hat{D})$ and $(W_0', S_0')$ as undecorated link cobordisms
from the empty set to the knot
\[
K := \d \hat{D} \times \{0\} = S' \cap Y \subset N(\hat{D}_0)
\]
in $Y$. As in Lemma~\ref{lem:stabilizationlemma}, the knot $K$ is an unknot in $Y$,
since we can explicitly construct a Seifert disk $D_K$.
Let us write $\cF_0 = (\hat{D}, \cA)$ and $\cF_0' = (S'_0, \cA')$.

Suppose $\frs' = \frs \# \frt$, for $\frt \in \Spin^c(X_0).$
Consider the quantity
\[
h(S_0' \cup D_K, \frs') := \frac{\langle\, c_1(\frs'), [S_0'\cup D_K] \,\rangle -
[S_0' \cup D_K] \cdot [S_0' \cup D_K]}{2}.
\]
If $h(S_0'\cup D_K, \frs') + g(S_{0,\zs}') - g(S_{0,\ws}') \ge 0$,
then, according to Lemma~\ref{lem:closedsurfacecomputation},
\begin{equation}
F_{W_0',\cF_0',\frs'|_{W_0'}} \simeq U^{g(S_{0,\ws}')} V^{g(S_{0,\zs}') +
h(S_0'\cup D_K,\frs')} \cdot (F_{W_0',\frs'|_{W_0'}}\otimes \bF[U,V]) \mod H_1(Y),
\label{eq:genstab1}
\end{equation}
while
\begin{equation}
F_{W_0,\cF_0,\frs|_{W_0}} \simeq (F_{W_0,\frs|_{W_0}}\otimes \bF[U,V]) \mod H_1(Y).
\label{eq:genstab2}
\end{equation}
Up to diffeomorphism, we can write $W_0$ and $W_0'$ as the compositions
\[
W_0 \iso W_1 \circ B^4 \text{ and } W_0' \iso W_1 \circ X_0,
\]
where $W_1 \iso (I \times \Sphere^3) \cup N(\hat{D}_0)$.
We note $W_1$ is a 1-handle cobordism.

The map
\[
F_{X_0,\frt} \colon \HF^-(\Sphere^3) \to \HF^-(\Sphere^3)
\]
is an injection since $b_1(X_0) = b_2^+(X_0) = 0$, by the proof of \cite{OSIntersectionForms}*{Theorem~9.1}.
The map $F_{X_0,\frt}$ has grading
\[
d := \frac{c_1(\frt)^2 + b_2(X_0)}{4},
\]
and hence must be chain homotopic to multiplication by $\hat{U}^{-d/2}$. Hence
\begin{equation}
F_{W_0',\frs'|_{W_0'}} \simeq  F_{W_1,\frs|_{W_1}} \circ F_{X_0,\frt} \simeq
\hat{U}^{-d/2} \cdot F_{W_1,\frs|_{W_1}} \circ F_{B^4,\frs|_{B^4}} \simeq
\hat{U}^{-d/2} \cdot F_{W_0,\frs|_{W_0}}.
\label{eq:genstab3}
\end{equation}
Write $W_2 := W \setminus W_1$. The inclusion $H_1(Y) \to H_1(W_2)/\Tors$ is trivial,
since $Y$ is the boundary of a 4-dimensional 1-handlebody in $W$.
Similarly, the coboundary maps $ H^1(Y) \to H^2(W)$ and $H^1(Y)\to H^2(W')$ are both trivial.
Hence, combining equations~\eqref{eq:genstab1}, \eqref{eq:genstab2}, and~\eqref{eq:genstab3},
and using the $\Spin^c$ composition law, we conclude that
\[
F_{W',\cF',\frs'} \simeq U^{-d/2+g(S_{0,\ws}')} V^{-d/2+h(S_0' \cup D_K,\frs') + g(S_{0,\zs}')}
\cdot F_{W,\cF,\frs}.
\]
It is easy to see that
\[
-\frac{d}{2} + g(S_{0,\ws}') = -\frac{1}{2} \left( G_{\ws}(W',\cF',\frs') - G_{\ws}(W,\cF,\frs)\right)
\]
and
\[-\frac{d}{2} + h(S_0'\cup D_K,\frs') + g(S_{0,\zs}') =
-\frac{1}{2} \left(G_{\zs}(W',\cF',\frs') - G_{\zs}(W,\cF,\frs)\right),
\]
which completes the proof in the case when $h(S_0'\cup D_K, \frs') + g(S_{0,\zs}') - g(S_{0,\ws}') \ge 0$.

The proof when $h(S_0'\cup D_K, \frs') + g(S_{0,\zs}') - g(S_{0,\ws}') \le 0$ is an easy modification,
using the corresponding subcase of Lemma~\ref{lem:closedsurfacecomputation}.
\end{proof}

Lemma~\ref{lem:stabilizationlemma2} also immediately computes
the effect of a generalized stabilization on the $t$-modified
versions of the link cobordism maps:

\begin{cor}\label{cor:tmodifiedstabilizationformula} If $(W',\cF')$
 is a generalized stabilization of $(W,\cF)$ and $\frs'\in \Spin^c(W')$
  restricts to $\frs$ on $W\setminus B^4$, then
\[
\tF_{W',\cF',\frs'}\simeq v^{-G_t(W',\cF',\frs')+G_t(W,\cF,\frs)}\cdot \tF_{W,\cF,\frs}.
\]
\end{cor}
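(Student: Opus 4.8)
The plan is to obtain the corollary by applying the base-change functor $-\otimes_{\bF_2[U,V]}\bF_2[v^{1/n}]$ to the chain homotopy equivalence of Lemma~\ref{lem:stabilizationlemma2}. Recall from Section~\ref{sec:variations} that the $t$-modified cobordism map is $\tF_{W,\cF,\frs}=F_{W,\cF,\frs}\otimes \id_{\bF_2[v^{1/n}]}$, where $\bF_2[v^{1/n}]$ carries the $\bF_2[U,V]$-module structure in which $U$ acts by $v^{2-t}$ and $V$ acts by $v^{t}$. The chain homotopy provided by Lemma~\ref{lem:stabilizationlemma2} is through filtered, $\bF_2[U,V]$-equivariant maps, so tensoring it over $\bF_2[U,V]$ with $\bF_2[v^{1/n}]$ produces a chain homotopy between $\tF_{W',\cF',\frs'}$ and the $t$-modification of $U^{-d_1/2}V^{-d_2/2}\cdot F_{W,\cF,\frs}$, with $d_1=G_{\ws}(W',\cF',\frs')-G_{\ws}(W,\cF,\frs)$ and $d_2=G_{\zs}(W',\cF',\frs')-G_{\zs}(W,\cF,\frs)$ as in the lemma.

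First I would record that, under this tensor product, multiplication by $U^aV^b$ becomes multiplication by $v^{(2-t)a+tb}$; applied with $a=-d_1/2$ and $b=-d_2/2$ this gives
\[
\tF_{W',\cF',\frs'}\simeq v^{-\frac{(2-t)d_1+td_2}{2}}\cdot\tF_{W,\cF,\frs}.
\]
It then remains only to identify the exponent. Rewriting $\tfrac{1}{2}\bigl((2-t)d_1+td_2\bigr)=\bigl(1-\tfrac t2\bigr)d_1+\tfrac t2 d_2$, substituting the values of $d_1$ and $d_2$, and invoking the definition $G_t=\bigl(1-\tfrac t2\bigr)G_{\ws}+\tfrac t2 G_{\zs}$, one gets
\[
\tfrac{1}{2}\bigl((2-t)d_1+td_2\bigr)=G_t(W',\cF',\frs')-G_t(W,\cF,\frs),
\]
which is exactly the claimed exponent.

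I do not expect a genuine obstacle here: all of the geometric input — the decomposition of the relevant submanifold as a handle cobordism composed with $X_0$, the injectivity and grading computation for $F_{X_0,\frt}$, and the reduction to graph cobordism maps via Lemma~\ref{lem:closedsurfacecomputation} — is already contained in the proof of Lemma~\ref{lem:stabilizationlemma2}. The corollary is a bookkeeping translation into the $t$-modified setting, and the one point to verify carefully is precisely that $G_t$ is the weighted average of $G_{\ws}$ and $G_{\zs}$ that matches the $v$-grading induced by the $U$- and $V$-actions, which is the short computation above. (The exponents $d_1/2$ and $d_2/2$ need not be integers individually, but $\bF_2[v^{1/n}]$ already contains the relevant fractional powers of $v$, and $d_1$, $d_2$ are in any case even, so this causes no difficulty.)
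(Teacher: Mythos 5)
Your proposal is correct and is exactly the argument the paper intends: the corollary is stated as an immediate consequence of Lemma~\ref{lem:stabilizationlemma2}, obtained by tensoring with $\bF_2[v^{1/n}]$ over $\bF_2[U,V]$ so that $U^{-d_1/2}V^{-d_2/2}$ becomes $v^{-(1-t/2)d_1 - (t/2)d_2}$, and your identification of this exponent with $-G_t(W',\cF',\frs')+G_t(W,\cF,\frs)$ via the definition of $G_t$ as the weighted average of $G_{\ws}$ and $G_{\zs}$ is the correct bookkeeping. No gaps.
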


We can now prove Theorem~\ref{thm:Upsilonbound}:

\begin{proof}[Proof of Theorem~\ref{thm:Upsilonbound}]
Fix $t \in [0,2]$. Suppose $\vec{\cS} = \{\cS_1, \dots, \cS_n\}$ is a
stabilization sequence connecting $(B^4, S)$ and $(B^4, S')$, and write
$\cS_i = (W_i, S_i)$. Decorate each $S_i$ with a dividing set $\cA_i$ consisting
of a single arc, such that the type-$\ws$ subregion has genus 0, and the
type-$\zs$ subregion has genus $g(S_i)$. We can assume the dividing arc is
very near to the knot $K$, and the the type-$\ws$
 subregion is unaffected by any of the stabilizations.
Write $\cF_i = (S_i, \cA_i)$.

Let us call a sequence $\vec{\frs} = \{\frs_1,\dots, \frs_n\}$ of $\Spin^c$ structures on
$W_1, \dots, W_n$, respectively, a \emph{stabilization sequence} if,
whenever $(W_{i+1}, S_{i+1})$ is obtained by stabilizing
$(W_i, S_i)$ with the negative definite link cobordism $(X_0, S_0)$, the
$\Spin^c$ structure $\frs_{i+1}$ can be written as $\frs_i \# \frt_i$ for
$\frt_i \in \Spin^c(X_0)$. We require an analogous condition whenever $W_{i+1}$ is
a generalized destabilization of $W_i$. We define
\[
M_{\vec{\cS},\vec{\frs}}(t) := \max_{1\le i\le n} -G_t(W_i,\cF_i,\frs_i) =
\max_{1\le i\le n} -H_t(W_i, [S_i], \frs_i) + t \cdot g(S_i),
\]
where the second equality follows from equation~\eqref{eqn:Gt} and the fact that $g(S_{i,\ws}) = 0$
and $g(S_{i,\zs}) = g(S_i)$.

By Corollary~\ref{cor:tmodifiedstabilizationformula},
if $\vec{\frs}=\{\frs_1, \dots, \frs_n\}$ is a
stabilization sequence of $\Spin^c$ structures on $W_1, \dots, W_n$,
then all of the elements
\[
v^{ M_{\vec{\cS},\vec{\frs}}(t) + G_t(W_i,\cF_i,\frs_i)} \cdot [\tF_{W_i,\cF_i,\frs_i}(1)]
\]
coincide in $\tHFK^-(\bK)$. In particular,
\begin{equation}
v^{ M_{\vec{\cS},\vec{\frs}}(t) - t \cdot g(S)} \cdot [\tF_{S, \zs}(1)] =
v^{ M_{\vec{\cS},\vec{\frs}}(t) - t \cdot g(S')} \cdot [\tF_{S', \zs}(1)],
\label{eq:stabilizationsequencewithrandomspinc}
\end{equation}
as $G_t(W_1,\cF_1,\frs_1) = -t \cdot g(S)$ and $G_t(W_n,\cF_n,\frs_n) = -t \cdot g(S')$.

Suppose that $(W_{i \pm 1},S_{i \pm 1})$ is obtained by stabilizing $(W_i, S_i)$ using
the negative definite link cobordism $(X_0, S_0)$, and
$\frs_{i} \in \Spin^c(W_i)$ and $\frt_i \in \Spin^c(X_0)$. Then
\[
H_t(W_i \# X_0, [S_i] + [S_0], \frs_i \# \frt_i) = H_t(W_i, [S_i], \frs_i) + H_t(X_0, [S_0], \frt_i).
\]
Hence, the $\Spin^c$ structure $\frs_{i} \# \frt_i$ minimizes
$-H_t(W_{i \pm 1}, [S_{i \pm 1}], \frs_i \# \frt_i)$ for a fixed $t$ if and
only if $\frs_i$ minimizes $-H_t(W_i, [S_i], \frs_i)$ and $\frt_i$ minimizes $-H_t(X_0, [S_0], \frt_i)$.
It follows that we can always construct a stabilization sequence of $\Spin^c$ structures
$\vec{\frs} = \{\frs_1, \dots, \frs_n\}$ such that
\[
-H_t(W_i,[S_i],\frs_i) = \min_{\frs \in \Spin^c(W_i)} - H_t(W_i,[S_i],\frs) = M_{(W_i, S_i)}(t) - t \cdot g(S_i).
\]
Then
\begin{equation}
M_{\vec{\cS},\vec{\frs}}(t) = \max_{1\le i\le n} -H_t(W_i, [S_i], \frs_i) + t \cdot g(S_i) =
\max_{1\le i\le n} M_{(W_i, S_i)}(t) = M_{\vec{\cS}}(t).
\label{eq:maximalstabseqspinc}
\end{equation}
Combining equations~\eqref{eq:stabilizationsequencewithrandomspinc}
 and~\eqref{eq:maximalstabseqspinc}, we conclude that
\begin{equation}
\Upsilon_{(S,S')}(t) \le M_{\vec{\cS}}(t)
\label{eq:UpsilonleMPon01}
\end{equation}
for any $t \in [0,2]$.
Minimizing equation~\eqref{eq:UpsilonleMPon01} over all
$\vec{\cS} \in P_{\st}(S, S')$ yields the result.
\end{proof}

\section{The summand-swapping diffeomorphism}
\label{sec:rigidmotiondeformation}

If $K$ is a knot in $\Sphere^3$, one can construct an order $n$ automorphism of the knot
$K^{\# n}$, corresponding to cyclically permuting the summands. In this
section, we investigate the case when $n = 2$, and compute the induced map on
knot Floer homology. We will use this in Section~\ref{sec:examples} to construct
pairs of slice disks for which we can explicitly compute the secondary invariants
defined in Section~\ref{sec:invariants}.

\subsection{Construction of the diffeomorphism map}\label{sec:construct-rigidmotion}
If $K \subset \Sphere^3$ is a knot, there is a diffeomorphism
\[
R^{\pi} \colon (\Sphere^3, K\# K) \to (\Sphere^3, K\# K)
\]
that switches the two summands of $K \# K$. In fact, for an appropriate embedding of
$K \# K$ into $\Sphere^3$, the diffeomorphism $R^{\pi}$ can be realized as
an order 2 rigid motion of $\Sphere^3$: Isotope $K$ into the $y \ge 1$ half-space of $\R^3 \subset \Sphere^3 = \R^3 \cup \{\infty\}$
such that the line segment $I := [-1,1] \times \{(1,0)\} \subset K$. For $\varphi \in \R$,
let $R^\varphi$ be $\varphi$-rotation about the $z$-axis. If we let $K \# K$ be the equivariant smoothing of
\[
(K \setminus I) \cup R^{\pi}(K \setminus I) \cup R^{\pi/2}(I) \cup R^{-\pi/2}(I),
\]
then $R^{\pi}$ is an orientation-preserving automorphism of $(\Sphere^3, K \# K)$.

In particular, the knot $K \# K$ is 2-periodic, and has no fixed points.
We pick two basepoints, $w$, $z \in K \setminus I$, such that $w$ follows $z$
with respect to the orientation of $K$. We let $w'$ and $z'$ denote their images
on $R^{\pi}(K \setminus I)$ under the map $R^{\pi}$. We write $\bK = (K, w, z)$
and $\bK \# \bK = (K \# K, w, z)$.

We define the element
\[
\ve{R}^{\pi} := \rho \circ R^\pi \in \MCG(\Sphere^3, K\# K, w, z),
\]
where
\[
\rho \colon (\Sphere^3, K \# K, w', z') \to (\Sphere^3, K \# K, w, z)
\]
is a half-twist diffeomorphism in the direction of the knot's orientation
that swaps the pairs $(w, z)$ and $(w', z')$.
We note that the diffeomorphism $(\mathbf{R}^{\pi})^2$ is isotopic to a full Dehn twist along $K \# K$.
Hence, by \cite{ZemQuasi}*{Theorem~B},
\begin{equation}
(\ve{R}^{\pi}_*)^2 \simeq \id + \Phi_w^{\bK\# \bK} \circ \Psi_z^{\bK\# \bK},
\label{eq:rigidmotionsquarestoSarkarmap}
\end{equation}
where $\Phi_w^{\bK\# \bK}$ and $\Psi_z^{\bK\# \bK}$ are the basepoint actions on $\cCFL^\infty(\bK \# \bK)$
described in Section~\ref{sec:basepointactions}; see also
the work of Sarkar~\cite{SarkarMovingBasepoints}.

\begin{rem}
 Note that any $\bF[U,V]$-equivariant homotopy equivalence
 \[
 \cCFL^\infty(\bK\# \bK)\simeq \cCFL^\infty(\bK)\otimes \cCFL^\infty(\bK)
 \]
will intertwine $\Phi_w^{\bK\# \bK}$  (resp. $\Psi_z^{\bK\# \bK}$) with
$\Phi_w\otimes \id+\id\otimes \Phi_w$ (resp. $\Psi_z \otimes \id+\id\otimes \Psi_z$), up to chain homotopy. This is because if $C$ and $C'$ are free chain complexes over $\bF[U,V]$ and $F\colon C\to C'$ is a chain map, one easily shows that $F\circ \Phi\simeq \Phi'\circ F$, where $\Phi$ and $\Phi'$ are the algebraic analogs of the map $\Phi_w$ on $C$ and $C'$. 
\end{rem}

We now wish to compute a formula for the chain homotopy type of the induced
map $\ve{R}^{\pi}_*$. Note that there is a filtered chain map
\[
\Sw \colon \cCFL^\infty(\bK) \otimes\cCFL^\infty(\bK) \to
\cCFL^\infty(\bK) \otimes \cCFL^\infty(\bK),
\]
obtained by switching the two factors. Note that $\Sw$ cannot be chain
homotopic to $\ve{R}^{\pi}$, since $\Sw \circ \Sw = \id$, which would violate
equation~\eqref{eq:rigidmotionsquarestoSarkarmap}. In this section, we prove the following:

\begin{thm}\label{thm:rigidmotionformula}
Let $\bK = (K, w, z)$ be a doubly-based knot in $\Sphere^3$, and
consider the doubly-based knot $\bK \# \bK = (K \# K, w, z)$ defined above. Then there is a
filtered chain homotopy equivalence between $\cCFL^\infty(\bK \# \bK)$ and
$\cCFL^\infty(\bK) \otimes_{\cR^\infty} \cCFL^\infty(\bK)$ that intertwines $\ve{R}^{\pi}_*$ with
\[
\Sw \circ \left(\id \otimes \id + \id \otimes (\Phi_w \circ \Psi_z) +
\Psi_z \otimes \Phi_w \right).
\]
\end{thm}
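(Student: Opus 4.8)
The plan is to realize the summand-swapping diffeomorphism $\ve{R}^\pi$ as a composition of more elementary pieces whose induced maps we already understand, and then assemble the formula from those pieces. The natural decomposition: the rigid rotation $R^\pi$ can be factored through a pair-of-pants cobordism description of $K\#K$. Specifically, cutting $\Sphere^3$ along a sphere that separates the two summands expresses $(\Sphere^3, K\#K)$ in terms of two copies of $(\Sphere^3,K)$ joined by a band, and Theorem~\ref{thm:spinning}'s pair-of-pants equivalence $E\colon \cCFL^\infty(-\bK\#\bK)\to C^\vee\otimes C$ (or rather its relevant analogue for $\bK\#\bK$, via the connected sum formula \eqref{eq:connectedsum=tensorprod}) gives the identification of $\cCFL^\infty(\bK\#\bK)$ with $\cCFL^\infty(\bK)\otimes_{\cR^\infty}\cCFL^\infty(\bK)$. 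Under this identification, the geometric rotation $R^\pi$ visibly swaps the two connected-sum factors, so it should induce $\Sw$ composed with whatever correction arises from the basepoint bookkeeping. The half-twist $\rho$, which repositions the basepoints $(w',z')$ back to $(w,z)$, is where the nontrivial endomorphisms $\Phi$ and $\Psi$ will enter, via the basepoint-moving relations from Section~\ref{sec:backgroundquasibasepoint}, particularly equations~\eqref{eq:R1}--\eqref{eq:R6} and \eqref{eq:movebasepointandquasistabilization}.

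First I would set up the Heegaard-diagrammatic model: take a diagram for $\bK\#\bK$ obtained by juxtaposing two diagrams for $\bK$ and connecting them with a band in the $\ws$-region, so that $\cCFL^\infty(\bK\#\bK)\simeq \cCFL^\infty(\bK)\otimes_{\cR^\infty}\cCFL^\infty(\bK)$ via the connected sum chain homotopy equivalence of \cite{ZemConnectedSums}. The rotation $R^\pi$ acts on this diagram by the obvious symmetry exchanging the two halves, which on the tensor product is literally $\Sw$ up to correcting for the band and the basepoints' placement on $R^\pi(K\setminus I)$ versus $K\setminus I$. Then I would analyze $\rho$: the half-twist swapping $(w,z)\leftrightarrow(w',z')$ can be written, via \eqref{eq:R5} and \eqref{eq:movebasepointandquasistabilization}, as a composition of quasi-stabilization maps $S^\pm_{w,z}$, $T^\pm_{w,z}$ and diffeomorphism maps. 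Using relations \eqref{eq:R1}--\eqref{eq:R4} to convert these into compositions of $\Phi_w$, $\Psi_z$, and the identity, one gets an expression for $\rho_*$ as a polynomial in $\Phi$ and $\Psi$ acting on the tensor factors. Multiplying the $\Sw$ (from $R^\pi$) by this polynomial and simplifying using $\Phi^2\simeq \Psi^2\simeq 0$ from \eqref{eqn:R7} should yield a candidate formula.

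The consistency check with equation~\eqref{eq:rigidmotionsquarestoSarkarmap} is a crucial verification step and a good way to pin down signs/terms: squaring the claimed formula $\Sw\circ(\id\otimes\id + \id\otimes\Phi\Psi + \Psi\otimes\Phi)$ should give $\id + \Phi_w\Psi_z$ on $\cCFL^\infty(\bK\#\bK)$. Since $\Sw$ conjugates $\id\otimes\Phi\Psi$ to $\Phi\Psi\otimes\id$ and $\Psi\otimes\Phi$ to $\Phi\otimes\Psi$, expanding $(\Sw\circ A)^2 = \Sw A\Sw A$ and using $\Phi^2=\Psi^2=0$ together with the known formula for $\Phi_w\Psi_z$ on a connected sum (which acts as $\Phi\otimes\id + \id\otimes\Phi$ times $\Psi\otimes\id+\id\otimes\Psi$, suitably interpreted) should reproduce the Sarkar map; the cross terms involving $\Phi\otimes\Psi$ and $\Psi\otimes\Phi$ must cancel or combine correctly, which constrains the formula. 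I would carry out this computation carefully as it both motivates and confirms the answer.

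The main obstacle I anticipate is bookkeeping the basepoint positions precisely through the pair-of-pants/connected-sum identification. The subtlety is that the band defining $K\#K$ sits in a specific region relative to $w,z,w',z'$, and the rotation $R^\pi$ does not just swap factors but also moves the band and the basepoints; keeping track of whether the resulting correction is $\id\otimes\Phi\Psi + \Psi\otimes\Phi$ rather than, say, $\Phi\Psi\otimes\id + \Phi\otimes\Psi$ (which would be the $\Sw$-conjugate) requires care about the order in which one applies $\Sw$ and the half-twist. A related technical point is ensuring all maps are genuinely $\cR^\infty$-equivariant and filtered at each stage, and that the chain homotopies from \cite{ZemCFLTQFT} and \cite{ZemConnectedSums} compose correctly; the relations \eqref{eq:R1}--\eqref{eq:R6} are only stated up to homotopy, so one must check the homotopies assemble without obstruction. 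The verification against \eqref{eq:rigidmotionsquarestoSarkarmap} will serve as the safeguard that the bookkeeping was done correctly.
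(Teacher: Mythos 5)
Your proposal is correct and follows essentially the same route as the paper: the paper also writes $\ve{R}^\pi_* = R^\pi_*\circ\rho_*$ (up to isotopy), pushes everything through the pair-of-pants connected-sum equivalence $E = F_{\bS_2}F_B^{\zs}T^+_{w',z'}$ under which $R^\pi$ becomes $\Sw$, and then expands $\rho_*\simeq S^-_{w,z}T^+_{w',z'}$ using the bypass relation~\eqref{eq:R6} and the relations~\eqref{eq:R1}--\eqref{eq:R4} to extract the $\Phi,\Psi$ correction terms. The only cosmetic difference is that the paper resolves the bookkeeping you flag by explicitly commuting $\Phi_w(\Psi_z+\Psi_{z'})$ past the band and $3$-handle maps (using $\Psi_{z'}T^+_{w',z'}\simeq 0$) rather than by the a posteriori check against the Sarkar map.
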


\begin{rem}
We say the endomorphisms $F$ and $G$ of a chain complex $C$ are
\emph{homotopy conjugate} if there is a homotopy automorphism
$A \colon C \to C$ such that $F \circ A \simeq A \circ G$.
It is not hard to see that the four maps
\[
\begin{split}
&\Sw \circ \left(\id \otimes \id + \id \otimes (\Phi_w \circ \Psi_z) + \Psi_z \otimes \Phi_w \right), \\
&\Sw \circ \left(\id \otimes \id + \id \otimes (\Phi_w \circ \Psi_z) + \Phi_w \otimes \Psi_z \right), \\
&\Sw \circ \left(\id \otimes \id  + (\Phi_w \circ \Psi_z) \otimes \id + \Psi_z \otimes \Phi_w \right), \\
&\Sw \circ \left(\id \otimes \id  + (\Phi_w \circ \Psi_z) \otimes \id + \Phi_w \otimes \Psi_z \right)
\end{split}
\]
are all homotopy conjugate endomorphisms of  $\cCFL^\infty(\bK)\otimes \cCFL^\infty(\bK)$.
Indeed, the map $A$ can be taken to be one of the maps $\Sw$, $\id \otimes \id + \Phi_w \otimes \Psi_z$,
or $\id \otimes \id + \Psi_z \otimes \Phi_w$,
since $\Phi^2_w \simeq 0$, $\Psi^2_z \simeq 0$, and $\Phi_w \circ \Psi_z \simeq \Psi_z \circ \Phi_w$.
\end{rem}

\subsection{Proof of the formula for the summand-swapping diffeomorphism map}
\label{sec:proofrigidmotiondiffeo}

\begin{proof}[Proof of Theorem~\ref{thm:rigidmotionformula}]
Let us write $\bK \# \bK = (K \# K, w, z)$, where $w$ and $z$ appear on the right copy of $K$.
Let $w'$ and $z'$ denote their images under $R^{\pi}$, on the left copy of $K$.
We define our connected sum map
\[
E \colon \cCFL^\infty(\bK\# \bK) \to \cCFL^\infty(K, w', z') \otimes_{\cR^\infty} \cCFL^\infty(\bK)
\]
as the composition
\[
E := F_{\bS_2} F_B^{\zs} T_{w',z'}^+,
\]
where $F_{\bS_2}$ denotes the 3-handle map induced by the framed 2-sphere $\bS_2$
that separates the two copies of $K$ after the band surgery along $B$.
A schematic of the link cobordism corresponding to $E$
is shown in Figure~\ref{fig::19}.

\begin{figure}[ht!]
	\centering
	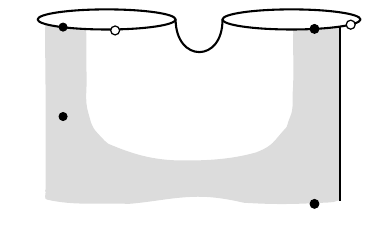
	\caption{A schematic of the map $E := F_{\bS_2} F_{B}^{\zs} T_{w',z'}^+$.
    A decomposition of the surface is shown, corresponding to the factors of $F_B^{\zs}$ and $T_{w',z'}^+$ in $E$.
    The 3-handle map $F_{\bS_2}$ is not shown.}
    \label{fig::19}
\end{figure}

The map $E$ is a chain homotopy equivalence. Indeed, $E$ is the map
induced by a pair-of-pants link cobordism in a 3-handle cobordism
that is diffeomorphic to the reverse of one of the two connected sum
cobordisms constructed in \cite{ZemConnectedSums}*{Section~5}
(in fact, it is diffeomorphic to the link cobordism inducing the map $E_1$, described therein).
According to \cite{ZemCFLTQFT}*{Proposition~5.1}, the map $E$ is a chain homotopy equivalence,
and a homotopy inverse is given by turning around and reversing the orientation of the cobordism.

Expanding the definitions of $E$ and $\ve{R}^{\pi}$, and observing that $\rho \circ R^\pi$ and
$R^\pi \circ \rho$ are equal as automorphisms of $(S^3, \bK \# \bK)$, 
we have
\begin{equation}
E \ve{R}^\pi_* \simeq F_{\bS_2} F_{B}^{\zs} T_{w',z'}^+ R^{\pi}_* \rho_*.
\label{eq:expandERpi}
\end{equation}
We note that, for the doubly-based knot $\rho(\bK \# \bK) = (K \# K, w', z')$, we have 
\begin{equation}
T_{w',z'}^+ R^{\pi}_* \simeq R^{\pi}_* T_{w,z}^+
\label{eq:quasiandR}
\end{equation}
by the functoriality of the quasi-stabilization operation. Similarly,
\begin{equation}
F_{B}^{\zs} R^{\pi}_* \simeq R^{\pi}_* F_{B}^{\zs},
\label{eq:bandmapsandR}
\end{equation}
since the diffeomorphism $R^{\pi}$ preserves the connected sum band $B$ setwise.
Finally, we note that
\begin{equation}
F_{\bS_2} R^{\pi}_* \simeq \Sw F_{\bS_2},
\label{eq:commuteRand3-handlemap}
\end{equation}
since the framed sphere $\bS_2$ is fixed setwise by $R^{\pi}$.
We remark that, in equation~\eqref{eq:commuteRand3-handlemap},
$R^{\pi}$ reverses the orientation of the framed 2-sphere $\bS_2$,
 though this has no effect on the cobordism map.

Applying the relations from equations~\eqref{eq:quasiandR}, \eqref{eq:bandmapsandR},
and~\eqref{eq:commuteRand3-handlemap} to equation~\eqref{eq:expandERpi} yields
\begin{equation}
E\ve{R}^{\pi}_* \simeq \Sw F_{\bS_2} F_B^{\zs} T_{w,z}^+ \rho_*.
\label{eq:movedRpitoleft}
\end{equation}

Next, we examine the expression $F_{\bS_2} F_B^{\zs} T_{w,z}^+ \rho_*$
appearing in equation~\eqref{eq:movedRpitoleft}. We perform the following manipulations:
\begin{equation}
\begin{alignedat}{3}
F_{\bS_2} F_B^{\zs} T_{w,z}^+ \rho_* &\simeq
F_{\bS_2} F_B^{\zs} T_{w,z}^+ S_{w,z}^- T_{w',z'}^+
&&\quad (\text{equation}~\eqref{eq:R5}) \\
&\simeq  F_{\bS_2} F_{B}^{\zs} T_{w',z'}^+ + F_{\bS_2} F_B^{\zs} S_{w,z}^+ T_{w,z}^- T_{w',z'}^+
&&\quad (\text{equation}~\eqref{eq:R6}).
\end{alignedat}
\label{eq:bypassrel1}
\end{equation}
Next, we compute
\begin{equation}
\begin{alignedat}{3}
F_{\bS_2} F_B^{\zs} S_{w,z}^+ T_{w,z}^- T_{w',z'}^+ &\simeq
F_{\bS_2} F_B^{\zs} S_{w,z}^+ S_{w,z}^- \Psi_z  T_{w',z'}^+
&&\quad (\text{equation}~\eqref{eq:R2}) \\
&\simeq F_{\bS_2} F_B^{\zs} \Phi_w \Psi_z T_{w',z'}^+
&&\quad (\text{equation}~\eqref{eq:R3}).
\end{alignedat}
\end{equation}

We note that $\Psi_{z'} T_{w',z'}^+ \simeq 0$,
since $T_{w',z'}^+ \simeq \Psi_{z'} S_{w',z'}^+$ and $\Psi_{z'}^2 \simeq 0$
by equations~\eqref{eq:R1} and~\eqref{eqn:R7}. Hence
\begin{equation}
F_{\bS_2} F_B^{\zs} \Phi_w \Psi_z  T_{w',z'}^+ \simeq
F_{\bS_2} F_B^{\zs} \Phi_w (\Psi_z + \Psi_{z'}) T_{w',z'}^+.
\end{equation}

From equations~\eqref{eq:R7} and~\eqref{eq:R8}, we conclude that
\begin{equation}
F_{\bS_2} F_{B}^{\zs} \Phi_w (\Psi_z + \Psi_{z'}) T_{w',z'}^+ \simeq
F_{\bS_2} \Phi_w (\Psi_z + \Psi_{z'}) F_{B}^{\zs} T_{w',z'}^+.
\end{equation}
Finally, we note that $\Phi_{w}$, $\Psi_{z}$, and $\Psi_{z'}$
commute with $F_{\bS_2}$ by \cite{ZemCFLTQFT}*{Lemma~8.3}, hence
\begin{equation}
F_{\bS_2} \Phi_w (\Psi_z + \Psi_{z'}) F_{B}^{\zs} T_{w',z'}^+ \simeq
(\id \otimes (\Phi_w \Psi_{z}) +  \Psi_{z'} \otimes \Phi_w)
F_{\bS_2} F_{B}^{\zs} T_{w',z'}^+.
\label{eq:commutePhiPsiwith3-handle}
\end{equation}

Combining, equations~\eqref{eq:bypassrel1}--\eqref{eq:commutePhiPsiwith3-handle},
and using the fact that $E := F_{\bS_2} F_B^{\zs} T_{w',z'}^+$, we see that
\begin{equation}
F_{\bS_2} F_{B}^{\zs}T_{w,z}^+ \rho_* \simeq
(\id \otimes \id + \id \otimes \Phi_w \Psi_z + \Psi_{z'} \otimes \Phi_w) E.
\label{eq:manipulateexpressionrecoverE}
\end{equation}
By applying $\Sw$ to equation~\eqref{eq:manipulateexpressionrecoverE}, and
combining it with equation~\eqref{eq:movedRpitoleft}, we obtain that
\[
E\ve{R}^{\pi}_* \simeq \Sw (\id \otimes \id + \id \otimes \Phi_w \Psi_z + \Psi_{z'} \otimes \Phi_w) E.
\]
Upon relabeling $w'$ and $z'$ as $w$ and $z$, we obtain the formula in the statement.
\end{proof}

\section{The trace formula}
\label{sec:trace}

If $(Y,\bL)$ is a multi-based link, the identity decorated link cobordism
\[
(W_{\id}, \cF_{\id})\colon (Y,\bL)\to (Y,\bL)
\]
is constructed by decorating $(I\times Y,I\times L)$ with a dividing set
$\cA=I\times \ve{p}$, where $\ve{p}\subset L$ consists of one point in each
component of $L\setminus (\ws\cup \zs)$.

By changing which ends of $(W_{\id}, \cF_{\id})$ are designated as incoming
or outgoing, we get two other decorated link cobordisms, which we denote by
\[
(W_{\tr}, \cF_{\tr}) \colon (-Y \sqcup Y, -\bL \sqcup \bL) \to \emptyset \quad\text{and}\quad
(W_{\cotr}, \cF_{\cotr}) \colon \emptyset \to (Y \sqcup -Y, \bL \sqcup -\bL).
\]

The $\cR$-module $\cCFL^\infty(-Y,-\bL,\frs)$ is canonically isomorphic to
$\Hom_{\cR^\infty}(\cCFL^\infty(Y,\bL,\frs),\cR^\infty)$, and hence there is
a \emph{canonical trace pairing}
\[
\tr \colon \cCFL^\infty(-Y,-\bL,\frs) \otimes_{\cR^\infty} \cCFL^\infty(Y,\bL,\frs) \to \cR^\infty.
\]
Similarly there is a canonical cotrace map
\[
\cotr\colon \cR^\infty\to \cCFL^\infty(Y,\bL,\frs)\otimes_{\cR^\infty} \cCFL^\infty(-Y,-\bL,\frs),
\]
obtained by dualizing the trace pairing.
In this section, we prove the following:

\begin{thm}\label{thm:traceforuma}
The trace and cotrace cobordisms induce the canonical trace and cotrace maps:
\[
F_{W_{\tr}, \cF_{\tr},\frs}\simeq \tr \quad \text{and} \quad F_{W_{\cotr}, \cF_{\cotr},\frs}\simeq \cotr.
\]
\end{thm}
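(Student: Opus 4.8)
The plan is to prove that the cobordism maps $F_{W_{\tr},\cF_{\tr},\frs}$ and $F_{W_{\cotr},\cF_{\cotr},\frs}$ agree up to chain homotopy with the canonical algebraic trace and cotrace maps. The two statements are dual to one another, so it suffices to handle one of them, say the cotrace statement, and deduce the other by turning around the cobordism and invoking the adjunction between $\cCFL^\infty(Y,\bL,\frs)$ and $\cCFL^\infty(-Y,-\bL,\frs)$. Concretely, I would first recall that the identity decorated link cobordism $(W_{\id},\cF_{\id})$ induces a map chain homotopic to the identity; this is the functoriality of the link Floer TQFT established in \cite{ZemCFLTQFT}. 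From this, the cobordisms $(W_{\tr},\cF_{\tr})$ and $(W_{\cotr},\cF_{\cotr})$ are obtained simply by reinterpreting incoming and outgoing boundary components, so the statement should amount to unwinding what the functoriality of the TQFT says once we allow the empty manifold as a boundary component.

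The key steps, in order, are as follows. First, I would set up the duality: the pairing $\tr$ comes from the canonical isomorphism $\cCFL^\infty(-Y,-\bL,\frs)\iso\Hom_{\cR^\infty}(\cCFL^\infty(Y,\bL,\frs),\cR^\infty)$, and $\cotr$ is its dual; I want to recall precisely how this isomorphism is realized on the chain level via a Heegaard diagram $(\Sigma,\as,\bs,\ws,\zs)$ for $(Y,\bL)$, where the same diagram with the roles of $\as$ and $\bs$ swapped (and orientation reversed) computes $\cCFL^\infty(-Y,-\bL,\frs)$, and intersection points are paired by the identity. Second, I would use the standard bending/zig-zag (or ``snake'') relations: $(W_{\cotr},\cF_{\cotr})$ followed by a tensor factor of $(W_{\tr},\cF_{\tr})$, after sliding handles, is diffeomorphic rel boundary to $(W_{\id},\cF_{\id})$, and similarly with the other factor bent. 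Applying the TQFT functoriality (composition law and $\Spin^c$ composition law) to these diffeomorphisms yields the corresponding snake identities for $F_{W_{\tr},\cF_{\tr},\frs}$ and $F_{W_{\cotr},\cF_{\cotr},\frs}$ up to chain homotopy. Third, I would invoke the algebraic fact that, over the (graded) field-like ring $\cR^\infty$ — more precisely, working with the free $\cR^\infty$-module $\cCFL^\infty(Y,\bL,\frs)$ and its dual — any pair of maps $e\colon\cR^\infty\to C\otimes C^\vee$ and $\varepsilon\colon C^\vee\otimes C\to\cR^\infty$ satisfying both zig-zag relations (even just up to chain homotopy, given that $C$ is a finitely generated free complex and the relevant homology is well-behaved) must be chain homotopic to the canonical $\cotr$ and $\tr$. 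This rigidity of dual pairs is what upgrades the snake identities to the desired identification.

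The main obstacle I expect is the third step: upgrading the snake relations ``up to chain homotopy'' to an actual chain homotopy equivalence with the canonical $\tr$ and $\cotr$, rather than merely with \emph{some} trace/cotrace pair. One has to be careful that $\cCFL^\infty(Y,\bL,\frs)$ is a finitely generated free $\cR^\infty$-module so that duality behaves as in the finite-dimensional case, and that the chain homotopies involved respect the $\Z\oplus\Z$-filtration and the $\cR^\infty$-equivariance; this is where I would need to check that the bending diffeomorphisms can be chosen to be filtered and that the composition law holds at the filtered level, citing the relevant results of \cite{ZemCFLTQFT} and \cite{ZemAbsoluteGradings}. A secondary technical point is bookkeeping the $\Spin^c$ structures: one must verify that the $\Spin^c$ structure $\frs$ on the trace/cotrace cobordism restricts compatibly under the bending, so that the $\Spin^c$ composition law assembles the pieces correctly and no extra sum over $\Spin^c$ structures appears. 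Once these points are in place, the theorem follows by combining the snake identities with the rigidity of dual pairs, exactly paralleling the structure of the earlier trace formulas \cite[Theorem~1.1]{JuhaszZemkeContactHandles} and \cite[Theorem~1.6]{ZemDualityMappingTori} in the sutured and graph TQFT settings.
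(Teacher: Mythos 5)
Your reduction of the cotrace statement to the trace statement via turning the cobordism around and dualizing is exactly what the paper does. However, the core of your argument --- the ``rigidity of dual pairs'' in your third step --- has a genuine gap. The zig-zag (snake) relations do \emph{not} characterize the canonical pair $(\tr,\cotr)$: if $A$ is any filtered, $\cR^\infty$-equivariant homotopy automorphism of $C=\cCFL^\infty(Y,\bL,\frs)$, then $\varepsilon:=\tr\circ(\id\otimes A)$ and $e:=(A^{-1}\otimes\id)\circ\cotr$ also satisfy both snake identities up to chain homotopy. So the most your second step can yield is that $F_{W_{\tr},\cF_{\tr},\frs}$ and $F_{W_{\cotr},\cF_{\cotr},\frs}$ form \emph{some} duality pairing, i.e.\ agree with $\tr$ and $\cotr$ only up to twisting by an unknown homotopy automorphism of $C$. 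Since such automorphisms exist in abundance (even graded, filtered, equivariant ones), no finiteness or freeness hypothesis on $C$ closes this gap. This is also why your approach would let you bypass all of the technical machinery of the section, which should be a warning sign.

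The paper's proof instead pins down the map by direct computation. It picks a diagram $(\Sigma,\as,\bs,\ws,\zs)$ for $(Y,\bL)$ and a Hamiltonian translate $\bs'$ of $\bs$, observes that the trace cobordism is the triple cobordism $(X_{\b',\a,\b},\cF_{\b',\a,\b})$ capped off with $3$- and $4$-handles, and then invokes Theorem~\ref{thm:heegaardtriplemap} (the identification of the triple cobordism map with the holomorphic triangle count, which is the real content of the section) to express $F_{W_{\tr},\cF_{\tr},\frs}(\xs\otimes\ys)$ as a count of Maslov index~$0$ triangles with vertices $\xs$, $\ys$, $\Theta^-_{\b',\b}$. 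Recognizing this count as $\tr(\xs\otimes\Phi^{\a}_{\b\to\b'}(\ys))$, where $\Phi^{\a}_{\b\to\b'}$ is the change-of-diagrams map, gives the result on the nose. To repair your argument you would either need to supply such a direct computation on a model diagram (thereby eliminating the automorphism ambiguity), or find some additional naturality constraint that forces the twisting automorphism to be the identity; the snake relations alone cannot do it.
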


Our proof of Theorem~\ref{thm:traceforuma} is similar to the proofs of
\cite{ZemDualityMappingTori}*{Theorem~1.6} and
\cite{JZContactHandles}*{Theorem~1.2}.

\subsection{Heegaard triples and link cobordisms}

\begin{define}
We say that $(\Sigma,\as,\bs,\gs,\ws,\zs)$ is a \emph{Heegaard link triple}
if $(\Sigma,\as,\bs,\gs)$ is a Heegaard triple diagram decorated with two disjoint
collections of basepoints, $\ws$ and $\zs$. Furthermore, for each $\sigmas\in
\{\as,\bs,\gs\}$,  each component of $\Sigma\setminus \sigmas$ is planar, and
contains exactly one $\ws$ basepoint, and exactly one $\zs$ basepoint.
\end{define}

We remark that such a Heegaard triple is called a \emph{doubly multi-pointed Heegaard triple} in~\cite{ZemCFLTQFT}.
If $(\Sigma,\as,\bs,\gs,\ws,\zs)$ is a Heegaard link triple
and $\sigmas, \etas \in \{\as,\bs,\gs\}$, then we write $(Y_{\sigma, \eta}, \bL_{\sigma, \eta})$
for the multi-based link defined by the diagram $(\Sigma, \sigmas, \etas, \ws, \zs)$.
There is a decorated link cobordism
\[
(X_{\a,\b,\g},\cF_{\a,\b,\g}) \colon (Y_{\a,\b}\sqcup Y_{\b,\g}, \bL_{\a,\b} \sqcup \bL_{\b,\g}) \to
(Y_{\a,\g}, \bL_{\a,\g}),
\]
described in \cite{JZContactHandles}*{Section~9.4}, which is a
refinement of the construction from \cite{OSDisks}*{Section~8.1}. The
4-manifold $X_{\a,\b,\g}$ is constructed as the union
\[
X_{\a,\b,\g} := (\Delta \times \Sigma) \cup (e_\a \times U_\a) \cup (e_{\b} \times U_{\b})\cup (e_{\g} \times U_{\g}),
\]
where $U_{\a},$ $U_{\b}$, and $U_{\g}$ denote handlebodies with boundary
$\Sigma$, with compressing curves $\as$, $\bs$, and~$\gs$, respectively.

The decorated surface $\cF_{\a,\b,\g} = (S_{\a,\b,\g}, \cA_{\a,\b,\g})$ is constructed as follows.
We pick embedded paths in $\Sigma \setminus \as$, $\Sigma \setminus \bs$, and
$\Sigma \setminus \gs$ connecting the $\zs$-basepoints to the
$\ws$-basepoints, and then push the interiors of these arcs into the
interior of $U_{\a}$, $U_{\b}$, or $U_{\g}$, respectively.
We obtain three sets of properly embedded arcs
$\ell_{\a} \subset U_{\a}$, $\ell_{\b} \subset U_{\b}$, and $\ell_{\g}\subset U_{\g}$.
The surface $S_{\a,\b,\g}$ is defined as
\[
S_{\a,\b,\g} := (\Delta \times (\ws \cup \zs)) \cup (e_{\a} \times \ell_{\a}) \cup
(e_{\b} \times \ell_{\b}) \cup (e_{\g} \times \ell_{\g}).
\]
To obtain $\cA_{\a,\b,\g}$, choose subsets $\ve{p}_{\a} \subset \ell_{\a}$,
$\ve{p}_{\b} \subset \ell_{\b}$, and $\ve{p}_{\g} \subset \ell_{\g}$ consisting of
one point in the interior of each component of $\ell_{\a}$, $\ell_{\b}$, and
$\ell_{\g}$, and set
\[
\cA_{\a,\b,\g} := (e_{\a} \times \ve{p}_{\a}) \cup (e_{\b} \times \ve{p}_{\b}) \cup (e_{\g} \times \ve{p}_{\g}).
\]

\begin{thm}\label{thm:heegaardtriplemap}
If $(\Sigma,\as,\bs,\gs,\ws,\zs)$ is a Heegaard link triple, then the
cobordism map $F_{W_{\a,\b,\g},\cF_{\a,\b,\g},\frs}$ is chain homotopic to
the holomorphic triangle map
\[
F_{\a,\b,\g,\frs} \colon \cCFL^\infty(\Sigma,\as,\bs,\ws,\zs,\frs|_{Y_{\a,\b}}) \otimes \cCFL^\infty(\Sigma,\bs,\gs,\ws,\zs,\frs|_{Y_{\b,\g}}) \to \cCFL^\infty(\Sigma,\as,\gs,\ws,\zs,\frs|_{Y_{\a,\g}}),
\]
defined by the formula
\[
F_{\a,\b,\g,\frs}(\xs\otimes \ys) := \sum_{\zs\in \bT_{\a}\cap \bT_{\g}} \sum_{\substack{\psi\in \pi_2(\xs,\ys,\zs)\\
\mu(\psi)=0\\
\frs_{\ws}(\psi)=\frs}} \# \cM(\psi) \cdot U^{n_{\ws}(\psi)} V^{n_{\zs}(\psi)} \cdot \zs
\]
for $\x \in \T_{\a} \cap \T_{\b}$ and $\y \in \T_{\b} \cap \T_{\g}$.
\end{thm}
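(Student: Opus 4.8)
\textbf{Proof strategy for Theorem~\ref{thm:heegaardtriplemap}.} The plan is to follow the now-standard technique of decomposing the link cobordism $(W_{\a,\b,\g},\cF_{\a,\b,\g})$ into elementary pieces whose maps are understood, and to match the resulting composite with the holomorphic triangle count. The argument is parallel to the proofs of \cite{JuhaszZemkeContactHandles}*{Theorem~1.2} and \cite{ZemDualityMappingTori}*{Theorem~1.6}, adapted to the full link Floer complex. First I would recall, from \cite{ZemCFLTQFT}, that any decorated link cobordism map factors as a composition of elementary maps associated with $0$-, $1$-, $2$-, $3$-, and $4$-handles, together with the link cobordism maps for births, deaths, saddles (band maps $F_B^{\ws},F_B^{\zs}$), and quasi-stabilizations. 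The key point is that $X_{\a,\b,\g}$ as presented has a natural handle decomposition: relative to the incoming end $Y_{\a,\b}\sqcup Y_{\b,\g}$, one builds $X_{\a,\b,\g}$ by attaching the $1$-handles and $2$-handles that turn $Y_{\a,\b}\sqcup Y_{\b,\g}$ into $Y_{\a,\g}$ (thought of via the $\bs$-handlebody being traded for the triangle). The decorated surface $S_{\a,\b,\g}$ is built compatibly, so the dividing set $\cA_{\a,\b,\g}$ passes through the handle regions in a controlled way.

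\textbf{Main steps.} (1) Reduce to the case where the diagram is \emph{admissible} and, by the usual handleslide/stabilization invariance arguments for both sides, reduce to a diagram in a convenient position — in particular one where the $\ws$- and $\zs$-basepoints, and the arcs $\ell_\a,\ell_\b,\ell_\g$, are localized near a single handle region. (2) Analyze the $\bs$-handlebody trade: cancelling or sliding the $\bs$-curves past one another, one expresses $F_{W_{\a,\b,\g},\cF_{\a,\b,\g},\frs}$ as a composition of a sequence of $2$-handle maps (one for each $\beta$-curve), interleaved with band maps and quasi-stabilization maps coming from the pieces $e_\b\times\ell_\b$ of the surface. (3) Identify this composition, via the model computation for a genus-zero triple with a single triangle class (where the triangle map is essentially the nearest-point/small-triangle map), with the small-triangle map; here one uses that the standard $2$-handle cobordism map, in the local model, agrees with counting small triangles, exactly as in Ozsv\'ath--Szab\'o's original argument \cite{OSDisks}*{Section~8}, upgraded to track the $U$ and $V$ powers via the multiplicities $n_{\ws}(\psi)$ and $n_{\zs}(\psi)$. (4) Finally, use the associativity/composition law for link cobordism maps together with the triangle-count version of the same law (the standard degeneration argument for holomorphic triangles) to bootstrap from the model case to an arbitrary Heegaard link triple: stacking the cobordism $X_{\a,\b,\g}$ with $X_{\a,\g,\delta}$ (for an auxiliary small-translate $\delta$ of $\as$ or $\gs$) and comparing with the pentagon degeneration pins down $F_{W_{\a,\b,\g},\cF_{\a,\b,\g},\frs}$ up to chain homotopy.

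\textbf{Tracking the decorations.} Throughout, the extra bookkeeping compared to the closed (graph TQFT) or sutured cases is the placement of the dividing set $\cA_{\a,\b,\g}$ and the separation of the surface into $S_{\ws}$ and $S_{\zs}$ portions. The band maps $F_B^{\ws}$ and $F_B^{\zs}$ from Section~4 of \cite{ZemCFLTQFT}, and the relations \eqref{eq:R1}--\eqref{eq:R4} together with \eqref{eq:band-birth=quasi}, are the tools I would use to push all of the surface pieces $e_\sigma\times\ell_\sigma$ into the handle regions and to verify that the resulting map records the multiplicities $n_{\ws}(\psi)$ on the $\ws$-side and $n_{\zs}(\psi)$ on the $\zs$-side, producing exactly the monomial $U^{n_{\ws}(\psi)}V^{n_{\zs}(\psi)}$ in the formula. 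Invariance of both sides under the choice of the arcs $\ell_\sigma$ and the points $\ve p_\sigma$ is immediate: the left-hand side because the isotopy class of $\cF_{\a,\b,\g}$ is unchanged, the right-hand side because it does not depend on these choices at all.

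\textbf{Expected main obstacle.} The hardest part will be step (3)–(4): rigorously matching the $2$-handle map in the local model with the small-triangle count while simultaneously keeping exact control of the $U$- and $V$-exponents and of the $\Spin^c$-refinement $\frs_{\ws}(\psi)=\frs$. In the closed theory this is handled by the neck-stretching argument of \cite{OSDisks}; here one must run the same argument equivariantly over $\cR^\infty$ and check that no spurious powers of $U$ or $V$ are introduced by the quasi-stabilization and band maps that appear in the decomposition of $\cF_{\a,\b,\g}$. Once the model case and the composition laws on both sides are in hand, the general statement follows formally, so the technical heart of the proof is this local identification together with a careful accounting of the decorated surface near the handles.
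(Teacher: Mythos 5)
Your proposal captures the right general shape (decompose $X_{\a,\b,\g}$ into $1$- and $2$-handles, identify the elementary pieces with triangle counts, assemble via associativity), and your step (1)–(2) decomposition into $F_{W_2,\cF_2}\circ F_{W_1,\cF_1}$ is exactly how the paper begins. But the two steps you flag as the technical heart are not carried out by the mechanism you describe, and as stated they have a genuine gap. First, your step (2) glosses over the central difficulty: the $2$-handles of $X_{\a,\b,\g}$ are attached along spheres obtained by concatenating the descending manifolds of $f_{\b}$ in $U_{\b}\subset Y_{\b,\g}$ with their mirror images in $\bar U_{\b}\subset Y_{\a,\b}$ across the connected-sum tubes created by the $1$-handles. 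To compute the resulting $2$-handle map one needs a Heegaard triple subordinate to this framed link, and this is precisely what the paper's \emph{doubled diagram} $D_{\a}^{\zs}(\cH)$ with the auxiliary attaching curves $\Ds$, together with the \emph{twisted conjugate} diagram $\Tw^-(\bar{\cH}_{\b,\g})$, provides. None of this appears in your outline, and without it "a sequence of $2$-handle maps interleaved with band maps and quasi-stabilizations" is not something you can actually evaluate. The paper also needs a separate, nontrivial result (its Proposition on the intertwining map) identifying the $1$-handle cobordism map $F_{W_1,\cF_1}$ with the map $\cG(-,-)=F_{\a\cup\bar\g,\b\cup\bar\g,\b\cup\bar\b}(F_1^{\bar\g,\bar\g}(-)\otimes F_1^{\b,\b}(-))$; that proof is delicate (the paper remarks that the naive symmetric variant of the argument fails), and it in turn rests on neck-stretching counts for the compound $1$- and $3$-handle maps.

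Second, your step (4) bootstrap does not work as a logical matter. Knowing that $F_{W_{\a,\b,\g},\cF_{\a,\b,\g},\frs}$ composed with the map of an auxiliary cobordism $X_{\a,\g,\delta}$ agrees with a pentagon degeneration only constrains the composite; it does not determine the individual factor up to chain homotopy, so "comparing with the pentagon degeneration pins down $F_{W_{\a,\b,\g},\cF_{\a,\b,\g},\frs}$" is unjustified. Relatedly, reducing to a "genus-zero model triple with a single triangle class" and then passing to an arbitrary triple by associativity is not available here: an arbitrary Heegaard link triple is not a stabilization or composite of such models in any sense that the composition law controls. The paper avoids any such bootstrap: it computes both $F_{W_1,\cF_1}$ and $F_{W_2,\cF_2}$ \emph{directly} as holomorphic triangle maps on the auxiliary (doubled/conjugated) diagrams, and then uses associativity of triangle maps together with explicit formulas for the transition maps $\Phi_{\cH\to D_{\a}^{\zs}(\cH)}$ and $\Phi_{D_{\a}^{\zs}(\cH)\to\cH}$ to collapse the composite into $F_{\a,\b,\g,\frs}$ on the original diagram. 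To repair your proposal you would need to supply the doubling/conjugation constructions, the compound handle-map triangle counts, and the intertwining-map identification, rather than the model-case-plus-associativity argument you sketch.
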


We now demonstrate that the trace formula follows quickly from Theorem~\ref{thm:heegaardtriplemap}:

\begin{proof}[Proof of Theorem~\ref{thm:traceforuma}]
We will focus on the claim that $F_{W_{\tr},\cF_{\tr},\frs}\simeq \tr$. The
claim about the cotrace cobordism follows from the formula for the trace
cobordism. Indeed, if $(W,\cF)$ is a decorated link cobordism from
$(Y_1,\bL_1)$ to $(Y_2,\bL_2)$, and
\[
(W^\vee,\cF^\vee)\colon (-Y_2,-\bL_2)\to (-Y_1,-\bL_1)
\]
is the cobordism obtained by turning around $(W,\cF)$, then it is
straightforward to adapt the proof of \cite{OSTriangles}*{Theorem~3.5} to see
that $F_{W^\vee,\cF^\vee,\frs}$ is equal to the dual map
\[
(F_{W,\cF,\frs})^\vee \colon \Hom_{\cR^\infty}(\cCFL^\infty(Y_2,\bL_2,\frs|_{Y_2}), \cR^\infty) \to \Hom_{\cR^\infty}(\cCFL^\infty(Y_1,\bL_1,\frs|_{Y_1}), \cR^\infty).
\]

To establish the formula for $F_{W_{\tr}, \cF_{\tr},\frs}$, we pick a diagram
$(\Sigma,\as,\bs,\ws,\zs)$ for $(Y,\bL)$, as well as a small Hamiltonian
translate $\bs'$ of $\bs$, and we consider the Heegaard triple
$(\Sigma,\bs',\as,\bs,\ws,\zs)$.  The decorated link cobordism
$(X_{\b',\a,\b},\cF_{\b',\a,\b})$ is in fact equal to $(I\times Y,I\times L)$
with a neighborhood of $\{\tfrac{1}{2}\}\times U_{\b}$ removed. Hence, using
Theorem~\ref{thm:heegaardtriplemap} and the composition law,  we can write
\begin{equation}
F_{W_{\tr},\cF_{\tr},\frs}(\xs\otimes \ys) = (G\circ F_{\b',\a,\b,\frs})(\xs\otimes \ys),
\label{eq:trace=3-handlefollowedbytrace}
\end{equation}
where $\x \in \T_{\b'} \cap \T_{\a}$ and $\y \in \T_{\a} \cap \T_{\b}$,
and $G$ is a composition of 3-handle and 4-handle maps. The map $G$ takes the form
\[
G(\Theta) = \begin{cases}1& \text{ if } \Theta = \Theta_{\b',\b}^-, \\
0& \text{ otherwise}
\end{cases}
\]
for $\Theta \in \T_{\b'} \cap \T_{\b}$, and extended $\cR^\infty$-equivariantly. On the other hand
equation~\eqref{eq:trace=3-handlefollowedbytrace} says that
$F_{W_{\tr}, \cF_{\tr},\frs}(\xs \otimes \ys)$ is exactly equal to the count of Maslov index~0
holomorphic triangles with vertices $\xs$, $\ys$, and
$\Theta_{\b',\b}^-$. Note that $\Theta_{\b',\b}^- = \Theta_{\b,\b'}^+$, and that
the transition map
\[
\Phi_{\b\to \b'}^{\a} \colon \cCFL^\infty(\Sigma,\as,\bs,\ws,\zs,\frs) \to
\cCFL^\infty(\Sigma,\as,\bs',\ws,\zs,\frs)
\]
can be computed via the triangle
map $F_{\a,\b,\b',\frs}(- \otimes \Theta_{\b,\b'}^+)$. Observing that
$F_{\a,\b,\b',\frs}$ and $F_{\b',\a,\b,\frs}$ count the exact same
holomorphic triangles, we conclude that
\[
F_{W_{\tr},\cF_{\tr},\frs}(\xs \otimes \ys) = \tr( \xs\otimes \Phi_{\b \to \b'}^{\a}(\ys)),
\]
completing the proof.
\end{proof}

\subsection{Compound 1- and 3-handle maps and some related counts of holomorphic curves}

Suppose that $(\Sigma,\as,\bs,\ws,\zs)$ and
$(\Sigma_0,\xis,\zetas,\ws_0,\zs_0)$ are two multi-pointed Heegaard diagrams,
and that we have a choice of injection
\[
i \colon \ws_0 \to \zs.
\]
Suppose further that $(\Sigma_0,\xis,\zetas,\ws_0,\zs_0)$ satisfies the following:
\begin{enumerate}[label=(\textit{D\arabic*}), ref=\textit{D\arabic*}]
\item\label{num:compdiagram1} The curves $\xis$ can be related to the curves $\zetas$ by a sequence of
    isotopies and handleslides
in the complement of $\ws_0$ and $\zs_0$.
\item\label{num:compdiagram2} Each $\ws_0$-basepoint is contained in the same region of
    $\Sigma_0\setminus (\xis\cup \zetas)$ as a $\zs_0$-basepoint.
\item\label{num:compdiagram3} $|\xi_i\cap \zeta_j|=2\delta_{ij}$, and $\xi_i\cap \zeta_i$ consists of
    two points that have relative Maslov grading~1.
\end{enumerate}

Note that condition~\eqref{num:compdiagram1} implies that
$(\Sigma_0,\xis,\zetas,\ws_0,\zs_0)$ is a diagram for an unlink $\bU$ in
$(\Sphere^1\times \Sphere^2)^{\# g(\Sigma_0)}$, each of whose components
contains exactly two basepoints. Condition~\eqref{num:compdiagram2} implies
that $\gr_{\ws}(\xs)=\gr_{\zs}(\xs)$ for any intersection point $\xs\in
\bT_{\xi}\cap \bT_{\zeta}$. Finally, condition~\eqref{num:compdiagram3}
implies that there is a top-graded intersection point
$\Theta^+_{\xi,\zeta}\in \bT_{\xi} \cap \bT_{\zeta}$, and a bottom-graded
intersection point $\Theta^-_{\xi,\zeta}\in \bT_{\xi}\cap \bT_{\zeta}$.

We form the surface $\Sigma\#_i \Sigma_0$ by joining $\Sigma$ and $\Sigma_0$
together with a connected sum tube for each point $w_0\in \ws_0$, which is
attached near the points $w_0$ and $i(w_0)$. Let us write
\[
\zs' := (\zs\setminus i(\ws_0))\cup \zs_0.
\]
There is an induced Heegaard diagram $(\Sigma \#_i \Sigma_0, \as\cup \xis,\bs\cup \zetas,\ws,\zs')$.

We define the \emph{compound 1-handle map}
\[
F_1^{\xi,\zeta}\colon \cCFL^\infty_{J_s}(\Sigma,\as,\bs,\ws,\zs) \to
\cCFL^\infty_{J_s(\ve{T})}(\Sigma \#_i \Sigma_0,\as\cup \xis,\bs\cup \zetas,\ws,\zs')
\]
via the formula
\[
F_1^{\xi,\zeta}(\xs) := \xs \times \Theta^+_{\xi,\zeta}
\]
for $\x \in \T_\a \cap \T_\b$, and extended $\cR^\infty$-equivariantly.
Here $J_s$ and $J_s(\ve{T})$ are 1-parameter families of almost complex
structures on $\S \times [0,1] \times \R$ and $(\Sigma \#_i \Sigma_0) \times [0,1] \times \R$,
respectively, that we will describe shortly.

Similarly there is a \emph{compound 3-handle map}
\[
F_3^{\xi,\zeta} \colon \cCFL^\infty_{J_s(\ve{T})}(\Sigma \#_i \Sigma_0, \as \cup \xis,\bs \cup \zetas,\ws,\zs')
\to \cCFL^\infty_{J_s}(\Sigma,\as,\bs,\ws,\zs),
\]
defined via the formula
\[
F_3^{\xi,\zeta}(\xs\times \Theta) := \begin{cases} \xs& \text{ if } \Theta = \Theta^-_{\xi,\zeta}, \\
0& \text{ if } \Theta\neq \Theta^-_{\xi,\zeta}
\end{cases}
\]
for $\x \in \T_\a \cap \T_\b$ and $\Theta \in \T_\xi \cap \T_\zeta$, and extended $\cR^\infty$-equivariantly.

We now wish to show that the compound 1-handle and 3-handle maps are chain
maps. This involves an argument involving analyzing how holomorphic curves
behave as one degenerates the almost complex structure.  Lipshitz's
cylindrical reformation of Heegaard Floer homology \cite{LipshitzCylindrical}
provides the technical framework necessary to perform the analysis. Let us
write $n=|\ws_0|$, the number of connected sum tubes we add.    Given almost
complex structures $J_s$ and $J_s'$ on $\Sigma\times [0,1]\times \R$ and $\Sigma_0\times
[0,1]\times \R$ that are split in a neighborhood of the connected sum
points, as well as an $n$-tuple of positive numbers $\ve{T}=(T_1,\dots,
T_n)$, we can form an almost complex structure $J_s(\ve{T})$ on $(\Sigma\#_i
\Sigma_0)\times [0,1]\times \R$ by inserting necks of length $T_1,\dots, T_n$
along the connected sum tubes.

\begin{prop}\label{prop:compound1-handleischainmap}
Suppose that $(\Sigma_0,\xis,\zetas,\ws_0,\zs_0)$ is a Heegaard diagram
satisfying conditions \eqref{num:compdiagram1}, \eqref{num:compdiagram2}, and
\eqref{num:compdiagram3}. If $\ve{T}$ is an $n$-tuple of neck lengths, all of
whose components are sufficiently large, the compound 1-handle map
$F_1^{\xi,\zeta}$ and the compound 3-handle map $F_3^{\xi,\zeta}$ are chain
maps.
\end{prop}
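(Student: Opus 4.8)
The plan is to prove that $F_1^{\xi,\zeta}$ and $F_3^{\xi,\zeta}$ are chain maps by a neck-stretching argument in Lipshitz's cylindrical formulation, reducing the verification to understanding holomorphic curves on the connected sum $(\Sigma \# \Sigma_0) \times [0,1] \times \R$ as the neck lengths $\ve{T}$ tend to infinity. Since $F_3^{\xi,\zeta}$ is adjoint to $F_1^{\xi,\zeta}$ (up to turning around the diagram), it suffices to treat the compound 1-handle map; the statement for $F_3^{\xi,\zeta}$ then follows formally.

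First I would set up the degeneration: fix the split almost complex structures $J_s$ on $\Sigma \times [0,1] \times \R$ and $J_s'$ on $\Sigma_0 \times [0,1] \times \R$, and for a tuple $\ve{T} = (T_1, \dots, T_n)$ of neck lengths form $J_s(\ve{T})$ by inserting necks along the $n$ connected sum tubes attached at the pairs $(w_0, i(w_0))$. The standard neck-stretching compactness result (as in Lipshitz, and used in \cite{OSDisks} and \cite{ZemCFLTQFT}) says that for $\ve{T}$ large, any index-$1$ $J_s(\ve{T})$-holomorphic curve contributing to $\partial \circ F_1^{\xi,\zeta}$ or $F_1^{\xi,\zeta} \circ \partial$ converges to a matched pair of curves: one on $\Sigma \times [0,1] \times \R$ and one on $\Sigma_0 \times [0,1] \times \R$, whose multiplicities agree along the connected sum points. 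The key computation is then that the only index-$0$ curve on the $\Sigma_0$ side with appropriate asymptotics is the constant curve at $\Theta^+_{\xi,\zeta}$: here I would invoke condition~\eqref{num:compdiagram3}, which guarantees that $\Theta^+_{\xi,\zeta}$ is the top-graded generator and there are no lower-index nonconstant contributions, together with condition~\eqref{num:compdiagram1}, which pins down the diagram $(\Sigma_0, \xis, \zetas)$ up to isotopy and handleslide so that its Floer complex is that of an unlink in $\#^{g(\Sigma_0)}(\Sphere^1 \times \Sphere^2)$. Condition~\eqref{num:compdiagram2} ensures $\gr_{\ws} = \gr_{\zs}$ on $\bT_\xi \cap \bT_\zeta$, so no $U$- or $V$-powers are introduced on the $\Sigma_0$ side, and the matched curve simply reproduces the differential on $\cCFL^\infty(\Sigma, \as, \bs, \ws, \zs)$ up to the bookkeeping of the basepoints that became interior to the tubes.

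The main obstacle I expect is the careful analysis of the boundary degenerations and of curves that could break off a nonconstant piece with zero energy on the $\Sigma_0$ side — in particular ruling out "ghost" components and ensuring that the gluing parameter counting works out so that the matched moduli spaces are cut out transversally and in bijection with the moduli spaces defining $\partial$ on the original diagram. This is exactly the kind of argument carried out in \cite{JuhaszZemkeContactHandles}*{Section~9} and \cite{ZemDualityMappingTori} for the trace and cotrace cobordisms, and in \cite{OSDisks}*{Section~8.1} for the original 1-handle maps, so I would follow those models closely, adapting the neck-stretching estimates to the multi-basepoint setting with $n = |\ws_0|$ simultaneous necks. Once the curve count is established, the chain map identities $\partial \circ F_1^{\xi,\zeta} \simeq F_1^{\xi,\zeta} \circ \partial$ and $\partial \circ F_3^{\xi,\zeta} \simeq F_3^{\xi,\zeta} \circ \partial$ follow immediately, and the independence of the chain homotopy type from the large choice of $\ve{T}$ is a routine continuation argument that I would also include.
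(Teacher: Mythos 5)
Your proposal is correct and follows essentially the same route as the paper, which likewise reduces the statement to a neck-stretching degeneration of $J_s(\ve{T})$-holomorphic curves into matched pairs on $\Sigma\times[0,1]\times\R$ and $\Sigma_0\times[0,1]\times\R$ and defers the technical curve counts to \cite{ZemDualityMappingTori}*{Proposition~6.1}. One small caution: the $\Sigma_0$-side component is generally not the constant curve at $\Theta^+_{\xi,\zeta}$ (it must carry the matching multiplicities at the connected sum points, so it can cover regions of $\Sigma_0$); the correct statement is that for each prescribed multiplicity vector the signed count of such index-appropriate curves ending at $\Theta^+_{\xi,\zeta}$ is $1$ and at any other generator is $0$, which your bookkeeping remark already implicitly acknowledges.
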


Proposition~\ref{prop:compound1-handleischainmap} follows from a careful
analysis of how holomorphic curves in $(\Sigma\#_i \Sigma_0)\times [0,1]\times
\R$ degenerate as one sends all  components of $\ve{T}$ to $+\infty$,
simultaneously.  The technical details of the proof can be found in
\cite{ZemDualityMappingTori}*{Proposition~6.1}.

There is an analogue of Proposition~\ref{prop:compound1-handleischainmap} for
the holomorphic triangle maps, which we will need for our proof of the trace
formula.  Suppose that $(\Sigma_0,\xis,\zetas,\taus,\ws_0,\zs_0)$ is a
Heegaard link triple satisfying the following:
\begin{enumerate}[label=(\textit{T}), ref=\textit{T}]
\item \label{num:propertytriple} All three sub-diagrams of $(\Sigma_0,\xis,\zetas,\taus,\ws_0,\zs_0)$ satisfy conditions~\eqref{num:compdiagram1}, \eqref{num:compdiagram2}, and~\eqref{num:compdiagram3}.
\end{enumerate}
Note that condition~\eqref{num:propertytriple} implies that there are
top-graded intersection points $\Theta_{\xi,\zeta}^+$, $\Theta_{\xi,\tau}^+$,
and $\Theta_{\zeta,\tau}^+$, as well as bottom-graded intersection
points $\Theta_{\xi,\zeta}^-$, $\Theta_{\xi,\tau}^-$, and $\Theta_{\zeta,\tau}^-$.

Suppose $\cT = (\Sigma,\as,\bs,\gs,\ws,\zs)$ and
$\cT_0 = (\Sigma_0,\xis,\zetas,\taus,\ws_0,\zs_0)$ are Heegaard link triples
such that the latter satisfies condition~\eqref{num:propertytriple} above,
and that we have a fixed injection
\[
i \colon \ws_0 \to \zs.
\]
We can construct a  surface $\Sigma\#_i \Sigma_0$  as we did before, as well as a Heegaard link triple
\[
\cT \# \cT_0 := (\Sigma \#_i \Sigma_0, \as \cup \xis, \bs \cup \zetas, \gs \cup \taus, \ws, \zs'),
\]
where $\zs' := (\zs \setminus i(\ws_0)) \cup \zs_0$.

Using a Mayer--Vietoris argument, it is not hard to see that there is an isomorphism
\[
\Spin^c(X_{\a\cup \xi,\b\cup \zeta,\g\cup \tau}) \iso \Spin^c(X_{\a,\b,\g}) \times \Spin^c(X_{\xi,\zeta,\tau}),
\]
under which $\frs_{\ws\cup \ws_0}(\psi\# \psi_0)$ is identified with
$(\frs_{\ws}(\psi), \frs_{\ws_0}(\psi_0))$. For triples
$(\Sigma_0,\xis,\zetas,\taus,\ws_0,\zs_0)$ satisfying
condition~\eqref{num:propertytriple}, the 4-manifold $X_{\xi,\zeta,\tau}$
becomes $\#^{g(\Sigma_0)} (\Sphere^1\times \Sphere^3)$ once we glue in 3- and
4-handles along the boundary. In particular, there is a unique $\Spin^c$
structure $\frs_0\in \Spin^c(X_{\xi,\zeta,\tau})$ which restricts to the
torsion $\Spin^c$ structure on all three boundary components. If $\frs\in
\Spin^c(X_{\a,\b,\g})$, there is thus a well-defined $\Spin^c$ structure
$\frs\# \frs_0\in \Spin^c(X_{\a\cup \xi,\b\cup \zeta,\g\cup \tau})$.

The holomorphic triangle counts from
\cite{ZemDualityMappingTori}*{Proposition~6.3} carry over to our present
situation without change to imply the following:

\begin{prop}\label{prop:holomorphictrianglecounts}
Suppose that $\cT=(\Sigma,\as,\bs,\gs,\ws,\zs)$ and
$\cT_0 = (\Sigma_0,\xis,\zetas,\taus,\ws_0,\zs_0)$ are Heegaard link triples
such that the latter satisfies condition~\eqref{num:propertytriple}, and $i\colon \ws_0\to \zs$
is a chosen injection. Let $\cT \# \cT_0$ denote the Heegaard link triple described
above. Then, for a tuple of sufficiently large neck-lengths $\ve{T}$, the
following hold:
\begin{align*}
F_{ \cT\# \cT_0,J(\ve{T}),\frs\# \frs_0} (F_1^{\xi,\zeta}(-)\otimes F_1^{\zeta,\tau}(-)) &= F_1^{\xi,\tau} F_{\cT,J,\frs}(-\otimes -),\\
F_3^{\xi,\tau}F_{ \cT \# \cT_0,J(\ve{T}),\frs \# \frs_0} (F_1^{\xi,\zeta}(-)\otimes -) &= F_{\cT,J,\frs}(-\otimes F_3^{\zeta,\tau}(-)),\\
F_3^{\xi,\tau}F_{\cT\# \cT_0,J(\ve{T}),\frs\# \frs_0} (-\otimes F_1^{\zeta,\tau}(-)) &= F_{\cT,J,\frs} (F_3^{\xi,\zeta}(-)\otimes -).
\end{align*}
\end{prop}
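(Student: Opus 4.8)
The plan is to prove all three identities at once by the neck-stretching argument of \cite{ZemDualityMappingTori}*{Proposition~6.3}, observing that it uses only conditions~\eqref{num:compdiagram1}--\eqref{num:compdiagram3} on each sub-diagram of $\cT_0$, together with \eqref{num:propertytriple}, all of which hold here. First I would fix an almost complex structure $J$ on $\Sigma\times\Delta$ and $J'$ on $\Sigma_0\times\Delta$ that are split in a neighborhood of the feet of the $n=|\ws_0|$ connected-sum tubes, so that $J(\ve{T})$ on $(\Sigma\#\Sigma_0)\times\Delta$ is obtained by inserting necks of lengths $T_1,\dots,T_n$. The task is then to analyze, for all components of $\ve{T}$ sufficiently large, the moduli spaces of index-zero holomorphic triangles on $\cT\#\cT_0$ in the $\Spin^c$ class $\frs\#\frs_0$, with corners prescribed by the maps $F_1^{\xi,\zeta}$, $F_1^{\zeta,\tau}$, $F_3^{\xi,\tau}$ appearing in each identity.

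Next I would run the degeneration-and-gluing dichotomy. A Gromov-compactness argument shows that, as all $T_i\to\infty$, a rigid holomorphic triangle on $\cT\#\cT_0$ limits to a matched pair $(\psi,\psi_0)$ of homotopy triangles on $\cT$ and on $\cT_0$ with equal local multiplicities at the two feet of each tube; conversely, a gluing argument shows that for $\ve{T}$ large every such matched rigid pair arises from a unique triangle on $\cT\#\cT_0$. Under the product decomposition $\Spin^c(X_{\a\cup\xi,\b\cup\zeta,\g\cup\tau})\iso\Spin^c(X_{\a,\b,\g})\times\Spin^c(X_{\xi,\zeta,\tau})$ recorded before the statement, the constraint $\frs_{\ws\cup\ws_0}(\psi\#\psi_0)=\frs\#\frs_0$ forces $\frs_{\ws_0}(\psi_0)=\frs_0$, the unique torsion class. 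Additivity of the Maslov index under connected sum at basepoints, together with the fact that $\psi$ and $\psi_0$ each have nonnegative expected dimension, forces both $\psi$ and $\psi_0$ to have index zero. Hence the left-hand count factors as (count of rigid $\psi$ on $\cT$) $\times$ (count of rigid $\psi_0$ on $\cT_0$ with the forced corners).

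Finally I would evaluate the local count on $\cT_0$. Since each sub-diagram of $\cT_0$ is a standard stabilized diagram for an unlink in $\#^{g(\Sigma_0)}(\Sphere^1\times\Sphere^2)$ with the Maslov-grading-$1$ pairs $\Theta^\pm$, the index-zero triangle count in the torsion $\Spin^c$ class is the usual small-triangle count: it is $1$ for the corner triple $(\Theta^+_{\xi,\zeta},\Theta^+_{\zeta,\tau},\Theta^+_{\xi,\tau})$ — giving the first identity — and, in the two cases where one input carries $\Theta^+$ and the output is projected onto $\Theta^-$, it is again $1$ exactly for $(\Theta^+_{\xi,\zeta},\Theta^-_{\zeta,\tau},\Theta^-_{\xi,\tau})$ and its mirror, and vanishes otherwise; these give the 3-handle identities. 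Reading off the three cases yields the displayed formulas. The main obstacle is the neck-stretching and gluing analysis underpinning the degeneration dichotomy, including controlling energy and treating the simultaneous degeneration at all $n$ tubes; but the almost-complex-structure setup and the combinatorics of $\cT_0$ are identical to those in \cite{ZemDualityMappingTori}*{Proposition~6.3}, so that analysis transfers without change, and I would simply cite it.
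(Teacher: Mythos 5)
Your proposal is correct and takes essentially the same route as the paper, which simply observes that the holomorphic triangle counts of \cite{ZemDualityMappingTori}*{Proposition~6.3} carry over to the link setting without change. Your sketch of the degeneration--gluing dichotomy, the $\Spin^c$ splitting, and the local count on $\cT_0$ is a faithful unpacking of that cited argument rather than a different proof.
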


\begin{rem}
Consider the special case when
\[
(\Sigma_0,\xis,\zetas,\taus,\ws_0,\zs_0)=(\Sphere^2,\xi,\zeta,\tau,\{w_0,w_0'\},\{z_0,z_0'\})
\]
is a Heegaard triple where $w_0$ and $z_0$, and also $w_0'$ and $z_0'$
are adjacent, and where any two of $\xi$, $\zeta$, and $\tau$
intersect in two points. Then Propositions~\ref{prop:compound1-handleischainmap}
and~\ref{prop:holomorphictrianglecounts} imply more standard relations between the
holomorphic disk and triangle counts, and the 1-handle and 3-handle
maps, for 1-handles with feet attached near the $\zs$ basepoints on the
original Heegaard diagram. Compare \cite{OSTriangles}*{Theorem~2.14} and
\cite{ZemGraphTQFT}*{Lemma~8.5 and Theorem~8.8}.
\end{rem}

Finally, we need an additional holomorphic triangle count, due to Manolescu
and Ozsv\'{a}th \cite{MoIntSurg}*{Proposition~6.2}, 
which is useful in the
proof that the quasi-stabilization maps are well defined. Suppose that
$\cT=(\Sigma,\as,\bs,\gs,\ws,\zs)$ is a Heegaard triple, and write $A$ for a
distinguished component of $\Sigma\setminus \as$. Let $w$ and $z$ be the two
basepoints in $A$. Suppose $\alpha_s$ is a simple closed curve in $A$ that
divides $A$ into two components, one of which contains $w$, and the other
contains $z$. Let $p\in \alpha_s\setminus (\bs\cup \gs)$ be an arbitrary
choice of point. We form the quasi-stabilized Heegaard triple $\cT^+$ via the
formula
\[
\cT^+ := (\Sigma,\as\cup \{\alpha_s\}, \bs\cup \{\beta_0\}, \gs \cup \{\gamma_0\}, \ws\cup \{w_0\}, \zs\cup \{z_0\}),
\]
where $\a_s,$ $\b_0$, $\g_0,$ $w_0$, and $z_0$ are as shown in
Figure~\ref{fig::28}. (Compare Figure~\ref{fig::43}). The curves $\beta_0$ and $\gamma_0$  are contained in a
small disk centered at the point $p$. The basepoints $w_0$ and $z_0$ are both
contained in this disk, and are in the regions bounded by $\beta_0$ and $\gamma_0$.

\begin{figure}[ht!]
	\centering
	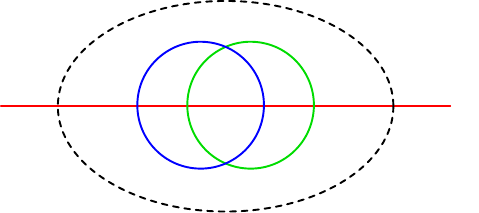
	\caption{The quasi-stabilized Heegaard triple
    $\cT^+=(\Sigma, \as\cup \{\alpha_s\}, \bs\cup \{\beta_0\}, \gs\cup \{\gamma_0\},\ws\cup \{w_0\}, \zs\cup \{z_0\})$
    considered in Proposition~\ref{prop:holomorphictrianglesandquasistabs}.}
    \label{fig::28}
\end{figure}

Abusing notation slightly, write
\[
\a_s\cap \b_0 = \{\theta^{\ws}, \theta^{\zs}\}, \qquad \a_s\cap \g_0 =
\{\theta^{\ws}, \theta^{\zs}\},\qquad \text{and} \qquad \beta_0\cap \gamma_0 = \{\theta^+, \theta^-\},
\]
where $\theta^{\ws}$ denotes the top $\gr_{\ws}$-graded intersection point,
and $\theta^{\zs}$ denotes the top $\gr_{\zs}$-graded intersection point.
When the relative gradings coincide, we write $\theta^+$ for the top-graded
intersection point, and $\theta^-$ for the bottom.

We note that, if $(\Sigma,\as,\bs,\gs,\ws,\zs)$ is a Heegaard triple and
$(\Sigma, \as\cup \{\a_s\}, \bs\cup \{\b_0\}, \gs\cup \{\g_0\},\ws,\zs)$ is a
quasi-stabilization, then the 4-manifolds $X_{\a,\b,\g}$ and $X_{\a\cup
\{\a_0\}, \b\cup \{\b_0\}, \g\cup \{\g_0\}}$ are canonically diffeomorphic,
since the handlebodies $U_{\a}$, $U_{\b}$, and $U_{\g}$ are unchanged after we
quasi-stabilize the triple.
In particular,
\[
\Spin^c(X_{\a,\b,\g})\iso \Spin^c(X_{\a\cup \{\a_0\}, \b\cup \{\b_0\}, \g\cup \{\g_0\}}).
\]
We will need the following holomorphic curve count:

\begin{prop}\label{prop:holomorphictrianglesandquasistabs}
Suppose that $\cT=(\Sigma,\as,\bs,\gs,\ws,\zs)$ is a Heegaard link triple,
and $\cT^+$ is its quasi-stabilization. Write $\bU$ for a doubly-based unknot
containing the basepoints $w_0$ and $z_0$, with Seifert disk $D$,
intersecting $\Sigma$ in an arc connecting $w_0$ and $z_0$ disjoint
from $\beta_0$ and $\gamma_0$. Then
\[
T_{w_0,z_0}^+ F_{\cT,\frs}(\x \otimes \y) =
F_{\cT^+}(T_{w_0,z_0}^+(\x) \otimes \cB^+_{\bU,D}(\y)) \text{ and }
S_{w_0,z_0}^+ F_{\cT}(\x \otimes \y) =
F_{\cT^+,\frs}(S_{w_0,z_0}^+(\x) \otimes \cB^+_{\bU,D}(\y)).
\]
\end{prop}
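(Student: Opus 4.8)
The plan is to reduce this to a count of Maslov index~$0$ holomorphic triangles in the quasi-stabilized Heegaard triple $\cT^+$, following the strategy of Manolescu--Ozsv\'{a}th \cite{MoIntSurg}*{Proposition~6.2}, which was also used by the second author in \cite{ZemGraphTQFT} to establish compatibility of quasi-stabilizations with triangle maps. The key point is that both $T_{w_0,z_0}^+$ and the birth map $\cB^+_{\bU,D}$ are defined on the diagram level simply by taking the product of an intersection point with a distinguished top-graded generator: $T_{w_0,z_0}^+(\x) = \x \times \theta^{\zs}$ and $\cB^+_{\bU,D}(\y) = \y \times \theta^+$, where $\theta^+ \in \beta_0 \cap \gamma_0$ is the higher Maslov-graded generator. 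Hence the right-hand side $F_{\cT^+,\frs}(T_{w_0,z_0}^+(\x) \otimes \cB^+_{\bU,D}(\y))$ is a count of holomorphic triangles in $(\Sigma, \as \cup \{\alpha_s\}, \bs \cup \{\beta_0\}, \gs \cup \{\gamma_0\})$ with incoming vertices $\x \times \theta^{\zs}$ and $\y \times \theta^+$. I would analyze such triangles by the standard neck-stretching/area argument: the disk $A \subset \Sigma \setminus \as$ has been subdivided by $\alpha_s$, and by choosing the curves $\beta_0$, $\gamma_0$ to bound small disks around the point $p \in \alpha_s$, one constrains any index-$0$ triangle to be the product of a triangle $\psi$ in the unstabilized triple $\cT$ (with incoming vertices $\x$, $\y$) and a small triangle $\psi_0$ in the local picture near $p$ connecting $\theta^{\zs}$, $\theta^+$, and some output generator in $\alpha_s \cap \gamma_0$.

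The core computation is then to identify, in the local model shown in Figure~\ref{fig::28}, which small triangles $\psi_0$ have a unique holomorphic representative and what their output vertices are. Since $\beta_0$ and $\gamma_0$ are contained in a small disk centered at $p$, the local region near $p$ is combinatorially a disk with two small circles inside it, and the relevant index-$0$ triangles are the obvious small ones; a direct inspection shows that the triangle with incoming vertices $\theta^{\zs}$ (on $\alpha_s \cap \beta_0$) and $\theta^+$ (on $\beta_0 \cap \gamma_0$) has a unique holomorphic representative with output $\theta^{\zs}$ (on $\alpha_s \cap \gamma_0$), and moreover has $n_{w_0} = n_{z_0} = 0$ so it contributes no extra powers of $U$ or $V$. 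Combined with the product decomposition, this gives that the right-hand side equals $\sum_{\psi} \#\cM(\psi) \cdot U^{n_{\ws}(\psi)} V^{n_{\zs}(\psi)} \cdot (\zs \times \theta^{\zs}) = T_{w_0,z_0}^+(F_{\cT,\frs}(\x \otimes \y))$, which is exactly the first claimed identity. For the second identity, one runs the identical argument but with $\theta^{\zs}$ replaced by $\theta^{\ws}$ throughout (i.e. using $S_{w_0,z_0}^+$ in place of $T_{w_0,z_0}^+$); the local model produces the small triangle with incoming $\theta^{\ws}$, $\theta^+$ and output $\theta^{\ws}$, again with trivial basepoint multiplicities.

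Two technical points need care. First, one must verify that the neck-stretching (or, more directly, the area argument of \cite{MoIntSurg}*{Proposition~6.2}) genuinely forces the product decomposition and rules out triangles whose domain has nonzero multiplicity straddling the subdividing curve $\alpha_s$; this is where the hypothesis that $\beta_0$, $\gamma_0$ bound small disks, and that $\cB^+_{\bU,D}$ uses the Seifert disk $D$ meeting $\Sigma$ in an arc disjoint from $\beta_0 \cup \gamma_0$, is used to pin down the $\Spin^c$-structure bookkeeping and the location of the output point. Second, one should check that the two sides are genuine chain maps in the appropriate sense so that the diagram-level equality passes to homology and is independent of the auxiliary choices — but this follows from the invariance statements already established for $F_{\cT^+}$, $T_{w_0,z_0}^+$, $S_{w_0,z_0}^+$, and $\cB^+_{\bU,D}$ in \cite{ZemCFLTQFT}. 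I expect the main obstacle to be the first point: carefully justifying that, for a generic choice of almost complex structure and sufficiently small $\beta_0$, $\gamma_0$, every index-$0$ triangle on $\cT^+$ contributing to the count factors as a product of a triangle on $\cT$ with the designated small local triangle, with no hidden contributions from degenerate or boundary-degenerate configurations. This is essentially a repackaging of the argument in \cite{MoIntSurg}*{Proposition~6.2} and \cite{ZemGraphTQFT}*{Lemma~8.5}, so the proof will consist of setting up the local picture precisely and then citing, or lightly adapting, that argument.
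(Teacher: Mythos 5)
Your proposal is correct and takes essentially the same route as the paper: both reduce the claim, via the product-with-distinguished-generator definitions of $T_{w_0,z_0}^+$, $S_{w_0,z_0}^+$, and $\cB^+_{\bU,D}$, to the identity $F_{\cT,\frs}(\xs\otimes\ys)\times\theta^{\zs} = F_{\cT^+,\frs}((\xs\times\theta^{\zs})\otimes(\ys\times\theta^+))$ (and its $\theta^{\ws}$ analogue), which is exactly \cite{MoIntSurg}*{Proposition~6.2}. The paper simply cites that proposition at this point, whereas you additionally sketch its proof (the neck-stretching and local triangle analysis); that extra detail is fine but not needed.
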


\begin{proof}
From the definitions of the maps $S_{w_0,z_0}^+$, $T_{w_0,z_0}^+$, and
$\cB_{\bU,D}^+$ (see Sections~\ref{sec:backgroundquasibasepoint} and
\ref{sec:birthsandquasis}), the  main claim is equivalent to the claim that
\[
F_{\cT,\frs}(\xs \otimes \ys) \times \theta^{\ws} =
F_{\cT^+,\frs}\left((\xs \times \theta^{\ws}) \otimes (\ys \times \theta^+)\right) \text{ and }
F_{\cT,\frs}(\xs \otimes \ys) \times \theta^{\zs} =
F_{\cT^+,\frs}\left((\xs \times \theta^{\zs}) \otimes (\ys\times \theta^+)\right),
\]
which is exactly the statement of \cite{MoIntSurg}*{Proposition~6.2}.
\end{proof}

\subsection{Twisted conjugate Heegaard diagrams for links}
\label{sec:twistedconjugateddiagrams}
Analogous to the proofs of the trace formulas in
\cite{ZemDualityMappingTori} and \cite{JZContactHandles},
to prove Theorem~\ref{thm:heegaardtriplemap}, we define a special kind
of operation on a Heegaard diagram for a link, whose result we will call the
\emph{twisted conjugate} of the original. We describe the construction presently.

If $\bL = (L,\ws,\zs)$ is a multi-based link, we write $\overline{\bL}$ for the
multi-based link $(L,\zs,\ws)$, obtained by switching the roles of the basepoints.
Given a Heegaard diagram $\cH=(\Sigma,\as,\bs,\ws,\zs)$ for $(Y,\bL)$, we can
obtain a diagram for $(Y,\overline{\bL})$ by reversing the orientation of
$\Sigma$, and switching the roles of $\as$ and $\bs$. The resulting diagram
$\bar{\cH}:=(\bar{\Sigma},\bs,\as,\zs,\ws)$ is referred to as the
\emph{conjugate} of $\cH$; see \cite{OSProperties}*{Section~2.2} and
\cite{HMInvolutive}*{Section~6.1}.

To obtain a diagram for $(Y,\bL)$, we can modify the embedding of
$\bar{\Sigma}$  in a neighborhood of $L$. By isotoping $\bar{\Sigma}$ along
$L$ in the positive direction according to the orientation of $L$,  we
obtain the \emph{positive twisted conjugate} diagram $\Tw^+(\bar{\cH})$.
Analogously, if we twist in the negative direction, we obtain the
\emph{negative twisted conjugate} $\Tw^-(\bar{\cH})$. The diagrams
$\Tw^+(\bar{\cH})$ and $\Tw^-(\bar{\cH})$ are illustrated in
Figure~\ref{fig::21}. We write $\Tw^+(\bar{\Sigma})$ and
$\Tw^-(\bar{\Sigma})$ for the underlying Heegaard surfaces of the twisted
conjugate diagrams.

\begin{figure}[ht!]
	\centering
	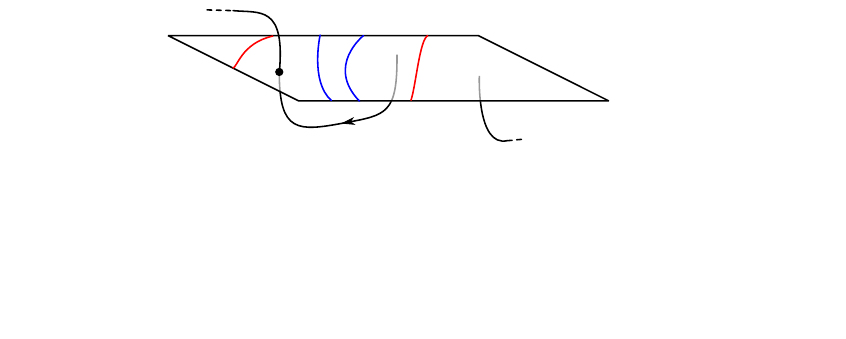
	\caption{On top, we show a link diagram $\cH$, and its twisted conjugates
    $\Tw^-(\bar{\cH})$ bottom left and $\Tw^+(\bar{\cH})$ bottom right.}
\label{fig::21}
\end{figure}

\subsection{Doubling Heegaard diagrams for links}\label{sec:doublingdiagrams}

An additional type of operation on Heegaard diagrams we will encounter in the proof
of Theorem~\ref{thm:heegaardtriplemap} is \emph{doubling}.
In the case of links, if $\cH=(\Sigma,\as,\bs,\ws,\zs)$ is a
diagram for $(Y,\bL)$, then there are four natural variations of the doubling
procedure, producing four diagrams
\begin{equation}
D^{\zs}_\a(\cH),\quad D^{\zs}_\b(\cH), \quad D^{\ws}_{\a}(\cH), \quad\text{and}\quad
D^{\ws}_{\b}(\cH).\label{eq:allfourdoubles}
\end{equation}
For our purposes, it will be sufficient to consider any one of the four
diagrams in equation~\eqref{eq:allfourdoubles}. We focus on
$D^{\zs}_{\a}(\cH)$, whose construction we describe presently.

We first construct the underlying Heegaard surface $D^{\zs}_{\a}(\Sigma)$.
Let $N(\Sigma) = [-1,1] \times \Sigma$ denote a regular
neighborhood of $\Sigma$ in $Y$. Let $N(\zs)$ denote a regular neighborhood
of the basepoints in $\Sigma$, which is a collection of disks, and let us write
\[
\Sigma_0 := \Sigma \setminus N(\zs).
\]
We define
\[
D^{\zs}_{\a,0}(\Sigma) := \d ([-1,0] \times \Sigma_0).
\]
We can view $D^{\zs}_{\a,0}(\Sigma)$ as being formed by gluing a copy of
$\Sigma \setminus N(\zs)$ to $\bar{\Sigma} \setminus N(\zs)$ along their
boundaries. Note that $D^{\zs}_{\a,0}(\Sigma) \cap L$ consists of the original
basepoints $\ws \subset \Sigma$, as well as another collection of basepoints
$\zs'$, which are the images of $\ws$ on $\bar{\Sigma}$.

We now describe the attaching curves on $D_{\a,0}^{\zs}(\Sigma)$.  Let
$m = |\ws| = |\zs|$.  We pick embedded and pairwise disjoint arcs $\lambda_1, \dots, \lambda_m$
on $\Sigma$, each traveling from a $\zs$-basepoint to a $\ws$-basepoint. We
assume further that each basepoint in $\ws\cup \zs$ is an endpoint of exactly
one $\lambda_i$. We assume that the interiors of the $\lambda_i$ are disjoint from $\ws\cup \zs$.

Next, we pick a collection $A$ of subarcs of $\d \Sigma_0$, such that each
component of $\d \Sigma_0$ contains exactly one subarc. We further require
that $A$ be disjoint from each $\lambda_i$. Pick a collection $d_1,\dots,
d_n$ of properly embedded and pairwise disjoint arcs on $\Sigma_0$ that have both boundary
components on $A$, are disjoint from the $\lambda_i$, and such that
they form a basis of $H_1(\Sigma_0, A)$.

By doubling the arcs $d_1,\dots, d_n$ across the connected sum tubes onto all
of $D_{\a,0}^{\zs}(\Sigma)$, we obtain $n$ pairwise disjoint simple closed
curves $\delta_1, \dots, \delta_n$ on $D_{\a,0}^{\zs}(\Sigma)$
that do not intersect the arcs $\lambda_i$. Let us write
\[
\Ds := \{\delta_1, \dots, \delta_n\}.
\]
We can now define an initial version of the doubled diagram as
\[
D_{\a,0}^{\zs}(\cH) := (D_{\a,0}^{\zs}(\Sigma), \as \cup \bar{\bs}, \Ds, \ws, \zs'),
\]
where $\bar{\bs}$ is the copy of $\bs$ on $\bar{\Sigma}$.

Via an isotopy of $Y$ supported in a neighborhood of $L$ that fixes
$\ws$, we can move $\zs'$ to $\zs$. We let $D^{\zs}_{\a}(\Sigma)$ denote the
Heegaard surface obtained by isotoping $D^{\zs}_{\a,0}(\Sigma)$ in such a
manner. The diagram $D^{\zs}_{\a}(\cH)$ is similarly obtained by pushing
forward the attaching curves on $D^{\zs}_{\a}(\cH)$ under such an isotopy;
see Figure~\ref{fig::20}.

\begin{figure}[ht!]
	\centering
	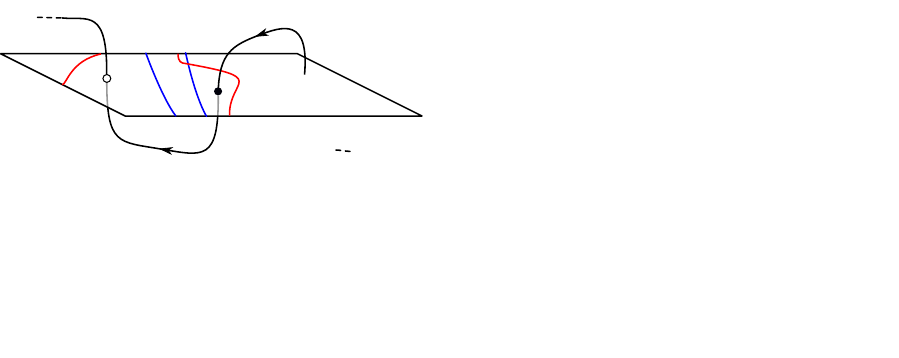
	\caption{The link diagram $\cH = (\Sigma,\as,\bs,\ws,\zs)$ is shown top left,
    the preliminary double $D_{\a,0}^{\zs}(\cH)$ top right,
    and the double $D_{\a}^{\zs}(\cH)$ at the bottom.
    The curves $\Ds$ bounding disks in the $\bs$-handlebody of $D_{\a}^{\zs}(\cH)$
    (i.e., the region between the two copies of $\Sigma$) are not shown.}
\label{fig::20}
\end{figure}

A diagram $D_{\b}^{\zs}(\cH)$ can be constructed using a variation of the
above construction, by having $[0,1] \times (\Sigma \setminus N(\zs))$ play the
role of the $\bs$-handlebody. Diagrams $D_{\a}^{\ws}(\cH)$ and
$D_{\b}^{\ws}(\cH)$ can be defined similarly, by instead adding tubes  near
the $\ws$-basepoints.

We now proceed to show that $D_{\a}^{\zs}(\cH)$ is a valid Heegaard diagram
for $(L,\ws,\zs)$. Note that it is clearly sufficient to show that
$D_{\a,0}^{\zs}(\cH)$ is a valid Heegaard diagram for $(L,\ws,\zs')$. To this
end, we prove the following fact about the $\Ds$ curves:

\begin{lem}\label{lem:homologicallyindependent}
The curves $\delta_1,\dots, \delta_n$ are homologically independent in both
$H_1(D_{\a,0}^{\zs}(\Sigma) \setminus N(\ws))$ and $H_1(D_{\a,0}^{\zs}(\Sigma) \setminus N(\zs'))$.
\end{lem}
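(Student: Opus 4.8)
The statement asserts that the doubled curves $\Ds = \{\delta_1,\dots,\delta_n\}$ remain homologically independent after we puncture the doubled surface $D_{\a,0}^{\zs}(\Sigma)$ at either $\ws$ or $\zs'$. The plan is to trace through the construction carefully and reduce everything to the defining property of the arcs $d_1,\dots,d_n$: that they form a \emph{basis} of $H_1(\Sigma_0, A)$, where $\Sigma_0 = \Sigma \setminus N(\zs)$ and $A$ is the chosen collection of boundary subarcs.

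First I would set up notation: write $\Sigma_0^+$ and $\Sigma_0^-$ for the two copies of $\Sigma_0$ (on $\Sigma$ and $\bar\Sigma$, respectively) glued along $\d\Sigma_0$ to form $D_{\a,0}^{\zs}(\Sigma)$, so that $\delta_i = d_i^+ \cup d_i^-$ is the double of $d_i$ across the connected sum tubes. The key observation is that the curves $\delta_i$ are disjoint from the arcs $\lambda_1,\dots,\lambda_m$, hence disjoint from a neighborhood of both $\ws$ and $\zs'$ (since each $\lambda_i$ has one endpoint in $\ws$ and one in $\zs$, and $\zs'$ is the image of $\ws$ on $\bar\Sigma$). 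So the $\delta_i$ live entirely in the open subsurface obtained by deleting these neighborhoods. The second key observation is that the doubling along the tubes is compatible with the Mayer--Vietoris decomposition: a cycle $\sum c_i \delta_i$ is null-homologous in $D_{\a,0}^{\zs}(\Sigma)\setminus N(\ws)$ iff (after cutting along the union of the tube cores, which is where $d_i^+$ meets $d_i^-$) it bounds on each copy $\Sigma_0^{\pm}$ relative to the cut locus. Since the cut locus is exactly $A$ (the boundary arcs together with the tube regions near $\zs$), a relative cycle $\sum c_i d_i^{+}$ bounds in $(\Sigma_0, A)$ iff $\sum c_i [d_i] = 0$ in $H_1(\Sigma_0, A)$, which forces all $c_i = 0$ by the basis hypothesis. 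The same argument works for the $-$ copy and for the $N(\zs')$-punctured version, using that $\Sigma_0^-$ is a copy of $\Sigma_0$ and $\zs'$ plays the role of $\ws$ there.

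I would organize the argument as: (1) record that $D_{\a,0}^{\zs}(\Sigma)\setminus N(\ws)$ deformation retracts onto $\Sigma_0^+ \cup_{\text{tubes}} \Sigma_0^-$ minus small disks, and set up the Mayer--Vietoris sequence with pieces $\Sigma_0^+$, $\Sigma_0^-$, and a neighborhood of the tubes; (2) identify the connecting/restriction maps so that homological independence of $\Ds$ becomes independence of the classes $[d_i] \in H_1(\Sigma_0, A)$; (3) invoke the choice of $d_1,\dots,d_n$ as a basis of $H_1(\Sigma_0,A)$ to conclude; (4) remark that the case of $N(\zs')$ is identical by the symmetry of the doubling construction (swapping the roles of the two copies and of $\ws \leftrightarrow \zs'$). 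The main obstacle is step (2): being careful about exactly which boundary/tube data is collapsed when we puncture at $\ws$ versus $\zs'$, and checking that in both cases the relevant relative homology group that governs bounding is $H_1(\Sigma_0, A)$ rather than some larger or smaller group — in particular that puncturing at $\ws$ does not accidentally create new relations among the $\delta_i$ coming from the absolute homology of $\Sigma_0$. I expect this to be handled cleanly by noting that $A$ already meets every boundary component of $\Sigma_0$, so $H_1(\Sigma_0, A)$ is the correct "cut-open" homology in either case, and the punctures at $\ws$ or $\zs'$ only remove disks disjoint from all the $\delta_i$ and from $A$.
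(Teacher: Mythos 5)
Your overall route is the same as the paper's: everything is reduced to the fact that $d_1,\dots,d_n$ form a basis of $H_1(\Sigma_0,A)$, via a map from the first homology of the punctured double to $H_1(\Sigma_0,A)$ sending $\delta_i\mapsto d_i$. (The paper implements this not by Mayer--Vietoris but by the composition $H_1(D_{\a,0}^{\zs}(\Sigma)\setminus N(\ve{p}))\to H_1(D_{\a,0}^{\zs}(\Sigma)\setminus N(\ve{p}),\bar{\Sigma}_0)\cong H_1(\Sigma_0,A)$, inclusion followed by the inverse of excision; only the ``only if'' direction of your iff is needed, which is exactly what this map gives.) The one place where your plan goes wrong is the step you yourself flag as the main obstacle. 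You resolve it by saying the punctures at $\ws$ or $\zs'$ ``only remove disks disjoint from all the $\delta_i$ and from $A$,'' which suggests they are harmless and could be ignored. They cannot: without puncturing, the double is $\Sigma_0$ glued to $\bar{\Sigma}_0$ along all of $\d\Sigma_0$, and the analogous map lands in $H_1(\Sigma_0,\d\Sigma_0)\cong H^1(\Sigma_0)$, which has rank $2g(\Sigma_0)+|\d\Sigma_0|-1 < n$ whenever $|\d\Sigma_0|\ge 2$, so the $n$ classes $[d_i]$ could not possibly be independent there. The punctures are what cut each gluing circle open so that, up to deformation retraction, the two copies are glued only along the arcs $A$, and this is why the target is $H_1(\Sigma_0,A)$ rather than $H_1(\Sigma_0,\d\Sigma_0)$. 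The mechanism for getting the punctures at $\ws$ (resp.\ $\zs'$) onto the gluing circles is the arcs $\lambda_i$: since they and their mirrors are disjoint from the $\delta_j$, independence in $H_1$ of the complement of $N(\ws)$, of $N(\zs')$, and of $N(\ve{p})$ with $\ve{p}=\{\lambda_i\cap\d\Sigma_0\}$ are all equivalent, and it is the last of these that admits the $\Sigma_0\cup_A\bar{\Sigma}_0$ model. With that correction your argument goes through and coincides with the paper's.
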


\begin{proof}
Let $p_i$ denote the point $\lambda_i\cap \d \Sigma_0$, and write
$\ve{p}=\{p_1,\dots, p_m\}$. Since the $\lambda_i$, as well as their images
on $\bar{\Sigma}$, are disjoint from the $\delta_j$, if follows that
$\delta_1,\dots, \delta_n$ are homologically independent in
$H_1(D_{\a,0}^{\zs}(\Sigma)\setminus N(\ws))$ if and only if they are
independent in $H_1(D_{\a,0}^{\zs}(\Sigma)\setminus N(\zs'))$, which in turn
occurs if an only if they are homologically independent in
$H_1(D_{\a,0}^{\zs}(\Sigma)\setminus N(\ve{p}))$. Noting that
$D_{\a,0}^{\zs}(\Sigma)\setminus N(\ve{p})$ can be viewed as $\Sigma_0$ glued
to $\bar{\Sigma}_0$ along the arcs $A$, we consider the sequence
\begin{equation}
H_1(D_{\a,0}^{\zs}(\Sigma) \setminus N(\ve{p})) \to  H_1(D_{\a,0}^{\zs}(\Sigma) \setminus N(\ve{p}), \bar{\Sigma}_0) \to
H_1(\Sigma_0, A).
\label{eq:somemapsonhomology}
\end{equation}
Here, the first map is induced by inclusion, and the second is the inverse of the
excision isomorphism. Since the curves $\delta_1,\dots, \delta_n$ are mapped to $d_1,\dots,d_n$ by
the composition of the two maps in equation~\eqref{eq:somemapsonhomology},
which are homologically independent in $H_1(\Sigma_0, A)$, we
conclude that the curves $\delta_1,\dots, \delta_n$ are also homologically independent.
\end{proof}

Lemma~\ref{lem:homologicallyindependent} implies that $\Ds$ is a valid set of
attaching curves on $D_{\a,0}^{\zs}(\Sigma)$, as the following basic lemma
demonstrates:

\begin{lem}
Suppose that $\Sigma_0$ is a connected surface-with-boundary, and
$\delta_1,\dots, \delta_n \subset \Sigma_0$ is a collection of pairwise
disjoint simple closed curves, with $n=g(\Sigma_0)+|\d \Sigma_0|-1$.  Then
each component of $\Sigma_0 \setminus (\delta_1\cup \cdots \cup \delta_n)$ is
planar and contains exactly one component of $\d \Sigma_0$ if and only if
$\delta_1,\dots, \delta_n$ are  homologically independent $H_1(\Sigma_0)$.
\end{lem}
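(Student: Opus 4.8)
The plan is to read off both sides of the equivalence from the surface obtained by cutting $\Sigma_0$ along the curves, and then to compare. Write $g=g(\Sigma_0)$ and $b=|\partial\Sigma_0|\ge 1$, so that $n=g+b-1$, and (isotoping if necessary) assume the $\delta_i$ are interior curves. Let $\widehat\Sigma_0$ be the result of cutting $\Sigma_0$ along $\delta_1\cup\cdots\cup\delta_n$, with components $C_1,\dots,C_k$; these correspond bijectively to the components of $\Sigma_0\setminus(\delta_1\cup\cdots\cup\delta_n)$. For each $j$ I record the genus $g_j$ of $C_j$, the number $o_j$ of components of $\partial\Sigma_0$ lying in $C_j$, and the number $\nu_j$ of boundary circles of $C_j$ created by the cut; since the $\delta_i$ are two-sided in the orientable surface $\Sigma_0$ we have $\sum_j\nu_j=2n$, and $\sum_j o_j=b$. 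The stated condition on the complementary regions says exactly that $g_j=0$ and $o_j=1$ for all $j$.

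First I would run an Euler characteristic count. Cutting along a two-sided simple closed curve leaves $\chi$ unchanged, since $\Sigma_0$ is recovered from the cut surface by gluing back an annulus along two circles; hence $\chi(\widehat\Sigma_0)=\chi(\Sigma_0)=2-2g-b$. Expanding $\chi(\widehat\Sigma_0)=\sum_j\bigl(2-2g_j-(o_j+\nu_j)\bigr)=2k-2\sum_j g_j-b-2n$ and substituting $n=g+b-1$ yields the identity $k=b+\sum_j g_j$. In particular $k\ge b$, with equality precisely when every $C_j$ is planar.

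Next I would characterize homological independence. Put $A:=\delta_1\sqcup\cdots\sqcup\delta_n$. With $\Q$ coefficients the classes $[\delta_i]$ are linearly independent in $H_1(\Sigma_0;\Z)$ — equivalently in $H_1(\Sigma_0;\Q)$, as $H_1$ is free — if and only if $H_1(A;\Q)\to H_1(\Sigma_0;\Q)$ is injective, which by the long exact sequence of $(\Sigma_0,A)$ and $H_2(\Sigma_0;\Q)=0$ is equivalent to $H_2(\Sigma_0,A;\Q)=0$. I would then compute this group: since the cutting map is a homeomorphism away from $A$, a standard excision argument identifies $H_2(\Sigma_0,A;\Q)$ with $\bigoplus_j H_2(C_j,\partial_{\mathrm{cut}}C_j;\Q)$, where $\partial_{\mathrm{cut}}C_j$ denotes the $\nu_j$ cut-circles of $C_j$; and the long exact sequence of each pair, using $H_2(C_j;\Q)=0$, identifies this with the kernel of $H_1(\partial_{\mathrm{cut}}C_j;\Q)\to H_1(C_j;\Q)$. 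Since the only relation among the boundary circles of a compact connected surface-with-boundary is that their oriented sum vanishes, this kernel is $\Q$ when $o_j=0$ (the total boundary class then lies in the span of the cut-circles) and is $0$ when $o_j\ge 1$. Thus $H_2(\Sigma_0,A;\Q)\cong\Q^{m}$ with $m=\#\{\,j:o_j=0\,\}$, so the $\delta_i$ are homologically independent if and only if every $C_j$ contains at least one component of $\partial\Sigma_0$.

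Finally I would assemble the two steps. If every $C_j$ is planar and contains exactly one component of $\partial\Sigma_0$, then $o_j=1$ for all $j$, so $m=0$ and the $\delta_i$ are independent. Conversely, if the $\delta_i$ are independent then $m=0$, i.e. $o_j\ge 1$ for all $j$, so $b=\sum_j o_j\ge k$; combined with $k\ge b$ from the first step this forces $k=b$, hence $\sum_j g_j=0$ (every $C_j$ planar), and then the positive integers $o_j$ summing to $b=k$ must each equal $1$ — which is the stated condition. The step needing the most care is the computation of $H_2(\Sigma_0,A;\Q)$: making the excision identification with the cut surface precise, and correctly analyzing the kernel $H_1(\partial_{\mathrm{cut}}C_j;\Q)\to H_1(C_j;\Q)$ in terms of whether a component of $\partial\Sigma_0$ survives in $C_j$. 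Everything else is bookkeeping with Euler characteristics together with a pigeonhole argument.
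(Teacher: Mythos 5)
Your proof is correct, and it takes a genuinely different route from the paper's in the harder direction. The paper proves the converse by contradiction: it first notes that a complementary component disjoint from $\d \Sigma_0$ would force a relation among the $[\delta_i]$, and then, if some component were non-planar or contained two boundary circles, it produces an extra disjoint curve $\delta'$ independent of the $\delta_i$, contradicting the fact that the maximal rank of an isotropic subspace of $(H_1(\Sigma_0), Q_{\Sigma_0})$ is $n = g + |\d \Sigma_0| - 1$. You instead prove a clean exact characterization of independence --- via $H_2(\Sigma_0, A;\Q) \iso \bigoplus_j \ker\left(H_1(\d_{\mathrm{cut}}C_j;\Q) \to H_1(C_j;\Q)\right)$, the classes $[\delta_i]$ are independent if and only if every complementary component meets $\d \Sigma_0$ --- and then close the argument with the Euler characteristic identity $k = |\d\Sigma_0| + \sum_j g_j$ and pigeonhole. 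This buys you two things: it replaces the paper's appeal to the maximal-isotropic-subspace bound (and the slightly delicate construction of $\delta'$) with standard exact-sequence and excision arguments, and it simultaneously handles the forward direction, for which the paper gives only a terse Mayer--Vietoris sketch. The one step you rightly flag as needing care --- the excision identification and the computation of $\ker\left(H_1(\d_{\mathrm{cut}}C_j;\Q)\to H_1(C_j;\Q)\right)$ as $\Q$ or $0$ according to whether $C_j$ misses $\d\Sigma_0$ --- is correct as stated, using that the kernel of $H_1(\d C_j;\Q)\to H_1(C_j;\Q)$ is spanned by the total boundary class.
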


\begin{proof}
Assume first that each component of $\Sigma_0\setminus (\delta_1\cup \cdots
\cup \delta_n)$ is planar and contains exactly one component of $\d
\Sigma_0$.  Each curve $\delta_i$ determines two boundary components of
$\Sigma_0\setminus (\delta_1\cup \cdots \cup  \delta_n)$. A simple
Mayer--Vietoris argument for gluing along  these two boundary components shows
that the curves $\delta_1,\dots, \delta_n$ are homologically independent in
$H_1(\Sigma_0)$.

Conversely, suppose that $\delta_1,\dots, \delta_n$ are homologically
independent in $H_1(\Sigma_0)$. We note that, if any component of
$\Sigma_0 \setminus (\delta_1 \cup \cdots \cup \delta_n)$ does not contain a
component of $\d \Sigma_0$, then we obtain a non-trivial relation in
$H_1(\Sigma_0)$ amongst the $\delta_i$. Hence each component of
$\Sigma_0 \setminus (\delta_1 \cup \cdots \cup \delta_n)$ contains at least one
component of $\d \Sigma_0$. If any component $C$ of $\Sigma_0 \setminus
(\delta_1 \cup \cdots \cup \delta_n)$ is non-planar or contains more than one
component of $\d \Sigma_0$, then we can pick a simple closed curve $\delta'$
in $C$ such that $[\delta']$ is not in the span of the classes $[\delta_i]$,
for $\delta_i \subset \d C$. Using a Mayer--Vietoris argument, such a class
$[\delta']$ remains homologically independent from the classes $[\delta_i]$
in $H_1(\Sigma_0)$, and is clearly disjoint from the curves
$\delta_1, \dots, \delta_n$. However, it is easily verified that the maximal
rank of a subspace of $H_1(\Sigma_0)$ on which the intersection form
$Q_{\Sigma_0}$ vanishes is $n = g(\Sigma_0) + |\d \Sigma_0| - 1$, so such a curve
$\delta'$ cannot exist, since we would obtain a subspace of rank $n+1$ on
which $Q_{\Sigma_0}$ vanished. Hence each component of $\Sigma_0 \setminus
(\delta_1 \cup \cdots \cup \delta_n)$ must be planar and contain exactly one
component of $\d \Sigma_0$.
\end{proof}

\subsection{Transition maps and doubled Heegaard diagrams}

Suppose that $\cH=(\Sigma,\as,\bs,\ws,\zs)$ is a Heegaard diagram for
$(Y,\bL)$. Let $D_{\a}^{\zs}(\cH) = (D_{\a}^{\zs}(\Sigma),\as\cup \bar{\bs},
\Ds,\ws,\zs)$ denote the doubled diagram from
Section~\ref{sec:doublingdiagrams}, and let
$\Tw^-(\bar{\cH}) = (\Tw^-(\bar{\Sigma}), \bar{\bs}, \bar{\as}, \ws, \zs)$
be the negative twisted conjugate from
Section~\ref{sec:twistedconjugateddiagrams}. In this section, we describe
compact formulas for the transition maps between the link Floer complexes for
$\cH$, $D^{\zs}_{\a}(\cH)$, and $\Tw^-(\bar{\cH})$.

As a first observation, we note that
$(D_{\a}^{\zs}(\Sigma),\bs \cup \bar{\bs}, \Ds,\ws,\zs)$
is a multi-pointed diagram for an unlink in
$(\Sphere^1 \times \Sphere^2)^{\# g(\Sigma)}$, where each component has
exactly two basepoints. Hence there is a well-defined top-graded generator
\[
[\Theta_{\b \cup \bar{\b},\Delta}^+] \in \cHFL^-(D_{\a}^{\zs}(\cH),\bs\cup \bar{\bs}, \Ds, \ws, \zs, \frs_0),
\]
where $\frs_0$ is the torsion $\Spin^c$ structure on $(\Sphere^1 \times \Sphere^2)^{\# g(\Sigma)}$.

\begin{lem}\label{lem:changeofdiagramstodouble}
If $\cH$ is a Heegaard diagram and $D_{\a}^{\zs}(\cH)$ is its double, then
\[
\Phi_{\cH\to D_{\a}^{\zs}(\cH)}(-) \simeq
F_{\a\cup \bar{\b}, \b\cup \bar{\b}, \Delta} (F_{1}^{\bar{\b},\bar{\b}}(-) \otimes \Theta_{\b\cup \bar{\b}, \Delta}^+).
\]
\end{lem}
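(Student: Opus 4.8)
The plan is to factor the transition map $\Phi_{\cH\to D_\a^\zs(\cH)}$ as a composition of a compound $1$-handle map and a holomorphic triangle map, following the template of the doubling arguments in \cite{ZemDualityMappingTori} and \cite{JuhaszZemkeContactHandles}. Recall from Section~\ref{sec:doublingdiagrams} that $D_\a^\zs(\Sigma)$ is built from $\Sigma$ by gluing on a copy of $\bar\Sigma\setminus N(\zs)$ along tubes near the $\zs$-basepoints. First I would introduce an intermediate Heegaard diagram $\cH_1$ for $(Y,\bL)$ with underlying surface $D_\a^\zs(\Sigma)$ and attaching curves $\as\cup\bar{\bs}$ and $\bs\cup\bar{\bs}$, where the second copy of $\bar{\bs}$ is taken to be a small Hamiltonian translate of the first, so that the diagram $(\bar\Sigma\setminus N(\zs),\bar{\bs},\bar{\bs})$ meets conditions~\ref{num:compdiagram1}--\ref{num:compdiagram3}. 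By construction $\cH_1$ is exactly the output of the compound $1$-handle construction of Section on compound $1$- and $3$-handles applied to $\cH$ with $\Sigma_0=\bar\Sigma\setminus N(\zs)$, so Proposition~\ref{prop:compound1-handleischainmap} applies, and by the identification of compound $1$-handle maps with stabilization transition maps (cf.\ \cite{ZemGraphTQFT}) we get $\Phi_{\cH\to\cH_1}\simeq F_1^{\bar{\b},\bar{\b}}$.

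Next I would pass from $\cH_1$ to $D_\a^\zs(\cH)$ by a sequence of handleslides and isotopies replacing the $\beta$-curves $\bs\cup\bar{\bs}$ by the curves $\Ds$, together with an isotopy of $Y$ supported near $L$ moving $\zs'$ back to $\zs$. Lemma~\ref{lem:homologicallyindependent} guarantees that $\Ds$ is a legitimate set of attaching curves, and an explicit inspection of the construction in Section~\ref{sec:doublingdiagrams} shows that $\bs\cup\bar{\bs}$ and $\Ds$ span the same $\beta$-handlebody of $D_\a^\zs(\Sigma)$, hence are related by handleslides and isotopy there. The corresponding transition map is then computed, in the usual way, by the holomorphic triangle map for the triple $(\as\cup\bar{\bs},\bs\cup\bar{\bs},\Ds)$ with the top-degree generator $\Theta^+_{\b\cup\bar{\b},\Delta}$ inserted in the second input (cf.\ \cite{OSTriangles}, \cite{ZemCFLTQFT}), so $\Phi_{\cH_1\to D_\a^\zs(\cH)}\simeq F_{\a\cup\bar{\b},\b\cup\bar{\b},\Delta}(-\otimes\Theta^+_{\b\cup\bar{\b},\Delta})$. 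Composing the two identifications, and tracking the $\Spin^c$ structures so that the torsion structure on the doubled handles is the one picked out by $\Theta^+_{\b\cup\bar{\b},\Delta}$, yields the claimed formula; all diagrams involved can be arranged to be admissible by standard winding-region arguments.

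The main obstacle I anticipate is not the algebra of composing the two maps but the justification that the compound $1$-handle map and the triangle map on the large doubled diagram genuinely realize the relevant transition maps. This rests on the neck-stretching degeneration analysis: for sufficiently long necks along the connected-sum tubes, index-zero triangles on $D_\a^\zs(\Sigma)$ split as products of triangles on $\Sigma$ and on $\bar\Sigma\setminus N(\zs)$, and the triangle count on the doubled side, with $\Theta^+$ inserted, collapses to an identity contribution. Fortunately this is precisely what is packaged by Propositions~\ref{prop:compound1-handleischainmap} and~\ref{prop:holomorphictrianglecounts} (the analogues of the holomorphic curve counts in \cite{ZemDualityMappingTori}), so the genuinely new work reduces to Heegaard-diagram bookkeeping: constructing $\cH_1$ correctly, verifying at each stage that the listed moves relate the diagrams --- with particular care for the twisting of $\Sigma$ near $L$ from Section~\ref{sec:twistedconjugateddiagrams} and for the positions of $\ws$, $\zs$, and $\zs'$ --- and confirming compatibility of the $\Spin^c$-bookkeeping throughout.
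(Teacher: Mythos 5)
Your overall shape --- compose $F_1^{\bar{\b},\bar{\b}}$ with the triangle map and identify the result with the transition map --- matches the statement, but both of your intermediate identifications are incorrect, and for the same reason: you are treating 4-dimensional cobordism maps as if they were change-of-diagram maps for a fixed 3-manifold. First, $(D_{\a}^{\zs}(\Sigma),\as\cup\bar{\bs},\bs\cup\bar{\bs},\ws,\zs')$ is \emph{not} a Heegaard diagram for $(Y,\bL)$: adding the parallel pairs of $\bar{\bs}$ curves to both sides connect-sums $Y$ with copies of $\Sphere^1\times\Sphere^2$. This is exactly what the compound 1-handle map is --- a composition of 4-dimensional 1-handle cobordism maps --- and it is not a stabilization of the Heegaard surface, which would require adding a transversely intersecting $\a$/$\b$ pair. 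So the map ``$\Phi_{\cH\to\cH_1}$'' is not defined, and the identification of $F_1^{\bar{\b},\bar{\b}}$ with a transition map cannot hold.

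Second, $\bs\cup\bar{\bs}$ and $\Ds$ do not bound the same handlebody in $D_{\a}^{\zs}(\Sigma)$. If they were handleslide-equivalent, the diagram $(D_{\a}^{\zs}(\Sigma),\bs\cup\bar{\bs},\Ds,\ws,\zs)$ would represent an unlink in $(\Sphere^1\times\Sphere^2)^{\# g(D_{\a}^{\zs}(\Sigma))}$ with $g(D_{\a}^{\zs}(\Sigma))=2g(\Sigma)+|\ws|-1$, whereas it in fact represents an unlink in $(\Sphere^1\times\Sphere^2)^{\# g(\Sigma)}$ (this is how $\Theta^+_{\b\cup\bar{\b},\Delta}$ is defined in the first place). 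Consequently the triangle map with $\Theta^+_{\b\cup\bar{\b},\Delta}$ inserted is not a handleslide transition map; it is the cobordism map for attaching a collection of 2-handles. The argument that actually closes the proof is handle cancellation: those 2-handles cancel the 1-handles added by $F_1^{\bar{\b},\bar{\b}}$, so the composite decorated link cobordism is the product $(I\times Y, I\times L)$ with product decoration, and invariance of the link cobordism maps then identifies the composition with $\Phi_{\cH\to D_{\a}^{\zs}(\cH)}$; see \cite{ZemDualityMappingTori}*{Proposition~7.2} for the closely related model computation. Your neck-stretching remarks (Propositions~\ref{prop:compound1-handleischainmap} and~\ref{prop:holomorphictrianglecounts}) are relevant background but do not substitute for this cancellation step.
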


\begin{proof}
The key observation is that the map $F_1^{\bar{\b}, \bar{\b}}$ is equal to a
composition of 1-handle maps, while
\[
F_{\a\cup \bar{\b}, \b \cup \bar{\b}, \Delta}(- \otimes \Theta^+_{\b \cup \bar{\b}, \Delta})
\]
is chain homotopic to the 2-handle map for a collection of 2-handles that cancel
the 1-handles which were added by $F_1^{\bar{\b},\bar{\b}}$.
See \cite{ZemDualityMappingTori}*{Proposition~7.2} for a detailed proof of a closely related result.
\end{proof}

Next, we need a simple formula for the transition map from $\Tw^-(\bar{\cH})$
to $D_{\a}^{\zs}(\cH)$. A handle cancellation argument yields the following:

\begin{lem}\label{lem:changeofdiagramdoubletoconj}
There is a chain homotopy
\[
\Phi_{\Tw^-(\bar{\cH}) \to D_{\a}^{\zs}(\cH)}(-) \simeq
F_{\a\cup \bar{\b}, \a \cup \bar{\a}, \Delta}(F_1^{\a,\a}(-) \otimes
\Theta^+_{\a \cup \bar{\a}, \Delta}).
\]
\end{lem}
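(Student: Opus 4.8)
The plan is to mirror the proof of Lemma~\ref{lem:changeofdiagramstodouble}, swapping the roles played by the two copies of the Heegaard surface that make up the double. Recall that $D_{\a}^{\zs}(\Sigma)$ is built by gluing a copy of $\Sigma_0=\Sigma\setminus N(\zs)$ to a copy of $\bar{\Sigma}_0$ along their boundary circles. From the vantage point of $\cH$ the ``extra'' handles of the double sit on the $\bar{\Sigma}$-side, which is why the previous lemma uses the compound $1$-handle map $F_1^{\bar{\b},\bar{\b}}$; from the vantage point of the twisted conjugate $\Tw^-(\bar{\cH})$, whose Heegaard surface is an isotopic copy of $\bar{\Sigma}$, the extra handles sit on the $\Sigma$-side, so one should introduce them with $F_1^{\a,\a}$ instead. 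The target formula then has exactly the same shape as the one in Lemma~\ref{lem:changeofdiagramstodouble}, and the proof skeleton will be identical.

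First I would identify $F_1^{\a,\a}$, applied to $\Tw^-(\bar{\cH})$, with a composition of ordinary four-dimensional $1$-handle maps of Ozsv\'ath--Szab\'o: the new attaching curves on the connected-sum tubes are small translates of (new copies of) the $\as$ curves, and for a family of almost complex structures stretched sufficiently along the necks the formula $\xs\mapsto\xs\times\Theta^+_{\a,\a}$ coincides with the iterated $1$-handle formula. This is precisely what Proposition~\ref{prop:compound1-handleischainmap}, together with the special case of Proposition~\ref{prop:holomorphictrianglecounts} recorded in the remark following it, is for; it is also the content of the argument cited in the proof of Lemma~\ref{lem:changeofdiagramstodouble}.

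Second, I would show that the triangle map $F_{\a\cup\bar{\b},\a\cup\bar{\a},\Delta}\bigl(-\otimes\Theta^+_{\a\cup\bar{\a},\Delta}\bigr)$ is chain homotopic to the map of a $2$-handle cobordism, followed by $3$-handle maps, whose $2$-handles are attached along the new copies of the $\as$ curves and exactly cancel the $1$-handles just introduced by $F_1^{\a,\a}$. The relevant input is that the sub-diagram of $D_{\a}^{\zs}(\cH)$ with attaching curves $\a\cup\bar{\a}$ and $\Delta$ is a multi-pointed diagram for an unlink in a connected sum of copies of $\Sphere^1\times\Sphere^2$ (for the same reason that $\bs\cup\bar{\b}$ together with $\Ds$ is in Lemma~\ref{lem:changeofdiagramstodouble}), so that $\Theta^+_{\a\cup\bar{\a},\Delta}$ is its distinguished top-graded generator, and that counting Maslov-index-zero triangles with this vertex computes the corresponding $2$- and $3$-handle maps. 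This is the ``handle cancellation argument'' announced before the statement, and it can be carried out exactly as in \cite{ZemDualityMappingTori}*{Proposition~7.2}, to which the proof of Lemma~\ref{lem:changeofdiagramstodouble} already appeals for the parallel claim. Combining the two steps with the composition law for decorated link cobordism maps, the right-hand side is the cobordism map of the stack of these $1$-, $2$-, and $3$-handles; since the $1$- and $2$-handles cancel in pairs and the resulting surgeries are undone by the $3$-handles, this cobordism is the trivial cobordism of $(Y,\bL)$ presented by a handle decomposition that connects the diagram $\Tw^-(\bar{\cH})$ to the diagram $D_{\a}^{\zs}(\cH)$, and hence its cobordism map is the transition map $\Phi_{\Tw^-(\bar{\cH})\to D_{\a}^{\zs}(\cH)}$.

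The main obstacle I expect is the bookkeeping in the second step: one must verify that, after the isotopy returning $\zs'$ to $\zs$ and in the presence of the negative twisting built into $\Tw^-(\bar{\cH})$, the curves along which the $2$-handles are attached are geometrically dual, on the doubled surface, to the meridians of the tubes produced by $F_1^{\a,\a}$, so that the triangle count with $\Theta^+_{\a\cup\bar{\a},\Delta}$ genuinely realizes the cancelling $2$-handle map and not merely one agreeing with it up to a $\Spin^c$-refinement or up to lower-order terms. Once the configuration is arranged to match the hypotheses of \cite{ZemDualityMappingTori}*{Proposition~7.2}, this becomes automatic, so essentially all of the work lies in setting up that identification carefully.
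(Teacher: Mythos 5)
Your proposal is exactly the paper's (one-line) argument: the paper proves this lemma by the same handle-cancellation reasoning as Lemma~\ref{lem:changeofdiagramstodouble}, identifying $F_1^{\a,\a}$ with $1$-handle maps and the triangle map against $\Theta^+_{\a\cup\bar\a,\Delta}$ with the cancelling $2$-handle map, exactly as you describe. The only quibble is your passing mention of $3$-handles in the second step --- none occur in this direction (they only enter in the dual Lemma~\ref{lem:changeofdiagramsfromdouble}); the $1$- and $2$-handles cancel in pairs to give the product cobordism directly.
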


Finally,  we describe a formula for the transition map from
$D_{\a}^{\zs}(\cH)$ to $\cH$, which is essentially just the dual of
Lemma~\ref{lem:changeofdiagramstodouble}. Suppose $D_1,\dots, D_n$ are
compressing disks attached to the $\bar{\Sigma}$ portion of
$\Sigma \#_i \bar{\Sigma}$ that bound the curves in $\bar{\bs}$. If we surger
$\Sigma \#_i \bar{\Sigma}$ along the $\bar{\bs}$ curves using the compressing disks
$D_1, \dots, D_n$, we simply obtain the original Heegaard surface $\Sigma$ (up
to isotopy, relative to $L \cap \Sigma$). With this in mind, the handle
cancellation argument used to prove Lemma~\ref{lem:changeofdiagramstodouble}
implies the following:

\begin{lem}\label{lem:changeofdiagramsfromdouble}
There is a chain homotopy
\[
\Phi_{D_{\a}^{\zs}(\cH) \to \cH}(-) \simeq F_{3}^{\bar{\b}, \bar{\b}}
F_{\a \cup \bar{\b}, \Delta, \b \cup \bar{\b}}(- \otimes \Theta_{\Delta, \b \cup \bar{\b}}^+).
\]
\end{lem}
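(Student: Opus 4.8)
The plan is to dualize the argument used for Lemma~\ref{lem:changeofdiagramstodouble}, using the fact that the transition map $\Phi_{D_{\a}^{\zs}(\cH)\to \cH}$ is (up to chain homotopy) the ``reverse'' of $\Phi_{\cH\to D_{\a}^{\zs}(\cH)}$. Concretely, I would first set up the correspondence between the Heegaard moves and handle attachments: surgering the doubled surface $\Sigma\#\bar{\Sigma}$ along the $\bar{\bs}$ curves, using the compressing disks $D_1,\dots,D_n$, recovers $\Sigma$ up to isotopy rel $L\cap\Sigma$, exactly as stated in the lemma's preamble. This identifies the passage from $D_{\a}^{\zs}(\cH)$ back to $\cH$ as a collection of 3-handle attachments (the duals of the 1-handles appearing in $F_1^{\bar{\b},\bar{\b}}$) together with a 2-handle cancellation.

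The key steps, in order: (1) recall from Section~\ref{sec:doublingdiagrams} that $(D_{\a}^{\zs}(\Sigma),\bs\cup\bar{\bs},\Ds,\ws,\zs)$ is a diagram for an unlink in $(\Sphere^1\times\Sphere^2)^{\#g(\Sigma)}$, so the top-graded generator $\Theta^+_{\Delta,\b\cup\bar{\b}}\in\bT_{\Delta}\cap\bT_{\b\cup\bar{\b}}$ is well defined; (2) identify $F_{\a\cup\bar{\b},\Delta,\b\cup\bar{\b}}(-\otimes\Theta^+_{\Delta,\b\cup\bar{\b}})$ with the 2-handle map that cancels the 1-handles which produced the $\bar{\bs}$ and $\Ds$ curves, invoking Theorem~\ref{thm:heegaardtriplemap} to replace the geometric triangle count by the cobordism map $F_{X_{\a\cup\bar{\b},\Delta,\b\cup\bar{\b}}}$; (3) identify the compound 3-handle map $F_3^{\bar{\b},\bar{\b}}$ (as defined via $\xs\times\Theta\mapsto \xs$ when $\Theta=\Theta^-_{\bar{\b},\bar{\b}}$ and $0$ otherwise) with the corresponding geometric 3-handle map, using Proposition~\ref{prop:holomorphictrianglecounts} for sufficiently large neck lengths to commute the 3-handle map past the triangle map; (4) assemble the composition $F_3^{\bar{\b},\bar{\b}}\circ F_{\a\cup\bar{\b},\Delta,\b\cup\bar{\b}}(-\otimes\Theta^+_{\Delta,\b\cup\bar{\b}})$ into the cobordism map of a handle-cancelling composite cobordism, which by the composition law for link cobordism maps in \cite{ZemCFLTQFT} is chain homotopic to the identity cobordism from $D_{\a}^{\zs}(\cH)$ to $\cH$, i.e.\ to $\Phi_{D_{\a}^{\zs}(\cH)\to\cH}$.

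I expect the main obstacle to be step (2)--(3): one must be careful that the neck lengths along the connected-sum tubes are chosen large enough for both the triangle-count identities of Proposition~\ref{prop:holomorphictrianglecounts} and the 3-handle identification to hold simultaneously, and that the $\Spin^c$ bookkeeping is consistent (the relevant $\Spin^c$ structure on $X_{\a\cup\bar{\b},\Delta,\b\cup\bar{\b}}$ restricts to the torsion structure on the $(\Sphere^1\times\Sphere^2)^{\#g(\Sigma)}$ ends, so $\frs\#\frs_0$ is forced). Since the parallel statement and its proof for the graph TQFT is carried out in detail in \cite{ZemDualityMappingTori}*{Proposition~7.2}, and Lemma~\ref{lem:changeofdiagramstodouble} already imports that argument in the link setting, the cleanest write-up is to observe that the present lemma follows by turning around the cobordism in Lemma~\ref{lem:changeofdiagramstodouble} and applying the duality $F_{W^\vee,\cF^\vee,\frs}=(F_{W,\cF,\frs})^\vee$ (cf.\ the proof of Theorem~\ref{thm:traceforuma}), so that most of the analytic content is quoted rather than reproved.
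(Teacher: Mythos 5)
Your overall strategy is the one the paper intends: the paper gives essentially no proof of this lemma beyond remarking that surgering $\Sigma\#\bar{\Sigma}$ along the $\bar{\bs}$ curves recovers $\Sigma$ and that ``the handle cancellation argument used to prove Lemma~\ref{lem:changeofdiagramstodouble} implies'' the statement, pointing to \cite{ZemDualityMappingTori}*{Proposition~7.2} for the details. Your steps (1), (3), and (4) — identifying $F_3^{\bar{\b},\bar{\b}}$ with a composition of geometric 3-handle maps and assembling the composite into the cobordism map of a handle-cancelling (hence product) cobordism, which is the transition map — are exactly that argument, with the 1-handle/2-handle cancellation of Lemma~\ref{lem:changeofdiagramstodouble} replaced by its Morse-dual 2-handle/3-handle cancellation.

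There is, however, one genuine misstep in step (2): you invoke Theorem~\ref{thm:heegaardtriplemap} to identify $F_{\a\cup\bar{\b},\Delta,\b\cup\bar{\b}}(-\otimes\Theta^+_{\Delta,\b\cup\bar{\b}})$ with a cobordism map. This is circular, since Lemma~\ref{lem:changeofdiagramsfromdouble} is itself an ingredient in the proof of Theorem~\ref{thm:heegaardtriplemap} (it is used to convert equation~\eqref{eq:triangle=cobordism4} into the transition map at the end of that proof). The correct justification is the one the paper uses in the analogous step of Lemma~\ref{lem:changeofdiagramstodouble}: the 2-handle cobordism maps in the link Floer TQFT are \emph{defined} as triangle maps $F(-\otimes\Theta^+)$ on Heegaard triples subordinate to the framed link, so one only needs to check that $(\Sigma\#\bar{\Sigma},\as\cup\bar{\bs},\Ds,\bs\cup\bar{\bs},\ws,\zs)$ is subordinate to the framed link describing the 2-handles that cancel the 3-handles of $F_3^{\bar{\b},\bar{\b}}$ — no appeal to the general triangle-cobordism formula is needed or permitted. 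Separately, your closing suggestion to deduce the lemma from Lemma~\ref{lem:changeofdiagramstodouble} by the duality $F_{W^\vee,\cF^\vee,\frs}=(F_{W,\cF,\frs})^\vee$ is looser than it sounds: that duality produces a map between the complexes of $-Y$ (the duals $\cCFL^\infty(\,\cdot\,)^\vee$), not the transition map between the complexes of $Y$ asserted in the lemma, so extra identifications would be required; the direct handle-cancellation argument you give first is the cleaner route.
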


\subsection{Intertwining maps and connected sums}

Suppose that $(Y_j, \bL_j)$ for $j \in \{1, 2\}$ is a 3-manifolds with a
multi-based link, and  $\cH_j = (\Sigma_j, \as_j, \bs_j, \ws_j, \zs_j)$
is a Heegaard diagram for $(Y_j, \bL_j)$. Suppose also we have chosen a bijection $i$ from
$\zs_1$ to $\ws_2$. We can form the \emph{generalized connected sum} of
$(Y_1,\bL_1)$ and $(Y_2,\bL_2)$, for which we write
$(Y_1 \#_i Y_2,\bL_1 \#_i \bL_2)$, by deleting 3-balls centered at each point in $\zs_1$ and $\ws_2$,
and gluing the boundary components according to our chosen bijection
between $\zs_1$ and $\ws_2$. The link $\bL_1 \#_i \bL_2$ is decorated with the
basepoints $\ws_1$ and $\zs_2$.

We can construct a Heegaard surface $\Sigma_1\#_i \Sigma_2$ for $(Y_1\#_i Y_2,
\bL_1\#_i \bL_2)$ by adding a connected sum tube between $\Sigma_1$ and
$\Sigma_2$ near each basepoint in $\zs_1$ and the corresponding basepoint in
$\ws_2$. We define
\[
\cH_1 \#_i\cH_2 := (\Sigma_1 \#_i \Sigma_2, \as_1 \cup \as_2, \bs_1 \cup \bs_2, \ws_1, \zs_2)
\]
for the resulting diagram.

Adapting the construction from \cite{OSProperties}*{Section~6.2}, we can define an \emph{intertwining map}
\[
\cG \colon \cCFL^\infty(\cH_1, \frs_1) \otimes_{\cR^\infty} \cCFL^\infty(\cH_2,\frs_2) \to
\cCFL^\infty(\cH_1\#_i \cH_2, \frs_1\# \frs_2),
\]
via the formula
\begin{equation}
\cG(-,-) := F_{\a_1 \cup \a_2,\b_1 \cup \a_2, \b_1 \cup \b_2} (F_{1}^{\a_2, \a_2}(-) \otimes F_1^{\b_1,\b_1}(-)).
\end{equation}

We now show that the map $\cG$ is chain homotopic to a link cobordism map. We
define the decorated link cobordism $(W,\cF)$, as follows. Write
$\zs_1=\{z_{1,1},\dots, z_{1,n}\}$ and $\ws_2=\{w_{2,1},\dots, w_{2,n}\}$,
ordered such that $z_{1,i}$ and $w_{2,i}$ are paired. The 4-manifold $W$ is
obtained by attaching $n$ 1-handles, such that the $i^{\text{th}}$ 1-handle
has one foot at $z_{1,i}$, and the other foot at $w_{2,i}$. We construct a
surface $S$ inside the 1-handle cobordism by attaching a band inside each
1-handle. We construct a dividing set $\cA\subset S$ as follows. For each
pair $(z_{1,i}, w_{2,i})$ we add a dividing arc $a_i$ which has one end on
$Y_1$, and the other on $Y_2$, and travels through the 1-handle connecting
$z_{1,i}$ to $w_{2,i}$. One end of $a_i$ occurs immediately after $z_{1,i}$,
and the other end occurs immediately after $w_{2,i}$. The remaining arcs of
$\cA$ are of the form $I\times \{p\}$, for points $p\in L_1\cup L_2$. We
define $\cF:=(S,\cA)$; see Figure~\ref{fig::29}.

\begin{figure}[ht!]
	\centering
\begingroup%
  \makeatletter%
  \providecommand\color[2][]{%
    \errmessage{(Inkscape) Color is used for the text in Inkscape, but the package 'color.sty' is not loaded}%
    \renewcommand\color[2][]{}%
  }%
  \providecommand\transparent[1]{%
    \errmessage{(Inkscape) Transparency is used (non-zero) for the text in Inkscape, but the package 'transparent.sty' is not loaded}%
    \renewcommand\transparent[1]{}%
  }%
  \providecommand\rotatebox[2]{#2}%
  \newcommand*\fsize{\dimexpr\f@size pt\relax}%
  \newcommand*\lineheight[1]{\fontsize{\fsize}{#1\fsize}\selectfont}%
  \ifx\svgwidth\undefined%
    \setlength{\unitlength}{175.68904576bp}%
    \ifx\svgscale\undefined%
      \relax%
    \else%
      \setlength{\unitlength}{\unitlength * \real{\svgscale}}%
    \fi%
  \else%
    \setlength{\unitlength}{\svgwidth}%
  \fi%
  \global\let\svgwidth\undefined%
  \global\let\svgscale\undefined%
  \makeatother%
  \begin{picture}(1,0.58769986)%
    \lineheight{1}%
    \setlength\tabcolsep{0pt}%
    \put(0,0){\includegraphics[width=\unitlength,page=1]{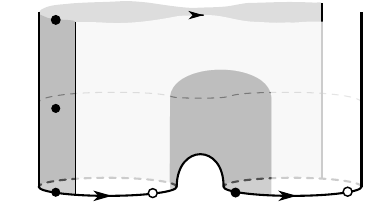}}%
    \put(0.13366504,0.01500126){\color[rgb]{0,0,0}\makebox(0,0)[t]{\lineheight{1.25}\smash{\begin{tabular}[t]{c}$w_1$\end{tabular}}}}%
    \put(0.41164574,0.00536948){\color[rgb]{0,0,0}\makebox(0,0)[t]{\lineheight{1.25}\smash{\begin{tabular}[t]{c}$z_1$\end{tabular}}}}%
    \put(0,0){\includegraphics[width=\unitlength,page=2]{fig29.pdf}}%
    \put(0.04475681,0.29386087){\color[rgb]{0,0,0}\makebox(0,0)[rt]{\lineheight{1.25}\smash{\begin{tabular}[t]{r}$G$\end{tabular}}}}%
    \put(0,0){\includegraphics[width=\unitlength,page=3]{fig29.pdf}}%
    \put(0.66297073,0.00536948){\color[rgb]{0,0,0}\makebox(0,0)[t]{\lineheight{1.25}\smash{\begin{tabular}[t]{c}$w_2$\end{tabular}}}}%
    \put(0.96848347,0.01413565){\color[rgb]{0,0,0}\makebox(0,0)[t]{\lineheight{1.25}\smash{\begin{tabular}[t]{c}$z_2$\end{tabular}}}}%
    \put(0,0){\includegraphics[width=\unitlength,page=4]{fig29.pdf}}%
  \end{picture}%
\endgroup%

	\caption{The decorated link cobordism used to define the map $G$,
    when $\bL_1$ and $\bL_2$ are both doubly-based knots. The orientation of $\bL_1$ and $\bL_2$ is shown.}
\label{fig::29}
\end{figure}

We define the cobordism map
\[
G:=F_{W,\cF}.
\]
In the case when $\bL_1$ and $\bL_2$ are doubly-based knots,
using the decomposition shown in Figure~\ref{fig::29},  we see
\begin{equation}
G \simeq S_{w_2,z_1}^- F_B^{\ws} F_1,
\label{eq:connectedsummap}
\end{equation}
where $F_1$ denotes the 1-handle map. More generally, if $\bL_1$ and $\bL_2$
have many basepoints, the cobordism map $G$ is a composition of $n$ terms
which each have the form shown in  equation~\eqref{eq:connectedsummap}.

Note that there is an asymmetry between $Y_1$ and $Y_2$ in the definition of
$G$. At each pair of basepoints we delete, we could instead do a type-$\zs$
band map, followed by the $T_{w_{2},z_1}^-$ quasi-stabilization map. The
corresponding decorated link cobordism is not diffeomorphic to $(W,\cF)$
(they can be distinguished by looking at the order in which the boundary
components appear on the subsurface $S_{\ws}$, with respect to the boundary
orientation). There is a similar asymmetry also in the definition of $\cG$
since the formula defining $\cG$  is not invariant under switching the roles
of $Y_1$ and $Y_2$.

\begin{prop}\label{prop:intertwiningmap=cobordismmap}
The intertwining map $\cG$ is chain homotopic to the link cobordism map $G$.
\end{prop}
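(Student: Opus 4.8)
The plan is to show that both $\cG$ and $G$ are, up to chain homotopy, built from the same holomorphic curve counts assembled in slightly different orders, and to reconcile the orders using the holomorphic triangle identities proven earlier in this section. First I would reduce to the case of doubly-based knots (i.e.\ $|\zs_1| = |\ws_2| = 1$); the general case follows by performing the connected sum one pair of basepoints at a time and invoking the composition law for link cobordism maps, since both $\cG$ and $G$ factor as $n$-fold compositions of the doubly-based building blocks. In the doubly-based case I would use equation~\eqref{eq:connectedsummap}, namely $G \simeq S_{w_2,z_1}^- F_B^{\ws} F_1$, as the target expression, and the defining formula
\[
\cG(-,-) = F_{\a_1 \cup \a_2,\b_1 \cup \a_2, \b_1 \cup \b_2}(F_1^{\a_2,\a_2}(-) \otimes F_1^{\b_1,\b_1}(-))
\]
as the source.

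The key step is to recognize the compound $1$-handle maps $F_1^{\a_2,\a_2}$ and $F_1^{\b_1,\b_1}$ appearing in $\cG$ as honest $4$-dimensional $1$-handle maps (this is the analysis underlying Proposition~\ref{prop:compound1-handleischainmap} and the remark following Proposition~\ref{prop:holomorphictrianglecounts}), and then to identify the big triangle map $F_{\a_1\cup\a_2,\b_1\cup\a_2,\b_1\cup\b_2}$ with a $2$-handle map composed with the relevant Heegaard-triple cobordism map via Theorem~\ref{thm:heegaardtriplemap}. The idea is that, after sliding handles, the triangle map in $\cG$ cancels the $1$-handles introduced by the $F_1$ maps except for the ones needed to realize the connected-sum tube and the band, leaving precisely the decorated link cobordism $(W,\cF)$ of Figure~\ref{fig::29}. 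Concretely, I would apply Proposition~\ref{prop:holomorphictrianglecounts} to commute a $1$-handle map past the triangle count, and Proposition~\ref{prop:holomorphictrianglesandquasistabs} to relate the triangle count on a quasi-stabilized triple to the one below, which is what converts the compound-handle-plus-triangle combination into a quasi-stabilization $S^-_{w_2,z_1}$ followed by a band map $F^{\ws}_B$ and an ordinary $1$-handle map $F_1$. Throughout I would need to track $\Spin^c$ structures using the splitting $\Spin^c(X_{\a\cup\xi,\b\cup\zeta,\g\cup\tau}) \cong \Spin^c(X_{\a,\b,\g}) \times \Spin^c(X_{\xi,\zeta,\tau})$ and the fact that the auxiliary piece contributes the unique torsion $\Spin^c$ structure on $\#^g(\Sphere^1\times\Sphere^3)$, so that $\frs_1\#\frs_2$ matches on both sides.

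The main obstacle I anticipate is bookkeeping: making the diffeomorphism between the ``doubled Heegaard diagram'' picture implicit in $\cG$ and the concrete $1$-handle cobordism $(W,\cF)$ precise enough that the composition law can be applied, and verifying that the handle slides do not introduce extra $\Spin^c$ ambiguity or extra powers of $U$ and $V$. This is a filtered and equivariant statement, so I must check that every intermediate identification is filtered and $\cR^\infty$-equivariant; the quasi-stabilization and band maps change the Alexander grading in a controlled way, and these shifts must cancel between the two sides. A secondary subtlety is the asymmetry between $Y_1$ and $Y_2$ noted in the paragraph preceding the proposition: I would need to make sure the ordering conventions (which basepoints get tubed, and in which direction the band dividing arc runs) are matched consistently between the triangle-map formula for $\cG$ and the surface $(S,\cA)$ defining $G$, since choosing the wrong convention would produce the non-diffeomorphic cobordism mentioned there. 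Modulo these organizational points, the argument is a direct parallel of \cite{ZemDualityMappingTori}*{Proposition~7.2} and the analogous step in \cite{JuhaszZemkeContactHandles}, so I would lean on those proofs for the detailed curve-degeneration analysis rather than redoing it.
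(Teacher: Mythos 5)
Your plan has a circularity problem. You propose to ``identify the big triangle map $F_{\a_1\cup\a_2,\b_1\cup\a_2,\b_1\cup\b_2}$ with a $2$-handle map composed with the relevant Heegaard-triple cobordism map via Theorem~\ref{thm:heegaardtriplemap}.'' But the proof of Theorem~\ref{thm:heegaardtriplemap} given in this paper invokes Proposition~\ref{prop:intertwiningmap=cobordismmap} as its very first step (to identify the $1$-handle portion $F_{W_1,\cF_1}$ of the triangle cobordism with $\cG$), so you cannot use that theorem here. Moreover, the triple $(\Sigma_1\#\Sigma_2,\as_1\cup\as_2,\bs_1\cup\as_2,\bs_1\cup\bs_2)$ is fed two \emph{arbitrary} inputs (images of arbitrary classes under the compound $1$-handle maps), not a distinguished top-graded generator, so it is not \emph{a priori} a $2$-handle map in the usual sense; interpreting such a general triangle count as a cobordism map is exactly the content of Theorem~\ref{thm:heegaardtriplemap}, not something you can assume.

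There is a second, related gap: your target expression $G\simeq S_{w_2,z_1}^- F_B^{\ws} F_1$ contains the \emph{negative} quasi-stabilization $S_{w_2,z_1}^-$, but the only tool you cite for converting triangle counts into quasi-stabilizations, Proposition~\ref{prop:holomorphictrianglesandquasistabs}, produces \emph{positive} quasi-stabilizations ($S^+$, $T^+$) and birth maps $\cB^+$. You give no mechanism for extracting $S^-_{w_2,z_1}$ from the formula defining $\cG$. The paper sidesteps both problems by a different logical route: it writes down the explicit homotopy inverse $E=F_3F_B^{\ws}S_{w_2,z_1}^+$ of $G$ (using only \cite{ZemCFLTQFT}*{Proposition~5.1}) and proves $E\circ\cG\simeq\id$ by repeatedly applying Propositions~\ref{prop:holomorphictrianglecounts} and~\ref{prop:holomorphictrianglesandquasistabs}, associativity, and the TQFT relations \eqref{eq:movebasepointandquasistabilization}, \eqref{eq:band-birth=quasi}, \eqref{eq:R3}, and \eqref{eq:connectedsum=tensorprod2}, until the composite is recognized as a tensor product of change-of-diagram maps. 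Since $E$ is built from $S^+$ rather than $S^-$, Proposition~\ref{prop:holomorphictrianglesandquasistabs} applies directly, and no appeal to Theorem~\ref{thm:heegaardtriplemap} is needed. The paper's closing remark (that the analogous cancellation with $E'=F_3F_B^{\zs}T^+_{w_2,z_1}$ fails at equation~\eqref{eq:intertwine=linkcob7}) shows this choice of inverse is genuinely delicate, so the gap in your outline is not merely organizational.
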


\begin{proof}
The proof is similar to the proof of
\cite{ZemDualityMappingTori}*{Proposition~8.1}. The idea is that we
exhibit a chain homotopy inverse of $G$, which we denote $E$, and show that
\begin{equation}
E\circ \cG \simeq \id.\label{eq:EcG=id}
\end{equation}
For notational simplicity, we restrict to the case when $\bL_1$ and $\bL_2$
are both doubly-based knots. The proof we present extends to the more
general case by an elaboration of notation.

We define the map $E$ via the formula
\begin{equation}
E:=F_3 F_{B}^{\ws}S_{w_2,z_1}^+.\label{eq:formulaforG}
\end{equation}
We note $E$ is the cobordism map for the decorated link cobordism obtained by
turning around and reversing the orientation of the link cobordism used to
define $G$. The fact that $E$ and $G$ are chain homotopy inverses of each
other follows from \cite{ZemCFLTQFT}*{Proposition~5.1}.
Using equation~\eqref{eq:formulaforG}, we see equation~\eqref{eq:EcG=id} is equivalent to
\begin{equation}
F_3F_B^{\ws} S_{w_2,z_1}^+ F_{\a_1\cup \a_2,\b_1 \cup \a_2, \b_1 \cup \b_2}(F_{1}^{\a_2, \a_2} \otimes
F_1^{\b_1,\b_1}) \simeq \id.
\label{eq:intertwined=cobordism1}
\end{equation}

We pick two curves in the connected sum region of $\Sigma_1\# \Sigma_2$,
which we label as $\xi_s$ and $\zeta_0$.   The curve $\zeta_0$ bounds a small
disk containing the basepoints $w_2$ and $z_1$, while $\xi_s$ wraps all the
way around the connected sum tube; see Figure~\ref{fig::23}. We write
\[
\xi_s\cap \zeta_0=\{ \theta^{\ws}_{\xi_s,\zeta_0},\theta^{\zs}_{\xi_s,\zeta_0}\},
\]
where $\theta^{\ws}_{\xi_s,\zeta_0}$ is the top $\gr_{\ws}$-graded
intersection point and $\theta^{\zs}_{\xi_s,\zeta_0}$ denotes the top
$\gr_{\zs}$-graded intersection point.

\begin{figure}[ht!]
	\centering
	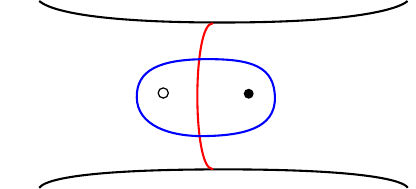
	\caption{The connected sum region of $\Sigma_1\# \Sigma_2$.}
\label{fig::23}
\end{figure}

The map $S_{w_2,z_1}^+$ appearing in equation~\eqref{eq:intertwined=cobordism1} is defined by the equation
\[
S_{w_2,z_1}^+(\xs) := \xs\times \theta^{\ws}_{\xi_s,\zeta_0},
\]
extended $\cR^\infty$-equivariantly. Similarly, there is a birth map
(corresponding to the cobordism map for a doubly-based unknot being born),
given by the formula
\[
\cB^+(\xs) := \xs \times \theta^+_{\zeta_0,\zeta_0},
\]
where $\theta^+_{\zeta_0,\zeta_0}$ denotes the top-graded intersection point
of~$\zeta_0$ and a small Hamiltonian translate of~$\zeta_0$. Note that, to
ease the notational burden, we will henceforth not distinguish between a
curve and its Hamiltonian translate (though, implicitly, when we are counting
holomorphic triangles, we must translate some of the curves using
Hamiltonians).

We introduce the following shorthand notation for sets of attaching curves on $\Sigma_1\# \Sigma_2$:
\[
L := \as_1 \cup \as_2, \qquad M := \bs_1 \cup \as_2, \qquad \text{and} \qquad R := \bs_1\cup \bs_2.
\]
We define
\[
L_0 := \as_1 \cup \{\zeta_0\}\cup \as_2 \qquad \text{and} \qquad L_s := \as_1\cup \{\xi_s\}\cup \as_2,
\]
and we define $M_0$, $M_s$, $R_0$, and $R_s$ similarly.

By Proposition~\ref{prop:holomorphictrianglesandquasistabs},
\begin{equation}
\begin{split}
&S_{w_2,z_1}^+F_{L, M, R} (F_{1}^{\a_2, \a_2}(-) \otimes F_1^{\b_1,\b_1}(-)) \simeq \\
&F_{L_s,M_0, R_0}(S_{w_2,z_1}^+F_{1}^{\a_2, \a_2}(-) \otimes \cB^+ F_1^{\b_1,\b_1}(-)).
\label{eq:intertwine=linkcob1}
\end{split}
\end{equation}
The band map $F_B^{\ws}$ is defined via the triangle count
\begin{equation}
F_{B}^{\ws}(-) := F_{L_s, R_0, R_s}(-\otimes (\Theta^+_{R,R} \times \theta^{\zs}_{\zeta_0,\xi_s})).
\label{eq:intertwine=linkcob2}
\end{equation}
We note that
\[
\Theta^+_{R,R} \times \theta^{\zs}_{\zeta_0,\xi_s} = T_{w_2,z_1}^+(\Theta^+_{R,R}),
\]
by definition, so equation~\eqref{eq:intertwine=linkcob2} reads
\begin{equation}
F_{B}^{\ws}(-) = F_{L_s, R_0, R_s}(-\otimes T_{w_2,z_1}^+(\Theta^+_{R,R})).
\label{eq:intertwine=linkcob3}
\end{equation}
Combining equations~\eqref{eq:intertwine=linkcob1} and~\eqref{eq:intertwine=linkcob3},
and using associativity, we see that
\begin{equation}
\begin{split}
&F_B^{\ws} S_{w_2,z_1}^+ \cG(-,-) \simeq \\
&F_{L_s, R_0, R_s}\left(F_{L_s,M_0, R_0}
\left(S_{w_2,z_1}^+F_{1}^{\a_2, \a_2}(-) \otimes \cB^+ F_1^{\b_1,\b_1}(-)\right)
\otimes T_{w_2,z_1}^+(\Theta^+_{R,R})\right) \simeq\\
&F_{L_s,M_0,R_s}\left(S_{w_2,z_1}^+ F_{1}^{\a_2, \a_2}(-) \otimes  F_{M_0,R_0,R_s}\left(\cB^+ F_1^{\b_1,\b_1}(-) \otimes T_{w_2,z_1}^+(\Theta^+_{R,R})\right)\right).
\label{eq:intertwine=linkcob4}
\end{split}
\end{equation}
Using Proposition~\ref{prop:holomorphictrianglesandquasistabs}, equation~\eqref{eq:intertwine=linkcob4} becomes
\begin{equation}
F_{L_s,M_0,R_s}\left(S_{w_2,z_1}^+ F_{1}^{\a_2, \a_2}(-) \otimes
T_{w_2,z_1}^+ F_{M,R,R}\left( F_1^{\b_1,\b_1}(-) \otimes \Theta^+_{R,R}\right)\right).
\label{eq:intertwine=linkcob5}
\end{equation}
The expression  $F_{M,R,R}(- \otimes \Theta^+_{R,R})$ is the
change of diagrams map for shifting the curves $R$ slightly, which we can
safely delete, since we are already precomposing with a change of diagrams
map on $\cCFL^\infty(Y_1,\bL_1,\frs_1)\otimes
\cCFL^\infty(Y_2,\bL_2,\frs_2)$.
Hence equation~\eqref{eq:intertwine=linkcob5} becomes
\begin{equation}
F_{L_s,M_0,R_s}(S_{w_2,z_1}^+ F_{1}^{\a_2, \a_2}(-)\otimes T_{w_2,z_1}^+ F_1^{\b_1,\b_1}(-)).
\label{eq:intertwine=linkcob6}
\end{equation}

Define the map
\[
\Top_{(\Sigma_1,\beta_1,\beta_1)}^+ \colon \cCFL^\infty(\Sigma_2,\as_2,\bs_2) \to
\cCFL^\infty(\Sigma_1,\bs_1,\bs_1) \otimes \cCFL^\infty(\Sigma_2,\as_2,\bs_2)
\]
via the formula
\[
\Top^+_{(\Sigma_1,\beta_1,\beta_1)}(\xs) := \Theta^+_{\b_1,\b_1} \otimes \xs,
\]
extended $\cR^\infty$-equivariantly. Next, we claim
\begin{equation}
T^+_{z_1,w_2}F_1^{\b_1,\b_1}(-) \simeq F_{B'}^{\ws} F_1^{\xi_s,\xi_s} \Top^+_{(\Sigma_1,\b_1,\b_1)}(-),
\label{eq:intertwine=linkcob7}
\end{equation}
where
\[
F_{B'}^{\ws}\colon \cCFL^\infty(\Sigma_1 \# \Sigma_2, M_s,R_s) \to \cCFL^\infty(\Sigma_1 \# \Sigma_2, M_0,R_s)
\]
is the band map
\begin{equation}
F_{B'}^{\ws}(-) := F_{M_0,M_s,R_s}(T_{z_1,w_2}^+(\Theta_{M,M}^+) \otimes -).
\label{eq:intertwine=linkcob11}
\end{equation}
Note that $F_{B'}^{\ws}$ is an $\alpha$-band map, because the handlebody
$U_{M_0}$ is playing the role of the $\alpha$-handlebody. Furthermore, the
quasi-stabilization map $T_{z_1,w_2}^+$ appears in
equation~\eqref{eq:intertwine=linkcob11} instead of $T_{w_2,z_1}^+$ because
$U_{M_0}$ is now playing the role of the $\alpha$-handlebody instead of the
$\beta$-handlebody, so the induced orientation of the strands it contains are
reversed, and hence, in this handlebody, $z_1$ now immediately follows $w_2$.

It is possible to establish equation~\eqref{eq:intertwine=linkcob7} via a
direct holomorphic triangle count. Indeed, by using
Proposition~\ref{prop:holomorphictrianglecounts}, one could delete the
portion added via the compound 1-handle map, and reduce the computation to a
holomorphic triangle count supported in a disk, involving three isotopic
attaching curves. A holomorphic triangle count could then be performed by
using a neck-stretching argument, as in \cite{ZemCFLTQFT}*{Lemma~8.6}.

A more conceptually enlightening approach for proving
equation~\eqref{eq:intertwine=linkcob7}, and the approach we take, is to
interpret the maps appearing as cobordism maps and use properties of the link Floer TQFT. We note that the map
$\Top_{(\Sigma_1,\b_1,\b_1)}^+$ can be written as the composition of a single 0-handle map, followed by $|\bs_1|$ 1-handle maps.
After rearranging handles and canceling the 0-handle added by $\Top^+_{(\Sigma_1,\b_1,\b_1)}$
with the 1-handle added by $F_1^{\xi_s,\xi_s}$,
the composition $F_1^{\xi_s,\xi_s}\Top^+_{(\Sigma_1,\b_1,\b_1)}$
can be rewritten as the composition of the cobordism map $F_1$ induced by attaching
$|\bs_1|$ 1-handles to $Y_2$, followed by a birth cobordism map, which adds the
doubly-based knot $\bU=(U,w_1,z_1)$. Hence, we can write
\begin{equation}
F_1^{\xi_s,\xi_s} \Top^+_{(\Sigma_1,\b_1,\b_1)} \simeq \cB_{\bU}^+ F_1.
\label{eq:intertwine=linkcob16}
\end{equation}

Similarly,
\begin{equation}
F_{1}^{\b_1,\b_1} \simeq \phi_* F_1,
\label{eqn:intertwine=linkcob17}
\end{equation}
where $\phi$ is an isotopy of $Y_2\# (\Sphere^1\times \Sphere^2)^{\#
|\bs_1|}$ that moves the knot in $Y_2$ into the 1-handle region formed when
we attach $(\Sigma_1,\bs_1,\bs_1)$.

We can decompose the isotopy $\phi$ as the composition of an isotopy
$\phi_0$, which fixes $w_2$ and $z_2$ but moves the link, followed by an
isotopy $\tau^{ w_1\from w_2}$ that fixes the link setwise, is supported in
a neighborhood of the link, fixes $z_2$, but moves $w_2$ to $w_1$. Using
equations~\eqref{eq:intertwine=linkcob16} and \eqref{eqn:intertwine=linkcob17},
we see that equation~\eqref{eq:intertwine=linkcob7} is equivalent to
\begin{equation}
T_{z_1,w_2}^+ \tau^{w_1\from w_2}_*(\phi_0)_* F_1 \simeq F_{B'}^{\ws} \cB^+_{\bU} F_1.
\label{eqn:intertwine=linkcob12}
\end{equation}
We now simply note that equation~\eqref{eq:movebasepointandquasistabilization} implies
$T_{z_1,w_2}^+ \tau^{w_1 \from w_2}_* \simeq T_{w_1,z_1}^+$, while
equation~\eqref{eq:band-birth=quasi} implies $F_{B'}^{\ws} \cB_{\bU}^+ \simeq
T_{w_1,z_1}^+(\phi_0)_*$. Together, these establish
equation~\eqref{eqn:intertwine=linkcob12}, and hence also equation~\eqref{eq:intertwine=linkcob7}.

If we substitute the formula~\eqref{eq:intertwine=linkcob11} for $F_{B'}^{\ws}$
into equation~\eqref{eq:intertwine=linkcob7}, equation~\eqref{eq:intertwine=linkcob6} becomes
\begin{equation}
F_{L_s,M_0,R_s}\left(S_{w_2,z_1}^+F_{1}^{\a_2, \a_2}(-) \otimes  F_{M_0,M_s,R_s} \left(T_{z_1,w_2}^+
(\Theta_{M,M}^+) \otimes F_1^{\xi_s,\xi_s} \Top^+_{(\Sigma_1,\b_1,\b_1)}(-)\right)\right).
\label{eq:intertwine=linkcob9}
\end{equation}
By associativity, we see that equation~\eqref{eq:intertwine=linkcob9} is chain homotopic to
\begin{equation}
F_{L_s,M_s,R_s}\left(F_{L_s,M_0,M_s} \left(S_{w_2,z_1}^+F_{1}^{\a_2, \a_2}(-) \otimes T_{z_1,w_2}^+(\Theta_{M,M}^+)\right)
\otimes F_1^{\xi_s,\xi_s}\Top^+_{(\Sigma_1,\b_1,\b_1)}(-) \right)
\label{eq:intertwine=linkcob10}
\end{equation}
After post-composing equation~\eqref{eq:intertwine=linkcob10} with the
3-handle map $F_3^{\xi_s,\xi_s}$, and pulling the 3-handle map inside
the outer triangle map using Proposition~\ref{prop:holomorphictrianglecounts}, we obtain that the
composition $E\circ \cG$ is chain homotopic to
\begin{equation}
F_{\cT_1 \sqcup \cT_2}\left(F_3^{\xi_s,\xi_s} F_{L_s,M_0,M_s} \left(S_{w_2,z_1}^+ F_{1}^{\a_2, \a_2}(-) \otimes
T_{z_1,w_2}^+(\Theta_{M,M}^+)\right) \otimes \Top^+_{(\Sigma_1,\b_1,\b_1)}(-)\right).
\label{eq:intertwine=linkcob14}
\end{equation}
where $\cT_1\sqcup \cT_2$ denotes the disjoint union of the Heegaard
triples $(\Sigma_1,\as_1,\bs_1,\bs_1)$ and $(\Sigma_2,\as_2,\as_2,\bs_2)$.
Since the outer triangle map is on the disjoint union of $\Sigma_1$ and
$\Sigma_2$, we direct our attention to the inner triangle map. We claim that
\begin{equation}
F_3^{\xi_s,\xi_s} F_{L_s,M_0,M_s} \left(S_{w_2,z_1}^+ F_{1}^{\a_2, \a_2}(-) \otimes T_{z_1,w_2}^+(\Theta_{M,M}^+)\right) \simeq \Top_{(\Sigma_2,\a_2,\a_2)}^+.
\label{eq:intertwine=linkcob12}
\end{equation}
Note that, by the definition of $F_B^{\ws}$, the left-hand side of equation~\eqref{eq:intertwine=linkcob12} is
\begin{equation}
F_3^{\xi_s,\xi_s} F_B^{\ws} S_{w_2,z_1}^+ F_1^{\a_2,\a_2}.
\label{eq:intertwine=linkcob13}
\end{equation}
Our strategy for proving equation~\eqref{eq:intertwine=linkcob12} will be to
manipulate equation~\eqref{eq:intertwine=linkcob13} using algebraic
properties of the TQFT until it becomes the cobordism map for the disjoint
union of the identity cobordism $I\times Y_1$, and a 4-dimensional handlebody
bounding $Y_{\a_2,\a_2}$.

We can write $F_1^{\a_2,\a_2}$ as $(\phi_*)F_1$, where $F_1$ is a 1-handle
cobordism, and $\phi$ is a diffeomorphism that moves a small portion of the
link near $z_1$ into $Y_{\a_2,\a_2}$, and sends $z_1$ to $z_2$. Note that we
can write $\phi$ as a composition $\rho^{z_1\to z_2}\circ \phi_0$, where
$\phi_0$ moves a small portion of $K_1$ near $z_1$, but fixes $z_1$, and
$\rho^{z_1\to z_2}$ is a diffeomorphism that is fixed outside a neighborhood
of the subarc of $\phi_0(K_1)$ containing $z_1$ and $z_2$, but sends $z_1$ to
$z_2$. We note that the map $\rho^{z_1\to z_2}_*$ satisfies the relation
\begin{equation}
\rho^{z_1\to z_2}_* \simeq S_{w_2,z_1}^- T_{z_2,w_2}^+,\label{eq:basepointmovingjustz}
\end{equation}
by \cite{ZemCFLTQFT}*{Lemma~4.25}; cf.~equation~\eqref{eq:R5} and Figure~\ref{fig::17}.

We perform the following manipulation:
\begin{equation}
\begin{alignedat}{3}
\phi_*&\simeq \rho^{z_1\to z_2}_*(\phi_0)_*&&\\
&\simeq S_{w_2,z_1}^-T_{z_2,w_2}^+(\phi_0)_*&&\qquad (\text{equation~\eqref{eq:basepointmovingjustz}})\\
&\simeq  S_{w_2,z_1}^- F_{B}^{\ws} \cB^+_{\bU}&&\qquad (\text{equation~\eqref{eq:band-birth=quasi2}}),
\end{alignedat}
\end{equation}
where $\bU$ denotes $(U,w_2,z_2)$. Hence equation~\eqref{eq:intertwine=linkcob13}  becomes
\begin{equation}
F_3^{\xi_s,\xi_s} F_B^{\ws} S_{w_2,z_1}^+ S_{w_2,z_1}^-F_{B}^{\ws} \cB^+_{\bU} F_1.\label{eq:intertwine=linkcob15}
\end{equation}

We note that the 1-handles of $F_1$ can be moved to the left of all the other
maps. After moving $F_1$ to the left, the birth cobordism map $\cB^+_{\bU}$
becomes the composition of a 0-handle map $F_0$ (which adds a 4-ball
containing $\bU$), and the 1-handle map $F_1^{\xi_s,\xi_s}$. We also note
that $S_{w_2,z_1}^+S_{w_2,z_1}^-\simeq \Phi_{w_2}$ by equation~\eqref{eq:R3}.
Hence equation~\eqref{eq:intertwine=linkcob15} becomes
\begin{equation}
F_1' F_3^{\xi_s,\xi_s}F_B^{\ws} \Phi_{w_2} F_{B}^{\ws} F_1^{\xi_{s},\xi_s} F_0,
\label{eq:intertwine=linkcob17}
\end{equation}
where $F_1'$ denotes the cobordism map for attaching $|\as_2|$ 1-handles to the 3-sphere added by $F_0$. Using equation~\eqref{eq:connectedsum=tensorprod2}, we can reduce equation~\eqref{eq:intertwine=linkcob17} to the expression
\[
F_1' F_0,
\]
which is clearly just $\Top^+_{(\Sigma_2,\a_2,\a_2)}$, establishing equation~\eqref{eq:intertwine=linkcob12}.

Applying the relation from equation~\eqref{eq:intertwine=linkcob12} to equation~\eqref{eq:intertwine=linkcob14},
it follows that $E \circ \cG$ is chain homotopic to
\[
F_{\cT_1\sqcup \cT_2} \left(\Top_{(\Sigma_2,\a_2,\a_2)}^+(-) \otimes \Top_{(\Sigma_1,\b_1,\b_1)}^+(-)\right).
\]
This holomorphic triangle count appears on the disjoint union of $\Sigma_1$
and $\Sigma_2$, and is clearly just the tensor product $\Phi_{\b_1\to
\b_1}^{\a_1}\otimes \Phi_{\b_2}^{\a_2\to \a_2}$, completing the proof.
\end{proof}

\begin{rem}
There is another chain homotopy equivalence $E'$ from the connected sum to
the disjoint union, defined via the formula $E' := F_3F_B^{\zs} T_{w_2,z_1}^+$.
The map $E'$ corresponds to a pair-of-pants link cobordism where the
type-$\ws$ and type-$\zs$ regions have been switched from the cobordism
corresponding to $E$. One might expect the above argument to also go through
using $E'$ to try to cancel $\cG$, by just replacing each
type-$T$ quasi-stabilization map with a type-$S$ quasi-stabilization map, and
replacing each type-$\ws$ band map with a type-$\zs$ band map. However, the
careful reader will discover that such a strategy fails at equation~\eqref{eq:intertwine=linkcob7}.
\end{rem}

\subsection{Proof of the triangle cobordism formula}

We now prove that the cobordism map induced by
$(X_{\a,\b,\g},\cF_{\a,\b,\g})$ is chain homotopic to the holomorphic
triangle map:

\begin{proof}[Proof of Theorem~\ref{thm:heegaardtriplemap}]
A handlebody description of the 4-manifold $X_{\a,\b,\g}$ is given in the
proof of \cite{ZemDualityMappingTori}*{Theorem~9.1}. Let $f_{\b}$ denote a
Morse function on the handlebody $U_\b$ compatible with the attaching curves
$\bs\subset \Sigma$, which has $\Sigma$ as a maximal level set.
The 4-manifold $X_{\a,\b,\g}$ has the following handlebody description:
\begin{itemize}
\item A 1-handle for each index 0 critical point of $f_{\b}$, with one foot at the critical point in
    $\bar{U}_{\b} \subset Y_{\a,\b}$, and the other foot at its image in $U_{\b}\subset Y_{\b,\g}$.
\item A 2-handle for each index 1 critical point of $f_{\b}$. The attaching
    sphere is equal to the union of the corresponding descending manifold in
    $U_{\b}$, concatenated across the connected sum tubes with its  mirror image
    in $\bar{U}_{\b}$. There is a canonical framing specified by taking an
    arbitrary framing in the portion in $U_{\b}$, and mirroring it in the portion
    in $\bar{U}_{\b}$.
\end{itemize}

Let $W_1$ denote the 1-handle cobordism, and let $W_2$ denote the 2-handle
cobordism. Let $\cF_1$ and $\cF_2$ denote the intersection of the decorated surface
$\cF_{\a,\b,\g}$ with $W_1$ and $W_2$, respectively. Note that $\cF_1$ is obtained by
attaching a collection of bands, one for each 1-handle, each containing a
single dividing arc that meets both $Y_{\a,\b}$ and $Y_{\b,\g}$. Hence, we
can write
\begin{equation}
F_{X_{\a,\b,\g},\cF_{\a,\b,\g}} \simeq F_{W_2,\cF_2} \circ F_{W_1,\cF_1}.
\end{equation}

Let $\cH_{\a,\b} = (\Sigma,\as,\bs,\ws,\zs)$,
and let  $\Tw^-(\bar{\cH}_{\b,\g}) = (\Tw^-(\bar{\Sigma}), \bar{\gs}, \bar{\bs}, \ws, \zs)$
denote the negative twisted conjugate of the diagram
$\cH_{\b, \g} = (\Sigma,\bs,\gs,\ws,\zs)$ described in Section~\ref{sec:twistedconjugateddiagrams}.
To show the main claim, we need to compute the cobordism map starting at the
diagram $\cH_{\a,\b} \sqcup \cH_{\b,\g}$. However, for the computation, it is
more convenient to start at the diagram $\cH_{\a,\b} \sqcup
\Tw^-(\bar{\cH}_{\b,\g})$. Hence, we precompose with the change of diagrams
map $\id \otimes \Phi_{\cH_{\b,\g} \to \Tw^-(\bar{\cH}_{\b,\g})}$. To simplify
notation, we will omit writing this change of diagrams map for most of the
proof, though it will reappear at the end.

We note that, by Proposition~\ref{prop:intertwiningmap=cobordismmap},
\begin{equation}
F_{W_1,\cF_1}\simeq \cG,
\end{equation} where
\[
\cG \colon \cCFL^\infty(\cH_{\a,\b}) \otimes \cCFL^\infty(\Tw^-(\bar{\cH}_{\b,\g})) \to
\cCFL^\infty(\Sigma \#_i \Tw^-(\bar{\Sigma}), \as \cup \bar{\gs}, \bs \cup \bar{\bs}, \ws, \zs)
\]
is the intertwining map defined by the equation
\[
\cG(-,-) := F_{\a \cup \bar{\g}, \b \cup \bar{\g}, \b \cup \bar{\b}} (F_1^{\bar{\g},\bar{\g}}(-) \otimes F_1^{\b,\b}(-)).
\]
Hence
\begin{equation}
F_{X_{\a,\b,\g}, \cF_{\a,\b,\g}} \simeq F_{W_2, \cF_2} \circ \cG.
\end{equation}

Next, it is not hard to see that the Heegaard triple $(\Sigma\#_i
\Tw^-(\bar{\Sigma}), \as\cup \bar{\gs}, \bs\cup \bar{\bs}, \Ds,\ws,\zs)$ can
be used to compute the 2-handle cobordism map $F_{W_2,\cF_2}$,
where $\Ds$ was defined in Section~\ref{sec:doublingdiagrams}; see
\cite{ZemDualityMappingTori}*{Lemma~7.7} for a detailed argument in a very
closely related context. It follows that
\[
F_{W_2,\cF_2} \simeq F_{\a \cup \bar{\g}, \b \cup \bar{\b}, \Delta}(- \otimes \Theta_{\b \cup \bar{\b}, \Delta}^+).
\]
Hence, omitting the initial factor of $\id \otimes \Phi_{\cH_{\b,\g} \to \Tw^-(\bar{\cH}_{\b,\g})}$,
we have, by associativity,
\begin{equation}
\begin{split}
F_{X_{\a,\b,\g}, \cF_{\a,\b,\g}}(-,-)& \simeq (F_{W_2,\cF_2}\circ  \cG)(-,-)\\
&\simeq F_{\a \cup \bar{\g}, \b \cup \bar{\b}, \Delta} \left(F_{\a \cup \bar{\g},\b \cup \bar{\g}, \b \cup \bar{\b}}
\left(F_1^{\bar{\g},\bar{\g}}(-) \otimes F_1^{\b,\b}(-)\right) \otimes \Theta_{\b \cup \bar{\b}, \Delta}^+ \right)\\
&\simeq  F_{\a \cup \bar{\g}, \b \cup \bar{\g},\Delta} \left(F_1^{\bar{\g},\bar{\g}}(-) \otimes
F_{\b \cup \bar{\g}, \b\cup \bar{\b}, \Delta} \left(F_1^{\b,\b}(-) \otimes \Theta_{\b \cup \bar{\b}, \Delta}^+ \right)\right).
\end{split}
\label{eq:triangle=cobordism1}
\end{equation}

It is not hard to see that the Heegaard diagram corresponding to the codomain
of the map in equation~\eqref{eq:triangle=cobordism1} is the double
$D_{\a}^{\zs}(\cH_{\a,\g})$ of the diagram $\cH_{\a,\g} = (\S, \as, \gs, \ws, \zs)$,
so we must postcompose with the transition map
$\Phi_{D_{\a}^{\zs}(\cH_{\a,\g})\to \cH_{\a,\g}}$, which we computed in
Lemma~\ref{lem:changeofdiagramsfromdouble}. Accordingly, our expression from
equation~\eqref{eq:triangle=cobordism1} for $F_{X_{\a,\b,\g},\cF_{\a,\b,\g}}$ becomes
\begin{equation}
F_3^{\bar{\g},\bar{\g}} F_{\a\cup \bar{\g}, \Delta, \g \cup \bar{\g}}
\left(F_{\a \cup \bar{\g}, \b \cup \bar{\g}, \Delta} \left(F_1^{\bar{\g},\bar{\g}}(-) \otimes
F_{\b \cup \bar{\g}, \b \cup \bar{\b}, \Delta} \left(F_1^{\b,\b}(-) \otimes \Theta_{\b \cup \bar{\b}, \Delta}^+\right)\right)
\otimes \Theta^+_{\Delta,\g\cup \bar{\g}}\right).
\label{eq:triangle=cobordism2}
\end{equation}
Associativity implies that equation~\eqref{eq:triangle=cobordism2} is chain homotopic to
\begin{equation}
F_3^{\bar{\g},\bar{\g}} F_{\a\cup \bar{\g}, \b\cup \bar{\g}, \g\cup \bar{\g}}
\left(F_1^{\bar{\g},\bar{\g}}(-) \otimes F_{\b\cup \bar{\g}, \Delta, \g\cup \bar{\g}}
\left(F_{\b\cup \bar{\g}, \b\cup \bar{\b}, \Delta} \left(F_1^{\b,\b}(-)\otimes \Theta_{\b\cup \bar{\b},\Delta}^+\right) \otimes
\Theta^+_{\Delta, \g\cup \bar{\g}}\right)\right).
\label{eq:triangle=cobordism3}
\end{equation}
Using Proposition~\ref{prop:holomorphictrianglecounts},
we see equation~\eqref{eq:triangle=cobordism3} is chain homotopic to
\begin{equation}
F_{\a,\b,\g}\left(-\otimes F_3^{\bar{\g},\bar{\g}} \left(F_{\b\cup \bar{\g}, \Delta, \g\cup \bar{\g}}
\left(F_{\b \cup \bar{\g}, \b \cup \bar{\b}, \Delta} \left(F_1^{\b,\b}(-) \otimes \Theta_{\b \cup \bar{\b},\Delta}^+\right)
\otimes \Theta_{\Delta, \g \cup \bar{\g}}^+\right)\right)\right).
\label{eq:triangle=cobordism4}
\end{equation}
Lemmas~\ref{lem:changeofdiagramdoubletoconj} and~\ref{lem:changeofdiagramsfromdouble}
imply that equation~\eqref{eq:triangle=cobordism4} is chain homotopic to
\[
F_{\a,\b,\g}(- \otimes \Phi_{\Tw^-(\bar{\cH}_{\b,\g}) \to \cH_{\b,\g}}).
\]
The transition map inside the triangle map cancels the initial factor of
$\id\otimes \Phi_{\cH_{\b,\g}\to \Tw^-(\cH_{\b,\g})}$, which we have been
omitting writing up until now. Hence $F_{X_{\a,\b,\g},\cF_{\a,\b,\g}}\simeq
F_{\a,\b,\g}$, concluding the proof.
\end{proof}

\section{Examples}\label{sec:examples}

In this section, we perform some model computations to illustrate our
invariants defined in Section~\ref{sec:invariants} for pairs of slice disks.
Our two main examples will be slice disks constructed by roll-spinning,
and deform-spinning using the rigid motion deformation from Section~\ref{sec:rigidmotiondeformation}.
See Section~\ref{sec:spinning} for the definitions of roll-spinning and deform-spinning.

\subsection{Invariants of deform-spun slice disks}
As stated in the introduction, our main computational
results rely on a formula for the fundamental principal invariants of
deform-spun slice disks, generalizing \cite[Theorem~5.1]{SliceDisks}
from the hat to the full infinity version of knot Floer homology:

\begin{customthm}{\ref{thm:spinning}}
  Let $D_{K, \varphi}$ be a slice disk of the knot $-K \# K$, obtained by deform-spinning
  a knot $K$ in $\Sphere^3$ using an automorphism $\varphi$ of $(\Sphere^3, K)$.
  Let $w$ and $z$ be basepoints on $K$, and write $C := \cCFL^\infty(K, w, z)$. Then
  \[
  E \circ \ve{t}_{D_{K, \varphi}}^\infty \simeq (\id \otimes \varphi_*) \circ \cotr \in
  \Hom_{\cR^\infty}(\cR^\infty, C^\vee \otimes C),
  \]
  where the chain homotopy equivalence $E \colon \cCFL^\infty(-K \# K, w, z) \to C^\vee \otimes C$ is described
  in \cite{ZemConnectedSums}*{Section~5}.
\end{customthm}

\begin{proof}
  This follows from the trace formula in Theorem~\ref{thm:traceforuma}, using the same argument as the proof of
\cite[Theorem~5.1]{SliceDisks}. 
\end{proof}

We now prove the following:

\begin{prop}\label{prop:roll}
Let $D_{K,\id}$ and $D_{K,r}$ be the canonical and the 1-roll-spun slice disks of $-K \# K$, respectively.
Then
\[
\tau(D_{K,\id}, D_{K,r}) \le 1.
\]
\end{prop}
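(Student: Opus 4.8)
The plan is to combine the trace formula for deform-spun slice disks (Theorem~\ref{thm:spinning}) with Sarkar's formula $r_*\simeq\id+\Phi\circ\Psi$ for the roll-spinning map, and then compare the two resulting maps after reducing to the complex $\hCFKfz(\bK)$ in which $\tau$ is computed via Lemma~\ref{lem:slightreformulationoftau}. First I would apply Theorem~\ref{thm:spinning} with $d=\id$ and $d=r$ to obtain
\[
E\circ\ve{t}^\infty_{D_{K,\id}}\simeq\cotr
\quad\text{and}\quad
E\circ\ve{t}^\infty_{D_{K,r}}\simeq(\id\otimes r_*)\circ\cotr,
\]
so that $E\circ(\ve{t}^\infty_{D_{K,\id}}+\ve{t}^\infty_{D_{K,r}})\simeq(\id\otimes(\id+r_*))\circ\cotr\simeq(\id\otimes(\Phi\circ\Psi))\circ\cotr$, using $r_*\simeq\id+\Phi\circ\Psi$. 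Since $E$ is a chain homotopy equivalence, this pins down the difference of the two principal invariants on $C^\vee\otimes C$ up to homotopy.

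Next I would pass to the reductions relevant for $\tau$. Recall that $\tau(D_{K,\id},D_{K,r})$ is, by Lemma~\ref{lem:slightreformulationoftau} (both disks have genus $0$), the minimal $n\ge 0$ with $V^n\cdot[t^-_{D_{K,\id},\zs}(1)]=V^n\cdot[t^-_{D_{K,r},\zs}(1)]$ in $\HFK^-_{U=0}(-\bK\#\bK)$. So it suffices to show that, after setting $U=0$ and multiplying by a single power of $V$, the classes $[\ve{t}^-_{D_{K,\id}}(1)]$ and $[\ve{t}^-_{D_{K,r}}(1)]$ agree. Transporting the identity above through $E$ and through the reduction $\Red_{U=1}$ (or working directly on $\CFK^-_{U=0}$), the difference of the two slice-disk elements is identified with the image under an explicit connected-sum/trace map of $(\id\otimes(\Phi\circ\Psi))\circ\cotr(1)$. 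The key algebraic input is that $\Phi$ and $\Psi$ each shift the Alexander grading by $\mp1$, so $\Phi\circ\Psi$ preserves the Alexander grading but, crucially, $\Phi$ carries the $U$-power count and $\Psi$ the $V$-power count in a way that makes $V\cdot(\Phi\circ\Psi)$ an honest boundary-type term after a single reduction step — concretely, one uses the relations $\Phi\simeq S^+_{w,z}S^-_{w,z}$ and $\Psi\simeq T^+_{w,z}T^-_{w,z}$ from equations~\eqref{eq:R3} and~\eqref{eq:R4}, together with $\Phi^2\simeq\Psi^2\simeq0$, to show $V\cdot(\Phi\circ\Psi)$ vanishes on homology after setting $U=0$. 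This yields $V\cdot[t^-_{D_{K,\id},\zs}(1)]=V\cdot[t^-_{D_{K,r},\zs}(1)]$, hence $\tau(D_{K,\id},D_{K,r})\le1$.

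The main obstacle I anticipate is bookkeeping the Alexander-grading and variable shifts correctly through the chain homotopy equivalence $E\colon\cCFL^\infty(-\bK\#\bK)\to C^\vee\otimes C$ and through the connected-sum identification, since $E$ involves a quasi-stabilization, a band map, and a $3$-handle map, each of which interacts nontrivially with $\Phi$ and $\Psi$ (cf.\ the commutation relations in Section~\ref{sec:proofrigidmotiondiffeo}). In particular I must verify that the factor of $V$ I multiply by is enough — i.e.\ that no higher power is forced — and that $\Phi\circ\Psi$ genuinely becomes null-homotopic rather than merely being annihilated by a large power of $V$; this is where the identities $\Phi^2\simeq0$, $\Psi^2\simeq0$, and $\Phi\Psi\simeq\Psi\Phi$ together with equation~\eqref{eq:R6} do the real work. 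A cleaner alternative, which I would pursue if the direct computation becomes unwieldy, is to invoke equation~\eqref{eq:SarkarmaptSz}, namely $r_*\circ\ve{t}^\infty_{S,\zs}\simeq\ve{t}^\infty_{S,\zs}$, to first replace $\ve{t}^\infty_{D_{K,r}}$ by a map differing from $\ve{t}^\infty_{D_{K,\id}}$ by something supported one filtration level down, and then read off $\tau\le1$ directly from the definition; this sidesteps most of the explicit curve counts.
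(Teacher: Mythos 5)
Your overall skeleton is the same as the paper's: apply Theorem~\ref{thm:spinning} to both disks, use $r_*\simeq\id+\Phi\circ\Psi$ to identify the difference $E\circ(\ve{t}^\infty_{D_{K,\id}}+\ve{t}^\infty_{D_{K,r}})$ with $(\id\otimes(\Phi\circ\Psi))\circ\cotr$, and then reduce to a one-variable complex where $\tau$ is computed. The problem is the one step that carries all the mathematical content: your justification for why a \emph{single} power of the variable kills $\Phi\circ\Psi$ does not work. The relations $\Phi\simeq S^+_{w,z}S^-_{w,z}$, $\Psi\simeq T^+_{w,z}T^-_{w,z}$, $\Phi^2\simeq\Psi^2\simeq 0$, $\Phi\Psi\simeq\Psi\Phi$, and the bypass relation~\eqref{eq:R6} all hold already on $\cCFL^-(\bK)$ and contain no factor of $U$ or $V$; composing and squaring them can never manufacture one. (If they did, $V\cdot\Phi\Psi$ would be null-homotopic on the full complex, which is false in general -- e.g.\ the explicit computation for $4_1$ in Lemma~\ref{lem:computationtauandVbyhand41} shows $\Phi\Psi$ surviving nontrivially there.) The mechanism the paper actually uses is an algebraic fact about the \emph{reduced} complex: on $\CFK^-_{V=0}(\bK)$ the map $\Phi_w$ is the formal derivative of the differential with respect to $U$, and for a finitely generated, free, $\Z$-graded complex over $\bF_2[U]$ the map $U\cdot\Phi_w$ is $U$-equivariantly null-homotopic (Hendricks--Manolescu--Zemke, Proposition~6.3); hence $U\cdot\Phi_w\Psi_z\simeq 0$ there, giving $\tau\le 1$. (Equivalently, on $\CFK^-_{U=0}$ one has $V\cdot\Psi_z\simeq 0$ by the conjugate statement.) You need to cite or reprove this fact; it is not a formal consequence of the TQFT relations you list.

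Your proposed ``cleaner alternative'' is also a non-starter. Equation~\eqref{eq:SarkarmaptSz} says $r_*\circ\ve{t}^\infty_{S,\zs}\simeq\ve{t}^\infty_{S,\zs}$ for a \emph{fixed} surface $S$: post-composing the invariant of one surface with the basepoint-moving map does nothing. It does not relate $\ve{t}^\infty_{D_{K,r}}$ to $\ve{t}^\infty_{D_{K,\id}}$, which are invariants of two genuinely different disks; the only bridge between them is Theorem~\ref{thm:spinning}, where $r_*$ enters as $\id\otimes r_*$ on one tensor factor of $C^\vee\otimes C$, not as an automorphism of $\cCFL^\infty(-\bK\#\bK)$ applied after the cobordism map. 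Finally, a small notational point: the reduction relevant to Lemma~\ref{lem:slightreformulationoftau} is $\CFK^-_{U=0}$ (setting $U=0$), not $\Red_{U=1}$, which produces $\CF^-$ of the ambient $3$-manifold and forgets the Alexander filtration entirely.
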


\begin{proof}
Let $w$ and $z$ be basepoints on $K$, and write $\bK = (K, w, z)$ and $-\bK \# \bK = (-K \# K, w, z)$.
By Lemma~\ref{lem:slightreformulationoftau}, we can calculate
$\tau(D_{K,\id}, D_{K,r})$ using $\HFK^-_{V=0}(-\bK \# \bK)$. By Theorem~\ref{thm:spinning},
\begin{equation}\label{eqn:cotr0}
E \circ t_{D_{K,\id}}^- \simeq  \cotr \in \Hom \left(\bF_2[U], \CFK^-_{V=0}(\bK)^\vee \otimes_{\bF_2[U]} \CFK^-_{V=0}(\bK)\right).
\end{equation}
Furthermore, $r_* \simeq \id + \Phi_w \circ \Psi_z$ by \cite{ZemQuasi}*{Theorem~B},  so
\begin{equation}\label{eqn:cotr1}
E \circ t_{D_{K,r}}^- \simeq (\id \otimes (\id + \Phi_w \circ \Psi_z)) \circ \cotr.
\end{equation}
Hence, if we can show that $U \cdot \Phi_w \circ \Psi_z$ is $U$-equivariantly chain homotopic
to zero, then
\[
U \cdot t_{D_{K,\id}}^- \simeq U \cdot t_{D_{K,r}}^-,
\]
so $\tau(D_{K,\id}, D_{K,r}) \le 1$. We note that $\Phi_w$ has a simple algebraic
interpretation on $\CFK^-_{V=0}(\bK)$. It is the map obtained by writing the
differential as a matrix with entries in $\bF_2[U]$, and then differentiating
each entry; cf.~equation~\eqref{eq:defPhi}.
According to \cite{HMZConnectedSum}*{Proposition~6.3}, since
$\CFK^-_{V=0}(\bK)$ is a finitely generated, free, $\Z$-graded chain complex over $\bF_2[U]$,
the map $U \cdot \Phi_w$ is $U$-equivariantly chain homotopic to
zero, and hence so is $U \cdot \Phi_w \circ \Psi_z$. The claim follows.
\end{proof}

\begin{question}
  In light of Proposition~\ref{prop:roll}, it is natural to ask whether
  $\mu_{\st}(D_{K,\id}, D_{K,r}) \le 1$ for any roll-spun slice disks $D_{K,\id}$ and $D_{K,r}$;
  cf.~Conjecture~\ref{conj:rt}.
  This would give a topological proof of Proposition~\ref{prop:roll} by Theorem~\ref{thm:tauD1D2bound}.
\end{question}

\subsection{Computational examples}

In this section, we compute the invariants $\tau$, $V_k$,
and $\Upsilon$ for several pairs of deform-spun slice disks. We begin by considering the
complex $\cCFL^\infty(4_1)$ for the figure-eight knot $4_1$, which
is shown in Figure~\ref{fig::4_1}.

\begin{figure}[ht!]
\centering
\[
\begin{tikzcd}
\cCFL^\infty(4_1)&&\xs_0\arrow[bend left =20]{r}{V}& \xs_1& \xs_2& \xs_3\arrow[bend left = 20]{r}{V}
\arrow[bend left=20]{lll}{U}& \xs_4 \arrow[bend left=20]{lll}{U}\\
(\gr_{\ws}, A)&= & (1,1)&(0,0)&(0,0)&(0,0)&(-1,-1)
\end{tikzcd}
\]
\caption{The complex $\cCFL^\infty(\Sphere^3,4_1)$.}\label{fig::4_1}
\end{figure}

\begin{lem}\label{lem:computationtauandVbyhand41}
Let $K$ be the figure-eight knot,
and let $D_{K,\id}$ and $D_{K,r}$ denote the canonical and the 1-roll-spun slice disk
slice disks of $-4_1 \# 4_1$. Then
\[
\tau(D_{K,\id}, D_{K,r}) = 1, \text{ } V_0(D_{K,\id},D_{K,r}) = 1 \text{, and }
V_1(D_{K,\id}, D_{K,r}) = 0.
\]
\end{lem}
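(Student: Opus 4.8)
The plan is to combine Theorem~\ref{thm:spinning} with the explicit description of $\cCFL^\infty(4_1)$ in Figure~\ref{fig::4_1} and the formula $r_*\simeq \id + \Phi_w\circ\Psi_z$, then do the resulting small matrix computation over $\bF_2[U,V]$. First I would use Lemma~\ref{lem:slightreformulationoftau} (and its $\CFK^-_{V=0}$ analogue, obtained by conjugation) together with the reformulations of $V_k$ from Section~\ref{sec:V}, to reduce everything to computing the classes $[t^-_{D_{K,\id},\zs}(1)]$ and $[t^-_{D_{K,r},\zs}(1)]$ in the appropriate small complexes. By Theorem~\ref{thm:spinning}, after applying the chain homotopy equivalence $E$, these become $\cotr$ and $(\id\otimes r_*)\circ\cotr$ respectively, where $\cotr\colon \cR^\infty\to C^\vee\otimes C$ is the canonical cotrace, $C:=\cCFL^\infty(4_1)$. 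So the whole problem is to understand the difference $(\id\otimes(\Phi_w\circ\Psi_z))\circ\cotr$ inside $C^\vee\otimes C$, or rather its image under setting $V=0$ (for $\tau$ and $V_k$ via $\CFK^-_{V=0}$) or $U=0$ (symmetrically).

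Next I would carry out the explicit computation. Using the basis $\xs_0,\dots,\xs_4$ from Figure~\ref{fig::4_1}, the cotrace element is $\cotr(1)=\sum_i \xs_i^\vee\otimes\xs_i$. I would compute $\Phi_w$ and $\Psi_z$ on $C$ from the differential shown: $\Phi_w$ differentiates the $U$-coefficient of each arrow and $\Psi_z$ the $V$-coefficient (cf.~equations~\eqref{eq:defPhi} and \eqref{eq:defPsi}). From the picture, the arrows $\xs_3\xrightarrow{U}\xs_0$, $\xs_4\xrightarrow{U}\xs_1$ contribute to $\Phi_w$, and $\xs_0\xrightarrow{V}\xs_1$, $\xs_3\xrightarrow{V}\xs_4$ to $\Psi_z$; so $\Phi_w\circ\Psi_z$ is (up to homotopy) the rank-one-ish map sending $\xs_0\mapsto \xs_1$ composed appropriately — I would track the precise nonzero entry. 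Then $(\id\otimes\Phi_w\Psi_z)\circ\cotr(1)$ is an explicit element, and I would check: (1) it is a nonzero homology class after setting $V=0$ but $U\cdot$ it vanishes in homology, giving $\tau=1$ and $V_0=1$; (2) $\hat U\cdot$ it (i.e.\ $UV\cdot$ it) vanishes in $H_*(A_1^-)$, or rather that already the $V^{-1}$-shifted class agrees, giving $V_1=0$. The inequality $\tau(D_{K,\id},D_{K,r})\le 1$ from Proposition~\ref{prop:roll} gives the upper bound for free; the content here is the lower bound $\tau\ge 1$, equivalently that $t^-_{D_{K,\id},\zs}(1)\ne t^-_{D_{K,r},\zs}(1)$ in $\HFK^-_{V=0}$ (before multiplying by $U$), which is exactly the statement that $(\id\otimes\Phi_w\Psi_z)\circ\cotr(1)$ is a nontrivial cycle there.

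Concretely, I would verify that after setting $V=0$ the complex $\CFK^-_{V=0}(4_1)\otimes\CFK^-_{V=0}(4_1)$ has homology a free $\bF_2[U]$-module with a single generator in the relevant Alexander grading, and that $\Phi_w\Psi_z$-twisting the cotrace changes the distinguished element by precisely $U\cdot(\text{generator})$ — so the two elements are distinct, but become equal after one multiplication by $U$. This also immediately yields $V_0=1$ (one application of $\hat U=UV$ is needed, but with $V$ set appropriately the $U$-count is what matters) and $V_1=0$ (the shift built into $A_1^-$ already absorbs the discrepancy). I expect the main obstacle to be bookkeeping: keeping the grading conventions straight (the Alexander shifts of $\ve{t}^\infty_{D,\zs}$, the identification $C^\vee\cong \cCFL^\infty(-K)$, and which of $U$, $V$ gets inverted in each of $\tau$, $V_0$, $V_1$), and making sure the homotopy $U\cdot\Phi_w\simeq 0$ from \cite{HMZConnectedSum}*{Proposition~6.3} used in Proposition~\ref{prop:roll} is applied consistently so that the $\le$ and the $\ge$ directions meet. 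Once the $5\times 5$ (really much smaller after reduction) matrix computation is set up correctly, the equalities drop out; the SageMath computation referenced in Section~\ref{sec:examples} can serve as an independent check.
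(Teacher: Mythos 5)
Your proposal follows essentially the same route as the paper: reduce via Theorem~\ref{thm:spinning} and $r_*\simeq \id+\Phi_w\circ\Psi_z$ to the element $(\id\otimes(\Phi_w\circ\Psi_z))\circ\cotr(1)$ in $C^\vee\otimes C$, then check in which filtration levels (resp.\ in which $A_k^-$) this class is or is not a boundary, using Proposition~\ref{prop:roll} for the upper bound on $\tau$. One concrete slip to fix: from the arrows you correctly list, $\Psi_z(\xs_0)=\xs_1$ but $\Phi_w(\xs_1)=0$, so $(\Phi_w\circ\Psi_z)(\xs_0)=0$; the unique nonzero value is $(\Phi_w\circ\Psi_z)(\xs_3)=\Phi_w(\xs_4)=\xs_1$, not ``$\xs_0\mapsto\xs_1$.'' This matters because the resulting element is $\xs_3^\vee\otimes\xs_1$, which sits in Alexander grading $0$ with its only preimages under $\d$ (namely $\xs_4^\vee\otimes\xs_1$ and $\xs_3^\vee\otimes\xs_0$) in filtration level $1$; that grading bookkeeping is exactly what yields $\tau=1$, $V_0=1$, and $V_1=0$, and it would come out wrong starting from $\xs_0^\vee\otimes\xs_1$.
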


\begin{proof}
The Alexander filtered chain complex $\hCFKf(4_1)$, obtained by setting $U = 0$ and $V = 1$,
has the form
\[
\hCFKf(4_1) = \big( (\xs_0)_1 \longrightarrow (\xs_1)_0 \qquad (\xs_2)_0 \qquad
(\xs_3)_0 \longrightarrow (\xs_4)_{-1}\big).
\]
The notation $(\xs_i)_j$ means the intersection points $\xs_i$,
which has Alexander grading $j$. Using equations~\eqref{eq:defPhi} and~\eqref{eq:defPsi},
\[
(\Phi_w \circ \Psi_z)(\xs_3) = \xs_1,
\]
and $\Phi_w \circ \Psi_z$ vanishes on all other generators. The complex
$\hCFKf(-4_1)$ is obtained by dualizing $\hCFKf(4_1)$ (note that, although $4_1$ is
amphichiral, to compute the trace formula, it is better to ignore this fact). Hence
\[
\hCFKf(-4_1) = \big( (\xs_0^\vee)_{-1} \longleftarrow (\xs_1^\vee)_0 \qquad
(\xs_2^\vee)_0 \qquad (\xs_3^\vee)_0 \longleftarrow (\xs_4^\vee)_1 \big).
\]
Using equations~\eqref{eqn:cotr0} and~\eqref{eqn:cotr1},
\[
E(\hat{t}_{D_{K,r}}(1) - \hat{t}_{D_{K,\id}}(1)) = (\id \otimes (\Phi_w \circ \Psi_z)) \circ \cotr(1) =
\xs_3^\vee \otimes (\Phi_w \circ \Psi_z)(\xs_3) = \xs_3^\vee \otimes \xs_1.
\]

We now observe that $\xs_3^\vee \otimes \xs_1$ is nonzero in the homology
of the 0-filtration level
\[
\cG_0 \left(\hCFKf(-4_1) \otimes_{\bF_2} \hCFKf(4_1)\right),
\]
where $\cG_i$ denotes Alexander filtration level $i$.
Indeed, the only elements mapped to $\xs_3^\vee \otimes \xs_1$
by the differential are $\xs_4^\vee \otimes \xs_1$
and $\xs_3^\vee \otimes \xs_0$. However, neither of these are in $\cG_0$;
instead, they are in $\cG_1$. Hence $\tau(D_{K,\id}, D_{K,r}) = 1$.

We now consider the invariants $V_0$ and $V_1$. The complex $A_0^-(-4_1 \# 4_1)$
has 25 generators over $\bF_2[\hat{U}]$. The generators are the
monomials $U^n V^m \cdot \xs_i^\vee \otimes \xs_j$, where $n$, $m \ge 0$, and
\begin{equation}
A(\xs_i^\vee) + A(\xs_j) + m - n = 0.
\label{eq:41Alexandergradingiszero}
\end{equation}
Similarly, $A_{1}^-(-4_1 \# 4_1)$ is generated by the monomials
$U^n V^m \cdot \xs_i^\vee \otimes \xs_j$, where $n \ge 0$, $m \ge -1$,
and satisfy equation~\eqref{eq:41Alexandergradingiszero}.

As above, we can identify $t^-_{D_{K,r}}(1) - t^-_{D_{K,\id}}(1)$
with $U^0 V^0 \cdot \xs_3^\vee \otimes \xs_1$. It is straightforward to see that
$U^0 V^0 \cdot \xs_3^\vee \otimes \xs_1$ is not a boundary in $A_0^-(-4_1 \# 4_1)$,
so $V_0 \ge 1$. However,
\[
\begin{split}
\d(U^1 V^0 \cdot \xs_4^\vee \otimes \xs_1) &= U^1 V^1 \cdot \xs_3^\vee \otimes \xs_1 :=
\hat{U}  \cdot \xs_3^\vee \otimes \xs_1 \text{ and } \\
\d(U^0 V^{-1} \cdot \xs_4^\vee \otimes \xs_1) &=  \xs_3^\vee \otimes \xs_1,
\end{split}
\]
implying that $V_0 \le 1$ and $V_1 = 0$.
\end{proof}

\begin{lem}
Let $K$ denote the figure-eight knot, and let $D_{K,\id}$ and $D_{K,r}$ be
as in Lemma~\ref{lem:computationtauandVbyhand41}. Then
$\Upsilon_{(D_{K,\id}, D_{K,r})}(t)$ takes the form shown in
Figure~\ref{fig::4_1upsilon}.
\end{lem}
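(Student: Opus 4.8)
The plan is to reduce the computation of $\Upsilon_{(D_{K,\id}, D_{K,r})}(t)$ to an explicit filtered-homotopy calculation in the $t$-modified complex $\tCFK^-(-\bK \# \bK)$, using the trace formula of Theorem~\ref{thm:spinning} together with the reformulation of $\Upsilon$ from Lemma~\ref{lem:livingstonreformupsilon}. By Theorem~\ref{thm:spinning}, the extremal principal invariants of the canonical and roll-spun slice disks are identified, under the pair-of-pants equivalence $E$, with $\cotr$ and $(\id \otimes r_*)\circ\cotr$ respectively. Since $r_* \simeq \id + \Phi_w \circ \Psi_z$ by Sarkar's formula (as extended by the second author in \cite{ZemQuasi}), we get, exactly as in the proof of Lemma~\ref{lem:computationtauandVbyhand41}, that $E(\ve{t}^-_{D_{K,r}}(1) - \ve{t}^-_{D_{K,\id}}(1))$ is represented by $\xs_3^\vee \otimes \xs_1 \in \cCFL^-(-4_1)\otimes \cCFL^-(4_1)$, using the description of $\cCFL^\infty(4_1)$ from Figure~\ref{fig::4_1} and the fact that $\Phi_w\circ\Psi_z$ sends $\xs_3 \mapsto \xs_1$ and kills all other generators.

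First I would record the bigradings of the relevant generators: in $\cCFL^\infty(4_1)$ the element $\xs_1$ has $(\gr_{\ws},A) = (0,0)$, so $\gr_{\zs}(\xs_1) = 0$ as well; dualizing, $\xs_3^\vee \in \cCFL^\infty(-4_1)$ has $(\gr_{\ws},A) = (0,0)$. Hence the difference class $\xs_3^\vee \otimes \xs_1$ sits in $(\gr_{\ws},\gr_{\zs}) = (0,0)$, i.e. Alexander grading $0$, which is consistent with both slice disks having genus $0$. Then I would apply Lemma~\ref{lem:livingstonreformupsilon}: $\Upsilon_{(D_{K,\id},D_{K,r})}(t)$ is the minimal $s \ge 0$ such that the class $[\xs_3^\vee\otimes\xs_1]$ vanishes in $H_*(\cG_s^t(\bK'))$, where $\bK' = -\bK\#\bK$ and $\cG_s^t$ is the filtration on $\CFK^\infty(-4_1\#4_1)$ generated by $U^iV^j\cdot\ys$ with $A(\ys)+j-i = 0$ and $t\cdot j + (2-t)\cdot i \ge -s$. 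The bulk of the work is then: (i) identify, among the boundaries hitting $\xs_3^\vee\otimes\xs_1$ in $\CFK^\infty(-4_1\#4_1)$, the one(s) of maximal $t$-filtration — from the proof of Lemma~\ref{lem:computationtauandVbyhand41} these come from $\xs_4^\vee\otimes\xs_1$ (via the $V$-differential $\xs_4^\vee \to \xs_3^\vee$ on the $-4_1$ factor) and $\xs_3^\vee\otimes\xs_0$ (via the $U$-differential $\xs_0 \to \xs_1$ on the $4_1$ factor, reading Figure~\ref{fig::4_1}); (ii) compute the $t$-gradings $\gr_t$ of the corresponding monomials $U^iV^j$-multiples needed to land in Alexander grading $0$; and (iii) take the optimum over $t\in[0,2]$, which will produce a piecewise-linear function of $t$, symmetric about $t=1$, with the two "slopes" near $t=0$ and near $t=2$ governed by $\tau(D_{K,\id},D_{K,r}) = 1$ and $\kappa_0$/$\kappa = 1$ respectively, matching Theorem~\ref{thm:upsilon-slope}. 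This identifies $\Upsilon_{(D_{K,\id},D_{K,r})}(t)$ with the function displayed in Figure~\ref{fig::4_1upsilon}.

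I would carry this out concretely by hand since the complex $\CFK^\infty(-4_1\#4_1)$ is small: it is the tensor product of two five-generator complexes, and the relevant sub-quotient complexes $\cG_s^t$ are genuinely computable. The main obstacle I anticipate is not conceptual but bookkeeping: correctly tracking the two bigradings through the dualization $\cCFL^\infty(4_1) \to \cCFL^\infty(-4_1) = \cCFL^\infty(4_1)^\vee$ (where differentials reverse and gradings negate), through the tensor product, and through the $U$- and $V$-powers forced by the constraint $A(\ys)+j-i = 0$, and then verifying at which parameter value $s$ each candidate boundary first enters $\cG_s^t$. A secondary subtlety is confirming that no boundary of higher $t$-filtration exists — i.e. that the two boundaries identified above are genuinely the optimal ones for all $t$ — which amounts to checking that $\xs_3^\vee\otimes\xs_1$ is not itself a cycle bounding something of strictly larger $\gr_t$; this follows from the explicit form of the differential on $\CFK^\infty(-4_1\#4_1)$ but should be spelled out. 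Once these checks are complete, the closed form of $\Upsilon_{(D_{K,\id},D_{K,r})}(t)$ follows immediately, and agreement with Figure~\ref{fig::4_1upsilon} is just a matter of reading off the piecewise-linear description.
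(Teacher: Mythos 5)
Your overall strategy is sound, and it is a genuinely different route from the paper's: the paper offers no by-hand argument for this lemma at all, but simply reports the output of a SageMath computation (both the preceding sentence and the figure caption say so). Reducing the claim to an explicit calculation in $\CFK^\infty(-4_1\# 4_1)$ via Theorem~\ref{thm:spinning}, Sarkar's formula $r_*\simeq \id+\Phi_w\circ\Psi_z$, and the Livingston-style reformulation of Lemma~\ref{lem:livingstonreformupsilon} is exactly the right way to verify the figure without a computer, and the complex is small enough that this is entirely feasible. What the by-hand route buys is a closed form: the answer is the tent function $\Upsilon_{(D_{K,\id},D_{K,r})}(t)=\min\{t,\,2-t\}$, consistent with $\tau=1$ near $t=0$ and with the peak value $1$ at $t=1$ visible in Figure~\ref{fig::4_1upsilon}.

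There is, however, one concrete incompleteness in step~(i) that would give the wrong answer for $t>1$ if followed literally. The two primitives you import from the proof of Lemma~\ref{lem:computationtauandVbyhand41}, namely $V^{-1}\cdot\xs_4^\vee\otimes\xs_1$ and $V^{-1}\cdot\xs_3^\vee\otimes\xs_0$ (note $\d\xs_0=V\xs_1$, so the latter arises from a $V$-differential, not a $U$-differential as you wrote), both lie at $t$-filtration level $-t$, so they only prove $\Upsilon(t)\le t$, which is strictly worse than the truth once $t>1$. There are exactly two further monomials whose differentials hit $\xs_3^\vee\otimes\xs_1$, namely $U^{-1}\cdot\xs_0^\vee\otimes\xs_1$ and $U^{-1}\cdot\xs_3^\vee\otimes\xs_4$ (coming from $\d\xs_0^\vee=U\xs_3^\vee$ and $\d\xs_4=U\xs_1$); these lie at level $-(2-t)$ and give $\Upsilon(t)\le 2-t$. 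Since $\d(V^{-1}\xs_4^\vee\otimes\xs_1)$ and $\d(U^{-1}\xs_0^\vee\otimes\xs_1)$ each equal $\xs_3^\vee\otimes\xs_1$ on the nose, and since over $\bF_2$ any chain with boundary $\xs_3^\vee\otimes\xs_1$ must contain an odd number of these four monomials, one obtains $\Upsilon(t)=\min\{t,2-t\}$ exactly. Your appeal to symmetry about $t=1$ and to Theorem~\ref{thm:upsilon-slope} shows you expect this shape, but that symmetry is carried precisely by the two $U^{-1}$-primitives you omitted; with them included, the argument goes through.
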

\begin{proof} The proof is similar to Lemma~\ref{lem:computationtauandVbyhand41}. Therein, we computed $t_{D_{K,r}}^-(1)-t_{D_K,\id}^-(1)$ to be $\xs_3^\vee\otimes \xs_1\in \CFK^\infty(4_1)^\vee\otimes \CFK^\infty(4_1)$. The two elements $y_1=V^{-1}\xs_4^\vee\otimes \xs_1$ and $y_2=U^{-1} \xs_3^\vee \otimes \xs_0$ lie in homogeneous $(\gr_{\ws},A)$-grading $(-1,0)$. It is straightforward to check that for $t\in [0,1]$, $y_1$ satisfies $\d y_1=\xs_3^\vee\otimes \xs_1$ and $y_1\in \cG_{t}^t(\bar 4_1\# 4_1)$. Furthermore, if $s<t$, then there are no elements $z\in \cG_s^t(\bar 4_1\# 4_1)$ such that $\d z=\xs_3^\vee\otimes \xs_1$. Similarly, if $t\in [1,2]$, then $\d y_2=\xs_3^\vee\otimes \xs_1$ and $y_2\in \cG_{2-t}^t(\bar 4_1\# 4_1)$, and there are no elements $z\in \cG_{s}^t(\bar 4_1\# 4_1)$ such that $\d z=\xs_3^\vee \otimes \xs_1$ for $s<2-t$. 
\end{proof}

\begin{figure}[ht!]
	\centering
	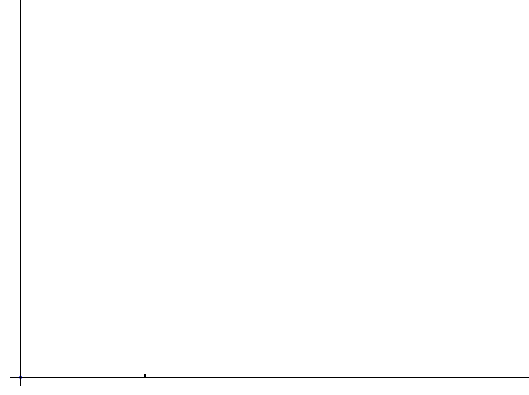
	\caption{$\Upsilon_{D_{4_1,\id}, D_{4_1,r}}(t)$.}
\label{fig::4_1upsilon}
\end{figure}

Our remaining examples were computed with the help of SageMath~\cite{sagemath}.
The program that computed these invariants can be found at \cite{GitHubCode}.

The next examples we consider are built from the knots $T_{3,4}$ and $T_{4,5}$.
Their associated full infinity complexes are shown in Figure~\ref{fig::T34-T45}.
If $K$ is a knot, we will write $D_{K \# K, \ve{R}^{\pi}}$ for the deform-spun slice disk
induced by the summand-swapping diffeomorphism $\ve{R}_\pi$ of $(\Sphere^3, K \# K)$
described in Section~\ref{sec:rigidmotiondeformation}.

\begin{figure}[ht!]
	\centering
\[
\begin{tikzcd}[column sep=.2cm,row sep=tiny,labels=description]
\cCFL^\infty(T_{3,4})&&\xs_0& \xs_1\arrow{l}{V}\arrow{r}{U^2}& \xs_2& \xs_3\arrow{l}{V^2}\arrow{r}{U} & \xs_4 \\
(\gr_{\ws}, A)&= & (-6,-3)&(-5,-2)&(-2,0)&(-1,2)&(0,3)
\\
\cCFL^\infty(T_{4,5})&&\xs_0& \xs_1\arrow{l}{V}\arrow{r}{U^3}& \xs_2& \xs_3\arrow{l}{V^2}\arrow{r}{U^2} & \xs_4 &\xs_5\arrow{l}{V^3}\arrow{r}{U}& \xs_6\\
(\gr_{\ws}, A)&= & (-12,-6)&(-11,-5)&(-6,-2)&(-5,0)&(-2,2)&(-1,5) & (0,6)
\end{tikzcd}
\]
	\caption{The complexes $\cCFL^\infty(T_{3,4})$ and $\cCFL^\infty(T_{4,5})$.}
\label{fig::T34-T45}
\end{figure}

The following has been computed using SageMath:

\begin{lem}
\label{lem:Sage}
\begin{enumerate}
\item For the pair $(D_{T_{3,4}\# T_{3,4}, \ve{R}^{\pi}}, D_{T_{3,4}\# T_{3,4},\id})$,
    we have $\tau = 2$, $V_0 = 1$, $V_1 = 1$, and $V_2 = 0$.
    A plot of $\Upsilon$ is shown on the left-hand side of Figure~\ref{fig:T34T45Upsilon}.
\item For the pair $(D_{T_{4,5} \# T_{4,5}, \ve{R}^{\pi}}, D_{T_{4,5}\# T_{4,5}, \id})$,
    we have $\tau = 3$, $V_0 = 2$, $V_1 = 1$, $V_2 = 1$, and $V_3 = 0$.
    A plot of $\Upsilon$ is shown on the right-hand side of Figure~\ref{fig:T34T45Upsilon}.
\end{enumerate}
\end{lem}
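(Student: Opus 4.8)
The plan is to reduce Lemma~\ref{lem:Sage} to a finite linear-algebra computation over $\bF_2$, which is then carried out with SageMath~\cite{sagemath} (the program is available on the second author's website). The essential inputs are the explicit model complexes in Figure~\ref{fig::T34-T45}, the trace-formula computation of the principal invariant of a deform-spun slice disk (Theorem~\ref{thm:spinning}), and the formula for the summand-swapping map (Theorem~\ref{thm:rigidmotionformula}).

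First I would fix $K$ to be $T_{3,4}$ (respectively $T_{4,5}$) and set $C := \cCFL^\infty(K)$, using the reduced complex of Figure~\ref{fig::T34-T45} together with its Alexander and Maslov gradings; the endomorphisms $\Phi$ and $\Psi$ of $C$ are then read off from the differential via equations~\eqref{eq:defPhi} and~\eqref{eq:defPsi} (differentiate the entries of $\d$, written over $\cR^-$, with respect to $U$, resp.\ $V$, after clearing one power). By Theorem~\ref{thm:rigidmotionformula} there is a filtered chain homotopy equivalence $\cCFL^\infty(\bK\#\bK) \simeq C \otimes_{\cR^\infty} C$ under which $\ve{R}^\pi_*$ becomes $\Sw \circ (\id\otimes\id + \id\otimes(\Phi\circ\Psi) + \Psi\otimes\Phi)$, so I can work entirely with the explicit complex $C\otimes C$ and never need a geometric Heegaard diagram for $K \# K$.

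Next I would apply Theorem~\ref{thm:spinning} to the deform-spun slice disks $D_{K\#K,\id}$ and $D_{K\#K,\ve{R}^\pi}$ of $-(K\#K)\#(K\#K)$. Writing $E$ for the connected-sum equivalence of \cite{ZemConnectedSums}*{Section~5} and using the identification $\cCFL^\infty(K\#K)\simeq C\otimes C$ above, Theorem~\ref{thm:spinning} gives $E\circ \ve{t}^\infty_{D_{K\#K,\id}} \simeq \cotr$ and $E\circ\ve{t}^\infty_{D_{K\#K,\ve{R}^\pi}}\simeq (\id\otimes\ve{R}^\pi_*)\circ\cotr$. Hence, expanding $\cotr(1)=\sum_i e_i^\vee\otimes e_i$ over a basis of $C\otimes C$, the difference of the two principal invariants is represented by the cycle
\[
\delta := \sum_i e_i^\vee \otimes \bigl(\id+\ve{R}^\pi_*\bigr)(e_i) \in (C\otimes C)^\vee\otimes_{\cR^\infty}(C\otimes C),
\]
with $\ve{R}^\pi_*$ replaced by the formula above (over $\bF_2$). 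Because both disks have genus $0$, the invariants of the pair are extracted from $[\delta]$ and $0$ by the reformulations already proved: $\tau$ is the least $n\ge 0$ with $V^n\cdot[\delta]=0$ in $\HFK^-_{U=0}$ of the connected sum (Lemma~\ref{lem:slightreformulationoftau}); $V_k$ is the least $n$ with $\hat U^n\cdot[\delta]=0$ in $H_*(A_k^-)$; and $\Upsilon(t)=\min\{s\ge 0:[\delta]=0\in H_*(\cG_s^t)\}$ by Lemma~\ref{lem:livingstonreformupsilon}. Each is a homology computation in a finite-rank truncation of $(C\otimes C)^\vee\otimes(C\otimes C)$ over $\bF_2$, which SageMath performs directly, yielding the listed values of $\tau$ and $V_k$ and the plots in Figure~\ref{fig:T34T45Upsilon}.

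There is no conceptual obstacle; the one thing to be careful about is bookkeeping. The main hurdle is propagating the bigrading consistently through the tensor product, the dual, and the equivalences $E$ and the one of Theorem~\ref{thm:rigidmotionformula}, so that the grading shifts built into the definitions of $A_k^-$, of the $t$-weighted filtrations $\cG_s^t$, and of the maps $\ve{t}^\infty_{D,\zs}$ are faithfully implemented, and representing each invariant by an honest cycle rather than a homotopy class. With these conventions fixed, the remaining work is routine linear algebra over $\bF_2$, and one also checks the outputs against the general inequalities of Theorems~\ref{thm:tauD1D2bound} and~\ref{thm:Vkbound} as a consistency test.
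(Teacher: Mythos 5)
Your proposal is correct and follows essentially the same route as the paper: the paper's own justification for this lemma is precisely the reduction via Theorem~\ref{thm:spinning} and Theorem~\ref{thm:rigidmotionformula} to finite $\bF_2$-linear algebra on $(C\otimes C)^\vee\otimes(C\otimes C)$ built from the model complexes of Figure~\ref{fig::T34-T45}, carried out in SageMath (cf.\ the hand-computed analogue for $4_1$ in Lemma~\ref{lem:computationtauandVbyhand41}). The only caveats you flag --- grading bookkeeping through the dual, the tensor product, and the equivalences --- are exactly the ones that matter in practice, and your consistency checks against Theorems~\ref{thm:tauD1D2bound} and~\ref{thm:Vkbound} are sensible.
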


\begin{figure}[ht!]
	\centering
	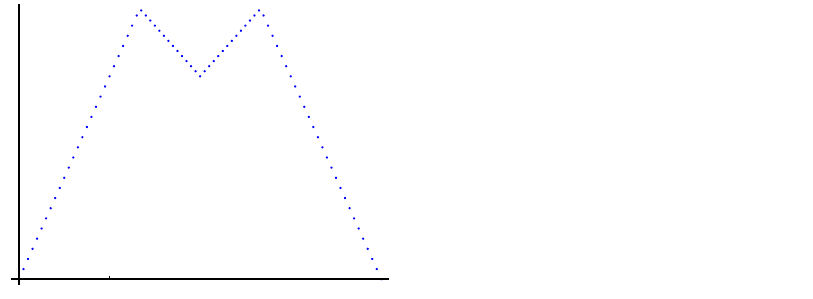
	\caption{The $\Upsilon(t)$ functions for the pairs
    $(D_{T_{3,4} \# T_{3,4}, \ve{R}^{\pi}}, D_{T_{3,4}\# T_{3,4},0}, \id)$ (left)
    and $(D_{T_{4,5} \# T_{4,5}, \ve{R}^{\pi}}, D_{T_{4,5} \# T_{4,5}, \id})$ (right),
    computed using SageMath.}\label{fig:T34T45Upsilon}
\end{figure}

An immediate corollary of Lemma~\ref{lem:Sage} and Theorems~\ref{thm:tauD1D2bound}
and~\ref{thm:doublepointbound} is the following:

\begin{cor}
Let $\omega \in \{\mu_{\st},\mu_{\Sing}\}$. Then
  \[
  \begin{split}
  \omega(D_{T_{3,4}\# T_{3,4}, \ve{R}^{\pi}}, D_{T_{3,4}\# T_{3,4}, \id}) &\ge 2 \text{, and} \\
  \omega(D_{T_{4,5} \# T_{4,5}, \ve{R}^{\pi}}, D_{T_{4,5} \# T_{4,5}, \id}) &\ge 3.
  \end{split}
  \]
\end{cor}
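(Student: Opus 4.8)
The plan is to derive this corollary as a formal consequence of two lower bounds already proven in the paper, applied to the explicit $\tau$-values computed in Lemma~\ref{lem:Sage}. Concretely, Theorem~\ref{thm:tauD1D2bound} asserts $\tau(S,S') \le \mu_{\st}(S,S')$ for all $S,S' \in \Surf(K)$, and Theorem~\ref{thm:doublepointbound} asserts $\tau(S,S') \le \mu_{\Sing}(S,S')$ whenever $S,S' \in \Surf_g(K)$ for a common $g$. So the only thing to check before invoking them is that each pair of deform-spun slice disks consists of two surfaces of the same genus bounding the same knot.

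First I would note that the disks $D_{T_{3,4}\# T_{3,4}, \ve{R}^{\pi}}$ and $D_{T_{3,4}\# T_{3,4}, \id}$ are both deform-spun slice disks of $-(T_{3,4}\# T_{3,4}) \# (T_{3,4}\# T_{3,4})$, hence both lie in $\Surf_0$ of that knot; likewise the $T_{4,5}$ pair lies in $\Surf_0$ of $-(T_{4,5}\# T_{4,5}) \# (T_{4,5}\# T_{4,5})$. In particular $g = 0 = g'$, so the genus hypothesis of Theorem~\ref{thm:doublepointbound} is satisfied, while Theorem~\ref{thm:tauD1D2bound} carries no genus restriction. Then I would simply substitute the values $\tau(D_{T_{3,4}\# T_{3,4}, \ve{R}^{\pi}}, D_{T_{3,4}\# T_{3,4}, \id}) = 2$ and $\tau(D_{T_{4,5}\# T_{4,5}, \ve{R}^{\pi}}, D_{T_{4,5}\# T_{4,5}, \id}) = 3$ from Lemma~\ref{lem:Sage}, obtaining $\omega \ge 2$ and $\omega \ge 3$ respectively, for $\omega \in \{\mu_{\st}, \mu_{\Sing}\}$.

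There is no genuine obstacle in this last deduction; it is immediate once Lemma~\ref{lem:Sage} is in hand. The substantive work sits upstream: Lemma~\ref{lem:Sage} relies on Theorem~\ref{thm:spinning} to express the extremal principal invariant of $D_{K,d}$ as $(\id \otimes d_*)\circ \cotr$, on the formula for the summand-swapping map $\ve{R}^{\pi}_*$ from Theorem~\ref{thm:rigidmotionformula}, and on a SageMath computation of $\tau$, $V_k$, and $\Upsilon$ from the resulting tensor-product complexes built on $\cCFL^\infty(T_{3,4})$ and $\cCFL^\infty(T_{4,5})$ displayed in Figure~\ref{fig::T34-T45}. A useful sanity check to fold into the write-up is that the $\tau = 3$ value for the $T_{4,5}$ pair is consistent with the $\Upsilon$-plot in Figure~\ref{fig:T34T45Upsilon}, since $\Upsilon_{(S,S')}(t) = \tau(S,S')\cdot t$ for $t$ near $0$ by Theorem~\ref{thm:upsilon-slope}; this cross-check guards against a transcription error in the lemma's statement.
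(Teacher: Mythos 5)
Your proposal is correct and is exactly the paper's argument: the corollary is deduced immediately from the values $\tau = 2$ and $\tau = 3$ in Lemma~\ref{lem:Sage} together with the bounds $\tau \le \mu_{\st}$ and $\tau \le \mu_{\Sing}$ from Theorems~\ref{thm:tauD1D2bound} and~\ref{thm:doublepointbound}, the genus hypothesis being satisfied since both members of each pair are slice disks. The extra consistency check against the slope of $\Upsilon$ near $t=0$ via Theorem~\ref{thm:upsilon-slope} is a sensible addition but not part of the paper's proof.
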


\subsection{Slice disks with large stabilization distance}

We now prove Theorem~\ref{thm:arbitrarily-large-torsion-order-intro} of the introduction.

\begin{thm}\label{thm:arbitrarily-large-torsion-order}
 Given $n\ge 0$, there is a knot $K_n$ and a pair of slice disks $D_1$ and $D_2$ for $K_n$ such that $\tau(D_1,D_2)\ge n$. 
\end{thm}
If $K$ is a knot in $S^3$, consider the \emph{$V$-torsion order} of $\HFK^-_{U=0}(K)$, for which we write $\Tor_V(K)$. This is the minimal $n\in \N\cup \{\infty\}$ such that 
\[
V^n\cdot \Tor(\HFK^-_{U=0}(K))=\{0\}.
\]
See \cite{AEUnknotting} and \cite{JMZTorsion} for examples of applications of the torsion order in knot Floer homology.

Let $K$ be a knot in $S^3$, and consider the slice knot $J=K_1\# K_2\# \bar K_3\# \bar K_4$, where each $K_i$ denotes a copy of $K$. We define two slice disks for $J$, which are boundary connected sums of slice disks for pairs of summands, as follows. Let $D_1$ be the spun slice disk obtained by viewing $J$ as $(K_1\# \bar K_3)\# (K_2\# \bar  K_4)$ and taking the boundary connected sum of the Artin spun slice disks for $K_1\# \bar K_3$ and $K_2\# \bar K_4$, and let $D_2$ be the spun slice disks obtained by viewing $J$ as $(K_1\# \bar K_4)\#(K_2\# \bar K_3)$, and taking a similar boundary connected sum.

\begin{lem}\label{lem:tau=torsion-order}
If $K$ is a knot and $D_1$ and $D_2$ are the slice disks for $J=K\# K\# \bar K \# \bar K$ described above, then 
\[
\tau(D_1,D_2)=\Tor_V(K). 
\]
\end{lem}
\begin{proof}
Firstly, note that the connected sum formula and the duality formula for mirroring knots implies that $\Tor_V(L)=\Tor_V(-L)$, and also $\Tor_V(L\# M)=\max(\Tor_V(L),\Tor_V(M))$ for any knots $L$ and $M$. In particular $\Tor_V(J)=\Tor_V(K)$.

We claim firstly that 
\begin{equation}
\tau(D_1,D_2)\le \Tor_V(K).\label{eq:torsion-D1D2-inequality}
\end{equation} 
 This follows from algebraic considerations. Indeed, $[t_{D_1}^-(1)]=[t_{D_2}^-(1)]+\sigma$, where $\sigma\in \HFK^-_{U=0}(J)$ is $V$-torsion. Hence, if $n=\Tor_V(K)$, then $V^n\cdot \sigma=0$, so
\[
V^n\cdot  [t_{D_1}^-]=V^n\cdot [t_{D_2}].
\]
This establishes equation~\eqref{eq:torsion-D1D2-inequality}.

To establish the reverse inequality of equation~\eqref{eq:torsion-D1D2-inequality}, we argue as follows. Consider the connected sum  decomposition of $J$ as $(K_1\# \bar{K}_3)\#(K_2\# \bar{K}_4)$. The corresponding 2-sphere gives a pair-of-pants cobordism from $(S^3,J)$ to $(S^3,K\# \bar K)\sqcup (S^3, K\# \bar K)$. Denote the cobordism map by $F$. By composing $t_{D_1}^-$ and $t_{D_2}^-$ with $F$, we may view the induced elements as chain maps
\[
T_1,T_2\in \Hom_{\bF[V]}(\CFK^-_{U=0}(K\# \bar K), \CFK^-_{U=0}(K\# \bar K)).
\]
Since $F$ is a homotopy equivalence,
\[
V^n\cdot [t^-_{D_1}]=V^n\cdot [t^-_{D_1}]
\]
if and only if $V^n\cdot  [F(t^-_{D_1})]=V^n \cdot [F(t^-_{D_1})]$, which in turn occurs if and only if $V^n\cdot T_1\simeq V^n\cdot T_2$, where $\simeq$ denotes $\bF[V]$-equivariant chain homotopy.

The maps $T_1$ and $T_2$ may be identified with concordance maps for concordances from $K\# \bar K$ to $K\# \bar K$. The map $T_1$ is identified with the identity map $\id$. On the other hand, the map $T_2$ is the concordance map for a concordance which factors through the unknot. In particular, $T_2$ must annihilate all torsion, as $\Tor_V(\mathrm{Unknot})=0$. In particular, if $V^n\cdot \id\simeq V^n\cdot T_2$, then $n$ must be larger than the torsion order of $K\# \bar K$.
\end{proof}

We now prove Theorem~\ref{thm:arbitrarily-large-torsion-order}:
\begin{proof}[Proof of Theorem~\ref{thm:arbitrarily-large-torsion-order}]
It suffices to construct knots where $\Tor_V(K)\ge n$. This is straightforward. For example, $\Tor_V(T_{p,q})=\min(p,q)-1$. (This fact is well known, but a proof may be found in \cite{JMZTorsion}*{Lemma~5.3}).
\end{proof}

\section{The cobordism distance}

In this section, we consider the following notion of distance between two surfaces:

\begin{define} Suppose that $g\in \N$, and $S$, $S'$ are two slice surfaces of a knot $K \subset S^3$. We say that $S$ and $S'$ are \emph{strictly $g$-cobordant} if there is a smoothly embedded, orientable 3-manifold $Y \subset I \times B^4$, such that the following are satisfied:
\begin{enumerate}
	\item $\d Y = (I \times K) \cup -(\{0\} \times S) \cup (\{1\} \times S')$.
	\item Projection of $Y$ onto $I$ is Morse.
	\item The sum of the genera of the components of each regular level set of $Y$ at most $g$.
\end{enumerate}
\end{define}

We write $\mu_{\Cob}(S, S')$ for the minimal $g$ such that $S$ and $S'$ are strictly $g$-cobordant. We call this quantity the \emph{cobordism distance} of $S$ and $S'$. In the case of a 0-cobordism, 
this coincides with the notion of a \emph{$0$-cobordism} introduced by Melvin \cite{MelvinPHD} for 2-knots. He defined a \emph{$g$-cobordism} to be one where each component of every level set has genus at most $g$.

The main result of this section is that the invariant $\tau(D, D')$ gives a lower bound on $\mu_{\Cob}(D, D')$ for slice disks $D$ and $D'$.

\begin{thm}\label{thm:k-concordancestau}
Suppose $D$ and $D'$ are two slice disks of a knot $K$ in $S^3$. Then 
\[
\tau(D, D') \le \mu_{\Cob}(D, D').
\]
\end{thm}

The main additional subtlety in the proof of Theorem~\ref{thm:k-concordancestau} is that the level sets of a strict $g$-cobordism $Y$ need not be connected, whereas the link cobordism maps vanish when there is a closed component. Hence, some care is required in the proof.

\subsection{Tubing disconnected surfaces}

In this section, we describe a way of meaningfully assigning cobordism maps to disconnected surfaces by tubing the components together.

\begin{define}Suppose that $W$ is a compact 4-manifold with boundary $Y$, and $S$ is a properly embedded, orientable surface in $W$. Suppose further that $\d S$ is equal to a knot $K\subset Y$. A \emph{tubing} of $S$ is a properly embedded surface $\hat{S}\subset W$ with $\d \hat{S}=K$, obtained by attaching tubes to $S$ which are the boundaries of 3-dimensional 1-handles in $W$. Furthermore, we assume  $g(\hat{S})=g(S)$ and $\hat{S}$ is connected.
\end{define}

We now prove a local relation for the graph cobordism maps (cf.~\cite{ZemHatGraphTQFT}*{Lemma~6.2}):

\begin{lem}\label{lem:bypass-to-graphs}
The graph cobordism maps satisfy the relation shown on the bottom of Figure~\ref{fig::4concordance}.
\end{lem}
\begin{proof} We begin with the bypass relation for the knot Floer cobordism maps, which is shown on the top of Figure~\ref{fig::4concordance}. We may take the underlying link cobordism to be $I \times K$, where $K$ is an unknot. The bypass relation for the link cobordism maps is proven in \cite{ZemConnectedSums}*{Lemma~1.4}. We then obtain the graph relation by considering the $V=1$ reductions of the graph cobordism maps, following \cite{ZemCFLTQFT}*{Theorem~C}.
\end{proof}

\begin{figure}[ht!]
	\centering
\begingroup%
  \makeatletter%
  \providecommand\color[2][]{%
    \errmessage{(Inkscape) Color is used for the text in Inkscape, but the package 'color.sty' is not loaded}%
    \renewcommand\color[2][]{}%
  }%
  \providecommand\transparent[1]{%
    \errmessage{(Inkscape) Transparency is used (non-zero) for the text in Inkscape, but the package 'transparent.sty' is not loaded}%
    \renewcommand\transparent[1]{}%
  }%
  \providecommand\rotatebox[2]{#2}%
  \newcommand*\fsize{\dimexpr\f@size pt\relax}%
  \newcommand*\lineheight[1]{\fontsize{\fsize}{#1\fsize}\selectfont}%
  \ifx\svgwidth\undefined%
    \setlength{\unitlength}{383.8197953bp}%
    \ifx\svgscale\undefined%
      \relax%
    \else%
      \setlength{\unitlength}{\unitlength * \real{\svgscale}}%
    \fi%
  \else%
    \setlength{\unitlength}{\svgwidth}%
  \fi%
  \global\let\svgwidth\undefined%
  \global\let\svgscale\undefined%
  \makeatother%
  \begin{picture}(1,0.54390996)%
    \lineheight{1}%
    \setlength\tabcolsep{0pt}%
    \put(0,0){\includegraphics[width=\unitlength,page=1]{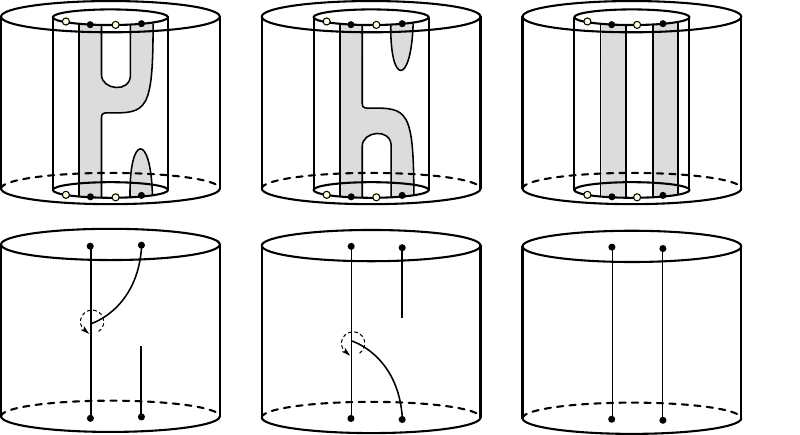}}%
    \put(0.30067364,0.39694121){\color[rgb]{0,0,0}\makebox(0,0)[t]{\lineheight{1.25}\smash{\begin{tabular}[t]{c}$+$\end{tabular}}}}%
    \put(0.62350009,0.39694121){\color[rgb]{0,0,0}\makebox(0,0)[t]{\lineheight{1.25}\smash{\begin{tabular}[t]{c}$+$\end{tabular}}}}%
    \put(0.30457119,0.12441527){\color[rgb]{0,0,0}\makebox(0,0)[t]{\lineheight{1.25}\smash{\begin{tabular}[t]{c}$+$\end{tabular}}}}%
    \put(0.62739764,0.12441527){\color[rgb]{0,0,0}\makebox(0,0)[t]{\lineheight{1.25}\smash{\begin{tabular}[t]{c}$+$\end{tabular}}}}%
    \put(0.94132281,0.39720074){\color[rgb]{0,0,0}\makebox(0,0)[lt]{\lineheight{1.25}\smash{\begin{tabular}[t]{l}$\simeq 0$\end{tabular}}}}%
    \put(0.93712877,0.12467474){\color[rgb]{0,0,0}\makebox(0,0)[lt]{\lineheight{1.25}\smash{\begin{tabular}[t]{l}$\simeq 0$\end{tabular}}}}%
  \end{picture}%
\endgroup%

	\caption{The bypass relation, as well as an induced relation obtained by setting $V=1$, in terms of graph cobordisms.}\label{fig::4concordance}
\end{figure}

\begin{prop}\label{prop:maps-independent-of-tubing} Suppose that $S$ is a properly embedded, oriented surface in $B^4$ with boundary equal to a knot $K$. Suppose that $\widehat{S}_1$ and $\widehat{S}_2$ are two tubings of $S$. Then
\[
\ve{t}_{\hat{S}_1,\zs}^-\simeq \ve{t}_{\hat{S}_2,\zs}^-.
\]
\end{prop}
\begin{proof} Any tubing of $S$ is isotopic to a tubing where each tube has one foot on the component of $S$ containing $K$, and one foot on a closed component of $S$. Since tubes are boundaries of 3-dimensional 1-handles, we may assume that, after an isotopy, any two such tubings have disjoint tubes. In particular, it suffices to change tubes one at a time. 

We assume that $T$ and $T'$ are tubes which have their feet on the same components of $S$. We assume the feet of the tubes are very close, and we pick an open neighborhood of the two tubes which is diffeomorphic to $S^1\times B^3$. We can factor the two cobordism maps through $(S^1\times S^2,O_2)$, where $O_2$ is a two-component unlink in $S^1\times S^2$.

We will prove the tube relation shown in  Figure~\ref{fig::1concordance}. This tube relation may be proven by considering the $V=1$ reduction of the link cobordism maps, and then applying the graph relation shown in Figure~\ref{fig::4concordance}. Since the link in $S^1\times S^2$ is an unlink with 2 basepoints per component, the link cobordism maps are determined by the graph cobordism maps, and so it is sufficient to prove the analogous formula for the graph cobordism maps. We do this in Figure~\ref{fig::5concordance}, using the local relation from Lemma~\ref{lem:bypass-to-graphs},
which is shown in Figure~\ref{fig::4concordance}.

We now claim that the cobordism map for the right-most surface in Figure~\ref{fig::1concordance} factors through the $H_1$-action. This is proven as follows. The $V=1$ reduction factors through the $H_1$-action by \cite{ZemDualityMappingTori}*{Proposition~4.6}. Since the Alexander grading change of the link cobordism map is zero, Lemma~\ref{lem:compute-using-reductions} implies that the link cobordism itself factors through the $H_1$-action. In particular, once we compose this cobordism map for $S^1\times B^3$ with the remainder of the cobordism map for the surface in $B^4$, we obtain the 0-map since $b_1(B^4)=0$.

\end{proof}

\begin{figure}[ht!]
	\centering
\begingroup%
  \makeatletter%
  \providecommand\color[2][]{%
    \errmessage{(Inkscape) Color is used for the text in Inkscape, but the package 'color.sty' is not loaded}%
    \renewcommand\color[2][]{}%
  }%
  \providecommand\transparent[1]{%
    \errmessage{(Inkscape) Transparency is used (non-zero) for the text in Inkscape, but the package 'transparent.sty' is not loaded}%
    \renewcommand\transparent[1]{}%
  }%
  \providecommand\rotatebox[2]{#2}%
  \newcommand*\fsize{\dimexpr\f@size pt\relax}%
  \newcommand*\lineheight[1]{\fontsize{\fsize}{#1\fsize}\selectfont}%
  \ifx\svgwidth\undefined%
    \setlength{\unitlength}{419.02800432bp}%
    \ifx\svgscale\undefined%
      \relax%
    \else%
      \setlength{\unitlength}{\unitlength * \real{\svgscale}}%
    \fi%
  \else%
    \setlength{\unitlength}{\svgwidth}%
  \fi%
  \global\let\svgwidth\undefined%
  \global\let\svgscale\undefined%
  \makeatother%
  \begin{picture}(1,0.27166358)%
    \lineheight{1}%
    \setlength\tabcolsep{0pt}%
    \put(0,0){\includegraphics[width=\unitlength,page=1]{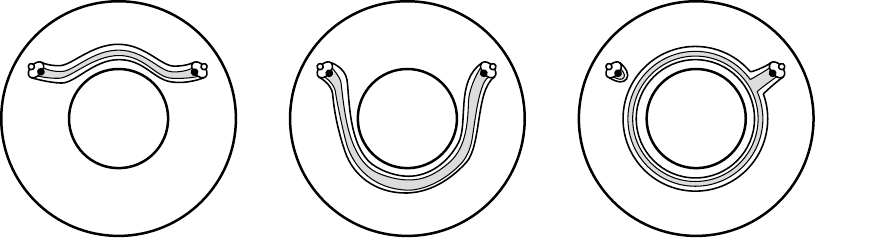}}%
    \put(0.30122676,0.12702028){\color[rgb]{0,0,0}\makebox(0,0)[t]{\lineheight{1.25}\smash{\begin{tabular}[t]{c}$+$\end{tabular}}}}%
    \put(0.63206789,0.12702028){\color[rgb]{0,0,0}\makebox(0,0)[t]{\lineheight{1.25}\smash{\begin{tabular}[t]{c}$+$\end{tabular}}}}%
    \put(0.94625307,0.127258){\color[rgb]{0,0,0}\makebox(0,0)[lt]{\lineheight{1.25}\smash{\begin{tabular}[t]{l}$\simeq 0$\end{tabular}}}}%
  \end{picture}%
\endgroup%

	\caption{A relation involving tubes.}\label{fig::1concordance}
\end{figure}

\begin{figure}[ht!]
	\centering
\begingroup%
  \makeatletter%
  \providecommand\color[2][]{%
    \errmessage{(Inkscape) Color is used for the text in Inkscape, but the package 'color.sty' is not loaded}%
    \renewcommand\color[2][]{}%
  }%
  \providecommand\transparent[1]{%
    \errmessage{(Inkscape) Transparency is used (non-zero) for the text in Inkscape, but the package 'transparent.sty' is not loaded}%
    \renewcommand\transparent[1]{}%
  }%
  \providecommand\rotatebox[2]{#2}%
  \newcommand*\fsize{\dimexpr\f@size pt\relax}%
  \newcommand*\lineheight[1]{\fontsize{\fsize}{#1\fsize}\selectfont}%
  \ifx\svgwidth\undefined%
    \setlength{\unitlength}{423.80272794bp}%
    \ifx\svgscale\undefined%
      \relax%
    \else%
      \setlength{\unitlength}{\unitlength * \real{\svgscale}}%
    \fi%
  \else%
    \setlength{\unitlength}{\svgwidth}%
  \fi%
  \global\let\svgwidth\undefined%
  \global\let\svgscale\undefined%
  \makeatother%
  \begin{picture}(1,0.26860291)%
    \lineheight{1}%
    \setlength\tabcolsep{0pt}%
    \put(0.29691601,0.13443715){\color[rgb]{0,0,0}\makebox(0,0)[t]{\lineheight{1.25}\smash{\begin{tabular}[t]{c}$+$\end{tabular}}}}%
    \put(0.62953695,0.13443715){\color[rgb]{0,0,0}\makebox(0,0)[t]{\lineheight{1.25}\smash{\begin{tabular}[t]{c}$+$\end{tabular}}}}%
    \put(0.94685861,0.13467219){\color[rgb]{0,0,0}\makebox(0,0)[lt]{\lineheight{1.25}\smash{\begin{tabular}[t]{l}$\simeq 0$\end{tabular}}}}%
    \put(0,0){\includegraphics[width=\unitlength,page=1]{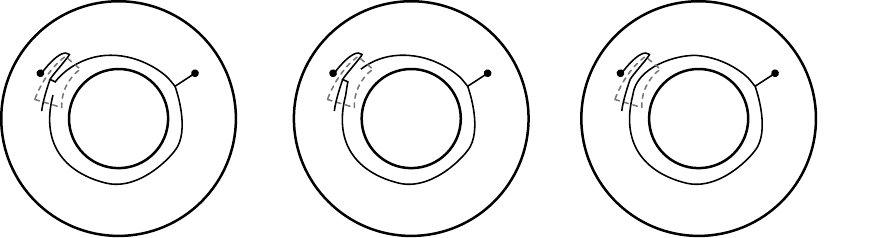}}%
  \end{picture}%
\endgroup%

	\caption{A relation involving graphs.}\label{fig::5concordance}
\end{figure}

\subsection{Proof of Theorem~\ref{thm:k-concordancestau}}

\begin{proof}[Proof of Theorem~\ref{thm:k-concordancestau}]
Suppose that $Y$ is a strict $g$-cobordism between slice disks $D$ and $D'$ of $K \subset S^3$. The projection from $Y$ onto the factor $I$ of $I \times B^4$ is a Morse function, by assumption. We may understand this Morse function as determining a sequence of 3-dimensional handles attached to $\{0\}\times D$, which build the 3-manifold $Y$. These handles may be of any index in $\{0,1,2, 3\}$. A 0-handle or 3-handle corresponds to adding or deleting an unknotted 2-sphere that is unlinked from the rest of the surface.  Attaching a 1-handle corresponds to a 1-handle stabilization, while a 2-handle corresponds to 1-handle destabilization. Let us write $S_1,\dots, S_n$ for a sequence of surfaces induced by a strict $g$-cobordism. By definition
\[
k=\max_{1\le i \le n} g(S_i),
\]
where $g(S_i)$ is the sum of the genera of the components of $S_i$.

The surfaces $S_1,\dots, S_n$ will in general not be connected. Let $\hat{S}_i$ be any tubing of $S_i$ for $i \in \{1, \dots, n\}$. Note that, by definition,
$g(\hat{S}_i) = g(S_i)$. We decorate each $\hat{S}_i$ with a dividing set such that $(\hat{S}_i)_{\ws}$ is a bigon. We write $\hat{\cS}_i$ for this decorated surface.
Proposition~\ref{prop:maps-independent-of-tubing} implies that the map $\ve{t}^-_{\hat{\cS}_i,\zs}$  is independent of the choice of tubing, and hence depends only on $S_i$.
 
 If $S_i$ is obtained from $S_{i-1}$ by a 0-handle, then we can pick tubes so that $\hat{S}_i$ is isotopic to $\hat{S}_{i-1}$.   If $S_i$ is obtained by a 3-handle, then, after changing tubes if necessary, the same is true. In particular, $\ve{t}^-_{\widehat{\cS}_i,\zs}=\ve{t}^-_{\widehat{\cS}_{i-1},\zs}$ if $S_i$ is obtained from $S_{i-1}$ by attaching a 0-handle or a 3-handle.
 
 If $S_{i}$ is obtained by attaching a 1-handle to $S_{i-1}$, then either $g(S_i)=g(S_{i-1})$ or $g(S_i)=g(S_{i-1})+1$. In the first case, the 1-handle connects two different components of $S_{i-1}$, and consequently, after changing tubes if necessary, $\hat{S}_i$ and $\hat{S}_{i-1}$ are isotopic. In the second case, $\hat{S}_i$ is obtained by stabilizing $\hat{S}_{i-1}$. We have the same conclusions for a 2-handle attachment, with the roles of $S_{i-1}$ and $S_i$ reversed. Consequently, using the formula in Lemma~\ref{lem:stabilizationlemma} for stabilization, the maps $V^k \cdot \ve{t}^-_{\hat{S}_i,\zs}$ coincide for all~$i$. So $\tau(D, D') \le k$ by Lemma~\ref{lem:slightreformulationoftau}, which completes the proof.
\end{proof}

\bibliographystyle{custom}
\bibliography{biblio}

\end{document}